\theoremstyle{plain}
\newtheorem{definition}{Definition}[section]
\newtheorem{remark}{Remark}[section]
\numberwithin{equation}{section}
\let\today\relax
\def\ps@pprintTitle{%
	\let\@oddhead\@empty
	\let\@evenhead\@empty
	\def\@oddfoot{\footnotesize\itshape
		 \hfill\today}%
	\let\@evenfoot\@oddfoot
}
\begin{document}
	
\begin{frontmatter}
	
	
		
	\title{$\mathfrak{R}$-matrix for quantum superalgebra $\mathfrak{sl}(2|1)$ at roots of unity and its application to
centralizer algebras}
	
	\author{Alexander~Mazurenko}
	\address{e-mail: mazurencoal@gmail.com}
	\author{Vladimir A. Stukopin \footnote{Research of the author V. Stukopin was supported by funding from the European Research Council (ERC) under the European Union’s Horizon 2020 research and innovation program (QUASIFT grant agreement 677368). } }
	\address{1) Moscow Institute of Physics and Technology (MIPT), e-mail: stukopin@mail.ru}
	\address{2) SMI of VSC RAS (South Mathematical Institute of Vladikavkaz Scientific Center of Russian Academy of Sciences)}

	\begin{abstract}
We consider fundamental facts from the theory of Hopf superalgebras. We use them to construct the quantum double of
the quantum superalgebra $sl(2|1)$ at roots of unity. Thus we obtain a multiplicative formula for universal $R$-matrix. Next we construct an $R$-matrix
to investigate parametrized family of centralizer algebras. We give multiplication laws in particular case and describe a
structure of such algebras in the general case.
	\end{abstract}
	
	\begin{keyword}
	 Lie superalgebra $sl(2|1)$ \sep quantum superalgebra at roots of unity \sep R-matrix \sep quantum double \sep centralizer
algebras.
	
	  MSC Primary 16W35, Secondary 16W55, 17B37, 81R50, 16W30
	\end{keyword}
	
\end{frontmatter}

\section{Introduction}
\label{Int}

Braided Hopf algebra (or quasitriangular Hopf algebra (see \cite{Dr90}) is the most important example of quantum group. This object was introduced
by Vladimir Drinfeld in the middle of 80-th of last century (see \cite{Dr86}).  Braided Hopf algebra has a specific comultiplication structure $\Delta$, such
that a comultiplication and the opposite comultiplication are adjoint. The adjoint map which connects them is defined by a so-called universal
$R$-matrix. Drinfeld calls such Hopf algebras almost cocommutative Hopf algebras (see \cite{Dr90}). If a universal $R$-matrix
of an almost cocommutative Hopf algebra satisfies additional relations from which the quantum Yang-Baxter equation follows, the Hopf algebra is
called quasitriangular (or braided) Hopf algebra.  The problem of finding explicit (and  simply expressed through generators of an algebra)
formula for a universal $R$-matrix for various examples of quantum groups is posed by V. Drinfeld and has a long history.
In many papers the problem is solved for various classes of quantized universal envelopes of both simple and affine algebras.
We mention the following papers in which are used methods close to us \cite{LSS93}, \cite{KT91}, \cite{KirResh}.
The case when the quantization parameter is the root of unity has been less studied (see \cite{HS}, \cite{AbArBa}), but
has recently been in the focus of attention, along with the case of quantum superalgebras (see \cite{DFI}, \cite{AbArBa}, \cite{Yam94},
\cite{AisMak}, \cite{BGZ}, \cite{ZD}, \cite{ZSL}).

The quantum double construction builds a braided Hopf algebra out of any finite-dimensional Hopf algebra with invertible antipode.
These results can be applied to supercase. Some classes of Hopf superalgebras were investigated in \cite{AAY}, \cite{AEG}, \cite{BGZ},
\cite{GZB}, \cite{ZSL}. The number of articles devoted to applications in modern mathematical physics is huge. Here we mention only
the review paper \cite{Maj90} and the following papers which are more closely related to the subject of our work  \cite{BLHM}, \cite{DFI}.

One of the non-trivial applications of the explicit formula for a universal $R$-matrix is related to the Schur-Weyl duality, which
connects the representations of a quantum algebra and the Hecke algebra or braid group, as well as to a construction of invariants of knots
and links. A generalization of these constructions to the case of quantum Hopf superalgebras is, in our opinion, one of the most
important  and actual problems in the representation theory of quantum algebras which attracts attention of many researchers at last
years (see \cite{Mit}, \cite{RSW}, \cite{CAGE}, \cite{CAMA}, \cite{ZD}, \cite{MWLG}).


In this paper we collect and prove supercase analogues of these results and show how to apply them to a finite-dimensional
quotient of the Hopf superalgebra $U_q(sl(2|1))$ at roots of unity. In particularly we obtain a new universal R-matrix. We use
this fact as an excuse to consider Hopf superalgebras and quantum double construction in supercase in systematic way. We adopt a fundamental approach presented in \cite{KAss}.

First we recall the basic notions about superalgebras. From now on let $\Bbbk$ denote a fixed field of characteristic $0$. All
vector spaces, linear maps and tensor products will be defined with respect to/taken over this field. We summarize definitions
of superbialgebras and Hopf superalgebras. For more details and notations we refer to \cite{AisMak}. A superspace $V$ is a
$\Bbbk$-vector space endowed with a $\mathbb{Z}_2$-grading $V = V_0 \oplus V_1$. If $v \in V_0$, we set $|v|=deg(v)=0$ and if $v
\in V_1$, $|v|=deg(v)=1$.

Let $V$, $W$ be two superspaces. The space of linear maps $\mathrm{Hom}_{\Bbbk}(V,W)$ has a natural $\mathbb{Z}_2$-grading given
by $f \in \mathrm{Hom}_{\Bbbk}(V,W)_{|j|}$ if $f(V_{|i|}) \in W_{|i|+|j|}$ for $|i|, |j| \in \mathbb{Z}_2$. A map $f: V \to W$
is a superspace morphism if $f(V_i) \subset W_i$ for $i \in \mathbb{Z}_2$.

The tensor product $V \otimes W$ of superspaces $V$ and $W$ is the tensor product of vector spaces $V$ and $W$, equipped with
the $\mathbb{Z}_2$-grading given by
\[ (V \otimes W)_{0} = ( V_{0} \otimes W_{0} ) \oplus ( V_{1} \otimes W_{1} ), \]
\[ (V \otimes W)_{1} = ( V_{0} \otimes W_{1} ) \oplus ( V_{1} \otimes W_{0} ). \]

We choose for basis of Lie superalgebra $sl(2|1)$ over $\Bbbk$ the following elements:
$ h_1=e_{1,1}-e_{2,2}, \quad h_2=e_{2,2}+e_{3,3}, $
$ e_1=e_{1,2}, \quad f_1=e_{2,1},$
$ e_2=e_{2,3}, \quad f_2=e_{3,2}, $
$ e_3=[e_1,e_2]=e_{1,3}, \quad f_3=[f_1,f_2]=-e_{3,1}, $
where $e_{i,j} \in M_3(\Bbbk)$ denotes matrix with $1$ at position $(i,j)$ and zeros elsewhere.
The elements $h_1, h_2, e_1, f_1$ are even and $e_2, f_2, e_3, f_3$ are odd.

We have $[h_i,h_j] = 0$, $[h_i, e_j] = a_{ij} e_j$, $[h_i,f_j]=-a_{ij} f_j$, $[e_i,f_j]=  \delta_{ij} h_i$,
$[e_2,e_2]=[f_2,f_2]=0$, $[e_1,[e_1,e_2]]=[f_1,[f_1,f_2]]=0$
with $(a_{ij})_{i,j}$ the Cartan matrix $$
A=\left(
\begin{array}{cc}
2 & -1 \\
-1 & 0 \\
\end{array}
\right).
$$

We denote by $H = \langle h_1,h_2 \rangle$ the Kartan subalgebra of $sl(2|1)$. Fix an element $a \in H$. Recall that
$a=diag(a_1,a_2,a_3)$ is a diagonal matrix, where $a_1,a_2,a_3 \in \Bbbk$. So we can introduce the following linear maps on
$H$:
\[ \epsilon_1(a)=a_1, \quad \epsilon_2(a)=a_2, \quad \delta_1(a)=a_3. \]
Then the set of roots $\Delta \subseteq H^{*}$ has the form $\Delta = \Delta_0 \oplus \Delta_1$, where
$ \Delta_0=\{\pm(\epsilon_1-\epsilon_2)\}, $
$ \Delta_1=\{ \pm(\epsilon_1-\delta_1),  \pm(\epsilon_2-\delta_1) \}. $
Fix the basis of $\Delta$: $\{ \epsilon_1-\epsilon_2, \epsilon_2-\delta_1 \}$. Select the order on the $\Delta$ in the following
way: $\epsilon_1-\epsilon_2 < \epsilon_2-\delta_1$. Then Kartan matrix has the form $A$.

Centralizer algebras of representations of quantum group $sl(2|1)$ at roots of unity are studied in \cite{CAMA}, \cite{MWLG},
see also \cite{Mit}, \cite{RSW}, \cite{ZD}. We consider a basis structure and multiplication laws for a parametrized family of
interwiner spaces corresponding to the tower of the tensor powers of certain representations of a finite-dimensional quotient of
quantum superalgebra $sl(2|1)$ at roots of unity.

This paper is split in five parts. In Section \ref{PrelRes} we prove (partially reprove may be well-known results for reader convenience)
important for us results for Hopf superalgebras. In Section \ref{QDUq} we describe how to construct the universal $R$-matrix for
a finite-dimensional factor-algebra of quantum $sl(2|1)$ at roots of unity. In Section \ref{PFCA} we consider centralizer algebras
which appears when we examine type A representations of quantum $sl(2|1)$ at roots of unity. All proofs are given in Sections \ref{PCR} and \ref{PAR}.

\section{Hopf Superalgebras Preliminaries}
\label{PrelRes}

In this section we prove fundamental theoretic results about superalgebras and related algebraic structures which we use in
Sections \ref{QDUq} and \ref{PFCA}.

\subsection{The Language of Hopf Superalgebras}

We describe basic constructions. By default we consider homogeneous elements when a degree appears. It is easy to prove

\begin{restatable}{proposition}{tensorhopfAlgebra}
	Let $H=(H,\mu_{H},\eta_{H},\Delta_{H},\epsilon_{H},S_{H})$ and $G=(G,\mu_{G},\eta_{G},\Delta_{G},\epsilon_{G},S_{G})$ be
Hopf superalgebras. Then the tensor product
	\[ H\otimes G = (H\otimes G, (\mu_{H}\otimes\mu_{G})\circ(id_{H}\otimes\tau_{G,H}\otimes id_{G}),
(\eta_{H}\otimes\eta_{G})\circ\nu_{k,k}^{-1}, (id_{H}\otimes\tau_{H,G}\otimes id_{G})\circ(\Delta_{H}\otimes\Delta_{G}),
\nu_{k,k} \circ(\epsilon_{H}\otimes\epsilon_{G}), S_{H}\otimes S_{G}) \]
	is a Hopf superalgebra.
\end{restatable}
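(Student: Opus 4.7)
The plan is to verify in turn the four groups of axioms that define a Hopf superalgebra on $H \otimes G$: that it is an associative superalgebra with unit, a coassociative supercoalgebra with counit, that these two structures are compatible (the bialgebra axiom), and that $S_H \otimes S_G$ satisfies the antipode identity. Each verification will reduce to the corresponding axiom for $H$ and for $G$ separately, after one carefully slides the super-twist $\tau$ past the (co)multiplications using its naturality and the coherence identities for the symmetric monoidal structure on superspaces.

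First I would check associativity and unitality of the new multiplication $\mu = (\mu_H \otimes \mu_G) \circ (id_H \otimes \tau_{G,H} \otimes id_G)$. Unitality is immediate because $\eta_H(1)$ and $\eta_G(1)$ are even, so $\tau_{G,H}$ contributes no signs against them. Associativity is the diagrammatic statement that, after expanding both composites and repeatedly applying the naturality of $\tau_{G,H}$ with respect to $\mu_H$ and $\mu_G$, both sides reduce to $\mu_H \circ (\mu_H \otimes id) \otimes \mu_G \circ (id \otimes \mu_G)$ up to an intermediate permutation, which equals the other bracketing by associativity in $H$ and in $G$ respectively. The supercoalgebra verification is completely dual and presents no new difficulties.

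The main obstacle is the bialgebra compatibility, namely that $\Delta = (id_H \otimes \tau_{H,G} \otimes id_G) \circ (\Delta_H \otimes \Delta_G)$ is a morphism of superalgebras, equivalently the identity
\[
\Delta \circ \mu = (\mu \otimes \mu) \circ (id \otimes \tau \otimes id) \circ (\Delta \otimes \Delta)
\]
on $H \otimes G$. Here the two sides act on eight tensor factors and differ by a sequence of $\tau$-swaps; one must verify that the underlying permutations of factors agree and, crucially, that the Koszul signs produced by each swap match. The cleanest route is to draw string diagrams (or invoke the coherence theorem for symmetric monoidal categories generated by the braiding $\tau$), after which the identity collapses to the bialgebra axioms of $H$ and of $G$ applied factor-wise. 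The remaining compatibilities $\epsilon \circ \mu = \nu_{k,k} \circ (\epsilon_H \otimes \epsilon_G) \circ$ \emph{etc.} and $\Delta \circ \eta = \eta \otimes \eta$ are one-line checks using that $\epsilon_H, \epsilon_G$ and $\eta_H, \eta_G$ are even morphisms.

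Finally the antipode axiom is checked by expanding $\mu \circ ((S_H \otimes S_G) \otimes id) \circ \Delta$, pulling $\tau_{H,G}$ through $S_H \otimes S_G$ (harmless since each $S$ is of degree $0$), and invoking $\mu_H \circ (S_H \otimes id) \circ \Delta_H = \eta_H \circ \epsilon_H$ together with its $G$-analogue; the opposite convolution is symmetric. The recurring technical issue throughout, and the only real obstacle, is the disciplined bookkeeping of the signs introduced by $\tau$ at each step — once a consistent convention is fixed and the naturality of $\tau$ is used systematically, every axiom on $H \otimes G$ reduces cleanly to its factorwise counterpart.
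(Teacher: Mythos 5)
Your outline is correct, and in fact the paper supplies no proof of this proposition at all: it is prefaced by ``It is easy to prove'' and, unlike the other restatable results, is never restated with a proof in Sections \ref{PCR} or \ref{PAR}. So there is no argument of the authors' to compare against; what you propose is the standard factor-wise verification, and it is the right one. You correctly identify the only genuinely delicate point, namely the Koszul signs in the bialgebra compatibility. For the record, that sign check does close: writing $a=h_1\otimes g_1$, $b=h_2\otimes g_2$, the exponent produced by $\Delta_{H\otimes G}\circ\mu_{H\otimes G}$ is
\[
|g_1||h_2|+|h_1''||h_2'|+|g_1''||g_2'|+(|h_1''|+|h_2''|)(|g_1'|+|g_2'|),
\]
while the exponent produced by $\mu_{(H\otimes G)\otimes(H\otimes G)}\circ(\Delta\otimes\Delta)$ is
\[
|h_1''||g_1'|+|h_2''||g_2'|+(|h_1''|+|g_1''|)(|h_2'|+|g_2'|)+|g_1'||h_2'|+|g_1''||h_2''|,
\]
and after expanding $|g_1|=|g_1'|+|g_1''|$, $|h_2|=|h_2'|+|h_2''|$ the two differ by $2|g_1'||h_2''|\equiv 0 \pmod 2$. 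Your treatment of the antipode is also sound: since $S_H$ and $S_G$ are even, $|S_G(y')|=|y'|$, so the sign from $\tau$ in $\Delta_{H\otimes G}$ cancels against the sign from $\tau$ in $\mu_{H\otimes G}$ and the convolution identity reduces to $\sum S_H(x')x''\otimes S_G(y')y''=\epsilon_H(x)\epsilon_G(y)\,1_H\otimes 1_G$. The only reservation is that your text is a plan rather than a proof --- the coherence/naturality steps are invoked rather than executed --- but every assertion you make is true and the reductions you describe do go through.
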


\begin{remark}\label{rm:one}
	Define by $\tau_{G,H}: G \otimes H \to H \otimes G$ a superspace morphism $\tau_{G,H}(a \otimes b) = (-1)^{|a||b|} b \otimes
a$ for all $a \in G,  b \in H$.
	
	Define a superspace isomorphism $\upsilon_{\Bbbk,V}$ of vector superspaces $\Bbbk \otimes V$ and $V$, where
$\upsilon_{\Bbbk,V}: \Bbbk \otimes V \to V$, $\upsilon_{\Bbbk,V}(1_{\Bbbk} \otimes v) = 1_{\Bbbk}v=v$ for all $v \in V$.

	Analogically, define a superspace isomorphism $\upsilon_{V,\Bbbk}$ of vector superspaces $V \otimes \Bbbk$ and $V$, where
$\upsilon_{V,\Bbbk}: V \otimes \Bbbk \to V$, $\upsilon_{V,\Bbbk}(v \otimes 1_{\Bbbk}) = v$ for all $v \in V$.
\end{remark}

\begin{restatable}{lemma}{commutatorEps}
	Let $H=(H,\mu,\eta)$ be a superalgebra, $\Delta:H\to H\otimes H$ and $\epsilon:H\to \Bbbk$ - superalgebra morphisms. Suppose
that $H$ is multiplicatively generated by a subset $X$ such that for all $x \in X$
	\[(id_{H} \otimes \Delta) \circ \Delta(x)=(\Delta \otimes id_{H}) \circ \Delta(x),\]
	\[(id_{H} \otimes \epsilon) \circ \Delta(x) = (\epsilon \otimes id_{H}) \circ \Delta(x). \]
	Then $\Delta$ is a comultiplication and $\epsilon$ is a counit in $H$.
\end{restatable}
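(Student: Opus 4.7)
The plan is to follow the standard strategy: show that the set of elements on which the coassociativity identity holds is closed under the superalgebra operations, and therefore, since it contains the generating set $X$, coincides with $H$; then run the same argument for the counit.

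First I would introduce the two maps $\Phi_L := (id_H \otimes \Delta) \circ \Delta$ and $\Phi_R := (\Delta \otimes id_H) \circ \Delta$ from $H$ to $H \otimes H \otimes H$, and the equalizer
\[ E := \{\, x \in H : \Phi_L(x) = \Phi_R(x) \,\}. \]
The hypothesis is that $X \subseteq E$, and the conclusion I need is $E = H$. The core step is to prove that $E$ is a subsuperalgebra of $H$. This reduces to showing that both $\Phi_L$ and $\Phi_R$ are morphisms of superalgebras, because then the preimage of the diagonal (equivalently, the set where the two morphisms agree) is automatically closed under multiplication and contains the unit: if $\Phi_L(x) = \Phi_R(x)$ and $\Phi_L(y) = \Phi_R(y)$, then $\Phi_L(xy) = \Phi_L(x)\Phi_L(y) = \Phi_R(x)\Phi_R(y) = \Phi_R(xy)$, and $\Phi_L(1) = 1 \otimes 1 \otimes 1 = \Phi_R(1)$.

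So the real content is the verification that $id_H \otimes \Delta$ and $\Delta \otimes id_H$ are superalgebra morphisms between the appropriate graded tensor products. This is where the super signs have to be handled: the product on $H\otimes H$ involves the graded flip $\tau_{H,H}$, so I have to check an identity of the form
\[ (id_H \otimes \Delta)\bigl((a\otimes b)(a'\otimes b')\bigr) \;=\; (id_H \otimes \Delta)(a\otimes b)\cdot (id_H \otimes \Delta)(a'\otimes b') \]
for homogeneous $a,a',b,b'$, and similarly on the other side. Since $\Delta$ is a superalgebra morphism it preserves the $\mathbb{Z}_2$-grading, so $|\Delta(b)| = |b|$, and the Koszul sign $(-1)^{|b||a'|}$ produced by the flip on $H\otimes H$ matches the sign produced by the flip on $H\otimes (H\otimes H)$. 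This is the only place where the super structure enters in a non-cosmetic way, and it is essentially a bookkeeping computation once one has set up the product on the triple tensor product $H\otimes H\otimes H$ using the graded flips as in the preceding proposition.

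Once $E$ is shown to be a subsuperalgebra, the inclusion $X \subseteq E$ plus the assumption that $X$ generates $H$ multiplicatively forces $E = H$, proving coassociativity. For the counit axiom, I would argue in exactly the same way with
\[ F := \{\, x \in H : (id_H \otimes \epsilon)\circ\Delta(x) = (\epsilon \otimes id_H)\circ\Delta(x) \,\}, \]
observing that $(id_H \otimes \epsilon)\circ\Delta$ and $(\epsilon\otimes id_H)\circ\Delta$ are again superalgebra morphisms $H \to H$ (after identifying $H\otimes \Bbbk \cong H \cong \Bbbk\otimes H$ via $\upsilon_{H,\Bbbk}$ and $\upsilon_{\Bbbk,H}$ from Remark \ref{rm:one}), so $F$ is a subsuperalgebra containing $X$ and hence equal to $H$. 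I do not anticipate a serious obstacle: the only delicate point is confirming that the super-tensor multiplication is compatible with the inserted identity maps in such a way that $id_H\otimes\Delta$ remains a superalgebra morphism, which is precisely where the Koszul sign has to be tracked.
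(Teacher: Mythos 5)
Your treatment of coassociativity is correct and is essentially the paper's own argument: the paper verifies by direct Sweedler-notation computation that $(id_H\otimes\Delta)\circ\Delta$ and $(\Delta\otimes id_H)\circ\Delta$ are both multiplicative (sending $xy$ to the product of the images of $x$ and $y$ in $H\otimes H\otimes H$), which is exactly your statement that the equalizer of two superalgebra morphisms is a subsuperalgebra containing $X$.

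The counit half, however, has a genuine gap. You take $F$ to be the equalizer of $(id_H\otimes\epsilon)\circ\Delta$ and $(\epsilon\otimes id_H)\circ\Delta$ and conclude $F=H$. But that only shows the two maps agree everywhere; the counit axiom requires each of them to equal $id_H$ under the identifications $\upsilon_{H,\Bbbk}$, $\upsilon_{\Bbbk,H}$, and agreement of the two sides does not imply this. Concretely, for $H=\Bbbk[t]$ with $\Delta(t)=t\otimes t$ and $\epsilon(t)=0$ both composites vanish on $t$, so they agree on the generator, yet $\epsilon$ is not a counit. (This also shows the hypothesis must be read as the full counit axiom holding at generators, i.e.\ $\sum_{(x)}x'\epsilon(x'')=x=\sum_{(x)}\epsilon(x')x''$; this is what the paper's proof actually uses when it computes $(id_H\otimes\epsilon)\circ\Delta(xy)=\sum_{(x),(y)}x'y'\otimes\epsilon(x'')\epsilon(y'')=xy\otimes 1_{\Bbbk}$, a step that needs the counit property at $x$ and at $y$, not merely the equality of the two sides.) The repair is small and stays within your framework: run the equalizer argument not between the two composites but between $\upsilon_{H,\Bbbk}\circ(id_H\otimes\epsilon)\circ\Delta$ and $id_H$ (and separately between $\upsilon_{\Bbbk,H}\circ(\epsilon\otimes id_H)\circ\Delta$ and $id_H$); all of these are superalgebra morphisms $H\to H$, so each equalizer is a subsuperalgebra containing $X$ and hence equals $H$.
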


\begin{restatable}{proposition}{inverseAlgebra}\label{pr:invAlg}
	Let $H=(H,\mu,\eta,\Delta,\epsilon)$ be a superbialgebra. Then
	\[ H^{op}=(H,\mu^{op},\eta,\Delta,\epsilon), H^{cop}=(H,\mu,\eta,\Delta^{op},\epsilon) \]
	and $H^{op, cop}=(H,\mu^{op},\eta,\Delta^{op},\epsilon)$ are superbialgebras.
\end{restatable}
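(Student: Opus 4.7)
The plan is to verify the five superbialgebra axioms for each of $H^{op}$, $H^{cop}$, $H^{op,cop}$ in parallel with the classical (non-super) case, with the extra work lying in keeping track of Koszul signs produced by $\tau_{H,H}$. Since $H^{op,cop} = (H^{op})^{cop}$, it suffices to treat the first two structures independently and apply them successively to obtain the third.

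For $H^{op}$, I write $\mu^{op} = \mu \circ \tau_{H,H}$. Associativity of $\mu^{op}$ follows from associativity of $\mu$ together with naturality of $\tau$ on $H \otimes H \otimes H$; the element $\eta(1)$ remains a unit because $\tau_{H,H}(1 \otimes a) = a \otimes 1$ up to a trivial parity. The coproduct data $(\Delta,\epsilon)$ are unchanged, so coassociativity and the counit axioms are inherited from $H$. The substantive step is showing that $\Delta : H^{op} \to H^{op} \otimes H^{op}$ is a superalgebra morphism, where the product on the target is the super tensor product of $H^{op}$ with itself (as in the previously proved tensor product Proposition, this inserts a $\tau$ between the two middle factors). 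Expanding both sides of $\Delta(a \cdot_{op} b) = \Delta(a) \cdot_{op \otimes op} \Delta(b)$ in Sweedler notation produces sums of the form $\sum \pm\, b_{(1)}a_{(1)} \otimes b_{(2)}a_{(2)}$; the two sign assignments are reconciled by using $|a| = |a_{(1)}|+|a_{(2)}|$ and $|b| = |b_{(1)}|+|b_{(2)}|$, which follow from $\Delta$ being a superspace morphism.

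For $H^{cop}$ the argument is dual. Coassociativity of $\Delta^{op} = \tau_{H,H} \circ \Delta$ and the counit axiom follow from the corresponding axioms for $\Delta$ together with naturality of $\tau$. The substantive step is verifying $\Delta^{op}(ab) = \Delta^{op}(a)\,\Delta^{op}(b)$ in $H \otimes H$; expanding both sides yields sums of the form $\sum \pm\, a_{(2)}b_{(2)} \otimes a_{(1)}b_{(1)}$ whose coefficients coincide modulo $2$ by the same parity bookkeeping.

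The main obstacle is the sign accounting itself: $\tau_{H,H}$ enters simultaneously into $\mu^{op}$, into $\Delta^{op}$, and into the multiplication on the super tensor product of two superalgebras, so one has to separate these contributions cleanly. Once each Koszul sign is attributed to its source and the identity $|x| = |x_{(1)}| + |x_{(2)}|$ is applied, the discrepancies collapse and the axioms transfer from $H$ without further input.
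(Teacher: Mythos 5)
Your proof is correct, and for $H^{op}$ and $H^{cop}$ it follows essentially the same route as the paper: direct verification of (co)associativity and (co)unit axioms, with the compatibility axiom reduced to the parity identity $|x|=|x_{(1)}|+|x_{(2)}|$ applied to both factors — this is exactly the degree bookkeeping the paper records in its ``Remark'' on degrees, e.g.\ $|a||b|+|b''||a'| = |a''||b'|+|a'||b'|+|a''||b''|$. Where you genuinely diverge is the third structure: the paper carries out a separate explicit sign verification that $\Delta^{op}$ is a superalgebra morphism on $(H,\mu^{op})$, whereas you observe that $H^{op,cop}=(H^{op})^{cop}$ and obtain it by applying the (already proved, general) ``cop'' construction to the superbialgebra $H^{op}$. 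This is legitimate — the second part of the statement holds for an arbitrary superbialgebra, so there is no circularity — and it spares you the longest sign computation in the paper's proof, the one reconciling $(-1)^{|a||b|+|b''||a'|+|b'a'||b''a''|}$ with the product sign on $H^{op,cop}\otimes H^{op,cop}$. The only cost is that your argument is less self-contained at that step: a reader must accept that the two modifications of the structure maps compose without interference, which your identification $(H^{op})^{cop}=(H,\mu^{op},\eta,\Delta^{op},\epsilon)=H^{op,cop}$ does make explicit. No gap.
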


Let $V$ be a vector superspace. Then dual vector space $V^{*}$ is a vector superspace, where $(V^{*})_{i}=(V_{i})^{*}$ for $i
\in \{0,1\}$. Proof of the following result is almost identical to that in the case of vector spaces.

\begin{restatable}{proposition}{evaluationmap}\label{pr:evaluationmap}
	Let $M, M^{'}, N, N^{'}$ be finite-dimensional vector superspaces. Then a superspace morphism
	\[ \lambda: \mathrm{Hom}(M,M^{'}) \otimes \mathrm{Hom}(N,N^{'}) \to \mathrm{Hom}(M\otimes N,M^{'}\otimes N^{'}), \]
	\[ \lambda(f \otimes g)(m \otimes n) := (-1)^{|g||m|} f(m) \otimes g(n), \]
	for all $f \in \mathrm{Hom}(M,M^{'}),g \in \mathrm{Hom}(N,N^{'}), m \in M, n\in N$, is an isomorphism of finite-dimensional
vector superspaces.
\end{restatable}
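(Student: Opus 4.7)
The plan is to prove the proposition by explicit basis computation: first verify that $\lambda$ is a well-defined parity-preserving linear map, then check that it sends a natural basis of the source to a family of nonzero scalar multiples of the natural basis of the target, from which bijectivity follows by a dimension count.

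For well-definedness, on homogeneous pairs $(f,g)$ the assignment $(f,g)\mapsto \lambda(f\otimes g)$ is $\Bbbk$-bilinear in $f$ and $g$, so it descends to the tensor product, and then extends by linearity to non-homogeneous inputs. To confirm that $\lambda$ is parity-preserving, observe that on a homogeneous $m\otimes n$ of parity $|m|+|n|$ the value $\lambda(f\otimes g)(m\otimes n)=(-1)^{|g||m|}f(m)\otimes g(n)$ has parity $|f|+|m|+|g|+|n|$, which differs from $|m\otimes n|$ by $|f|+|g|=|f\otimes g|$, as required (the scalar $(-1)^{|g||m|}$ does not affect parity).

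Next, fix homogeneous bases $(m_i),(m'_j),(n_k),(n'_l)$ of $M,M',N,N'$ and introduce the matrix units $E^{M'M}_{ji}\in\mathrm{Hom}(M,M')$ defined by $E^{M'M}_{ji}(m_a)=\delta_{ia}\,m'_j$ (which is homogeneous of parity $|m_i|+|m'_j|$), and analogously $E^{N'N}_{lk}\in \mathrm{Hom}(N,N')$. The products $\{E^{M'M}_{ji}\otimes E^{N'N}_{lk}\}$ form a basis of the source, while the matrix units $\{E^{M'\otimes N',\,M\otimes N}_{(j,l),(i,k)}\}$ form a basis of the target; both spaces have dimension $\dim M\cdot \dim M'\cdot \dim N\cdot \dim N'$. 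A direct computation gives
\[
\lambda(E^{M'M}_{ji}\otimes E^{N'N}_{lk})(m_a\otimes n_b)=(-1)^{(|n_k|+|n'_l|)\,|m_a|}\,\delta_{ia}\delta_{kb}\,m'_j\otimes n'_l,
\]
which vanishes unless $(a,b)=(i,k)$ and otherwise equals a nonzero scalar multiple of $m'_j\otimes n'_l$. Thus $\lambda$ carries each source basis vector to a nonzero scalar multiple of a distinct target basis vector, and so is a bijection.

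The main obstacle is the sign bookkeeping: one must verify that the prefactor $(-1)^{|g||m|}$ is exactly what is needed to make $\lambda$ a morphism of \emph{superspaces} (not merely a linear map) while simultaneously ensuring that the images of matrix units are nonzero and land on distinct matrix units in the target. Once these signs are handled cleanly in the homogeneous case, the argument reduces to the standard ungraded one and the proposition follows.
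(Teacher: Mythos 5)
Your proof is correct: the parity check shows $\lambda$ is an even morphism, and the matrix-unit computation sends a basis of the source bijectively (up to nonzero signs) onto a basis of the target, which together with the equal dimension count gives the isomorphism. The paper itself omits the proof, remarking only that it is "almost identical to that in the case of vector spaces," and your argument is precisely that standard argument with the sign bookkeeping made explicit.
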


\begin{restatable}{corollary}{evaluationmapcolone}\label{cl:evaluationmapcolone}
	Let $M, N$ be finite-dimensional vector superspaces. Then a superspace morphism
	\[ \lambda_{M,N}: M^{*} \otimes N^{*} \to ( M \otimes N )^{*}, \]
	\[ \lambda_{M,N}(f \otimes g)(m \otimes n) := (-1)^{|g||m|} f(m) g(n), \]
	for all $f \in M^{*},g \in N^{*}, m \in M, n \in N$, is an isomorphism of finite-dimensional vector superspaces.		
\end{restatable}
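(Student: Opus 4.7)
The plan is to derive this corollary as a direct specialization of Proposition \ref{pr:evaluationmap} by taking $M' = N' = \Bbbk$. With this choice, $\mathrm{Hom}(M,\Bbbk) = M^{*}$ and $\mathrm{Hom}(N,\Bbbk) = N^{*}$ by definition, so the proposition produces a superspace isomorphism
\[
\lambda : M^{*} \otimes N^{*} \longrightarrow \mathrm{Hom}(M \otimes N,\, \Bbbk \otimes \Bbbk), \qquad \lambda(f \otimes g)(m \otimes n) = (-1)^{|g||m|} f(m) \otimes g(n).
\]

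Next, I would use the canonical multiplication isomorphism $\Bbbk \otimes \Bbbk \to \Bbbk$ (equivalently, the map $\upsilon_{\Bbbk,\Bbbk}$ from Remark \ref{rm:one}, which is a superspace isomorphism concentrated in even degree) to identify $\mathrm{Hom}(M \otimes N,\, \Bbbk \otimes \Bbbk)$ with $\mathrm{Hom}(M \otimes N,\, \Bbbk) = (M \otimes N)^{*}$. Post-composing $\lambda$ with this identification sends $(-1)^{|g||m|} f(m) \otimes g(n)$ to the scalar $(-1)^{|g||m|} f(m) g(n)$, which is precisely the formula for $\lambda_{M,N}$ in the corollary. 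Since a composition of superspace isomorphisms is a superspace isomorphism, the map $\lambda_{M,N}$ inherits this property.

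There is essentially no obstacle to overcome: once Proposition \ref{pr:evaluationmap} is in hand, the only thing to verify is that the induced identification $\mathrm{Hom}(M \otimes N,\Bbbk \otimes \Bbbk) \cong (M \otimes N)^{*}$ is compatible with the $\mathbb{Z}_{2}$-grading and carries the prescribed sign conventions, both of which are immediate because the scalar field $\Bbbk$ is placed in even degree. Finite-dimensionality is preserved automatically since it was assumed in the hypothesis of the proposition and is inherited by duals.
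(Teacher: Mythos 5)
Your proof is correct and follows exactly the paper's argument: specialize Proposition \ref{pr:evaluationmap} to $M'=N'=\Bbbk$ and post-compose with the canonical isomorphism $\Bbbk\otimes\Bbbk\to\Bbbk$ from Remark \ref{rm:one} to land in $(M\otimes N)^{*}$. Nothing further is needed.
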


\begin{restatable}{corollary}{evaluationmapcoltwo}\label{cl:evaluationmapcoltwo}
	Let $\{M_h\}_{h \in [n]}$ be a family of finite-dimensional vector superspaces, where $[n]=\{1,2,...,n\}$. Then we have for
all $f_h \in M_h^{*}$ , $m_h \in M_h$, where $h \in [n]$, a superspace morphism
	\[ \lambda_{M_1,M_2,...M_n}: M_1^{*} \otimes M_2^{*} \otimes ... \otimes M_n^{*} \to (M_1 \otimes M_2 \otimes ... \otimes
M_n)^{*}, \]
	\[ \lambda_{M_1,M_2,...M_n} ( f_1 \otimes f_2 \otimes ... \otimes f_n ) ( m_1 \otimes m_2 \otimes ... \otimes m_n ) =
(-1)^{\sum_{i=2}^{n} \sum_{j=1}^{i-1} |f_{i}| |m_{j}|} \prod_{h=1}^{n} f_{h}(m_{h}) \]
	is an isomorphism of finite-dimensional vector superspaces.
\end{restatable}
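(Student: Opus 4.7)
The plan is to proceed by induction on $n$, using Corollary \ref{cl:evaluationmapcolone} both as the base case $n=2$ and as the outer ingredient in the inductive step. The idea is to peel off one tensor factor at a time, so that the $n$-fold evaluation map is recognized as an iterated application of the binary evaluation map, each step of which is already known to be an isomorphism of finite-dimensional superspaces.

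Concretely, for the inductive step I would define
\[ \lambda_{M_1,\ldots,M_n} := \lambda_{M_1 \otimes \cdots \otimes M_{n-1},\, M_n} \circ \bigl(\lambda_{M_1,\ldots,M_{n-1}} \otimes id_{M_n^{*}}\bigr). \]
The right-hand factor is an isomorphism by the induction hypothesis (tensored with the identity, which is clearly an isomorphism of finite-dimensional superspaces), and the left-hand factor is an isomorphism by Corollary \ref{cl:evaluationmapcolone} applied to $M := M_1 \otimes \cdots \otimes M_{n-1}$ and $N := M_n$. Hence the composite is an isomorphism of finite-dimensional superspaces, and it only remains to verify that it coincides with the stated formula on pure tensors.

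The main (and essentially only) technical point is the sign bookkeeping. Evaluating the composite on $(f_1 \otimes \cdots \otimes f_n)$ against $(m_1 \otimes \cdots \otimes m_n)$, the outer map contributes a Koszul sign $(-1)^{|f_n|\sum_{j=1}^{n-1}|m_j|}$ from Corollary \ref{cl:evaluationmapcolone}, while the induction hypothesis contributes $(-1)^{\sum_{i=2}^{n-1}\sum_{j=1}^{i-1}|f_i||m_j|}$ applied to the first $n-1$ factors. The tensor-product morphism $\lambda_{M_1,\ldots,M_{n-1}} \otimes id_{M_n^{*}}$ introduces no additional sign because $|id_{M_n^{*}}|=0$, so multiplying the two contributions yields exactly
\[ (-1)^{\sum_{i=2}^{n}\sum_{j=1}^{i-1}|f_i||m_j|} \prod_{h=1}^{n} f_h(m_h), \]
which is the claimed formula.

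The hard part is nothing deep, only the sign tracking: one must be sure that no hidden Koszul factor is introduced by the canonical identifications $(A \otimes B) \otimes C \cong A \otimes (B \otimes C)$ used to regroup $M_1^{*} \otimes \cdots \otimes M_n^{*}$ as $(M_1^{*} \otimes \cdots \otimes M_{n-1}^{*}) \otimes M_n^{*}$, and similarly for the $M_i$. Since associativity of the tensor product of superspaces is sign-free and $id_{M_n^{*}}$ is even, these identifications are invisible at the level of signs, and the induction closes cleanly.
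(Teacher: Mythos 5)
Your argument is correct and is essentially the paper's own proof: the same induction on $n$, the same peeling off of the last tensor factor via $\lambda_{M_1,\ldots,M_n} = \lambda_{M_1\otimes\cdots\otimes M_{n-1},\,M_n}\circ(\lambda_{M_1,\ldots,M_{n-1}}\otimes id_{M_n^{*}})$ (the paper writes the outer map as $\nu_{k,k}\circ\lambda$, which is the same thing by Corollary \ref{cl:evaluationmapcolone}), and the same sign computation combining $(-1)^{|f_n|\sum_{j=1}^{n-1}|m_j|}$ with the inductive sign. Your remark that $id_{M_n^{*}}$ being even prevents any extra Koszul factor is the only point the paper leaves implicit, and it is handled correctly.
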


\begin{remark}\label{rm:chiisom}
	Define an isomorphism of vector spaces:
	\[ \chi: \Bbbk \to \Bbbk^{*}, \]
	\[ \chi(a)(b) := ab, \]
	for all $a,b \in \Bbbk$.
	The inverse is
	\[ \chi^{-1}: \Bbbk^{*} \to \Bbbk, \]
	\[ \chi^{-1} ( f ) := f(1_{\Bbbk}), \]
	for all $f \in \Bbbk^{*}$.
\end{remark}

\begin{remark}
	Define a superspace morphism of evaluation $\rho_{V^{*},V}$ for vector superspaces $V^{*}$ and $V$, where $\rho_{V^{*},V}:
V^{*} \otimes V \to \Bbbk$, $\rho_{V^{*},V}(f \otimes x) = f(x)$, for all  $f \in V^{*}, \; x \in V$.
\end{remark}

\begin{restatable}{proposition}{HopfDualStruct}\label{pr:HopfDualStructl}
	Let $H=(H,\mu,\eta,\Delta,\epsilon,S)$ be a Hopf superalgebra with antipode $S$. Then the dual vector superspace
	\[ H^*=(H^{*},\Delta_{H}^{*} \circ \lambda_{H,H},\epsilon_{H}^{*} \circ \chi,\lambda_{H,H}^{-1} \circ \mu_{H}^{*},\chi^{-1}
\circ \eta^{*},S^{*}) \]
	is a Hopf superalgebra.
\end{restatable}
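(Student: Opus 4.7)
The plan is to transport each axiom defining a Hopf superalgebra from $H$ to $H^*$ by dualizing and then using the isomorphisms $\lambda_{H,H}$, $\lambda_{H,H,H}$, $\chi$ of Corollary \ref{cl:evaluationmapcolone}, Corollary \ref{cl:evaluationmapcoltwo}, and Remark \ref{rm:chiisom} to convert duals of tensor products into tensor products of duals. Since all these isomorphisms are natural and intertwine the braiding with its dual up to the Koszul sign already built into $\lambda$, the super signs will take care of themselves once the identifications are made.

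First I would check that $(H^{*},\Delta_{H}^{*}\circ\lambda_{H,H},\epsilon_{H}^{*}\circ\chi)$ is a superalgebra. Associativity is obtained by dualizing coassociativity $(\mathrm{id}\otimes\Delta)\circ\Delta=(\Delta\otimes\mathrm{id})\circ\Delta$; applying $(-)^{*}$ and precomposing with $\lambda_{H,H,H}$ gives exactly the associativity diagram for $\Delta_{H}^{*}\circ\lambda_{H,H}$ after one verifies the naturality square relating $\lambda_{H,H,H}$ to $\lambda_{H,H}\otimes\mathrm{id}$ and $\mathrm{id}\otimes\lambda_{H,H}$. The unit axiom is dualized from the counit axiom in the same way, using $\chi$ to identify $\Bbbk^{*}$ with $\Bbbk$. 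Symmetrically, I would prove that $(H^{*},\lambda_{H,H}^{-1}\circ\mu_{H}^{*},\chi^{-1}\circ\eta^{*})$ is a supercoalgebra by dualizing the associativity and unit axioms of $H$; Lemma \ref{commutatorEps} is not needed here because we are dualizing global identities, not generator-wise ones.

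Next I would establish the bialgebra compatibility, namely that $\Delta_{H^{*}}$ is a morphism of superalgebras. This is equivalent to the identity
\[
\lambda_{H,H}^{-1}\circ(\Delta_{H}\circ\mu_{H})^{*}=(\Delta_{H^{*}}\otimes\Delta_{H^{*}})\circ\mu_{H^{*}}^{\otimes 2}\text{-style}
\]
which one obtains by dualizing the bialgebra compatibility of $H$, $\Delta_{H}\circ\mu_{H}=(\mu_{H}\otimes\mu_{H})\circ(\mathrm{id}\otimes\tau_{H,H}\otimes\mathrm{id})\circ(\Delta_{H}\otimes\Delta_{H})$, and then inserting $\lambda_{H,H,H,H}$ four times. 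The crucial point is that $\lambda_{H,H}$ intertwines $\tau_{H^{*},H^{*}}$ with $\tau_{H,H}^{*}$: indeed for $f,g$ and $x,y$ of homogeneous parity, both sides of $\lambda_{H,H}(\tau_{H^{*},H^{*}}(f\otimes g))(x\otimes y)=\tau_{H,H}^{*}(\lambda_{H,H}(f\otimes g))(x\otimes y)$ collapse to the same monomial sign $(-1)^{|f||g|+|f||x|+|g||y|+|x||y|}$. Once this sign lemma is in hand, the bialgebra compatibility for $H^{*}$ is a purely formal consequence of the one for $H$, and the counit/unit compatibilities dualize trivially.

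Finally, the antipode axiom $\mu_{H}\circ(S_{H}\otimes\mathrm{id})\circ\Delta_{H}=\eta_{H}\circ\epsilon_{H}=\mu_{H}\circ(\mathrm{id}\otimes S_{H})\circ\Delta_{H}$ dualizes in a single step, giving the corresponding identity for $S_{H}^{*}$ with respect to $\mu_{H^{*}}$ and $\Delta_{H^{*}}$ after the usual $\lambda$-translation. The main obstacle I anticipate is the sign bookkeeping in the compatibility step: one must verify carefully that the Koszul signs produced by dualizing the braiding $\tau_{H,H}$ in $H^{\otimes 4}$ coincide with those introduced by the multi-factor $\lambda_{H,H,H,H}$. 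All other steps are diagram-chases that commute with duality in a routine way.
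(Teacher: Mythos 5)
Your proposal is correct and is essentially the paper's own argument: the paper also obtains each axiom for $H^{*}$ by dualizing the corresponding axiom of $H$ and transporting it through $\lambda_{H,H}$, $\lambda_{H,H,H}$ and $\chi$, merely carrying out the diagram chases element-wise in Sweedler notation with explicit Koszul signs (e.g.\ associativity of $\mu_{H^{*}}$ from coassociativity of $\Delta_{H}$, the antipode identity from $\sum_{(a)}S(a')a''=\epsilon(a)1_{H}$). Your key sign lemma, that $\lambda_{H,H}$ intertwines $\tau_{H^{*},H^{*}}$ with $\tau_{H,H}^{*}$, is exactly the sign cancellation the paper verifies by hand in the compatibility step, so the two proofs differ only in level of abstraction.
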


Endow a vector space $\mathrm{Hom}(A,B)$ with a structure of a vector superspace:
\[ \mathrm{Hom}(A,B)_{\alpha} = \{ f \in A | f(A_{\beta}) \subset B_{\alpha+\beta}, \beta \in \mathbb{Z}_{2} \} \]
for all $\alpha \in \mathbb{Z}_{2}$.

\begin{restatable}{proposition}{Alglinfunct}
	Let $A=(A,\mu_{A},\eta_{A},\Delta_{A},\epsilon_{A})$, $B=(B,\mu_{B},\eta_{B},\Delta_{B},\epsilon_{B})$ be superbialgebras.
Consider a vector superspace  $\mathrm{Hom}(A,B)$. Then $\mathrm{Hom}(A,B)$ is a superbialgebra
	\[ \mathrm{Hom}(A,B) = ( \mathrm{Hom}(A,B), *, \eta_{B} \circ \epsilon_{A}) \]
	for all $f,g \in \mathrm{Hom}(A,B)$
	\[ f * g = \mu_{B} \circ ( f \otimes g ) \circ \Delta_{A}. \]
\end{restatable}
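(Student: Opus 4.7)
The plan is to verify the three structural axioms of a superalgebra in order: first that the convolution product $*$ is a homogeneous bilinear map of superspaces, then that it is associative, and finally that $\eta_B \circ \epsilon_A$ is a two-sided unit. (Note that only an algebra structure is given in the statement, so only the superalgebra axioms need to be checked.)

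First I would check homogeneity. For homogeneous $f,g \in \mathrm{Hom}(A,B)$, the map $f*g = \mu_B \circ (f \otimes g) \circ \Delta_A$ is a composition of three $\mathbb{Z}_2$-graded maps: $\Delta_A$ has degree $0$ since $A$ is a superbialgebra, $\mu_B$ has degree $0$ by the same reason, and using the convention $(f\otimes g)(a_1 \otimes a_2) = (-1)^{|g||a_1|} f(a_1)\otimes g(a_2)$ one checks that $f\otimes g$ has degree $|f|+|g|$ as a map $A\otimes A \to B\otimes B$. Hence $|f*g| = |f|+|g|$.

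Next, associativity. I would write out both $(f*g)*h$ and $f*(g*h)$ as compositions $A \to A\otimes A \to A\otimes A\otimes A \to B \otimes B \otimes B \to B$ and compare them. Using coassociativity of $\Delta_A$, namely $(\Delta_A \otimes id_A)\circ \Delta_A = (id_A \otimes \Delta_A)\circ \Delta_A$, the two iterated comultiplications coincide; using associativity of $\mu_B$, the two iterated multiplications coincide; and the interchange law (functoriality of the super tensor product) together with Proposition~\ref{pr:evaluationmap} guarantees that $(f\otimes g)\otimes h$ and $f\otimes (g\otimes h)$ agree as a single triple tensor $f\otimes g\otimes h\colon A\otimes A\otimes A \to B\otimes B\otimes B$. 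Putting these three observations together yields $(f*g)*h = f*(g*h)$.

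For the unit axiom, compute $(\eta_B \circ \epsilon_A) * f = \mu_B \circ ((\eta_B\circ\epsilon_A)\otimes f)\circ \Delta_A = \mu_B \circ (\eta_B\otimes id_B)\circ (\epsilon_A\otimes id_A)\circ \cdots \circ \Delta_A$. By the counit axiom for $A$ (so that $(\epsilon_A\otimes id_A)\circ\Delta_A = \upsilon_{\Bbbk,A}^{-1}$) and the unit axiom for $B$ (so that $\mu_B\circ(\eta_B\otimes id_B)\circ\upsilon_{\Bbbk,B}^{-1} = id_B$), one obtains $f$. The right identity $f*(\eta_B\circ\epsilon_A)=f$ is analogous, using the symmetric counit and unit identities.

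The principal technical obstacle is bookkeeping of Koszul signs, since sign factors enter whenever one evaluates $f\otimes g$ on an element of $A\otimes A$ or pushes a morphism past another in a tensor product. However, in this proof every tensor of morphisms acts on a single element of the form $\Delta_A(a)$ or $(\Delta_A\otimes id_A)\Delta_A(a)$, so the signs introduced on the two sides of each identity are identical and cancel; no new subtlety beyond careful use of the super-interchange law $\lambda$ of Proposition~\ref{pr:evaluationmap} is required.
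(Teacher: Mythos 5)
Your proposal is correct and follows essentially the same route as the paper: verify that $*$ is homogeneous of degree $|f|+|g|$, derive associativity from coassociativity of $\Delta_A$ plus associativity of $\mu_B$ (with both sides reducing to the same triple tensor $f\otimes g\otimes h$ evaluated on the iterated comultiplication, so the Koszul signs agree), and obtain the unit law from the counit axiom of $A$ and the unit axiom of $B$. The paper simply carries this out explicitly in Sweedler notation with the signs written out, and, like you, only verifies the superalgebra axioms for $(\mathrm{Hom}(A,B),*,\eta_B\circ\epsilon_A)$.
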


\begin{restatable}{proposition}{HopfQuotient}
	\label{prHopfQuotient}
	Let $H=(H,\mu,\eta,\Delta,\epsilon,S)$ be a Hopf superalgebra, $I$ - a $\mathbb{Z}_{2}$-graded Hopf ideal in $H$, that is
$I$ is a $\mathbb{Z}_{2}$-graded ideal in superalgebra $H$, $\Delta(I) \subset I \otimes H + H \otimes I$, $\epsilon(I)=0$ and
$S(I) \subset I$. Then a quotient $H/I$ is a Hopf superalgebra. Moreover, a canonical map $\pi : H \to H/I$ is a Hopf
superalgebra morphism.
\end{restatable}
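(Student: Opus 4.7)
The plan is to transport each piece of Hopf superalgebra structure from $H$ to $H/I$ through the canonical projection $\pi$, verifying well-definedness by means of the four defining properties of the ideal $I$ (graded, bi-ideal compatibility with $\Delta$, $\epsilon$, $S$), and then to deduce the axioms on $H/I$ from those on $H$ by exploiting the surjectivity of $\pi$ and $\pi\otimes\pi$, $\pi\otimes\pi\otimes\pi$.

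First I would put the superalgebra structure on $H/I$. Because $I$ is $\mathbb{Z}_2$-graded, $H/I$ inherits the grading $(H/I)_i = H_i/I_i$ and $\pi$ is a degree-preserving linear map. Since $I$ is a two-sided ideal, the multiplication $\bar\mu : H/I \otimes H/I \to H/I$ determined by $\bar\mu\circ(\pi\otimes\pi) = \pi\circ\mu$ is well defined, and the unit $\bar\eta = \pi\circ\eta$ is tautologically well defined. Associativity and the unit axiom pass to the quotient by surjectivity of $\pi\otimes\pi$ and $\pi\otimes\pi\otimes\pi$.

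Next I define the remaining structure maps as factorizations through $\pi$. For comultiplication, I aim to define $\bar\Delta:H/I\to H/I\otimes H/I$ by $\bar\Delta\circ\pi = (\pi\otimes\pi)\circ\Delta$; this requires $\ker\pi\subset\ker((\pi\otimes\pi)\circ\Delta)$, and indeed
\[ (\pi\otimes\pi)\bigl(\Delta(I)\bigr) \subset (\pi\otimes\pi)(I\otimes H + H\otimes I) = 0, \]
so $\bar\Delta$ exists uniquely as a superspace morphism. The counit $\bar\epsilon:H/I\to\Bbbk$ is defined by $\bar\epsilon\circ\pi = \epsilon$, well defined by $\epsilon(I)=0$. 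The antipode $\bar S:H/I\to H/I$ is defined by $\bar S\circ\pi = \pi\circ S$, well defined by $S(I)\subset I = \ker\pi$. Each of these is a superspace (degree $0$) morphism because its lift to $H$ is.

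Then I verify coassociativity, the counit axiom, the bialgebra compatibility, and the antipode axiom on $H/I$. Each reduces, by precomposing with the surjection $\pi$ (resp.\ $\pi\otimes\pi$, $\pi\otimes\pi\otimes\pi$), to the corresponding identity in $H$; for example, coassociativity follows from
\[ (\bar\Delta\otimes id)\circ\bar\Delta\circ\pi = (\pi\otimes\pi\otimes\pi)\circ(\Delta\otimes id)\circ\Delta = (\pi\otimes\pi\otimes\pi)\circ(id\otimes\Delta)\circ\Delta = (id\otimes\bar\Delta)\circ\bar\Delta\circ\pi. \]
The fact that $\bar\Delta$ is a superalgebra morphism for the graded tensor product structure is automatic: the Koszul signs in the multiplication on $H/I\otimes H/I$ are the images of those in $H\otimes H$ under the graded map $\pi\otimes\pi$, so the identity $\Delta\circ\mu = (\mu\otimes\mu)\circ(id\otimes\tau\otimes id)\circ(\Delta\otimes\Delta)$ descends unchanged. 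Taking all of this together, $(H/I,\bar\mu,\bar\eta,\bar\Delta,\bar\epsilon,\bar S)$ is a Hopf superalgebra, and the defining relations $\bar\mu\circ(\pi\otimes\pi)=\pi\circ\mu$, $\bar\eta = \pi\circ\eta$, $(\pi\otimes\pi)\circ\Delta = \bar\Delta\circ\pi$, $\bar\epsilon\circ\pi=\epsilon$, $\bar S\circ\pi=\pi\circ S$ state exactly that $\pi$ is a Hopf superalgebra morphism.

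The main technical point is the well-definedness of $\bar\Delta$ from the hypothesis $\Delta(I)\subset I\otimes H + H\otimes I$; everything else is a routine diagram chase combined with the same signed-tensor manipulations already established in Proposition \ref{pr:invAlg} and the discussion preceding it.
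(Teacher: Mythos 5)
Your proposal is correct and follows essentially the same route as the paper: both define all structure maps on $H/I$ by factoring through $\pi$, derive well-definedness of $\bar\Delta$, $\bar\epsilon$, $\bar S$ from the three Hopf-ideal conditions (the paper makes the key step $\ker(\pi\otimes\pi)=I\otimes H+H\otimes I$ explicit via the first isomorphism theorem, which is exactly what your computation $(\pi\otimes\pi)(I\otimes H+H\otimes I)=0$ uses), and then descend every axiom from $H$ by surjectivity of $\pi$ and its tensor powers. The paper merely carries out the final verifications element-by-element in Sweedler notation with explicit Koszul signs, whereas you argue diagrammatically; the content is the same.
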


It is easy to prove that
\begin{restatable}{lemma}{ComultProp}
	Let $H=(H,\Delta,\epsilon)$ be a supercoalgebra. Then
	\[ \sum_{(a),(a^{'}),(a^{''})} (a^{'})^{'} \otimes (a^{'})^{''} \otimes (a^{''})^{'} \otimes (a^{''})^{''} =
\sum_{(a),(a^{''}),((a^{''})^{'})} a^{'} \otimes ((a^{''})^{'})^{'} \otimes ((a^{''})^{'})^{''} \otimes (a^{''})^{''} \]
	for all $a \in H$.
\end{restatable}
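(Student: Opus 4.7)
The plan is to recognize both sides of the claimed identity as the same iterated coproduct $\Delta^{(3)}\colon H\to H^{\otimes 4}$ obtained by applying $\Delta$ three times, and then deduce the equality from coassociativity. First I would unpack the Sweedler notation: the left-hand side is exactly what $(\Delta\otimes\Delta)\circ\Delta$ produces when evaluated at $a$, while the right-hand side is the result of applying $\Delta$ to $a$, then $\Delta$ to the second tensor factor, and finally $\Delta$ to the (new) second factor, i.e.\ $(id_H\otimes\Delta\otimes id_H)\circ(id_H\otimes\Delta)\circ\Delta(a)$.

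Next I would reduce both compositions to a common normal form. For the left-hand side, observe that $(\Delta\otimes\Delta)=(id_H\otimes id_H\otimes\Delta)\circ(\Delta\otimes id_H)$ as maps $H\otimes H\to H^{\otimes 4}$; precomposing with $\Delta$ and then invoking coassociativity $(\Delta\otimes id_H)\circ\Delta=(id_H\otimes\Delta)\circ\Delta$ gives
\[(\Delta\otimes\Delta)\circ\Delta=(id_H\otimes id_H\otimes\Delta)\circ(id_H\otimes\Delta)\circ\Delta.\]
For the right-hand side, I would push coassociativity into the second tensor slot: since $(\Delta\otimes id_H)\circ\Delta=(id_H\otimes\Delta)\circ\Delta$, applying $id_H\otimes(-)$ yields
\[(id_H\otimes\Delta\otimes id_H)\circ(id_H\otimes\Delta)=(id_H\otimes id_H\otimes\Delta)\circ(id_H\otimes\Delta),\]
so the right-hand side equals the same composite $(id_H\otimes id_H\otimes\Delta)\circ(id_H\otimes\Delta)\circ\Delta$ as the left-hand side.

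The computation is essentially bookkeeping in Sweedler notation; there is no substantive obstacle. The only mild subtlety inherent to the super setting is making sure no Koszul signs creep in when we factor $\Delta\otimes\Delta$ as an iterated application of the two elementary tensor-product maps, but since $\Delta$ is an even morphism the sign factor $(-1)^{|\Delta||\,\cdot\,|}$ is identically $1$, and coassociativity in the supercoalgebra carries no extra sign either. Thus the lemma reduces to a direct corollary of coassociativity, which I would state and conclude.
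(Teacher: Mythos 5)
Your proof is correct: both sides are indeed $(\mathrm{id}_H\otimes \mathrm{id}_H\otimes\Delta)\circ(\mathrm{id}_H\otimes\Delta)\circ\Delta(a)$, the sign bookkeeping is trivial because $\Delta$ is even, and the identity follows from coassociativity exactly as you describe. The paper omits the proof entirely (prefacing the lemma with ``It is easy to prove that''), and your argument is the standard one the authors evidently had in mind.
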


\begin{restatable}{proposition}{Sprop}\label{PropS}
	Let $H=(H,\mu,\eta,\Delta,\epsilon,S)$ be a Hopf superalgebra. The following statements hold:
	
	1.
	the antipode $S$ is unique;
	
	2. \label{it:Spropfirst}
	\[ S \circ \mu = \mu \circ \tau_{H,H} \circ( S \otimes S); \]
	
	3. \label{it:Spropsecond}
	\[ S \circ \eta = \eta; \]
	
	4. \label{it:Spropthird}
	\[ \epsilon \circ S = \epsilon; \]
	
	5. \label{it:Spropfourth}
	\[ \tau_{H,H} \circ (S \otimes S) \circ \Delta = \Delta \circ S; \]
	
	6.
	all finite-dimensional Hopf superalgebras have a bijective antipode.
\end{restatable}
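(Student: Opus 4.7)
The plan is to work consistently in the convolution superbialgebra $\mathrm{Hom}(A,B)$ from the preceding proposition. The antipode axiom says precisely that $S * \mathrm{id}_H = \mathrm{id}_H * S = \eta \circ \epsilon$ in $\mathrm{Hom}(H,H)$, which identifies $S$ as the two-sided convolution inverse of $\mathrm{id}_H$; since inverses in a monoid are unique, item~(1) is immediate. For~(3), evaluating $S * \mathrm{id}_H = \eta \circ \epsilon$ at $1_H$ and using $\Delta(1_H) = 1_H \otimes 1_H$ together with $\epsilon(1_H) = 1_{\Bbbk}$ gives $S(1_H) \cdot 1_H = 1_H$, hence $S \circ \eta = \eta$. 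For~(4), I would pass to $\mathrm{Hom}(H,\Bbbk)$, whose convolution unit is $\epsilon$ itself, and compute
\[ (\epsilon \circ S) * \epsilon = \epsilon \circ \mu \circ (S \otimes \mathrm{id}_H) \circ \Delta = \epsilon \circ \eta \circ \epsilon = \epsilon; \]
since also $(\epsilon \circ S) * \epsilon = \epsilon \circ S$ by the unit property, this forces $\epsilon \circ S = \epsilon$.

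Items~(2) and~(5) are proved by symmetric convolution-inverse arguments in auxiliary Hom-algebras. For~(2), work in $\mathrm{Hom}(H \otimes H, H)$, with $H \otimes H$ carrying the tensor-product supercoalgebra structure of the first proposition of this section. I would exhibit $S \circ \mu$ as a left and $\mu \circ \tau_{H,H} \circ (S \otimes S)$ as a right convolution inverse of $\mu$: the left identity reduces to $\mu \circ (S \otimes \mathrm{id}_H) \circ \Delta \circ \mu = \eta \circ \epsilon \circ \mu = \eta_H \circ \epsilon_{H \otimes H}$ after using that $\mu$ and $\epsilon$ are algebra morphisms, and the right identity unpacks similarly. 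The key subtlety, and the main place where the supersignature enters, is verifying that the Koszul signs $(-1)^{|a''||b'|}$ appearing in $\Delta_{H \otimes H}$ are exactly compensated by the signs from $\tau_{H,H}$ inside the proposed inverse. Uniqueness of convolution inverses then gives~(2). Item~(5) is proved dually in $\mathrm{Hom}(H, H \otimes H)$, showing $\Delta \circ S$ and $\tau_{H,H} \circ (S \otimes S) \circ \Delta$ are both convolution inverses of $\Delta$, this time exploiting that $\Delta$ is an algebra morphism so that $\mu_{H \otimes H} \circ (\Delta \otimes \Delta) = \Delta \circ \mu$.

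The main obstacle is item~(6). The plan is to implement the super-analogue of the Larson--Sweedler argument: endow $H^{*}$ with the Hopf-superalgebra structure of Proposition~\ref{pr:HopfDualStructl}, produce a nonzero left integral $\Lambda \in H^{*}$ by a dimension count on the space of integrals, and use $\Lambda$ together with the reflexivity isomorphism $H \cong H^{**}$ supplied by Corollary~\ref{cl:evaluationmapcolone} and the relation $S^{**} = S$ to construct an explicit two-sided inverse of $S$. The delicate part is that all the super-signs in the integral identities and in the canonical dualities must be tracked carefully to ensure the classical skeleton carries over; once that is settled, injectivity of $S$ gives bijectivity by finite-dimensionality.
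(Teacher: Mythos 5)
Your treatment of items (1)--(5) is correct and is essentially the paper's own argument: the paper likewise identifies $S$ as the two-sided convolution inverse of $\mathrm{id}_H$ for uniqueness, proves (2) by exhibiting $S\circ\mu$ as a left and $\mu\circ\tau_{H,H}\circ(S\otimes S)$ as a right convolution inverse of $\mu$ in $\mathrm{Hom}(H\otimes H,H)$, proves (5) dually in $\mathrm{Hom}(H,H\otimes H)$, and handles (3) and (4) by direct evaluation. Your observation that the Koszul signs from $\Delta_{H\otimes H}$ must cancel against those from $\tau_{H,H}$ is exactly the computation the paper carries out, so nothing is missing there. One small misattribution: the reflexivity isomorphism $ev_H\colon H\to H^{**}$ is constructed in the proof of Proposition \ref{pr:Dualcopop}, not in Corollary \ref{cl:evaluationmapcolone}.

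Item (6) is where your route genuinely diverges from the paper's, and where there is a real gap. You propose the Larson--Sweedler strategy, but the step ``produce a nonzero left integral $\Lambda\in H^{*}$ by a dimension count on the space of integrals'' does not stand on its own: the space of left integrals in $H^{*}$ is cut out by roughly $\dim(H)^{2}$ linear conditions on $\dim(H)$ unknowns, so a naive count gives nothing. The standard (and, as far as I know, only) way to get nontriviality and one-dimensionality of that space is the fundamental theorem of Hopf modules applied to a suitable Hopf-(super)module structure on $H^{*}$, a substantial result that is neither proved in this paper nor supplied in your sketch; as written, the existence of $\Lambda$ is the entire difficulty and it has been assumed. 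By contrast, the paper's proof of (6) is an elementary induction on $\dim(H)$: it checks that $\ker(S)$ is a ($\mathbb{Z}_2$-graded) Hopf ideal and $\mathrm{im}(S)$ a Hopf subsuperalgebra, and, assuming $\mathrm{im}(S)\neq H$, builds a projection $\pi\colon H\to\mathrm{im}(S)$ that it shows to be a convolution inverse of $S$ in $\mathrm{Hom}(H,H)$, forcing $\pi=\mathrm{id}_H$ and a contradiction. Your approach would buy more (existence and uniqueness of integrals, which have independent uses), but to make it a proof you would need to first establish the super analogue of the fundamental theorem of Hopf modules; otherwise you should fall back on an argument of the paper's elementary type.
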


Proof of the following result is almost identical to that in the case of vector spaces.

\begin{restatable}{proposition}{Sopcopisom}\label{Sopcopisoml}
	Let $H=(H,\mu,\eta,\Delta,\epsilon,S)$ be a Hopf superalgebra.
	
	$H^{op}$ is a Hopf superalgebra if and only if when $S$ is invertible as a superspace morphism.
	Analogically for $H^{cop}$. Moreover, $S_{H^{op}} = S_{H^{cop}} = S_{H}^{-1}$.
	
	\[ H^{op,cop}=(H,\mu^{op},\eta,\Delta^{op},\epsilon,S) \]
	is a Hopf superalgebra and $S:H \to H^{op,cop}$ is a Hopf superalgebra morphism. If $S$ is an superspace isomorphism with
the inverse $S^{-1}$, then $H$ and $H^{op,cop}$ are Hopf superalgebra isomorphisms. Moreover, they are isomorphic via $S$.
	
	If $S$ is an isomorphism of vector superspaces with inverse $S^{-1}$, then
	\[ H^{op}=(H,\mu^{op},\eta,\Delta,\epsilon,S^{-1}), H^{cop}=(H,\mu,\eta,\Delta^{op},\epsilon,S^{-1}) \]
	are isomorphic Hopf superalgebras, moreover, they are isomorphic via $S$.
\end{restatable}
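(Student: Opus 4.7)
The plan is to establish the $H^{op,cop}$ assertions first (where no invertibility hypothesis on $S$ is needed), and then to reduce the $H^{op}$ and $H^{cop}$ claims to convolution computations that lean on the antipode identities collected in Proposition \ref{PropS} together with Proposition \ref{pr:invAlg}.

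First I would show that $H^{op,cop}$ is a Hopf superalgebra with antipode $S$. By Proposition \ref{pr:invAlg} it is already a superbialgebra, so only the antipode axiom is at stake. On homogeneous elements a direct computation with signs gives $\tau_{H,H}\circ(S\otimes id)\circ\tau_{H,H}=id\otimes S$ (the two transpositions cancel because $S$ is even), whence
\[
\mu^{op}\circ(S\otimes id)\circ\Delta^{op}
\;=\;\mu\circ\tau_{H,H}\circ(S\otimes id)\circ\tau_{H,H}\circ\Delta
\;=\;\mu\circ(id\otimes S)\circ\Delta
\;=\;\eta\circ\epsilon,
\]
the last equality being the right-sided antipode axiom for $H$; the left-sided version is symmetric. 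To see that $S:H\to H^{op,cop}$ is a Hopf superalgebra morphism, compatibility with $\mu$ is property 2 of Proposition \ref{PropS}, compatibility with $\Delta$ is the rearrangement $\Delta^{op}\circ S=(S\otimes S)\circ\Delta$ of property 5, and preservation of unit and counit are properties 3 and 4. Bijectivity of $S$ (when assumed) then upgrades this to an isomorphism.

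For the $H^{op}$ assertion, the sufficiency direction uses the same sign trick to rewrite $\mu^{op}\circ(S^{-1}\otimes id)\circ\Delta=\mu\circ(id\otimes S^{-1})\circ\Delta^{op}$; substituting $\Delta^{op}=(S\otimes S)\circ\Delta\circ S^{-1}$ (property 5 after composing with $S^{-1}$ on the right) collapses this to $\mu\circ(S\otimes id)\circ\Delta\circ S^{-1}=\eta\epsilon\circ S^{-1}=\eta\epsilon$, the last step by $\epsilon\circ S^{-1}=\epsilon$ from property 4. The necessity direction is a cancellation argument in the convolution monoid $(\mathrm{Hom}(H,H),*_H,\eta\epsilon)$: if $T$ is an antipode for $H^{op}$, then applying $S$ to the $H^{op}$-antipode identity and invoking super-antimultiplicativity of $S$ (property 2) yields $(S\circ T)*_H S=\eta\epsilon$; since $id*_H S=\eta\epsilon$ as well and $S$ is two-sidedly convolution-inverse to $id$, right-cancellation forces $S\circ T=id$, while the symmetric computation gives $T\circ S=id$, so $T=S^{-1}$. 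The $H^{cop}$ claim follows by the analogous scheme with the roles of $\mu$ and $\Delta$ swapped.

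With both $H^{op}$ and $H^{cop}$ now known to be Hopf superalgebras when $S$ is invertible (each with antipode $S^{-1}$), I would conclude by verifying that $S:H^{op}\to H^{cop}$ is a Hopf morphism: multiplicativity is property 2 combined with the fact that $(S\otimes S)$ commutes with $\tau_{H,H}$ because $S$ is even; comultiplicativity is property 5 repackaged as $\Delta^{op}\circ S=(S\otimes S)\circ\Delta$; and compatibility of antipodes is the tautology $S\circ S^{-1}=S^{-1}\circ S=id$. Bijectivity of $S$ supplies the desired isomorphism. The main obstacle will be the cancellation step in the necessity direction of the $H^{op}$ case, together with the bookkeeping of super-signs throughout; everything else is a matter of assembling identities already proved in Propositions \ref{PropS} and \ref{pr:invAlg}.
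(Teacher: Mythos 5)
Your proof is correct and is exactly the standard argument the paper has in mind: the paper gives no proof of this proposition, remarking only that it is ``almost identical to that in the case of vector spaces,'' and your write-up is that classical argument (antipode axiom for $H^{op,cop}$ via $\tau_{H,H}\circ(S\otimes id)\circ\tau_{H,H}=id\otimes S$, sufficiency for $H^{op}$ via $\Delta^{op}=(S\otimes S)\circ\Delta\circ S^{-1}$, necessity via cancellation in the convolution monoid) carried out with the correct Koszul signs, leaning on Propositions \ref{pr:invAlg} and \ref{PropS} just as the paper's logical structure requires. The only point worth spelling out when writing it up in full is that the ``symmetric computation'' giving $T\circ S=id$ applies $T$ to the antipode identity of $H$ and cancels in the convolution monoid of $\mathrm{Hom}(H^{op},H^{op})$, which requires invoking the super-antimultiplicativity of $T$ as the antipode of $H^{op}$.
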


\begin{restatable}{lemma}{sH}
	\label{lemma:sH}
	Let $H$ be a superbialgebra, $S:H \to H^{op}$ - a superalgebra morphism. Suppose that $H$ is multiplicatively generated by
elements of a subset $X$ such that
	\[ \sum_{(x)} x' S(x'') = \epsilon(x)1_{H} = \sum_{(x)} S(x') x'' \]
	for all $x \in X$. Then $S$ is the antipode in $H$.
\end{restatable}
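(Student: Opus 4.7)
The plan is to define the subset $Y\subseteq H$ consisting of those $y \in H$ for which both antipode relations $\sum_{(y)} y'S(y'') = \epsilon(y)1_H$ and $\sum_{(y)} S(y')y'' = \epsilon(y)1_H$ hold, and then to show $Y = H$. The defining relations are linear in $y$, so $Y$ is automatically a linear subspace of $H$. By hypothesis $X\subseteq Y$, so it suffices to prove that $1_H \in Y$ and that $Y$ is closed under multiplication; since $H$ is generated as a superalgebra by $X$, this will force $Y=H$, and $S$ will then satisfy the antipode axiom on all of $H$.

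The element $1_H$ lies in $Y$ trivially: $\Delta(1_H) = 1_H \otimes 1_H$, and $S(1_H) = 1_H$ because $S$ is a superalgebra morphism, so both sides equal $1_H = \epsilon(1_H) 1_H$. The real work is closure under multiplication. For homogeneous $a, b \in Y$, I would expand $\sum_{(ab)} (ab)' S((ab)'')$ by applying two super-multiplicativity facts: the superbialgebra compatibility
\[ \Delta(ab) = \sum_{(a),(b)} (-1)^{|a''||b'|} a'b' \otimes a''b'', \]
and the identity $S(a''b'') = (-1)^{|a''||b''|} S(b'') S(a'')$, which follows from $S$ being a superalgebra morphism into $H^{op}$. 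These combine to produce an overall sign $(-1)^{|a''|(|b'|+|b''|)} = (-1)^{|a''||b|}$ on the term $a'b' S(b'') S(a'')$, where I use that $\Delta$ is grading-preserving so that $|b'|+|b''|=|b|$ for homogeneous $b$. Applying the hypothesis $\sum_{(b)} b' S(b'') = \epsilon(b)1_H$ to collapse the inner sum, and noting that $\epsilon(b)=0$ unless $|b|=0$ (so the leftover sign contributes nothing), reduces the expression to $\epsilon(b)\sum_{(a)} a' S(a'')$, and a second application of the hypothesis for $a$ yields $\epsilon(a)\epsilon(b)1_H = \epsilon(ab)1_H$. A symmetric computation handles $\sum_{(ab)} S((ab)')(ab)''$, establishing $ab\in Y$. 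Extending bilinearly in the two variables covers the inhomogeneous case.

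The main obstacle is disciplined Koszul-sign bookkeeping: the sign $(-1)^{|a''||b'|}$ coming from $\Delta(ab)$ and the sign $(-1)^{|a''||b''|}$ coming from $S:H\to H^{op}$ must be combined and then absorbed using the identity $|b'|+|b''|=|b|$ and the parity constraint $\epsilon(b)\ne 0 \Rightarrow |b|=0$. Once the signs have been reconciled, the argument is the standard classical proof that the antipode property propagates along algebra generators, and the conclusion $Y=H$ is immediate.
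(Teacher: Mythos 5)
Your proposal is correct and follows essentially the same route as the paper: both proofs reduce to showing that the two antipode identities propagate from $a$ and $b$ to the product $ab$, using $\Delta(ab)=\sum(-1)^{|a''||b'|}a'b'\otimes a''b''$ together with the $H^{op}$-morphism property of $S$, combining the Koszul signs into $(-1)^{|a||b'|}$ (resp.\ $(-1)^{|a''||b|}$) and discharging them via $\epsilon(x)\neq 0\Rightarrow|x|=0$. Your explicit framing via the subspace $Y$ (linearity, $1_H\in Y$, closure under multiplication) merely makes precise the paper's opening remark that it suffices to check products of generators.
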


\begin{restatable}{proposition}{HopfEqual}\label{HopfEquall}
	Let $H=(H,\mu,\eta,\Delta,\epsilon,S)$ be a Hopf superalgebra with an invertible as a superspace morphism antipode $S$. Then
Hopf superalgebras $(H^{op})^{*}$ and $(H^{*})^{cop}$ are equivalent, as well as $(H^{cop})^{*}$ and $(H^{*})^{op}$.
\end{restatable}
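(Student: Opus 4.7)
The plan is to show that in each of the two stated equivalences, the identity map on the underlying vector superspace $H^*$ \emph{is} the Hopf superalgebra isomorphism, and thereby reduce the claim to a coincidence of structure maps. Unpacking via Proposition \ref{pr:HopfDualStructl}, Proposition \ref{pr:invAlg}, and Proposition \ref{Sopcopisoml}, I would write out both sides of the first equivalence as
\[
(H^{op})^{*} = \bigl(H^{*},\, \Delta^{*}\!\circ\!\lambda_{H,H},\, \epsilon^{*}\!\circ\!\chi,\, \lambda_{H,H}^{-1}\!\circ\!(\mu^{op})^{*},\, \chi^{-1}\!\circ\!\eta^{*},\, (S^{-1})^{*}\bigr),
\]
\[
(H^{*})^{cop} = \bigl(H^{*},\, \Delta^{*}\!\circ\!\lambda_{H,H},\, \epsilon^{*}\!\circ\!\chi,\, (\lambda_{H,H}^{-1}\!\circ\!\mu^{*})^{op},\, \chi^{-1}\!\circ\!\eta^{*},\, (S^{*})^{-1}\bigr).
\]
The multiplication, unit, and counit agree on the nose. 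The identification of antipodes is an immediate two-line check: $(S^{-1})^{*}\circ S^{*}(f)=f\circ S\circ S^{-1}=f$ and symmetrically, so $S^{*}$ is invertible with inverse $(S^{-1})^{*}$.

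The central step is therefore the equality of comultiplications,
\[
\lambda_{H,H}^{-1}\circ(\mu^{op})^{*} \;=\; \tau_{H^{*},H^{*}}\circ\lambda_{H,H}^{-1}\circ\mu^{*},
\]
which is where the graded signs live. I would evaluate both sides on a homogeneous $f\in H^{*}$: writing $\lambda_{H,H}^{-1}(f\circ\mu)=\sum_i g_i\otimes h_i$, both sides are tested by pairing with an arbitrary homogeneous simple tensor $a\otimes b$ via $\lambda_{H,H}$. On the left, $\mu^{op}=\mu\circ\tau_{H,H}$ produces a factor $(-1)^{|a||b|}$ together with the $(-1)^{|h_i||b|}$ coming from the definition of $\lambda_{H,H}$; on the right, $\tau_{H^{*},H^{*}}$ contributes $(-1)^{|g_i||h_i|}$ and the second application of $\lambda_{H,H}$ contributes $(-1)^{|g_i||a|}$. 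Using that $g_i(b)$ and $h_i(a)$ vanish unless $|g_i|=|b|$ and $|h_i|=|a|$, both total signs reduce to $(-1)^{2|a||b|}=1$, so the two expressions coincide.

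The main obstacle is thus purely the Koszul sign bookkeeping in the display above; once it is dispatched, the identity map is a Hopf superalgebra morphism from $(H^{op})^{*}$ to $(H^{*})^{cop}$, and it is trivially bijective on the underlying vector superspace, giving the equivalence. The second equivalence $(H^{cop})^{*}\simeq (H^{*})^{op}$ is now obtained by either repeating the same calculation with the roles of multiplication and comultiplication exchanged, or, more economically, by applying the first equivalence to the Hopf superalgebra $H^{op,cop}$ of Proposition \ref{Sopcopisoml}: writing $H':=H^{op,cop}$, one has $(H')^{op}=H^{cop}$ and $(H')^{*}=(H^{*})^{op,cop}$ by the first equivalence applied \emph{and} by Proposition \ref{pr:invAlg}, from which $(H^{cop})^{*}\simeq ((H')^{*})^{cop}=((H^{*})^{op,cop})^{cop}=(H^{*})^{op}$ follows. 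Either way, no new arguments are required beyond the Koszul computation already performed.
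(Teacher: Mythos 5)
Your argument is essentially the paper's own proof: both identify the identity map of $H^{*}$ as the isomorphism, observe that multiplication, unit and counit coincide on the nose, reduce the first equivalence to the single Koszul-sign identity for the two comultiplications (killed by the observation that a homogeneous functional vanishes off its own degree), verify $(S^{*})^{-1}=(S^{-1})^{*}$ by the same two-line check, and handle the second equivalence by the symmetric computation on multiplications. The one caveat is your ``more economical'' derivation of $(H^{cop})^{*}\simeq (H^{*})^{op}$: the intermediate claim $(H^{op,cop})^{*}=(H^{*})^{op,cop}$ already requires knowing that the dual of $\Delta_{H}^{op}$ is $\mu_{H^{*}}^{op}$, which \emph{is} the multiplication identity of the second equivalence, so that shortcut is circular; the direct computation you offer as the first alternative (and which the paper carries out) is the one that closes the argument, and it is a genuinely new, if parallel, sign check rather than ``the computation already performed.''
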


\begin{remark}
	Let $\{ e_{i} \}_{i \in I}$ be a basis for a finite-dimensional Hopf superalgebra $H$, where $e_{0} = 1_{H}$. Define for
$H^{*}$ a dual basis $\{ e_{i}^{*} \}_{i \in I}$, such that $ e_{i}^{*} ( e_{j}) = \delta_{ij} $, where $i, j \in I$.
	Then we have for all $f,g \in H^{*}$
	\[ \mu_{H^{*}} ( f \otimes g ) = \sum_{i \in I, (e_{i})} (-1)^{|g||e_{i}^{'}|} f(e_{i}^{'}) g(e_{i}^{''}) e_{i}^{*}. \]
	
	As $ \Delta_{H^{*}} (f) = \sum_{i,j \in I} a_{ij} e_{i}^{*} \otimes e_{j}^{*}$, we have
	\[ \lambda_{H,H}(\Delta_{H^{*}} (f)) (e_{i} \otimes e_{j}) = f(e_{i}e_{j}) = (-1)^{|e_{i}||e_{j}|} a_{ij} \Rightarrow a_{ij}
= (-1)^{|e_{i}||e_{j}|} f(e_{i}e_{j}) . \] Therefore,
	\[ \Delta_{H^{*}} (f) = \sum_{i,j \in I} (-1)^{|e_{i}||e_{j}|} f(e_{i}e_{j}) e_{i}^{*} \otimes e_{j}^{*}.  \]
	
	We have for $(H^{op})^{*}$:
	\[ (H^{op})^{*}=(H^{*},\Delta_{H}^{*} \circ \lambda_{H,H},\epsilon_{H}^{*} \circ \chi,\lambda_{H,H}^{-1} \circ (\mu_{H}
\circ \tau_{H,H})^{*},\chi^{-1} \circ \eta^{*},(S^{-1})^{*}), \]
	\[ \Delta_{H^{*}}(f) = \sum_{(f)} f^{'} \otimes f^{''}, \; \Delta_{(H^{op})^{*}}(f) = \sum_{i,j \in I} a_{ij}^{'} e_{i}^{*}
\otimes e_{j}^{*}, \]
	\[ \lambda_{H,H}(\Delta_{(H^{op})^{*}}(f)) (a \otimes b) = \sum_{(f)} (-1)^{|a||b| + |f^{''}||b|} f^{'}(b) f^{''}(a), \]
	\[ \lambda_{H,H}(\Delta_{(H^{op})^{*}}(f)) (e_{i} \otimes e_{j}) =   (-1)^{|e_{i}||e_{j}|} f(e_{j}e_{i}) =
(-1)^{|e_{i}||e_{j}|} a_{ij}^{'} \Rightarrow a_{ij}^{'} = f(e_{j}e_{i}), \]
	\[ \Delta_{(H^{op})^{*}} (f) = \sum_{i,j \in I} f(e_{j}e_{i}) e_{i}^{*} \otimes e_{j}^{*}.  \]
	
	We have for $(H^{cop})^{*}$:
	\[ (H^{cop})^{*}=(H^{*},(\tau_{H,H} \circ \Delta_{H})^{*} \circ \lambda_{H,H},\epsilon_{H}^{*} \circ \chi,\lambda_{H,H}^{-1}
\circ \mu_{H}^{*},\chi^{-1} \circ \eta^{*},(S^{-1})^{*}), \]
	\[ \mu_{(H^{cop})^{*}} ( f \otimes g ) ( a ) = \sum_{(a)} (-1)^{|a^{'}||a^{''}| + |g||a^{''}|} f(a^{''})g(a^{'}). \]
	\[ \mu_{(H^{cop})^{*}} ( f \otimes g ) = \sum_{i \in I, (e_{i})} (-1)^{|e_{i}^{'}||e_{i}^{''}| + |g||e_{i}^{''}|}
f(e_{i}^{''}) g(e_{i}^{'}) e_{i}^{*}. \]
	
	We have for $(H^{op,cop})^{*}$:
	\[ (H^{op,cop})^{*}=(H^{*},(\tau_{H,H} \circ \Delta_{H})^{*} \circ \lambda_{H,H},\epsilon_{H}^{*} \circ
\chi,\lambda_{H,H}^{-1} \circ (\mu_{H} \circ \tau_{H,H})^{*},\chi^{-1} \circ \eta^{*},S^{*}). \]
\end{remark}

\begin{remark}\label{rm:antipodeisom}
	Let $H$ be a Hopf superalgebra with an invertible as a superspace morphism antipode $S$. Then from Propositions
\ref{pr:HopfDualStructl}, \ref{Sopcopisoml} and \ref{HopfEquall} it follows that
	\[ S^{*}: (H^{cop})^{*} \to (H^{op})^{*}, \; S^{*}: (H^{*})^{op} \to (H^{*})^{cop}, \; (H^{*})^{op} \cong (H^{*})^{cop}. \]
	\[ (S^{-1})^{*}: (H^{op})^{*} \to (H^{cop})^{*}, \; (S^{-1})^{*}: (H^{*})^{cop} \to (H^{*})^{op}, \; (H^{*})^{cop} \cong
(H^{*})^{op}. \]
\end{remark}

\begin{restatable}{proposition}{Dualcopop}\label{pr:Dualcopop}
	Let $H=(H,\mu,\eta,\Delta,\epsilon,S)$ be a finite-dimensional Hopf superalgebra with an invertible as a superspace morphism
antipode $S$. Then
	\[ (((H^{cop})^{*})^{op})^{*} \cong H. \]
\end{restatable}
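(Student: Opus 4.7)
The idea is to reduce the nested construction to a standard double-dual by peeling off the outer layers using the isomorphisms from the previous propositions.

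First, I would apply Proposition~\ref{HopfEquall} to rewrite the innermost layer: $(H^{cop})^{*}\cong (H^{*})^{op}$ as Hopf superalgebras (this is where the invertibility of $S$ is used, via Proposition~\ref{Sopcopisoml} and Remark~\ref{rm:antipodeisom}). Since isomorphism of finite-dimensional Hopf superalgebras is preserved under the $op$ construction (trivially) and under the dualization construction (by transposing the isomorphism, which is again a Hopf superalgebra morphism), we obtain
\[ (((H^{cop})^{*})^{op})^{*} \cong (((H^{*})^{op})^{op})^{*}. \]

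Second, I would collapse the iterated $op$. For any Hopf superalgebra $A=(A,\mu,\eta,\Delta,\epsilon,S_A)$ with invertible antipode, Proposition~\ref{Sopcopisoml} gives $A^{op}=(A,\mu^{op},\eta,\Delta,\epsilon,S_A^{-1})$. Iterating once more and using $\tau_{H,H}^{2}=\mathrm{id}$ (so $(\mu^{op})^{op}=\mu$) together with $(S_A^{-1})^{-1}=S_A$, we get $(A^{op})^{op}=A$ on the nose. Taking $A=H^{*}$, the target reduces to $(H^{*})^{*}$.

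The substance of the argument is then the canonical double-dual isomorphism $H\cong (H^{*})^{*}$ for a finite-dimensional Hopf superalgebra. I would define
\[ \iota_{H}: H \to (H^{*})^{*}, \quad \iota_{H}(h)(f) := (-1)^{|h||f|} f(h), \]
for homogeneous $h\in H$, $f\in H^{*}$, and verify that $\iota_{H}$ is a Hopf superalgebra morphism. The structure maps on $(H^{*})^{*}$ are obtained by applying Proposition~\ref{pr:HopfDualStructl} twice, and they nest two copies each of $\lambda$ (Corollary~\ref{cl:evaluationmapcolone}), $\chi$ (Remark~\ref{rm:chiisom}), and the graded transposition $\tau$. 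One checks compatibility of $\iota_{H}$ with $\mu$, $\eta$, $\Delta$, $\epsilon$, and $S$ by unpacking these formulas on pure tensors and evaluating against a basis. Bijectivity of $\iota_{H}$ follows from finite-dimensionality, since it is a linear isomorphism of the underlying superspaces.

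The main obstacle is the sign bookkeeping in this last step: each verification involves Koszul signs from two layers of $\lambda_{H,H}$ and several graded transpositions $\tau_{H,H}$, together with the evaluation sign $(-1)^{|h||f|}$ built into $\iota_{H}$. Once the conventions of Remark~\ref{rm:one} and Corollary~\ref{cl:evaluationmapcolone} are in place the signs are forced, but checking all five compatibilities systematically is the tedious part of the proof.
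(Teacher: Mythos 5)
Your proposal is correct, and it reaches the same evaluation map $h\mapsto\bigl(f\mapsto(-1)^{|h||f|}f(h)\bigr)$ that the paper uses, but it gets there by a genuinely different (more modular) route. The paper does \emph{not} reduce to the plain bidual: it writes out the structure maps of $(((H^{cop})^{*})^{op})^{*}$ explicitly as nested composites of $\lambda$, $\chi$, $\tau$ and the duals of $\mu_H$, $\Delta_H$, $S$, and then verifies directly that $ev_H$ intertwines all five structure maps of $H$ with those of the triple construction; the cancellation of the $cop$ and $op$ twists happens implicitly inside those sign computations. You instead peel the twists off first: Proposition~\ref{HopfEquall} identifies $(H^{cop})^{*}$ with $(H^{*})^{op}$ (in fact the paper's proof of that proposition shows the structure maps literally coincide, so no transposition of an isomorphism is even needed), and the involutivity $\tau_{H,H}^{2}=\mathrm{id}$ together with $(S^{-1})^{-1}=S$ collapses $((H^{*})^{op})^{op}$ to $H^{*}$ on the nose, leaving only the biduality $H\cong(H^{*})^{*}$ to check. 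What your approach buys is a cleaner separation of concerns — the only nontrivial verification left is the standard super-bidual isomorphism, with the $cop$/$op$ bookkeeping handled once by citing an earlier result — at the cost of relying on Propositions~\ref{Sopcopisoml} and \ref{HopfEquall}; the paper's direct computation is self-contained but buries the same Koszul-sign cancellations inside a single long verification. The remaining sign checks you defer are indeed forced by the conventions and go through exactly as in the paper's computation.
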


\subsection{Braided Hopf Superalgebras}

We introduce the conception of braided superbialgebras.

\begin{definition}
	Let $V$ be a vector superspace. A linear $\mathbb{Z}_2$-graded automorphism $c$ of vector superspace $V \otimes V$ is said
to be an $R$-matrix if it is a solution of the Yang-Baxter equation
	\[ (c \otimes id_V)(id_V \otimes c)(c \otimes id_V) = (id_V \otimes c)(c \otimes id_V)(id_V \otimes c), \]
	that holds in the automorphism group of vector superspace $V \otimes V \otimes V$.
\end{definition}

\begin{restatable}{proposition}{rmatrixgeneration}
	If $c \in Aut(V,V)$ is an $R$-matrix, then so are $\lambda c, \; c^{-1}$ and $\tau_{V,V} \circ c \circ \tau_{V,V}$ where
$\lambda$ is any non-zero element of $\Bbbk$.
\end{restatable}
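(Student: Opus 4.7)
The plan is to handle the three cases separately, saving the conjugated version for last. For $\lambda c$, each factor of $c$ on either side of the Yang--Baxter equation contributes one factor of $\lambda$, so both sides are scaled by $\lambda^{3}$; the equation is preserved, and since $\lambda \ne 0$ the map $\lambda c$ is still an automorphism of the superspace $V \otimes V$.

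For $c^{-1}$ I would simply take inverses of both sides of the Yang--Baxter equation for $c$. Using that the inverse of a product reverses the order, together with $(f \otimes id_V)^{-1} = f^{-1} \otimes id_V$ and $(id_V \otimes f)^{-1} = id_V \otimes f^{-1}$, the inverse of the left-hand side $(c \otimes id_V)(id_V \otimes c)(c \otimes id_V)$ is $(c^{-1} \otimes id_V)(id_V \otimes c^{-1})(c^{-1} \otimes id_V)$, and similarly for the right-hand side. Equal operators have equal inverses, so this yields the Yang--Baxter equation for $c^{-1}$.

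For $c' := \tau_{V,V} \circ c \circ \tau_{V,V}$ my plan is to conjugate the Yang--Baxter equation for $c$ by the triple graded swap $\Psi := (\tau_{V,V} \otimes id_V)(id_V \otimes \tau_{V,V})(\tau_{V,V} \otimes id_V)$ on $V^{\otimes 3}$. Since the graded swap $\tau_{V,V}$ is itself an $R$-matrix, $\Psi$ also equals $(id_V \otimes \tau_{V,V})(\tau_{V,V} \otimes id_V)(id_V \otimes \tau_{V,V})$, and because $\tau_{V,V}^{2} = id_{V \otimes V}$ we get $\Psi^{2} = id_{V^{\otimes 3}}$. The key intermediate identities are
\[ \Psi \circ (c \otimes id_V) \circ \Psi = id_V \otimes c', \qquad \Psi \circ (id_V \otimes c) \circ \Psi = c' \otimes id_V, \]
which I would verify by evaluating both sides on a homogeneous simple tensor $a \otimes b \otimes d$ and tracking the Koszul signs produced by each swap; the cross-terms in $|a|$ (respectively $|d|$) collapse because $c$ is $\mathbb{Z}_{2}$-homogeneous of degree zero, so the total parities $|a|(|b|+|d|)$ cancel modulo $2$. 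Conjugating the Yang--Baxter equation $(c\otimes id_V)(id_V\otimes c)(c\otimes id_V) = (id_V\otimes c)(c\otimes id_V)(id_V\otimes c)$ by $\Psi$ then exchanges the two sides and turns the whole equation into the Yang--Baxter equation for $c'$.

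The main obstacle is the third case: the Koszul signs generated by commuting $\tau_{V,V}$ past a graded linear map have to be tracked carefully, and the easiest pitfall is to forget that both factors being swapped contribute independent signs. I would therefore record and verify the two identities $\Psi c_{12} \Psi = c'_{23}$ and $\Psi c_{23} \Psi = c'_{12}$ by direct computation on a generic homogeneous tensor before quoting them in the conjugation argument; once those are in place, the remainder of the proof is purely formal manipulation of the Yang--Baxter equation.
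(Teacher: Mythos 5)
Your proposal is correct and follows essentially the same route as the paper: scaling both sides by $\lambda^{3}$, inverting the Yang--Baxter identity for $c^{-1}$, and, for $\tau_{V,V}\circ c\circ\tau_{V,V}$, conjugating by the triple swap $\sigma=(\tau_{V,V}\otimes id_{V})(id_{V}\otimes\tau_{V,V})(\tau_{V,V}\otimes id_{V})$ after establishing exactly the two intermediate identities $\sigma(id_{V}\otimes c)\sigma^{-1}=(\tau_{V,V}\circ c\circ\tau_{V,V})\otimes id_{V}$ and $\sigma(c\otimes id_{V})\sigma^{-1}=id_{V}\otimes(\tau_{V,V}\circ c\circ\tau_{V,V})$ by a Koszul-sign computation on homogeneous tensors. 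The paper verifies the same identities and then performs the same formal conjugation, so no further changes are needed.
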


\begin{definition}\label{quasicoco}
	Let $(H,\mu,\eta,\Delta,\epsilon)$ be a superbialgebra. We call it quasi-cocommutative if there exists an invertible even
element $R$, called universal $R$-matrix, of the superalgebra $H \otimes H$, such that we have for all $x \in H$
	\begin{equation}\label{eq:Runiversal}
	\Delta^{op}(x) = R \Delta(x) R^{-1}.
	\end{equation}
\end{definition}

If we set $R = \sum_{i \in I} s_i \otimes t_i$, where $I$ is an index set, then relation \ref{eq:Runiversal} can be expressed,
for all $x \in H$, by
\[ \sum_{(x),i \in I} (-1)^{|x'|(|x^{''}| + |s_i|)} x^{''} s_i \otimes x^{'} t_i = \sum_{(x),i \in I} (-1)^{|t_i||x'|} s_ix^{'}
\otimes t_i x^{''}. \]

\begin{definition}\label{twistedquasicoco}
	A quasi-cocommutative superbialgebra $(H,\mu,\eta,\Delta,\epsilon,R)$ or a quasi-cocommutative Hopf superalgebra
$(H,\mu,\eta,\Delta,\epsilon,S,S^{-1},R)$ is braided, if the universal $R$-matrix $R$ satisfies the two relations:
	\begin{equation}\label{eq:RuniversalBr1}
	(\Delta \otimes id_H)(R) = R_{13}R_{23},
	\end{equation}
	\begin{equation}\label{eq:RuniversalBr2}
	(id_H \otimes \Delta)(R) = R_{13}R_{12}.
	\end{equation}
\end{definition}

If $R = \sum_{i \in I} s_i \otimes t_i$, relations \ref{eq:RuniversalBr1} and \ref{eq:RuniversalBr2} can be expressed
respectively as
\[ \sum_{i \in I,(s_i)} (s_i)^{'} \otimes (s_i)^{''} \otimes t_i = \sum_{i,j \in I} (-1)^{|t_i||s_j|} s_i \otimes s_j \otimes
t_it_j, \]
\[ \sum_{i \in I,(t_i)} s_i \otimes (t_i)^{'} \otimes (t_i)^{''} = \sum_{i,j \in I} (-1)^{|t_i|(|s_j|+|t_j|)} s_is_j \otimes t_j
\otimes t_i = \sum_{i,j \in I} s_is_j \otimes t_j \otimes t_i. \]

\begin{restatable}{proposition}{newquasicom}\label{pr:newquasicom}
	1.
	If $(H,\mu,\eta,\Delta,\epsilon,S,S^{-1},R)$ is a quasi-cocommutative Hopf superalgebra whose antipode $S$ is bijective,
then so are
	\[ (H,\mu^{op},\eta,\Delta,\epsilon,S^{-1},S,R^{-1}), (H,\mu,\eta,\Delta^{op},\epsilon,S^{-1},S,R^{-1}), \]
	\[ (H,\mu,\eta,\Delta^{op},\epsilon,S^{-1},S,\tau_{H,H}(R)). \]
	
	2.
	If, furthermore, a Hopf superalgebra $(H,\mu,\eta,\Delta,\epsilon,S,S^{-1},R)$ is braided, then so is
	\[ (H,\mu,\eta,\Delta^{op},\epsilon,S^{-1},S,\tau_{H,H}(R)). \]
\end{restatable}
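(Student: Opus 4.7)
The Hopf superalgebra structures of $H^{op}$, $H^{cop}$, and $H^{op,cop}$ (with antipodes $S^{-1}$, $S^{-1}$, $S$ respectively) are already provided by Proposition~\ref{Sopcopisoml}, so in each of the three tuples appearing in Part~1 only the quasi-cocommutativity relation remains to be verified, and for Part~2 only the two braided axioms (\ref{eq:RuniversalBr1}) and (\ref{eq:RuniversalBr2}). Invertibility and evenness of the candidates $R^{-1}$ and $\tau_{H,H}(R)$ are immediate: $R$ is even and invertible, and $\tau_{H,H}$ is an invertible parity-preserving map.

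The plan rests on two structural observations, each a direct Koszul-sign computation. First, the flip $\tau_{H,H}:H\otimes H\to H\otimes H$ is an automorphism of \emph{superalgebras} (not merely of superspaces), so it sends products to products and inverses to inverses. Second, $H^{op}\otimes H^{op}$ coincides with $(H\otimes H)^{op}$ as a superalgebra; hence for the even element $R\in H\otimes H$ and any $X\in H\otimes H$, the identity $R^{-1}\cdot_{\mu^{op}}X\cdot_{\mu^{op}}R=R\,X\,R^{-1}$ holds in the ordinary product.

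Granted these, the three cases of Part~1 follow quickly. For $(H^{op},\Delta,R^{-1})$ the opposite comultiplication is unchanged (it depends only on the superspace structure), so the required intertwining reduces to the original $\Delta^{op}(x)=R\Delta(x)R^{-1}$. For $(H^{cop},\Delta^{op},R^{-1})$ the opposite of $\Delta^{op}$ is $\Delta$, and the relation to check, $\Delta(x)=R^{-1}\Delta^{op}(x)R$, is literally the inverse of the original equation. For $(H^{cop},\Delta^{op},\tau_{H,H}(R))$, apply $\tau_{H,H}$ to both sides of $\Delta^{op}(x)=R\Delta(x)R^{-1}$; since $\tau_{H,H}\circ\Delta=\Delta^{op}$, $\tau_{H,H}\circ\Delta^{op}=\Delta$, and $\tau_{H,H}$ is an algebra morphism, one obtains $\Delta(x)=\tau_{H,H}(R)\,\Delta^{op}(x)\,\tau_{H,H}(R)^{-1}$, the required intertwining.

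For Part~2, the braided axioms for $(H^{cop},\tau_{H,H}(R))$ read
\[ (\Delta^{op}\otimes id_H)(\tau_{H,H}(R))=\tau_{H,H}(R)_{13}\tau_{H,H}(R)_{23}, \]
\[ (id_H\otimes\Delta^{op})(\tau_{H,H}(R))=\tau_{H,H}(R)_{13}\tau_{H,H}(R)_{12}. \]
The approach is to transport the original axioms (\ref{eq:RuniversalBr1}) and (\ref{eq:RuniversalBr2}) along suitable flip maps on $H^{\otimes 3}$, which by the same sign computation as above are themselves superalgebra automorphisms. Each such flip converts a $\Delta$ into $\Delta^{op}$, an $R$ into $\tau_{H,H}(R)$, and rearranges the leg labels on the right-hand side into the configurations $13,23$ and $13,12$. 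The main obstacle is bookkeeping of Koszul signs on $H^{\otimes 3}$, where three gradings and two products interact; once the relevant three-factor flips are confirmed to be algebra automorphisms, the signs collapse coherently and both braided axioms drop out.
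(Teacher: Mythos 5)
Your proposal is correct and follows essentially the same route as the paper: part 1 reduces each case to the original intertwining relation (reading $R^{-1}\cdot_{op}X\cdot_{op}R$ as $RXR^{-1}$, inverting the relation, or pushing it through $\tau_{H,H}$), and part 2 transports the two braided axioms along flips of $H^{\otimes 3}$. The only difference is presentational — you package the Koszul-sign bookkeeping into the general facts that $\tau_{H,H}$ is a superalgebra automorphism and $H^{op}\otimes H^{op}=(H\otimes H)^{op}$, whereas the paper carries out the same sign computations element-wise on Sweedler expansions.
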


\begin{restatable}{proposition}{propuniversalmatrix}\label{Req}
	Let $(H,\mu,\eta,\Delta,\epsilon,R)$ be a braided superbialgebra.
	
	1.
	Then the universal $R$-matrix $R$ satisfies the equation
	\[ R_{12}R_{13}R_{23}=R_{23}R_{13}R_{12}, \]
	and we have
	\[ ((\eta \circ \epsilon) \otimes id_H)(R)= 1_{H} \otimes 1_{H} =(id_H \otimes (\eta \circ \epsilon))(R). \]
	
	2.
	If, moreover, $H$ has an invertible antipode, then
	\[ (S \otimes id_H)(R)=R^{-1}=(id_H \otimes S^{-1})(R), \]
	\[ (S \otimes S)(R)=R. \]
\end{restatable}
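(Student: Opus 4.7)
The plan is to prove the assertions in order: the counit identity, then the QYBE, then the three antipode identities.

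For the counit identity I would apply $\epsilon\otimes id_H\otimes id_H$ to both sides of $(\Delta\otimes id_H)(R)=R_{13}R_{23}$. The left-hand side reduces to $R$ via the counit axiom $(\epsilon\otimes id_H)\circ\Delta=id_H$. Since $\epsilon$ is a superalgebra morphism, the right-hand side equals $(1_H\otimes\alpha)\cdot R$, where $\alpha:=(\epsilon\otimes id_H)(R)$. Invertibility of $R$ then forces $\alpha=1_H$, so $((\eta\circ\epsilon)\otimes id_H)(R)=1_H\otimes 1_H$. The mirror identity follows analogously from $(id_H\otimes\Delta)(R)=R_{13}R_{12}$.

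For the QYBE I would compute $(\Delta^{op}\otimes id_H)(R)$ in two ways. Since $\Delta^{op}=\tau_{H,H}\circ\Delta$ and the swap acts as a superalgebra morphism on the first two tensor slots, $(\Delta^{op}\otimes id_H)(R)=(\tau_{H,H}\otimes id_H)(R_{13}R_{23})=R_{23}R_{13}$. Applying the quasi-cocommutativity relation $\Delta^{op}(x)=R\Delta(x)R^{-1}$ slot-wise gives $(\Delta^{op}\otimes id_H)(R)=R_{12}(\Delta\otimes id_H)(R)R_{12}^{-1}=R_{12}R_{13}R_{23}R_{12}^{-1}$. Equating the two expressions and right-multiplying by $R_{12}$ produces $R_{12}R_{13}R_{23}=R_{23}R_{13}R_{12}$.

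For part 2 I would first establish $(S\otimes id_H)(R)=R^{-1}$ by applying $\mu\circ(S\otimes id_H)$ to the first two tensor slots of $(\Delta\otimes id_H)(R)=R_{13}R_{23}$: the left-hand side collapses, via the antipode axiom $\mu\circ(S\otimes id_H)\circ\Delta=\eta\circ\epsilon$ together with the counit identity, to $1_H\otimes 1_H$; the right-hand side equals $(S\otimes id_H)(R)\cdot R$, giving the claim. For $(id_H\otimes S^{-1})(R)=R^{-1}$ I would invoke Proposition~\ref{pr:newquasicom}: the tuple $(H,\mu,\eta,\Delta^{op},\epsilon,S^{-1},S,\tau_{H,H}(R))$ is a braided Hopf superalgebra with antipode $S^{-1}$ and $R$-matrix $\tau_{H,H}(R)$, so the already-proven first identity applied there reads $(S^{-1}\otimes id_H)(\tau_{H,H}(R))=\tau_{H,H}(R)^{-1}$; since $\tau_{H,H}$ is a superalgebra automorphism of $H\otimes H$ (a short Koszul-sign check), its right side equals $\tau_{H,H}(R^{-1})$, and applying $\tau_{H,H}$ once more to both sides, simplified using $|s_i|=|t_i|$, yields $(id_H\otimes S^{-1})(R)=R^{-1}$. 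Finally, applying $id_H\otimes S$ to $R^{-1}=(id_H\otimes S^{-1})(R)$ gives $R=(id_H\otimes S)(R^{-1})$, and substituting $R^{-1}=(S\otimes id_H)(R)$ yields $(S\otimes S)(R)=R$.

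The principal technical obstacle will be tracking Koszul signs in the super-tensor-product algebras $H\otimes H$ and $H\otimes H\otimes H$; what keeps the bookkeeping manageable is the fact that $\tau_{H,H}$ is a genuine superalgebra isomorphism (so it commutes with inversion), combined with the parity constraint $|s_i|=|t_i|$ forced by $R$ being even, which collapses most of the residual signs.
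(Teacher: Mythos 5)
Your proposal is correct and follows essentially the same route as the paper: the counit identities by hitting $(\Delta\otimes id_H)(R)=R_{13}R_{23}$ with $\epsilon$ in the first slot and cancelling the invertible $R$, the QYBE by combining quasi-cocommutativity with $(\tau_{H,H}\otimes id_H)(R_{13}R_{23})=R_{23}R_{13}$, the identity $(S\otimes id_H)(R)=R^{-1}$ from the antipode axiom together with part 1, and $(id_H\otimes S^{-1})(R)=R^{-1}$ by transporting the first antipode identity through the braided structure $(H,\mu,\eta,\Delta^{op},\epsilon,S^{-1},S,\tau_{H,H}(R))$ of Proposition \ref{pr:newquasicom}, using that $\tau_{H,H}$ is a superalgebra automorphism so $\tau_{H,H}(R)^{-1}=\tau_{H,H}(R^{-1})$. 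The only difference is presentational: the paper writes the QYBE as a single chain of equalities starting from $R_{12}R_{13}R_{23}$ rather than as two evaluations of $(\Delta^{op}\otimes id_H)(R)$.
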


If $R = \sum_{i \in I} s_i \otimes t_i$, relations from Proposition \ref{Req} are equivalent to
\[ R_{12}R_{13}R_{23} = ( \sum_{i \in I} s_{i} \otimes t_{i} \otimes 1_H ) ( \sum_{j \in I} s_{j} \otimes 1_{H} \otimes t_{j} )
( \sum_{k \in I} 1_{H} \otimes s_{k} \otimes t_{k} ) = \]
\[ = \sum_{i,j,k \in I} (-1)^{|t_i||s_j|+|t_j||s_k|} s_is_j \otimes t_i s_k \otimes t_j t_k, \]
\[ R_{23}R_{13}R_{12} = ( \sum_{i \in I} 1_H \otimes s_{i} \otimes t_{i} ) ( \sum_{j \in I} s_{j} \otimes 1_{H} \otimes t_{j} )
( \sum_{k \in I} s_{k} \otimes t_{k} \otimes 1_{H} ) = \]
\[ = \sum_{i,j,k \in I} (-1)^{ |s_{k}| |s_{i}| } s_js_k \otimes s_i t_k \otimes t_i t_j. \]

\[ \sum_{i,j,k \in I} (-1)^{|t_i||s_j|+|t_j||s_k|} s_is_j \otimes t_i s_k \otimes t_j t_k = \sum_{i,j,k \in I} (-1)^{ |s_{k}|
|s_{i}| } s_js_k \otimes s_i t_k \otimes t_i t_j. \]

\[ \sum_{i \in I} \epsilon(s_i) 1_H \otimes t_i = \sum_{i \in I} s_i \otimes \epsilon(t_i) 1_H = 1_{H} \otimes 1_{H}, \]
\[ R^{-1} = \sum_{i \in I} S(s_i) \otimes t_i = \sum_{i \in I} s_i \otimes S^{-1} (t_i), \]
\[ \sum_{i \in I} S(s_i) \otimes S(t_i) = R. \]

\begin{definition}
	Let $A$ be a superalgebra. A left $A$-module is a vector superspace $M$ with a superspace morphism $\alpha_{A,M}: A \otimes
M \to M$, such that for all $a,b \in A, x,y \in M$
	\[ \alpha_{A,M}( a \otimes (x + y)) = \alpha_{A,M}(a \otimes x) + \alpha_{A,M}(a \otimes y), \]
	\[ \alpha_{A,M}(( a + b ) \otimes x) = \alpha_{A,M}(a \otimes x) + \alpha_{A,M}(b \otimes x), \]
	\[ \alpha_{A,M}((ab) \otimes x) = \alpha_{A,M}(a \otimes ( \alpha_{A,M}(b \otimes x) )), \]
	\[ \alpha_{A,M}(1_{A} \otimes x)=x. \]
	
	A right $A$-module is a vector superspace $M$  with a superspace morphism $\beta_{M,A}: M \otimes A \to M$, such that for
all $a,b \in A$, $x,y \in M$
	\[ \beta_{M,A}( (x + y) \otimes a) = \beta_{M,A}(x \otimes a) + \beta_{M,A}(y \otimes a), \]
	\[ \beta_{M,A} (x \otimes ( a + b )) = \beta_{M,A}(x \otimes a) + \beta_{M,A}(x \otimes b), \]
	\[ \beta_{M,A}(x \otimes (ab)) = \beta_{M,A}( \beta_{M,A}(x \otimes a) \otimes b), \]
	\[ \beta_{M,A}(x \otimes 1_{A})=x. \]
	
	Let $M, \; N$ be left (right) $A$-modules. A map of $A$-modules $\phi: M \to N$ is a superspace morphism such that
$\phi(ax)=a \phi(x)$ ($\phi(xa)=\phi(x) a$) for all $a \in A, \; x \in M$.
\end{definition}

\begin{remark}
	As a map $\alpha_{A,M}$ is even, it follows that $|\alpha_{A,M}(a \otimes x)| = |a| + |x|$.
	As a map $\beta_{M,A}$ is even, it follows that $|\beta_{M,A}(a \otimes x)| = |a| + |x|$.
\end{remark}

\begin{remark}
	Let $(H,\mu,\eta,\Delta,\epsilon)$ be a superbialgebra, $U, \; V$ be a left (right) $H$-modules. A superalgebra morphism
$\Delta:H \to H \otimes H$ allows to endow a vector superspace $U \otimes V$ with a structure of left (right) $H$-module.
	For left $H$-modules:
	\[ \alpha_{H,U \otimes V}( a \otimes (u \otimes v) ) = ((\alpha_{H,U} \otimes \alpha_{H,V}) \circ (id_{H} \otimes \tau_{H,U}
\otimes id_{V})) (\Delta(a) \otimes (u \otimes v)) = \]
	\[ = \sum_{(a)} (-1)^{|u||a^{''}|} \alpha_{H,U} (a^{'} \otimes u) \otimes \alpha_{H,V} (a^{''} \otimes v); \]
	
	for right $H$-modules:
	\[ \beta_{U \otimes V,H}((u\otimes v) \otimes a)= ((\beta_{U,H} \otimes \beta_{V,H}) \circ (id_{U} \otimes \tau_{V,H}
\otimes id_{H})) ((u\otimes v) \otimes \Delta(a))= \]
	\[ = \sum_{(a)}(-1)^{|v||a^{'}|} (\beta_{U,H} (u \otimes a^{'}) \otimes (\beta_{V,H}(v \otimes a^{''}) \]
	for all $a \in H, \; u \in U, \; v \in V$.
\end{remark}

\begin{remark}
	Let $(H,\mu,\eta,\Delta,\epsilon)$ be a superbialgebra. A counit endows a vector superspace $V$ with structure of trivial
left $H$-module by
	\[ \alpha_{H,V}(a \otimes x) = \epsilon(a)x \]
	for all $a \in H, x \in V$.	
	Also a counit endows a vector superspace $V$ with structure of trivial right $H$-module by
	\[ \beta_{V,H}(x \otimes a) = x\epsilon(a) \]
	for all $a \in H, x \in V$. It is obvious, $\alpha_{H,V} = \beta_{V,H}$.
\end{remark}

Let $(H,\mu,\eta,\Delta,\epsilon,S,R)$ be a braided Hopf superalgebra. Consider vector superspaces $V$ and $W$ which are left
$H$-modules. If $R = \sum_{i \in I} s_i \otimes t_i$, then we have for all $v \in V, w \in W$ define a superspace morphism
\[ c_{V,W}^R: V \otimes W \to W \otimes V, \]
by
\[ c_{V,W}^R(v \otimes w) = \tau_{V,W}( ((\alpha_{H,V} \otimes \alpha_{H,W}) \circ (id_H \otimes \tau_{H,V} \otimes id_{W})) (R
\otimes (v \otimes w)) ) = \]
\[ = \sum_{i \in I} (-1)^{|s_i| ( |t_i| + |w| ) + |w||v|} \alpha_{H,W} (t_i \otimes w) \otimes \alpha_{H,V} (s_i \otimes v). \]

\begin{restatable}{lemma}{isomctouniveralmatrix}\label{lm:isomctouniveralmatrix}
	$c_{V,W}^R$ is an isomorphism of vector superspaces. It's inverse is a supespace morphism
	\[ (c_{V,W}^R)^{-1}: W \otimes V \to V \otimes W, \]
	\[ (c_{V,W}^R)^{-1}(w \otimes v) = ((\alpha_{H,V} \otimes \alpha_{H,W}) \circ (id_H \otimes \tau_{H,V} \otimes id_{W}))
(R^{-1} \otimes (\tau_{W,V}(w \otimes v))) = \]
	\[ = \sum_{i \in I} (-1)^{|w||v| + |t_i||v|} \alpha_{H,V} (S(s_i) \otimes v) \otimes \alpha_{H,W} (t_i \otimes w) = \]
	\[ = \sum_{i \in I} (-1)^{|w||v| + |t_i||v|} \alpha_{H,V} (s_i \otimes v) \otimes \alpha_{H,W} (S^{-1}(t_i) \otimes w). \]
	The last equation holds only when $H$ has an invertible antipode $S$.
\end{restatable}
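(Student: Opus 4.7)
The strategy is to verify the formula by direct computation: I check that each of the two stated expressions for $(c_{V,W}^R)^{-1}$, when composed with $c_{V,W}^R$ on both sides, yields the identity. The guiding principle is that $c_{V,W}^R$ is built from (i) the action of the element $R$ on the tensor product $V\otimes W$ through the natural module structure and (ii) the graded flip $\tau_{V,W}$; its inverse should therefore come from the analogous construction with $R^{-1}$ in place of $R$ and $\tau_{W,V}$ in place of $\tau_{V,W}$. The two alternative expressions for the inverse then correspond to the two descriptions of $R^{-1}$ supplied by Proposition \ref{Req}: namely $R^{-1}=(S\otimes id_H)(R)=\sum_i S(s_i)\otimes t_i$ and $R^{-1}=(id_H\otimes S^{-1})(R)=\sum_i s_i\otimes S^{-1}(t_i)$. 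The second of these requires $S$ to be invertible, which is precisely the hypothesis attached to the final displayed equation.

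First I would unwind the definition of the candidate $(c_{V,W}^R)^{-1}$, tracking the Koszul signs produced by $\tau_{W,V}$, $\tau_{H,V}$ and the tensor product of the even action maps $\alpha_{H,V}\otimes\alpha_{H,W}$ acting on a four-fold tensor; using the evenness of $R$ (so that $|s_i|=|t_i|$ on each homogeneous summand), these signs collapse to the exponent $|w||v|+|t_i||v|$ displayed in the statement. The alternative expression follows by substituting $\sum_i s_i\otimes S^{-1}(t_i)$ for $R^{-1}$ and using $|S^{-1}(t_i)|=|t_i|$. Then I would compose: applying $c_{V,W}^R$ to each summand of $(c_{V,W}^R)^{-1}(w\otimes v)$ produces a double sum indexed by two copies of $I$. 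Using the module axiom $\alpha_{H,V}(ab\otimes v)=\alpha_{H,V}(a\otimes\alpha_{H,V}(b\otimes v))$ in each tensor factor, this double sum becomes the action of
\[ \sum_{i,j}(-1)^{|t_i||s_j|}\,S(s_i)s_j\otimes t_it_j \;=\; \bigl((S\otimes id_H)(R)\bigr)\cdot R \;=\; R^{-1}R \;=\; 1_H\otimes 1_H \]
on $v\otimes w$, after which the whole expression collapses to $w\otimes v$. The opposite composition $(c_{V,W}^R)^{-1}\circ c_{V,W}^R$ is handled identically, invoking $RR^{-1}=1_H\otimes 1_H$.

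The main obstacle is the sign bookkeeping. The exponents already present in the definition of $c_{V,W}^R$ and in the proposed inverse, together with the Koszul signs arising when the even tensor $\alpha_{H,V}\otimes\alpha_{H,W}$ and the swaps $\tau_{H,V}$, $\tau_{V,W}$ are evaluated on four-fold tensors, must combine modulo the evenness relation $|s_i|=|t_i|$ to produce exactly the sign $(-1)^{|t_i||s_j|}$ dictated by multiplication in $H\otimes H$. Beyond this computational care, everything else is a routine application of the module axioms and the identity $(S\otimes id_H)(R)=R^{-1}=(id_H\otimes S^{-1})(R)$ from Proposition \ref{Req}.
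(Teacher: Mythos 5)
Your proposal is correct and follows essentially the same route as the paper: both verify invertibility by composing the candidate inverse with $c_{V,W}^R$ in each order, using the module axiom to reduce the double sum to the action of $R^{-1}R$ (respectively $RR^{-1}$, which the paper rewrites as $\tau_{H,H}(R)\tau_{H,H}(R)^{-1}$) on the tensor, and both justify the two alternative formulas via $(S\otimes id_H)(R)=R^{-1}=(id_H\otimes S^{-1})(R)$ from Proposition \ref{Req}. The sign bookkeeping you flag as the main obstacle is indeed the bulk of the paper's computation, and your exponents match.
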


Now we can proof

\begin{restatable}{proposition}{isompropuniverslamatrix}\label{pr:isompropuniverslamatrix}
	Under the previous hypotheses,
	
	1.
	the map $c_{V,W}^R$ is an superspace isomorphism of left $H$-modules, and
	
	2.
	for any triple $(U,V,W)$ of left $H$-modules, we have
	\[ c_{U \otimes V,W}^{R} = (c_{U,W}^{R} \otimes id_V)(id_{U} \otimes c_{V,W}^{R}),
	c_{U,V \otimes W}^{R} = (id_{V} \otimes c_{U,W}^{R})(c_{U,V}^{R} \otimes id_{W}) \]
	and
	\[ (c_{V,W}^{R} \otimes id_{U})(id_{V} \otimes c_{U,W}^{R})(c_{U,V}^{R} \otimes id_{W} ) = (id_{W} \otimes
c_{U,V}^{R})(c_{U,W}^{R} \otimes id_{V})(id_{U} \otimes c_{V,W}^{R}). \]
\end{restatable}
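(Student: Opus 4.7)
The plan is to prove part 1 by first noting that invertibility of $c_{V,W}^R$ is already established in Lemma \ref{lm:isomctouniveralmatrix}, so only $H$-linearity remains. For this I fix $a \in H$ and expand both $c_{V,W}^R(a \cdot (v \otimes w))$ and $a \cdot c_{V,W}^R(v \otimes w)$ using the formula for $c_{V,W}^R$, writing $R = \sum_i s_i \otimes t_i$ and $\Delta(a) = \sum_{(a)} a' \otimes a''$, together with the diagonal $H$-action on the tensor product. After collecting the Koszul signs from $\tau_{V,W}$ and from the graded tensor-product action, the two expressions differ only in the order of multiplication of $s_i, t_i$ with $a', a''$, and matching them is precisely the quasi-cocommutativity identity
\[ \sum_{i,(a)} (-1)^{|t_i||a'|} s_i a' \otimes t_i a'' = \sum_{i,(a)} (-1)^{|a'|(|a''|+|s_i|)} a'' s_i \otimes a' t_i \]
from Definition \ref{quasicoco}. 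Substituting this identity turns one side into the other.

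For part 2, the first hexagon $c_{U\otimes V, W}^R = (c_{U,W}^R \otimes id_V)(id_U \otimes c_{V,W}^R)$ is a direct consequence of the braided-structure axiom $(\Delta \otimes id_H)(R) = R_{13}R_{23}$ from (\ref{eq:RuniversalBr1}): on the left, the action of $R$ on $(U \otimes V) \otimes W$ is, by construction of the tensor-product module structure, governed by $(\Delta \otimes id_H)(R)$, while on the right the composition applies $R$ first on $V \otimes W$ and then on $U \otimes W$, giving $R_{23}$ and $R_{13}$ acting on $U \otimes V \otimes W$. Equating the two uses (\ref{eq:RuniversalBr1}) after accounting for the graded flips $\tau$ that convert the action into a map into $W \otimes V$. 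Symmetrically, $c_{U, V \otimes W}^R = (id_V \otimes c_{U,W}^R)(c_{U,V}^R \otimes id_W)$ follows from $(id_H \otimes \Delta)(R) = R_{13}R_{12}$ in (\ref{eq:RuniversalBr2}).

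The Yang-Baxter equation for $c$ I would then derive either by combining the two hexagons with the $H$-linearity from part 1, or more directly by unfolding both triple compositions on a pure tensor $u \otimes v \otimes w$, in each case reducing to the operator identity $R_{12}R_{13}R_{23} = R_{23}R_{13}R_{12}$ of Proposition \ref{Req} acting on $U \otimes V \otimes W$ (once the graded flips have been commuted past the module actions).

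The main technical obstacle is, as always in the super setting, consistent sign bookkeeping. Each application of $c$ contributes one graded swap, two module actions and one copy of $R$, each of which produces its own Koszul phase; in the hexagon and Yang-Baxter identities up to three such $c$'s are composed on a triple tensor, and the underlying $R$-matrix relations hold in $H \otimes H$ or $H \otimes H \otimes H$ without any signs. The proof therefore reduces, after peeling off everything except the $R$-matrix identities, to checking that the accumulated Koszul signs on the two sides agree term by term. I would organize this by rewriting each composition as an operator of the form $R_{ij}$ (or $R_{ij}R_{kl}$, $R_{ij}R_{kl}R_{mn}$) acting on $U \otimes V \otimes W$ via the action of $H^{\otimes 3}$, so that the module-action signs are absorbed into the action itself, and only then applying the $R$-matrix identities of Definition \ref{twistedquasicoco} and Proposition \ref{Req}.
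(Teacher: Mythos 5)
Your proposal is correct and follows essentially the same route as the paper: invertibility is delegated to Lemma \ref{lm:isomctouniveralmatrix}, $H$-linearity of $c_{V,W}^R$ is obtained by rewriting $R\,\Delta_H(a)$ as $(\tau_{H,H}\circ\Delta_H(a))R$ via quasi-cocommutativity, the two hexagon identities come from relations \ref{eq:RuniversalBr1} and \ref{eq:RuniversalBr2} respectively, and the braid relation is verified by unfolding the triple compositions on pure tensors and reducing to $R_{12}R_{13}R_{23}=R_{23}R_{13}R_{12}$ from Proposition \ref{Req} after tracking the Koszul signs. The paper takes the direct-unfolding option for the last identity rather than the categorical combination of the hexagons, but that is the primary route you describe as well.
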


Setting $U=V=W$ in part 2 of Proposition \ref{pr:isompropuniverslamatrix}, we conclude that $c_{V,V}^{R}$ is a solution of the
Yang-Baxter equation for any left $H$-module $V$.

\subsection{Drinfeld's Quantum Double}

We show how to build a braided Hopf superalgebra out of any finite-dimensional Hopf superalgebra with invertible antipode.

\begin{definition}
	Let $H$ be a superbialgebra and $C$ be a supercoalgebra. We say that $C$ is a module-supercoalgebra over $H$ if there exists
a supercoalgebra morphism $H \otimes C \to C \; ( C \otimes H \to C ) $ inducing a left (right) $H$-module structure on $C$.
\end{definition}

\begin{definition}\label{df:twistedB}
	A pair $(X,A)$ of superbialgebras is matched if there exist superspace morphisms $\alpha: A \otimes X \to X$ and $\beta: A
\otimes X \to A$ turning $X$ into a left module-supercoalgebra over $A$, and turning $A$ into a right module-supercoalgebra over
$X$, such that, if we set
	\[ \alpha( a \otimes x ) = a \cdot x, \beta( a \otimes x ) = a^{x}, \]
	the following conditions are satisfied:
	\begin{subequations}
		\renewcommand{\theequation}{\theparentequation.\arabic{equation}}
		\begin{align}
		\label{eq:twistBi1}
		a \cdot (xy) & = \sum_{(a),(x)} (-1)^{|x^{'}||a^{''}|} (a^{'} \cdot x^{'}) ((a^{''})^{x^{''}} \cdot y), \\
		\label{eq:twistBi2}
		a \cdot 1_{X} & = \epsilon_{A}(a) 1_{X}, \\
		\label{eq:twistBi3}
		(ab) ^ {x}  & = \sum_{(b),(x)} (-1)^{|b^{''}||x^{'}|} (a^{b^{'} \cdot x^{'}}) (b^{''})^{ x^{''}}, \\
		\label{eq:twistBi4}
		(1_{A})^{x} & = \epsilon_{X}(x) 1_{A}, \\
		\label{eq:twistBi5}
		\sum_{(a),(x)} (-1)^{|a^{''}||x^{'}|} (a^{'})^{x^{'}} \otimes a^{''} \cdot x^{''} & = \sum_{(a),(x)}
(-1)^{|a^{'}||a^{''}|+|x^{'}||x^{''}|+|a^{'}||x^{''}|} (a^{''})^{x^{''}} \otimes a^{'} \cdot x^{'}
		\end{align}
	\end{subequations}
	for all $a,b \in A$ and $x,y \in X$.
\end{definition}

\begin{remark}
	We can rewrite Definition \ref{df:twistedB} in the following way
	
	1.	
	\[ \alpha \circ ( id_{A} \otimes \mu_{X} ) ( a \otimes x \otimes y ) = \mu_{X} \circ ( \alpha \otimes \alpha ) \circ ( id_{A
\otimes X} \otimes \beta \otimes id_{X} ) \circ ( \Delta_{A \otimes X} \otimes id_{X} ) ( a \otimes x \otimes y ). \]
	
	2.	
	\[ \alpha(a \otimes 1_{X}) = \mu_{X} \circ ( \eta_{X} \otimes id_{X} ) \circ ( \epsilon_{A} \otimes id_{X} ) ( a \otimes
1_{X} ). \]
	
	3.	
	\[ \beta \circ ( \mu_{A} \otimes id_{X} ) ( a \otimes b \otimes x ) = \mu_{A} \circ ( \beta \otimes \beta ) \circ ( id_{A}
\otimes \alpha \otimes id_{A \otimes X} ) \circ ( id_{A} \otimes \Delta_{A \otimes X} ) ( a \otimes b \otimes x ). \]
	
	4.	
	\[ \beta( 1_{A} \otimes x ) = \mu_{A} \circ ( id_{A} \otimes \eta_{A} ) \circ ( id_{A} \otimes \epsilon_{X} ) ( 1_{A}
\otimes x ). \]
	
	5.	
	\[ ( \beta \otimes \alpha ) \circ \Delta_{A \otimes X} (a \otimes x) = ( \beta \otimes \alpha ) \circ ( id_{A} \otimes
\tau_{A,X} \otimes id_{X} ) \circ ( id_{A} \otimes id_{A} \otimes \tau_{X,X} ) \circ ( \tau_{A,A} \otimes id_{X} \otimes id_{X}
) ( \Delta_{A} \otimes \Delta_{X} ) ( a \otimes x ). \]
	
\end{remark}

\begin{remark}
	As maps $\alpha$ and $\beta$ are supercoalgebras morphisms, we have by definition
	
	1.
	\[ (\alpha \otimes \alpha) \circ \Delta_{A \otimes X} = \Delta_{X} \circ \alpha, \]
	\[ \Delta_{X} ( a \cdot x ) = \sum_{(a),(x)} (-1)^{|x^{'}||a^{''}|} a^{'} \cdot x^{'} \otimes a^{''} \cdot x^{''}, \]
	
	2.
	\[ \epsilon_{X} \circ \alpha = \epsilon_{A \otimes X}, \]
	\[ \epsilon_{X} ( a \cdot x ) = \epsilon_{A}(a) \epsilon_{X}(x). \]
	
	3.
	\[ (\beta \otimes \beta) \circ \Delta_{A \otimes X} = \Delta_{A} \circ \beta, \]
	\[ \Delta_{A} ( a^{x} ) = \sum_{(a),(x)} (-1)^{|x^{'}||a^{''}|} (a^{'})^{x^{'}} \otimes (a^{''})^{x^{''}}, \]
	
	4.
	\[ \epsilon_{A} \circ \beta = \epsilon_{A \otimes X}, \]
	\[ \epsilon_{A} ( a^{x} ) = \epsilon_{A}(a) \epsilon_{X}(x). \]
\end{remark}

\begin{restatable}{theorem}{twistedBialgebras}
	\label{theorem:twistedBialgebras}
	Let $(X,A)$ be a matched pair of superbialgebras. There exists a superbialgebra structure on the vector superspace $X
\otimes A$, with unit equal to $1_{X} \otimes 1_{A}$, such that its product is given by
	\[ \mu_{X \bowtie A} = ( \mu_{X} \otimes \mu_{A} ) \circ ( id_{X} \otimes \alpha \otimes \beta \otimes id_{A} ) \circ (
id_{X} \otimes \Delta_{A \otimes X} \otimes id_{A} ), \]
	\[ ( x \otimes a ) ( y \otimes b ) = \sum_{(a),(y)} (-1)^{ |y^{'}||a^{''}| } x ( a^{'} \cdot y^{'} ) \otimes
(a^{''})^{y^{''}} b, \]
	its coproduct by
	\[ \Delta_{X \bowtie A}(x \otimes a) = \sum_{(a),(x)} (-1)^{ |a^{'}| |x^{''}| } (x^{'} \otimes a^{'}) \otimes ( x^{''}
\otimes a^{''} ), \]
	and its counit by
	\[ \epsilon_{X \bowtie A}(x \otimes a) = \epsilon_{X}(x) \epsilon_{A}(a) \]
	for all $x,y \in X$ and $a,b \in A$. Equipped with this superbialgebra structure, $X \otimes A$ is called the bicrossed
product of $X$ and $A$ and denoted $X \bowtie A$. Furthermore, the injective maps $i_{X}(x) = x \otimes 1_{A}$ and $i_{A}(a) =
1_{X} \otimes a$ from $X$ and from $A$ into $X \bowtie A$ are superbialgebra morphisms. We also have
	\[ x \otimes a = \mu_{X \bowtie A} ( ( x \otimes 1_{A} ) \otimes ( 1_{X} \otimes a ) ) \]
	for all $x \in X$ and $a \in A$.
	
	If the superbialgebras $X$ and $A$ have antipodes, respectively denoted by $S_{X}$ and $S_{A}$, then the bicrossed product
is a Hopf superalgebra with antipode $S_{X \bowtie A}$ given by
	\[ S_{X \bowtie A} = ( \alpha \otimes \beta ) \circ ( \tau_{X,A} \circ \tau_{X,A} ) \circ ( id_{X} \otimes \tau_{X,A}
\otimes id_{A} ) \circ ( \tau_{X,X} \otimes \tau_{A,A} ) \circ ( S_{X} \otimes S_{X} \otimes S_{A} \otimes S_{A} ) \circ (
\Delta_{X} \otimes \Delta_{A} ), \]
	\[ S_{X \bowtie A}(x \otimes a) = \sum_{(x),(a)} (-1)^{|x^{'}||x^{''}| + |a^{'}||a^{''}| + |x^{'}||a^{''}| +
|x^{''}||a^{''}| + |x^{'}||a^{'}|} S_{A}(a^{''}) \cdot S_{X}(x^{''}) \otimes ( S_{A}(a^{'}) )^{S_{X}(x^{'})} \]
	for all $x \in X$ and $a \in A$.
\end{restatable}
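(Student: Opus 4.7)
The plan is to adapt Majid's bicrossed (or matched pair) product construction to the super setting, checking each bialgebra axiom by hand with careful sign bookkeeping, and then handling the antipode via Lemma \ref{lemma:sH}. I would organize the verification into three blocks: the superalgebra axioms for the product, the superbialgebra compatibilities, and finally the antipode identities.

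For the first block, I would unravel the definition of $\mu_{X \bowtie A}$ and check associativity directly. Expanding $((x \otimes a)(y \otimes b))(z \otimes c)$ and $(x \otimes a)((y \otimes b)(z \otimes c))$ produces two triple sums indexed by the Sweedler decompositions of $a$, $y$, $b$, and $z$; equating them reduces precisely to the matched pair axioms \eqref{eq:twistBi1} and \eqref{eq:twistBi3}, applied inside the $X$ and $A$ factors respectively, together with the fact that $\alpha$ and $\beta$ are (left/right) module actions. The unit axiom $(x \otimes a)(1_X \otimes 1_A) = (1_X \otimes 1_A)(x \otimes a) = x \otimes a$ is immediate from \eqref{eq:twistBi2}, \eqref{eq:twistBi4}, and the counit properties $\epsilon_X \circ \alpha = \epsilon_{A \otimes X}$, $\epsilon_A \circ \beta = \epsilon_{A \otimes X}$.

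For the second block, the coproduct $\Delta_{X \bowtie A}$ is essentially the tensor product coproduct of $X$ and $A$ with the super twist, so coassociativity and the counit property are inherited from those of $X$ and $A$ via Lemma \ref{commutatorEps} (it is enough to verify on generators $x \otimes 1_A$ and $1_X \otimes a$, where $\Delta_{X \bowtie A}$ reduces to $\Delta_X$ or $\Delta_A$ respectively). The nontrivial part is showing that $\Delta_{X \bowtie A}$ is a superalgebra morphism: by Lemma \ref{commutatorEps} again, it suffices to check this on products of generators of the form $(x \otimes 1_A)(1_X \otimes a)$ and $(1_X \otimes a)(y \otimes 1_A)$. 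The first is trivial since it equals $x \otimes a$. For the second we must compare $\Delta_{X \bowtie A}((1_X \otimes a)(y \otimes 1_A))$ with $\Delta_{X \bowtie A}(1_X \otimes a) \cdot \Delta_{X \bowtie A}(y \otimes 1_A)$, and the equality that emerges after expansion is exactly the hexagon compatibility \eqref{eq:twistBi5}, together with the supercoalgebra morphism properties of $\alpha$ and $\beta$ used to split $\Delta_X(a \cdot y)$ and $\Delta_A(a^y)$. Verifying that $\epsilon_{X \bowtie A}$ is a superalgebra morphism is straightforward from the counit properties recalled above.

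For the antipode, I would invoke Lemma \ref{lemma:sH}: since $X \bowtie A$ is multiplicatively generated by the images of $i_X$ and $i_A$, which are superbialgebra morphisms by the previous step, it is enough to verify $\sum_{(z)} S_{X \bowtie A}(z') z'' = \epsilon(z) 1 = \sum_{(z)} z' S_{X \bowtie A}(z'')$ on the generators $z = x \otimes 1_A$ and $z = 1_X \otimes a$. On these, the formula for $S_{X \bowtie A}$ collapses via \eqref{eq:twistBi2}, \eqref{eq:twistBi4} to $S_X(x) \otimes 1_A$ and $1_X \otimes S_A(a)$ respectively, and the required identities reduce to the antipode axioms in $X$ and $A$.

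The main obstacle will be the compatibility check $\Delta_{X\bowtie A}(uv) = \Delta_{X\bowtie A}(u)\Delta_{X\bowtie A}(v)$ on the cross generators $(1_X\otimes a)(y\otimes 1_A)$: here the Koszul signs from $\tau_{A,X}$, from the super twist in $\Delta_{X\bowtie A}$, and from the Sweedler signs in $\Delta_X \circ \alpha$ and $\Delta_A \circ \beta$ must all be reconciled against the signs in \eqref{eq:twistBi5}. I expect a direct expansion with a careful tabulation of the four-factor sign exponents $|a'||x''|$, $|a''||x'|$, and $|a'||a''| + |x'||x''|$ to match both sides, with \eqref{eq:twistBi5} providing the final interchange.
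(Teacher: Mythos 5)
Your route is genuinely different from the paper's. The paper proves every axiom by direct expansion on arbitrary elements $x\otimes a$, $y\otimes b$, $z\otimes c$ (including both antipode convolution identities), whereas you exploit the factorization $x\otimes a=i_X(x)\,i_A(a)$, the fact that $i_X(X)$ and $i_A(A)$ are subalgebras on which all structure maps restrict to those of $X$ and $A$, and the single cross-relation $i_A(a)i_X(y)=\sum_{(a),(y)}(-1)^{|y'||a''|}i_X(a'\cdot y')\,i_A((a'')^{y''})$, so each compatibility only has to be checked against that relation. This is the standard Kassel/Majid presentation and, where it applies, it is much shorter. Your first two blocks are sound modulo routine bookkeeping: for associativity you should also cite the module-supercoalgebra property of $\alpha$ and $\beta$ (needed to expand $\Delta_A(a^y)$ inside the triple product), and for the multiplicativity of $\Delta_{X\bowtie A}$ you need to say explicitly that associativity of $X\bowtie A$ and of $(X\bowtie A)^{\otimes 2}$, together with the subalgebra observations, propagates the generator--generator case governed by \eqref{eq:twistBi5} to arbitrary products; both points are standard.

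The genuine gap is in the antipode step. Lemma \ref{lemma:sH} assumes as a hypothesis that $S\colon H\to H^{op}$ is a superalgebra morphism; only then does checking $\sum_{(z)}S(z')z''=\epsilon(z)1=\sum_{(z)}z'S(z'')$ on generators suffice. You never establish that the explicit linear map $S_{X\bowtie A}$ is anti-multiplicative, and that is not a formality: it amounts to verifying that $S_{X\bowtie A}$ respects the cross-relation, i.e.\ $S(i_A(a)i_X(y))=(-1)^{|a||y|}S(i_X(y))S(i_A(a))$, a matched-pair computation of essentially the same weight as the antipode verification itself. A clean patch that avoids Lemma \ref{lemma:sH} entirely: first check (using $\tau\circ(S\otimes S)\circ\Delta=\Delta\circ S$ in $X$ and in $A$) that the stated formula is exactly $S_{X\bowtie A}(x\otimes a)=(-1)^{|x||a|}\,i_A(S_A(a))\,i_X(S_X(x))$; then verify the two convolution identities directly on a general $x\otimes a$. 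Substituting into $\sum(-1)^{|a'||x''|}S(x'\otimes a')(x''\otimes a'')$ the Koszul signs combine to $(-1)^{|a'||x|}$, the factor $\sum_{(x)}i_X(S_X(x')x'')$ collapses to $\epsilon_X(x)1$ (killing the sign, since $\epsilon_X(x)\neq 0$ forces $|x|=0$), and what remains is $\epsilon_X(x)\,i_A\bigl(\sum_{(a)}S_A(a')a''\bigr)=\epsilon_X(x)\epsilon_A(a)1$; the right-sided identity is symmetric with sign $(-1)^{|x''||a|}$. With that replacement your proof is complete.
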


\begin{definition}
	Let $H$ be a superbialgebra and $A$ be a superalgebra. $A$ is a left (right) module-superalgebra over $H$ if
	
	1.
	as a vector superspace $A$ is a left (right) $H$-module and
	
	2.
	multiplication $\mu_{A}: A \otimes A \to A$ and unit $\eta_{A}: k \to A$ are maps of left (right) $H$-modules.
	
	For left $H$-module
	\begin{equation}
	\label{eq:tensormodule}
	\alpha_{H,A} (x \otimes \mu ( a \otimes b )) = \mu ( \alpha_{H,A \otimes A} (x \otimes ( a \otimes b )) ) = \sum_{(x)}
(-1)^{|a||x^{''}|} \alpha_{H,A} (x^{'} \otimes a) \alpha_{H,A} (x^{''} \otimes b),
	\end{equation}
	\begin{equation}\label{eq:epsmodule}
	\alpha_{H,A} (x \otimes 1_{A}) = \alpha_{H,A} (x \otimes \eta_{A} (1_{k})) = \eta_{A} ( \alpha_{H,k} (x \otimes 1_{k}) ) =
\eta_{A} ( \epsilon_{H} (x) 1_{k} ) = \epsilon_{H} (x) 1_{A}
	\end{equation}
	for all $x \in H, a,b \in A$.
	
	For right $H$-module:
	\begin{equation}\label{eq:tensormodule_1}
	\beta_{A,H} (\mu ( a \otimes b ) \otimes x) = \mu ( \beta_{A \otimes A,H} (( a \otimes b ) \otimes x) ) = \sum_{(x)}
(-1)^{|b||x^{'}|} \beta_{A,H} (a \otimes x^{'}) \beta_{A,H} (b \otimes x^{''}),
	\end{equation}
	\begin{equation}\label{eq:epsmodule_1}
	\beta_{A,H} (1_{A} \otimes x) = \beta_{A,H} (\eta_{A} (1_{k}) \otimes x) = \eta_{A} ( \beta_{k,H} (1_{k} \otimes x) ) =
\eta_{A} ( \epsilon_{H} (x) 1_{k} ) = \epsilon_{H} (x) 1_{A}
	\end{equation}
	for all $x \in H, a,b \in A$.
\end{definition}

\begin{restatable}{lemma}{modularalgebra}
	Let $H=(H,\mu,\eta,\Delta,\epsilon)$ be a superbialgebra, $A$ be a superalgebra, endowed with a structure of left (right)
$H$-module, such that the relations \ref{eq:epsmodule} ( \ref{eq:epsmodule_1} ) are satisfied. Suppose that $H$ is
multiplicatively generated by a subset $X$, which elements satisfy conditions \ref{eq:tensormodule} ( \ref{eq:tensormodule_1} )
for all $a,b$ in $A$. Then $A$ is a left (right) module-superalgebra over $H$.
\end{restatable}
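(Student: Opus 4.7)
The plan is to let $S \subseteq H$ denote the set of elements $x \in H$ for which identity \ref{eq:tensormodule} holds for all $a,b \in A$, and then show that $S$ is a unital subsuperalgebra of $H$. Since $X \subseteq S$ by hypothesis and $X$ generates $H$ multiplicatively, this forces $S = H$; combined with the already-assumed relation \ref{eq:epsmodule}, the conclusion will be that $A$ is a left module-superalgebra over $H$. The right-module case is handled identically, starting from \ref{eq:tensormodule_1} and \ref{eq:epsmodule_1}.

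For the base case $1_H \in S$, both sides of \ref{eq:tensormodule} reduce to $\mu_A(a \otimes b)$ via $\Delta(1_H) = 1_H \otimes 1_H$ and the unit axiom of the $H$-action. Linearity of $\Delta$, $\mu_A$ and $\alpha_{H,A}$ makes $S$ a linear subspace of $H$ in an obvious way, so only closure under multiplication remains.

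The heart of the argument is that closure. For $x, y \in S$, I would rewrite $\alpha_{H,A}(xy \otimes \mu_A(a \otimes b))$ using associativity of the action as $\alpha_{H,A}(x \otimes \alpha_{H,A}(y \otimes \mu_A(a \otimes b)))$, expand the inner term by $y \in S$, and then expand the outer term by $x \in S$, taking into account that $\alpha_{H,A}(y' \otimes a)$ has parity $|y'|+|a|$. The outcome is a double sum over $(x),(y)$ with accumulated sign $(-1)^{|a||y''|+(|y'|+|a|)|x''|}$. On the other side, applying \ref{eq:tensormodule} directly to $xy$ and inserting the supercoalgebra identity $\Delta(xy) = \sum_{(x),(y)} (-1)^{|y'||x''|} x'y' \otimes x''y''$ produces the same double sum with accumulated sign $(-1)^{|y'||x''|+|a|(|x''|+|y''|)}$, and the two exponents visibly agree modulo $2$. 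Hence $xy \in S$.

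The only genuine obstacle is the sign bookkeeping in the two expansions, which ultimately reduces to a short comparison of quadratic exponents in $|a|,|x''|,|y'|,|y''|$ modulo $2$; nothing deeper is needed. The right-module version is strictly parallel, with the signs instead arising from sliding $b$ past the primed components on the opposite side of the tensor product.
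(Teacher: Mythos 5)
Your proposal is correct and follows essentially the same route as the paper: both reduce the statement to closure of the compatibility identity under products, using associativity of the $H$-action to expand $\alpha_{H,A}(xy\otimes\mu_A(a\otimes b))$ in two stages and matching signs against the superalgebra formula for $\Delta(xy)$. Your explicit framing via the subset $S$ (including the unit and linearity checks) is slightly more careful than the paper's terse "it suffices to verify for products," but the substance is identical.
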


Let $H=(H,\mu,\eta,\Delta,\epsilon,S)$ be a Hopf superalgebra. Define two superspace morphisms for all $a,x \in H$ by
\begin{equation}\label{eq:leftmult}
\gamma(a \otimes x) = \mu \circ ( \mu \otimes id ) \circ ( id \otimes \tau_{H,H} ) \circ ( id \otimes S \otimes id ) \circ (
\Delta \otimes id ) ( a \otimes x ) = \sum_{(a)} (-1)^{|x||a^{''}|} a^{'} x S(a^{''}),
\end{equation}

\begin{equation}\label{eq:rightmult}
\delta(x \otimes a) = \mu \circ ( \mu \otimes id ) \circ ( \tau_{H,H} \otimes id ) \circ ( id \otimes S \otimes id ) \circ ( id
\otimes \Delta ) ( x \otimes a ) = \sum_{(a)} (-1)^{|x||a^{'}|} S(a^{'}) x a^{''}.
\end{equation}

\begin{restatable}{proposition}{moduleAlgebra}\label{pr:moduleAlgebra}
	The superspace morphism $\gamma:H \otimes H \to H$ endows $H$ with the structure of a left module-superalgebra on the
superbialgebra $H$. Similarly, the superspace morphism $\delta:H \otimes H \to H$ endows $H$ with the structure of a right
module-superalgebra on the superbialgebra $H$.
\end{restatable}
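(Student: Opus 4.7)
The plan is to verify, for $\gamma$, the two left $H$-module axioms and the two compatibility conditions \eqref{eq:tensormodule} and \eqref{eq:epsmodule} that upgrade this module structure to a module-superalgebra structure; the argument for $\delta$ is symmetric.

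For the module axioms, I would first check $\gamma(1_H \otimes x) = x$, which is immediate from $\Delta(1_H) = 1_H \otimes 1_H$ and item 3 of Proposition \ref{PropS}. The associativity $\gamma(ab \otimes x) = \gamma(a \otimes \gamma(b \otimes x))$ then follows by expanding $\Delta(ab) = \Delta(a)\Delta(b)$ on the left, applying super-anti-multiplicativity $S(cd) = (-1)^{|c||d|} S(d) S(c)$ from item 2 of the same proposition, and matching the result to the nested expansion on the right.

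For the compatibility, the unit axiom \eqref{eq:epsmodule}, namely $\gamma(a \otimes 1_H) = \epsilon(a) 1_H$, is directly the antipode axiom $\sum_{(a)} a' S(a'') = \epsilon(a) 1_H$. The key computation is \eqref{eq:tensormodule}. One expands
\[ \sum_{(a)} (-1)^{|x||a''|} \gamma(a' \otimes x)\, \gamma(a'' \otimes y) \]
into a sum indexed by $(\Delta \otimes \Delta) \circ \Delta(a)$, then uses the coassociativity lemma stated above to relabel it via $(\mathrm{id} \otimes \Delta \otimes \mathrm{id}) \circ (\Delta \otimes \mathrm{id}) \circ \Delta(a)$, so that the two inner factors $S((a')'')\,(a'')'$ become $S(b')\,b''$ for the middle Sweedler component $b$. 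The antipode axiom collapses $S(b')\,b''$ to $\epsilon(b)\,1_H$, and the counit axiom then erases $b$, leaving $\sum_{(a)} (-1)^{(|x|+|y|)|a''|}\, a'\, xy\, S(a'') = \gamma(a \otimes xy)$.

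The main obstacle is the super-sign bookkeeping. Each Koszul swap in the expansion of $\Delta(ab)$ and each application of super-anti-multiplicativity of $S$ introduces signs depending on the degrees of the Sweedler components $(a')', (a')'', (a'')', (a'')''$. After the antipode/counit collapse the surviving factor $b$ satisfies $|b|=0$ (otherwise $\epsilon(b) = 0$), which lets the accumulated signs involving the middle components reduce cleanly to $(-1)^{(|x|+|y|)|a''|}$, using $|a| = |a'| + |a''|$ throughout the homogeneous decomposition. The case of $\delta$ is entirely symmetric: the right module axioms again follow from $\Delta(ab) = \Delta(a)\Delta(b)$ and super-anti-multiplicativity of $S$, while the right module-superalgebra compatibilities \eqref{eq:tensormodule_1} and \eqref{eq:epsmodule_1} reduce to the same antipode/counit collapse, applied now to the first Sweedler factor rather than the second.
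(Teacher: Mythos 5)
Your proposal is correct and follows essentially the same route as the paper's proof: verify the two module axioms via $\Delta(ab)=\Delta(a)\Delta(b)$ and the super-anti-multiplicativity of $S$, obtain the unit compatibility directly from the antipode axiom, and establish the multiplicativity compatibility by expanding over $(\Delta\otimes\Delta)\circ\Delta(a)$, relabelling by coassociativity so that the two inner factors collapse to $\epsilon$ of the middle Sweedler component, and then applying the counit axiom; the sign reduction to $(-1)^{(|x|+|y|)|a''|}$ is exactly the bookkeeping carried out in the paper. The treatment of $\delta$ by symmetry likewise matches.
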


\begin{restatable}{lemma}{RightLeftaction}\label{cl:action}
	Consider a Hopf superalgebra $H$ with invertible antipode $S$ and a superalgebra $A$ that is a left (right)
module-superalgebra over $H$. Let us put on the dual vector superspace $A^{*}$ the left (right) $H$-module structure given by
	\begin{equation}\label{eq:modulesupercoalgebraleft}
	\alpha(x \otimes f)(a) = (-1)^{|x||f|} f( \alpha_{H,A} (S^{-1}(x) \otimes a)),
	\end{equation}
	\begin{equation}\label{eq:modulesupercoalgebraright}
	\beta(f \otimes x)(a)=(-1)^{|a||x|}f( \beta_{A,H} (a \otimes S^{-1}(x)))
	\end{equation}
	for all $a \in A, x \in H$ and $f \in A^{*}$. If superalgebra $A$ is finite-dimensional, then the supercoalgebra
$(A^{op})^{*}$ is a left (right) module-supercoalgebra over $H$.
\end{restatable}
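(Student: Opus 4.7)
The plan is to verify two things in sequence: first, that the prescribed formulas do endow $A^{*}$ with a left (respectively right) $H$-module structure, and then that the resulting action map is a morphism of supercoalgebras when the target is equipped with the $(A^{op})^{*}$-coalgebra structure.

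For the module axioms, the unit condition is immediate from $S^{-1}(1_H) = 1_H$. For associativity I would compute both sides of $\alpha(xy \otimes f) = \alpha(x \otimes \alpha(y \otimes f))$ on an arbitrary $a \in A$; after cancelling the $(-1)^{|x||f|}$ prefactors, the identity reduces to the anti-multiplicativity $S^{-1}(xy) = (-1)^{|x||y|} S^{-1}(y) S^{-1}(x)$, which follows from Propositions \ref{PropS} and \ref{Sopcopisoml}, combined with the associativity of the $H$-action on $A$. The right-module case is handled symmetrically by the mirrored formula for $\beta$.

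It then remains to check that $\alpha$ commutes with both counit and coproduct of $(A^{op})^{*}$. The counit condition is short: $\epsilon_{(A^{op})^{*}}(\alpha(x \otimes f)) = \alpha(x \otimes f)(1_A) = (-1)^{|x||f|} f(\alpha_{H,A}(S^{-1}(x) \otimes 1_A))$, which collapses to $\epsilon_H(x) f(1_A)$ using the module-superalgebra axiom $\alpha_{H,A}(y \otimes 1_A) = \epsilon_H(y) 1_A$ and $\epsilon_H \circ S^{-1} = \epsilon_H$ (Proposition \ref{PropS}, items 3--4).

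The main computation is the coproduct compatibility. I would test both sides of $\Delta_{(A^{op})^{*}} \circ \alpha = (\alpha \otimes \alpha) \circ \Delta_{H \otimes (A^{op})^{*}}$ by pairing with an arbitrary $a \otimes b \in A \otimes A$ via $\lambda_{A,A}$, exploiting the identity $\lambda_{A,A}(\Delta_{(A^{op})^{*}}(g))(a \otimes b) = (-1)^{|a||b|} g(ba)$ from the remark preceding Proposition \ref{pr:Dualcopop}. The left-hand side then becomes $(-1)^{|a||b| + |x||f|} f(\alpha_{H,A}(S^{-1}(x) \otimes ba))$; applying the module-superalgebra compatibility with the product of $A$, together with $\Delta(S^{-1}(x)) = \sum_{(x)} (-1)^{|x'||x''|} S^{-1}(x'') \otimes S^{-1}(x')$ (derived from Proposition \ref{PropS}, item 5), splits this into a double Sweedler sum over $(x)$ and $(f)$. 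The right-hand side, expanded using the sign $(-1)^{|x''||f'|}$ that $\Delta_{H \otimes (A^{op})^{*}}$ contributes via $\tau$ (Proposition \ref{tensorhopfAlgebra}), collapses to an identical sum of summands of the form $f'(\alpha_{H,A}(S^{-1}(x') \otimes a)) f''(\alpha_{H,A}(S^{-1}(x'') \otimes b))$. The hard part will be the Koszul sign bookkeeping: one must systematically expand $|x||f| = |x'||f'| + |x'||f''| + |x''||f'| + |x''||f''|$ and the product $(|x'|+|a|)(|x''|+|b|)$ coming from the bilinearity of $\lambda_{A,A}$, then cancel pairs modulo $2$ to equate exponents on the two sides. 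Nothing conceptual beyond the tools already established is required; the right-module case runs in perfect parallel, with $S^{-1}$ acting on the right factor producing the mirrored sign patterns.
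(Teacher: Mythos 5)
Your proposal follows essentially the same route as the paper's proof: verify the module axioms (unit via $S^{-1}(1_H)=1_H$, associativity via the graded anti-multiplicativity of $S^{-1}$), then check counit compatibility using $\alpha_{H,A}(y\otimes 1_A)=\epsilon_H(y)1_A$, and finally check coproduct compatibility by pairing with $a\otimes b$ through $\lambda_{A,A}$ and the identity $\lambda_{A,A}(\Delta_{(A^{op})^*}(g))(a\otimes b)=(-1)^{|a||b|}g(ba)$, splitting $\alpha_{H,A}(S^{-1}(x)\otimes ba)$ with the module-superalgebra axiom and $\Delta(S^{-1}(x))=\sum_{(x)}(-1)^{|x'||x''|}S^{-1}(x'')\otimes S^{-1}(x')$. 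The remaining work is exactly the Koszul sign bookkeeping you describe, which the paper carries out in the same way; the approach is sound.
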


\begin{restatable}{corollary}{hondualop}\label{cl:caction}
	Let $H=(H,\mu,\eta,\Delta,\epsilon,S,S^{-1})$ be a finite-dimensional Hopf superalgebra with invertible antipode $S$. There
is a left (right) $H$-module-supercoalgebra structure on the Hopf superalgebra $(H^{op})^{*}=(H^{*},\Delta_{H}^{*} \circ
\lambda_{H,H},\epsilon_{H}^{*} \circ \chi,\lambda_{H,H}^{-1} \circ (\mu_{H} \circ \tau_{H,H})^{*},\chi^{-1} \circ
\eta^{*},(S^{-1})^{*},S^{*})$, given for all $a,x \in H$ and $f \in (H^{op})^{*}$ by
	\[ \alpha(x \otimes f)(a) = \sum_{(x)} (-1)^{|x||f|+|x^{'}||x^{''}|+|a||x^{'}|} f(S^{-1}(x^{''}) a x^{'}), \]
	\[ \beta( f \otimes x )(a) = \sum_{(x)} (-1)^{|x||a|+|x^{'}||x^{''}|+|a||x^{''}|} f(x^{''}aS^{-1}(x^{'})).\]
\end{restatable}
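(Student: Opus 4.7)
The plan is to obtain the statement by composing two earlier results: view $H$ as a module-superalgebra over itself via the adjoint action of Proposition \ref{pr:moduleAlgebra}, then dualize via Lemma \ref{cl:action} to put the module-supercoalgebra structure on $(H^{op})^{*}$. The explicit formulas in the corollary will then drop out by substituting the adjoint action into the formulas \eqref{eq:modulesupercoalgebraleft}, \eqref{eq:modulesupercoalgebraright}.

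More precisely, first I would invoke Proposition \ref{pr:moduleAlgebra}, which says that the maps $\gamma$ and $\delta$ of \eqref{eq:leftmult}, \eqref{eq:rightmult} turn $H$ into a left (resp.\ right) module-superalgebra over itself. Since $H$ is assumed finite dimensional with invertible antipode, the hypothesis of Lemma \ref{cl:action} is met, so $(H^{op})^{*}$ acquires a left (resp.\ right) $H$-module-supercoalgebra structure through the transposed actions
\[ \alpha(x \otimes f)(a) = (-1)^{|x||f|}\, f\bigl(\gamma(S^{-1}(x)\otimes a)\bigr), \qquad \beta(f\otimes x)(a) = (-1)^{|a||x|}\, f\bigl(\delta(a \otimes S^{-1}(x))\bigr). \]
The supercoalgebra structure of $(H^{op})^{*}$ is the one already recorded in Proposition \ref{pr:HopfDualStructl} and the remark following it, so only the explicit form of $\alpha$ and $\beta$ remains to be checked.

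Next I would compute $\Delta(S^{-1}(x))$. From statement 5 of Proposition \ref{PropS}, $\Delta\circ S = \tau_{H,H}\circ (S\otimes S)\circ \Delta$; substituting $y = S^{-1}(x)$ and inverting gives
\[ \Delta(S^{-1}(x)) = \sum_{(x)} (-1)^{|x'||x''|}\, S^{-1}(x'')\otimes S^{-1}(x'). \]
Plugging this into $\gamma(S^{-1}(x)\otimes a) = \sum_{(S^{-1}(x))}(-1)^{|a||(S^{-1}(x))''|}(S^{-1}(x))'\, a\, S((S^{-1}(x))'')$ and using $S\circ S^{-1}=\mathrm{id}$ yields $\gamma(S^{-1}(x)\otimes a) = \sum_{(x)}(-1)^{|x'||x''|+|a||x'|}S^{-1}(x'')\, a\, x'$; feeding this into the formula for $\alpha(x\otimes f)(a)$ produces exactly the claimed expression. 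An entirely parallel calculation for $\delta(a\otimes S^{-1}(x)) = \sum_{(x)}(-1)^{|x'||x''|+|a||x''|} x''\, a\, S^{-1}(x')$ gives the formula for $\beta$.

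The only delicate step is bookkeeping of the Koszul signs: the signs coming from the definitions of $\gamma$, $\delta$, from $\tau_{H,H}$ in the formula for $\Delta\circ S$, and from the evaluation pairing all have to be combined correctly, using that $|S^{\pm 1}(y)|=|y|$. Once the sign from $\Delta(S^{-1}(x))$ is correctly tracked, the remaining manipulations are routine rewriting of the sum, so I expect no conceptual difficulty beyond that.
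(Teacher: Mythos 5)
Your proposal is correct and follows essentially the same route as the paper: it invokes Proposition \ref{pr:moduleAlgebra} for the adjoint actions $\gamma,\delta$, feeds them into Lemma \ref{cl:action}, and then uses $\Delta(S^{-1}(x))=\sum_{(x)}(-1)^{|x'||x''|}S^{-1}(x'')\otimes S^{-1}(x')$ to obtain the explicit formulas, with the Koszul signs tracked exactly as in the paper's computation.
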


\begin{restatable}{corollary}{rightAction}\label{cl:rightAction}
	Under the hypotheses of Lemma \ref{cl:action} there exists a right $(H^{op})^{*}$-module-supercoalgebra structure on $H$,
given for all $a \in H$ and $f \in (H^{op})^{*}$ by
	\[ \beta(a \otimes f) =  \sum_{(a),(a^{''})} (-1)^{|a||f| + |(a^{''})^{''}|(|(a^{''})^{'}| + |a^{'}|)}
f(S^{-1}((a^{''})^{''}) a^{'}) (a^{''})^{'}. \]	
\end{restatable}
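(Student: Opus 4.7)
The plan is to prove the statement by a direct verification of the right module-supercoalgebra axioms for the given $\beta$, guided by a conceptual reinterpretation. The key observation is that, on setting
\[ \rho(a) = \sum_{(a),(a'')} (-1)^{|(a'')''|(|(a'')'| + |a'|)} (a'')' \otimes S^{-1}((a'')'')\, a', \]
one may rewrite $\beta(a\otimes f) = (-1)^{|a||f|} (\mathrm{id}_H \otimes f)(\rho(a))$. Up to sign conventions, $\rho$ is the standard right adjoint coaction of a Hopf superalgebra on itself, so the statement is the super-translation (via the evaluation pairing) of the coaction $\rho$ into an action of $(H^{op})^{*}$ on $H$ — the mirror construction of what Corollary \ref{cl:caction} does for the right adjoint action $\delta$ of Proposition \ref{pr:moduleAlgebra}.

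The first main step is to check the right module axioms, $\beta(\beta(a\otimes f)\otimes g)=\beta(a\otimes \mu_{(H^{op})^{*}}(f\otimes g))$ and $\beta(a\otimes \epsilon_H)=a$. Using the product formula $\mu_{(H^{op})^{*}}(f\otimes g)(c)=\sum_{(c)}(-1)^{|g||c'|}f(c')g(c'')$ from the remark preceding Proposition \ref{pr:Dualcopop} (note that $\mu_{(H^{op})^{*}}=\mu_{H^{*}}$), expansion of both sides by iterated coassociativity of $\Delta$ and pairing with $f$ and $g$ reduces the associativity identity to the antipode relations $\sum_{(b)} S^{-1}(b'')b' = \epsilon(b)1_H = \sum_{(b)} b''S^{-1}(b')$, which hold by Proposition \ref{PropS} since $S$ is invertible. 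The unit axiom collapses using $\sum \epsilon(a_{(1)})\epsilon(a_{(3)})a_{(2)}=a$ together with $\epsilon\circ S^{-1}=\epsilon$.

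The second main step is to verify that $\beta$ is a supercoalgebra morphism, i.e.\ that $\Delta_H\circ\beta=(\beta\otimes\beta)\circ\Delta_{H\otimes (H^{op})^{*}}$ and $\epsilon_H\circ\beta=\epsilon_H\otimes\epsilon_{(H^{op})^{*}}$. For the coproduct compatibility, both sides expand via $\Delta_{(H^{op})^{*}}(f)=\sum_{i,j\in I}f(e_je_i)\,e_i^{*}\otimes e_j^{*}$ from the same remark, combined with iterated coassociativity of $\Delta$ and the super-antihomomorphism property of $S^{-1}$ (Proposition \ref{PropS}). The counit compatibility is immediate from the multiplicativity of $\epsilon$ and $\epsilon\circ S^{-1}=\epsilon$.

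The main obstacle is sign bookkeeping: the formula uses two iterations of $\Delta$ together with the inverse antipode, and each application of coassociativity or transposition of tensor factors introduces Koszul signs. To keep the computation tractable I would adopt the uniform notation $\Delta^{(2)}(a)=\sum a_{(1)}\otimes a_{(2)}\otimes a_{(3)}$ for the iterated coproduct and track parities by consecutive pairs. A more conceptual shortcut would be to apply Corollary \ref{cl:caction} to $K:=(H^{op})^{*}$, which yields a right $K$-module-supercoalgebra structure on $(K^{op})^{*}$, and then transport along the isomorphism of Proposition \ref{pr:Dualcopop} combined with the Hopf superalgebra identifications of Remark \ref{rm:antipodeisom}; matching the resulting formula against the stated one becomes then a routine, if lengthy, direct check.
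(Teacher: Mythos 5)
Your primary route --- a direct verification of the right-module and supercoalgebra-morphism axioms for the explicit formula --- is sound but is \emph{not} the paper's argument; the paper instead uses exactly the ``conceptual shortcut'' you relegate to your last sentence. Concretely, the paper applies Corollary~\ref{cl:caction} to the Hopf superalgebra $(H^{cop})^{*}$ to obtain a right $(H^{cop})^{*}$-module-supercoalgebra structure $\beta'$ on $(((H^{cop})^{*})^{op})^{*}$, identifies the latter with $H$ via the evaluation isomorphism $ev_{H}$ of Proposition~\ref{pr:Dualcopop}, computes $\beta'(ev_{H}(a)\otimes f)$ explicitly, and finally precomposes with the Hopf superalgebra isomorphism $(S^{-1})^{*}\colon (H^{op})^{*}\to (H^{cop})^{*}$ of Remark~\ref{rm:antipodeisom} to convert the $(H^{cop})^{*}$-action into the stated $(H^{op})^{*}$-action. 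Your shortcut as phrased takes $K=(H^{op})^{*}$ rather than $(H^{cop})^{*}$, so $(K^{op})^{*}$ is not literally the object covered by Proposition~\ref{pr:Dualcopop}; the fix is precisely the paper's ordering (dualize $H^{cop}$ first, then transport the action along $(S^{-1})^{*}$ at the end), and the identifications of Remark~\ref{rm:antipodeisom} that you invoke do supply the missing link. As for the trade-off: the direct check buys self-containedness at the price of heavy Koszul-sign bookkeeping through a three-fold coproduct and $S^{-1}$, whereas the paper's transport argument reuses work already done (Corollary~\ref{cl:caction}, Proposition~\ref{pr:Dualcopop}) and confines the computation to one evaluation of $\beta'(ev_{H}(a)\otimes f)$. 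One small imprecision in your sketch: the module associativity $\beta(\beta(a\otimes f)\otimes g)=\beta(a\otimes fg)$ reduces to coassociativity together with the anti-comultiplicativity $\Delta\circ S^{-1}=\tau_{H,H}\circ(S^{-1}\otimes S^{-1})\circ\Delta$ (Proposition~\ref{PropS}), not to the relations $\sum_{(b)}S^{-1}(b'')b'=\epsilon(b)1_{H}$; those cancellation identities are what you need in the \emph{coalgebra-morphism} check, where the middle Sweedler leg collapses. Neither point invalidates the plan, but both would need to be set straight in a written-out proof.
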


\begin{restatable}{theorem}{QBT}
	Let $(H,\mu,\eta,\Delta,\epsilon,S,S^{-1})$ be a finite-dimensional Hopf superalgebra with invertible antipode $S$. Consider
the Hopf superalgebra
	\[ X = (H^{op})^{*}=(H^{*},\Delta_{H}^{*} \circ \lambda_{H,H},\epsilon_{H}^{*} \circ \chi,\lambda_{H,H}^{-1} \circ (\mu_{H}
\circ \tau_{H,H})^{*},\chi^{-1} \circ \eta^{*},(S^{-1})^{*}). \]
	Let $\alpha:H \otimes X \to X$ and $\beta: H \otimes X \to H$ be superspace morphisms given by
	\[ \alpha(a \otimes f) = a \cdot f = \sum_{(a)} (-1)^{|a||f|+|a^{'}||a^{''}|+|?||a^{'}|} f(S^{-1}(a^{''}) ? a^{'}), \]
	\[ \beta(a \otimes f) = a^f = \sum_{(a),(a^{''})} (-1)^{|a||f| + |(a^{''})^{''}|(|(a^{''})^{'}| + |a^{'}|)}
f(S^{-1}((a^{''})^{''}) a^{'}) (a^{''})^{'}, \]
	for all $a \in H, \; f \in X$. Then the pair $(X,H)$ of Hopf superalgebras is matched in the sense of Defenition
\ref{df:twistedB}.
\end{restatable}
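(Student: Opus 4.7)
The plan is to verify, in turn, the five compatibility axioms (\ref{eq:twistBi1})--(\ref{eq:twistBi5}) of Definition~\ref{df:twistedB}. A considerable amount of the required structure is already in hand: Corollary~\ref{cl:caction} establishes that the given $\alpha$ makes $X=(H^{op})^*$ into a left $H$-module-supercoalgebra, so in particular $\alpha$ is a morphism of supercoalgebras, and Corollary~\ref{cl:rightAction} shows that $\beta$ gives $H$ the structure of a right $X$-module-supercoalgebra. Thus the unexplored content of Definition~\ref{df:twistedB} consists only of the five multiplicative/unit identities.

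First I would dispose of the unit axioms (\ref{eq:twistBi2}) and (\ref{eq:twistBi4}). Both fall out directly by substituting $x=1_X=\epsilon_H$ (respectively $a=1_H$) into the defining formulas for $\alpha$ and $\beta$, using $\Delta(1_H)=1_H\otimes 1_H$, $S(1_H)=1_H$, $\epsilon_H(1_H)=1$, and evaluating the resulting functionals at an arbitrary $h\in H$. The sign prefactors vanish because the relevant tensor factors become even, so the identities reduce to $\epsilon_H(a)\epsilon_H(h)=\epsilon_H(a)1_X(h)$ and $\epsilon_X(x)=\epsilon_X(x)$ respectively.

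Next I would handle the module-on-products axioms (\ref{eq:twistBi1}) and (\ref{eq:twistBi3}) by evaluating both sides as functionals on $H$. Recall that the product in $X=(H^{op})^*$ is dual to $\mu_H^{op}$, so for $f,g\in X$ and $c\in H$,
\[ (fg)(c)=\sum_{(c)}(-1)^{|g||c''|}f(c'')g(c'). \]
For (\ref{eq:twistBi1}), one expands $(a\cdot(xy))(h)$ using Corollary~\ref{cl:caction}, then rewrites $(xy)$ by the displayed formula, and finally reorganises via coassociativity of $\Delta_H$ to match the expansion of the right-hand side, in which the inner $\beta$ produces the factor $(a'')^{x''}$ that cancels an extra $S^{-1}S$ pair. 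For (\ref{eq:twistBi3}), the dual manipulation uses the formula for $\beta$ iterated twice, coassociativity of $\Delta_H$ applied to $ab$, and the antipode equation $\mu(S\otimes id)\Delta=\eta\epsilon$ (in the form appropriate to $S^{-1}$) to contract the superfluous pieces.

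The main obstacle will be the twisted compatibility (\ref{eq:twistBi5}), which mixes $\alpha$, $\beta$, the coproducts of both $a$ and $x$, and the nontrivial braiding sign. My approach is to evaluate both sides against an arbitrary $h\in H$ in the second tensor factor: by Corollary~\ref{cl:caction} the component $a''\cdot x''$ becomes
\[ \sum (-1)^{\ast}\,x''\bigl(S^{-1}(a''^{\,(2)})\,h\,a''^{\,(1)}\bigr), \]
and by Corollary~\ref{cl:rightAction} the component $(a')^{x'}$ is a combination of terms indexed by $\Delta^{(2)}(a')$ weighted by $x'$ evaluated at an antipode-twisted product. After fully expanding both sides using $\Delta^{(3)}$ or $\Delta^{(4)}$ of $a$ and $\Delta(x)$, coassociativity allows me to collect all factors into one canonical iterated comultiplication; the antipode identities $\mu(S\otimes id)\Delta=\mu(id\otimes S)\Delta=\eta\circ\epsilon$ then collapse the internal $S^{-1}S$ pair on one side to reproduce the other side exactly. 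The substantive work is keeping the $\mathbb{Z}_2$-signs consistent across the reshufflings; once this is done carefully, both sides are manifestly equal, completing the verification that $(X,H)$ is a matched pair.
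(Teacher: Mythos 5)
Your overall strategy coincides with the paper's: both arguments take the module-supercoalgebra structures for granted from Corollaries \ref{cl:caction} and \ref{cl:rightAction}, dispose of (\ref{eq:twistBi2}) and (\ref{eq:twistBi4}) by direct substitution, and verify (\ref{eq:twistBi1}), (\ref{eq:twistBi3}) and (\ref{eq:twistBi5}) by evaluating both sides as functionals on $H$ and reorganising via coassociativity and the antipode identities.

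There is, however, one concrete error that would derail the verification of (\ref{eq:twistBi1}) if your plan were executed literally. The product of $X=(H^{op})^{*}$ is \emph{not} dual to $\mu_H^{op}$: products in a dual Hopf superalgebra are dual to the \emph{coproduct} of the original object, and since $H^{op}$ carries the same coproduct as $H$, the multiplication of $X$ is the ordinary convolution
\[ (fg)(c)=\sum_{(c)}(-1)^{|g||c'|}\,f(c')\,g(c''), \]
exactly as in Proposition \ref{pr:HopfDualStructl}. The formula you wrote, $(fg)(c)=\sum_{(c)}(-1)^{|g||c''|}f(c'')g(c')$, is (up to sign) the product of $(H^{cop})^{*}$, not of $(H^{op})^{*}$. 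What is dual to $\mu_H\circ\tau_{H,H}$ is the \emph{comultiplication} of $X$, i.e.\ $\Delta_X=\tau_{X,X}\circ\Delta_{H^{*}}$; this transposition is precisely why the identity actually established in the paper reads $a\cdot(fg)=\sum_{(a),(f)}(-1)^{|f'||f''|+|f''||a''|}(a'\cdot f'')\,\bigl((a'')^{f'}\cdot g\bigr)$, with $f'$ and $f''$ interchanged relative to the raw form of (\ref{eq:twistBi1}). With the incorrect product formula the two sides of (\ref{eq:twistBi1}) will not match; once it is corrected, and the corresponding transposition in $\Delta_X$ is tracked through (\ref{eq:twistBi3}) and (\ref{eq:twistBi5}) as well, the rest of your outline goes through along the same lines as the paper's proof.
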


\begin{definition}\label{QDDef}
	The quantum double $D(H)$ of the Hopf superalgebra $H$ is the bicrossed product of $X=(H^{op})^{*}$ and of $H$:
	\[ D(H) = X \bowtie H = (H^{op})^{*} \bowtie H. \]
\end{definition}

\begin{restatable}{lemma}{MultQD}\label{lm:MultQDr}
	The multiplication in $D(H)$ is given by
	\[ ( f \otimes a ) ( g \otimes b ) = \sum_{(a),(a^{'})} (-1)^{ |g||a| + |a^{''}||a^{'}| + |?||(a^{'})^{'}| } f
g(S^{-1}(a^{''}) ? (a^{'})^{'} ) \otimes (a^{'})^{''} b, \]
	for all $f,g \in X$ and $a,b \in H$.
\end{restatable}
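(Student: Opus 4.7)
The plan is to specialize the bicrossed-product multiplication formula of Theorem~\ref{theorem:twistedBialgebras} to the matched pair $(X,H)$ with $X=(H^{op})^{*}$, and then substitute the explicit expressions for the two actions $\alpha$ and $\beta$ provided by Corollaries~\ref{cl:caction} and~\ref{cl:rightAction}. The theorem gives
\[ (f \otimes a)(g \otimes b) \;=\; \sum_{(a),(g)} (-1)^{|g'||a''|}\, \bigl(f \cdot (a'\cdot g')\bigr) \otimes (a'')^{g''}\, b, \]
where the dot between $f$ and $a'\cdot g'$ is the convolution product in $(H^{op})^{*}$.

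First I would substitute the formula from Corollary~\ref{cl:caction} for the left action,
\[ (a'\cdot g')(y) \;=\; \sum_{(a')} (-1)^{|a'||g'|+|(a')'||(a')''|+|y||(a')'|}\, g'\bigl(S^{-1}((a')'')\, y\, (a')'\bigr), \]
and the corresponding formula from Corollary~\ref{cl:rightAction} for $(a'')^{g''}$. Expanding the convolution $f\cdot(a'\cdot g')$ via $\mu_{(H^{op})^{*}}=\Delta_{H}^{*}\circ\lambda_{H,H}$ then produces a sum indexed by $(a)$, $(g)$, and several iterated Sweedler components, each summand containing a factor of the form $g'(P)\,g''(Q)$ for elements $P,Q\in H$ built from iterated coproducts of $a$ and from the dummy argument at which the first tensor slot is evaluated.

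The key collapsing step uses the defining property of $\Delta_{(H^{op})^{*}}$ (dual to $\mu_{H}\circ\tau_{H,H}$), namely
\[ \sum_{(g)}(-1)^{|g''||P|}\, g'(P)\, g''(Q) \;=\; (-1)^{|P||Q|}\, g(Q\,P), \]
valid for all homogeneous $P,Q\in H$; this is precisely the identity recorded in the remark on $\Delta_{(H^{op})^{*}}$ preceding Proposition~\ref{pr:Dualcopop}. Applying it eliminates the sum over $(g)$ and replaces $g'(P)\,g''(Q)$ by a single evaluation of $g$, which can then be absorbed together with $f$ into the convolution, giving an element of $(H^{op})^{*}$ of the form $(fg)\bigl(S^{-1}(a'')\,?\,(a')'\bigr)$. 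Coassociativity of $\Delta$ on $H$ allows one to reorganize the iterated Sweedler components created by the $\alpha$- and $\beta$-expansions into the single two-step splitting indexed by $(a)$ and $(a')$ with summands $(a')',(a')'',a''$, matching the statement.

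The main obstacle is the sign bookkeeping. Grading-dependent signs enter from the super-twist $\tau$ in the multiplication formula, from $\lambda_{H,H}$ in the convolution, from the explicit factors in Corollaries~\ref{cl:caction} and~\ref{cl:rightAction}, and from the collapsing identity above. Each source contributes quadratic terms in the $\mathbb{Z}_{2}$-degrees of the Sweedler components, and reducing the total to the three terms $|g||a|+|a''||a'|+|?|\,|(a')'|$ of the claim requires systematic use of $|a|=|a'|+|a''|$, $|g|=|g'|+|g''|$, and the evenness of the structure maps. Once these cancellations are completed the stated formula follows.
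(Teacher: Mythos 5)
Your plan is correct and coincides with the paper's own proof: the paper likewise specializes the bicrossed-product multiplication of Theorem~\ref{theorem:twistedBialgebras} to the matched pair $(X,H)$, substitutes the explicit action formulas of Corollaries~\ref{cl:caction} and~\ref{cl:rightAction}, collapses the sum over the Sweedler components of $g$ via the pairing identity for the dual comultiplication, and finishes with coassociativity and the sign bookkeeping you describe. The only cosmetic difference is that the paper performs the computation after applying $id_{X}\otimes ev_{H}$ and evaluating against a generic element $c\otimes h\in H\otimes H^{*}$, which is merely a device for rigorously identifying the resulting functionals.
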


\begin{restatable}{proposition}{DHprop}
	The unit in $D(H)$ is equal to $1_{X} \otimes 1_{H} = \epsilon_{H} \otimes 1_{H} $. The counit and comultiplication are
given by
	\[ \epsilon_{X \bowtie H} ( f \otimes a ) = \epsilon_{X}(f) \epsilon_{H} (a) = f(1_{H}) \epsilon_{H} (a), \]
	\[ \Delta_{X \bowtie H}( f \otimes a ) = \sum_{(a),(f)} (-1)^{|f^{'}||f^{''}| + |a^{'}||f^{'}|} ( f^{''} \otimes a^{'} )
\otimes ( f^{'} \otimes a^{''} ), \]
	for all $f \in X, a \in H$, furthemore, $\Delta_{H^*} (f) = \sum_{(f)} f^{'} \otimes f^{''}$.
	
	The antipode $S_{X \bowtie H}$ is given by
	\[ S_{X \bowtie A}(f \otimes a) = \sum_{(a),(a^{'})} (-1)^{ |a^{''}||a^{'}| + |(a^{'})^{'}||?| } f(a^{''} S^{-1}(?)
S^{-1}((a^{'})^{'})) \otimes S( (a^{'})^{''} ) \]
	for all $f \in X$ and $a \in H$.
	
	The quantum double $D(H)$ contains $H$ and $X$ as Hopf subsuperalgebras where the inclusion maps $i_{H}$ and $i_{X}$ are
given by
	\[ i_{H}(a) = 1_{X} \otimes a \; , \; i_{X}(f) = f \otimes 1_{H}. \]
	for all $f \in X$ and $a \in H$.
	Moreover,
	\[ i_{X}(f)i_{H}(a) = f \otimes a,\]
	\[ i_{H}(a)i_{X}(f) = \sum_{(a),(a^{'})} (-1)^{|f||a| + |a^{''}||a^{'}| + |?||(a^{'})^{'}|} f(S^{-1}( a^{''} ) ?
(a^{'})^{'}) \otimes (a^{'})^{''} \]
	for all $f \in X$ and $a \in H$.
\end{restatable}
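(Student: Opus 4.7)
The plan is to apply Theorem \ref{theorem:twistedBialgebras} to the matched pair $(X,H)$ established in the preceding theorem, and to unpack each formula against the explicit structure data of $X=(H^{op})^{*}$ supplied by Proposition \ref{pr:HopfDualStructl} together with the explicit actions $\alpha,\beta$ from Corollaries \ref{cl:caction} and \ref{cl:rightAction}. Once the data are identified, every assertion of the statement is a direct substitution into a result already proved earlier in the paper; the work is essentially bookkeeping of signs dictated by the Koszul rule.

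First, for the unit, counit and coproduct: Theorem \ref{theorem:twistedBialgebras} asserts $1_{X\bowtie H}=1_X\otimes 1_H$, $\epsilon_{X\bowtie H}=\epsilon_X\otimes\epsilon_H$, and $\Delta_{X\bowtie H}(x\otimes a)=\sum(-1)^{|a'||x''|}(x'\otimes a')\otimes(x''\otimes a'')$, where Sweedler notation refers to $\Delta_X$. From Proposition \ref{pr:HopfDualStructl} one reads off $1_X=\eta_X(1_\Bbbk)=(\epsilon_H^{*}\circ\chi)(1_\Bbbk)=\epsilon_H$ and $\epsilon_X(f)=(\chi^{-1}\circ\eta^{*})(f)=f(1_H)$, giving the first two asserted formulas. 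For the coproduct, I use that for $X=(H^{op})^{*}$ the comultiplication is $\lambda_{H,H}^{-1}\circ(\mu_H\circ\tau_{H,H})^{*}$, which by the computation in the remark with dual bases differs from $\Delta_{H^{*}}$ by the graded flip: if $\Delta_{H^{*}}(f)=\sum_{(f)}f'\otimes f''$ then $\Delta_X(f)=\sum_{(f)}(-1)^{|f'||f''|}f''\otimes f'$. Plugging this into the general coproduct formula of Theorem \ref{theorem:twistedBialgebras} yields the stated sign $(-1)^{|f'||f''|+|a'||f'|}$.

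Second, for the antipode, I substitute $S_X=(S^{-1})^{*}$ and the explicit $\alpha,\beta$ of the previous theorem into the general formula
\[
S_{X\bowtie H}(x\otimes a)=\sum_{(x),(a)}(-1)^{\ast}\,S_H(a'')\cdot S_X(x'')\;\otimes\;(S_H(a'))^{S_X(x')}
\]
from Theorem \ref{theorem:twistedBialgebras}. The action $\beta$ of the previous theorem shows that $b^{g}=\sum_{(b),(b'')}(-1)^{\cdots}g(S^{-1}((b'')'')\,b')(b'')'$, so that $(S_H(a'))^{S_X(x')}$ expands into an iterated Sweedler sum in $a$ and is paired with $S_X(x')=x'\circ S^{-1}$ evaluated on a product; after relabeling (absorbing the two successive coproducts of $a$ into a single ternary decomposition $a\mapsto a'\otimes(a'')'\otimes(a'')''$ via Lemma \ref{lemma:sH}-type associativity for $\Delta$) one arrives at the closed form stated. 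This is the sign-heaviest part and is the step I anticipate as the main obstacle; it amounts to verifying that all occurrences of $(-1)^{\cdots}$ collect exactly to the prefactor in the theorem.

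Third, the inclusions: Theorem \ref{theorem:twistedBialgebras} already states that $i_X$ and $i_H$ are injective superbialgebra morphisms. The explicit antipode formula from the preceding paragraph restricts to $S_X$ on $i_X(X)$ and to $S_H$ on $i_H(H)$, so both images are Hopf subsuperalgebras. Finally, the product identities are immediate: $i_X(f)\,i_H(a)=(f\otimes 1_H)(1_X\otimes a)=f\otimes a$ by the identity $x\otimes a=(x\otimes 1_A)(1_X\otimes a)$ from Theorem \ref{theorem:twistedBialgebras}, and $i_H(a)\,i_X(f)=(1_X\otimes a)(f\otimes 1_H)$ is obtained from Lemma \ref{lm:MultQDr} by setting the first-factor element of $X$ to $1_X=\epsilon_H$ and the second-factor element of $H$ to $1_H$, whereupon the two unit factors collapse against $\mu_X$ and $\mu_H$ and the stated formula remains.
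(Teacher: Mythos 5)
Your proposal is correct and follows essentially the same route as the paper: read the unit, counit, coproduct and the general antipode and product formulas off Theorem \ref{theorem:twistedBialgebras}, substitute the explicit structure of $X=(H^{op})^{*}$ (in particular $1_X=\epsilon_H$, $\epsilon_X(f)=f(1_H)$, $\Delta_X=\tau_{H^{*},H^{*}}\circ\Delta_{H^{*}}$, $S_X=(S^{-1})^{*}$) and the actions $\alpha,\beta$, and obtain the two product identities from the bicrossed-product identity and Lemma \ref{lm:MultQDr} exactly as the paper does. The only caveats are that the closed form of $S_{X\bowtie H}$ — the part on which the paper expends almost all of its effort, via repeated coassociativity rearrangements and counit cancellations — is left as an announced sign check rather than carried out, and that the "Lemma \ref{lemma:sH}-type associativity" you invoke there is really just iterated coassociativity of $\Delta_H$, not that lemma.
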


Proof of following two results is almost identical to that in the case of vector spaces.

\begin{restatable}{lemma}{tensormapproplinear}\label{lm:tensormapproplinear}
	Let $M_1, M_2, ... , M_k, N$ be super vector spaces, with $k > 2$, and suppose
	\[ \eta: M_1 \times M_2 \times ... \times M_{k-1} \times M_k \to N \]
	is a function that is bilinear in $M_1$ and $M_2$ when other coordinates are fixed. There is a unique function
	\[ \omega: (M_1 \otimes M_2) \times ... \times M_{k-1} \times M_k \to N \]
	that is linear in $M_1 \otimes M_2$ when the other coordinates are fixed and satisfies
	\[ \omega(m_1 \otimes m_2,...,m_k) = \eta(m_1,m_2,...,m_k) \]
	for all $m_1 \in M_1,m_2 \in M_2,...,m_k \in M_k$.
	If $\eta$ is multilinear in $M_1, M_2, ..., M_k$, then $\omega$ is multilinear in $(M_1 \otimes M_2), M_3, ..., M_k$.
\end{restatable}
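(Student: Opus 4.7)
The plan is to reduce the claim to the ordinary universal property of the tensor product by fixing the last $k-2$ coordinates. Concretely, for any fixed tuple $(m_3,\dots,m_k) \in M_3 \times \cdots \times M_k$, the map
\[ \eta_{m_3,\dots,m_k}: M_1 \times M_2 \to N, \quad (m_1,m_2) \mapsto \eta(m_1,m_2,m_3,\dots,m_k) \]
is bilinear by hypothesis. The standard universal property of the tensor product (which holds verbatim in the super setting, since the super structure is irrelevant to the underlying vector space construction of $M_1 \otimes M_2$) yields a unique linear map $\widetilde{\eta}_{m_3,\dots,m_k}: M_1 \otimes M_2 \to N$ with $\widetilde{\eta}_{m_3,\dots,m_k}(m_1 \otimes m_2) = \eta(m_1,m_2,m_3,\dots,m_k)$. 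I would then define
\[ \omega(u,m_3,\dots,m_k) := \widetilde{\eta}_{m_3,\dots,m_k}(u) \]
for $u \in M_1 \otimes M_2$, and this immediately satisfies the required compatibility on pure tensors.

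The next step is to verify the properties of $\omega$. Linearity in the first slot $M_1 \otimes M_2$ (with the other coordinates fixed) is built into the construction via $\widetilde{\eta}_{m_3,\dots,m_k}$. For uniqueness, suppose $\omega'$ is another function satisfying the conclusion. For each fixed $(m_3,\dots,m_k)$, the linear map $u \mapsto \omega'(u,m_3,\dots,m_k)$ agrees with $\widetilde{\eta}_{m_3,\dots,m_k}$ on the generating set $\{m_1 \otimes m_2\}$ of $M_1 \otimes M_2$, and hence agrees everywhere; thus $\omega' = \omega$.

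For the final assertion, assume $\eta$ is multilinear. Fix an index $j \in \{3,\dots,k\}$ and elements in every slot except $u \in M_1 \otimes M_2$ and the $j$-th one; I need to show linearity in the $j$-th coordinate. On a decomposable tensor $u = m_1 \otimes m_2$ this follows from multilinearity of $\eta$. For a general $u = \sum_{i} m_{1,i} \otimes m_{2,i}$, linearity of $\omega$ in its first argument allows us to expand $\omega(u,m_3,\dots,m_k) = \sum_i \omega(m_{1,i} \otimes m_{2,i}, m_3, \dots, m_k)$, reducing the claim to the decomposable case.

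The only mild subtlety worth flagging is that everything must respect the $\mathbb{Z}_2$-grading: the bilinear map $\eta$ is a superspace morphism when viewed on the homogeneous components, so the induced map $\widetilde{\eta}_{m_3,\dots,m_k}$ preserves degrees automatically because the grading on $M_1 \otimes M_2$ is defined precisely so that $|m_1 \otimes m_2| = |m_1| + |m_2|$. Beyond this parity bookkeeping there is no genuine obstacle; the argument is a direct transcription of the classical statement, which is why the authors note that the proof is essentially identical to the vector space case.
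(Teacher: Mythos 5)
Your proof is correct and is exactly the standard universal-property argument that the paper has in mind — the authors omit the proof entirely, remarking only that it is "almost identical to that in the case of vector spaces." Your construction via the maps $\widetilde{\eta}_{m_3,\dots,m_k}$, the uniqueness check on pure tensors, and the reduction of multilinearity to decomposable elements are all sound, and your parity remark correctly identifies the only super-specific point.
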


Lemma \ref{lm:tensormapproplinear} is not specific to functions that are bilinear in the first two coordinates: any two
coordinates can be used when the function is bilinear in those two coordinates.

\begin{restatable}{theorem}{mapSuperIsom}\label{cl:mapIsom}
	Let $V,U$ be finite-dimensional vector superspaces. A superspace morphism $\xi_{V,U}: U \otimes V^{*} \to
\mathrm{Hom}(V,U)$, defined for all $u \in U, \alpha \in V^{*}, v \in V$ by
	\begin{equation}\label{eq:clambda}
	\xi_{V,U}(u \otimes \alpha)(v)=(-1)^{|u|( |\alpha| + |v| )} \alpha(v)u,
	\end{equation}
	is an superspace isomorphism. In particular, if $V$ is a finite-dimensional vector superspace the map $\xi_{V,V}$ is the
superspace isomorphism
	\[ V \otimes V^* \cong End(V). \]
\end{restatable}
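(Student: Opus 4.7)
The plan is to mimic the classical non-super argument, with careful attention to the $\mathbb{Z}_2$-grading and Koszul signs. I will proceed in three steps: verify that $\xi_{V,U}$ is well-defined as a graded linear map, construct an explicit inverse using dual bases, and then check that the two compositions are identities.

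First, I would use Lemma \ref{lm:tensormapproplinear} applied to the assignment $(u,\alpha,v) \mapsto (-1)^{|u|(|\alpha|+|v|)}\alpha(v)u$, which is trilinear on homogeneous elements and hence factors uniquely through a linear map $\xi_{V,U}: U \otimes V^* \to \mathrm{Hom}(V,U)$ satisfying \eqref{eq:clambda}. To check that $\xi_{V,U}$ preserves the grading, take a homogeneous simple tensor $u \otimes \alpha$ of degree $|u|+|\alpha|$. For homogeneous $v \in V$, the scalar $\alpha(v)$ vanishes unless $|v|=|\alpha|$, in which case $\xi_{V,U}(u \otimes \alpha)(v)$ lies in $U_{|u|}$; this coincides with $U_{|u|+|\alpha|+|v|}$ mod $2$, confirming $\xi_{V,U}(u \otimes \alpha) \in \mathrm{Hom}(V,U)_{|u|+|\alpha|}$.

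Next, I would construct the inverse. Fix a homogeneous basis $\{v_i\}_{i \in I}$ of $V$, with dual basis $\{v_i^*\}_{i \in I}$ of $V^*$, so that $|v_i^*|=|v_i|$ and $v_i^*(v_j)=\delta_{ij}$. Define
\[
\psi_{V,U}: \mathrm{Hom}(V,U) \to U \otimes V^*, \qquad \psi_{V,U}(f) = \sum_{i \in I} f(v_i) \otimes v_i^*.
\]
This is manifestly linear, and it is grading-preserving since if $f$ is homogeneous of degree $k$ then $f(v_i)$ has degree $k+|v_i|$, so each summand has degree $k+|v_i|+|v_i^*| = k$.

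Finally, I would verify $\xi_{V,U} \circ \psi_{V,U} = \mathrm{id}$ and $\psi_{V,U} \circ \xi_{V,U} = \mathrm{id}$ by direct calculation on homogeneous elements. For the first composition, evaluating on $v_j$ gives
\[
\xi_{V,U}(\psi_{V,U}(f))(v_j) = \sum_{i \in I} (-1)^{|f(v_i)|(|v_i^*|+|v_j|)} v_i^*(v_j) f(v_i) = f(v_j),
\]
since only $i=j$ contributes and the sign is $(-1)^{2|f(v_j)||v_j|}=1$. For the second composition, expanding $\alpha = \sum_i \alpha(v_i) v_i^*$ (which is legitimate because $\alpha(v_i) \neq 0$ forces $|v_i|=|\alpha|$) and using the analogous cancellation of signs reduces $\psi_{V,U}(\xi_{V,U}(u \otimes \alpha))$ to $u \otimes \alpha$. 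The only real bookkeeping is Koszul-sign tracking, and the squared-degree terms always collapse mod $2$, so no essential obstacle arises. The last assertion about $V \otimes V^* \cong \mathrm{End}(V)$ is the special case $U=V$.
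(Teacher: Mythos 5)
Your proof is correct and follows exactly the route the paper intends: the paper omits the argument, remarking only that it is ``almost identical to that in the case of vector spaces,'' and your dual-basis construction of the inverse with the observation that all Koszul signs collapse (since $\alpha(v)\neq 0$ forces $|v|=|\alpha|$, making every exponent even) is precisely that classical argument carried out in the graded setting. Nothing further is needed.
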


Let us consider the map $\xi_{H,H}:H \otimes X \to End(H)$ defined in Theorem \ref{cl:mapIsom} for $a,b\in H$ and $f\in X$ by
$\xi_{H,H}(a\otimes f)(b)= (-1)^{|a|(|f| + |b|)} f(b)a$. Since the map  $\xi_{H,H}$ is the isomorphism of vector superspaces, we
have
\[ \rho=\xi_{H,H}^{-1}(id_{H})\in H\otimes X. \]

We define the universal $R$-matrix of the quantum double as the element
\[ R=(i_{H}\otimes i_{X})(\rho) \in D(H)\otimes D(H). \]
We get a more explicit formula for $R$ by choosing a basis $\{e_{i}\}_{i\in I}$ of the vector superspace $H$ together with its
dual basis $\{e^{i}\}_{i\in I}$ in $X$, where $I$ is a index set. Then
\[ \rho=\sum_{i\in I}e_{i}\otimes e^{i}\;\text{and} \; R=\sum_{i\in I}(1_{X}\otimes e_{i})\otimes(e^{i}\otimes1_{H}). \]

Indeed, $\xi_{H,H}(\rho)=id_{H}$. Fix any element $b\in H:b=\sum_{j\in I}\alpha_{j}e_{j},\alpha_{j}\in   \Bbbk$. Then
\[ \xi_{H,H}(\sum_{i\in I}e_{i}\otimes e^{i})(b)=\xi_{H,H}(\sum_{i\in I}e_{i}\otimes e^{i})(\sum_{j\in
I}\alpha_{j}e_{j})=\sum_{i,j\in I} (-1)^{ |e_{i}|(|e_{j}| + |e^{i}|) } \alpha_{j}e^{i}(e_{j})e_{i}= \]
\[ = \sum_{i\in I} (-1)^{ |e_{i}|(|e_{i}| + |e^{i}|) }\alpha_{i}e_{i} = \sum_{i\in I} \alpha_{i}e_{i} =b=id_{H}(b). \]

Therefore, $\rho=\sum_{i\in I}e_{i}\otimes e^{i}$.

\begin{restatable}{theorem}{RQD} \label{RQDth}
	Under the previous hypotheses, the Hopf superalgebra $D(H)$ equipped with the element $R=\sum_{i\in I}(1_{X}\otimes
e_{i})\otimes(e^{i}\otimes1_{H}) \in D(H)\otimes D(H)$ is braided.
\end{restatable}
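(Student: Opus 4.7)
The plan is to verify the three defining conditions of a braided Hopf superalgebra for $(D(H),R)$: invertibility of $R$, quasi-cocommutativity $R\Delta(z)=\Delta^{op}(z)R$ for every $z\in D(H)$, and the two hexagon identities $(\Delta\otimes\mathrm{id})(R)=R_{13}R_{23}$, $(\mathrm{id}\otimes\Delta)(R)=R_{13}R_{12}$. Throughout I would work with the canonical element $\rho=\sum_i e_i\otimes e^i\in H\otimes X$, since by Theorem~\ref{cl:mapIsom} it equals $\xi_{H,H}^{-1}(\mathrm{id}_H)$ and is therefore basis independent; the only features of $\rho$ I will actually use are the dualities $\sum_i f(e_i)\,e^i=f$ for $f\in X$ and $\sum_i e_i\,\alpha(e^i)=a_{\alpha}$-type identities coming from evaluation of $\mathrm{id}_H$. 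Since $i_X(X)$ and $i_H(H)$ generate $D(H)$ as a superalgebra and both sides of the relations to be checked are multiplicative in $z$, it suffices to verify everything on elements of the form $1_X\otimes a$ and $f\otimes 1_H$.

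For invertibility I would produce $R^{-1}$ explicitly. Using the antipode formula of Theorem~\ref{theorem:twistedBialgebras} with $f=\epsilon_H$ and the identities $\epsilon\circ S=\epsilon$ and $\sum_{(a')}\epsilon((a')')S((a')'')=S(a')$, the formula collapses to $S_{D(H)}(1_X\otimes a)=1_X\otimes S(a)$, so the natural candidate is $R^{-1}=\sum_i(1_X\otimes S(e_i))\otimes(e^i\otimes 1_H)$. Checking $RR^{-1}=1_{D(H)}\otimes 1_{D(H)}$ is then an application of Lemma~\ref{lm:MultQDr} to each tensor factor: the second factor yields $\sum_i e_iS(e_i')\otimes e^i$ which, using $\sum_{(e_i)}e_i'\otimes S(e_i'')\otimes e^i=\ldots$ (the duality between $\Delta_H$ and $\mu_X$, together with the antipode axiom in $H$) reduces to $1_H\otimes 1_X$; the first factor is $\epsilon_H$ by the counit axiom. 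For quasi-cocommutativity on $z=1_X\otimes a$, unwind $(1_X\otimes a)\cdot R$ and $R\cdot\Delta^{op}(1_X\otimes a)$ via Lemma~\ref{lm:MultQDr}; the resulting identity in $D(H)\otimes D(H)$ reduces, after cancelling the scalar first factors, to the statement $\sum_{(a),i}(-1)^{\ldots}a'e_i\otimes e^i\cdot a''=\sum_{(a),i}(-1)^{\ldots}e_ia''\otimes a'\cdot e^i$, which is nothing but the matching-pair axiom (\ref{eq:twistBi5}) for the pair $(X,H)$ read through the pairing given by $\rho$, i.e.\ a repackaging of Corollaries~\ref{cl:caction} and~\ref{cl:rightAction}. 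The case $z=f\otimes 1_H$ is the mirror computation.

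For the hexagons, compute $(\Delta_{D(H)}\otimes\mathrm{id})(R)=\sum_{i,(e_i)}(1_X\otimes e_i')\otimes(1_X\otimes e_i'')\otimes(e^i\otimes 1_H)$ from the explicit $\Delta_{D(H)}$ formula, and $R_{13}R_{23}=\sum_{i,j}(1_X\otimes e_i)\otimes(1_X\otimes e_j)\otimes(e^i e^j\otimes 1_H)$, where $(e^i\otimes 1_H)(e^j\otimes 1_H)=e^ie^j\otimes 1_H$ follows immediately from Lemma~\ref{lm:MultQDr} because $\Delta(1_H)=1_H\otimes 1_H$ kills all nontrivial terms. Equality of the two expressions is precisely the duality $\mu_X=\Delta_H^*\circ\lambda_{H,H}$ translated to an identity on $\rho$: evaluating both sides against arbitrary $\alpha\otimes\beta\in X\otimes X$ in the first two slots returns $\sum_i(\alpha\beta)(e_i)\,e^i=\sum_i f(e_i)e^i$ for $f=\alpha\beta$. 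The second hexagon follows by the symmetric computation using $(1_X\otimes e_i)(1_X\otimes e_j)=1_X\otimes e_ie_j$ and the duality $\Delta_X=\mu_H^*\circ\lambda_{H,H}^{-1}$. The genuine difficulty, and the place I expect most of the work to go, is tracking the graded signs coming from Lemma~\ref{lm:MultQDr} and from $\lambda_{H,H}$: in the super setting the choice of $(H^{op})^*$ (rather than $H^*$) is exactly what produces the signs needed for the matching-pair axioms (\ref{eq:twistBi1})--(\ref{eq:twistBi5}) to cancel the signs arising on the other side of each identity, and checking this cancellation termwise is the main obstacle.
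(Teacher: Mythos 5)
Your proposal is correct and follows essentially the same route as the paper: both establish invertibility of $R$, quasi-cocommutativity, and the two relations $(\Delta\otimes\mathrm{id})(R)=R_{13}R_{23}$ and $(\mathrm{id}\otimes\Delta)(R)=R_{13}R_{12}$ by reducing each to a duality identity for the canonical element $\rho=\sum_{i}e_{i}\otimes e^{i}$ (the paper simply carries out every such reduction by evaluating both sides against arbitrary test elements of $H\otimes X\otimes\cdots$ via $\lambda$ and $ev_{H}$, which is exactly where the graded signs you flag get tracked, and its inverse $\sum_{i}(1_{X}\otimes e_{i})\otimes((e^{i}\circ S)\otimes 1_{H})$ coincides with your $\sum_{i}(1_{X}\otimes S(e_{i}))\otimes(e^{i}\otimes 1_{H})$ under $\xi_{H,H}$). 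The one point to watch in your sketch of the second hexagon is that $\Delta_{X}=\lambda_{H,H}^{-1}\circ(\mu_{H}\circ\tau_{H,H})^{*}$ rather than $\lambda_{H,H}^{-1}\circ\mu_{H}^{*}$, and that $\Delta_{D(H)}$ interchanges the two $X$-legs; these two reversals compensate, and both sides then evaluate to $(-1)^{|b||c|}\,t(cb)$ exactly as in the paper's computation.
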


\section{Construction of Universal $R$-matrix in Supercase and its Multiplicative Formula}
\label{QDUq}

In this section we describe the quantum double of a finite-dimensional Hopf superalgebra $\bar{U}_q$ \ref{Uqdef}. First, we
recall and proof some important results about $U_q(sl(2|1))$. After that we construct bases for Hopf superalgebras
$U_q(sl(2|1))$ and $\bar{U}_q$. We consider the subalgebra $B_q^{+}$ \ref{PDBSdef} and its dual in order to construct the
quantum double $D(B_q^{+})$ of $\bar{U}_q$. We use $D(B_q^{+})$ to construct an epimorphism \ref{th:specialisom} of Hoph
superalgebras and get the universal $R$-matrix of $\bar{U}_q$. We also describe a multiplicative formula for the obtained
universal $R$-matrix.

\subsection{Hopf Superagelbra $U_q(sl(2|1))$}

\begin{definition}\label{def:u_q} Fix an algebraically independent and invertible element $q$ over $\Bbbk$. Let $F$ be the field
of fractions of the integral domain $\Bbbk[q,q^{-1}]$.
$U_q(sl(2|1)) \; (\text{we also use notation } U_q) $ is the associative unital algebra over $F$, generated by elements $k_1,
k_2, e_1, e_2, f_1, f_2$, which satisfy the following relations:
	
	\[ k_1k_1^{-1}=k_1^{-1}k_1=1, \; k_2k_2^{-1}=k_2^{-1}k_2=1, \; k_2k_1=k_1k_2, \]
	\[ e_1k_1=q^{-2}k_1e_1, \; e_2k_1=qk_1e_2, \; e_1k_2=qk_2e_1, \; e_2k_2=k_2e_2, \]
	\[ k_1f_1=q^{-2}f_1k_1, \; k_1f_2=qf_2k_1, \; k_2f_1=qf_1k_2, \; k_2f_2=f_2k_2, \]
	\[ e_1f_1= f_1e_1 + \frac{k_1-k_1^{-1}}{q-q^{-1}}, \; e_2f_2= -f_2e_2 + \frac{k_2-k_2^{-1}}{q-q^{-1}}, \]
	\[e_1f_2=f_2e_1, \; e_2f_1=f_1e_2, \; e_2^2=f_2^2=0,\]
	\[e_1^2e_2-(q+q^{-1})e_1e_2e_1+e_2e_1^2=0,\]
	\[f_1^2f_2-(q+q^{-1})f_1f_2f_1+f_2f_1^2=0.\]
\end{definition}

Additionally define elemenets:
\[ e_3 := e_1e_2 - q^{-1}e_2e_1, f_3 := f_2f_1 - qf_1f_2. \]
Then two last equations can be expressed in the form:
\begin{equation} \label{e3f3eq}
	e_3 e_1 = q^{-1} e_1 e_3, \; f_3 f_1 = q^{-1} f_1 f_3.
\end{equation}

We can introduce the superalgebra structure on $U_q$:
\[deg(k_1)=deg(k_2)=deg(k_1^{-1})=deg(k_2^{-1})=deg(e_1)=deg(f_1)=0,\]
\[deg(e_2)=deg(f_2)=deg(e_3)=deg(f_3)=1.\]

$U_q$ is also the Hopf superalgebra. Define the comultiplication $\Delta$ on multiplicative generators:
\[ \Delta:U_q \rightarrow U_q \otimes U_q, \]
\[ \Delta(k_i)=k_i \otimes k_i, \; \Delta(k_i^{-1})=k_i^{-1} \otimes k_i^{-1}, \; \Delta(e_i)=e_i \otimes 1 + k_i \otimes e_i,
\; \Delta(f_i)=f_i \otimes k_i^{-1} + 1 {\otimes} f_i,\]
\[ \Delta(e_3) = (q-q^{-1}) k_2 e_1 \otimes e_2 + e_3 \otimes 1 + k_1 k_2 \otimes e_3, \]
\[ \Delta(f_3) = ( q^{-1} - q ) f_2 \otimes f_1 k_2^{-1} + 1 \otimes f_3 + f_3 \otimes k_1^{-1} k_2^{-1}, \]
where $i \in \{1,2\}$.

Define the counit $\epsilon$ on multiplicative generators:
\[ \epsilon:U_q \rightarrow F, \]
\[ \epsilon(1_{U_q})=1_{F}, \; \epsilon(k_i)=1_{F}, \; \epsilon(k_i^{-1})=1_{F}, \; \epsilon(e_i)=0, \; \epsilon(f_i)=0, \;
\epsilon(e_3)=0, \; \epsilon(f_3)=0,\]
where $i \in \{1,2\}$.

Define the antipode $S$ on multiplicative generators:
\[ S:U_q \rightarrow U_q^{op}, \]
\[ S(1)=1, \; S(k_i)=k_i^{-1}, \; S(k_i^{-1})=k_i, \; S(e_i)=-k_i^{-1}e_i, \; S(f_i)=-f_ik_i, \]
\[ S(e_3) = (1-q^{-2}) k_1^{-1} k_2^{-1} e_1 e_2 - k_1^{-1} k_2^{-1} e_3, \; S(f_3) = (q - q^3) f_1 f_2 k_1 k_2 - q^2 f_3 k_1
k_2, \]
where $i \in \{1,2\}$.

Define the inverse $S^{-1}$ of the antipode on multiplicative generators of $U_q^{op}$:
\[ S^{-1}:U_q^{op} \rightarrow U_q, \]
\[ S^{-1}(1) = 1, S^{-1} (k_i) = k_i^{-1}, \; S^{-1}(k_i^{-1})=k_i, \; S^{-1} (e_i) = - q^{2[i=1]} k_i^{-1} e_i, S^{-1} (f_i) =
- q^{-2[i=1]} f_i k_i, \]
\[ S^{-1}(e_3) = (q^2-1) k_1^{-1} k_2^{-1} e_1 e_2 - q^2 k_1^{-1} k_2^{-1} e_3, \;  S^{-1}(f_3) = (q^{-1} - q) f_1 f_2 k_1 k_2 -
f_3 k_1 k_2,  \]
where $i \in \{1,2\}$.

We introduce some notations.
We have for all $n \in \mathbb{Z}$
\[[n]:=\frac{q^n-q^{-n}}{q-q^{-1}}.\]
We have for all $m \in \mathbb{Z}_+$
\[[0]!:=1, \; [m]!:=[1][2]...[m].\]
We have for all $n,m \in \mathbb{Z}_+$, $n \ge m$
\[{n \brack m}:=\frac{[n]!}{[m]![n-m]!}.\]	
We have for all $n \in \mathbb{Z}$ and an invertible variable $x$
\[ [x;n] := \frac{x q^n - x^{-1} q^{-n} }{q - q^{-1}}. \]
If non-commuting variables $x$ and $y$ satisfy the equation $yx=q^2xy$, then we have for $n>0$
\[ (x+y)^n = \sum_{k=0}^{n} q^{k(n-k)} {n \brack k} x^k y^{n-k}. \]	

\begin{restatable}{lemma}{eq}\label{lm:relonbasis}
	\label{lemma:eq}
	We have in $U_q$
	\[ f_3^sf_1^w=q^{-sw}f_1^wf_3^s, \; f_2^l f_1^w = q^{wl} f_1^w f_2^l + [l=1] [w] f_1^{w-1} f_3, \]
	\[ k_1^{i}f_1^{w}=q^{-2iw}f_1^wk_1^i, \; k_2^jf_1^w=q^{jw}f_1^wk_2^j,\]
	\[ e_1^r f_1^w = f_1^w e_1^r + [w,r > 0] \sum_{u=1}^{min(r,w)} \frac{[r]! [w]!}{[u]! [r-u]! [w-u]!}  f_1^{w-u} \times \]
	\[ \times [k_1; \; 2u-r-w] [k_1; \; 2u-r-w-1] ... [k_1; \; u-r-w+1] e_1^{r-u}, \]
	\[ e_3^h f_1^w = f_1^w e_3^h - q^{w-2} [h=1] [w] f_1^{w-1} k_1^{-1} e_2, \;  e_2^tf_1^w=f_1^we_2^t, \]
	\[ f_3^2 = 0, \; f_2^lf_3^s=(-q)^{sl}f_3^sf_2^l, \; k_1^if_3^s=q^{-is}f_3^sk_1^i, \; k_2^jf_3^s=q^{js}f_3^sk_2^j, \]
	\[ e_1^r f_3^s = f_3^s e_1^r - q^{2-r} [s=1] [r] f_2 k_1 e_1^{r-1}, e_3^h f_3^s = (-1)^{sh} f_3^s e_3^h + [s,h>0]
\frac{k_1k_2-k_1^{-1}k_2^{-1}}{q-q^{-1}}, \]
	\[ e_2^t f_3^s = (-1)^{st} f_3^s e_2^t + [s,t > 0] f_1k_2^{-1}, \]
	\[ f_2^2=0, k_1^if_2^l=q^{il}f_2^lk_1^i, \; k_2^jf_2^l=f_2^lk_2^j, \; e_1^rf_2^l=f_2^le_1^r, \; e_3^h f_2^l = (-1)^{lh}
f_2^l e_3^h + [h,l > 0] k_2 e_1, \]
	\[ e_2^t f_2^l = (-1)^{lt} f_2^l e_2^t + [l,t > 0] [k_2;0], \]
	\[ k_2^jk_1^i=k_1^ik_2^j, \; e_1^rk_1^i=q^{-2ir}k_1^{i}e_1^{r}, \; e_3^hk_1^i=q^{-ih}k_1^ie_3^h, \;
e_2^tk_1^i=q^{it}k_1^ie_2^t, \]
	\[ e_1^rk_2^j=q^{jr}k_2^je_1^r, \; e_3^hk_2^j=q^{jh}k_2^je_3^h, \; e_2^tk_2^j=k_2^je_2^t, \]
	\[ e_3^he_1^r=q^{-hr}e_1^re_3^h, \; e_2^t e_1^r = q^{rt} e_1^r e_2^t - q [t=1] [r] e_1^{r-1} e_3, \]
	\[ e_3^2 = 0, e_2^te_3^h= (-q)^{ht} e_3^he_2^t, e_2^2=0, \]
	where $i,j \in \mathbb{Z}, w,r \in \mathbb{N}, l,s,t,h \in \{0,1\} $.
\end{restatable}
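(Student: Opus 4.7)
The lemma is a compendium of commutation identities in $U_q$, and every one of them reduces, by induction on the relevant exponents, to a short computation with the defining relations of Definition \ref{def:u_q} together with the formulas $e_3 = e_1e_2 - q^{-1}e_2e_1$ and $f_3 = f_2f_1 - qf_1f_2$. The plan is to verify the base cases (typically with exponents equal to $1$, or with odd-letter exponents automatically restricted to $\{0,1\}$ by nilpotency) and then lift. I would organize the verification into three blocks of increasing difficulty: Cartan and nilpotency identities; commutations internal to the $e$-sector and (dually) the $f$-sector; and mixed $e$-$f$ identities.

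\textbf{Cartan identities and nilpotency.} The identities $k_i^a e_j^r = q^{\star}e_j^r k_i^a$ and $k_i^a f_j^w = q^{\star}f_j^w k_i^a$ for $j \in \{1,2\}$ follow from the degree-$1$ relations of Definition \ref{def:u_q} by routine induction on $a, r, w$; the corresponding $k_i$-$e_3$ and $k_i$-$f_3$ identities are then obtained by applying these to the defining formulas for $e_3$ and $f_3$. Nilpotency $e_3^2 = f_3^2 = 0$ is the first non-trivial input. Expanding $e_3^2$ and using $e_2^2 = 0$ together with the identity $e_2e_1^2e_2 = (q+q^{-1})\,e_2e_1e_2e_1$ (obtained by multiplying the Serre relation on the left by $e_2$) yields $e_3^2 = e_1e_2e_1e_2 - e_2e_1e_2e_1$; multiplying the Serre relation on the right by $e_2$ then gives $e_1e_2e_1e_2 = e_2e_1e_2e_1$ (using $q+q^{-1}\ne 0$), whence $e_3^2 = 0$. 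The mirrored computation gives $f_3^2 = 0$.

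\textbf{Internal commutations.} The identity (\ref{e3f3eq}), $e_3e_1 = q^{-1}e_1e_3$, is a two-line reformulation of Serre; combined with $e_3^2 = 0$ it upgrades by induction to $e_3^h e_1^r = q^{-hr}e_1^r e_3^h$. A direct expansion of $e_2e_3$ using $e_2^2 = 0$ gives $e_2e_3 = -q\,e_3e_2$, and hence $e_2^t e_3^h = (-q)^{ht} e_3^h e_2^t$. For $e_2^t e_1^r$ I would rewrite $e_2 e_1 = q\,e_1e_2 - q\,e_3$ (immediate from the definition of $e_3$) and induct on $r$, using $e_3 e_1 = q^{-1}e_1e_3$ to collect the correction term; the cases $t=0$ and $t\ge 2$ are trivial. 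All corresponding $f$-sector identities follow by the mirror-image argument with the two sides swapped.

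\textbf{Mixed $e$-$f$ relations and the main obstacle.} I would first establish the base commutators $e_1f_3 = f_3e_1 - q\,f_2k_1$, $e_2f_3 = -f_3e_2 + f_1k_2^{-1}$, $e_3f_2 = -f_2e_3 + k_2e_1$, $e_3f_1 = -f_1e_3 + q^{-1}k_1^{-1}e_2$, and $e_3f_3 = -f_3e_3 + (k_1k_2-k_1^{-1}k_2^{-1})/(q-q^{-1})$. Each is a short direct calculation using $e_if_j = \delta_{ij}[k_i;0] \pm f_je_i$, the $k$-$f$ relations, and $e_2^2 = f_2^2 = 0$. Lifting to arbitrary powers $e_1^r$ or $f_1^w$ on the outside hinges on the shift rule $[k_1;n]\,e_1 = e_1\,[k_1;n-2]$ (equivalent to $e_1k_1 = q^{-2}k_1e_1$), which moves the $[k_1;\bullet]$ factors past $e_1$. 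The hardest identity, and the one I expect to be the main obstacle, is the $U_q(\mathfrak{sl}_2)$-type straightening formula $e_1^r f_1^w = f_1^w e_1^r + \sum_u \cdots$; it is proved by double induction on $(r,w)$ with base $r=w=1$ being the defining relation $e_1f_1 = f_1e_1 + [k_1;0]$, whose inductive step collapses to a $q$-Pascal identity for ${n \brack k}$ combined with the shift rule above, requiring careful bookkeeping of the telescoping products of $[k_1;\bullet]$ factors. All remaining mixed identities of type $e_\alpha^\star f_1^w$ or $e_1^r f_\beta^s$ with $\alpha,\beta \in \{2,3\}$ are then corollaries of a single induction on the outside exponent, using the previously established base commutators and the internal commutations from the preceding block.
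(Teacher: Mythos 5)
Your strategy coincides with the paper's: the paper proves every identity in this lemma by double induction on the two exponents, explicitly writes out only the $e_1^r f_1^w$ straightening formula (by exactly the double induction you describe, with base case $e_1f_1 = f_1e_1 + [k_1;0]$ and inductive steps that recombine the $q$-binomial coefficients and shift the $[k_1;\bullet]$ factors), and declares the remaining relations routine. Two of your stated auxiliary formulas carry sign slips you should fix before writing this up: since $f_1$ is even the base commutator is $e_3f_1 = f_1e_3 - q^{-1}k_1^{-1}e_2$ (not $-f_1e_3 + q^{-1}k_1^{-1}e_2$), as a direct computation from $e_1f_1 = f_1e_1 + [k_1;0]$ and $e_2f_1=f_1e_2$ shows; and the shift rule implied by $e_1k_1=q^{-2}k_1e_1$ is $[k_1;n]\,e_1 = e_1\,[k_1;n+2]$ (while $[k_1;n]\,f_1 = f_1\,[k_1;n-2]$), not $[k_1;n]\,e_1 = e_1\,[k_1;n-2]$.
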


\begin{restatable}{theorem}{basisU}
	\label{theorem:basisU}
	The elements of
	\[ G = \{ f_1^wf_3^sf_2^lk_1^ik_2^je_1^re_3^he_2^t | \; i,j \in \mathbb{Z}, \; w,r \in \mathbb{Z}_+, \; l,s,t,h \in \{0,1\}
\} \]
	form an additive basis of $U_q$.
\end{restatable}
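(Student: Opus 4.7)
The plan is to establish the two parts of the PBW-type statement separately: $G$ spans $U_q$, and $G$ is linearly independent over $F$.

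For spanning, I define the generator ordering $f_1 < f_3 < f_2 < k_1^{\pm 1} < k_2^{\pm 1} < e_1 < e_3 < e_2$ and endow the set of words in the generators with a compatible Noetherian well-order (length-lexicographic, after assigning positive weights to the generators so that termination is guaranteed). Each identity in Lemma \ref{lm:relonbasis} is a rewrite rule that exchanges an adjacent out-of-order pair of generators at the cost of scalar factors from $F$ and strictly lower-order correction terms in that well-order. By Noetherian induction every monomial reduces to an $F$-linear combination of elements of $G$, giving the spanning claim.

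For linear independence I would use a Verma-module argument. Let $U_q^{+}$ denote the subalgebra generated by the $k_i^{\pm 1}$ and $e_1, e_2, e_3$, let $\lambda = (\lambda_1, \lambda_2)$ be a generic weight, and give the one-dimensional space $F_\lambda$ the $U_q^{+}$-module structure $k_i \mapsto \lambda_i$, $e_i \mapsto 0$, $e_3 \mapsto 0$. Assuming the ``small'' PBW statement that $U_q^{-}$, the subalgebra generated by $f_1, f_2, f_3$, is a free $F$-module on $\{f_1^w f_3^s f_2^l\}$, the induced module $M(\lambda) = U_q \otimes_{U_q^{+}} F_\lambda$ has these monomials (tensored with $1$) as an $F$-basis. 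Now suppose a hypothetical nontrivial relation $\sum c_g g = 0$ holds in $U_q$. Acting on the highest-weight vector $v_\lambda = 1 \otimes 1$: every summand with $(r,h,t) \neq (0,0,0)$ annihilates $v_\lambda$, while the rest yields $\sum c_g \lambda_1^i \lambda_2^j\, f_1^w f_3^s f_2^l \otimes 1 = 0$. Since the Laurent monomials in $\lambda_1, \lambda_2$ are $F$-linearly independent for generic $\lambda$, the basis property of $M(\lambda)$ forces $c_g = 0$ for all $g \in G$ with $r = h = t = 0$. The remaining coefficients are handled either by a symmetric induction commuting $e$'s past $f$'s using Lemma \ref{lm:relonbasis}, or by repeating the argument on an analogous right Verma module with the roles of $U_q^{+}$ and $U_q^{-}$ reversed.

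The main obstacle is the reliance on the small PBW statements for $U_q^{\pm}$. These must be proved by a separate diamond-lemma style reduction applied to the subalgebras generated respectively by $\{f_1, f_2, f_3\}$ and $\{e_1, e_2, e_3\}$; in particular the super-Serre relation $e_1^2 e_2 - (q+q^{-1}) e_1 e_2 e_1 + e_2 e_1^2 = 0$ and its $f$-analogue must be shown to produce no further constraints beyond those captured by the definitions $e_3 = e_1 e_2 - q^{-1} e_2 e_1$ and $f_3 = f_2 f_1 - q f_1 f_2$. Verifying this — either by exhibiting a faithful representation on a Fock/polynomial space on which the ordered $e$- and $f$-monomials act by linearly independent operators, or by a direct overlap-resolution check for the reduction system of Lemma \ref{lm:relonbasis} — is where the substantive combinatorial work of the proof concentrates.
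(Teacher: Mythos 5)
Your spanning argument is the same as the paper's: order the generators, measure a monomial by its inversion index, and use the relations of Lemma \ref{lm:relonbasis} as rewrite rules that strictly decrease (degree, index), so every monomial reduces to a combination of elements of $G$. That half is fine.

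The linear-independence half, however, has a genuine gap, and it sits exactly where you locate it yourself. Your Verma-module / triangular-decomposition strategy is a legitimate alternative route in principle, but it is conditional on the ``small'' PBW statements for $U_q^{\pm}$ --- in particular on the claim that the super-Serre relations $e_1^2e_2-(q+q^{-1})e_1e_2e_1+e_2e_1^2=0$ and its $f$-analogue impose no constraints beyond the reordering rules involving $e_3$ and $f_3$. You explicitly defer this to ``a separate diamond-lemma style reduction'' or ``a faithful representation,'' but that deferred verification is the entire mathematical content of the independence proof; nothing in your sketch supplies it. The paper does precisely this work, and in a way that bypasses the Verma-module detour altogether: it constructs a single linear map $\theta:T(L)\to F[z_1,\dots,z_8]$ by induction on degree and inversion index, and the heart of the proof is the check that the value of $\theta$ does not depend on which out-of-order adjacent pair is rewritten first --- split into the case of disjoint pairs and the case of overlapping pairs $a>b$, $b>c$. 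The overlapping case is exactly the overlap-resolution (ambiguity) check you name as ``where the substantive combinatorial work concentrates.'' Since $\theta$ then kills the defining ideal $J$ and sends the elements of $G$ to distinct monomials $z_1^wz_2^sz_3^lz_4^iz_5^jz_6^rz_7^hz_8^t$, independence follows at once, for the whole algebra rather than for $U_q^{\pm}$ separately. As written, your proposal identifies the hard step but does not carry it out, so it does not constitute a proof. (A secondary, fixable issue: even granting the small PBW statements, your treatment of the coefficients with $(r,h,t)\neq(0,0,0)$ --- which are invisible on the highest-weight vector --- is only gestured at; you would need to act on lower vectors of $M(\lambda)$ or exploit the root-lattice grading to finish.)
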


\subsection{Hopf Superagelbra $\bar{U}_q$ at roots of unity}

Next we suppose that $q$ is a root unity of odd order $d$. Then $ F = \Bbbk (q) = \Bbbk [q] / ( \Phi_d (q))$, where $\Phi_d$ is
the $d$-cyclotomic polynomial. In this case we additionally consider the equation $q^d 1_{U_q} = 1_{U_q}$. It is obviously from
the proof of Theorem \ref{theorem:basisU} that the basis of $U_q$ remains the same.

\begin{restatable}{lemma}{cent}\label{lm:cent}
	\label{lemma:cent}
	Elements $f_1^d, k_1^d - 1, k_2^d - 1, e_1^d$ are in the center of the Hopf superalgebra $U_q$.
\end{restatable}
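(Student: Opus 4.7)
The plan is to verify, case by case, that each of the four even elements commutes with every algebra generator of $U_q$, using the normal-ordering commutation relations gathered in Lemma~\ref{lm:relonbasis}. All four elements have even $\mathbb{Z}_2$-degree, so super-commutation reduces to ordinary commutation; and since an even element commuting with a generating set commutes with the whole algebra, it is enough to check against the finite list $\{k_1^{\pm 1},k_2^{\pm 1},e_1,e_2,f_1,f_2\}$ (commutation with $e_3,f_3$ then follows automatically, since they are polynomials in the generators). The group-like cases $k_1^d-1$ and $k_2^d-1$ are immediate: the $k_j$'s pairwise commute, while each relation of the form $k_je_i=q^{\pm}e_ik_j$ and $k_jf_i=q^{\pm}f_ik_j$ from Definition~\ref{def:u_q}, when iterated $d$ times, produces a scalar factor $q^{\pm d}$ or $q^{\pm 2d}$, all equal to $1$ because $q^d=1$; subtracting the central element $1$ preserves centrality.

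For $f_1^d$ and $e_1^d$, commutation with $k_1^{\pm 1},k_2^{\pm 1}$ similarly reduces to $q^d=1$ after specializing $w=d$ (resp.\ $r=d$) in the relations $k_1^if_1^w=q^{-2iw}f_1^wk_1^i$, $k_2^jf_1^w=q^{jw}f_1^wk_2^j$, $e_1^rk_1^i=q^{-2ir}k_1^ie_1^r$, $e_1^rk_2^j=q^{jr}k_2^je_1^r$ from Lemma~\ref{lm:relonbasis}. The relations $f_2f_1^d=q^df_1^df_2+[d]f_1^{d-1}f_3$ and $e_2e_1^d=q^de_1^de_2-q[d]e_1^{d-1}e_3$ collapse to ordinary commutation once we invoke $q^d=1$ together with the key arithmetic identity
\[
[d]\;=\;\frac{q^d-q^{-d}}{q-q^{-1}}\;=\;0 \qquad\text{when } q^d=1.
\]
The only genuinely delicate cases are the pairings $(e_1,f_1^d)$ and $(e_1^d,f_1)$. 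Both are handled by specializing the general formula $e_1^rf_1^w=f_1^we_1^r+[w,r>0]\sum_{u}\tfrac{[r]![w]!}{[u]![r-u]![w-u]!}f_1^{w-u}(\cdots)e_1^{r-u}$ at $(r,w)=(1,d)$ and $(r,w)=(d,1)$: in each case the sum reduces to a single term $u=1$, whose scalar coefficient is $\tfrac{[1]!\,[d]!}{[1]!\,[0]!\,[d-1]!}=[d]=0$, so the commutator vanishes.

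No conceptual obstacle arises; the work is a mechanical case analysis. The main observation that drives the whole proof is that in every correction term appearing in Lemma~\ref{lm:relonbasis} the scalar prefactor is either a power of $q^d$, a factor of $[d]$, or a product of the two — all of which are trivial at a $d$-th root of unity of odd order. The only care required is to enumerate the generators exhaustively so that no case is overlooked.
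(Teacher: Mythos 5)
Your proposal is correct and follows essentially the same route as the paper's proof: a generator-by-generator check using the commutation relations of Lemma~\ref{lm:relonbasis}, with every correction term killed by $q^{d}=1$ or $[d]=0$ (including the single $u=1$ term in the $e_1^r f_1^w$ formula at $(r,w)=(1,d)$ and $(d,1)$). The only cosmetic difference is that you dispense with $e_3,f_3$ by noting they are polynomials in the generators, whereas the paper checks them explicitly.
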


We now can give

\begin{definition} \label{Uqdef}
Consider in $U_q$ the two-sided $\mathbb{Z}_2$-graded ideal
\[I=(f_1^d, k_1^d - 1, k_2^d - 1, e_1^d).\]
Denote by $\bar{U}_q$ the quotien algebra of the Hopf superalgebra $U_q$ by the ideal $I=(f_1^d, k_1^d - 1, k_2^d - 1, e_1^d)$.
\end{definition}

Next we proof

\begin{restatable}{proposition}{strHUU}
	\label{proposition:strHUU}
	The superalgebra $\bar{U}_q$ has a Hopf superalgebra structure such that the canonical projection from $U_q$ to $\bar{U}_q$
is a Hopf superalgebra morphism.
\end{restatable}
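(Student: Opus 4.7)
The plan is to apply Proposition~\ref{prHopfQuotient}: it suffices to show that $I=(f_1^d,\ k_1^d-1,\ k_2^d-1,\ e_1^d)$ is a $\mathbb{Z}_2$-graded Hopf ideal of $U_q$, i.e.\ a $\mathbb{Z}_2$-graded two-sided ideal satisfying $\epsilon(I)=0$, $\Delta(I)\subset I\otimes U_q + U_q\otimes I$, and $S(I)\subset I$. The first two requirements are immediate: Lemma~\ref{lm:cent} gives centrality of the four generators (so the ideal is automatically two-sided), each generator is even (so $I$ is $\mathbb{Z}_2$-graded), and the counits $\epsilon(e_1^d)=\epsilon(f_1^d)=0$, $\epsilon(k_i^d-1)=0$ are clear.

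For the coproduct condition on the grouplike generators, $\Delta(k_i^d-1)=(k_i^d-1)\otimes k_i^d + 1\otimes(k_i^d-1)$ is manifestly in $I\otimes U_q + U_q\otimes I$. For $e_1^d$, I would set $x=e_1\otimes 1$ and $y=k_1\otimes e_1$; the relation $e_1k_1=q^{-2}k_1e_1$ gives $yx=q^2 xy$ in $U_q\otimes U_q$ (no signs arise, as all elements involved are even), so the $q$-binomial identity stated before Lemma~\ref{lm:relonbasis} yields
$$\Delta(e_1^d)=\sum_{k=0}^{d}q^{k(d-k)}{d\brack k}\, e_1^k k_1^{d-k}\otimes e_1^{d-k}.$$
Since $d$ is odd and $q$ is a primitive $d$-th root of unity, $[k]\neq 0$ for $1\le k\le d-1$ while $[d]=0$, so every middle $q$-binomial coefficient vanishes. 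Only the extreme terms survive, giving $\Delta(e_1^d)=e_1^d\otimes 1 + k_1^d\otimes e_1^d$, which lies in $I\otimes U_q+U_q\otimes I$ after writing $k_1^d\otimes e_1^d=(k_1^d-1)\otimes e_1^d + 1\otimes e_1^d$. The calculation for $\Delta(f_1^d)$ is completely parallel, with $x=1\otimes f_1$ and $y=f_1\otimes k_1^{-1}$ again satisfying $yx=q^2xy$.

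For the antipode condition: $S(k_i^d-1)=k_i^{-d}-1=-k_i^{-d}(k_i^d-1)\in I$. Since $e_1$ is even, $S(e_1^d)=S(e_1)^d=(-k_1^{-1}e_1)^d$, and an easy induction using $k_1^{-1}e_1=q^{-2}e_1k_1^{-1}$ yields $(k_1^{-1}e_1)^d=q^{d(d-1)}k_1^{-d}e_1^d$; as $d$ is odd, $d-1$ is even and $q^{d(d-1)}=(q^d)^{d-1}=1$, so $S(e_1^d)=-k_1^{-d}e_1^d\in I$. The computation for $S(f_1^d)=(-f_1k_1)^d$ is analogous: $(f_1k_1)^d=q^{-d(d-1)}f_1^dk_1^d=f_1^dk_1^d$, so $S(f_1^d)=-f_1^dk_1^d\in I$.

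The only substantive step is the vanishing of the middle $q$-binomial coefficients used to compute $\Delta(e_1^d)$ and $\Delta(f_1^d)$; everything else amounts to mechanical bookkeeping with the given relations. With all four Hopf-ideal conditions verified, Proposition~\ref{prHopfQuotient} immediately gives that $\bar{U}_q=U_q/I$ carries a unique Hopf superalgebra structure for which the canonical projection $\pi:U_q\to\bar{U}_q$ is a Hopf superalgebra morphism.
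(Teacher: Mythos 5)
Your proposal is correct and follows essentially the same route as the paper: invoke Proposition~\ref{prHopfQuotient}, use Lemma~\ref{lm:cent} for centrality, compute $\Delta(e_1^d)$ and $\Delta(f_1^d)$ via the $q$-binomial identity for the relation $yx=q^2xy$, and check $\epsilon$ and $S$ on the generators. The only difference is that you make explicit the vanishing of the middle Gaussian binomials ${d\brack k}$ for $1\le k\le d-1$ (since $[d]=0$ while the denominator factors are nonzero), which the paper leaves implicit.
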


\begin{restatable}{theorem}{basisUU}
	\label{theorem:basisUU}
	Elements of the set
	\[ \{f_1^wf_3^sf_2^lk_1^ik_2^je_1^re_3^he_2^t | \; 0 \le w,i,j,r \le d-1, \; l,s,t,h \in \{0,1\} \} \]
	additively generate a basis of $\bar{U}_q$.
\end{restatable}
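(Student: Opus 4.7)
The plan is to prove the two complementary statements: (i) the restricted monomials span $\bar{U}_q$, and (ii) they remain linearly independent after passing to the quotient. Both steps rest on the centrality of the four ideal generators established in Lemma~\ref{lm:cent}, together with the PBW basis $G$ of $U_q$ from Theorem~\ref{theorem:basisU}.

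For spanning, I would begin with an arbitrary PBW monomial $M = f_1^w f_3^s f_2^l k_1^i k_2^j e_1^r e_3^h e_2^t \in G$. Because $f_1^d, e_1^d, k_1^d - 1, k_2^d - 1$ are central, they commute freely with all factors of $M$; hence $f_1^d \equiv 0$ and $e_1^d \equiv 0$ in $\bar{U}_q$ force $M \equiv 0$ whenever $w \ge d$ or $r \ge d$, while $k_1^d \equiv 1$ and $k_2^d \equiv 1$ allow one to reduce $i$ and $j$ to representatives in $\{0, 1, \ldots, d-1\}$. This half of the argument is essentially routine.

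The main content is linear independence. My plan is to construct an explicit $\Bbbk$-linear projection $\pi : U_q \to V$, where $V \subseteq U_q$ is the subspace spanned by the restricted monomials, defined on the PBW basis by
\[ \pi(f_1^w f_3^s f_2^l k_1^i k_2^j e_1^r e_3^h e_2^t) = \begin{cases} f_1^w f_3^s f_2^l k_1^{i \bmod d} k_2^{j \bmod d} e_1^r e_3^h e_2^t, & 0 \le w, r < d, \\ 0, & \text{otherwise.} \end{cases} \]
By Theorem~\ref{theorem:basisU} this map is well defined and restricts to the identity on $V$. The key technical point is the inclusion $I \subseteq \ker \pi$. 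Since the four generators are central, every element of $I$ is a finite sum $a_1 f_1^d + a_2(k_1^d - 1) + a_3(k_2^d - 1) + a_4 e_1^d$ with $a_i \in U_q$, and centrality guarantees that multiplying any $M \in G$ by $f_1^d$ (resp.\ $e_1^d$) just raises the exponent $w$ (resp.\ $r$) by $d$, whereas multiplication by $k_1^d - 1$ (resp.\ $k_2^d - 1$) yields a difference of two PBW monomials whose $k_1$- (resp.\ $k_2$-) exponents differ by exactly $d$. In the first case $\pi$ kills the result directly; in the second case the two terms either have equal $\pi$-image, since $(i + d) \bmod d = i \bmod d$, or both vanish.

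Given $I \subseteq \ker \pi$, any $v \in V \cap I$ satisfies $v = \pi(v) = 0$, so the restricted monomials project injectively to $\bar{U}_q$; combined with spanning, they form a basis. I expect the sole delicate step to be the inclusion $I \subseteq \ker \pi$, and its success hinges entirely on the centrality statement of Lemma~\ref{lm:cent}: without centrality the commutation relations of Lemma~\ref{lm:relonbasis} would introduce lower-order corrections sitting inside $V$, and the projection argument would collapse.
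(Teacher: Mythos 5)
Your proposal is correct, and the key step --- the inclusion $I \subseteq \ker\pi$ --- does go through exactly as you argue: since the four generators are central, the two-sided ideal coincides with the left ideal, $M f_1^d$ and $M e_1^d$ are again PBW monomials with the relevant exponent raised by $d$ (centrality lets you slide $f_1^d$, $e_1^d$ into position with no correction terms and no scalars), and $M(k_i^d-1)$ is a difference of two PBW monomials identified by reduction of the exponent modulo $d$. The paper reaches the same conclusion by a different and less uniform route. It splits the ideal into two stages: for $(f_1^d, e_1^d)$ it reuses the superspace morphism $\bar{\theta}\colon U_q \to F[z_1,\dots,z_8]$ built in the proof of Theorem~\ref{theorem:basisU}, passes to the quotient $R/(z_1^d, z_6^d)$, and checks that the induced map factors through $U_q/(f_1^d,e_1^d)$ and separates the monomials with $0\le w,r<d$; then, for $k_1^d-1$ and $k_2^d-1$ in turn, it writes a hypothetical kernel element as $X\,(k_i^d-1)$ and derives a contradiction by comparing degrees in $k_i$ and $k_i^{-1}$ (the product must have $k_i$-degree at least $d$, while a combination of restricted monomials has degree below $d$). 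Your single projection $\pi$ onto the span of the restricted monomials subsumes both stages at once and is, in my view, cleaner; what the paper's version buys is that the polynomial-ring morphism is already in hand from Theorem~\ref{theorem:basisU} and the degree argument makes the role of the Laurent structure in $k_1, k_2$ explicit. Both arguments ultimately rest on the same two inputs you identify, namely the PBW basis of $U_q$ and the centrality statement of Lemma~\ref{lemma:cent}, and you are right that centrality is the load-bearing hypothesis: without it, reordering would produce lower-order terms landing inside $V$ and the projection would no longer annihilate the ideal.
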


\subsection{Negative Definite Borel Subalgebra of $\bar{U}_q$ and its Dual}

\begin{definition} Negative definite Borel subalgebra $B_q^-$ is a subalgebra in superalgebra $\bar{U}_q$, which is linearly
generated by elements of set
	\[ \{ f_1^wf_3^sf_2^lk_1^ik_2^j | \; 0 \le w,i,j \le d-1, \; s,l \in \{0,1\} \}. \]
\end{definition}

\begin{definition} \label{PDBSdef} Positive definite Borel subalgebra $B_q^+$ is a subalgebra in superalgebra $\bar{U}_q$, which
is linearly generated by elements of set
	\[ \{ k_1^ik_2^je_1^re_3^he_2^t | \; 0 \le i,j,r < d, \; t,h \in \{0,1\} \}. \]
\end{definition}

\begin{restatable}{proposition}{subH}
	\label{proposition:subH}
	Negative and positive definite Borel subalgebras $B_q^-$ and $B_q^+$ are a Hopf subsuperalgebras in $\bar{U}_q$.
\end{restatable}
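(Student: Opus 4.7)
The plan is to verify the defining properties of a Hopf subsuperalgebra for $B_q^+$ in turn; the argument for $B_q^-$ is completely symmetric, obtained by replacing $e$'s with $f$'s and applying the analogous formulas. Linear independence of the spanning set is free: $\{k_1^i k_2^j e_1^r e_3^h e_2^t\}$ is a subset of the PBW basis of $\bar{U}_q$ given in Theorem \ref{theorem:basisUU}, and gradedness is automatic since every generator is homogeneous.

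First I would show that $B_q^+$ is closed under multiplication. It suffices to reduce a product of two spanning monomials to standard form using the reordering relations of Lemma \ref{lm:relonbasis}. The only relations involved are the purely positive ones: the commutation $k_1 k_2 = k_2 k_1$, the $q$-commutation of $e_i$'s past $k_j$'s, the relations $e_3 e_1 = q^{-1} e_1 e_3$, $e_2 e_1 = q e_1 e_2 - q e_3$, $e_2 e_3 = -q e_3 e_2$, and the nilpotency $e_2^2 = e_3^2 = 0$. Crucially, none of these introduce any $f$-letters, and the quotient relations $k_i^d = 1$ and $e_1^d = 0$ serve only to keep the exponents in the prescribed ranges $0 \le i,j,r \le d-1$ and $h,t \in \{0,1\}$; thus the entire reduction stays inside $B_q^+$.

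Second, I would check closure under $\Delta$, $\epsilon$ and $S$. Since $\Delta$ is a superalgebra morphism, $\epsilon$ is a counit and $S$ is a graded antimorphism (Proposition \ref{PropS}, item 2), it is enough to verify the closure on the multiplicative generators $k_1^{\pm 1}, k_2^{\pm 1}, e_1, e_2, e_3$. The explicit formulas recorded in Section \ref{QDUq} show directly that $\Delta(k_i) = k_i \otimes k_i$, $\Delta(e_i) = e_i \otimes 1 + k_i \otimes e_i$ for $i=1,2$, and $\Delta(e_3) = (q-q^{-1}) k_2 e_1 \otimes e_2 + e_3 \otimes 1 + k_1 k_2 \otimes e_3$ all lie in $B_q^+ \otimes B_q^+$; that $\epsilon$ maps generators into $\Bbbk$ is trivial; and the antipode formulas $S(k_i) = k_i^{-1}$, $S(e_i) = -k_i^{-1} e_i$ and $S(e_3) = (1-q^{-2}) k_1^{-1} k_2^{-1} e_1 e_2 - k_1^{-1} k_2^{-1} e_3$ all lie in $B_q^+$ once the step above is known (so the products $k_1^{-1} k_2^{-1} e_1 e_2$ admit standard form). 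The negative powers of $k_i$ are made sense of by $k_i^{-1} = k_i^{d-1}$ in $\bar U_q$.

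The main obstacle, such as it is, is purely combinatorial bookkeeping in the first step: one must confirm that every reduction step one might perform when straightening an arbitrary product of the form $(k_1^{i_1} k_2^{j_1} e_1^{r_1} e_3^{h_1} e_2^{t_1})(k_1^{i_2} k_2^{j_2} e_1^{r_2} e_3^{h_2} e_2^{t_2})$ appeals only to the purely positive relations above, so that no $f$-term is ever created. For $B_q^-$ the analogous statement is that the reordering uses only the relations among $f_1, f_2, f_3, k_1, k_2$ from Lemma \ref{lm:relonbasis}, together with the quotient identifications, and the coproduct/antipode formulas for the $f_i$ given in Section \ref{QDUq} land in $B_q^- \otimes B_q^-$ and $B_q^-$ respectively.
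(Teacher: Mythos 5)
Your proof is correct, and it differs from the paper's in a way worth noting. The paper proves closure under $\Delta$ and $S$ by brute force: it writes down explicit closed formulas for $\Delta(f_1^{w}f_3^{s}f_2^{l}k_1^{i}k_2^{j})$, $S(f_1^{w}f_3^{s}f_2^{l}k_1^{i}k_2^{j})$ (and likewise for $S^{-1}$ and for the positive monomials) and inspects that every term lands in the right space. You instead first establish multiplicative closure and then exploit that $\Delta$ is a superalgebra morphism and $S$ a graded antimorphism (Proposition \ref{PropS}) to reduce the coalgebra and antipode checks to the multiplicative generators $k_1^{\pm1},k_2^{\pm1},e_1,e_2,e_3$ (resp.\ the $f_i$), whose images are already recorded in Section \ref{QDUq}. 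This is legitimate --- once multiplicative closure is known, $B_q^{\pm}$ coincides with the subalgebra generated by those elements, $B_q^{\pm}\otimes B_q^{\pm}$ is a subalgebra of $\bar U_q\otimes\bar U_q$, and $S$ reverses products only up to sign and stays inside --- and it buys a much shorter argument at the cost of not producing the explicit monomial formulas, which the paper reuses later (e.g.\ in the computations for $X=((B_q^+)^{op})^*$). For the multiplication step the two arguments are essentially the same: your observation that the straightening relations of Lemma \ref{lm:relonbasis} among $k_1,k_2,e_1,e_3,e_2$ (respectively among $f_1,f_3,f_2,k_1,k_2$) never create letters of the opposite sign --- the only new letter produced is $e_3$ from $e_2e_1=qe_1e_2-qe_3$, resp.\ $f_3$ from $f_2f_1=qf_1f_2+f_3$, both of which stay inside --- is exactly the content of the paper's displayed product formula, just left in qualitative form.
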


We know from Section \ref{PrelRes} that $X=((B_q^+)^{op})^*$ is a Hopf superalgebra, where
\[ X = ((B_q^{+})^{*},\Delta_{B_q^{+}}^{*} \circ \lambda_{B_q^{+},B_q^{+}},\epsilon_{B_q^{+}}^{*} \circ
\chi,\lambda_{B_q^{+},B_q^{+}}^{-1} \circ (\mu_{B_q^{+}} \circ \tau_{B_q^{+},B_q^{+}})^{*},\chi^{-1} \circ
\eta_{B_q^{+}}^{*},(S_{B_q^{+}}^{-1})^{*},S_{B_q^{+}}^{*}).  \]

We consider the dual basis of $X$ to the basis of $B_q^{+}$:
\[ \{ (k_1^ik_2^je_1^re_3^he_2^t)^{*} | \; 0 \le i,j,r < d, \; t,h \in \{0,1\} \}. \]
Therefore we have
\[ (k_1^ik_2^je_1^re_3^he_2^t)^{*}(k_1^{i'}k_2^{j'}e_1^{r'}e_3^{h'}e_2^{t'}) = [i=i',j=j',r=r',h=h',t=t'] 1_F. \]

$\epsilon_{B_q^+}$ is an unit in $X$ and, moreover,
\[ \epsilon_{B_q^+} = \sum_{0 \le v,p \le d-1} (k_1^v k_2^p)^*. \]
Define elements in $X$:
\[ \alpha_{k_1} = \sum_{0 \le v,p \le d-1} q^{-2v+p} (k_1^v k_2^p)^*, \; \alpha_{k_2} = \sum_{0 \le v,p \le d-1} q^{v} (k_1^v
k_2^p)^*, \]
\[ \alpha_{e_1} = -(q-q^{-1})^{-1} \sum_{0 \le v,p \le d-1} q^{2v-p} (k_1^v k_2^p e_1)^{*}, \; \alpha_{e_2} = (q-q^{-1})^{-1}
\sum_{0 \le v,p \le d-1} q^{-v} (k_1^v k_2^p e_2)^{*}, \]
\[ \alpha_{e_3} = (q - q^{-1})^{-1} \sum_{0 \le v_1,v_2,p_1,p_2 \le d-1} (q^{v_1-p_1} (k_1^{v_1} k_2^{p_1} e_1 e_2)^{*} +
q^{v_2-p_2} (k_1^{v_2} k_2^{p_2} e_3)^{*}). \]

\begin{restatable}{lemma}{Xmultip}\label{lm:mulX}
	We have for all $0 \le i_1,j_1,r_1 < d, \; t_1,h_1 \in \{0,1\}$
	\[ \alpha_{e_1}^{r_1} \alpha_{e_3}^{h_1} \alpha_{e_2}^{t_1} \alpha_{k_1}^{i_1} \alpha_{k_2}^{j_1} = \]
	\[ = \sum_{0 \le v,p \le d-1} (-1)^{r_1} (q-q^{-1})^{-r_1} q^{(r_1-i_1)(2v-p) + \frac{r_1(r_1-1)}{2} + j_1v} [r_1]! (k_1^v
k_2^p e_1^{r_1})^{*} + \]
	\[ + (-1)^{r_1} (q-q^{-1})^{-r_1-1} q^{r_1(2v-p+1) + \frac{r_1(r_1-1)}{2} + 2i_1(-2v+p) + j_1(2v+1) -i_1 + v - p } [r_1+1]!
(k_1^v k_2^p e_1^{r_1+1} e_2 )^{*} + \]
	\[ + (-1)^{r_1} (q-q^{-1})^{-r_1-1} q^{r_1(2v-p+2) + \frac{r_1(r_1-1)}{2} + 2i_1(-2v+p) + j_1(2v+1) -i_1 + v - p} [r_1]!
(k_1^v k_2^p e_1^{r_1} e_3 )^{*} + \]
	\[ + (-1)^{r_1} (q-q^{-1})^{-r_1-1} q^{r_1(2v-p)+\frac{r_1(r_1-1)}{2} + i_1(-2v+p) + j_1v -v} [r_1]! (k_1^{v} k_2^{p}
e_1^{r_1} e_2 )^{*} + \]
	\[ + (q-q^{-1}) (-1)^{r_1} (q-q^{-1})^{-r_1-1} q^{r_1(2v-p)+\frac{r_1(r_1-1)}{2}+ i_1(-2v+p) + j_1v -v -1} [r_1]! (k_1^{v}
k_2^{p} e_1^{r_1-1} e_3 )^{*} + \]
	\[ +  (-1)^{r_1+1} (q-q^{-1})^{-r_1-2} q^{r_1(2v-p) + \frac{r_1(r_1-1)}{2} + i_1(-2v+p)+j_1v - v - 2} [r_1]! (k_1^{v}
k_2^{p} e_1^{r_1} e_3 e_2 )^{*}. \]
\end{restatable}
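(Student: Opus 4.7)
Since $X = ((B_q^+)^{op})^*$, multiplication in $X$ is dual to the comultiplication of $B_q^+$ read in opposite order. Explicitly, by the formula recorded in the remark preceding Proposition \ref{pr:Dualcopop}, for $f,g \in X$ one has $(fg)(a)=\sum_{(a)}(-1)^{|a'||a''|+|g||a''|}f(a'')g(a')$, so iterating gives the five-fold product as a signed sum indexed by the fourfold coproduct of a test element. The strategy is to evaluate the left-hand side on every basis element $k_1^v k_2^p e_1^{r'} e_3^{h'} e_2^{t'}$ of $B_q^+$, read off the coefficient, and check that this matches the coefficient supplied by the right-hand side.

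Concretely, I would first derive closed-form expressions for $\Delta(k_1^i)$, $\Delta(k_2^j)$, $\Delta(e_1^r)$, $\Delta(e_2^t)$ and $\Delta(e_3^h)$ in $B_q^+\otimes B_q^+$. The grouplike generators $k_i$ are trivial; for $\Delta(e_1^r)$ one applies the $q$-binomial theorem to $\Delta(e_1)=e_1\otimes 1+k_1\otimes e_1$ using $(k_1\otimes e_1)(e_1\otimes 1)=q^2(e_1\otimes 1)(k_1\otimes e_1)$; for $\Delta(e_2)$ and $\Delta(e_3)$ only the two listed summands contribute because of $e_2^2=e_3^2=0$. From these one assembles $\Delta^{(4)}(k_1^v k_2^p e_1^{r'} e_3^{h'} e_2^{t'})$, tracking the Koszul signs produced by $\tau_{H,H}$ each time an odd element crosses another. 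Then one plugs into the five-fold dual product. Because each $\alpha_\star$ is supported on just one or two dual generators — $\alpha_{k_i}$ on $(k_1^\bullet k_2^\bullet)^*$, $\alpha_{e_1}$ on $(k_1^\bullet k_2^\bullet e_1)^*$, $\alpha_{e_2}$ on $(k_1^\bullet k_2^\bullet e_2)^*$, and $\alpha_{e_3}$ on both $(k_1^\bullet k_2^\bullet e_1 e_2)^*$ and $(k_1^\bullet k_2^\bullet e_3)^*$ — only a very small number of legs of $\Delta^{(4)}$ survive the evaluation. Matching the surviving terms against the six dual-basis elements listed in the statement finishes the proof.

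The argument is structurally routine but computationally heavy; the hard part is purely bookkeeping. There are four independent sources of $q$-powers and signs that must all be tracked correctly: the twist factors $k_i$ appearing in $\Delta(e_j)$, the $q$-binomial coefficients from $\Delta(e_1^{r_1})$, the order-reversal enforced by the $(H^{op})^*$ formula (which is why the $(2v-p)$ exponent in $\alpha_{k_1}$ ends up paired with $i_1$ rather than $-i_1$ in some terms), and the Koszul signs on the odd factors $e_2,e_3,f_2,f_3$. The single delicate point is the appearance of the dual $(k_1^v k_2^p e_1^{r_1} e_3)^*$ with \emph{two different} $q$-power coefficients: one contribution comes from the piece of $\alpha_{e_3}$ concentrated on $(\,\cdot\,e_3)^*$, while the other comes from the piece concentrated on $(\,\cdot\,e_1 e_2)^*$ together with a second $e_1$-leg from $\alpha_{e_1}$, which must then be re-expressed in the standard basis via $e_1 e_2 = e_3 + q^{-1} e_2 e_1$; this same rewriting is responsible for the coefficient of $(k_1^v k_2^p e_1^{r_1-1} e_3)^*$ and for the extra factor $(q-q^{-1})$ it carries. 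Once this algebraic recombination is handled consistently, assembling the answer reduces to collecting $q$-exponents term by term.
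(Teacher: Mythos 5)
Your overall strategy --- evaluate the five-fold product on each basis monomial $k_1^v k_2^p e_1^{r'} e_3^{h'} e_2^{t'}$ of $B_q^+$ using the explicit coproduct formulas, exploit the nilpotency of $e_2,e_3$ and the $q$-binomial expansion of $\Delta(e_1^{r})$, and read off the dual-basis coefficients --- is exactly the route the paper takes, and your remarks about the recombination $e_1e_2=e_3+q^{-1}e_2e_1$ being responsible for the doubled $(\cdots e_3)^*$-type contributions are on target. However, there is a genuine error at the foundation of the computation: you have taken the wrong multiplication formula for $X$. Since $X=((B_q^+)^{op})^*$, the ``op'' dualizes to the \emph{comultiplication} of $X$ (which is $\lambda^{-1}\circ(\mu\circ\tau)^*$), whereas the \emph{multiplication} of $X$ is the plain convolution $\Delta_{B_q^+}^*\circ\lambda$, that is,
\[ (fg)(a)\;=\;\sum_{(a)}(-1)^{|a'||g|}\,f(a')\,g(a''), \]
with no order reversal. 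The formula you quote, $(fg)(a)=\sum_{(a)}(-1)^{|a'||a''|+|g||a''|}f(a'')g(a')$, is the multiplication of $(H^{cop})^*\cong(H^*)^{op}$, not of $(H^{op})^*$. Because $B_q^+$ is far from cocommutative the two products genuinely differ: with your formula one gets $\alpha_{k_1}\alpha_{e_1}=q^{2}\alpha_{e_1}\alpha_{k_1}$ instead of the relation $\alpha_{k_1}\alpha_{e_1}=q^{-2}\alpha_{e_1}\alpha_{k_1}$ of Proposition \ref{pr:baisX}, and every $q$-exponent in Lemma \ref{lm:mulX} involving $i_1$ or $j_1$ would come out reversed. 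Your parenthetical explanation that the pairing of $(2v-p)$ with $i_1$ is ``enforced by the order reversal'' is a symptom of this misreading: in fact $(r_1-i_1)(2v-p)$ is simply the sum of $r_1(2v-p)$ coming from $\alpha_{e_1}^{r_1}$ and $i_1(-2v+p)$ coming from $\alpha_{k_1}^{i_1}(k_1^vk_2^p)=q^{i_1(-2v+p)}$, with no reversal anywhere.

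Two smaller points. First, the Koszul signs you need to track involve only $e_2,e_3$; the odd elements $f_2,f_3$ do not occur in $B_q^+$. Second, the paper does not assemble the full fourfold coproduct at once: it proceeds incrementally (first $\alpha_{k_1}^{i_1}\alpha_{k_2}^{j_1}$, then left-multiplying by $\alpha_{e_1}^{r_1}$, $\alpha_{e_3}$, $\alpha_{e_2}$ one factor at a time, each step using the two-fold dual product against the coproduct of a general basis monomial). Once you replace your product formula by the correct one, adopting this incremental bookkeeping would make the sign and exponent tracking substantially less error-prone than expanding $\Delta^{(4)}$ in one shot.
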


\begin{restatable}{proposition}{subBF}
	\label{pr:baisX}
	The following relations hold in the Hopf superalgebra $X$:
	\[ \alpha_{e_1}^d = 0, \; \alpha_{e_3} \alpha_{e_1} = q^{-1} \alpha_{e_1} \alpha_{e_3}, \]
	\[ \alpha_{e_2} \alpha_{e_1} = q \alpha_{e_1} \alpha_{e_2} + \alpha_{e_3}, \; \alpha_{k_1} \alpha_{e_1} = q^{-2}
\alpha_{e_1} \alpha_{k_1}, \]
	\[ \alpha_{k_2} \alpha_{e_1} = q \alpha_{e_1} \alpha_{k_2}, \; \alpha_{e_3}^2 = 0, \]
	\[ \alpha_{e_2} \alpha_{e_3} = - q \alpha_{e_3} \alpha_{e_2}, \; \alpha_{k_1} \alpha_{e_3} = q^{-1} \alpha_{e_3}
\alpha_{k_1}, \]
	\[ \alpha_{k_2} \alpha_{e_3} = q \alpha_{e_3} \alpha_{k_2}, \; \alpha_{e_2}^2 = 0, \; \alpha_{k_1} \alpha_{e_2} = q
\alpha_{e_2} \alpha_{k_1}, \; \alpha_{k_2} \alpha_{e_2} = \alpha_{e_2} \alpha_{k_2}, \]
	\[ \alpha_{k_1}^d=1_X, \; \alpha_{k_2}^d=1_X, \; \alpha_{k_2} \alpha_{k_1} = \alpha_{k_1} \alpha_{k_2}, \]
	\[ \Delta_{X} ( \alpha_{k_1} ) = \alpha_{k_1} \otimes \alpha_{k_1}, \; \Delta_{X} ( \alpha_{k_2} ) = \alpha_{k_2} \otimes
\alpha_{k_2}, \]
	\[  \Delta_{X} ( \alpha_{e_1} ) = \alpha_{e_1} \otimes \alpha_{k_1}^{-1} + 1_X \otimes \alpha_{e_1}, \; \Delta_{X} (
\alpha_{e_2} ) = \alpha_{e_2} \otimes \alpha_{k_2}^{-1} + 1_X \otimes \alpha_{e_2} , \]
	\[ \Delta_{X} ( \alpha_{e_3} ) = 1_X \otimes \alpha_{e_3} + \alpha_{e_3} \otimes \alpha_{k_1}^{-1} \alpha_{k_2}^{-1} +
(q^{-1}-q) \alpha_{e_2} \otimes \alpha_{e_1} \alpha_{k_2}^{-1}, \]
	\[ \epsilon_{X}(\alpha_{k_1}) = \alpha_{k_1}(1_{B^+_q}) = 1_F, \; \epsilon_{X}(\alpha_{k_2}) = \alpha_{k_1}(1_{B^+_q}) =
1_F, \]
	\[ \epsilon_{X}(\alpha_{e_1}) = \alpha_{e_1}(1_{B_q^{+}}) = 0, \; \epsilon_X(\alpha_{e_2}) = \alpha_{e_2}(1_{B_q^{+}}) = 0,
\; \epsilon_{X}(\alpha_{e_3}) = \alpha_{e_3}(1_{B_q^{+}}) = 0, \]
	\[ S_{X}(\alpha_{k_1}) = \alpha_{k_1}^{d-1}, \; S_{X}(\alpha_{k_2}) = \alpha_{k_2}^{d-1}, \; S_{X}(\alpha_{e_1}) = -
\alpha_{e_1} \alpha_{k_1}, \]
	\[S_X(\alpha_{e_2}) = - \alpha_{e_2} \alpha_{k_2}, \; S_X(\alpha_{e_3}) = (q-q^3) \alpha_{e_1} \alpha_{e_2} \alpha_{k_1}
\alpha_{k_2} - q^{2} \alpha_{e_3} \alpha_{k_1} \alpha_{k_2}. \]
	Moreover, the set $\{ \alpha_{e_1}^{w} \alpha_{e_3}^{s} \alpha_{e_2}^{l} \alpha_{k_1}^{i} \alpha_{k_2}^{j} | \; 0 \le w,i,j
\le d-1, \; s,l \in \{0,1\} \}$ forms a basis of $X$.
\end{restatable}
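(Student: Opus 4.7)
The proof reduces essentially to a direct but lengthy computation in the dual Hopf superalgebra $X = ((B_q^+)^{op})^{*}$, where the calculational heavy lifting has already been packaged into Lemma \ref{lm:mulX}. That lemma gives an explicit expansion of each PBW-monomial $\alpha_{e_1}^{r_1}\alpha_{e_3}^{h_1}\alpha_{e_2}^{t_1}\alpha_{k_1}^{i_1}\alpha_{k_2}^{j_1}$ in the dual basis $\{(k_1^v k_2^p e_1^r e_3^h e_2^t)^{*}\}$, so most of the relations reduce to matching coefficients on the two sides of each identity.

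First I would verify the multiplicative relations. The commutation laws between the group-like $\alpha_{k_i}$ and the skew-primitive $\alpha_{e_j}$, as well as the braided Serre-type identity $\alpha_{e_2}\alpha_{e_1}=q\alpha_{e_1}\alpha_{e_2}+\alpha_{e_3}$, are checked by applying Lemma \ref{lm:mulX} to both sides with the appropriate choices of $(r_1,h_1,t_1,i_1,j_1)$ and comparing coefficients termwise. The boundary relations $\alpha_{k_i}^d = 1_X$ and $\alpha_{e_1}^d = 0$ follow from the vanishing of $[d]$ in $F$ (as $q$ is a root of unity of odd order $d$, we have $q^d = 1$ and hence $[d]! = 0$), applied to the $[r_1]!$ and $[r_1+1]!$ prefactors in the lemma. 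The nilpotencies $\alpha_{e_2}^2=\alpha_{e_3}^2=0$ and any identity whose range of exponents falls outside Lemma \ref{lm:mulX} would be handled directly through the multiplication formula
\[ (fg)(a) = \sum_{(a)} (-1)^{|g||a'|} f(a') g(a''), \]
evaluated on the PBW-basis of $B_q^+$, whose coproducts are explicit on the generators and propagate to arbitrary monomials via Lemma \ref{lemma:eq}.

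Next I would treat the coalgebraic data. Using the dual formula
\[ \Delta_{X}(f) = \sum_{i,j} f(e_j e_i)\, e_i^{*}\otimes e_j^{*} \]
(from the remark preceding Proposition \ref{pr:Dualcopop}), with $\{e_i\}$ the PBW-basis of $B_q^+$, the coproducts of $\alpha_{k_i}$ and $\alpha_{e_i}$ reduce to evaluating these functionals on products $e_j e_i$; most such values vanish, and what survives is extracted by reduction to PBW form via Lemma \ref{lemma:eq}. The counit values $\epsilon_X(\alpha_?)=\alpha_?(1_{B_q^+})$ are immediate from the defining expansions. For the antipode, I would use $S_X = (S^{-1}_{B_q^+})^{*}$, that is, $S_X(f)(a) = f(S^{-1}_{B_q^+}(a))$, and then re-express the resulting functionals in terms of the already-established PBW-products of the $\alpha$'s. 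The only genuinely delicate case is $\Delta_X(\alpha_{e_3})$, because $\alpha_{e_3}$ is supported on two distinct dual-basis families $(k_1^v k_2^p e_1 e_2)^{*}$ and $(k_1^v k_2^p e_3)^{*}$, and their contributions must be balanced against $\Delta(e_3)=(q-q^{-1})k_2 e_1\otimes e_2 + e_3\otimes 1 + k_1 k_2\otimes e_3$; this is where the twisted skew-primitive term $(q^{-1}-q)\alpha_{e_2}\otimes\alpha_{e_1}\alpha_{k_2}^{-1}$ arises.

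Finally, for the basis assertion, the relations established above allow any monomial in $\alpha_{k_1}^{\pm1},\alpha_{k_2}^{\pm1},\alpha_{e_1},\alpha_{e_2},\alpha_{e_3}$ to be rewritten in the ordered form $\alpha_{e_1}^w\alpha_{e_3}^s\alpha_{e_2}^l\alpha_{k_1}^i\alpha_{k_2}^j$ with $0\le w,i,j\le d-1$ and $s,l\in\{0,1\}$, giving the spanning property. The cardinality $4d^3$ of this set equals $\dim_F X=\dim_F B_q^+$, so it suffices to prove linear independence; this follows from Lemma \ref{lm:mulX}, since for fixed $(s,l)$ the leading coefficient of the dual-basis element $(k_1^v k_2^p e_1^w e_3^s e_2^l)^{*}$ in $\alpha_{e_1}^w \alpha_{e_3}^s \alpha_{e_2}^l \alpha_{k_1}^i \alpha_{k_2}^j$ is an invertible $q$-monomial depending injectively on $(v,p,i,j)$, rendering the corresponding transition matrix non-singular. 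The main obstacle throughout is bookkeeping: the superalgebra signs, the $q$-powers spawned by the $[n]!$-factors, and the cross-terms produced by $\Delta(e_3)$ all appear simultaneously and must be balanced exactly to reproduce the coefficients claimed.
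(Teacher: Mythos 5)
Your verification of the multiplicative relations, the coproducts, the counit and the antipode follows essentially the paper's own route: the products are matched coefficientwise against Lemma \ref{lm:mulX} (with $\alpha_{e_1}^d=0$ and $\alpha_{k_i}^d=1_X$ coming from $[d]!=0$, resp.\ $q^{\pm d}=1$), the coproducts are read off from $\Delta_{(H^{op})^{*}}(f)=\sum_{i,j}f(e_je_i)\,e_i^{*}\otimes e_j^{*}$ evaluated on products of PBW monomials of $B_q^{+}$, and the antipode from $S_X(f)=f\circ S_{B_q^{+}}^{-1}$. That part is correct and is what the paper does.

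The basis assertion is where you depart from the paper, and where your argument has a gap. The paper proves linear independence by exhibiting the superalgebra morphism $\rho:X\to B_q^{-}$ with $\rho(\alpha_{e_1})=f_1$, $\rho(\alpha_{e_3})=f_3$, $\rho(\alpha_{e_2})=f_2$, $\rho(\alpha_{k_i})=k_i$, which carries the $4d^3$ ordered monomials onto the PBW basis of $B_q^{-}$ (independent by Theorem \ref{theorem:basisUU}); combined with $\dim_F X=4d^3$ this closes the proof. You instead argue non-singularity of the transition matrix to the dual basis, but your justification fails on two counts. First, the sectors do not decouple ``for fixed $(s,l)$'': by Lemma \ref{lm:mulX}, the monomials with $(s,l)=(1,0)$ and $e_1$-exponent $w$ and those with $(s,l)=(0,1)$ and exponent $w+1$ are \emph{both} supported on the two dual families $(k_1^vk_2^pe_1^{w+1}e_2)^{*}$ and $(k_1^vk_2^pe_1^{w}e_3)^{*}$, so there is a genuine $2\times 2$ block whose invertibility must be checked; this is precisely what the relations \ref{eq:linkeq1}--\ref{eq:linkeq4} in the proof of Lemma \ref{lm:mulX} accomplish. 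Second, the coefficient being ``an invertible $q$-monomial depending injectively on $(v,p,i,j)$'' does not yield non-singularity: the $(i,j)$-dependence enters through the characters $(v,p)\mapsto q^{i(-2v+p)+jv}$ of $(\mathbb{Z}/d\mathbb{Z})^2$, and what you need is their linear independence (a character/Vandermonde argument using that $q$ is a primitive $d$-th root of unity), not injectivity of an index map. Both defects are repairable, but as written the linear-independence step does not go through; the paper's $\rho$-argument sidesteps the issue entirely.
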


\begin{restatable}{lemma}{Xeq} \label{lm:Xeq}
	The following relations hold in the Hopf superalgebra $X$ for all $v \in \{1,2\}$:
	\[ \alpha_{k_v}(k_1^{-1} ? k_1) = \alpha_{k_v}, \; \alpha_{e_1}(k_1^{-1} ? k_1) = q^{-2} \alpha_{e_1}, \;
\alpha_{e_2}(k_1^{-1} ? k_1) = q \alpha_{e_2}, \; \alpha_{e_3}(k_1^{-1} ? k_1) = q^{-1} \alpha_{e_3}, \]
	\[ \alpha_{k_v}(k_2^{-1} ? k_2) = \alpha_{k_v},\; \alpha_{e_1}(k_2^{-1} ? k_2) = q \alpha_{e_1}, \; \alpha_{e_2}(k_2^{-1} ?
k_2) = \alpha_{e_2}, \; \alpha_{e_3}(k_2^{-1} ? k_2) = q \alpha_{e_3}, \]
	\[ \alpha_{k_v}(?e_1) = 0, \; \alpha_{e_1}(?e_1) = -(q-q^{-1})^{-1} \alpha_{k_1}^{-1}, \; \alpha_{e_2}(?e_1) = 0, \;
\alpha_{e_3}(?e_1) = 0, \]
	\[ \alpha_{k_v}(?k_1) = ([v=1]q^{-2} + [v=2]q) \alpha_{k_v}, \; \alpha_{e_1}(?k_1) = \alpha_{e_1}, \; \alpha_{e_2}(?k_1) =
\alpha_{e_2}, \; \alpha_{e_3}(?k_1) = \alpha_{e_3}, \]
	\[ \alpha_{k_v}(k_1^{-1} e_1 ? k_1) = 0, \alpha_{e_1}(k_1^{-1} e_1 ? k_1) = - (q-q^{-1})^{-1} q^{-2} 1_X, \;
\alpha_{e_2}(k_1^{-1} e_1 ? k_1) = 0, \alpha_{e_3}(k_1^{-1} e_1 ? k_1) = q^{-1} \alpha_{e_2}, \]
	\[ \alpha_{k_v}(?e_2) = 0, \; \alpha_{e_1}(?e_2) = 0, \; \alpha_{e_2}(?e_2) = (q-q^{-1})^{-1} \alpha_{k_2}^{-1}, \;
\alpha_{e_3}(?e_2) = - \alpha_{e_1} \alpha_{k_2}^{-1}, \]
	\[ \alpha_{k_v}(?k_2) = q^{[v=1]} \alpha_{k_v}, \; \alpha_{e_1}(?k_2) = \alpha_{e_1}, \; \alpha_{e_2}(?k_2) = \alpha_{e_2},
\; \alpha_{e_3}(?k_2) = \alpha_{e_3}, \]
	\[ \alpha_{k_v}(k_2^{-1} e_2 ? k_2) = 0, \; \alpha_{e_1}(k_2^{-1} e_2 ? k_2) = 0, \; \alpha_{e_2}(k_2^{-1} e_2 ? k_2) =
(q-q^{-1})^{-1} 1_X, \; \alpha_{e_3}(k_2^{-1} e_2 ? k_2) = 0, \]
	\[ \alpha_{k_v}(k_2^{-1}e_2 ?k_2e_1) = 0, \; \alpha_{e_1}(k_2^{-1}e_2 ?k_2e_1) = 0, \; \alpha_{e_2}(k_2^{-1}e_2 ?k_2e_1) =
0, \; \alpha_{e_3}(k_2^{-1}e_2 ?k_2e_1) = 0, \]
	\[ \alpha_{k_v}(k_2^{-1}e_2 ?k_1k_2) = 0, \; \alpha_{e_1}(k_2^{-1}e_2 ?k_1k_2) = 0, \; \alpha_{e_2}(k_2^{-1}e_2 ?k_1k_2) =
(q-q^{-1})^{-1} 1_X, \; \alpha_{e_3}(k_2^{-1}e_2 ?k_1k_2) = 0, \]
	\[ \alpha_{k_v}(?k_2e_1) = 0, \; \alpha_{e_1}(?k_2e_1) = - (q-q^{-1})^{-1} q^{-1} \alpha_{k_1}^{-1}, \;
\alpha_{e_2}(?k_2e_1) = 0, \; \alpha_{e_3}(?k_2e_1) = 0, \]
	\[ \alpha_{k_v}(?e_3) = 0, \; \alpha_{e_1}(?e_3) = 0, \; \alpha_{e_2}(?e_3) = 0, \; \alpha_{e_3}(?e_3) = (q-q^{-1})^{-1}
\alpha_{k_1}^{-1} \alpha_{k_2}^{-1}, \]
	\[ \alpha_{k_v}(?k_1k_2) = ([v=1] q^{-1} + [v=2] q) \alpha_{k_v}, \; \alpha_{e_1}(?k_1k_2) = \alpha_{e_1}, \;
\alpha_{e_2}(?k_1k_2) = \alpha_{e_2}, \; \alpha_{e_3}(?k_1k_2) = \alpha_{e_3}, \]
	\[ \alpha_{k_v}(k_1^{-1} k_2^{-1} e_1 e_2?k_1k_2) = 0, \; \alpha_{e_1}(k_1^{-1} k_2^{-1} e_1 e_2?k_1k_2) = 0, \]
	\[ \alpha_{e_2}(k_1^{-1} k_2^{-1} e_1 e_2?k_1k_2) = 0, \; \alpha_{e_3}(k_1^{-1} k_2^{-1} e_1 e_2?k_1k_2) = (q-q^{-1})^{-1}
1_X, \]
	\[ \alpha_{k_v}(k_1^{-1} k_2^{-1} e_3?k_1k_2) = 0, \; \alpha_{e_1}(k_1^{-1} k_2^{-1} e_3?k_1k_2) = 0, \]
	\[ \alpha_{e_2}(k_1^{-1} k_2^{-1} e_3?k_1k_2) = 0, \; \alpha_{e_3}(k_1^{-1} k_2^{-1} e_3?k_1k_2) = (q-q^{-1})^{-1} 1_X. \]
\end{restatable}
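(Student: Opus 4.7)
The plan is to interpret each expression in Lemma \ref{lm:Xeq} as a linear functional on $B_q^+$, with the symbol $?$ standing for the argument, and then evaluate this functional on the PBW basis $\{k_1^i k_2^j e_1^r e_3^h e_2^t\}$ of $B_q^+$ given in Proposition \ref{pr:baisX}. Since each of $\alpha_{k_v}, \alpha_{e_1}, \alpha_{e_2}, \alpha_{e_3}$ is defined as an explicit sum of dual basis vectors $(k_1^v k_2^p \cdots)^*$, the strategy is to show in each case that the functional on the left-hand side agrees, basis element by basis element, with the $X$-element on the right-hand side.

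I would first dispatch all the conjugation expressions of the form $\alpha_X(k_i^{-1} ? k_i)$. For a basis element $a = k_1^{v'} k_2^{p'} e_1^{r'} e_3^{h'} e_2^{t'}$, the commutation relations in Definition \ref{def:u_q} give immediately $k_i^{-1} a k_i = q^{c} a$, where $c$ is a linear function of the exponents determined by the Cartan matrix (e.g.\ $c = -2r' - h' + t'$ when $i=1$). Hence $\alpha_X(k_i^{-1} \cdot k_i)$ is a scalar multiple of $\alpha_X$, and matching the scalar against the right-hand side is a short check.

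Next I would handle the one-sided multiplications $\alpha_X(? y)$ for $y \in \{e_1, e_2, k_1, k_2, k_1k_2, k_2 e_1, e_3\}$. Using the straightening relations of Lemma \ref{lm:relonbasis} (reduced modulo the ideal defining $\bar U_q$), one expresses the product $a y$ in the PBW basis; then evaluating $\alpha_{e_i}$ picks out only those PBW monomials of the form $k_1^v k_2^p e_i$ (or of the form $k_1^v k_2^p e_1 e_2$ and $k_1^v k_2^p e_3$ in the case of $\alpha_{e_3}$), and the resulting functional is visibly a scalar multiple of one of the $\alpha$'s or vanishes. The composite expressions such as $\alpha_X(k_2^{-1} e_2 ? k_1 k_2)$ are then handled by iterating this procedure on both sides, or equivalently by straightening $k_2^{-1} e_2 a k_1 k_2$ in the PBW basis in one shot.

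The main obstacle is the $\alpha_{e_3}$ block, because $\alpha_{e_3}$ is a \emph{two-term} functional involving both $(k_1^{v_1} k_2^{p_1} e_1 e_2)^*$ and $(k_1^{v_2} k_2^{p_2} e_3)^*$, and the relation $e_3 = e_1 e_2 - q^{-1} e_2 e_1$ forces these two pieces to mix whenever the argument is straightened. Concretely, in expressions like $\alpha_{e_3}(k_1^{-1} e_1 ? k_1)$ or $\alpha_{e_3}(?e_2)$, both PBW monomials $k_1^v k_2^p e_1 e_2$ and $k_1^v k_2^p e_3$ arise simultaneously in the expansion, and one must verify that their contributions combine to give exactly the stated $q$-prefactor (for instance $q^{-1}\alpha_{e_2}$ or $-\alpha_{e_1}\alpha_{k_2}^{-1}$). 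The $q$-power bookkeeping comes from the shifts $q^{2v-p}$, $q^v$ appearing in the definitions of $\alpha_{e_1},\alpha_{e_2},\alpha_{e_3}$, and the supercommutation signs from the odd generators $e_2, e_3$. Once these identities are verified, the remaining lines of the lemma follow by the same straighten-and-evaluate routine.
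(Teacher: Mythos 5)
Your proposal is correct and follows essentially the same route as the paper: for each expression the paper straightens the product $(\cdots)\,k_1^i k_2^j e_1^r e_3^h e_2^t\,(\cdots)$ into the PBW basis of $B_q^+$ using the commutation relations of Lemma \ref{lm:relonbasis} and then reads off the values of $\alpha_{k_v},\alpha_{e_1},\alpha_{e_2},\alpha_{e_3}$ on the resulting monomials, exactly the straighten-and-evaluate routine you describe. Your remark about the two-term nature of $\alpha_{e_3}$ (mixing the $(k_1^{v}k_2^{p}e_1e_2)^*$ and $(k_1^{v}k_2^{p}e_3)^*$ components) correctly identifies the only genuinely delicate bookkeeping, which the paper carries out case by case.
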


\subsection{Quantum Double of $\bar{U}_q$}

We now construct the quantum double $D:=D(B_q^{+})$. By definition \ref{QDDef}, the set
\[ \{ \alpha_{e_1}^{w} \alpha_{e_3}^{s} \alpha_{e_2}^{l} \alpha_{k_1}^{i_1} \alpha_{k_2}^{j_1} \otimes
k_1^{i_2}k_2^{j_2}e_1^{r}e_3^{h}e_2^{t} |  \; 0 \le w,i_1,i_2,j_1,j_2,r \le d-1, \; l,s,t,h \in \{0,1\} \} \]
is a basis of $D$.

\begin{restatable}{proposition}{DF}
	The following relations hold in $D$:
	\[(1_X \otimes k_v) (\alpha_{k_1} \otimes 1_{B_q^+}) = (\alpha_{k_1} \otimes 1_{B_q^+}) (1_X \otimes k_v), \;
	(1_X \otimes e_1) (\alpha_{k_1} \otimes 1_{B_q^+}) = q^{-2} (\alpha_{k_1} \otimes 1_{B_q^+}) (1_X \otimes e_1),\]
	\[(1_X \otimes e_2) (\alpha_{k_1} \otimes 1_{B_q^+}) =  (\alpha_{k_1} \otimes 1_{B_q^+}) (1_X \otimes e_2),\; (1_X \otimes
e_3) (\alpha_{k_1} \otimes 1_{B_q^+}) = q^{-1} (\alpha_{k_1} \otimes 1_{B_q^+}) (1_X \otimes e_3), \]
	\[ (1_X \otimes k_v) (\alpha_{k_2} \otimes 1_{B_q^+}) =(\alpha_{k_2} \otimes 1_{B_q^+})(1_X \otimes k_v), \; (1_X \otimes
e_1) (\alpha_{k_2} \otimes 1_{B_q^+}) = q (\alpha_{k_2} \otimes 1_{B_q^+}) (1_X \otimes e_1),\]
	\[(1_X \otimes e_2) (\alpha_{k_2} \otimes 1_{B_q^+}) = (\alpha_{k_2} \otimes 1_{B_q^+})(1_X \otimes e_2), \; (1_X \otimes
e_3) (\alpha_{k_2} \otimes 1_{B_q^+}) = q (\alpha_{k_2} \otimes 1_{B_q^+}) (1_X \otimes e_3),\]
	\[(1_X \otimes k_1) (\alpha_{e_1} \otimes 1_{B_q^+}) = q^{-2} \alpha_{e_1} \otimes k_1, \; (1_X \otimes k_2) (\alpha_{e_1}
\otimes 1_{B_q^+}) = q \alpha_{e_1} \otimes k_2,\]
	\[(1_X \otimes e_1) (\alpha_{e_1} \otimes 1_{B_q^+}) = - (q-q^{-1})^{-1} \alpha_{k_1}^{-1} \otimes 1_{B_q^+} + \alpha_{e_1}
\otimes e_1 + (q-q^{-1})^{-1} 1_{X} \otimes k_1,\]
	\[(1_X \otimes e_2) (\alpha_{e_1} \otimes 1_{B_q^+}) = \alpha_{e_1} \otimes e_2, \; (1_X \otimes e_3) (\alpha_{e_1} \otimes
1_{B_q^+}) = - q^{-1} \alpha_{k_1}^{-1} \otimes e_2 + \alpha_{e_1} \otimes e_3,\]
	\[(1_X \otimes k_1) (\alpha_{e_2} \otimes 1_{B^{+}_q}) = q \alpha_{e_2} \otimes k_1, \; (1_X \otimes k_2)(\alpha_{e_2}
\otimes 1_{B_q^{+}}) = \alpha_{e_2} \otimes e_1,\]
	\[(1_X \otimes e_2)(\alpha_{e_2} \otimes 1_{B_q^{+}}) = - (q-q^{-1})^{-1} \alpha_{k_2}^{-1} \otimes 1_{B_q^{+}} -
\alpha_{e_2} \otimes e_2 + (q-q^{-1})^{-1} 1_{X} \otimes k_2,\]
	\[(1_X \otimes e_3)(\alpha_{e_2} \otimes 1_{B_q^{+}}) = 1_{X} \otimes k_2 e_1 - \alpha_{e_2} \otimes e_3, \; (1_X \otimes
k_1) (\alpha_{e_3} \otimes 1_{B_q^{+}}) = q^{-1} \alpha_{e_3} \otimes k_1,\]
	\[ (1_X \otimes k_2) (\alpha_{e_3} \otimes 1_{B_q^{+}}) = q \alpha_{e_3} \otimes k_2, \; (1_X \otimes e_1) (\alpha_{e_3}
\otimes 1_{B_q^{+}}) = \alpha_{e_3} \otimes e_1 - q \alpha_{e_2} \otimes k_1,\]
	\[ (1_X \otimes e_2) (\alpha_{e_3} \otimes 1_{B_q^{+}}) = \alpha_{e_1} \alpha_{k_2}^{-1} \otimes 1_{B_q^{+}} - \alpha_{e_3}
\otimes e_2,\]	
	\[ (1_X \otimes e_3) (\alpha_{e_3} \otimes 1_{B_q^+}) = - (q-q^{-1})^{-1} \alpha_{k_1}^{-1} \alpha_{k_2}^{-1} \otimes
1_{B_q^{+}} - \alpha_{e_3} \otimes e_3 + (q-q^{-1})^{-1} 1_X  \otimes k_1 k_2. \]
\end{restatable}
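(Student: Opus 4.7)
The plan is to apply directly the formula
\[ i_H(a) i_X(f) = \sum_{(a),(a')} (-1)^{|f||a| + |a''||a'| + |?||(a')'|} f\bigl(S^{-1}(a'') \, ? \, (a')'\bigr) \otimes (a')'' \]
established in the preceding proposition, taking $f$ to be one of $\alpha_{k_v}, \alpha_{e_1}, \alpha_{e_2}, \alpha_{e_3}$ and $a$ to be one of the generators $k_v, e_1, e_2, e_3$ of $B_q^+$. Each evaluation $f(S^{-1}(a'')\,?\,(a')')$ that arises is precisely one of the expressions computed in Lemma \ref{lm:Xeq}, so the right-hand sides listed in the proposition should fall out after substitution and collection of terms.

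First I would write $(\Delta \otimes \mathrm{id})\Delta(a) = \sum_{(a),(a')} (a')' \otimes (a')'' \otimes a''$ explicitly for each generator. When $a = k_v$ the coproduct is grouplike, the iterated coproduct is $k_v \otimes k_v \otimes k_v$, and the single summand $f(k_v^{-1}\,?\,k_v) \otimes k_v$ combined with the lines of Lemma \ref{lm:Xeq} of the form $\alpha_?(k_v^{-1}\,?\,k_v)$ immediately produces the eight relations of the first two blocks. When $a = e_1$ or $a = e_2$ the iterated coproduct has three terms; for instance
\[ (\Delta \otimes \mathrm{id})\Delta(e_1) = e_1 \otimes 1 \otimes 1 + k_1 \otimes e_1 \otimes 1 + k_1 \otimes k_1 \otimes e_1. \]
Using $S^{-1}(1) = 1$, $S^{-1}(e_1) = -q^{2} k_1^{-1} e_1$ and pairing each summand against $f$ via the lines $\alpha_?(?e_1)$, $\alpha_?(?k_1)$ and $\alpha_?(k_1^{-1} e_1\,?\,k_1)$ of Lemma \ref{lm:Xeq}, one recovers the five relations that feature $\alpha_{e_1}$, and the mirror argument (with $k_2, e_2$ replacing $k_1, e_1$ and $S^{-1}(e_2) = -k_2^{-1} e_2$) handles the block for $\alpha_{e_2}$.

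The main difficulty is the final block, where $a = e_3$. Because
\[ \Delta(e_3) = (q - q^{-1}) k_2 e_1 \otimes e_2 + e_3 \otimes 1 + k_1 k_2 \otimes e_3, \]
the iterated coproduct $(\Delta \otimes \mathrm{id})\Delta(e_3)$ carries up to nine summands, and each contributes a term whose Koszul sign involves the odd parities of $e_2$ and $e_3$ as well as that of the ambient argument in the first tensor factor. To keep the bookkeeping under control I would expand $(\Delta \otimes \mathrm{id})\Delta(e_3)$ using coassociativity together with the commutation rules of Lemma \ref{lemma:eq}, and then, for each of the nine summands, identify the matching evaluation among the lines of Lemma \ref{lm:Xeq} specifically tailored to the arguments $?\,k_2 e_1$, $?\,k_1 k_2$, $?\,e_3$, $k_2^{-1} e_2\,?\,k_2 e_1$, $k_2^{-1} e_2\,?\,k_1 k_2$, $k_1^{-1} k_2^{-1} e_1 e_2\,?\,k_1 k_2$ and $k_1^{-1} k_2^{-1} e_3\,?\,k_1 k_2$. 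Most of those evaluations vanish, so only two or three summands survive in each of the four $e_3$-relations, and once the signs are tracked the survivors combine to give precisely the claimed right-hand sides.
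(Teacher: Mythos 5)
Your proposal follows essentially the same route as the paper: the paper likewise specializes the multiplication formula $(1_X\otimes a)(\beta\otimes 1_{B_q^+})=\sum_{(a),(a')}(-1)^{\cdots}\beta(S^{-1}(a'')\,?\,(a')')\otimes(a')''$ to each generator $a\in\{k_v,e_1,e_2,e_3\}$, writes out the iterated coproducts (the one for $e_3$ has six summands, not nine, but that does not affect the argument), and then evaluates term by term against Lemma \ref{lm:Xeq}. The approach is correct and matches the paper's own proof.
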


\begin{restatable}{theorem}{thisom} \label{th:specialisom}
	Let $\chi: D \to \bar{U}_q$ be a superspace morphism determined by
	\[ \chi( \alpha_{e_1}^{w} \alpha_{e_3}^{s} \alpha_{e_2}^{l} \alpha_{k_1}^{i_1} \alpha_{k_2}^{j_1} \otimes
k_1^{i_2}k_2^{j_2}e_1^{r}e_3^{h}e_2^{t} ) = f_1^wf_3^sf_2^lk_1^{i_1+i_2} k_2^{j_1+j_2} e_1^re_3^he_2^t, \]
	where $0 \le w,i_1,i_2,j_1,j_2,r \le d-1, \; l,s,t,h \in \{0,1\}$. Then $\chi$ is the epimorphism of Hopf superalgebras.
\end{restatable}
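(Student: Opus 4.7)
The plan is to prove the theorem in four steps: define $\chi$ abstractly as a superalgebra morphism on generators, check that it agrees with the stated formula on the prescribed basis of $D$, verify coalgebra compatibility, and then conclude surjectivity and antipode compatibility.

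First I would specify $\chi$ on the generating set $\{\alpha_{k_i}^{\pm 1},\alpha_{e_1},\alpha_{e_2},\alpha_{e_3}\}\otimes 1_{B_q^+} \cup 1_X\otimes\{k_i^{\pm 1},e_1,e_2,e_3\}$ of $D$ by
\[ \chi(\alpha_{k_i}\otimes 1)=k_i,\quad \chi(\alpha_{e_i}\otimes 1)=f_i,\quad \chi(1\otimes k_i)=k_i,\quad \chi(1\otimes e_i)=e_i, \]
and check that each defining relation of $D$ is sent to a valid relation in $\bar U_q$. The relations split into three families: the relations inside $X$ from Proposition \ref{pr:baisX}, the relations inside $B_q^+$, and the cross-relations provided by the Proposition immediately preceding the theorem. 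Relations inside $B_q^+$ map identically to the corresponding relations among $k_i,e_i$ in $\bar U_q$, and those inside $X$ become, under the substitutions $\alpha_{e_i}\mapsto f_i$, $\alpha_{k_i}\mapsto k_i$, precisely the relations of the negative Borel part of $\bar U_q$.

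The crucial step is the cross-relations. For example, the identity $(1_X\otimes e_1)(\alpha_{e_1}\otimes 1)=-(q-q^{-1})^{-1}\alpha_{k_1}^{-1}\otimes 1+\alpha_{e_1}\otimes e_1+(q-q^{-1})^{-1}1_X\otimes k_1$ is carried by $\chi$ to $e_1f_1=f_1e_1+(k_1-k_1^{-1})/(q-q^{-1})$, which is a defining relation of $\bar U_q$. The analogous identity for $\alpha_{e_2}$ gives $[e_2,f_2]=(k_2-k_2^{-1})/(q-q^{-1})$, and the remaining cross-relations reproduce the super-commutations $e_if_j=(-1)^{|e_i||f_j|}f_je_i$ for $i\neq j$ together with the scaling rules relating $k_i$ and $e_j$. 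Once $\chi$ is known to be an algebra morphism, expanding $\chi\bigl((\alpha_{e_1}^w\alpha_{e_3}^s\alpha_{e_2}^l\alpha_{k_1}^{i_1}\alpha_{k_2}^{j_1}\otimes 1)(1\otimes k_1^{i_2}k_2^{j_2}e_1^re_3^he_2^t)\bigr)$ recovers the formula stated in the theorem, so $\chi$ coincides with the specified linear map on the basis of $D$.

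Next I would verify the coalgebra morphism condition. Since $\chi$ is by then an algebra morphism, both sides of $\Delta_{\bar U_q}\circ\chi=(\chi\otimes\chi)\circ\Delta_D$ are algebra morphisms $D\to\bar U_q\otimes\bar U_q$, so it suffices to compare them on the chosen generators; the same is true for the equality $\epsilon_{\bar U_q}\circ\chi=\epsilon_D$. For instance $\Delta_X(\alpha_{e_1})=\alpha_{e_1}\otimes\alpha_{k_1}^{-1}+1_X\otimes\alpha_{e_1}$ is sent by $\chi\otimes\chi$ to $f_1\otimes k_1^{-1}+1\otimes f_1=\Delta_{\bar U_q}(f_1)$, and analogous direct comparisons handle $\alpha_{e_2},\alpha_{e_3},\alpha_{k_i}$ and the generators coming from $B_q^+$. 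Antipode compatibility then follows from part 1 of Proposition \ref{PropS} (uniqueness of the antipode), and surjectivity is immediate since $k_i^{\pm 1},e_i,f_i$ all lie in the image and generate $\bar U_q$.

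The main obstacle will be the sign bookkeeping in the algebra-morphism step, in particular the Koszul signs arising from the odd generators $e_2,e_3,f_2,f_3$ and the check that the slightly asymmetric coproduct of $\alpha_{e_3}$ (with its additional $(q^{-1}-q)\alpha_{e_2}\otimes\alpha_{e_1}\alpha_{k_2}^{-1}$ term) matches $\Delta_{\bar U_q}(f_3)$ under $\chi\otimes\chi$. The cross-relations involving $e_3$ and $\alpha_{e_3}$ and their interactions with $e_1,f_1,e_2,f_2$ will be the most delicate point, but Lemma \ref{lm:mulX}, Proposition \ref{pr:baisX}, and the Proposition immediately preceding the theorem provide explicit enough formulas to permit a direct term-by-term verification.
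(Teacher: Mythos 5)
Your proposal is correct and follows essentially the same route as the paper: the paper likewise checks that $\chi$ sends generators to generators, verifies multiplicativity on exactly the cross-products $(1_X\otimes x)(\alpha_y\otimes 1_{B_q^+})$ supplied by the preceding Proposition, and then compares comultiplication and counit on generators. The only notable difference is that the paper verifies antipode compatibility by direct computation on each generator, whereas you invoke uniqueness of the (convolution) inverse; your shortcut is valid and slightly cleaner, though you should cite the uniqueness of convolution inverses for a bialgebra morphism between Hopf superalgebras rather than literally part 1 of Proposition \ref{PropS}, which is stated only for a single Hopf superalgebra.
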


\begin{restatable}{corollary}{braided} \label{Bruq}
	The Hopf superalgebra $\bar{U}_q$ is braided.
\end{restatable}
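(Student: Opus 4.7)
The plan is to transport the braided structure of the quantum double $D = D(B_q^+)$, provided by Theorem \ref{RQDth}, through the Hopf superalgebra epimorphism $\chi : D \to \bar{U}_q$ of Theorem \ref{th:specialisom}. Concretely, I would set
\[ R := (\chi \otimes \chi)(R_D) \in \bar{U}_q \otimes \bar{U}_q, \]
where $R_D = \sum_{i \in I}(1_X \otimes e_i)\otimes(e^i \otimes 1_{B_q^+})$ is the universal $R$-matrix of $D$ built from the basis $\{e_i\}$ of $B_q^+$ (Definition \ref{PDBSdef}) and its dual basis $\{e^i\} \subset X$, as in the construction preceding Theorem \ref{RQDth}. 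Because $\chi \otimes \chi$ is a morphism of associative superalgebras, $R$ is invertible with inverse $(\chi \otimes \chi)(R_D^{-1})$; because $\chi$ preserves the $\mathbb{Z}_2$-grading and $R_D$ is even (Definition \ref{quasicoco} applied to $D$), the element $R$ is even as well.

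Each defining relation of a braided Hopf superalgebra would then be obtained by pushing the corresponding identity for $R_D$ through an appropriate tensor power of $\chi$. For quasi-cocommutativity, fix $y \in \bar{U}_q$ and use surjectivity of $\chi$ to write $y = \chi(x)$ with $x \in D$; then
\[ \Delta_{\bar{U}_q}^{op}(y)\, R = (\chi \otimes \chi)\bigl(\Delta_D^{op}(x)\, R_D\bigr) = (\chi \otimes \chi)\bigl(R_D\, \Delta_D(x)\bigr) = R\, \Delta_{\bar{U}_q}(y), \]
where the outer equalities use that $\chi$ is a supercoalgebra morphism and $\chi \otimes \chi$ a superalgebra morphism. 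For the two braiding hexagons $(\Delta \otimes \mathrm{id})(R) = R_{13}R_{23}$ and $(\mathrm{id} \otimes \Delta)(R) = R_{13}R_{12}$ of Definition \ref{twistedquasicoco}, I would apply $\chi \otimes \chi \otimes \chi$ to the analogous relations in $D \otimes D \otimes D$; the naturality identities $(\chi \otimes \chi \otimes \chi)\circ(\Delta_D \otimes \mathrm{id}_D) = (\Delta_{\bar{U}_q} \otimes \mathrm{id}_{\bar{U}_q}) \circ (\chi \otimes \chi)$ and its right-sided counterpart, both consequences of $\chi$ being a Hopf superalgebra morphism, convert the left-hand sides correctly, while multiplicativity of $\chi \otimes \chi \otimes \chi$ handles the right-hand sides.

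The main obstacle is not conceptual but bookkeeping: to extract the promised explicit multiplicative formula for $R$ one must evaluate $\chi$ on each term $(1_X \otimes e_i)\otimes(e^i \otimes 1_{B_q^+})$ using the bases of $B_q^+$ (Definition \ref{PDBSdef}) and of $X$ (Proposition \ref{pr:baisX}), keeping track of Koszul signs and of the fact that the two tensorands contribute grouplike factors which collapse under $\chi$ into a single $k_1^{i_1+i_2} k_2^{j_1+j_2}$. Lemma \ref{lm:mulX} supplies the multiplication table needed to reorganize the resulting sum into a compact product over the positive root vectors $f_1, f_3, f_2$ and $e_1, e_3, e_2$, paired with a Cartan-type exponential in the $k_i$'s; but the verification that $R$ is braided does not require this explicit form, so the corollary itself follows at once from the abstract transport argument above.
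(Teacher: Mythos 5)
Your proposal is correct and follows essentially the same route as the paper: the paper also defines $\bar{R} = (\chi \otimes \chi)(R_D)$, uses surjectivity of $\chi$ to verify quasi-cocommutativity, and applies $\chi \otimes \chi \otimes \chi$ to the two hexagon identities of $D(B_q^+)$. The closing remarks about extracting the explicit multiplicative formula are, as you note, not needed for the corollary itself (they belong to Theorem \ref{URBUq}).
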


\subsection{Multiplicative formula for universal $R$-matrix of $\bar{U}_q$}

Denote by
\[ exp_{q}(x) := \sum_{n=0}^{\infty} \frac{x^{n}}{(n)_{q}!}, \]
where for all $k \in \mathbb{N}$ we set $(k)_{q} := \frac{q^{k}-1}{q-1}$ and $(0)_{q}!:=1$, $(n)_{q}!:=(1)_{q} (2)_{q} ...
(n)_{q},$ if $n \in \mathbb{Z}_{+}$.
Consider the following expressions:
\[ \tilde{R} = exp_{q^2}( (q-q^{-1}) e_3 \otimes f_3 ) exp_{q^2} ((q-q^{-1}) e_2 \otimes f_2 ) exp_{q^2}((-1) (q-q^{-1}) e_1
\otimes f_1 ) \times \]
\[ \times exp_{q^2}( (-1) (q^{2}-1) (q-q^{-1})^{2} e_3 e_2 \otimes f_3 f_2 ), \]
\[ K = d^{-2} \sum_{0 \le i_1, j_1, i_2, j_2 \le d-1} q^{i_1(2i_2-j_2)-j_1i_2} k_1^{i_2} k_2^{j_2} \otimes k_1^{i_1} k_2^{j_1}.
\]

\begin{restatable}{theorem}{univeralR}\label{URBUq}
	The multiplicative formula of universal $R$-matrix of $\bar{U}_q$ is given by
	\[ \bar{R} = \tilde{R} K. \]
\end{restatable}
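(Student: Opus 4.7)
The plan is to identify $\bar{R}$ as the image under $\chi\otimes\chi$ of the canonical universal $R$-matrix of the quantum double $D=D(B_q^+)$. By Theorem \ref{RQDth} applied to the finite-dimensional Hopf superalgebra $B_q^+$ of Proposition \ref{proposition:subH}, the element $R_D=\sum_{i\in I}(1_X\otimes e_i)\otimes(e^i\otimes 1_{B_q^+})$ is a universal $R$-matrix on $D$; since $\chi:D\to\bar{U}_q$ from Theorem \ref{th:specialisom} is a Hopf superalgebra morphism, $(\chi\otimes\chi)(R_D)$ is automatically a universal $R$-matrix on $\bar{U}_q$ (this is in fact how Corollary \ref{Bruq} is obtained). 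It therefore suffices to evaluate this pushforward in closed form and check that it coincides with $\tilde{R}K$.

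Choose the PBW basis $\{k_1^{i_2}k_2^{j_2}e_1^re_3^he_2^t\}$ of $B_q^+$. The sum defining $R_D$ then splits along the factorization Cartan times nilpotent, and I would handle the two pieces separately. For the Cartan piece the dual basis vectors $(k_1^{i_2}k_2^{j_2})^*$ expanded in the group-like generators $\alpha_{k_1},\alpha_{k_2}$ arise from an inverse finite Fourier transform on $(\mathbb{Z}/d\mathbb{Z})^2$; the characters $\alpha_{k_1}(k_1)=q^{-2}$, $\alpha_{k_1}(k_2)=q$, $\alpha_{k_2}(k_1)=q$, $\alpha_{k_2}(k_2)=1$ from Lemma \ref{lm:Xeq} produce exactly the exponent $i_1(2i_2-j_2)-j_1i_2$ and the prefactor $d^{-2}$. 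After applying $\chi$, which sends $\alpha_{k_v}^{i_1}\otimes 1$ to $k_v^{i_1}$, the Cartan contribution equals $K$ verbatim.

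For the nilpotent piece I would use the biorthogonality data of Lemma \ref{lm:Xeq}: $\alpha_{e_1}$ detects only components ending in $e_1$, $\alpha_{e_2}$ only those ending in $e_2$, and $\alpha_{e_3}$ those ending in $e_3$ or in $e_1e_2$. Combined with the $q$-commutation relations among the $\alpha_{e_v}$'s from Proposition \ref{pr:baisX}, which mirror the relations among the $e_v$'s up to the expected twists, resumming the power series in each individual root vector produces three decoupled $q^2$-exponentials
\[
exp_{q^2}((q-q^{-1})e_3\otimes f_3),\quad exp_{q^2}((q-q^{-1})e_2\otimes f_2),\quad exp_{q^2}(-(q-q^{-1})e_1\otimes f_1).
\]
The fourth, cross factor $exp_{q^2}(-(q^2-1)(q-q^{-1})^2\, e_3e_2\otimes f_3f_2)$ comes from the twisted coproduct $\Delta(e_3)=(q-q^{-1})k_2e_1\otimes e_2+e_3\otimes 1+k_1k_2\otimes e_3$ and its dual $\Delta(\alpha_{e_3})=1_X\otimes\alpha_{e_3}+\alpha_{e_3}\otimes\alpha_{k_1}^{-1}\alpha_{k_2}^{-1}+(q^{-1}-q)\alpha_{e_2}\otimes\alpha_{e_1}\alpha_{k_2}^{-1}$: these force the dual vectors $(k_1^{i_2}k_2^{j_2}e_1^re_3)^*$ and $(k_1^{i_2}k_2^{j_2}e_1^{r+1}e_2)^*$ to mix in the expansions provided by Lemma \ref{lm:mulX}, and re-summing the mixed contributions produces exactly the cross-exponential.

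The main obstacle will be carrying out this last bookkeeping: one has to invert the change-of-basis of Lemma \ref{lm:mulX}, recognize the coefficients as $q^2$-binomial and factorial quantities, and verify they assemble into the advertised product of four $q^2$-exponentials. Fortunately the nilpotency relations $e_2^2=e_3^2=f_2^2=f_3^2=0$ from Lemma \ref{lm:relonbasis}, together with $e_1^d=f_1^d=0$ in $\bar{U}_q$, truncate three of the four exponential factors to short polynomials, so only the $e_1\otimes f_1$ factor carries a genuinely infinite formal expansion. This drastically reduces the combinatorics and makes the term-by-term matching feasible.
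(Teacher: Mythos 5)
Your proposal is correct and follows essentially the same route as the paper: both identify $\bar{R}=(\chi\otimes\chi)(R_D)$ with $R_D=\sum_i(1_X\otimes e_i)\otimes(e^i\otimes 1_{B_q^+})$, expand the dual PBW basis of $(B_q^+)^*$ in the generators $\alpha_{e_v},\alpha_{k_v}$ via Lemma \ref{lm:mulX}, and match the resulting coefficients term by term against the expansion of the four $q^2$-exponentials times the Cartan factor $K$. Your Fourier-transform reading of the Cartan coefficients and your explanation of the $e_3e_2\otimes f_3f_2$ cross-factor via the mixing of $(k_1^{i_2}k_2^{j_2}e_1^re_3)^*$ and $(k_1^{i_2}k_2^{j_2}e_1^{r+1}e_2)^*$ are exactly the bookkeeping the paper carries out explicitly.
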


\section{Parameterized Family of Centralizer Algebras}
\label{PFCA}

In this section we use the universal $R$-matrix of $\bar{U}_q$ to investigate a parameterized family of centralizer algebras
$L_{n,\mu}$ of $\bar{U}_q$ type A representations \cite{AbArBa}. In particular, we construct a bases for algebras $L_{n,\mu}$,
where $n \le 4$. We also give multiplication laws for $L_{3,\mu}$. We examine structure of centralizer algebras in general
case.

We consider typical type $A$ representations described in \cite{AbArBa}, \cite{CAGE}. In particular, we use the simple highest
weight module $V_{\mu}:=V(1,0,\mu)$, where $\mu \in \Bbbk/d\mathbb{Z}$, with a highest weight vector $w_{0,0,0}$ such that
\[ e_1 w_{0,0,0} = 0, \; e_2 w_{0,0,0} = 0, \]
\[ k_1 w_{0,0,0} = w_{0,0,0}, \; k_2 w_{0,0,0} = q^{\mu} w_{0,0,0}. \]
As $V_{\mu}$ is simple we require $[\mu][1+\mu] \ne 0$.
$V_{\mu}$ has the basis
$ \{ w_{0,\sigma,\rho} | \sigma, \rho \in \{0,1\} \} $
whose action of $U_q$ is given by:
\[ k_1 w_{0,\sigma,\rho} = q^{\rho-\sigma} w_{0,\sigma,\rho}, \; k_2 w_{0,\sigma,\rho} = q^{\mu+\sigma} w_{0,\sigma,\rho}, \]
\[ f_1 w_{0,\sigma,\rho} = [\sigma=0, \rho=1] (-1) q^{-1} w_{0,1,0}, \; f_2 w_{0,\sigma,\rho} = [\rho=0] w_{0,\sigma,1}, \]
\[ f_3 w_{0,\sigma,\rho} = [\sigma=0] (-1)^{\rho} q^{-\rho} w_{0,1,\rho}, \; f_3 f_2 w_{0,\sigma,\rho} = [\sigma=\rho=0] (-1)
q^{-1} w_{0,1,1}, \]
\[ e_1 w_{0,\sigma,\rho} = [\sigma=1, \rho=0] (-1) q w_{0,0,1}, \; e_2 w_{0,\sigma,\rho} = [\rho=1] [\mu+\sigma] w_{0,\sigma,0},
\]
\[ e_3 w_{0,\sigma,\rho} = [\sigma=1] (-1)^{\rho} q^{\rho} [\mu+\rho] w_{0,0,\rho}, \; e_3 e_2 w_{0,\sigma,\rho} =
[\sigma=\rho=1] [\mu+1] [\mu] w_{0,0,0}. \]

Notice that simple $U_{q}$-module $M$ is also a simple $\bar{U}_q$-module if central elements
$k_1^{d}-1,k_2^{d}-1,e_1^{d},f_1^{d}$ act trivially on $M$. Let us check that central elements
$k_1^{d}-1,k_2^{d}-1,e_1^{d},f_1^{d}$ act trivially on $V_{\mu}$:
\[ (k_1^{d} - 1) w_{0, \sigma, \rho} = (q^{-d \sigma +d \rho} - 1) w_{0, \sigma, \rho} = (1-1) w_{0, \sigma, \rho} = 0, \]
\[ (k_2^{d} - 1) w_{0, \sigma, \rho} = (q^{d \sigma + d \mu} - 1) w_{0, \sigma, \rho} = 0, \]
\[ f_1^{d} w_{p, \sigma, \rho} = 0, \; e_1^{d} w_{p, \sigma, \rho} = 0. \]

Consider the dictionary ordered basis for $V_{\mu} \otimes V_{\mu}$: $\{ w_{0,i_1,j_1} \otimes w_{0,i_2,j_2} | 0 \le
i_1,j_1,i_2,j_2 \le 1 \}$ where $(0,0) < (1,0) < (0,1) < (1,1)$. The basis for $V_{\mu} \otimes V_{\mu} \otimes V_{\mu}$ is
ordered in the same manner.

We use Theorem \ref{URBUq} to find image of the universal $R$-matix $\bar{R}$ in $V_{\mu} \otimes V_{\mu}$:
\[ \bar{R}(w_{0,\sigma_{1},\rho_1} \otimes w_{0,\sigma_{2},\rho_2}) = q^{ 2 \mu^2 + \sigma_1 (\rho_2+ \mu  ) + \rho_1
(\sigma_2+\mu) + \mu (\sigma_{2} + \rho_2 ) } ( w_{0,\sigma_{1},\rho_1} \otimes w_{0,\sigma_{2},\rho_2} + \]
\[ + [\sigma_1=\rho_1=1,\sigma_2=\rho_2=0] q (q-q^{-1})^{2} [\mu+1] [\mu] w_{0,0,0} \otimes w_{0,1,1} + \]
\[ + [\sigma_1=1, \rho_1=0,\sigma_2=0, \rho_2=1] (-1) (q-q^{-1}) w_{0,0,1} \otimes w_{0,1,0} + \]
\[ + [\rho_1=1,\rho_2=0] (-1)^{\sigma_{1}+1} (q-q^{-1}) [\mu+\sigma_1] w_{0,\sigma_{1},0} \otimes  w_{0,\sigma_{2},1} + \]
\[ + [\sigma_1=1, \rho_1=0,\sigma_2=0, \rho_2=1] (q-q^{-1})^{2} [\mu] w_{0,0,0} \otimes w_{0,1,1} + \]
\[ + [\sigma_1=1,\sigma_2=0] (-1)^{1+\rho_2} (q-q^{-1}) q^{\rho_1-\rho_2} [\mu+\rho_1] w_{0,0,\rho_1} \otimes w_{0,1,\rho_2} ).
\]

Now we can construct the $R$-matrix
\[ c^{\bar{R}}_{V_{\mu},V_{\mu}}: V_{\mu} \otimes V_{\mu} \to V_{\mu} \otimes V_{\mu}, \]
where $c^{\bar{R}}_{V_{\mu},V_{\mu}}=q^{- 2 \mu^2} \tau_{V_{\mu},V_{\mu}} \circ \bar{R}$.

\[ c^{\bar{R}}_{V_{\mu},V_{\mu}}(w_{0,0,0} \otimes w_{0,0,0}) = w_{0,0,0} \otimes w_{0,0,0}, \]
\[ c^{\bar{R}}_{V_{\mu},V_{\mu}}(w_{0,1,0} \otimes w_{0,0,0}) = q^{\mu} ( w_{0,0,0} \otimes w_{0,1,0} + (-1) (q-q^{-1}) [\mu]
w_{0,1,0} \otimes w_{0,0,0} ), \]
\[ c^{\bar{R}}_{V_{\mu},V_{\mu}}(w_{0,0,1} \otimes w_{0,0,0}) = q^{\mu} ( w_{0,0,0} \otimes w_{0,0,1} + (-1) (q-q^{-1}) [\mu]
w_{0,0,1} \otimes w_{0,0,0} ), \]
\[ c^{\bar{R}}_{V_{\mu},V_{\mu}}(w_{0,1,1} \otimes w_{0,0,0}) = q^{2\mu} ( w_{0,0,0} \otimes w_{0,1,1} + q (q-q^{-1})^{2}
[\mu+1] [\mu] w_{0,1,1} \otimes w_{0,0,0} + \]
\[ + (-1) (q-q^{-1}) [\mu+1] w_{0,0,1} \otimes w_{0,1,0} + q (q-q^{-1}) [\mu+1] w_{0,1,0} \otimes w_{0,0,1} ), \]

\[ c^{\bar{R}}_{V_{\mu},V_{\mu}}(w_{0,0,0} \otimes w_{0,1,0}) = q^{\mu} w_{0,1,0} \otimes w_{0,0,0} , \]
\[ c^{\bar{R}}_{V_{\mu},V_{\mu}}(w_{0,1,0} \otimes w_{0,1,0}) = (-1) q^{2\mu} w_{0,1,0} \otimes w_{0,1,0} , \]
\[ c^{\bar{R}}_{V_{\mu},V_{\mu}}(w_{0,0,1} \otimes w_{0,1,0}) = q^{2\mu+1} ( (-1) w_{0,1,0} \otimes w_{0,0,1} + (-1) (q-q^{-1})
[\mu] w_{0,1,1} \otimes w_{0,0,0} ),\]
\[ c^{\bar{R}}_{V_{\mu},V_{\mu}}(w_{0,1,1} \otimes w_{0,1,0}) = q^{3\mu+1} ( w_{0,1,0} \otimes w_{0,1,1} + (q-q^{-1}) [\mu+1]
w_{0,1,1} \otimes w_{0,1,0} ),\]

\[ c^{\bar{R}}_{V_{\mu},V_{\mu}}(w_{0,0,0} \otimes w_{0,0,1}) = q^{\mu} w_{0,0,1} \otimes w_{0,0,0} , \]
\[ c^{\bar{R}}_{V_{\mu},V_{\mu}}(w_{0,1,0} \otimes w_{0,0,1}) = q^{2\mu+1} ( (-1) w_{0,0,1} \otimes w_{0,1,0} +  q (q-q^{-1})
[\mu] w_{0,1,1} \otimes w_{0,0,0} + (q-q^{-1}) w_{0,1,0} \otimes w_{0,0,1} ), \]
\[ c^{\bar{R}}_{V_{\mu},V_{\mu}}(w_{0,0,1} \otimes w_{0,0,1}) = (-1) q^{2\mu} w_{0,0,1} \otimes w_{0,0,1} , \]
\[ c^{\bar{R}}_{V_{\mu},V_{\mu}}(w_{0,1,1} \otimes w_{0,0,1}) = q^{3\mu+1} ( w_{0,0,1} \otimes w_{0,1,1} + (q-q^{-1}) [\mu+1]
w_{0,1,1} \otimes w_{0,0,1} ), \]

\[ c^{\bar{R}}_{V_{\mu},V_{\mu}}(w_{0,0,0} \otimes w_{0,1,1}) = q^{2\mu} w_{0,1,1} \otimes w_{0,0,0}, \]
\[ c^{\bar{R}}_{V_{\mu},V_{\mu}}(w_{0,1,0} \otimes w_{0,1,1}) = q^{3\mu+1} w_{0,1,1} \otimes w_{0,1,0}, \]
\[ c^{\bar{R}}_{V_{\mu},V_{\mu}}(w_{0,0,1} \otimes w_{0,1,1}) = q^{3\mu+1} w_{0,1,1} \otimes w_{0,0,1}, \]
\[ c^{\bar{R}}_{V_{\mu},V_{\mu}}(w_{0,1,1} \otimes w_{0,1,1}) = q^{4\mu+2} w_{0,1,1} \otimes w_{0,1,1}. \]

We define the family of elements $c^{\bar{R}}_{V_{\mu}^i,V_{\mu}^{i+1}} \in End(V_{\mu}^{\otimes n})$:
\[ c^{\bar{R}}_{V_{\mu}^i,V_{\mu}^{i+1}} = id_{V_{\mu}}^{\otimes i-1 } \otimes c^{\bar{R}}_{V_{\mu},V_{\mu}} \otimes
id_{V_{\mu}}^{\otimes n-i-1 }, \]
where $n > 1, \; i= \{ 1,2,...,n-1 \}$.

Define the representation $\rho_{n,\mu}: \bar{U}_q \to V_{\mu}^{\otimes n}$ given by $\Delta_{\bar{U}_q}$:
\[ \rho_{n,\mu}(a) v = \Delta_{\bar{U}_q}^{n-1}(a) v, \]
where $a \in \bar{U}_q$, $v \in V_{\mu}^{\otimes n}$.

We consider the family of unital associative algebras $L_{n,\mu}$ for which representation $\gamma_{n,\mu}: L_{n,\mu} \to
End(V_{\mu}^{\otimes n})$ is faithful and, moreover, $\gamma_{n,\mu}(L_{n,\mu})=End_{\rho_{n,\mu}(U_q)}(V_{\mu}^{\otimes n})$.
$L_{n,\mu}$ is generated by elements $g_1, g_2, ..., g_{n-1}$ such that $\gamma_{n,\mu}(g_i):=
c^{\bar{R}}_{V_{\mu}^i,V_{\mu}^{i+1}}$. According to $R$-matrix properties the generation elements satisfy equations
\[ g_{j} g_{i} g_{j} = g_{i} g_{j} g_{i}, \]
for $|i-j| = 1$,
\[ g_{j} g_{i} = g_{i} g_{j}, \]
where $|i-j| \ge 2, \; i,j \in \{ 1,2,...,n-1 \}$.

We set
\[ L_{0,\mu} = L_{1,\mu} = k*1. \]

Algebra $L_{2,\mu}$ is generated by element $g_1$. Generating relation for $L_{2,\mu}$ is
\[ (g_1-1)(g_1+q^{2 \mu})(g_1-q^{4 \mu + 2}) = 0. \]
From this equation we get
\[ g_1^{-1} = - q^{-6 \mu - 2} g_1^2 + ( q^{-2 \mu} - q^{-4 \mu - 2} + q^{-6 \mu - 2} ) g_1 + ( 1 - q^{-2 \mu} + q^{-4 \mu - 2}
) 1. \]	
The basis is $ B_{2,\mu} = \{1, g_1, g_1^2\}$.

To generate bases for $L_{3,\mu}$ and $L_{4,\mu}$ we use the following method. Let us introduce the degree-lexicographic order
$\le_{deglex}$ on monomials formed by generator elements using relations: $1 <_{deglex} g_1 <_{deglex} g_2 <_{deglex} g_3$. We
exclude all monomials $a$ which can be reduced using generation relations to linear combination of elements $b$ such that $b
<_{deglex} a$. After that we add to basis elements starting from $1$ in ascending order according to $<_{deglex}$ and verify
linear independence to exclude linear dependent elements. It is possible as we investigate a faithful representation.
For $L_{4,\mu}$ we additionally require to exclude elements that contain subwords $g_1 g_3 g_2 g_2 g_3, \; g_3 g_2 g_2 g_1 g_3,
\; g_3 g_2 g_2 g_1 g_1 g_3$ in order to proof Theorem \ref{th:inductionAlgebra}.

$L_{3,\mu}$ is generated by elements $g_1$ and $g_2$. We describe here all generating relations for $L_{3,\mu}$:
\[ g_1 g_2 g_1 = g_2 g_1 g_2, \]
\[ (g_i-1)(g_i+q^{2 \mu})(g_i-q^{4 \mu + 2}) = 0, \]

\[ (g_1 + q^{2 \mu}) (g_2 + q^{4 \mu}) (g_1 - q^{4 \mu + 2} ) g_1 ( g_2 + q^{2 \mu}) = (g_1 + q^{2 \mu}) (g_2 - q^{4 \mu +2})
g_2 (g_1 + q^{4 \mu}) ( g_2 + q^{2 \mu}), \]

\[ (g_1 + q^{2 \mu} ) (g_2 + q^{2 \mu} - q^{4 \mu + 2} + q^{2 \mu + 2}) (g_2 - 1) (g_1 + q^{2 \mu} ) (g_1 + q^{-2 \mu}) = \]
\[ = (g_1 + q^{2 \mu} ) (g_1 + q^{-2 \mu}) (g_2 + q^{2 \mu} - q^{4 \mu + 2} + q^{2 \mu + 2}) (g_2 - 1) (g_1 + q^{2 \mu} ), \]

where $i=1,2$.

Now we give the basis for $L_{3,\mu}$:
\[ B_{3,\mu} = \{ 1, g_1, g_2, g_1^2,  g_1 g_2, g_2 g_1, g_2^2, g_1^2 g_2, g_1 g_2 g_1, g_1 g_2^2, g_2 g_1^2, g_2^2 g_1, \]
\[ g_1^2 g_2 g_1, g_1^2 g_2^2, g_1 g_2 g_1^2, g_1 g_2^2 g_1, g_2 g_1^2 g_2, g_2^2 g_1^2, g_1^2 g_2 g_1^2, g_1^2 g_2^2 g_1 \}.
\]

In order to describe multiplication laws we add additional equations. All of them can be deduced using generating relations
mentioned above.

\[ g_2 g_1 g_1 g_2 g_1 = g_1 g_2 g_1 g_1 g_2, \]
\[ g_2 g_1^2 g_2^2 =
(q^{2 \mu} - q^{4 \mu} q^2 + q^{6 \mu} q^2) g_2 g_1^2 +
(q^{4 \mu} q^2 - q^{2 \mu} - q^{6 \mu} q^2) g_2^2 g_1 + \]
\[ + (q^{2 \mu} - q^{4 \mu} q^2 - 1) g_1 g_2^2 g_1 +
(q^{4 \mu} q^2 - q^{2 \mu} + 1) g_2 g_1^2 g_2 +
g_1^2 g_2^2 g_1, \]
\[ g_2^2 g_1^2 g_2 =
(q^{6 \mu} - q^{4 \mu} + q^{6 \mu + 2} - q^{8 \mu + 2} ) g_1 g_2 +
(q^{4 \mu} - q^{6 \mu} - q^{6 \mu + 2} + q^{8 \mu + 2} ) g_2 g_1 +
q^{4 \mu} g_1^2 g_2 + \]
\[ + (q^{4 \mu} - q^{2 \mu} + q^{4 \mu + 2} - q^{6 \mu + 2}) g_1 g_2^2 +
(q^{2 \mu} - q^{4 \mu} - q^{4 \mu + 2} + q^{6 \mu + 2}) g_2 g_1^2 + \]
\[ + (-q^{4 \mu}) g_2^2 g_1 +
(q^{2 \mu} + q^{2 \mu + 2} - q^{4 \mu + 2} - 1) g_1^2 g_2 g_1 +
q^{2 \mu} g_1^2 g_2^2 +
(q^{4 \mu + 2} - q^{2 \mu + 2} - q^{2 \mu} + 1) g_1 g_2 g_1^2 + \]
\[ + (q^{2 \mu} - q^{4 \mu + 2} - 1) g_1 g_2^2 g_1 +
(q^{4 \mu + 2} - q^{2 \mu} + 1) g_2 g_1^2 g_2 +
(-q^{2 \mu}) g_2^2 g_1^2 +
g_1^2 g_2^2 g_1.
\]

Note that
\begin{equation}\label{eq:l3decomp}
L_{3,\mu} = \sum_{i=0}^{2} L_{2,\mu} g_2^{i} L_{2,\mu} + \Bbbk g_2 g_1^{2} g_2.
\end{equation}

Now we give the basis for $L_{4,\mu}$:
\[ B_{4,\mu} = B_{3,\mu} \cup \{ g_3 g_2^{2} g_3 \} \cup \{ a g_3^{i}, a g_3 g_2 | a \in B_{3,\mu}, i \in \{1,2\} \} \cup \]
\[ \cup \{ a g_3 g_3 g_2, a g_3 g_2^2 | a \in B_{3,\mu}, a < g_2^2 g_1^2 \} \cup \{ g_3^2 g_2^2, g_1 g_3^2 g_2^2, g_1^2 g_3^2
g_2^2, g_2 g_1 g_3^2 g_2^2 \} \cup \]
\[ \cup \{ a g_3 g_2 g_1 | a \in B_{3,\mu}, a < g_2 g_1^2 g_2 \} \cup \{ a g_3 g_3 g_2 g_1 | a \in B_{3,\mu}, a < g_2^2 g_1, a
\ne g_1 g_2^2 \} \cup \]
\[ \cup \{ a g_3 g_2 g_2 g_1 | a \in B_{3,\mu}, a < g_2^2 g_1, a \notin \{ g_1^2 g_2, g_1 g_2^2 \} \} \cup \{ g_3 g_3 g_2 g_2
g_1, g_1 g_3 g_3 g_2 g_2 g_1 \} \cup \]
\[ \cup \{ a g_3 g_2 g_1 g_1 | a \in B_{3,\mu}, a < g_1 g_2^2 \} \cup \{ g_3 g_3 g_2 g_1 g_1,	g_1 g_3 g_3 g_2 g_1 g_1, g_2 g_3
g_3 g_2 g_1 g_1, g_1 g_1 g_3 g_3 g_2 g_1 g_1 \} \cup \]
\[ \cup \{ g_3 g_2 g_1 g_1 g_2, g_2 g_3 g_2 g_1 g_1 g_2, g_3 g_3 g_2 g_1 g_1 g_2 \} \cup \{ g_3 g_2 g_2 g_1 g_1, g_2 g_3 g_2 g_2
g_1 g_1, g_3 g_3 g_2 g_2 g_1 g_1 \}. \]

Note that
\begin{equation}\label{eq:l4decomp}
L_{4,\mu} = \sum_{i=0}^{2} L_{3,\mu} g_3^{i} L_{3,\mu} + L_{1,\mu} g_3 g_2^{2} g_3.
\end{equation}

\begin{restatable}{lemma}{lmbimo} \label{lm:bimolln}
	For $n \ge 3$ we have
	\[ L_{n-1,\mu}  g_{n-1} g_{n-2}^{2} g_{n-1} L_{n-1,\mu} \subset \sum_{i=0}^{2} L_{n-1,\mu} g_{n-1}^{i} L_{n-1,\mu} +
L_{n-3,\mu} g_{n-1} g_{n-2}^{2} g_{n-1}. \]
\end{restatable}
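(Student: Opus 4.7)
The plan is to proceed by strong induction on $n$. The base case $n=3$ reduces to a direct computation using the explicit relations listed for $L_{3,\mu}$: the identity $g_{2}g_{1}^{2}g_{2}g_{1} = g_{1}g_{2}g_{1}^{2}g_{2}$ (which is precisely $W g_{n-2} = g_{n-2} W$ in this case), together with the explicit expansions of $g_{2}g_{1}^{2}g_{2}^{2}$ and $g_{2}^{2}g_{1}^{2}g_{2}$, settles the $3 \times 3$ matrix of products indexed by the basis $\{1, g_1, g_1^2\}$ of $L_{2,\mu}$. For the inductive step I would assume the parallel structural decomposition
\[ L_{n-1,\mu} = \sum_{i=0}^{2} L_{n-2,\mu} g_{n-2}^{i} L_{n-2,\mu} + L_{n-4,\mu} g_{n-2} g_{n-3}^{2} g_{n-2}, \]
extending equations \ref{eq:l3decomp} and \ref{eq:l4decomp}; by bilinearity it then suffices to verify the inclusion for $a$ and $b$ drawn from the four summand types.

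Two commutation facts drive the reductions. First, since $L_{n-2,\mu}$ is generated by $g_1, \ldots, g_{n-3}$ (all at distance at least $2$ from $g_{n-1}$), every element of $L_{n-2,\mu}$ commutes with $g_{n-1}$. Second, writing $W := g_{n-1} g_{n-2}^{2} g_{n-1}$, the braid manipulation
\[ g_{n-2} W = (g_{n-2} g_{n-1} g_{n-2}) g_{n-2} g_{n-1} = (g_{n-1} g_{n-2} g_{n-1}) g_{n-2} g_{n-1} = g_{n-1} g_{n-2} (g_{n-1} g_{n-2} g_{n-1}) = g_{n-1} g_{n-2}^{2} g_{n-1} g_{n-2} \]
shows $g_{n-2} W = W g_{n-2}$. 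Combined with the trivial commutation of $W$ with $g_i$ for $i \le n-4$, this isolates $g_{n-3}$ as the unique generator of $L_{n-1,\mu}$ that does not commute with $W$.

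For the cases where both $a$ and $b$ lie in the first three summands of the decomposition, I would push all $L_{n-2,\mu}$-factors past $g_{n-1}$ and all $g_{n-2}$-factors past $W$, reducing the product to one of the form $g_{n-1} c g_{n-1}$ with $c \in L_{n-1,\mu}$. Expanding $c$ by the same inductive decomposition splits this into sub-cases: the $L_{n-2,\mu}$ and $L_{n-2,\mu} g_{n-2} L_{n-2,\mu}$ parts produce elements of $\sum_{i=0}^{2} L_{n-1,\mu} g_{n-1}^{i} L_{n-1,\mu}$ via the braid $g_{n-1} g_{n-2} g_{n-1} = g_{n-2} g_{n-1} g_{n-2}$, while the $L_{n-2,\mu} g_{n-2}^{2} L_{n-2,\mu}$ part regenerates $W$ with $L_{n-2,\mu}$-factors on both sides, and an inner induction on the length of these factors (or an appeal to the already-verified $n-1$ case of the lemma applied to $L_{n-2,\mu}$ in place of $L_{n-1,\mu}$) closes the loop.

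The principal obstacle is the case where $a$ or $b$ is of the fourth type $L_{n-4,\mu} g_{n-2} g_{n-3}^{2} g_{n-2}$, since the embedded $g_{n-3}$ is exactly the generator that fails to commute with $W$. My approach would be to exploit the braid $g_{n-3} g_{n-2} g_{n-3} = g_{n-2} g_{n-3} g_{n-2}$ together with the cubic Hecke relation on $g_{n-3}$, pushing the $g_{n-1}$-factors of $W$ across $g_{n-3}^{2}$ (which they do commute with) until they meet the surrounding $g_{n-2}$'s, at which point further braid absorption collapses the $g_{n-3}$-dependence into an $L_{n-4,\mu} \subset L_{n-3,\mu}$ coefficient multiplying $W$. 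The delicate bookkeeping needed to ensure that no stray $g_{n-3}$'s remain and that the residual terms genuinely land in $\sum_{i=0}^{2} L_{n-1,\mu} g_{n-1}^{i} L_{n-1,\mu}$, especially when both $a$ and $b$ are of type (iv), is the technically most demanding part of the argument.
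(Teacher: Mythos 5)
Your overall skeleton --- induction on $n$, isolating $g_{n-3}$ and $g_{n-2}$ as the only generators of $L_{n-1,\mu}$ that interact nontrivially with $W:=g_{n-1}g_{n-2}^{2}g_{n-1}$, together with the correct and useful observation $g_{n-2}W=Wg_{n-2}$ --- matches the paper's reduction, which brings everything down to the products $g_{i_1}^{j_1}Wg_{i_2}^{j_2}$ with $i_1,i_2\in\{n-3,n-2\}$. However, your argument has a genuine gap precisely at the step you yourself flag as ``the technically most demanding part'', and that step is where the entire content of the lemma lives. Products of $W$ with words in $g_{n-3}$ and $g_{n-2}$ are words in the three consecutive generators $g_{n-3},g_{n-2},g_{n-1}$, i.e.\ they lie in an index-shifted copy of $L_{4,\mu}$; what must be shown is that inside that copy they decompose as in \ref{eq:l4decomp}, with the coefficient of $g_{n-1}g_{n-2}^{2}g_{n-1}$ landing in $L_{1,\mu}=\Bbbk\subset L_{n-3,\mu}$. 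This is exactly what the paper's explicit basis $B_{4,\mu}$ delivers --- a basis deliberately constructed to exclude the subwords $g_1g_3g_2^2g_3$, $g_3g_2^2g_1g_3$, $g_3g_2^2g_1^2g_3$ for this purpose --- and some such explicit rank-$4$ computation cannot be avoided: invoking ``braid absorption'' via $g_{n-3}g_{n-2}g_{n-3}=g_{n-2}g_{n-3}g_{n-2}$ and the cubic relation describes the computation but does not establish its outcome. Moreover, your proposed fallback of ``appealing to the already-verified $n-1$ case of the lemma applied to $L_{n-2,\mu}$'' does not apply to the residual terms $L_{n-2,\mu}\,W\,L_{n-2,\mu}$ that your reduction produces: the lemma at level $n-1$ concerns the element $g_{n-2}g_{n-3}^{2}g_{n-2}$, not $W=g_{n-1}g_{n-2}^{2}g_{n-1}$, and there is no index-shift automorphism of $L_{n,\mu}$ converting one into the other.

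Two further points. You need $n=4$ as a base case in addition to $n=3$ (the paper uses both \ref{eq:l3decomp} and \ref{eq:l4decomp}); the $n=4$ instance is where the problematic fourth summand first becomes nontrivial and it is not subsumed by the $n=3$ computation. And since your inductive step invokes the decomposition of $L_{n-1,\mu}$ coming from Theorem \ref{th:inductionAlgebra}, whose proof in turn relies on this lemma, you must organize the argument explicitly as a joint induction on $n$ over the lemma and the theorem together; as written, the logical order of the two statements is left ambiguous.
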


\begin{restatable}{theorem}{thindAlg} \label{th:inductionAlgebra}
	For $n \ge 3$ we have
	\[ L_{n,\mu} = \sum_{i=0}^{2} L_{n-1,\mu} g_{n-1}^{i} L_{n-1,\mu} + L_{n-3,\mu} g_{n-1} g_{n-2}^{2} g_{n-1}. \]
\end{restatable}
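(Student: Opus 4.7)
The plan is to induct on $n$, with $n=3$ and $n=4$ serving as base cases via the explicit decompositions \eqref{eq:l3decomp} and \eqref{eq:l4decomp}. For the inductive step, denote the right-hand side by
\[ M_n := \sum_{i=0}^{2} L_{n-1,\mu} g_{n-1}^{i} L_{n-1,\mu} + L_{n-3,\mu} g_{n-1} g_{n-2}^{2} g_{n-1}. \]
Clearly $M_n \subseteq L_{n,\mu}$, and $1 \in L_{n-1,\mu} \subseteq M_n$. Since $L_{n,\mu}$ is generated as a unital algebra by $L_{n-1,\mu}$ and $g_{n-1}$, it suffices to show that $M_n$ is stable under multiplication on either side by any generator $g_j$ ($1 \le j \le n-1$). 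By symmetry we argue right multiplication; the left-multiplication case is analogous.

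For $j \le n-2$, we have $g_j \in L_{n-1,\mu}$, so the terms $L_{n-1,\mu} g_{n-1}^{i} L_{n-1,\mu}$ are trivially stable. The extra term satisfies $L_{n-3,\mu} g_{n-1} g_{n-2}^{2} g_{n-1} \cdot g_j \subseteq L_{n-1,\mu} g_{n-1} g_{n-2}^{2} g_{n-1} L_{n-1,\mu}$, which is contained in $M_n$ by Lemma \ref{lm:bimolln}. For right multiplication by $g_{n-1}$, I would decompose the inner $L_{n-1,\mu}$ by the inductive hypothesis at level $n-1$:
\[ L_{n-1,\mu} = \sum_{j=0}^{2} L_{n-2,\mu} g_{n-2}^{j} L_{n-2,\mu} + L_{n-4,\mu} g_{n-2} g_{n-3}^{2} g_{n-2}, \]
and exploit that $L_{n-2,\mu}$ and $L_{n-4,\mu}$ commute with $g_{n-1}$ (being generated by $g_i$ with $|i-(n-1)|\ge 2$). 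This reduces the stability problem to showing that words of the form $g_{n-1}^{i} g_{n-2}^{j} g_{n-1}$ ($0\le i\le 2$, $0\le j\le 2$) and $g_{n-1}^{i} g_{n-2} g_{n-3}^{2} g_{n-2} g_{n-1}$ lie in $M_n$.

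The cases $j=0$ and $j=1$ are routine: $j=0$ gives $g_{n-1}^{i+1}$, which is reduced by the cubic minimal polynomial for $g_{n-1}$ to a linear combination of $1,g_{n-1},g_{n-1}^2$; while $j=1$ is handled by the braid relation $g_{n-1}g_{n-2}g_{n-1}=g_{n-2}g_{n-1}g_{n-2}$. The case $j=2$ is precisely where Lemma \ref{lm:bimolln} is crucial: for $i=1$ it directly supplies $g_{n-1} g_{n-2}^{2} g_{n-1} \in M_n$ (even after multiplying by $L_{n-1,\mu}$ on the right), and for $i=2$ one writes $g_{n-1}^{2} g_{n-2}^{2} g_{n-1} = g_{n-1}\cdot(g_{n-1} g_{n-2}^{2} g_{n-1})$ and invokes the lemma again. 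The closure of $L_{n-3,\mu} g_{n-1} g_{n-2}^{2} g_{n-1}\cdot g_{n-1}$ under right multiplication is handled similarly, using the cubic in $g_{n-1}$ and the lemma.

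The main obstacle is the residual cross-term $L_{n-1,\mu} g_{n-1} g_{n-2} g_{n-3}^{2} g_{n-2} g_{n-1} L_{n-1,\mu}$, produced when the inductive decomposition of $L_{n-1,\mu}$ contributes its special summand $L_{n-4,\mu} g_{n-2} g_{n-3}^{2} g_{n-2}$. Here the three generators $g_{n-1},g_{n-2},g_{n-3}$ interact non-trivially, and one cannot simply commute $g_{n-3}^{2}$ past $g_{n-2}$. The plan is to reshape this word by successive applications of the braid relation $g_{n-2}g_{n-3}g_{n-2}=g_{n-3}g_{n-2}g_{n-3}$, the commutation $[g_{n-1},g_{n-3}]=0$, and the cubic in $g_{n-2}$, until the pattern $g_{n-1} g_{n-2}^{2} g_{n-1}$ (or lower-order braid words in $g_{n-1},g_{n-2}$) appears explicitly, at which point Lemma \ref{lm:bimolln} absorbs the remaining contribution into $M_n$. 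This rewriting is analogous in spirit to the auxiliary identities recorded just before equation \eqref{eq:l3decomp}, only now performed one level higher in the tower of centralizers.
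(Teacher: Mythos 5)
Your proposal is correct and follows essentially the same route as the paper: induction with base cases $n=3$ and $n=4$, stability of the right-hand side under the generators, decomposition of the inner $L_{n-1,\mu}$ via the inductive hypothesis together with the commutation of $L_{n-2,\mu}$ and $L_{n-4,\mu}$ past $g_{n-1}$, and absorption of the residual words (including the cross-term $g_{n-1}g_{n-2}g_{n-3}^{2}g_{n-2}g_{n-1}$) by Lemma \ref{lm:bimolln} together with the explicit $L_{3,\mu}$ and $L_{4,\mu}$ computations. The one minor imprecision is that $g_{n-1}\cdot(g_{n-1}g_{n-2}^{2}g_{n-1})$ is not covered by Lemma \ref{lm:bimolln} (since $g_{n-1}\notin L_{n-1,\mu}$) but rather by the shifted $L_{3,\mu}$ decomposition \ref{eq:l3decomp}, which is exactly what the paper invokes at that step.
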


\section{Proofs of Central Results}
\label{PCR}

\basisU*
\begin{proof}		
	Let $L$ be a vector superspace over field $F$ such that $L=L_0 \oplus L_1$, where $L_0=<f_1,k_1,k_2,e_1>$,
$L_1=<f_2,f_3,e_2,e_3>$. Consider a pair $(T(L),i)$ where $T(L)$ is the tensor superalgebra of the vector superspace $L$ and $i$
is the canonical inclusion of $L$ in $T(L)$. Consider equations in Definition \ref{def:u_q} and replace two last relations by
formulas \ref{e3f3eq}. Rewrite them in $T(L)$ in the following way:
	\[ a \otimes b - (-1)^{|a||b|} q^{\delta(a,b)} b \otimes a = [a,b], \]
	where $a,b \in i(X), \; [a,b] \in T(L), \; \delta: i(X) \times i(X) \to \{ -2,-1,0,1,2 \}$. Notice that for all $a,b \in
i(X)$
	\[ \delta(b,a) = - \delta(a,b) \Rightarrow q^{\delta(b,a)} = (q^{\delta(a,b)})^{-1}. \]
	Denote by $J$ a $\mathbb{Z}_{2}$-graded two-sided ideal in $T(L)$ generated by the following relations
	\[ a\otimes b-(-1)^{|a||b|}q^{\delta(a,b)}b\otimes a-[a,b], \]
	where $a,b\in i(X)$. Notice that $U_{q} \cong T(L)/J$.
	Next we identify $X$ and $i(X)$. Order the set $X$ in the following way
	\[ f_1<f_3<f_2<k_1<k_2<e_1<e_3<e_2. \]
	
	Define the index of a monomial $x_{i_1} \otimes x_{i_2} \otimes ... \otimes x_{i_n} \in T(L)$ by
	\[ ind(x_{i_1} \otimes x_{i_2} \otimes ... \otimes x_{i_n})=\sum_{j<k} \pi_{jk},\]
	where
	\[\pi_{jk} = \begin{cases} 0, & \mbox{if } x_{i_j} \leq x_{i_k}, \\ 1, & \mbox{if } x_{i_j}>x_{i_k} \end{cases}.\]
	We call a monomial standart if it has the index equal to $0$. It is possible if and only if
	\[x_{i_1} \leq x_{i_2} \leq ... \leq x_{i_n}.\]
	
	We adopt in a natural way the definition of the index on elements of $U_q$. Next we show that each element in $U_q$ is a
$F$-linear combination of unit and standart monomials. By applying the canonical superalgebra morphism $T(L) \to U_q$ we notice
that elements of form $y_1 \otimes y_2 \otimes ... \otimes y_n + J$, where $y_i \in X$ span $U_q$. Therefore it is sufficient to
prove that every monomial is a linear combination of elements of the set $G$.
	
	Let us order monomials by degrees. Monomials of equal degree we order by the indeces. The statement is trivially true for
monomials of degree one. The statement is true for monomials of degree two by Lemma \ref{lm:relonbasis}. Consider a non-standart
monomial $y_{i_1} \otimes y_{i_2} \otimes ... \otimes y_{i_n} + J$ and fix it degree $n$ and index $m$. We assume that the
statement is true for monomials of degree less than $n$ and for monomials of the same degree but of index less than $m$. Also we
assume for clarity that $y_{i_k}>y_{i_{k+1}}$. Then
	\[ y_{i_1} \otimes y_{i_2} \otimes ... \otimes y_{i_k} \otimes y_{i_{k+1}} \otimes ... \otimes y_{i_n} + J =
(-1)^{|y_{i_k}||y_{i_{k+1}}|} q^{\delta(y_{i_k},y_{i_{k+1}})} y_{i_1} \otimes y_{i_2} \otimes...y_{i_{k+1}}\otimes
y_{i_k}\otimes...\otimes y_{i_n} + \]
	\[ + y_{i_1}\otimes y_{i_2}\otimes...\otimes [y_{i_k},y_{i_{k+1}}] \otimes...\otimes y_{i_n} + J. \]
	Notice that $[y_{i_k},y_{i_{k+1}}]$ is a linear combination of standart monomials of degree one or two. Thus the first
monomial in the right side of equality has  index less than $m$, and the second monomial is a linear combination of elements of
degree less than $n$ or index less than $m$. The required result then follows by mathematical induction.
	
	We now show that elements from the set $G$ are linear independent in $U_q$. Consider a polynomial ring
$R=F[z_1,z_2,z_3,z_4,z_5,z_6,z_7,z_8]$. We endow $R$ with the structure of a superalgebra in the following way:
$deg(z_1)=deg(z_4)=deg(z_5)=deg(z_6)=0$, $deg(z_2)=deg(z_3)=deg(z_7)=deg(z_8)=1$.
	
	Let us proof that there is a superspace morphism $\theta:T(L) \to R$ which satisfies the following relations
	\[ \theta(1) = 1, \]
	\[ \theta(x_{i_1} \otimes x_{i_2} \otimes ... \otimes x_{i_n}) = z_{i_1} z_{i_2} ... z_{i_n}, \; \text{if } i_1 \le i_2 \le
... \le i_n, \]
	\[ \theta( x_{i_1} \otimes x_{i_2} \otimes ... \otimes  x_{i_k} \otimes x_{i_{k+1}} \otimes ... \otimes x_{i_n}) -
(-1)^{|x_{i_k}||x_{i_{k+1}}|} q^{\delta(x_{i_k},x_{i_{k+1}})} \theta( x_{i_1} \otimes x_{i_2} \otimes ... \otimes  x_{i_{k+1}}
\otimes x_{i_k} \otimes ... \otimes x_{i_n}) = \]
	\[ = \theta( x_{i_1} \otimes x_{i_2} \otimes ... \otimes  [x_{i_k},x_{i_{k+1}}] \otimes ... \otimes x_{i_n} ) \]
	for all $x_{i_1},x_{i_2},...,x_{i_n} \in X$ and for each $k:1 \le k < n$.
	
	Recall that $T^{0}(L) = F 1$, $T^{n}(L)= \bigotimes_{i=1}^{n} L$ for all $n \in \mathbb{N}$. Denote by $T^{n,j}(L)$ a linear
subspace $T^{n}(L)$ spanned by all monomials $x_{i_1} \otimes x_{i_2} \otimes ... \otimes x_{i_n}$, which have index less or
equal to $j$. Thus,
	\[ T^{n,0}(L) \subset T^{n,1}(L) \subset ... \subset T^{n}(L). \]
	We define $\theta:T^{0}(L) \to R$ by $\theta(1)=1$. Suppose inductively that $\theta:T^{0}(L) \oplus T^{1}(L) ... \oplus
T^{n-1}(L) \to R$ has already been defined satisfying the required conditions. We will show that $\theta$ can be extended to
$\theta:T^{0}(L) \oplus T^{1}(L) ... \oplus T^{n}(L) \to R$. We define $\theta:T^{n,0}(L) \to R$ by
	\[ \theta(x_{i_1} \otimes x_{i_2} \otimes ... \otimes x_{i_n}) = z_{i_1} z_{i_2}...z_{i_n} \]
	for standart monomials of degree $n$. We suppose $\theta:T^{n,i-1} \to R$ has already been defined, thus giving a superspace
morphism from $\theta:T^{0}(L) \oplus T^{1}(L) ... \oplus T^{n-1}(L) \oplus T^{n,i-1}(L) \to R$ satisfying the required
conditions. We wish to define $\theta:T^{n,i}(L) \to R$.
	
	Assume that the monomial $x_{i_{1}}\otimes x_{i_{2}}\otimes...\otimes x_{i_{n}}$ has the index $i\ge1$ and let $x_{i_{k}}\ge
x_{i_{k+1}}$. Then define
	\begin{equation}\label{eq:basisULinearMap}
	\theta(x_{i_{1}}\otimes...\otimes x_{i_{k}}\otimes x_{i_{k+1}}\otimes...\otimes
x_{i_{n}})=\theta(x_{i_{1}}\otimes...\otimes[x_{i_{k}},x_{i_{k+1}}]\otimes...\otimes x_{i_{n}}) +
	\end{equation}
	\[ +(-1)^{|x_{i_{k}}||x_{i_{k+1}}|}q^{\delta(x_{i_{k}},x_{i_{k+1}})}\theta(x_{i_{1}}\otimes...\otimes x_{i_{k+1}}\otimes
x_{i_{k}}\otimes...\otimes x_{i_{n}}). \]
	
	This definition is correct as both terms on the right side of the equation belong to a vector superspace
$T^{0}(L)+T^{1}(L)+...+T^{n-1}(L)+T^{n,i-1}(L)$. We show that definition \ref{eq:basisULinearMap} doesn't depend on the choise
of the pair $(x_{i_{k}},x_{i_{k+1}}), x_{i_{k}}>x_{i_{k+1}}$. Let $(x_{i_{j}},x_{i_{j+1}})$ be another pair, where
$x_{i_{j}}>x_{i_{j+1}}$. There are two different possible situations: 1. $x_{i_{j}}>x_{i_{k+1}}$, 2. $x_{i_{j}}=x_{i_{k+1}}$.
	
	1. Suppose that $x_{i_{k}}=a,x_{i_{k+1}}=b,x_{i_{j}}=c,x_{i_{j+1}}=d$. Then we can rewrite by inductive assumption the right
side of the equation \ref{eq:basisULinearMap}, starting from the pair $(a,b)$, in the following way:
	\[ \theta( x_{i_{1}} \otimes ... \otimes a \otimes b \otimes...\otimes c \otimes d \otimes ... \otimes x_{i_{n}} ) =
(-1)^{|a||b|} q^{\delta(a,b)} \theta( x_{i_{1}} \otimes ... \otimes b \otimes a \otimes ... \otimes c \otimes d \otimes ...
\otimes x_{i_{n}} ) + \]
	\[ + \theta( x_{i_{1}} \otimes ... \otimes [a,b] \otimes ... \otimes c \otimes d \otimes ... \otimes x_{i_{n}} ) =
(-1)^{|a||b|+|c||d|} q^{\delta(a,b)+\delta(c,d)} \theta( x_{i_{1}} \otimes ... \otimes b \otimes a \otimes ... \otimes d \otimes
c \otimes ... \otimes x_{i_{n}} ) + \]
	\[ + (-1)^{|a||b|} q^{\delta(a,b)} \theta( x_{i_{1}} \otimes ... \otimes b \otimes a \otimes ... \otimes [c,d] \otimes ...
\otimes x_{i_{n}} ) + \]
	\[ + (-1)^{|c||d|} q^{\delta(c,d)} \theta( x_{i_{1}} \otimes ... \otimes [a,b] \otimes ... \otimes d \otimes c \otimes ...
\otimes x_{i_{n}} ) + \theta( x_{i_{1}} \otimes ... \otimes [a,b] \otimes ... \otimes [c,d] \otimes ... \otimes x_{i_{n}} ) .
\]
	
	If we start from the pair $(c,d)$, we get
	\[ \theta( x_{i_{1}} \otimes ... \otimes a \otimes b \otimes ... \otimes c \otimes d \otimes ... x_{i_{n}} ) = (-1)^{|c||d|}
q^{\delta(c,d)} \theta( x_{i_{1}} \otimes ... \otimes a \otimes b \otimes ... \otimes d \otimes c \otimes ... \otimes x_{i_{n}}
) + \]
	\[ + \theta( x_{i_{1}} \otimes ... \otimes a \otimes b \otimes ... \otimes [c,d] \otimes ... \otimes x_{i_{n}} ) =
(-1)^{|a||b|+|c||d|} q^{\delta(a,b)+\delta(c,d)} \theta( x_{i_{1}} \otimes ... \otimes b \otimes a \otimes ... \otimes d \otimes
c \otimes ... \otimes x_{i_{n}} ) + \]
	\[ + (-1)^{|c||d|} q^{\delta(c,d)} \theta( x_{i_{1}} \otimes ... \otimes [a,b] \otimes ... \otimes d \otimes c \otimes ...
\otimes x_{i_{n}} ) + \]
	\[ + (-1)^{|a||b|} q^{\delta(a,b)} \theta( x_{i_{1}} \otimes ... \otimes b \otimes a \otimes ... \otimes [c,d] \otimes ...
\otimes x_{i_{n}} ) + \theta( x_{i_{1}} \otimes ... \otimes [a,b] \otimes ... \otimes [c,d] \otimes ... \otimes x_{i_{n}} ). \]
	
	We see that the result is the same as in the first case.
	
	2. Suppose that $x_{i_{k}}=a,x_{i_{k+1}}=b,x_{i_{k+2}}=c$. Then we can rewrite by inductive assumption the right side of the
equation \ref{eq:basisULinearMap}, starting from the pair $(a,b)$, in the following way:	
	\[ \theta( x_{i_{1}} \otimes ... \otimes a \otimes b \otimes c \otimes ... \otimes x_{i_{n}} ) = (-1)^{|a||b|}
q^{\delta(a,b)} \theta( x_{i_{1}} \otimes ... \otimes b \otimes a \otimes c \otimes ... \otimes x_{i_{n}} ) + \theta( x_{i_{1}}
\otimes ... \otimes [a,b] \otimes c \otimes ... \otimes x_{i_{n}} )  = \]
	\[ = (-1)^{|a||b|+|a||c|} q^{\delta(a,b) + \delta(a,c)} \theta ( x_{i_{1}} \otimes ... \otimes b \otimes c \otimes a \otimes
... \otimes x_{i_{n}} )  + \]
	\[ + (-1)^{|a||b|} q^{\delta(a,b)} \theta( x_{i_{1}} \otimes ... \otimes b \otimes [a,c] \otimes ... \otimes x_{i_{n}} ) +
\theta (x_{i_{1}} \otimes ... \otimes [a,b] \otimes c \otimes ... \otimes x_{i_{n}}) = \]
	\[ = (-1)^{|a||b|+|a||c| + |b||c|} q^{\delta(a,b) + \delta(a,c) + \delta(b,c)} \theta (x_{i_{1}} \otimes ... \otimes c
\otimes b \otimes a \otimes ... \otimes x_{i_{n}}) + \]
	\[ + (-1)^{|a||b|+|a||c|} q^{\delta(a,b) + \delta(a,c)} \theta( x_{i_{1}} \otimes ... \otimes [b,c] \otimes a \otimes ...
\otimes x_{i_{n}} ) + \]
	\[ + (-1)^{|a||b|} q^{\delta(a,b)} \theta( x_{i_{1}} \otimes ... \otimes b \otimes [a,c] \otimes ... \otimes x_{i_{n}})  +
\theta( x_{i_{1}} \otimes ... \otimes [a,b] \otimes c \otimes ... \otimes x_{i_{n}}) . \]
	
	If we start from the pair $(b,c)$, we get
	\[ \theta( x_{i_{1}} \otimes ... \otimes a \otimes b \otimes c \otimes ... \otimes x_{i_{n}} ) = (-1)^{|b||c|}
q^{\delta(b,c)} \theta( x_{i_{1}} \otimes ... \otimes a \otimes c \otimes b \otimes ... \otimes x_{i_{n}} ) + \theta( x_{i_{1}}
\otimes ... \otimes a \otimes [b,c] \otimes ... \otimes x_{i_{n}} ) = \]
	\[ = (-1)^{|a||c|+|b||c|} q^{\delta(a,c) + \delta(b,c)} \theta ( x_{i_{1}} \otimes ... \otimes c \otimes a \otimes b \otimes
... \otimes x_{i_{n}} ) + (-1)^{|b||c|} q^{\delta(b,c)} \theta ( x_{i_{1}} \otimes ... \otimes [a,c] \otimes b \otimes ...
\otimes x_{i_{n}} ) + \]
	\[ + \theta( x_{i_{1}} \otimes ... \otimes a \otimes [b,c] \otimes ... \otimes x_{i_{n}} ) = \]
	\[ = (-1)^{|a||b|+|a||c| + |b||c|} q^{\delta(a,b) + \delta(a,c) + \delta(b,c)} \theta ( x_{i_{1}} \otimes ... \otimes c
\otimes b \otimes a \otimes ... \otimes x_{i_{n}} ) + \]
	\[ + (-1)^{|a||c|+|b||c|} q^{\delta(a,c) + \delta(b,c)} \theta ( x_{i_{1}} \otimes ... \otimes c \otimes [a,b] \otimes ...
\otimes x_{i_{n}} ) + (-1)^{|b||c|} q^{\delta(b,c)} \theta ( x_{i_{1}} \otimes ... \otimes [a,c] \otimes b \otimes ... \otimes
x_{i_{n}} ) + \]
	\[ + \theta( x_{i_{1}} \otimes ... \otimes a \otimes [b,c] \otimes ... \otimes x_{i_{n}}). \]
	
	Thus we have to show that $\theta$ annihilates the following element of the vector superspace
$T^{0}(L)+T^{1}(L)+...+T^{n-1}(L)+T^{n,i-1}(L)$:
	\[ (-1)^{|a||b|+|a||c|} q^{\delta(a,b) + \delta(a,c)} x_{i_{1}} \otimes ... \otimes [b,c] \otimes a \otimes ... \otimes
x_{i_{n}} - x_{i_{1}} \otimes ... \otimes a \otimes [b,c] \otimes ... \otimes x_{i_{n}} + \]
	\[ + (-1)^{|a||b|} q^{\delta(a,b)} x_{i_{1}} \otimes ... \otimes b \otimes [a,c] \otimes ... \otimes x_{i_{n}} -
(-1)^{|b||c|} q^{\delta(b,c)} x_{i_{1}} \otimes ... \otimes [a,c] \otimes b \otimes ... \otimes x_{i_{n}}  + \]
	\[ + x_{i_{1}} \otimes ... \otimes [a,b] \otimes c \otimes ... \otimes x_{i_{n}} - (-1)^{|a||c|+|b||c|} q^{\delta(a,c) +
\delta(b,c)} x_{i_{1}} \otimes ... \otimes c \otimes [a,b] \otimes ... \otimes x_{i_{n}}. \]
	
	Note that
	\[ \theta ( x_{i_{1}} \otimes ... \otimes a \otimes [b,c] \otimes ... \otimes x_{i_{n}} ) = \theta ( x_{i_{1}} \otimes ...
\otimes a \otimes b \otimes c \otimes ... \otimes x_{i_{n}} ) - (-1)^{|b||c|} q^{\delta(b,c)} \theta( x_{i_{1}} \otimes ...
\otimes a \otimes c \otimes b \otimes ... \otimes x_{i_{n}}) = \]
	\[ = (-1)^{|a||b|} q^{\delta(a,b)} \theta ( x_{i_{1}} \otimes ... \otimes b \otimes a \otimes c \otimes ... \otimes
x_{i_{n}} ) + \theta ( x_{i_{1}} \otimes ... \otimes [a,b] \otimes c \otimes ... \otimes x_{i_{n}} ) - \]
	\[ - (-1)^{|b||c| + |a||c|} q^{\delta(b,c) + \delta(a,c)} \theta ( x_{i_{1}} \otimes ... \otimes c \otimes a \otimes b
\otimes ... \otimes x_{i_{n}} ) - (-1)^{|b||c|} q^{\delta(b,c)} \theta ( x_{i_{1}} \otimes ... \otimes [a,c] \otimes b \otimes
... \otimes x_{i_{n}} ) = \]
	\[ = (-1)^{|a||b| + |a||c|} q^{\delta(a,b) + \delta(a,c)} \theta( x_{i_{1}} \otimes ... \otimes b \otimes c \otimes a
\otimes ... \otimes x_{i_{n}} ) + (-1)^{|a||b|} q^{\delta(a,b)} \theta( x_{i_{1}} \otimes ... \otimes b \otimes [a,c] \otimes
... \otimes x_{i_{n}}) +  \]
	\[ + \theta ( x_{i_{1}} \otimes ... \otimes [a,b] \otimes c \otimes ... \otimes x_{i_{n}} ) - (-1)^{|b||c| + |a||c| +
|a||b|} q^{\delta(b,c) + \delta(a,c) + \delta(a,b)} \theta ( x_{i_{1}} \otimes ... \otimes c \otimes b \otimes a \otimes ...
\otimes x_{i_{n}} ) - \]
	\[ - (-1)^{|b||c| + |a||c|} q^{\delta(b,c) + \delta(a,c)} \theta( x_{i_{1}} \otimes ... \otimes c \otimes [a,b] \otimes ...
\otimes x_{i_{n}} ) - (-1)^{|b||c|} q^{\delta(b,c)} \theta ( x_{i_{1}} \otimes ... \otimes [a,c] \otimes b \otimes ... \otimes
x_{i_{n}} ) = \]
	\[ = (-1)^{|a||b| + |a||c|} q^{\delta(a,b) + \delta(a,c)} \theta( x_{i_{1}} \otimes ... \otimes [b,c] \otimes a \otimes ...
\otimes x_{i_{n}} ) + \]
	\[ +  (-1)^{|a||b|} q^{\delta(a,b)} \theta( x_{i_{1}} \otimes ... \otimes b \otimes [a,c] \otimes ... \otimes x_{i_{n}} ) +
\theta( x_{i_{1}} \otimes ... \otimes [a,b] \otimes c \otimes ... \otimes x_{i_{n}}) - \]
	\[ - (-1)^{|b||c| + |a||c|} q^{\delta(b,c) + \delta(a,c)} \theta( x_{i_{1}} \otimes ... \otimes c \otimes [a,b] \otimes ...
\otimes x_{i_{n}} ) - (-1)^{|b||c|} q^{\delta(b,c)} \theta( x_{i_{1}} \otimes ... \otimes [a,c] \otimes b \otimes ... \otimes
x_{i_{n}}) . \]
	
	Hence
	\[ (-1)^{|a||b|+|a||c|} q^{\delta(a,b) + \delta(a,c)} \theta ( x_{i_{1}} \otimes ... \otimes [b,c] \otimes a \otimes ...
\otimes x_{i_{n}}) - (-1)^{|a||b| + |a||c|} q^{\delta(a,b) + \delta(a,c)} \theta ( x_{i_{1}} \otimes ... \otimes [b,c] \otimes a
\otimes ... \otimes x_{i_{n}} ) - \]
	\[ - (-1)^{|a||b|} q^{\delta(a,b)} \theta ( x_{i_{1}} \otimes ... \otimes b \otimes [a,c] \otimes ... \otimes x_{i_{n}} ) -
\theta ( x_{i_{1}} \otimes ... \otimes [a,b] \otimes c \otimes ... \otimes x_{i_{n}} ) + \]
	\[ + (-1)^{|b||c| + |a||c|} q^{\delta(b,c) + \delta(a,c)} \theta ( x_{i_{1}} \otimes ... \otimes c \otimes [a,b] \otimes ...
\otimes x_{i_{n}} ) + (-1)^{|b||c|} q^{\delta(b,c)} \theta ( x_{i_{1}} \otimes ... \otimes [a,c] \otimes b \otimes ... \otimes
x_{i_{n}} ) + \]
	\[ + (-1)^{|a||b|} q^{\delta(a,b)} \theta ( x_{i_{1}} \otimes ... \otimes b \otimes [a,c] \otimes ... \otimes x_{i_{n}} ) -
(-1)^{|b||c|} q^{\delta(b,c)} \theta ( x_{i_{1}} \otimes ... \otimes [a,c] \otimes b \otimes ... \otimes x_{i_{n}} ) + \]
	\[ + \theta (x_{i_{1}} \otimes ... \otimes [a,b] \otimes c \otimes ... \otimes x_{i_{n}}) - (-1)^{|a||c|+|b||c|}
q^{\delta(a,c) + \delta(b,c)} \theta ( x_{i_{1}} \otimes ... \otimes c \otimes [a,b] \otimes ... \otimes x_{i_{n}} ) = 0. \]
	
	Consequently, even in this case the right side of the equation \ref{eq:basisULinearMap} is correctly defined.
	
	Thus we have defined a map $\theta:T^{n,i}(L) \to R$. A linear extension of this map gives us $\theta:\sum_{j=0}^{n-1}
T^{j}(L) \oplus T^{n,i}(L) \to R$, which satisfies the required conditions. Since $T^{n}= T^{n,r}$ for sufficiently large $r$,
we can consider a map $\theta: \sum_{j=0}^{n} T^j(L) \to R$. Since $ T(L)=T^{0} \oplus \sum_{\substack{i \in \mathbb{N}}} T^i(L)
$, we get a map $\theta: T(L) \to R$, which satisfies the required conditions.
	
	We have $U_{q}=T(L)/J$, and each element in the ideal $J$ is a linear combination of elements
	\[ \theta( x_{i_{1}}\otimes...\otimes x_{i_{k}}\otimes x_{i_{k+1}}\otimes...\otimes x_{i_{n}})-
(-1)^{|x_{i_{k}}||x_{i_{k+1}}|}q^{\delta(x_{i_{k}},x_{i_{k+1}})} \theta (x_{i_{1}}\otimes...\otimes x_{i_{k+1}}\otimes
x_{i_{k}}\otimes...\otimes x_{i_{n}})-\]
	\[-\theta(x_{i_{1}}\otimes...\otimes[x_{i_{k}},x_{i_{k+1}}]\otimes...\otimes x_{i_{n}}). \]
	
	Thus the linear $\mathbb{Z}_2$-graded $\theta:T(L)\to R$ annihilates all elements in $J$, inducing a superspace morphism
$\bar{\theta}:T(L)/J\to R$, that is $\bar{\theta}:U_{q}\to R$. A monomial $f_1^wf_3^sf_2^lk_1^ik_2^je_1^re_3^he_2^t$ is mapped
by $\bar{\theta}$ in $z_{1}^wz_{2}^sz_{3}^lz_{4}^iz_{5}^jz_{6}^rz_{7}^hz_{8}^t \in R$, where $0 \le w,i,j,r \le d-1, \; l,s,t,h
\in \{0,1\}$. Since elements $z_{1}^wz_{2}^sz_{3}^lz_{4}^iz_{5}^jz_{6}^rz_{7}^hz_{8}^t$ are linearly independent in the
polynomial ring $R$, it follows that elements $f_1^wf_3^sf_2^lk_1^ik_2^je_1^re_3^he_2^t$ must be linearly independent in $U_{q}$
for all $0 \le w,i,j,r \le d-1, \; l,s,t,h \in \{0,1\}$.
\end{proof}

\basisUU*
\begin{proof}
	It follows from Theorem \ref{theorem:basisU} that elements of a set  $\{ f_1^wf_3^sf_2^lk_1^ik_2^je_1^re_3^he_2^t \}$, for
all $0 \le w,i,j,r \le d-1, \; l,s,t,h \in \{0,1\}$, span $\bar{U}_q$. We only have to check a linear independence of these
elements. We consider intermediate factor-superalgebras until we construct a basis in $\bar{U}_q$.
	
	We consider in Theorem \ref{theorem:basisU} the superspace morphism $\bar{\theta}:U_{q}\to R$ between vector superspaces
$U_q$ and $R=F[z_{1},z_{2},...,z_{8}]$, which is the monomorphism on the set of basis elements in $U_q$. Recall that we denote
by $d$ he order of the root of unity $q$, where $F=\Bbbk(q)$. Consider a $\mathbb{Z}_2$-graded two-sided ideal
$R(I)=(z_1^{d},z_6^{d}) \subset R$, where $\bar{\theta}(f_1) = z_1, \bar{\theta}(e_1)=z_6$. Then $R(I)$ is a linear combination
of the following basis elements in $R$:
	\[ R(I) = <z_{1}^{d+w}z_2^sz_3^lz_4^iz_5^jz_6^rz_7^hz_8^t, z_1^wz_2^sz_3^lz_4^iz_5^jz_6^{d+r}z_7^hz_8^t>, \]
	where $i,j \in \mathbb{Z}, w,r \in \mathbb{Z}_+, l,s,t,h \in \{0,1\}$.
	
	Let $ \phi: R \to R/R( I ) $ be the canonical superspace morphism. A basis of $R/R( I )$ is composed by elements of a set
	\[ \{z_{1}^{w}z_2^sz_3^lz_4^iz_5^jz_6^rz_7^hz_8^t + R(I)\}, \]
	for all $i,j \in \mathbb{Z}, 0 \le w,r \le d-1, l,s,t,h \in \{0,1\}$. Thus we can consider a superspace morphism $\phi \circ
\bar{\theta}:U_{q}\to R/R(I) $. Denote by $ I = (f_1^d,e_1^d) $ a two-sided $\mathbb{Z}_2$-graded ideal in $U_q$. It follows
from Lemma \ref{lm:cent} that a basis of $ I$ is composed by elements of a set
	\[ \{ f_1^{d+w}f_3^sf_2^lk_1^ik_2^je_1^re_3^he_2^t, f_1^wf_3^sf_2^lk_1^ik_2^je_1^{d+r}e_3^he_2^t \}, \]
	where $i,j \in \mathbb{Z}, w,r \in \mathbb{Z}_+, l,s,t,h \in \{0,1\} $. Thus $I \subset ker(\phi \circ \bar{\theta})$.
Consequently as it is well-known there exists a unique superspace morphism $\psi : U_q / I \to R/R( I ) $, such that $\psi \circ
\nu = \phi \circ \bar{\theta}$.
	
	\[
	\begin{tikzpicture}
	\matrix (m) [matrix of math nodes,row sep=3em,column sep=4em,minimum width=2em]
	{
		U_q & R \\
		U_q/I	& R/R( I ) \\};
	\path[-stealth]
	(m-1-1) edge[commutative diagrams/dashed] node [right] {$\nu$} (m-2-1)
	(m-1-1) edge node [above] {$\bar{\theta}$} (m-1-2)
	(m-1-2) edge[commutative diagrams/dashed] node [right] {$\phi$} (m-2-2)
	(m-2-1) edge node [above] {$\psi$} (m-2-2);
	\end{tikzpicture}
	\]
	
	We prove that elements of a set $\{ f_1^wf_3^sf_2^lk_1^ik_2^je_1^re_3^he_2^t + I \}$, for all $ 0 \leq w,r \leq d-1, \; i,j
\in \mathbb{Z}, \; l,s,t,h \in \{0,1\} $, form a basis for $U_q / I$. Indeed, it follows from Theorem \ref{theorem:basisU} that
these elements span $U_q / I$.
	Consider the following relation in $U_q / I$
	\[ \sum_{0 \leq w,r \leq d-1, \; i,j \in \mathbb{Z}, \; l,s,t,h \in \{0,1\} } \alpha_{wslijrht}
f_1^wf_3^sf_2^lk_1^ik_2^je_1^re_3^he_2^t + I = 0, \]
	where $\alpha_{wslijrht} \in F$ for all $0 \leq w,r \leq d-1, \; i,j \in \mathbb{Z}, \; l,s,t,h \in \{0,1\}$.
	Then a linear independence follows from the following relations
	\[ \psi( \sum_{0 \leq w,r \leq d-1, \; i,j \in \mathbb{Z}, \; l,s,t,h \in \{0,1\} } \alpha_{wslijrht}
f_1^wf_3^sf_2^lk_1^ik_2^je_1^re_3^he_2^t + I) = \]
	\[  = \psi \circ \nu ( \sum_{0 \leq w,r \leq d-1, \; i,j \in \mathbb{Z}, \; l,s,t,h \in \{0,1\} } \alpha_{wslijrht}
f_1^wf_3^sf_2^lk_1^ik_2^je_1^re_3^he_2^t )= \]
	\[ = \phi \circ \bar{\theta} ( \sum_{0 \leq w,r \leq d-1, \; i,j \in \mathbb{Z}, \; l,s,t,h \in \{0,1\} } \alpha_{wslijrht}
f_1^wf_3^sf_2^lk_1^ik_2^je_1^re_3^he_2^t ) = \]
	\[ = \sum_{0 \leq w,r \leq d-1, \; i,j \in \mathbb{Z}, \; l,s,t,h \in \{0,1\} } \alpha_{wslijrht} (\phi \circ
\bar{\theta})(f_1^wf_3^sf_2^lk_1^ik_2^je_1^re_3^he_2^t) = \]
	\[ = \sum_{0 \leq w,r \leq d-1, \; i,j \in \mathbb{Z}, \; l,s,t,h \in \{0,1\} } \alpha_{wslijrht} \phi
(z_1^wz_2^sz_3^lz_4^iz_5^jz_6^rz_7^hz_8^t) = \]
	\[ = \sum_{0 \leq w,r \leq d-1, \; i,j \in \mathbb{Z}, \; l,s,t,h \in \{0,1\} } \alpha_{wslijrht}
z_1^wz_2^sz_3^lz_4^iz_5^jz_6^rz_7^hz_8^t + R(I)  =0. \]
	Consequently all $\alpha_{wslijrht}=0$ for all $0 \leq w,r \leq d-1, \; i,j \in \mathbb{Z}, \; l,s,t,h \in \{0,1\}$.
	
	Denote by $deg_{k_1}(f)$ (respectively by $deg_{k_1^{-1}} (f)$ ) a degree of non-zero element $f \in U_q / I$, which is
written in the mentioned above basis as a polynomial in $k_1$ (respectively in $k_1^{-1}$).
	
	Consider in $U_q / (f_1^d, k_1^d - 1, e_1^d)$ a linear equation
	\begin{equation}\label{eq:ideal1}
	\sum_{ 0 \leq w,i,r \leq d-1; j \in \mathbb{Z}; 0 \leq l,s,t,h \leq 1  } \alpha_{wslijrht}
f_1^wf_3^sf_2^lk_1^ik_2^je_1^re_3^he_2^t + (f_1^d, k_1^d - 1, e_1^d) = 0.
	\end{equation}
	
	Examine a superalgebra morphism $\mu_{1}: U_q / I \to U_q / (f_1^d, k_1^d - 1, e_1^d)$. Then we can consider the element
$Z_1=\sum_{ 0 \leq w,i,r \leq d-1; j \in \mathbb{Z}; 0 \leq l,s,t,h \leq 1  } \alpha_{wslijrht}
f_1^wf_3^sf_2^lk_1^ik_2^je_1^re_3^he_2^t + I$ in the superalgebra $U_q / I$, which is a preimage of the linear combination in
the left side of the equation \ref{eq:ideal1}. Moreover,  $Z_1 \in ker(\mu_{1})$. Let us rewrite $Z_1$ as a polynomial in
$k_1$:
	\[ Z_1 = \sum_{ 0 \leq w,i,r \leq d-1; j \in \mathbb{Z}; 0 \leq l,s,t,h \leq 1  } \alpha_{wslijrht}
f_1^wf_3^sf_2^lk_1^ik_2^je_1^re_3^he_2^t + I = \]
	\[ = \sum_{ 0 \leq w,i,r \leq d-1; j \in \mathbb{Z}; 0 \leq l,s,t,h \leq 1  } q^{i(2r-t+h)} \alpha_{wslijrht}
f_1^wf_3^sf_2^lk_2^je_1^re_3^he_2^tk_1^i + I \in P_1[k_1] + I, \]
	where $P_1 = <f_1^wf_3^sf_2^lk_2^je_1^re_3^he_2^t>$. Consequently $deg_{k_1}(Z_1) \le d-1, \; deg_{k_1^{-1}}(Z_1)=0$, that
is
	\begin{equation}\label{eq:idealdeg2}
	0 \leq deg_{k_1^{-1}}(Z_1) \leq deg_{k_1}(Z_1) < d.
	\end{equation}
	
	From the third theorem of isomorphism it follows that
	\[(U_q / I) / ((I + (k_1^d - 1)) / I) \cong U_q / (f_1^d, k_1^d - 1, e_1^d).\]
	Thus $Z_1$ is in a $\mathbb{Z}_2$-graded two-sided ideal of the superalgebra $U_q / I$, genererated by element $k_1^d - 1 $,
that is $Z_1 \in (I + (k_1^d - 1)) / I$, as $ker(\mu_{1}) = (I + (k_1^d - 1)) / I$.
	Thus $Z_1=X_1((k_1^d - 1) + I)$, where
	\[X_1 = \sum_{ 0 \leq w,r \leq d-1; i,j \in \mathbb{Z}; 0 \leq l,s,t,h \leq 1  } \beta_{wslijrht}
f_1^wf_3^sf_2^lk_1^ik_2^je_1^re_3^he_2^t + I. \]
	
	As the element $k_1^d$ is central, we have
	\begin{equation}\label{eq:ideal2}
	Z_1 = \sum_{ 0 \leq w,r \leq d-1; i,j \in \mathbb{Z}; 0 \leq l,s,t,h \leq 1  } (\beta_{wslijrht}
f_1^wf_3^sf_2^lk_1^{i+d}k_2^je_1^re_3^he_2^t + I) -
	\end{equation}
	\[ - \sum_{ 0 \leq w,r \leq d-1; i,j \in \mathbb{Z}; 0 \leq l,s,t,h \leq 1  } (\beta_{wslijrht}
f_1^wf_3^sf_2^lk_1^ik_2^je_1^re_3^he_2^t + I) = \]
	\[ = \sum_{ 0 \leq w,r \leq d-1; i,j \in \mathbb{Z}; 0 \leq l,s,t,h \leq 1  } (q^{(i+d)(2r-t+h)} \beta_{wslijrht}
f_1^wf_3^sf_2^lk_2^je_1^re_3^he_2^t k_1^{i+d} + I) - \]
	\[ - \sum_{ 0 \leq w,r \leq d-1; i,j \in \mathbb{Z}; 0 \leq l,s,t,h \leq 1  } (q^{i(2r-t+h)} \beta_{wslijrht}
f_1^wf_3^sf_2^lk_2^je_1^re_3^he_2^t k_1^i  + I). \]
	Suppose that $Z_1 \neq 0$, which means that $X_1 \neq 0$. From the equality \ref{eq:ideal2} it follows that
	\begin{equation}\label{eq:idealdeg1}
	deg_{k_1}(Z_1) = deg_{k_1}(X_1) + d, deg_{k_1^{-1}}(Z_1) = deg_{k_1^{-1}}(X_1).
	\end{equation}
	
	Considering \ref{eq:idealdeg1}, \ref{eq:idealdeg2}, we get $ deg_{k_1}(X_1) < 0 \leq deg_{k_1^{-1}}(Z_1) =
deg_{k_1^{-1}}(X_1)  $. It is impossible. Consequently $Z_1=0$. Therefore all $ \beta_{wslijrht} = 0 $ in \ref{eq:ideal2}. Thus
all $ \alpha_{wslijrht} = 0 $ in \ref{eq:ideal1}.
	
	Denote by $deg_{k_2}(f)$ (respectively by $deg_{k_2^{-1}} (f)$ ) a degree of non-zero element $f \in U_q / (f_1^d, k_1^d -
1, e_1^d)$, which is written in the mentioned above basis as a polynomial in $k_2$ (respectively in $k_2^{-1}$).
	
	Consider in $U_q / (f_1^d, k_1^d - 1, k_2^d - 1, e_1^d)$ a linear equation
	\begin{equation}\label{eq:ideal3}
	\sum_{ 0 \leq w,i,j,r \leq d-1; 0 \leq l,s,t,h \leq 1  } \alpha_{wslijrht} f_1^wf_3^sf_2^lk_1^ik_2^je_1^re_3^he_2^t +
(f_1^d, k_1^d - 1, k_2^d - 1, e_1^d) = 0.
	\end{equation}
	
	Examine a superalgebra morphism
	\[\mu_{2}: U_q / (f_1^d, k_1^d - 1, e_1^d) \to U_q / (f_1^d, k_1^d - 1, k_2^d - 1, e_1^d).\]
	Then we can consider the element
	\[Z_2=\sum_{ 0 \leq w,i,j,r \leq d-1; 0 \leq l,s,t,h \leq 1  } \alpha_{wslijrht} f_1^wf_3^sf_2^lk_1^ik_2^je_1^re_3^he_2^t +
(f_1^d, k_1^d - 1, e_1^d)\]
	in superalgebra $U_q / (f_1^d, k_1^d - 1, e_1^d)$, which is a preimage of the linear combination in the left side of the
equation \ref{eq:ideal3}. Moreover, $Z_2 \in ker(\mu_{2})$. Let us rewrite $Z_2$ as a polynomial in $k_2$:
	\[ Z_2 = \sum_{ 0 \leq w,i,j,r \leq d-1; 0 \leq l,s,t,h \leq 1  } \alpha_{wslijrht} f_1^wf_3^sf_2^lk_1^ik_2^je_1^re_3^he_2^t
+ (f_1^d, k_1^d - 1, e_1^d) = \]
	\[ = \sum_{ 0 \leq w,i,j,r \leq d-1; 0 \leq l,s,t,h \leq 1  } q^{j(-r-h)} \alpha_{wslijrht}
f_1^wf_3^sf_2^lk_1^ie_1^re_3^he_2^tk_2^j + (f_1^d, k_1^d - 1, e_1^d) \in P_2[k_2] + (f_1^d, k_1^d - 1, e_1^d), \]
	where $P_2 = <f_1^wf_3^sf_2^lk_1^ie_1^re_3^he_2^t>$. Consequently $deg_{k_2}(Z_2) \le d-1, \; deg_{k_2^{-1}}(Z_2)=0$, that
is
	\begin{equation}\label{eq:idealdeg4}
	0 \leq deg_{k_2^{-1}}(Z_2) \leq deg_{k_2}(Z_2) < d.
	\end{equation}
	
	From the third theorem of isomorphism it follows that
	\[(U_q / (f_1^d, k_1^d - 1, e_1^d)) / ((f_1^d, k_1^d - 1, k_2^d - 1, e_1^d) / (f_1^d, k_1^d - 1, e_1^d)) \cong U_q / (f_1^d,
k_1^d - 1, k_2^d - 1, e_1^d).\]
	Thus $Z_2$ is in a $\mathbb{Z}_2$-graded two-sided ideal of superalgebra $U_q / (f_1^d, k_1^d - 1, e_1^d)$, genererated by
element $k_2^d - 1 $, that is $Z_2 \in ((f_1^d, k_1^d - 1, e_1^d) + (k_2^d - 1)) / (f_1^d, k_1^d - 1, e_1^d)$, as $ker(\mu_{2})
= (f_1^d, k_1^d - 1, k_2^d - 1, e_1^d) / (f_1^d, k_1^d - 1, e_1^d)$.
	Thus $Z_2=X_2((k_2^d - 1) +(f_1^d, k_1^d - 1, e_1^d))$, where
	\[X_2 = \sum_{ 0 \leq w,i,r \leq d-1; j \in \mathbb{Z}; 0 \leq l,s,t,h \leq 1  } \beta_{wslijrht}
f_1^wf_3^sf_2^lk_1^ik_2^je_1^re_3^he_2^t + (f_1^d, k_1^d - 1, e_1^d). \]
	
	As the element $k_2^d$ is central, we have
	\begin{equation}\label{eq:ideal4}
	Z_2 = \sum_{ 0 \leq w,i,r \leq d-1; j \in \mathbb{Z}; 0 \leq l,s,t,h \leq 1  } (\beta_{wslijrht}
f_1^wf_3^sf_2^lk_1^ik_2^{j+d}e_1^re_3^he_2^t + (f_1^d, k_1^d - 1, e_1^d)) -
	\end{equation}
	\[ - \sum_{ 0 \leq w,i,r \leq d-1; j \in \mathbb{Z}; 0 \leq l,s,t,h \leq 1  } (\beta_{wslijrht}
f_1^wf_3^sf_2^lk_1^ik_2^je_1^re_3^he_2^t + (f_1^d, k_1^d - 1, e_1^d)) = \]
	\[ = \sum_{ 0 \leq w,i,r \leq d-1; j \in \mathbb{Z}; 0 \leq l,s,t,h \leq 1  } (q^{(j+d)(-r-h)} \beta_{wslijrht}
f_1^wf_3^sf_2^lk_1^ie_1^re_3^he_2^tk_2^{j+d} + (f_1^d, k_1^d - 1, e_1^d)) - \]
	\[ - \sum_{ 0 \leq w,i,r \leq d-1; j \in \mathbb{Z}; 0 \leq l,s,t,h \leq 1  } (q^{j(-r-h)} \beta_{wslijrht}
f_1^wf_3^sf_2^lk_1^ie_1^re_3^he_2^tk_2^j + (f_1^d, k_1^d - 1, e_1^d)). \]
	Suppose that $Z_2 \neq 0$, which means that $X_2 \neq 0$. From the equality \ref{eq:ideal4} it follows that
	\begin{equation}\label{eq:idealdeg3}
	deg_{k_2}(Z_2) = deg_{k_2}(X_2) + d, deg_{k_2^{-1}}(Z_2) = deg_{k_2^{-1}}(X_2).
	\end{equation}
	
	Considering \ref{eq:idealdeg3}, \ref{eq:idealdeg4}, we get $ deg_{k_2}(X_2) < 0 \leq deg_{k_2^{-1}}(Z_2) =
deg_{k_2^{-1}}(X_2)  $. It is impossible. Consequently $Z_2=0$. Therefore all $ \beta_{wslijrht} = 0 $ in \ref{eq:ideal4}. Thus
all $ \alpha_{wslijrht} = 0 $ in \ref{eq:ideal3}.
\end{proof}

\thisom*
\begin{proof}		
	The surjectivity of $\chi$ follows from the fact, that the image of the basis $\{\alpha_{e_1}^{w} \alpha_{e_3}^{s}
\alpha_{e_2}^{l} \alpha_{k_1}^{i_1} \alpha_{k_2}^{j_1} \otimes k_1^{i_2}k_2^{j_2}e_1^{r}e_3^{h}e_2^{t}\}$ additively generates
$\bar{U}_q$.
	Indeed,
	\[ \chi(\alpha_{k_1} \otimes 1_{B_q^{+}}) = k_1, \; \chi(\alpha_{k_2} \otimes 1_{B_q^{+}}) = k_2, \]
	\[ \chi(\alpha_{e_1} \otimes 1_{B_q^{+}}) = f_1, \; \chi(\alpha_{e_2} \otimes 1_{B_q^{+}}) = f_2, \]
	\[ \chi(\alpha_{e_3} \otimes 1_{B_q^{+}}) = f_3, \]
	\[ \chi( 1_X \otimes k_1) = k_1, \; \chi( 1_X \otimes k_2) = k_2, \]
	\[ \chi( 1_X \otimes e_1) = e_1, \; \chi( 1_X \otimes e_2) = e_2, \]
	\[ \chi( 1_X \otimes e_3) = e_3. \]
	
	We check that $\chi$ is a superalgebra morphism.
	We have for $\alpha_{k_1}$:
	\[ \mu_{\bar{U}_q} \circ (\chi \otimes \chi) ( (1_X \otimes k_v) \otimes (\alpha_{k_1} \otimes 1_{B^{+}_q}) ) = k_v k_1 =
k_1 k_v  = \chi ( \alpha_{k_1} \otimes k_v ) = \]
	\[ = \chi ( \alpha_{k_1}( k_v^{-1} ? k_v ) \otimes k_v ) = \chi \circ \mu_{D}  ( (1_X \otimes k_v) \otimes (\alpha_{k_1}
\otimes 1_{B^{+}_q}) ), \]
	\[ \mu_{\bar{U}_q} \circ ( \chi \otimes \chi )( (1_X \otimes e_1) \otimes (\alpha_{k_1} \otimes 1_{B^{+}_q}) ) = e_1 k_1 =
q^{-2} k_1 e_1 = \]
	\[ = \chi( q^{-2} (\alpha_{k_1} \otimes 1_{B_q^+}) \otimes (1_X \otimes e_1) ) = \chi \circ \mu_{D}  ( (1_X \otimes e_1)
\otimes (\alpha_{k_1} \otimes 1_{B^{+}_q}) ), \]
	\[ \mu_{\bar{U}_q} \circ ( \chi \otimes \chi )( (1_X \otimes e_2) \otimes (\alpha_{k_1} \otimes 1_{B^{+}_q}) ) = e_2 k_1 = q
k_1 e_2 = \]
	\[ = \chi( q (\alpha_{k_1} \otimes 1_{B_q^+}) \otimes (1_X \otimes e_2) ) = \chi \circ \mu_{D}  ( (1_X \otimes e_2) \otimes
(\alpha_{k_1} \otimes 1_{B^{+}_q}) ), \]
	\[ \mu_{\bar{U}_q} \circ ( \chi \otimes \chi ) ( (1_X \otimes e_3) \otimes (\alpha_{k_1} \otimes 1_{B^{+}_q}) ) = e_3 k_1 =
q^{-1} k_1 e_3 = \]
	\[ = \chi ( q^{-1} (\alpha_{k_1} \otimes 1_{B_q^+}) \otimes (1_X \otimes e_3) ) = \chi \circ \mu_{D}( (1_X \otimes e_3)
\otimes (\alpha_{k_1} \otimes 1_{B^{+}_q}) ). \]
	
	We have for $\alpha_{k_2}$:
	\[ \mu_{\bar{U}_q} \circ (\chi \otimes \chi) ( (1_X \otimes k_v) \otimes (\alpha_{k_2} \otimes 1_{B^{+}_q}) ) = \]
	\[ = k_v k_2 = k_2 k_v = \chi ( \alpha_{k_2} \otimes k_v ) = \chi \circ \mu_{D}  ( (1_X \otimes k_v) \otimes (\alpha_{k_2}
\otimes 1_{B^{+}_q}) ), \]
	\[ \mu_{\bar{U}_q} \circ ( \chi \otimes \chi ) ( ( 1_X \otimes e_1) \otimes (\alpha_{k_2} \otimes 1_{B^{+}_q}) ) = e_1 k_2 =
q k_2 e_1 = \]
	\[ = \chi( q (\alpha_{k_2} \otimes 1_{B_q^+}) \otimes (1_X \otimes e_1) ) = \chi \circ \mu_{D}( (1_X \otimes e_1) \otimes
(\alpha_{k_2} \otimes 1_{B^{+}_q}) ), \]
	\[ \mu_{\bar{U}_q} \circ ( \chi \otimes \chi )( (1_X \otimes e_2) \otimes (\alpha_{k_2} \otimes 1_{B^{+}_q}) ) = e_2 k_2 =
k_2 e_2 = \]
	\[ = \chi( (\alpha_{k_2} \otimes 1_{B_q^+}) \otimes (1_X \otimes e_2) ) = \chi \circ \mu_{D}  ( (1_X \otimes e_2) \otimes
(\alpha_{k_2} \otimes 1_{B^{+}_q}) ), \]
	\[ \mu_{\bar{U}_q} \circ ( \chi \otimes \chi ) ( (1_X \otimes e_3) \otimes (\alpha_{k_2} \otimes 1_{B_q^+}) ) = e_3 k_2 = q
k_2 e_3 = \]
	\[ = \chi( q (\alpha_{k_2} \otimes 1_{B_q^+}) \otimes (1_X \otimes e_3) ) = \chi \circ \mu_{D} ( (1_X \otimes e_3) \otimes
(\alpha_{k_2} \otimes 1_{B_q^+}) ). \]
	
	We have for $\alpha_{e_1}$:
	\[ \mu_{\bar{U}_q} \circ ( \chi \otimes \chi )( (1_X \otimes k_1) \otimes (\alpha_{e_1} \otimes 1_{B^{+}_q}) ) = k_1 f_1 =
q^{-2} f_1 k_1 = \]
	\[ = \chi(q^{-2} \alpha_{e_1} \otimes k_1) = \chi \circ \mu_{D}  ( (1_X \otimes k_1) \otimes (\alpha_{e_1} \otimes
1_{B^{+}_q}) ), \]
	\[ \mu_{\bar{U}_q} \circ ( \chi \otimes \chi ) ((1_X \otimes k_2) \otimes (\alpha_{e_1} \otimes 1_{B_q^+})) = k_2 f_1 = q
f_1 k_2 = \]
	\[ = \chi(q \alpha_{e_1} \otimes k_2) = \chi \circ \mu_{D} ((1_X \otimes k_2) \otimes (\alpha_{e_1} \otimes 1_{B_q^+})), \]
	\[ \mu_{\bar{U}_q} \circ (\chi \otimes \chi) ( (1_X \otimes e_1) \otimes (\alpha_{e_1} \otimes 1_{B^{+}_q}) ) = e_1 f_1 =
f_1e_1 + \frac{k_1-k_1^{-1}}{q-q^{-1}} = \]
	\[ = \chi( - (q-q^{-1})^{-1} \alpha_{k_1}^{-1} \otimes 1_{B_q^+} + \alpha_{e_1} \otimes e_1 + (q-q^{-1})^{-1} 1_{X} \otimes
k_1) =  \]
	\[ = \chi \circ \mu_{D}  ( (1_X \otimes e_1) \otimes (\alpha_{e_1} \otimes 1_{B^{+}_q}) ), \]
	\[ \mu_{\bar{U}_q} \circ ( \chi \otimes \chi ) ( (1_X \otimes e_2) \otimes (\alpha_{e_1} \otimes 1_{B^{+}_q}) ) = e_2 f_1 =
f_1 e_2 = \]
	\[ = \chi( \alpha_{e_1} \otimes e_2 ) = \chi \circ \mu_{D}( (1_X \otimes e_2) \otimes (\alpha_{e_1} \otimes 1_{B^{+}_q}) ),
\]
	\[ \mu_{\bar{U}_q} \circ ( \chi \otimes \chi ) ( (1_X \otimes e_3) \otimes (\alpha_{e_1} \otimes 1_{B_q^+}) ) = e_3 f_1 =
f_1 e_3 - q^{-1} k_1^{-1} e_2 = \]
	\[ = \chi( - q^{-1} \alpha_{k_1}^{-1} \otimes e_2 + \alpha_{e_1} \otimes e_3 ) =  \chi \circ \mu_{D} ( (1_X \otimes e_3)
\otimes (\alpha_{e_1} \otimes 1_{B_q^+}) ). \]
	
	We have for $\alpha_{e_2}$:
	\[ \mu_{\bar{U}_q} \circ ( \chi \otimes \chi ) ( (1_X \otimes k_1) \otimes (\alpha_{e_2} \otimes 1_{B^{+}_q}) ) = k_1 f_2 =
q f_2 k_1 = \]
	\[ = \chi( q \alpha_{e_2} \otimes k_1 ) = \chi \circ \mu_{D}  ( (1_X \otimes k_1) \otimes (\alpha_{e_2} \otimes 1_{B^{+}_q})
), \]
	\[ \mu_{\bar{U}_q} \circ (\chi \otimes \chi)( (1_X \otimes k_2) \otimes (\alpha_{e_2} \otimes 1_{B_q^{+}}) ) = k_2 f_2 = f_2
k_2 = \]
	\[ = \chi( \alpha_{e_2} \otimes k_2 ) = \chi \circ \mu_{D} ( (1_X \otimes k_2) \otimes (\alpha_{e_2} \otimes 1_{B_q^{+}}) ),
\]
	\[ \mu_{\bar{U}_q} \circ (\chi \otimes \chi)( (1_X \otimes e_1) \otimes (\alpha_{e_2} \otimes 1_{B_q^{+}}) ) = e_1 f_2 = f_2
e_1 = \]
	\[ = \chi( \alpha_{e_2} \otimes e_1 ) = \chi \circ \mu_{D} ( (1_X \otimes e_1) \otimes (\alpha_{e_2} \otimes 1_{B_q^{+}}) ),
\]
	\[ \mu_{\bar{U}_q} \circ (\chi \otimes \chi) ( (1_X \otimes e_2) \otimes (\alpha_{e_2} \otimes 1_{B_q^{+}}) ) = e_2 f_2 =
-f_2e_2 + \frac{k_2-k_2^{-1}}{q-q^{-1}} = \]
	\[ = \chi( - (q-q^{-1})^{-1} \alpha_{k_2}^{-1} \otimes 1_{B_q^{+}} - \alpha_{e_2} \otimes e_2 + (q-q^{-1})^{-1} 1_{X}
\otimes k_2 ) = \]
	\[ = \chi \circ \mu_{D} ( (1_X \otimes e_2) \otimes (\alpha_{e_2} \otimes 1_{B_q^{+}}) ), \]
	\[ \mu_{\bar{U}_q} \circ (\chi \otimes \chi)( (1_X \otimes e_3) \otimes (\alpha_{e_2} \otimes 1_{B_q^{+}}) ) = e_3 f_2 = -
f_2 e_3 + k_2 e_1 = \]
	\[ = \chi( 1_{X} \otimes k_2 e_1 - \alpha_{e_2} \otimes e_3 ) = \chi \circ \mu_{D}( (1_X \otimes e_3) \otimes (\alpha_{e_2}
\otimes 1_{B_q^{+}}) ). \]
	
	We have for $\alpha_{e_3}$:
	\[ \mu_{\bar{U}_q} \circ (\chi \otimes \chi)( (1_X \otimes k_1) \otimes (\alpha_{e_3} \otimes 1_{B_q^{+}}) ) = k_1 f_3 =
q^{-1} f_3 k_1 = \chi( q^{-1} \alpha_{e_3} \otimes k_1 ) = \chi \circ \mu_{D}( (1_X \otimes k_1) \otimes (\alpha_{e_3} \otimes
1_{B_q^{+}}) ), \]
	\[ \mu_{\bar{U}_q} \circ (\chi \otimes \chi)( (1_X \otimes k_2) \otimes (\alpha_{e_3} \otimes 1_{B_q^{+}}) ) = \]
	\[ = k_2 f_3 = q f_3 k_2 = \chi( q \alpha_{e_3} \otimes k_2 ) = \chi \circ \mu_{D}( (1_X \otimes k_2) \otimes (\alpha_{e_3}
\otimes 1_{B_q^{+}}) ), \]
	\[ \mu_{\bar{U}_q} \circ (\chi \otimes \chi)( (1_X \otimes e_1) \otimes (\alpha_{e_3} \otimes 1_{B_q^{+}}) ) = e_1 f_3 = f_3
e_1 - q f_2 k_1 = \]
	\[ = \chi( \alpha_{e_3} \otimes e_1 - q \alpha_{e_2} \otimes k_1 ) = \chi \circ \mu_{D}( (1_X \otimes e_1) \otimes
(\alpha_{e_3} \otimes 1_{B_q^{+}}) ), \]
	\[ \mu_{\bar{U}_q} \circ (\chi \otimes \chi)( (1_X \otimes e_2) \otimes (\alpha_{e_3} \otimes 1_{B_q^{+}}) ) = e_2 f_3 =
-f_3 e_2 + f_1 k_2^{-1} = \]
	\[ = \chi( \alpha_{e_1} \alpha_{k_2}^{-1} \otimes 1_{B_q^{+}} - \alpha_{e_3} \otimes e_2 ) = \chi \circ \mu_{D}( (1_X
\otimes e_2) \otimes (\alpha_{e_3} \otimes 1_{B_q^{+}}) ), \]
	\[ \mu_{\bar{U}_q} \circ ( \chi \otimes \chi ) ( (1_X \otimes e_3) \otimes (\alpha_{e_3} \otimes 1_{B_q^+}) ) = e_3 f_3 = -
f_3 e_3 + \frac{k_1k_2-k_1^{-1}k_2^{-1}}{q-q^{-1}} = \]
	\[ = \chi( - (q-q^{-1})^{-1} \alpha_{k_1}^{-1} \alpha_{k_2}^{-1} \otimes 1_{B_q^{+}} - \alpha_{e_3} \otimes e_3 +
(q-q^{-1})^{-1} 1_X  \otimes k_1 k_2 ) = \]
	\[ = \chi \circ \mu_{D} ( (1_X \otimes e_3) \otimes (\alpha_{e_3} \otimes 1_{B_q^+}) ). \]
	
	We check that $\chi$ respects the comultiplication.
	\[ \Delta_{\bar{U}_q} \circ \chi( \alpha_{k_v} \otimes 1_{B^{+}_q} ) = k_v \otimes k_v =  \]
	\[ = (\chi \otimes \chi) ( (\alpha_{k_v} \otimes 1_{B^{+}_q}) \otimes (\alpha_{k_v} \otimes 1_{B^{+}_q}) )  = (\chi \otimes
\chi) \circ \Delta_{D} ( \alpha_{k_v} \otimes 1_{B^{+}_q} ), \]
	\[ \Delta_{\bar{U}_q} \circ \chi( \alpha_{e_1} \otimes 1_{B^{+}_q} ) = \Delta_{\bar{U}_q}(f_1) =  f_1 \otimes k_1^{-1} +
1_{\bar{U}_q} \otimes f_1 = \]
	\[ = (\chi \otimes \chi) ( \alpha_{e_1} \otimes \alpha_{k_1}^{-1} + 1_X \otimes \alpha_{e_1} ) = (\chi \otimes \chi) \circ
\Delta_{D} ( \alpha_{e_1} \otimes 1_{B^{+}_q}), \]
	\[\Delta_{\bar{U}_q} \circ \chi (\alpha_{e_2} \otimes 1_{B_q^{+}}) = \Delta_{\bar{U}_q}(f_2) = f_2 \otimes k_2^{-1} +
1_{\bar{U}_q} \otimes f_2 = \]
	\[ = (\chi \otimes \chi)( \alpha_{e_2} \otimes \alpha_{k_2}^{-1} + 1_X \otimes \alpha_{e_2} ) = (\chi \otimes \chi) \circ
\Delta_{D} ( \alpha_{e_2} \otimes 1_{B_q^{+}} ), \]
	\[ \Delta_{\bar{U}_q} \circ \chi (\alpha_{e_3} \otimes 1_{B_q^{+}}) = \Delta_{\bar{U}_q}(f_3) =  \]
	\[ = 1 \otimes f_3 + f_3 \otimes k_1^{-1} k_2^{-1} +  ( q^{-1} - q ) f_2 \otimes f_1 k_2^{-1} = \]
	\[ = (\chi \otimes \chi)( 1_X \otimes \alpha_{e_3} + \alpha_{e_3} \otimes \alpha_{k_1}^{-1} \alpha_{k_2}^{-1} + (q^{-1}-q)
\alpha_{e_2} \otimes \alpha_{e_1} \alpha_{k_2}^{-1} ) = (\chi \otimes \chi) \circ \Delta_{D} ( \alpha_{e_3} \otimes 1_{B_q^{+}}
). \]
	
	We check that $\chi$ respects the counit.
	\[ \epsilon_{\bar{U}_q} \circ \chi(\alpha_{k_v} \otimes 1_{B^{+}_q} ) =  \epsilon_{\bar{U}_q}(k_v) = 1_F = \epsilon_{D} (
\alpha_{k_v} \otimes 1_{B^{+}_q}), \]
	\[ \epsilon_{\bar{U}_q} \circ \chi(\alpha_{e_1} \otimes 1_{B^{+}_q} ) = \epsilon_{\bar{U}_q}(f_1) = 0 =  \epsilon_{D} (
\alpha_{e_1} \otimes 1_{B^{+}_q}), \]
	\[\epsilon_{\bar{U}_q} \circ \chi( \alpha_{e_2} \otimes 1_{B_q^{+}}) = \epsilon_{\bar{U}_q}(f_2) = 0 =
\epsilon_{D}(\alpha_{e_2} \otimes 1_{B_q^{+}}), \]
	\[ \epsilon_{\bar{U}_q} \circ \chi( \alpha_{e_3} \otimes 1_{B_q^{+}}) =  \epsilon_{\bar{U}_q}(f_3) = 0 =
\epsilon_{D}(\alpha_{e_3} \otimes 1_{B_q^{+}}). \]
	
	We check that $\chi$ respects the antipode.
	\[ S_{\bar{U}_q}(\chi(\alpha_{k_v} \otimes 1_{B^{+}_q})) = S_{\bar{U}_q}(k_v) = k_v^{-1} = \chi ( \alpha_{k_v}^{d-1} \otimes
1_{B^{+}_q} ) = \chi ( S_{X} (\alpha_{k_v}) \otimes 1_{B^{+}_q} ) = \chi(S_{D}( \alpha_{k_v} \otimes 1_{B^{+}_q})), \]
	\[ S_{\bar{U}_q} ( \chi(\alpha_{e_1} \otimes 1_{B^{+}_q})) = S_{\bar{U}_q}(f_1 ) = - f_1 k_1 = \chi( - \alpha_{e_1}
\alpha_{k_1} \otimes 1_{B^{+}_q} ) = \]
	\[ = \chi ( S_{X} (\alpha_{e_1}) \otimes 1_{B^{+}_q} ) = \chi(S_{D}( \alpha_{e_1} \otimes 1_{B^{+}_q})), \]
	\[S_{\bar{U}_q}(\chi(\alpha_{e_2} \otimes 1_{B_q^{+}})) = S_{\bar{U}_q}(f_2) = - f_2 k_2 = \chi(- \alpha_{e_2} \alpha_{k_2}
\otimes 1_{B_q^{+}}) = \]
	\[= \chi( S_{X} (\alpha_{e_2}) \otimes 1_{B_q^{+}}) = \chi( S_{D} ( \alpha_{e_2} \otimes 1_{B_q^{+}} ) ), \]
	\[ S_{\bar{U}_q}(\chi(\alpha_{e_3} \otimes 1_{B_q^{+}})) = S_{\bar{U}_q}(f_3) = (q - q^3) f_1 f_2 k_1 k_2 - q^2 f_3 k_1 k_2
= \]
	\[ = \chi( (q-q^3) \alpha_{e_1} \alpha_{e_2} \alpha_{k_1} \alpha_{k_2} \otimes 1_{B_q^{+}} - q^{2} \alpha_{e_3} \alpha_{k_1}
\alpha_{k_2} \otimes 1_{B_q^{+}} ) = \chi(S_X(\alpha_{e_3}) \otimes 1_{B_q^{+}}) = \chi(S_{D}(\alpha_{e_3} \otimes
1_{B_q^{+}})). \]	
\end{proof}

\univeralR*
\begin{proof}
	We use notations from the proof of Corollary \ref{Bruq}. According to Theorem \ref{RQDth}, we have $R_{D} = \sum_{i \in I}
(1_X \otimes e_i) \otimes (e^{i} \otimes 1_{B_{q}^{+}})$ where $\{ e_i \}_{i \in I}$ is a basis of the supervector space
$B_{q}^{+}$ and $\{e^{i}\}_{i \in I}$ is its dual. Consequently,
	\[ \bar{R} = \sum_{i \in I} \chi(1_X \otimes e_i) \otimes \chi(e^{i} \otimes 1_{B_{q}^{+}} ). \]
	We take the set $\{ k_1^ik_2^je_1^re_3^he_2^t | \; 0 \le i,j,r < d, \; h,t \in \{0,1\} \}$ as a basis of $B_{q}^{+}$. Denote
by $\{(k_1^{i} k_2^{j} e_1^{r} e_3^{h} e_2^{t})^{*} | \; 0 \le i,j,r < d, \; h,t \in \{0,1\}\}$ the dual basis. By Lemma
\ref{lm:mulX} we know there exist scalars $\{ \mu^{i_{2}j_{2}r_{2}h_{2}t_{2}}_{i_{1}j_{1}r_{1}h_{1}t_{1}} | 0 \le
i_1,i_2,j_1,j_2,r_1,r_2 < d, \; h_1,h_2,t_1,t_2 \in \{0,1\} \}$ such that
	\[ (k_1^{i_2} k_2^{j_2} e_1^{r_2} e_3^{h_2} e_2^{t_2})^{*} = \sum_{0 \le i_1,j_1,r_1 \le d-1, 0 \le h_1, t_1 \le 1}
\mu^{i_{2}j_{2}r_{2}h_{2}t_{2}}_{i_{1}j_{1}r_{1}h_{1}t_{1}} \alpha_{e_1}^{r_1} \alpha_{e_3}^{h_1} \alpha_{e_2}^{t_1}
\alpha_{k_1}^{i_1} \alpha_{k_2}^{j_1} = \]
	\[ = \sum_{0 \le i_1,j_1 \le d-1} [h_2=t_2=0] \mu^{i_{2}j_{2}r_{2},0,0}_{i_{1}j_{1}r_{2},0,0} \alpha_{e_1}^{r_2}
\alpha_{k_1}^{i_1} \alpha_{k_2}^{j_1} + [h_2=t_2=1]  \mu^{i_{2}j_{2}r_{2},1,1}_{i_{1}j_{1}r_{2},1,1} \alpha_{e_1}^{r_2}
\alpha_{e_3} \alpha_{e_2} \alpha_{k_1}^{i_1} \alpha_{k_2}^{j_1} + \]
	\[ + [h_2=1,t_2=0] (\mu^{i_{2},j_{2},r_{2},1,0}_{i_{1},j_{1},r_{2},1,0} \alpha_{e_1}^{r_2} \alpha_{e_3} \alpha_{k_1}^{i_1}
\alpha_{k_2}^{j_1} + \mu^{i_{2},j_{2},r_{2},1,0}_{i_{1},j_{1},r_{2}+1,0,1} \alpha_{e_1}^{r_2+1} \alpha_{e_2} \alpha_{k_1}^{i_1}
\alpha_{k_2}^{j_1}) +  \]
	\[ + [h_2=0,t_2=1] (\mu^{i_{2}j_{2}r_{2},0,1}_{i_{1}j_{1},r_{2}-1,1,0} \alpha_{e_1}^{r_2-1} \alpha_{e_3} \alpha_{k_1}^{i_1}
\alpha_{k_2}^{j_1} + \mu^{i_{2}j_{2}r_{2},0,1}_{i_{1}j_{1},r_{2},0,1} \alpha_{e_2} \alpha_{k_1}^{i_1} \alpha_{k_2}^{j_1}). \]
	
	Then
	\[ \bar{R} = \sum_{\substack{0 \le i_1,j_1,i_2,j_2,r_1,r_2 \le d-1, \\ h_1,h_2,t_1,t_2 \in \{0,1\} }}
\mu^{i_{2}j_{2}r_{2}h_{2}t_{2}}_{i_{1}j_{1}r_{1}h_{1}t_{1}} k_1^{i_2}k_2^{j_2}e_1^{r_2}e_3^{h_2}e_2^{t_2} \otimes f_1^{r_1}
f_3^{h_1} f_2^{t_1} k_1^{i_1} k_2^{j_1} = \]
	\[ = \sum_{0 \le i_1,j_1,i_2,j_2,r \le d-1} ( \mu^{i_{2},j_{2},r,0,0}_{i_{1},j_{1},r,0,0} k_1^{i_2}k_2^{j_2}e_1^{r} \otimes
f_1^{r} k_1^{i_1} k_2^{j_1} + \mu^{i_{2},j_{2},r,1,1}_{i_{1},j_{1},r,1,1} k_1^{i_2}k_2^{j_2}e_1^{r} e_3 e_2  \otimes f_1^{r} f_3
f_2 k_1^{i_1} k_2^{j_1} + \]
	\[ + \mu^{i_{2},j_{2},r,1,0}_{i_{1},j_{1},r,1,0} k_1^{i_2}k_2^{j_2}e_1^{r}e_3 \otimes f_1^{r} f_3 k_1^{i_1} k_2^{j_1} +
\mu^{i_{2},j_{2},r,1,0}_{i_{1},j_{1},r+1,0,1} k_1^{i_2}k_2^{j_2}e_1^{r}e_3 \otimes f_1^{r+1} f_2 k_1^{i_1} k_2^{j_1} + \]
	\[ + \mu^{i_{2},j_{2},r,0,1}_{i_{1},j_{1},r-1,1,0} k_1^{i_2}k_2^{j_2}e_1^{r}e_2 \otimes f_1^{r-1} f_3 k_1^{i_1} k_2^{j_1} +
\mu^{i_{2},j_{2},r,0,1}_{i_{1},j_{1},r,0,1} k_1^{i_2}k_2^{j_2}e_1^{r} e_2 \otimes f_1^{r} f_2 k_1^{i_1} k_2^{j_1} ), \]
	where	
	\[ \mu^{i_2,j_2,r,0,0}_{i_1,j_1,r,0,0} = (-1)^{r} \frac{(q-q^{-1})^{r}}{d^2[r]!} q^{-r(2i_2-j_2) - \frac{r(r-1)}{2} +
i_1(2i_2-j_2) - j_1i_2}, \]
	\[ \mu^{i_2,j_2,r,1,1}_{i_1,j_1,r,1,1} = (-1)^{r+1} \frac{(q-q^{-1})^{r+2}}{d^2[r]!} q^{-r(2i_2-j_2) - \frac{r(r-1)}{2} +
i_1(2i_2-j_2)-j_1i_2 + j_2 + 2}, \]
	\[ \mu^{i_2,j_2,r,1,0}_{i_1,j_1,r,1,0} = (-1)^{r} \frac{(q-q^{-1})^{r+1}}{d^2[r]!} q^{-r(2i_2-j_2) - \frac{r(r-1)}{2} +
i_1(2i_2-j_2) - j_1i_2 - i_2 + j_2+2}, \]
	\[ \mu^{i_2,j_2,r,1,0}_{i_1,j_1,r+1,0,1} = (-1)^{r} \frac{(q-q^{-1})^{r+2}}{d^2[r]!} q^{-r(2i_2-j_2) - \frac{r(r-1)}{2} +
i_1(2i_2-j_2) - j_1i_2 - i_2 + j_2+2}, \]
	\[ \mu^{i_2,j_2,r,0,1}_{i_1,j_1,r,0,1} = (-1)^{r} \frac{(q-q^{-1})^{r+1}}{d^2[r]!} q^{-r(2i_2-j_2-2) - \frac{r(r-1)}{2} +
i_1(2i_2-j_2) - j_1i_2 + i_2}, \]
	\[ \mu^{i_2,j_2,r,0,1}_{i_1,j_1,r-1,1,0} = (-1)^{r} \frac{(q-q^{-1})^{r+1}}{d^2[r-1]!} q^{-r(2i_2-j_2-1) - \frac{r(r-1)}{2}
+ i_1(2i_2-j_2) - j_1i_2 + i_2}. \]
	
	We can represent $\bar{R}$ in a multiplicative form. Let us note, that
	\[ \tilde{R} = exp_{q^2}( (q-q^{-1}) e_3 \otimes f_3 ) exp_{q^2} ((q-q^{-1}) e_2 \otimes f_2 ) exp_{q^2}((-1) (q-q^{-1}) e_1
\otimes f_1 ) \times \]
	\[ \times exp_{q^2}( (-1) (q^{2}-1) (q-q^{-1})^{2} e_3 e_2 \otimes f_3 f_2 ) = \]
	\[ = \sum_{r=0}^{d-1} \frac{(-1)^{r} (q-q^{-1})^{r} q^{-\frac{r(r-1)}{2}} }{[r]!} e_1^{r} \otimes f_1^{r} + \sum_{r=0}^{d-1}
\frac{(-1)^{r+1} (q-q^{-1})^{r+2} q^{-\frac{r(r-1)}{2}+2} }{[r]!} e_1^{r} e_{3} e_{2} \otimes f_1^{r} f_{3} f_{2} +  \]
	\[ + \sum_{r=0}^{d-1} \frac{(-1)^{r} (q-q^{-1})^{r+1} q^{-\frac{r(r-1)}{2}+2r} }{[r]!} e_1^{r} e_{2} \otimes f_1^{r} f_{2} +
\sum_{r=1}^{d-1} \frac{(-1)^{r} (q-q^{-1})^{r+1} q^{-\frac{r(r-1)}{2}+r}}{[r-1]!} e_1^{r} e_{2} \otimes f_1^{r-1} f_{3} + \]
	\[ + \sum_{r=0}^{d-1} \frac{(-1)^{r} (q-q^{-1})^{r+2} q^{-\frac{r(r-1)}{2}+2} }{[r]!} e_1^{r} e_{3} \otimes f_1^{r+1} f_{2}
+ \sum_{r=0}^{d-1} \frac{(-1)^{r} (q-q^{-1})^{r+1} q^{-\frac{r(r-1)}{2}+2} }{[r]!} e_1^{r} e_{3} \otimes f_1^{r} f_{3}. \]
	
	We set
	\[ K = d^{-2} \sum_{0 \le i_1, j_1, i_2, j_2 \le d-1} q^{i_1(2i_2-j_2)-j_1i_2} k_1^{i_2} k_2^{j_2} \otimes k_1^{i_1}
k_2^{j_1}. \]
	
	Therefore, we have
	\[ \bar{R} = \tilde{R} K. \]
	
\end{proof}

\thindAlg*
\begin{proof}
	We give a proof by induction on $n$. The cases $n=3$ and $n=4$ are already considered. Let $W = \sum_{i=0}^{2} L_{n-1,\mu}
g_{n-1}^{i} L_{n-1,\mu} + L_{n-3,\mu} g_{n-1} g_{n-2}^{2} g_{n-1}$. Since $1 \in W$, we need to prove that $ g_{i}^{j} W \subset
W, \; W g_{i}^{j} \subset W$ for $i \in \{ n-3,n-2,n-1 \}, \; j \in \{1,2\}$. These relations are correct for $g_{n-3}$ and
$g_{n-2}$, as it follows from Lemma \ref{lm:bimolln}. Recall that $L_{n-3,\mu}$ commutes with $g_{n-1} g_{n-2}^{2} g_{n-1}$. So
$g_{n-1}^{i_1} g_{n-1} g_{n-2}^{2} g_{n-1} g_{n-1}^{i_2} \in \sum_{i=0}^{2} L_{n-1,\mu} g_{n-1}^{i} L_{n-1,\mu} + \Bbbk g_{n-1}
g_{n-2}^{2} g_{n-1}$, where $i_1, i_2 \in \{ 1,2 \}$, by basis construction for $L_{3,\mu}$.
	
	We use the induction assumption $L_{n-1,\mu} = \sum_{i=0}^{2} L_{n-2,\mu} g_{n-2}^{i} L_{n-2,\mu} + L_{n-4,\mu} g_{n-2}
g_{n-3}^{2} g_{n-2}$. Hence, for $a \in \{ 1,2 \}$
	\[ g_{n-1}^{a} \sum_{i=0}^{2} L_{n-1,\mu} g_{n-1}^{i} L_{n-1,\mu} \subset g_{n-1}^{a} \sum_{i=0}^{2} ( \sum_{j=0}^{2}
L_{n-2,\mu} g_{n-2}^{j} L_{n-2,\mu} + L_{n-4,\mu} g_{n-2} g_{n-3}^{2} g_{n-2} ) g_{n-1}^{i} L_{n-1,\mu} = \]
	\[ = \sum_{i,j=0}^{2} L_{n-2,\mu} g_{n-1}^{a} g_{n-2}^{i} g_{n-1}^{j} L_{n-2,\mu} L_{n-1,\mu} + L_{n-4,\mu} \sum_{i=0}^{2}
g_{n-1}^{a} g_{n-2} g_{n-3}^{2} g_{n-2} g_{n-1}^{i} L_{n-1,\mu} \subset \]
	\[ \subset \sum_{i=0}^{2} L_{n-1,\mu} g_{n-1}^{i} L_{n-1,\mu} + L_{n-1,\mu} g_{n-1} g_{n-2}^{2} g_{n-1} L_{n-1,\mu}  \subset
\sum_{i=0}^{2} L_{n-1,\mu} g_{n-1}^{i} L_{n-1,\mu} + L_{n-3,\mu} g_{n-1} g_{n-2}^{2} g_{n-1}.  \]
	We use bases constructions for $L_{3,\mu}$ and $L_{4,\mu}$ on step $4$. We use Lemma \ref{lm:bimolln} on step $5$.
	Analogically, we prove
	\[ \sum_{i=0}^{2} L_{n-1,\mu} g_{n-1}^{i} L_{n-1,\mu} g_{n-1}^{a} \subset L_{n,\mu}.\]
\end{proof}

\section{Proofs of Auxiliary Results}
\label{PAR}

\commutatorEps*
\begin{proof}
	Since we consider linear mappings it is enough to verify that if the statement is true for $x$ and $y$ it will be true for
$xy$.
	
	We have for $\Delta$ for all $x,y \in H$:
	\[(id_{H}\otimes\Delta)\circ\Delta(xy)=(id_{H}\otimes\Delta)(\sum_{(x),(y)}(-1)^{|x^{''}||y^{'}|}x^{'}y^{'}\otimes
x^{''}y^{''})=\]
	
\[=\sum_{(x),(y),(x^{''}),(y^{''})}(-1)^{|x^{''}||y^{'}|+|(x^{''})^{''}||(y^{''})^{'}|}x^{'}y^{'}\otimes(x^{''})^{'}(y^{''})^{'}\otimes(x^{''})^{''}(y^{''})^{''}.\]
	
	\[ \mu_{H \otimes H \otimes H} ((id_{H}\otimes\Delta)\circ\Delta(x) \otimes (id_{H}\otimes\Delta)\circ\Delta(y))=\]
	\[ = \mu_{H \otimes H \otimes H} ((\sum_{(x),(x^{''})}x^{'}\otimes(x^{''})^{'}\otimes(x^{''})^{''}) \otimes
(\sum_{(y),(y^{''})}y^{'}\otimes(y^{''})^{'}\otimes(y^{''})^{''}))=\]
	
\[=\sum_{(x),(y),(x^{''}),(y^{''})}(-1)^{|x^{''}||y^{'}|+|(x^{''})^{''}||(y^{''})^{'}|}x^{'}y^{'}\otimes(x^{''})^{'}(y^{''})^{'}\otimes(x^{''})^{''}(y^{''})^{''}.\]
	
	\[(\Delta\otimes id_{H})\circ\Delta(xy)=(\Delta\otimes id_{H})(\sum_{(x),(y)}(-1)^{|x^{''}||y^{'}|}x^{'}y^{'}\otimes
x^{''}y^{''})=\]
	
\[=\sum_{(x),(y),(x^{'}),(y^{'})}(-1)^{|x^{''}||y^{'}|+|(y^{'})^{'}||(x^{'})^{''}|}(x^{'})^{'}(y^{'})^{'}\otimes(x^{'})^{''}(y^{'})^{''}\otimes
x^{''}y^{''}.\]
	
	\[\mu_{H \otimes H \otimes H}((\Delta\otimes id_{H})\circ\Delta(x)\otimes(\Delta\otimes id_{H})\circ\Delta(y))=\]
	\[=\mu_{H \otimes H \otimes H}((\sum_{(x),(x^{'})}(x^{'})^{'}\otimes(x^{'})^{''}\otimes
x^{''})\otimes(\sum_{(y),(y^{'})}(y^{'})^{'}\otimes(y^{'})^{''}\otimes y^{''}))=\]
	
\[=\sum_{(x),(y),(x^{'}),(y^{'})}(-1)^{|x^{''}||y^{'}|+|(y^{'})^{'}||(x^{'})^{''}|}(x^{'})^{'}(y^{'})^{'}\otimes(x^{'})^{''}(y^{'})^{''}\otimes
x^{''}y^{''}.\]
	
	Thus
	\[(id_{H}\otimes\Delta)\circ\Delta(xy)=\mu_{H \otimes H \otimes
H}((id_{H}\otimes\Delta)\circ\Delta(x)\otimes(id_{H}\otimes\Delta)\circ\Delta(y))=\]
	\[=\mu_{H \otimes H \otimes H}((\Delta\otimes id_{H})\circ\Delta(x)\otimes(\Delta\otimes
id_{H})\circ\Delta(y))=(\Delta\otimes id_{H})\circ\Delta(xy).\]
	
	We have for $\epsilon$ for all $x,y \in H$:
	\[(id_{H}\otimes\epsilon)\circ\Delta(xy)=(id_{H}\otimes\epsilon)(\sum_{(x),(y)}(-1)^{|x^{''}||y^{'}|}x^{'}y^{'}\otimes
x^{''}y^{''})=\sum_{(x),(y)}x^{'}y^{'}\otimes\epsilon(x^{''})\epsilon(y^{''})=xy\otimes1_{k},\]
	
	\[(\epsilon\otimes id_{H})\circ\Delta(xy)=(\epsilon\otimes id_{H})(\sum_{(x),(y)}(-1)^{|x^{''}||y^{'}|}x^{'}y^{'}\otimes
x^{''}y^{''})=\sum_{(x),(y)}\epsilon(x^{'})\epsilon(y^{'})\otimes x^{''}y^{''}=1_{k}\otimes xy.\]
	
	Thus using superspace isomorphisms defined in Remark \ref{rm:one}, we have
	\[ \nu_{H,k}((id_{H}\otimes\epsilon)\circ\Delta(xy)) = \nu_{H,k}(xy\otimes1_{k}) =  xy = id_{H}(xy), \]
	\[ \nu_{k,H} ((\epsilon\otimes id_{H})\circ\Delta(xy)) = \nu_{k,H} (1_{k}\otimes xy) = xy = id_{H}(xy). \]
	
	Thus
	\[ (id_{H}\otimes\epsilon)\circ\Delta = id_H = (\epsilon\otimes id_{H})\circ\Delta. \]
\end{proof}

\inverseAlgebra*
\begin{proof}
	
	1. 	$\mu^{op}$ is a superspace morphism.
	
	1.1
	Verify that the associativity axiom holds in $H^{op}$:
	\[ \mu^{op} \circ (\mu^{op} \otimes id_{ H^{op} } ) = \mu^{op} \circ ( id_{H^{op}} \otimes \mu^{op} ). \]
	\[ \mu^{op} \circ (\mu^{op} \otimes id_{ H^{op} } ) ( a \otimes b \otimes c ) = \mu^{op} ( (-1)^{|a||b|} ba \otimes c ) =
(-1)^{|a||b| + |c||b| + |c||a|} cba, \]
	\[ \mu^{op} \circ ( id_{H^{op}} \otimes \mu^{op} ) ( a \otimes b \otimes c ) = \mu^{op} ( (-1)^{|b||c|} a \otimes cb ) =
(-1)^{|b||c|+|c||a|+|b||a|} cba \]
	for all $a, b, c \in H^{op}$.

	1.2
	Verify that the unity axiom holds in $H^{op}$:
	\[ \mu^{op} \circ (\eta \otimes id_{H^{op}}) = \mu^{op} \circ (id_{H^{op}} \otimes \eta). \]
	\[ \mu^{op} \circ (\eta \otimes id_{H^{op}}) ( 1_{k} \otimes a ) = \mu^{op} (\eta(1_{k}) \otimes a) = \mu^{op} (1_{H^{op}}
\otimes a) = (-1)^{|1_{H^{op}}||a|} a 1_{H^{op}} = a, \]
	\[ \mu^{op} \circ (id_{H^{op}} \otimes \eta) ( a \otimes 1_{k} ) = \mu^{op} ( a \otimes \eta(1_k) ) = \mu^{op} ( a \otimes
1_{H^{op}} ) = (-1)^{|1_{H^{op}}||a|} 1_{H^{op}} a = a \]
	for all $a \in H^{op}$.
	
	1.3
	Verify that $\Delta$ is a superalgebra morphism in $H^{op}$:
	\[ \Delta \circ \mu^{op} = ( \mu^{op} \otimes \mu^{op} ) \circ ( id_{H^{op}} \otimes \tau_{H^{op},H^{op}} \otimes
id_{H^{op}} ) \circ ( \Delta \otimes \Delta ), \]
	\[ \Delta \circ \mu^{op} (a \otimes b) = (-1)^{|a||b|} \Delta ( ba ) = (-1)^{|a||b|} \Delta (b) \Delta(a) =   \sum_{(a),(b)}
(-1)^{|a||b|+|b^{''}||a^{'}|} b^{'}a^{'} \otimes b^{''}a^{''} = \]
	\[ = \sum_{(a),(b)} (-1)^{|a^{''}||b^{'}|+|a^{'}||b^{'}|+|a^{''}||b^{''}|} b^{'}a^{'} \otimes b^{''}a^{''}, \]
	\[  ( \mu^{op} \otimes \mu^{op} ) \circ ( id_{H^{op}} \otimes \tau_{H^{op},H^{op}} \otimes id_{H^{op}} ) \circ ( \Delta
\otimes \Delta ) (a \otimes b) = \]
	\[ = ( \mu^{op} \otimes \mu^{op} ) ( \sum_{(a),(b)} (-1)^{|a^{''}||b^{'}|} a^{'} \otimes b^{'} \otimes a^{''} \otimes b^{''}
) = \sum_{(a),(b)} (-1)^{|a^{''}||b^{'}|+|a^{'}||b^{'}|+|a^{''}||b^{''}|} b^{'}a^{'} \otimes b^{''}a^{''} \]
	for all $a,b \in H^{op}$.
	
	Remark. We use the following relation for degrees
	\[ |a||b|+|b^{''}||a^{'}| = (|a^{'}|+|a^{''}|)(|b^{'}|+|b^{''}|) +|b^{''}||a^{'}| =
|a^{''}||b^{'}|+|a^{'}||b^{'}|+|a^{''}||b^{''}|. \qed \]
	
	1.4
	Verify that $\epsilon$ is a superalgebra morphism in $H^{op}$:
	\[ \epsilon \circ \mu^{op} = \mu_{k} \circ (\epsilon \otimes \epsilon ). \]
	\[ \epsilon \circ \mu^{op} (a \otimes b) = (-1)^{|a||b|} \epsilon ( ba ) = (-1)^{|a||b|} \epsilon (a) \epsilon (b) =
\epsilon (a) \epsilon (b), \]
	\[ \mu_{k} \circ (\epsilon \otimes \epsilon ) (a \otimes b) = \epsilon (a) \epsilon (b) \]
	for all $a,b \in H^{op}$.
	
	2. $\Delta^{op}$ is a superspace morphism.
	
	2.1
	Verify that the coassociativity axiom holds in $H^{cop}$:
	\[ (\Delta^{op} \otimes id_{H^{cop}}) \circ \Delta^{op} = ( id_{H^{cop}} \otimes \Delta^{op} ) \circ \Delta^{op}. \]
	\[ (\Delta^{op} \otimes id_{H^{cop}}) \circ \Delta^{op} ( a ) = (\Delta^{op} \otimes id_{H^{cop}}) ( \sum_{(a)}
(-1)^{|a^{'}||a^{''}|} a^{''} \otimes a^{'} ) = \]
	\[ = \sum_{(a),(a^{''})} (-1)^{|a^{'}||a^{''}|+|(a^{''})^{'}||(a^{''})^{''}|} (a^{''})^{''} \otimes (a^{''})^{'} \otimes
a^{'}, \]
	
	\[ ( id_{H^{cop}} \otimes \Delta^{op} ) \circ \Delta^{op} (a) = ( id_{H^{cop}} \otimes \Delta^{op} ) ( \sum_{(a)}
(-1)^{|a^{'}||a^{''}|} a^{''} \otimes a^{'} ) = \]
	\[ = \sum_{(a)} (-1)^{|a^{'}||a^{''}|+|(a^{'})^{'}||(a^{'})^{''}|} a^{''} \otimes (a^{'})^{''} \otimes (a^{'})^{'} \]
	for all $a \in H^{cop}$.
	
	Note that
	\[ (\Delta^{op} \otimes id_{H^{cop}}) \circ \Delta^{op} (a) = (id_{H} \otimes \tau_{H,H}) \circ \tau_{H \otimes H,H} \circ
(( id_{H} \otimes \Delta_H ) \circ \Delta_H) (a) = \]
	\[ = (id_{H} \otimes \tau_{H,H}) \circ \tau_{H \otimes H,H} \circ (( \Delta_H \otimes id_{H} ) \circ \Delta_H) (a) = (
id_{H^{cop}} \otimes \Delta^{op} ) \circ \Delta^{op} (a). \]

	2.2
	Verify that the counit axiom holds in $H^{cop}$:
	\[ (\epsilon \otimes id_{H^{cop}}) \circ \Delta^{op} = ( id_{H^{cop}} \otimes \epsilon ) \circ \Delta^{op}. \]
	\[ ( id_{H^{cop}} \otimes \epsilon ) \circ \Delta^{op} ( a ) = \sum_{(a)} (-1)^{|a^{'}||a^{''}|} a^{''} \otimes
\epsilon(a^{'}) = \sum_{(a)} a^{''} \otimes  \epsilon(a^{'}) = a \otimes 1_{k}, \]
	\[ (\epsilon \otimes id_{H^{cop}}) \circ \Delta^{op} ( a ) = \sum_{(a)} (-1)^{|a^{'}||a^{''}|} \epsilon (a^{''}) \otimes
a^{'} = \sum_{(a)} \epsilon (a^{''}) \otimes a^{'} = 1_{k} \otimes a \]
	for all $a \in H^{cop}$.
	
	Thus using superspace isomorphisms defined in Remark \ref{rm:one}, we have
	\[ \nu_{H^{cop},k}((id_{H^{cop}}\otimes\epsilon)\circ\Delta^{op}(a)) = \nu_{H^{cop},k}(a\otimes1_{k}) =  a =
id_{H^{cop}}(a), \]
	\[ \nu_{k,H^{cop}} ((\epsilon\otimes id_{H^{cop}})\circ\Delta^{op}(a)) = \nu_{k,H^{cop}} (1_{k}\otimes a) = a =
id_{H^{cop}}(a). \]
	
	Consequently
	\[ (id_{H^{cop}}\otimes\epsilon)\circ\Delta^{op} = id_{H^{cop}} = (\epsilon\otimes id_{H^{cop}})\circ\Delta^{op}. \]
	
	2.3
	Verify that $\Delta^{op}$ is a superalgebra morphism in $H^{cop}$. We have for all $a,b \in H^{cop}$
	\[\Delta^{op} \circ \mu = ( \mu \otimes \mu ) \circ ( id_{H^{cop}} \otimes \tau_{H^{cop},H^{cop}} \otimes id_{H^{cop}} )
\circ ( \Delta^{op} \otimes \Delta^{op} ). \]
	\[ (\Delta^{op} \circ \mu) ( a \otimes b ) = \sum_{(a),(b)} (-1)^{|a^{''}||b^{'}|+|a^{'}b^{'}||a^{''}b^{''}|} a^{''}b^{''}
\otimes a^{'}b^{'} = \]
	\[ = \sum_{(a),(b)} (-1)^{|a^{'}||a^{''}|+|a^{'}||b^{''}|+|b^{'}||b^{''}|} a^{''}b^{''} \otimes a^{'}b^{'}, \]
	\[ ( \mu \otimes \mu ) \circ ( id_{H^{cop}} \otimes \tau_{H^{cop},H^{cop}} \otimes id_{H^{cop}} ) \circ ( \Delta^{op}
\otimes \Delta^{op} ) ( a \otimes b ) =  \]
	\[ = ( \mu \otimes \mu ) \circ ( id_{H^{cop}} \otimes \tau_{H^{cop},H^{cop}} \otimes id_{H^{cop}} ) ( \sum_{(a), (b) }
(-1)^{|a^{'}||a^{''}|+|b^{'}||b^{''}|} a^{''} \otimes a^{'} \otimes b^{''} \otimes b^{'} ) = \]
	\[ = \sum_{(a),(b)} (-1)^{|a^{'}||a^{''}|+|a^{'}||b^{''}|+|b^{'}||b^{''}|} a^{''}b^{''} \otimes a^{'}b^{'}. \]

	2.4
	\[ \Delta^{op} \circ \eta = \eta \otimes \eta. \]
	\[ \Delta^{op} \circ \eta (1_{k}) = 1_{H^{cop}} \otimes 1_{H^{cop}}, \]
	\[ (\eta \otimes \eta) \circ \Delta_{k} (1_k) = (\eta \otimes \eta) ( 1_{k} \otimes  1_{k} ) = 1_{H^{cop}} \otimes
1_{H^{cop}}. \]

	3.
	Verify that $\Delta^{op}$ is a superalgebra morphism in $H^{op,cop}$. We have for all $a,b \in H^{op,cop}$
	\[\Delta^{op} \circ \mu^{op} = ( \mu^{op} \otimes \mu^{op} ) \circ ( id_{H^{op,cop}} \otimes \tau_{H^{op,cop},H^{op,cop}}
\otimes id_{H^{op,cop}} ) \circ ( \Delta^{op} \otimes \Delta^{op} ). \]
	\[ \Delta^{op} \circ \mu^{op} ( a \otimes b ) = \sum_{(a),(b)} (-1)^{|a||b|+|b^{''}||a^{'}|+|b^{'}a^{'}||b^{''}a^{''}|}
b^{''}a^{''} \otimes b^{'}a^{'} = \]
	\[ = \sum_{(a),(b)} (-1)^{|a^{'}||a^{''}| + |b^{'}|| b^{''}| + |a^{'}||b^{''}| + |a^{''}||b^{''}| + |a^{'}||b^{'}|}
b^{''}a^{''} \otimes b^{'}a^{'}, \]
	
	\[ ( \mu^{op} \otimes \mu^{op} ) \circ ( id_{H^{op,cop}} \otimes \tau_{H^{op,cop},H^{op,cop}} \otimes id_{H^{op,cop}} )
\circ ( \Delta^{op} \otimes \Delta^{op} ) ( a \otimes b ) = \]
	\[ = ( \mu^{op} \otimes \mu^{op} ) \circ ( id_{H^{op,cop}} \otimes \tau_{H^{op,cop},H^{op,cop}} \otimes id_{H^{op,cop}} ) (
\sum_{(a),(b)} (-1)^{|a^{'}||a^{''}| + | b^{'}|| b^{''}|} a^{''} \otimes a^{'} \otimes b^{''} \otimes b^{'} ) = \]
	\[ = ( \mu^{op} \otimes \mu^{op} ) ( \sum_{(a),(b)} (-1)^{|a^{'}||a^{''}| + | b^{'}|| b^{''}| + |a^{'}||b^{''}|} a^{''}
\otimes b^{''} \otimes a^{'} \otimes b^{'} ) = \]
	\[ = \sum_{(a),(b)} (-1)^{|a^{'}||a^{''}| + |b^{'}|| b^{''}| + |a^{'}||b^{''}| + |a^{''}||b^{''}| + |a^{'}||b^{'}|}
b^{''}a^{''} \otimes b^{'}a^{'}. \]
	
	Remark. We use the following relation for degrees
	\[ |a||b|+|b^{''}||a^{'}|+|b^{'}a^{'}||b^{''}a^{''}| = |a^{'}||b^{'}| + |a^{'}||b^{''}| + |a^{''}||b^{'}| + |a^{''}||b^{''}|
+  \]
	\[ + |b^{''}||a^{'}|  + |b^{'}||b^{''}| + |b^{'}||a^{''}| + |a^{'}||b^{''}| + |a^{'}||a^{''}| = |a^{'}||b^{'}| +
|a^{'}||b^{''}| + |a^{''}||b^{''}| + |b^{'}||b^{''}| + |a^{'}||a^{''}|. \]
\end{proof}

\evaluationmapcolone*
\begin{proof}
	If we set $M^{'}=N^{'}=k$ in Proposition \ref{pr:evaluationmap} we shall get a superspace morphism
	\[\lambda:\mathrm{Hom}(M,k)\otimes \mathrm{Hom}(N,k)\to \mathrm{Hom}(M\otimes N,k\otimes k),\]
	\[\lambda(f\otimes g)(m\otimes n)=(-1)^{|g||m|}f(m)\otimes g(n).\]
	
	Using the superspace isomorphism $\nu_{k,k}:k\otimes k\to k$ defined in Remark \ref{rm:one}, we get an isomorphism of vector
superspaces
	\[\lambda_{M,N}=\nu_{k,k}\circ\lambda:\mathrm{Hom}(M,k)\otimes \mathrm{Hom}(N,k)\to \mathrm{Hom}(M\otimes N,k),\]
	\[\lambda_{M,N}(f\otimes g)(m\otimes n)=(-1)^{|g||m|}f(m)g(n).\]
\end{proof}

\evaluationmapcoltwo*
\begin{proof}
	We know from Corollary \ref{cl:evaluationmapcolone}, that the statement holds for $n=2$. Suppose that the result is true for
all $k<n$. We prove that the statement is true for $k=n$. Indeed, from Proposition \ref{pr:evaluationmap}, Corollary
\ref{cl:evaluationmapcolone} and the induction hypothesis it follows that
	\[ \nu_{k,k} \circ \lambda \circ (\lambda_{M_{1},M_{2},...,M_{n-1}} \otimes id_{M_{n}^{*}}) : M_{1}^{*} \otimes M_{2}^{*}
\otimes ... \otimes M_{n}^{*} \to (M_{1} \otimes M_{2} \otimes ... \otimes M_{n})^{*}, \]
	\[ \nu_{k,k} \circ \lambda \circ (\lambda_{M_{1},M_{2},...,M_{n-1}} \otimes id_{M_{n}^{*}}) (\bigotimes_{h=1}^{n}f_{h})
(\bigotimes_{h=1}^{n}m_{h}) =  \]
	\[ = (-1)^{|f_{n}|\sum_{j=1}^{n-1}|m_{j}|}\ \nu_{k,k} ((-1)^{\sum_{i=2}^{n-1} \sum_{j=1}^{i-1} |f_{i}||m_{j}|}
\prod_{h=1}^{n-1}f_{h}(m_{h}) \otimes f_{n}(m_{n})) = \]
	\[ = (-1)^{\sum_{i=2}^{n} \sum_{j=1}^{i-1} |f_{i}||m_{j}|} \prod_{h=1}^{n}f_{h}(m_{h}). \]
	
	Denote by $\lambda_{M_{1},M_{2},...,M_{n}} := \nu_{k,k} \circ \lambda \circ (\lambda_{M_{1},M_{2},...,M_{n-1}} \otimes
id_{M_{n}^{*}}) $. Note that $\lambda_{M_{1},M_{2},...,M_{n}}$ is an isomorphism of vector superspaces by definition.
\end{proof}

\HopfDualStruct*
\begin{proof}	
	Define a multiplication as a superspace morphism
	\[ \mu_{H^{*}} := \Delta_{H}^{*} \circ \lambda_{H,H}, \]
	\[ \mu_{H^{*}} ( f \otimes g ) (a) = \sum_{(a)} (-1)^{|a^{'}||g|} f(a^{'}) g(a^{''}) \]
	for all $f,g \in H^{*}, a \in H$.
	
	Verify that the associativity axiom holds
	\[ ( \mu_{H^{*}} \circ ( \mu_{H^{*}} \otimes id_{H^{*}} ) ( f \otimes g \otimes h ) ) ( a ) =   \sum_{(a)} (-1)^{|h||a^{'}|}
\mu_{H^{*}} ( f \otimes g )(a^{'}) h(a^{''}) = \]
	\[ \sum_{(a),(a^{'})} (-1)^{|h||a^{'}| + |g||(a^{'})^{'}|}  f((a^{'})^{'}) g((a^{'})^{''}) h(a^{''}) =  ( f \otimes g
\otimes h ) ( \sum_{(a),(a^{'})} (a^{'})^{'} \otimes (a^{'})^{''} \otimes a^{''} ), \]
	\[ ( \mu_{H^{*}} \circ ( id_{H^{*}} \otimes \mu_{H^{*}} ) ( f \otimes g \otimes h ) ) ( a ) =  \sum_{(a)} (-1)^{|g||a^{'}| +
|h||a^{'}|} f(a^{'}) \mu_{H^{*}} ( g \otimes h )(a^{''}) = \]
	\[ = \sum_{(a),(a^{''})} (-1)^{|g||a^{'}| + |h||a^{'}| + |h||(a^{''})^{'}|} f(a^{'}) g((a^{''})^{'}) h((a^{''})^{''}) = ( f
\otimes g \otimes h ) ( \sum_{(a),(a^{''})} a^{'} \otimes (a^{''})^{'} \otimes (a^{''})^{''} ), \]
	for all $f,g,h \in H^{*},a \in H$.
	
	Recall that from the coassociativity axiom in $H$ it follows that for all $a \in H$
	\[ \sum_{(a),(a^{'})} (a^{'})^{'} \otimes (a^{'})^{''} \otimes a^{''} = \sum_{(a),(a^{''})} a^{'} \otimes (a^{''})^{'}
\otimes (a^{''})^{''}. \]
	Thus the statement holds.
	
	Define a unit as a superspace morphism
	\[ \eta_{H^{*}} := \epsilon_{H}^{*} \circ \chi, \]
	\[ \eta_{H^{*}} ( 1_{k} ) = \chi ( 1_{k} ) \epsilon_{H} = \epsilon_{H}, \]
	where $\chi$ is the superspace isomorphism defined in Remark \ref{rm:chiisom}.
	
	Verify that the unit axiom holds
	\[ (\mu_{H^{*}} \circ (\eta_{H^{*}} \otimes id_{H^{*}}) ( 1_k \otimes f )) (a) = \mu_{H^{*}} ( \epsilon_{H} \otimes f ) (a)
= \sum_{(a)} (-1)^{|f| |a^{'}|} \epsilon_{H}(a^{'}) f(a^{''}) =  f( \sum_{(a)} \epsilon_{H}(a^{'}) a^{''} ) = f(a), \]
	\[ (\mu_{H^{*}} \circ (id_{H^{*}} \otimes \eta_{H^{*}}) ( f \otimes 1_k )) (a) = \mu_{H^{*}} ( f \otimes \epsilon_{H} ) (a)
= \sum_{(a)} (-1)^{|\epsilon_{H}| |a^{'}|} f(a^{'}) \epsilon_{H}(a^{''}) =  f( \sum_{(a)} a^{'} \epsilon_{H}(a^{''}) ) = f(a)
\]
	for all $f \in H^{*}, a \in H$.
	
	Define a comultiplication as a superspace morphism
	\[ \Delta_{H^{*}} := \lambda_{H,H}^{-1} \circ \mu_{H}^{*}, \]
	\[ \Delta_{H^{*}}(f) = \sum_{(f)} f^{'} \otimes f^{''}, \]
	\[ f(ab) = \sum_{(a),(b)} (-1)^{|a||f^{''}|} f^{'}(a) f^{''} (b) \]
	for all $f \in H^{*}, a,b \in H$.
	
	Recall that from Corollary \ref{cl:evaluationmapcoltwo} it follows that there exists a superspace morphism
	\[ \lambda_{H,H,H}: H^* \otimes H^* \otimes H^* \to (H \otimes H \otimes H)^*, \]
	\[ \lambda_{H,H,H}( f_1 \otimes f_2 \otimes f_3 ) ( h_1 \otimes h_2 \otimes h_3 ) = (-1)^{|f_2||h_1| + |f_3|( |h_1| + |h_2|
)} f_1(h_1) f_2 (h_2) f_3 (h_3) \]
	for all $f_1, f_2, f_3 \in H^{*}$, $h_1, h_2, h_3 \in H$.
	
	Verify that the coassociativity axiom holds
	\[ ( (\Delta_{H^{*}} \otimes id_{H^{*}}) \circ \Delta_{H^{*}} (f) ) (a \otimes b \otimes c) = ( \sum_{(f)} \Delta_{H^{*}}
(f^{'}) \otimes f^{''} ) (a \otimes b \otimes c) = \]
	\[ = \sum_{(f)} (-1)^{|f^{''}|( |a| + |b| )} \Delta_{H^{*}} (f^{'}) ( a \otimes b ) \otimes f^{''} ( c ) =  \sum_{(f),
(f^{'})} (-1)^{|f^{''}|( |a| + |b| ) + |a||(f^{'})^{''}|} (f^{'})^{'}(a) \otimes (f^{'})^{''}(b) \otimes f^{''}( c ), \]
	
	\[ ( (id_{H^{*}} \otimes \Delta_{H^{*}}) \circ \Delta_{H^{*}} (f) ) (a \otimes b \otimes c) = ( \sum_{(f)} f^{'} \otimes
\Delta_{H^{*}}(f^{''}) ) (a \otimes b \otimes c) = \]
	\[ = \sum_{(f)} (-1)^{|f^{''}||a|} f^{'} (a) \otimes \Delta_{H^{*}} (f^{''})( b \otimes c ) =  \sum_{(f), (f^{''})}
(-1)^{|f^{''}||a| + |(f^{''})^{''}| |b|} f^{'} (a) \otimes (f^{''})^{'} (b) \otimes (f^{''})^{''} (c), \]
	
	\[ \lambda_{H,H,H}( \sum_{(f), (f^{'})} (f^{'})^{'} \otimes (f^{'})^{''} \otimes f^{''} ) (a \otimes b \otimes c) =  \]
	\[ = \sum_{(f), (f^{'})} (-1)^{|f^{''}|( |a| + |b| ) + |a||(f^{'})^{''}|} (f^{'})^{'}(a) (f^{'})^{''}(b) f^{''}( c ) =
\sum_{(f)} (-1)^{|f^{''}|( |a| + |b| )} f^{'}(ab) f^{''}( c ) = f(abc), \]
	
	\[ \lambda_{H,H,H} ( \sum_{(f), (f^{''})} f^{'} \otimes (f^{''})^{'} \otimes (f^{''})^{''} ) (a \otimes b \otimes c) = \]
	\[ = \sum_{(f), (f^{''})} (-1)^{|f^{''}||a| + |(f^{''})^{''}| |b|} f^{'} (a) (f^{''})^{'} (b) (f^{''})^{''} (c) = \sum_{(f)}
(-1)^{|f^{''}||a|} f^{'} (a) f^{''} (bc) = f(abc) \]
	for all $f \in H^{*}, a,b,c \in H$.
	
	Since $\lambda_{H,H,H}$ is the isomorphism of vector superspaces
	\[ \sum_{(f), (f^{'})}  (f^{'})^{'} \otimes (f^{'})^{''} \otimes f^{''} = \sum_{(f), (f^{''})} f^{'}  \otimes (f^{''})^{'}
\otimes (f^{''})^{''} \Rightarrow  (\Delta_{H^{*}} \otimes id_{H^{*}}) \circ \Delta_{H^{*}} = (id_{H^{*}} \otimes
\Delta_{H^{*}}) \circ \Delta_{H^{*}}. \]
	
	Define a counit as a superspace morphism
	\[ \epsilon_{H^{*}} := \chi^{-1} \circ \eta^{*}, \]
	\[ \epsilon_{H^{*}} ( f ) = f(1_{H}) \]
	for all $f \in H^{*}$.
	
	Verify that the counit axiom holds
	\[ (id_{H^{*}} \otimes \epsilon_{H^{*}}) \circ \Delta_{H^{*}} ( f ) = \sum_{(f)} f^{'} \otimes \epsilon_{H^{*}}( f^{''} ) =
\sum_{(f)} f^{'} \otimes f^{''}(1_{H}) =  \sum_{(f)} f^{'} f^{''}(1_{H}) \otimes 1_{k} = f(?1_{H}) \otimes 1_{k} = f \otimes
1_{k}, \]
	\[ (\epsilon_{H^{*}} \otimes id_{H^{*}}) \circ \Delta_{H^{*}} ( f ) = \sum_{(f)} \epsilon_{H^{*}}( f^{'} ) \otimes f^{''} =
\sum_{(f)} f^{'}(1_{H}) \otimes f^{''} =  1_{k} \otimes \sum_{(f)} f^{'}(1_{H}) f^{''} = 1_{k} \otimes f(1_{H}?) = 1_k \otimes f
\]
	for all $f \in H^{*}$, $?$ is a numb variable.
	
	Thus using superspace isomorphisms defined in Remark \ref{rm:one}, we get
	\[ \nu_{H^{*},k}((id_{H^{*}}\otimes\epsilon_{H^{*}})\circ\Delta_{H^{*}}(f)) = \nu_{H^{*},k}(f\otimes1_{k}) = f =
id_{H^{*}}(f), \]
	\[ \nu_{k,H^{*}} ((\epsilon_{H^{*}}\otimes id_{H^{*}})\circ\Delta_{H^{*}}(f)) = \nu_{k,H^{*}} (1_{k}\otimes f) = f =
id_{H^{*}}(f). \]
	
	Thus
	\[ (id_{H^{*}}\otimes\epsilon_{H^{*}})\circ\Delta_{H^{*}} = id_{H^{*}} = (\epsilon_{H^{*}}\otimes
id_{H^{*}})\circ\Delta_{H^{*}}. \]
	
	Verify that the comultiplication is a superalgebra morphism
	\[ (\lambda_{H,H} \circ \Delta_{H^{*}} \circ \mu_{H^{*}} ( f \otimes g )) ( a \otimes b ) = \mu_{H^{*}} ( f \otimes g ) ( ab
) = \]
	\[ = \sum_{(a),(b)} (-1)^{|a^{''}||b^{'}| + |g||a^{'}| + |g||b^{'}|}  f(a^{'}b^{'})  g(a^{''}b^{''}) \Leftrightarrow \]
	\[ \Leftrightarrow (\Delta_{H^{*}} \circ \mu_{H^{*}} ( f \otimes g )) ( a \otimes b ) = \sum_{(a),(b)} (-1)^{|a^{''}||b^{'}|
+ |g||a^{'}| + |g||b^{'}|}  f(a^{'}b^{'}) \otimes g(a^{''}b^{''}), \]
	\[ ( (\mu_{H^{*}} \otimes \mu_{H^{*}}) \circ (id_{H^{*}} \otimes \tau_{H^{*},H^{*}} \otimes id_{H^{*}}) \circ
(\Delta_{H^{*}} \otimes \Delta_{H^{*}}) ( f \otimes g )) ( a \otimes b ) = \]
	\[ = (\sum_{(f),(g)} (-1)^{|f^{''}||g^{'}|} \mu_{H^{*}} (f^{'} \otimes g^{'}) \otimes \mu_{H^{*}} (f^{''} \otimes g^{''})) (
a \otimes b ) = \]
	\[ = \sum_{(f),(g),(a),(b)} (-1)^{|f^{''}||g^{'}| + |f^{''}||a| + |g^{''}||a| + |g^{'}||a^{'}| + |g^{''}||b^{'}|}
f^{'}(a^{'}) g^{'}(a^{''}) \otimes f^{''}(b^{'}) g^{''}(b^{''}) = \]
	\[ = \sum_{(a),(b)} (-1)^{|a^{''}||b^{'}| + |g||a^{'}| + |g||b^{'}|} (\sum_{(f)} (-1)^{|f^{''}||a^{'}|} f^{'}(a^{'})
f^{''}(b^{'})) \otimes (\sum_{(g)} (-1)^{|g^{''}||a^{''}|} g^{'}(a^{''}) g^{''}(b^{''})) = \]
	\[ = \sum_{(a),(b)} (-1)^{|a^{''}||b^{'}| + |g||a^{'}| + |g||b^{'}|} f(a^{'} b^{'}) \otimes g(a^{''} b^{''})\]
	for all $f,g \in H^{*}, a,b \in H$.
	
	Note that
	\[ |f^{''}||g^{'}| + |f^{''}||a| + |g^{''}||a| + |g^{'}||a^{'}| + |g^{''}||b^{'}| = |a^{''}||b^{'}| + |b^{'}||a| +
|b^{''}||a| + |a^{''}||a^{'}| + |b^{''}||b^{'}| =  \]
	\[ = |a^{''}||b^{'}| + ( |b^{''}| + |a^{''}| ) |a^{'}| + |b^{''}||a^{''}| + ( |a^{''}| + |b^{''}| ) |b^{'}| + |b^{'}||a^{'}|
= \]
	\[ = |a^{''}||b^{'}| + |g||a^{'}| + |g||b^{'}| + |g^{''}||a^{''}| + |f^{''}||a^{'}|. \]
	
	Note that
	\[ \epsilon_{H} ( ab ) = \epsilon_{H} (a) \epsilon_{H} (b) = \sum_{(\epsilon_{H})} (\epsilon_{H})^{'}(a)
(\epsilon_{H})^{''}(b) \Leftrightarrow \Delta_{H^{*}} ( \epsilon_{H} ) = \epsilon_{H} \otimes \epsilon_{H} \]
	for all $a,b \in H$.
	
	Therefore
	\[ (\Delta_{H^{*}} \circ \eta_{H^*}) (1_{k}) = \Delta_{H^{*}} ( \epsilon_{H} )  = \epsilon_{H} \otimes \epsilon_{H}, \]
	\[ (\eta_{H^*} \otimes \eta_{H^*}) \circ \Delta_{k} (1_k) = (\eta_{H^*} \otimes \eta_{H^*}) ( 1_{k} \otimes 1_{k} ) =
\epsilon_{H} \otimes \epsilon_{H}. \]
	
	Verify that the counit is a superalgebra morphism
	\[ (\epsilon_{H^{*}} \circ \mu_{H^{*}}) ( f \otimes g ) = \mu_{H^{*}} ( f \otimes g ) (1_{H}) = f(1_{H}) g(1_{H}), \]
	\[ (\mu_{k} \circ ( \epsilon_{H^{*}} \otimes \epsilon_{H^{*}} )) ( f \otimes g ) = \mu_{k} ( f(1_{H}) \otimes g(1_{H} )) =
f(1_{H}) g(1_{H}), \]
	for all $f,g \in H^{*}$.
	
	\[ \epsilon_{H^{*}} \circ \eta_{H^{*}} (1_{k}) = \epsilon_{H^{*}} ( \epsilon_{H} ) = \epsilon_{H}(1_{H}) = 1_{k} = id_{k}
(1_{k}).  \]
	
	Define the antipode as a superspace morphism
	\[ S_{H^{*}} = S^{*}, \]
	\[ S^{*}( f ) = f \circ S, \]
	for all $f \in H^{*}$.
	
	\[ (\eta_{H^{*}} \circ \epsilon_{H^{*}}) (f) = \eta_{H^{*}} ( f(1_{H}) ) = f(1_{H}) \eta_{H^{*}} ( 1_{k} ) = f(1_{H})
\epsilon_{H}, \]
	\[ (\mu_{H^{*}} \circ ( S_{H^{*}} \otimes id_{H^{*}} ) \circ \Delta_{H^{*}} ( f )) (a) = \mu_{H^{*}} ( \sum_{(f)}
S_{H^{*}}(f^{'}) \otimes f^{''} ) (a) = \]
	\[ = \sum_{(a),(f)} (-1)^{|f^{''}||a^{'}|} S_{H^{*}}(f^{'})(a^{'}) f^{''} ( a^{''} ) = \sum_{(a),(f)} (-1)^{|f^{''}||a^{'}|}
f^{'}(S(a^{'})) f^{''} ( a^{''} ) = \]
	\[ = \sum_{(a)} f( S(a^{'}) a^{''} ) = f ( \sum_{(a)} S(a^{'}) a^{''} ) = f(1_{H}) \epsilon_{H}(a) = ((\eta_{H^{*}} \circ
\epsilon_{H^{*}}) (f) )(a), \]
	
	\[ (\mu_{H^{*}} \circ ( id_{H^{*}} \otimes S_{H^{*}} ) \circ \Delta_{H^{*}} ( f )) (a) = \mu_{H^{*}} ( \sum_{(f)} f^{'}
\otimes S_{H^{*}} (f^{''} )) (a) = \]
	\[ = \sum_{(a),(f)} (-1)^{|f^{''}| |a^{'}|} f^{'} (a^{'}) S_{H^{*}} (f^{''} ) (a^{''}) = \sum_{(a),(f)} (-1)^{|f^{''}|
|a^{'}|} f^{'} (a^{'}) f^{''} (S(a^{''})) = \]
	\[ = \sum_{(a)} f( a^{'} S(a^{''}) ) = f( \sum_{(a)} a^{'} S(a^{''}) ) = f(1_{H}) \epsilon_{H}(a) = ((\eta_{H^{*}} \circ
\epsilon_{H^{*}}) (f) )(a) \]
	
	for all $f \in H^{*}, a \in H$.
	
\end{proof}

\Alglinfunct*
\begin{proof}
	Note that
	\[ | ( f * g ) (a)| = |f * g| + |a| = |f(a^{'})| + |g(a^{''})| = |f| + |a^{'}| + |g| + |a^{''}| = |f| + |g| + |a|
\Rightarrow |f * g| = |f| + |g|. \]
	Thus the multiplication is a superspace morphism.
	
	Verify that the associativity axiom holds
	\[ ((f * g) * h) (a) = \sum_{(a)} (-1)^{|h||a^{'}|} (f * g) (a^{'}) h(a^{''}) = \sum_{(a),(a^{'})} (-1)^{|h||a^{'}| +
|g||(a^{'})^{'}|} f((a^{'})^{'}) g((a^{'})^{''}) h(a^{''}) = \]
	\[ = \mu_{B} \circ (\mu_{B} \otimes id_{B} ) \circ (f \otimes g \otimes h) ( \sum_{(a),(a^{'})} (a^{'})^{'} \otimes
(a^{'})^{''} \otimes a^{''} ) =  \mu_{B} \circ (\mu_{B} \otimes id_{B} ) \circ (f \otimes g \otimes h) ( \sum_{(a),(a^{''})}
a^{'} \otimes (a^{''})^{'} \otimes (a^{''})^{''} ) = \]
	\[ = \sum_{(a),(a^{''})} (-1)^{|a^{'}| ( |g| + |h| ) + |(a^{''})^{'}||h|} f(a^{'}) g((a^{''})^{'}) h((a^{''})^{''}) =
\sum_{(a)} (-1)^{|a^{'}| ( |g| + |h| )} f(a^{'}) (g * h) (a^{''}) = (f * ( g * h )) (a) \]
	for all $f,g,h \in \mathrm{Hom}(A,B), a \in A$.
	
	It is obvious that the unity is a superspace morphism. Verify that the unity axiom holds
	\[ ((\eta_{B} \circ \epsilon_{A}) * f) (a) = \sum_{(a)} (-1)^{|f||a^{'}|} \epsilon_{A}(a^{'}) f(a^{''}) = f( \sum_{(a)}
\epsilon_{A}(a^{'}) a^{''} ) = f(a), \]
	\[ (f * (\eta_{B} \circ \epsilon_{A})) (a) = \sum_{(a)} (-1)^{|\eta_{B} \circ \epsilon_{A}||a^{'}|} f(a^{'})
\epsilon_{A}(a^{''}) = f( \sum_{(a)} a^{'} \epsilon_{A}(a^{''} ) ) = f(a) \]
	for all $f \in \mathrm{Hom}(A,B), \; a \in A$.
\end{proof}

\HopfQuotient*
\begin{proof}
	Notice that
	\[ H / I = (H_{0} + I) / I \oplus (H_{1} + I) / I. \]
	Define a multiplication in $H/I$ by
	\[ \mu_{H/I}: H/I \otimes H/I \to H/I, \]
	\[ \mu_{H/I} ( (a + I) \otimes (b + I) ) = ab + I \]
	for all $a, b \in H$. It is clear that the multiplication is a superspace morphism which respects the associativity axiom.
Moreover, we have for all $\alpha, \beta \in \mathbb{Z}_{2}$
	\[ \mu_{H/I} ( (H_{\alpha} + I) / I \otimes (H_{\beta} + I) / I ) = H_{\alpha + \beta} + I. \]
	
	Respectively a unit is defined by
	\[ \eta_{H/I} : k \to H/I, \]
	\[ \eta_{H/I} (1_{k}) = 1_{H} + I. \]
	The unit is a superspace morphism that respects the unity axiom.
	
	Before we define a comultiplication we investigate intermediate constructions.
	$J = H \otimes I + I \otimes H$ is a $\mathbb{Z}_2$-graded ideal in $H \otimes H$. Indeed,
	\[ H \otimes I + I \otimes H = (H_{0} + H_{1}) \otimes (H_{0} \cap I + H_{1} \cap I) +  (H_{0} \cap I + H_{1} \cap I)
\otimes (H_{0} + H_{1}) = \]
	\[ = (H_{0} \otimes (H_{0} \cap I) + (H_{0} \cap I) \otimes H_{0} + H_{1} \otimes (H_{1} \cap I) + (H_{1} \cap I) \otimes
H_{1}) + \]
	\[ + (H_{1} \otimes (H_{0} \cap I) + (H_{1} \cap I) \otimes H_{0} + H_{0} \otimes (H_{1} \cap I) + (H_{0} \cap I) \otimes
H_{1}) = \]
	\[ = (H_{0} \otimes H_{0} + H_{1} \otimes H_{1}) \cap J + (H_{1} \otimes H_{0} + H_{0} \otimes H_{1}) \cap J. \]
	
	Define the canonical superspace morphism
	\[ \gamma: H \otimes H \to H \otimes H / ( H \otimes I + I \otimes H ), \]
	\[ \gamma(a \otimes b) = a \otimes b + J \]
	for all $a,b \in H$.
	
	Consider a superspace morphism
	\[ \phi^{'}: H \otimes H \to H/I \otimes H/I, \]
	\[ \phi^{'} ( a \otimes b ) = (a + I) \otimes (b + I) \]
	for all $a,b \in H$. It is clear that $im(\phi^{'}) = H/I \otimes H/I$, $ker(\phi^{'}) = J$. According to the first
isomorphism theorem we can construct a superspace isomorphism
	\[ \phi : H \otimes H / ( H \otimes I + I \otimes H ) \to H/I \otimes H/I, \]
	\[ \phi ( a \otimes b + J ) = (a + I) \otimes (b + I) \]
	for all $a,b \in H$.
	
	Define a comultiplication by
	\[ \Delta_{H/I}: H/I \to H/I \otimes H/I, \]
	\[ \Delta_{H/I} ( a + I ) = \phi \circ \gamma \circ \Delta_{H} (a) =  \sum_{(a)} (a^{'} + I) \otimes (a^{''} + I) \]
	for all $a \in H$. The mapping is correctly defined. Indeed, $a + I = b + I \Leftrightarrow a - b \in I \Rightarrow
\Delta_{H} (a) - \Delta_{H} (b) \in H \otimes I + I \otimes H \Leftrightarrow \Delta_{H} (a) + J = \Delta_{H} (b) + J
\Leftrightarrow \phi ( \Delta_{H} (a) + J ) = \phi ( \Delta_{H} (b) + J ) \Leftrightarrow \Delta_{H/I} ( a + I ) = \Delta_{H/I}
( b + I ) $ for all $a, b \in H$. It is clear that $\Delta_{H/I}$ is a superspace morphism.
	Verify that the coassociativity axiom holds
	\[ ((\Delta_{H/I} \otimes id_{H/I}) \circ \Delta_{H/I}) ( a + I ) = ((\Delta_{H/I} \otimes id_{H/I})) ( \sum_{(a)} (a^{'} +
I) \otimes (a^{''} + I) ) = \]
	\[ = \sum_{(a),(a^{'})} ((a^{'})^{'} + I) \otimes ((a^{'})^{''} + I) \otimes (a^{''} + I) = (\pi \otimes \pi \otimes \pi) (
\sum_{(a),(a^{'})} (a^{'})^{'} \otimes (a^{'})^{''} \otimes a^{''}  ), \]
	\[ ((id_{H/I} \otimes \Delta_{H/I}) \circ \Delta_{H/I}) ( a + I ) = ((id_{H/I} \otimes \Delta_{H/I})) ( \sum_{(a)} (a^{'} +
I) \otimes (a^{''} + I) ) = \]	
	\[ = \sum_{(a),(a^{''})} (a^{'} + I) \otimes ((a^{''})^{'} + I) \otimes ((a^{''})^{''} + I) =  (\pi \otimes \pi \otimes \pi)
( \sum_{(a),(a^{''})} a^{'} \otimes ((a^{''})^{'} \otimes ((a^{''})^{''} ) \]
	for all $a \in H$.
	The result follows from the coassociativity axiom in $H$.
	
	Define a counit by
	\[ \epsilon_{H/I}: H/I \to k, \]
	\[ \epsilon_{H/I}(a + I) = \epsilon_{H} (a) \]
	for all $a \in H$.
	The mapping is correctly defined. Indeed, $a + I = b + I \Leftrightarrow a - b \in I \Leftrightarrow \epsilon_{H} (a) -
\epsilon_{H}(b) \in \epsilon_{H}(I) \Rightarrow \epsilon_{H} (a) = \epsilon_{H}(b)$ for all $a, b \in H$. It is clear that
$\epsilon_{H/I}$ is a superspace morphism.
	Verify that the counit axiom holds
	\[ (( id_{H/I} \otimes \epsilon_{H/I} ) \circ \Delta_{H/I}) (a + I) = ( id_{H/I} \otimes \epsilon_{H/I} ) ( \sum_{(a)}
(a^{'} + I) \otimes (a^{''} + I) ) = \]
	\[ = \sum_{(a)} (a^{'} + I) \otimes \epsilon_{H}(a^{''}) = ( ( \sum_{(a)} a^{'} \epsilon_{H}(a^{''}) ) + I) \otimes 1_{k} =
(a + I) \otimes 1_{k}, \]
	\[ (( \epsilon_{H/I} \otimes id_{H/I} ) \circ \Delta_{H/I}) (a + I) = ( \epsilon_{H/I} \otimes id_{H/I} ) ( \sum_{(a)}
(a^{'} + I) \otimes (a^{''} + I) ) = \]
	\[ = \sum_{(a)} \epsilon_{H}(a^{'}) \otimes (a^{''} + I) = 1_{k} \otimes ((\sum_{(a)} \epsilon_{H}(a^{'}) a^{''} ) + I) =
1_{k} \otimes (a + I) \]
	for all $a \in H$.
	
	Thus using superspace isomorphisms defined in Remark \ref{rm:one}, we have
	\[ \nu_{H/I,k}((id_{H/I}\otimes\epsilon_{H/I})\circ\Delta_{H/I}(a + I)) = \nu_{H/I,k}((a + I)\otimes1_{k}) = a + I =
id_{H/I}(a + I), \]
	\[ \nu_{k,H/I} ((\epsilon_{H/I}\otimes id_{H/I})\circ\Delta_{H/I}(a + I)) = \nu_{k,H/I} (1_{k}\otimes (a + I)) = a + I =
id_{H/I}(a + I). \]
	
	Consequently,
	\[ (id_{H/I}\otimes\epsilon_{H/I})\circ\Delta_{H/I} = id_{H/I} = (\epsilon_{H/I}\otimes id_{H/I})\circ\Delta_{H/I}. \]
	
	We verify that the comultiplication is a superalgebra morphism
	\[ (( \mu_{H/I} \otimes \mu_{H/I} ) \circ ( id_{H/I} \otimes \tau_{H/I,H/I} \otimes id_{H/I} ) \circ ( \Delta_{H/I} \otimes
\Delta_{H/I} )) ( (a + I) \otimes (b + I) ) =  \]
	\[ = ( \mu_{H/I} \otimes \mu_{H/I} ) ( \sum_{(a),(b)} (-1)^{|b^{'}||a^{''}|} (a^{'} + I) \otimes (b^{'} + I) \otimes (a^{''}
+ I) \otimes (b^{''} + I) ) = \]
	\[ = \sum_{(a),(b)} (-1)^{|b^{'}||a^{''}|} (a^{'}b^{'} + I) \otimes (a^{''}b^{''} + I), \]
	
	\[ (\Delta_{H/I} \circ \mu_{H/I}) ( (a + I) \otimes (b + I) ) = \Delta_{H/I} ( ab + I ) = (\phi \circ \gamma) (\Delta_{H}
(ab)  ) = \]
	\[ = (\phi \circ \gamma) (\Delta_{H}(a) \Delta_{H}(b) ) = (\phi \circ \gamma) ( \sum_{(a),(b)} (-1)^{|b^{'}||a^{''}|}
a^{'}b^{'} \otimes a^{''}b^{''} ) = \]
	\[ = \sum_{(a),(b)} (-1)^{|b^{'}||a^{''}|} (\phi \circ \gamma) (a^{'}b^{'} \otimes a^{''}b^{''}) = \sum_{(a),(b)}
(-1)^{|b^{'}||a^{''}|} (a^{'} b^{'} + I) \otimes (a^{''} b^{''} + I) \]
	for all $a, b \in H$.
	
	\[ (\Delta_{H/I} \circ \eta_{H/I}) ( 1_{k} ) = \Delta_{H/I} ( 1_{H} + I ) = (1_{H} + I) \otimes (1_{H} + I), \]
	\[ ((\eta_{H/I} \otimes \eta_{H/I}) \circ \Delta_{k}) (1_{k}) = (\eta_{H/I} \otimes \eta_{H/I}) ( 1_{k} \otimes 1_{k} ) =
(1_{H} + I) \otimes (1_{H} + I). \]
	
	We verify that the counit is a superalgebra morphism
	\[ (\epsilon_{H/I} \circ \mu_{H/I}) ( ( a + I ) \otimes ( b + I ) ) = \epsilon_{H/I} ( ab + I ) = \epsilon_{H} (ab) =
\epsilon_{H}(a) \epsilon_{H}(b), \]
	\[ (\mu_{k} \circ ( \epsilon_{H/I} \otimes \epsilon_{H/I} )) ( ( a + I ) \otimes ( b + I ) ) = \epsilon_{H}(a)
\epsilon_{H}(b)\]
	for all $a,b \in H$.
	
	\[ (\epsilon_{H/I} \circ \eta_{H/I}) (1_{k}) = \epsilon_{H/I} ( 1_H + I ) = \epsilon_{H} (1_{H}) = 1_{k} = id_{k} ( 1_{k} ).
\]
	Define a antipode by
	\[ S_{H/I} : H/I \to H/I, \]
	\[ S_{H/I} ( a + I ) = S_{H} (a) + I \]
	for all $a \in H$.
	The mapping is correctly defined. Indeed, $a + I = b + I \Leftrightarrow a - b \in I \Rightarrow S_{H}(a) - S_{H}(b) \in
S_{H}(I) \Leftrightarrow  S_{H}(a) - S_{H}(b) \in I \Leftrightarrow S_{H}(a) + I = S_{H}(b) + I$ for all $a,b \in H$. It is
clear that $S_{H/I}$ is a superspace morphism. Moreover,
	\[ \mu_{H/I} \circ ( S_{H/I} \otimes id_{H/I} ) \circ \Delta_{H/I} (a + I) = \mu_{H/I} ( \sum_{(a)} (S_{H} (a^{'}) + I)
\otimes (a^{''} + I) ) = \]
	\[ = ( \sum_{(a)} (S_{H} (a^{'}) a^{''} ) + I = \epsilon_{H}(a) (1_{H} + I) = \eta_{H/I} ( \epsilon_{H}(a) ) = (\eta_{H/I}
\circ \epsilon_{H/I}) (a + I), \]
	
	\[ \mu_{H/I} \circ ( id_{H/I} \otimes S_{H/I} ) \circ \Delta_{H/I} (a + I) = \mu_{H/I} ( \sum_{(a)} (a^{'} + I) \otimes
(S_{H} (a^{''}) + I) ) = \]
	\[ = ( \sum_{(a)} (a^{'} S_{H} (a^{''}) ) + I = \epsilon_{H}(a) (1_{H} + I) = \eta_{H/I} ( \epsilon_{H}(a) ) = (\eta_{H/I}
\circ \epsilon_{H/I}) (a + I) \]
	for all $a \in H$.
	
	We verify that the canonical superspace morphism
	\[ \pi: H \to H/I, \]
	\[ \pi(a) = a + I \]
	for all $a \in H$, is a Hopf superalgebra morphism.
	We show that $\pi$ is a superalgebra morphism
	\[ (\pi \circ \mu_{H}) ( a \otimes b ) = ab + I, \]
	\[ (\mu_{H/I} \circ ( \pi \otimes \pi )) ( a \otimes b ) = \mu_{H/I} ( (a + I) \otimes (b + I) ) = ab + I \]
	for all $a,b \in H$. Moreover,
	
	\[ (\pi \circ \eta_{H}) (1_{k}) = 1_{H} + I = \eta_{H/I} (1_{k}). \]
	
	We show that $\pi$ is a supercoalgebra morphism
	\[ ((\pi \otimes \pi) \circ \Delta_{H}) (a) = \sum_{(a)} (a^{'} + I) \otimes (a^{''} + I) = \Delta_{H/I} ( a + I ) = (
\Delta_{H/I} \circ \pi ) ( a ), \]
	\[ (\epsilon_{H/I} \circ \pi) (a) = \epsilon_{H/I} ( a + I ) = \epsilon_{H}(a) \]
	for all $a \in H$.
	
	We verify that $\pi$ commutes with antipodes
	\[ (S_{H/I} \circ \pi) (a) = S_{H} (a) + I = \pi (S_{H} (a)) = (\pi \circ S_{H}) (a) \]
	for all $a \in H$.
\end{proof}

\Sprop*
\begin{proof}
	\begin{enumerate}
		1.
		Let $S$ and $S_{1}$ be antipodes of Hopf superalgebra $H$. Then
		\[ S_{1}(h) = \sum_{(h)} S_{1}(h^{'}) \epsilon(h^{''}) = \sum_{(h),(h^{''})} S_{1}(h^{'}) (h^{''})^{'}
S((h^{''})^{''}) = \lambda_{H,H,H} ( S_{1} \otimes id_{H} \otimes S ) ( \sum_{(h),(h^{''})} h^{'} \otimes (h^{''})^{'} \otimes
(h^{''})^{''} ) =  \]
		\[ = \lambda_{H,H,H} ( S_{1} \otimes id_{H} \otimes S ) ( \sum_{(h),(h^{'})} (h^{'})^{'} \otimes (h^{'})^{''} \otimes
h^{''} ) = \sum_{(h),(h^{'})} S_{1} ((h^{'})^{'}) (h^{'})^{''} S(h^{''}) =  \sum_{(h)} \epsilon(h^{'}) S(h^{''}) = S(h) \]
		for all $h \in H$.
		
		2.
		Consider a superalgebra $\mathrm{Hom}(H \otimes H, H) = ( \mathrm{Hom}(H \otimes H, H), *, \eta_{H} \circ \epsilon_{H
\otimes H} )$, where
		\[ f * g = \mu_{H} \circ ( f \otimes g ) \circ \Delta_{H \otimes H} \]
		for all $f,g \in \mathrm{Hom}(H \otimes H, H)$.
		
		We prove that a superspace morphism $S \circ \mu_{H}$ is a left inverse to $\mu_{H}$ in superalgebra $\mathrm{Hom}(H
\otimes H, H)$:
		\[ ((S \circ \mu_{H}) * \mu_{H}) (a \otimes b) =  \mu_{H} \circ ((S \circ \mu_{H}) \otimes \mu_{H}) \circ \Delta_{H
\otimes H} (a \otimes b) = \sum_{(a),(b)} (-1)^{|a^{''}||b^{'}|} S(a^{'}b^{'}) a^{''}b^{''} =  \]
		\[ = \mu_{H} \circ (S \otimes id_{H}) ( \mu_{H \otimes H} (\Delta_{H} (a) \otimes \Delta_{H} (b)) ) = \mu_{H} \circ (S
\otimes id_{H}) ( \Delta_{H} (ab) ) = \]
		\[ = \sum_{(ab)} S((ab)^{'}) (ab)^{''} = \epsilon_{H} (ab) 1_{H} = \eta_{H} \circ \epsilon_{H \otimes H} ( a \otimes b
) \]
		for all $a,b \in H$.
		
		We prove that a superspace morphism $\mu_{H} \circ \tau_{H,H} \circ ( S \otimes S )$ is a right inverse to $\mu_{H}$
in superalgebra $\mathrm{Hom}(H \otimes H, H)$:
		\[ (\mu_{H} * (\mu_{H} \circ \tau_{H,H} \circ ( S \otimes S ))) ( a \otimes b ) =  \mu_{H} \circ (\mu_{H} \otimes
(\mu_{H} \circ \tau_{H,H} \circ ( S \otimes S ))) \circ \Delta_{H \otimes H} (a \otimes b) = \]
		\[ = \sum_{(a),(b)} (-1)^{|a^{''}||b^{'}| + |a^{''}| |b^{''}|} a^{'} b^{'} S(b^{''}) S(a^{''}) = \sum_{(a),(b)}
(-1)^{|a^{''}||b|} a^{'} b^{'} S(b^{''}) S(a^{''}) = \]
		\[ = \sum_{(a),(b)} (-1)^{|a^{''}||b|} a^{'} \epsilon_{H}(b) S(a^{''}) = \epsilon_{H}(b) \sum_{(a)} a^{'} S(a^{''}) =
\epsilon_{H}(a) \epsilon_{H}(b) 1_{H} = \epsilon_{H} (ab) 1_{H} = \eta_{H} \circ \epsilon_{H \otimes H} ( a \otimes b ) \]
		for all $a,b \in H$.
		
		The result follows from the fact that if an element in associative monoid has both a left and a right inverse, then
the left and right inverse are equal, and from the fact that the inverse is unique.
		
		3.
		\[ S * id_H (1_H) = \mu \circ ( S \otimes id_{H} ) \circ  \Delta(1_{H}) = \mu ( S(1_{H}) \otimes 1_{H}) = S(1_{H}) 1_H
= S \circ \eta(1_k) = \epsilon(1_{H}) 1_H = \eta(1_{k}). \]
		
		4.
		\[ \epsilon( S(h) ) = \epsilon( \sum_{(h)} S(h^{'}) \epsilon(h^{''}) ) = \sum_{(h)} \epsilon( S(h^{'}) )
\epsilon(h^{''}) = \]
		\[ = \epsilon( \sum_{(h)} S(h^{'}) h^{''} ) = \epsilon( \epsilon(h) ) = \epsilon(h) \]
		for all $h \in H$.
		
		5.
		Consider a superalgebra $\mathrm{Hom}(H,H \otimes H) = ( \mathrm{Hom}(H,H \otimes H), *, \eta_{H \otimes H} \circ
\epsilon_{H} )$, where
		\[ f * g = \mu_{H \otimes H} \circ ( f \otimes g ) \circ \Delta_{H} \]
		for all $f,g \in \mathrm{Hom}(H,H \otimes H)$.
		
		We prove that a superspace morphism $\Delta_{H} \circ S$ is a left inverse to $\Delta_{H}$ in the superalgebra
$\mathrm{Hom}(H,H \otimes H)$:
		\[ ((\Delta_{H} \circ S) * \Delta_{H}) (a) =  \mu_{H \otimes H} \circ ( (\Delta_{H} \circ S) \otimes \Delta_{H} )
\circ \Delta_{H} (a) = \]
		\[ = \sum_{(a)} \mu_{H \otimes H} ( \Delta_{H} (S(a^{'})) \otimes \Delta_{H} (a^{''} ) ) =  \sum_{(a)} \Delta_{H} (
S(a^{'}) a^{''} ) = \Delta_{H} ( \sum_{(a)} S(a^{'}) a^{''} ) = \]
		\[ = \Delta_{H}( \epsilon_{H}(a) ) = \epsilon_{H}(a) 1_{H} \otimes 1_{H} = \eta_{H \otimes H} \circ \epsilon_{H} (a)
\]
		for all $a \in H$.
		
		We prove that a superspace morphism $\tau_{H,H} \circ (S \otimes S) \circ \Delta_{H}$ is a right inverse to
$\Delta_{H}$ in the superalgebra $\mathrm{Hom}(H,H \otimes H)$:
		\[ ( \Delta_{H} * ( \tau_{H,H} \circ (S \otimes S) \circ \Delta_{H} ) ) (a) =  \mu_{H \otimes H} \circ ( \Delta_{H}
\otimes ( \tau_{H,H} \circ (S \otimes S) \circ \Delta_{H} ) ) \circ \Delta_{H} (a) = \]
		\[ = \sum_{(a),(a^{'}),(a^{''})} (-1)^{ |(a^{''})^{'}| |(a^{''})^{''}| + |(a^{'})^{''}| |(a^{''})^{''}| } (a^{'})^{'}
S((a^{''})^{''}) \otimes (a^{'})^{''} S((a^{''})^{'}) = \]
		\[ = \mu_{H \otimes H} \circ ( id \otimes id \otimes S \otimes S ) \circ ( id \otimes id \otimes \tau_{H,H} ) (
\sum_{(a),(a^{'}),(a^{''})} (a^{'})^{'} \otimes (a^{'})^{''} \otimes (a^{''})^{'} \otimes (a^{''})^{''} ) = \]
		\[ = \mu_{H \otimes H} \circ ( id \otimes id \otimes S \otimes S ) \circ ( id \otimes id \otimes \tau_{H,H} ) (
\sum_{(a),(a^{''}),((a^{''})^{'})} a^{'} \otimes ((a^{''})^{'})^{'} \otimes ((a^{''})^{'})^{''} \otimes (a^{''})^{''} ) = \]
		\[ = \sum_{(a),(a^{''}),((a^{''})^{'})} (-1)^{ |((a^{''})^{'})^{''}| |(a^{''})^{''}| + |((a^{''})^{'})^{'}|
|(a^{''})^{''}| } a^{'} S( (a^{''})^{''} ) \otimes ((a^{''})^{'})^{'} S(((a^{''})^{'})^{''}  )  = \]
		\[ = \sum_{(a),(a^{''}),((a^{''})^{'})} (-1)^{ |(a^{''})^{'}| |(a^{''})^{''}| } a^{'} S( (a^{''})^{''} ) \otimes
((a^{''})^{'})^{'} S(((a^{''})^{'})^{''}  ) = \]
		\[ = (id \otimes (\mu_{H} \circ (id \otimes S) \circ \Delta_{H} ) ) (\sum_{(a),(a^{''}),((a^{''})^{'})} (-1)^{
|(a^{''})^{'}| |(a^{''})^{''}| } a^{'} S( (a^{''})^{''} ) \otimes (a^{''})^{'} ) = \]
		\[ = (id \otimes ( \eta_{H} \circ \epsilon_{H} ) ) (\sum_{(a),(a^{''})} (-1)^{ |(a^{''})^{'}| |(a^{''})^{''}| } a^{'}
S( (a^{''})^{''} ) \otimes (a^{''})^{'} ) =  \sum_{(a),(a^{''})} (-1)^{ |(a^{''})^{'}| |(a^{''})^{''}| } a^{'} S( \epsilon(
(a^{''})^{'} ) (a^{''})^{''} ) \otimes 1_{H} = \]
		\[ = ( (\mu_{H} \circ (id \otimes S) \circ (id \otimes ( \nu_{k,H} \circ ( \epsilon_{H} \otimes id ) \circ
\Delta_{H})) \otimes id ) (\sum_{(a)} a^{'} \otimes a^{''} \otimes 1_{H}) = \]
		\[ = ( (\mu_{H} \circ (id \otimes S) \circ (id \otimes id) ) \otimes id ) (\sum_{(a)} a^{'} \otimes a^{''} \otimes
1_{H}) =  \sum_{(a)} a^{'} S(a^{''}) \otimes 1_{H} = \epsilon_{H}(a) 1_{H} \otimes 1_{H} = \eta_{H \otimes H} \circ
\epsilon_{H} (a) \]
		for all $a \in H$.
		
		The result follows from the fact that if an element in associative monoid has both a left and a right inverse, then
the left and right inverse are equal, and from the fact that the inverse is unique.
		
		6.
		We prove by induction on a dimension $dim(H)$ of a vector superspace $H$. Let $dim(H)=1$, that is $H=k1_{H}$. We
define a superspace morphism $S:H\to H,S(1_{H})=1_{H}$. In this case $S^{-1}:H\to H, S^{-1}(1_{H})=1_{H}$.
		
		Suppose that $dim(H)>1$ and antipodes of finite-dimensional Hopf superalgebras of lower dimension are superspace
automorphisms.		
		We prove that $ker(S)$ is an ideal in Hopf superalgebra $H$
		
		The kernel of $S:H\to H$ is a $\mathbb{Z}_{2}$-graded subsuperspace. Indeed, for each $a \in H$ it is possible to
write a decomposition $a=a_{0}+a_{1}$, where $a_{i}\in H_{i}$, $i\in \mathbb{Z}_{2}$. Suppose that $a\in ker(S)$. Then
$S(a)=S(a_{0})+S(a_{1})=0_H$, $S(a_{i})\in H_{i}$, $i\in \mathbb{Z}_{2}$. Consequently $S(a_{0_H})=S(a_{1})=0_H$.
		
		We have for all $a,b\in ker(S),\lambda_{1},\lambda_{2}\in k$
		\[ S(\lambda_{1}a+\lambda_{2}b)=\lambda_{1}S(a)+\lambda_{2}S(b)=0_{H} \in ker(S), \]
		\[ S(ab)=(-1)^{|a||b|}S(b)S(a)=0_{H} \in ker(S), \]
		\[ \tau_{H,H}\circ(S\otimes S)\circ\Delta(a)=\sum_{(a)}(-1)^{|a^{'}||a^{''}|}S(a^{''})\otimes
S(a^{'})=\Delta(S(a))=\Delta(0_{H})=0_{H}\otimes0_{H}\Rightarrow \]
		\[ \Rightarrow\Delta(ker(S))\subseteqq ker(S)\otimes H+H\otimes ker(S), \]
		\[ \epsilon(a)=\epsilon(S(a))=\epsilon(0_{H})=0_{k}, \]
		\[ S(ker(S))=0_{H}\subset ker(S). \]
		
		We prove that $im(S)$ is a Hopf subsuperalgebra in $H$. Indeed,
		\[ S(H_{i})\subseteqq H_{i}\Rightarrow im(S)_{i}=S(H_{i})\Rightarrow im(S)_{i}=H_{i}\cap im(S) \]
		for all $i\in \mathbb{Z}_{2}$.
		
		We have for all $a,b\in im(S)\Rightarrow\exists a_{1},b_{1}\in H:a=S(a_{1}),b=S(a_{1}),\lambda_{1},\lambda_{2}\in k$
		\[ \lambda_{1}a+\lambda_{2}b=\lambda_{1}S(a_{1})+\lambda_{2}S(b_{1})=S(\lambda_{1}a_{1}+\lambda_{2}b_{1})\in im(S),
\]
		\[ \mu_{H}(a\otimes b)=ab=S(a_{1})S(b_{1})=(-1)^{|a_{1}||b_{1}|}S(b_{1}a_{1})=S((-1)^{|a_{1}||b_{1}|}b_{1}a_{1})\in
im(S), \]
		\[ \Delta(a)=\Delta(S(a_{1}))=\sum_{(a_{1})}(-1)^{|(a_{1})^{'}||(a_{1})^{''}|}S((a_{1})^{''})\otimes
S((a_{1})^{'})\Rightarrow\Delta(im(S)) \subseteqq im(S)\otimes im(S), \]
		\[ S(1_{H})=1_{H}\in im(S). \]
		Let $c\in S(im(S))$. Then $\exists c_{1}\in H$:
		\[ c=S^{2}(c_{1})=S(S(c_{1}))=S(c_{2})\in im(S) \]
		where $c_{2}=S(c_{1})$. Consequently $S(im(S)) \subseteqq im(S)$.		
		
		Let $im(S)=H$, then $ker(S)=\{ 0 \}$. The result follows.
		
		Let $im(S)\ne H$. Then according to inductive assumption $S|_{im(S)}$ is a superspace automorphism of $im(S)$.		
		Since $H=im(S)\oplus ker(S)$, we can consider a superspace morphism $\pi:H\to im(S)$, $\pi(a)=[a\in im(S)]a$ for all
$a\in H$. Since $ker(S)=ker(\pi)$, we have $\Delta(ker(S))\subset ker(S)\otimes H+H\otimes ker(S)=ker(\pi)\otimes H+H\otimes
ker(\pi)$. Condiser a superalgebra $\mathrm{Hom}(H,H)$. Then
		\[ (\pi*S)(a)=\mu\circ(\pi\otimes
S)\circ\Delta(a)=\sum_{(a)}\pi(a^{'})S(a^{''})=0=\epsilon(a)1_{H}=(\eta\circ\epsilon)(a) \]
		for all $a\in ker(S)$.
		\[ (\pi*S)(a)=\mu\circ(\pi\otimes
S)\circ\Delta(a)=\sum_{(a)}\pi(a^{'})S(a^{''})=\sum_{(a)}a^{'}S(a^{''})=\epsilon(a)1_{H}=(\eta\circ\epsilon)(a)
		\]
		for all $a\in im(S)$.
		
		Next we have
		
\[(S*\pi)(a)=\mu\circ(S\otimes\pi)\circ\Delta(a)=\sum_{(a)}S(a^{'})\pi(a^{''})=0=\epsilon(a)1_{H}=(\eta\circ\epsilon)(a) \]
		for all $a\in ker(S)$. Moreover,
		
\[(S*\pi)(a)=\mu\circ(S\otimes\pi)\circ\Delta(a)=\sum_{(a)}S(a^{'})\pi(a^{''})=\sum_{(a)}S(a^{'})a^{''}=\epsilon(a)1_{H}=(\eta\circ\epsilon)(a)
\]
		for all $a\in im(S)$.
		
		We see that $\pi$ is the inverse of $S$ in the superalgebra $\mathrm{Hom}(H,H)$. Since the inverse is unique
$\pi=id_{H}$. Thus we get a contradiction as $ker(S_H)=ker(id_H)={0_H}$. Consequently $im(S)=H$.
	\end{enumerate}
\end{proof}

\sH*
\begin{proof}
	Since we consider $\mathbb{Z}_2$-graded linear mappings it is sufficient to prove that if the statement is true for $x\in X$
and $y \in X$ it will be true for $xy$.
	\[ \mu \circ ( S \otimes id ) \circ \Delta (xy) = \mu \circ ( S \otimes id ) ( \Delta(x) \Delta(y) ) =  \mu \circ ( S
\otimes id ) ( \sum_{(x),(y)} (-1)^{|x^{''}||y^{'}|} x^{'} y^{'} \otimes x^{''} y^{''}  ) = \]
	\[ = \sum_{(x),(y)} (-1)^{|x^{''}||y^{'}| + |x^{'}| |y^{'}|} S(y^{'}) S(x^{'}) x^{''} y^{''} =  \sum_{(y)} (-1)^{|x||y^{'}|}
S(y^{'}) ( \sum_{(x)} S(x^{'}) x^{''} ) y^{''} = \]
	\[ = \sum_{(y)} (-1)^{|x||y^{'}|} \epsilon(x) S(y^{'}) y^{''} =  \epsilon(x) \sum_{(y)} S(y^{'}) y^{''} = \epsilon(x)
\epsilon(y) 1_{H} = \eta \circ \epsilon (xy), \]
	
	\[ \mu \circ ( id \otimes S ) \circ \Delta (xy) = \mu \circ ( id \otimes S ) ( \Delta(x) \Delta(y) ) =  \mu \circ ( id
\otimes S ) ( \sum_{(x),(y)} (-1)^{|x^{''}||y^{'}|} x^{'} y^{'} \otimes x^{''} y^{''}  ) = \]
	\[ = \sum_{(x)(y)} (-1)^{|x''|(|y'|+|y''|)} x'y' S(y'')S(x'') =  \sum_{(x)} (-1)^{|x''||y|} x' (\sum_{(y)} y' S(y'')) S(x'')
= \]
	\[ = \sum_{(x)} (-1)^{|x''||y|} \epsilon (y) x' S(x'') =  \epsilon (y) \sum_{(x)} x' S(x'') = \epsilon (y) \epsilon (x)
1_{H} = \eta \circ \epsilon (xy). \]
\end{proof}

\HopfEqual*
\begin{proof}
	Consider
	\[ (H^{op})^{*} = (H^{*},\Delta_{H}^{*} \circ \lambda_{H,H},\epsilon_{H}^{*} \circ \chi,\lambda_{H,H}^{-1} \circ (\mu_{H}
\circ \tau_{H,H})^{*},\chi^{-1} \circ \eta^{*},(S^{-1})^{*}), \]
	\[ (H^{*})^{cop} = (H^{*},\Delta_{H}^{*} \circ \lambda_{H,H},\epsilon_{H}^{*} \circ \chi,\tau_{H^{*},H^{*}} \circ
\lambda_{H,H}^{-1} \circ \mu_{H}^{*} = \tau_{H^{*},H^{*}} \circ \Delta_{H^{*}},\chi^{-1} \circ \eta^{*},(S^{*})^{-1}). \]
	We have for all $f \in H^{*}, a,b \in H$
	\[ \Delta_{H^{*}} (f) = \sum_{(f)} f^{'} \otimes f^{''}, \]
	\[ \Delta_{(H^{op})^{*}} (f) = \lambda_{H,H}^{-1} \circ (\mu_{H} \circ \tau_{H,H})^{*} \circ f \Leftrightarrow \]
	\[ \Leftrightarrow \lambda_{H,H} (\Delta_{(H^{op})^{*}} (f)) ( a \otimes b ) = (-1)^{|a||b|} f(ba) = \sum_{(f)} (-1)^{|a||b|
+ |f^{''}||b|} f^{'} (b)  f^{''} (a) =  \sum_{(f),f^{'} (b) \neq 0,f^{''} (a) \neq 0} f^{'} (b) f^{''} (a). \]
	
	\[ \Delta_{(H^{*})^{cop}} (f) = (\tau_{H^{*},H^{*}} \circ \lambda_{H,H}^{-1} \circ \mu_{H}^{*}) (f) = \tau_{H^{*},H^{*}}
\circ (\sum_{(f)} f^{'} \otimes f^{''}) \Leftrightarrow \]
	\[ \Leftrightarrow \lambda_{H,H} (\Delta_{(H^{*})^{cop}} (f)) ( a \otimes b )  = \sum_{(f)} (-1)^{|f^{'}||f^{''}| + |f^{'}|
|a|} f^{''} (a)  f^{'} (b) = \sum_{(f), f^{'} (b) \neq 0,f^{''} (a) \neq 0}  f^{'} (b) f^{''} (a). \]	
	Since $\lambda_{H,H}$ is the superspace isomorphism, we have $\Delta_{(H^{op})^{*}} = \Delta_{(H^{*})^{cop}}$.
	
	We prove that for a superspace morphism $S^{*}: H^{*} \to H^{*}$: $(S^{*})^{-1} = (S^{-1})^{*}$.
	Indeed,
	\[ (S^{*} \circ (S^{-1})^{*}) (f) = f \circ S^{-1} \circ S = f, \]
	\[ ((S^{-1})^{*} \circ S^{*}) (f) = f \circ S \circ S^{-1} = f \]
	for all $f \in H^{*}$.
	
	Consider
	\[ (H^{cop})^{*}=(H^{*},(\tau_{H,H} \circ \Delta_{H})^{*} \circ \lambda_{H,H},\epsilon_{H}^{*} \circ \chi,\lambda_{H,H}^{-1}
\circ \mu_{H}^{*},\chi^{-1} \circ \eta^{*},(S^{-1})^{*}), \]
	\[ (H^{*})^{op} = (H^{*},\Delta_{H}^{*} \circ \lambda_{H,H} \circ \tau_{H^{*},H^{*}} = \mu_{H^{*}} \circ
\tau_{H^{*},H^{*}},\epsilon_{H}^{*} \circ \chi, \lambda_{H,H}^{-1} \circ \mu_{H}^{*},\chi^{-1} \circ \eta^{*},(S^{*})^{-1}). \]
	We have for all $f,g \in H^{*}, a \in H$
	\[ \mu_{(H^{cop})^{*}} ( f \otimes g ) ( a ) = \sum_{(a)} (-1)^{|a^{'}||a^{''}| + |g||a^{''}|} f(a^{''})g(a^{'}) = \]
	\[ = \sum_{(a),f(a^{''}) \neq 0, g(a^{'}) \neq 0} (-1)^{|a^{'}||a^{''}| + |a^{'}||a^{''}|} f(a^{''})g(a^{'}) =
\sum_{(a),f(a^{''}) \neq 0, g(a^{'}) \neq 0} f(a^{''})g(a^{'}). \]
	\[ \mu_{(H^{*})^{op}} ( f \otimes g ) (a) = \sum_{(a)} (-1)^{|f||g| + |f||a^{'}|} g(a^{'}) f(a^{''}) = \]
	\[ = \sum_{(a),f(a^{''}) \neq 0, g(a^{'}) \neq 0} (-1)^{|f||g| + |f||g|} g(a^{'}) f(a^{''}) = \sum_{(a),f(a^{''}) \neq 0,
g(a^{'}) \neq 0} g(a^{'}) f(a^{''}). \]
	
	We have already proved that $(S^{*})^{-1} = (S^{-1})^{*}$.
\end{proof}

\Dualcopop*
\begin{proof}
	Fix a vector $x$ in a finite-dimensional vector superspace $V$. Define a mapping $E_x : V^{*} \to k$, $E_{x}(f) =
(-1)^{|f||x|} f(x)$ for all $f \in V^{*}$. We verify that $E_{x} \in V^{*}$. Indeed,
	\[ E_{x}( f + g ) = ((-1)^{|x||f|}f + (-1)^{|x||g|} g)(x) = (-1)^{|x||f|}f(x) + (-1)^{|x||g|} g(x) = E_x(f) + E_x(g) \]
	for all $f,g \in V^{*}$.
	\[ E_x (af) = (-1)^{|x|(|a| + |f|)} (af)(x) = (-1)^{|x||f|} (af)(x) = a ((-1)^{|x||f|} f(x)) = a (E_x (f))  \]
	for all $a \in k, \; f \in V^{*}$.
	Thus $E_{x} \in V^{*}$. Also $E_x$ is a $\mathbb{Z}_2$-graded mapping and, moreover, $|E_x| = |x|$.
	
	Define a mapping $ev_{V}:V \to V^{**}$, $ev_{V} (x) = E_{x}$ for all $x \in V$. We verify that $ev_{V}$ is a linear map.
Indeed, for all $x,y \in V, \; f \in V^{*}$
	\[ ev_{V}(x + y)(f) = E_{x+y}(f) = f( (-1)^{|f||x|} x + (-1)^{|f||y|} y ) = (-1)^{|f||x|} f(x) + (-1)^{|f||y|} f(y) = \]
	\[ = E_x(f) + E_y(f) = ev_V(x)(f) + ev_V(y)(f). \]
	Furthermore, we have for all $a \in k, \; x \in V, \; f \in V^{*}$
	\[ ev_V(ax)(f) = E_{ax}(f) = (-1)^{|f|(|a| + |x|)} f(ax) = \]
	\[ = (-1)^{|f||x|} f(ax) = a ((-1)^{|f||x|} f(x)) = a (E_x (f)) = a (ev_V(x)(f)). \]
	Notice that $ev_V$ is an even mapping.
	
	For a subset $S \subseteq V^{*}$ define $Z(S)=\{ x \in V | f(x) = 0_{k} \; \text{for all} \; f \in S \}$. We prove that
$Z(V^{*})=\{0_{V}\}$. Notice that $0_V \in Z(V^{*})$, as all linear mappings map $0_V$ in $0_k$. Fix a vector $v \ne 0_V$ in
$V$, and verify that $v \notin Z(V^{*})$. We find a linear mapping $f \in V^{*}$, such that $f(v) \ne 0_k$. Thus extend a set
$\{v\}$ to a basis $B$ of the vector superspace $V$. The first function in a dual basis to $B$ is the element in $V^{*}$ that
maps $v$ in $1_k \ne 0_k$.
	
	Since $ev_V$ is a superspace morphism we prove its injectivity by showing that a condition $ev_V(x) = 0_{V^{**}}$ implies
that $x = 0_V$. We give a proof by assuming the opposite. Fix a non-zero vector $x \in V$. Since $Z(V^{*})=\{0_{V}\}$ it follows
that there exists $f \in V^{*}$ such that $f(x) \ne 0_{k}$. Then $ev_V(x)(f) \ne 0_k$, that is $ev_{V}(x)$ is a non-zero mapping
from $V^{*}$ to $k$. Thus $ev_{V} \ne 0_{V^{**}}$. Since $V$ is a finite-dimensional vector superspace it follows that
$dim(V^{**})=dim(V)$. Consequently the injective supaerspace morphism $ev_V$ is surjective since the domain and codomain have
the same finite dimension.
	
	Thus for a finite-dimensional vector superspace $V$ exists a natural superspace isomorphism $ev_{V}:V \to V^{**}$, $ev_{V}
(x) = E_{x}, E_{x}(f) = (-1)^{|f||x|} f(x)$ for all $x \in V, f \in V^{*}$. Consequently we need only to prove that the mapping
$ev_{H}$ is a Hopf superalgebra morphism.
	
	Notice that
	\[ H^{cop} = (H, \mu, \eta, \tau_{H,H} \circ \Delta, \epsilon, S^{-1}), \]
	\[ (H^{cop})^{*}=(H^{*},(\tau_{H,H} \circ \Delta_{H})^{*} \circ \lambda_{H,H},\epsilon_{H}^{*} \circ \chi,\lambda_{H,H}^{-1}
\circ \mu_{H}^{*},\chi^{-1} \circ \eta^{*},(S^{-1})^{*}), \]
	\[ ((H^{cop})^{*})^{op}=(H^{*},(\tau_{H,H} \circ \Delta_{H})^{*} \circ \lambda_{H,H} \circ
\tau_{H^{*},H^{*}},\epsilon_{H}^{*} \circ \chi,\lambda_{H,H}^{-1} \circ \mu_{H}^{*},\chi^{-1} \circ
\eta^{*},((S^{-1})^{*})^{-1}), \]
	\[ (((H^{cop})^{*})^{op})^{*} = ( H^{**}, (\lambda_{H,H}^{-1} \circ \mu_{H}^{*})^{*} \circ \lambda_{H^{*},H^{*}}, (\chi^{-1}
\circ \eta^{*})^{*} \circ \chi, \lambda_{H^{*},H^{*}}^{-1} \circ ((\tau_{H,H} \circ \Delta_{H})^{*} \circ \lambda_{H,H} \circ
\tau_{H^{*},H^{*}})^{*}, \]
	\[ \chi^{-1} \circ (\epsilon_{H}^{*} \circ \chi)^{*},(((S^{-1})^{*})^{-1})^{*} ). \]
	
	We have for all $f \in H^{*}, x,y \in H$
	\[  (\lambda_{H,H}^{-1} \circ \mu_{H}^{*})^{*} \circ \lambda_{H^{*},H^{*}} : H^{**} \otimes H^{**} \to H^{**}, \]
	\[ ((\mu_{(((H^{cop})^{*})^{op})^{*}} \circ ( ev_{H} \otimes ev_{H} )) ( x \otimes y )) (f) = \]
	\[ (\lambda_{H,H}^{-1} \circ \mu_{H}^{*})^{*} ( \lambda_{H^{*},H^{*}} ( E_{x} \otimes E_{y} ) ) ( f ) =
(\lambda_{H^{*},H^{*}} ( E_{x} \otimes E_{y} )) ( \lambda_{H,H}^{-1} \circ \mu_{H}^{*} ) ( f ) = \]
	\[ = \sum_{(f)} (-1)^{|E_{y}||f^{'}|} E_{x} (f^{'}) E_{y} ( f^{''} ) = \sum_{(f)} (-1)^{|y||f^{'}| + |f^{'}||x| +
|f^{''}||y|} f^{'} (x) f^{''} ( y ) =  \]
	\[ = \sum_{(f)} (-1)^{|x||f^{''}| + |f|(|x| + |y|)} f^{'} (x) f^{''} ( y ) = (-1)^{|f|(|x| + |y|)} f(xy) = E_{xy} (f) =
((ev_{H} \circ \mu_{H}) ( x \otimes y )) (f) . \]
	
	We have for all $f \in H^{*}$
	\[ (\chi^{-1} \circ \eta^{*})^{*} \circ \chi : k \to H^{**}, \]
	\[ (\eta_{(((H^{cop})^{*})^{op})^{*}} ( 1_{k} )) (f) = ( (\chi^{-1} \circ \eta^{*})^{*}  (\chi (1_{k}) ) ) (f) = \]
	\[ = \chi (1_{k}) ((\chi^{-1} \circ \eta^{*}) (f)) = \chi (1_{k}) (f(1_{H})) = f(1_{H}) = E_{1_{H}} (f) = ev_{H} ( 1_{H} )
(f) = (ev_{H} \circ \eta_{H} (1_k)) (f). \]
	
	We have for all $f,g \in H^{*}, x \in H$
	\[ \lambda_{H^{*},H^{*}}^{-1} \circ ((\tau_{H,H} \circ \Delta_{H})^{*} \circ \lambda_{H,H} \circ \tau_{H^{*},H^{*}})^{*} :
H^{**} \to H^{**} \otimes H^{**}, \]
	
	\[ ( ( \lambda_{H^{*},H^{*}} \circ \Delta_{(((H^{cop})^{*})^{op})^{*}} \circ ev_{H}) (x) ) ( f \otimes g ) = \]
	\[ = ( \lambda_{H^{*},H^{*}} \circ \lambda_{H^{*},H^{*}}^{-1} \circ  ( (\tau_{H,H} \circ \Delta_{H})^{*} \circ \lambda_{H,H}
\circ \tau_{H^{*},H^{*}})^{*}  ( E_{x} )) ( f \otimes g ) = \]
	\[ = E_{x} ((\tau_{H,H} \circ \Delta_{H})^{*} \circ \lambda_{H,H} \circ \tau_{H^{*},H^{*}}  ( f \otimes g ))  =
(-1)^{|x|(|f| + |g|)} ((\tau_{H,H} \circ \Delta_{H})^{*} \circ \lambda_{H,H} \circ \tau_{H^{*},H^{*}} ( f \otimes g )) (x)  =
\]
	\[ = (-1)^{|x|(|f| + |g|) + |f||g|} \lambda_{H,H} ( g \otimes f ) ( \sum_{(x)} (-1)^{|x^{'}||x^{''}|} x^{''} \otimes x^{'} )
=   \sum_{(x)} (-1)^{|x|(|f| + |g|) + |f||g| + |x^{'}| |x^{''}| + |f||x^{''}|} g(x^{''}) f(x^{'}) = \]
	\[ = \sum_{(x)} (-1)^{|f||x^{'}| + |g||x^{''}| + |f||x^{''}|} g(x^{''}) f(x^{'}) =  \sum_{(x)} (-1)^{|f||E_{x^{''}}|}
E_{x^{'}} (f) E_{x^{''}} (g) = \lambda_{H^{*},H^{*}} (\sum_{(x)} E_{x^{'}} \otimes E_{x^{''}}) ( f \otimes g ) = \]
	\[ = (\lambda_{H^{*},H^{*}} \circ ((ev_{H} \otimes ev_{H}) \circ \Delta_{H}) (x)) ( f \otimes g ). \]
	
	Thus
	\[ \Delta_{(((H^{cop})^{*})^{op})^{*}} \circ ev_{H} = (ev_{H} \otimes ev_{H}) \circ \Delta_{H}. \]
	
	We have for all $ x \in H $
	\[ \chi^{-1} \circ (\epsilon_{H}^{*} \circ \chi)^{*} : H^{**} \to k, \]
	\[ (\epsilon_{(((H^{cop})^{*})^{op})^{*}} \circ ev_{H}) (x) = (\chi^{-1} \circ (\epsilon_{H}^{*} \circ \chi)^{*}) ( E_{x} )
= \]
	\[ = ((\epsilon_{H}^{*} \circ \chi)^{*} ( E_{x} )) (1_{k}) = E_{x} ( (\epsilon_{H}^{*} \circ \chi ) (1_{k}) ) = E_{x} (
\epsilon_{H} ) = \epsilon_{H}(x). \]
	
	We have for all $ x \in H, \; f \in H^{*} $
	\[ (((S^{-1})^{*})^{-1})^{*} : H^{**} \to H^{**}, \]
	\[ (S_{(((H^{cop})^{*})^{op})^{*}} \circ ev_{H} (x)) (f) = ((((S^{-1})^{*})^{-1})^{*} \circ ev_{H} (x)) (f) =
((((S^{-1})^{*})^{-1})^{*} (E_{x})) (f) = \]
	\[ = (E_{x} \circ ((S^{-1})^{*})^{-1}) (f) = (E_{x} \circ S^{*}) (f) = E_{x} ( f \circ S ) = (-1)^{|x||f|} f( S(x) ) =
E_{S(x)} (f) = (ev_{H} \circ S (x)) (f). \]
	
	Remark.
	We have for all $ f \in H^{*} $
	\[ ((S^{-1})^{*})^{-1} : H^{*} \to H^{*}, \]
	\[ ((S^{-1})^{*})^{-1} = S^{*}. \]
	Indeed,
	\[ (S^{-1})^{*} \circ S^{*} (f) = f \circ S \circ S^{-1} = f, \]
	\[ S^{*} \circ (S^{-1})^{*} (f) = f \circ S^{-1} \circ S = f. \]
\end{proof}

\rmatrixgeneration*
\begin{proof}
	1. Notice that $id_{H} \otimes (\lambda c) = \lambda (id_{H} \otimes c)$, $(\lambda c) \otimes id_{H} = \lambda (c \otimes
id_{H})$.
	\[ ((\lambda c) \otimes id_{V}) (id_{V} \otimes (\lambda c)) ((\lambda c) \otimes id_{V}) = \lambda^{3} ((c \otimes id_{V})
(id_{V} \otimes c) (c \otimes id_{V})) = \]
	\[ = \lambda^{3} ((id_{V} \otimes c) (c \otimes id_{V}) (id_{V} \otimes c)) = (id_{V} \otimes (\lambda c)) ((\lambda c)
\otimes id_{V}) (id_{V} \otimes (\lambda c)). \]
	
	2. Since $c^{-1} \otimes id_{V} = (c \otimes id_{V})^{-1}$, $id_{V} \otimes c^{-1} = (id_{V} \otimes c)^{-1}$, we have
	\[ id_{V} \otimes id_{V} \otimes id_{V} = (c \otimes id_{V})^{-1} (id_{V} \otimes c)^{-1} (c \otimes id_{V})^{-1} (c \otimes
id_{V}) (id_{V} \otimes c) (c \otimes id_{V}) = \]
	\[ = ( c^{-1} \otimes id_{V} ) (id_{V} \otimes c^{-1}) ( c^{-1} \otimes id_{V} ) (id_{V} \otimes c) (c \otimes id_{V})
(id_{V} \otimes c) \iff \]
	\[ \iff (id_{V} \otimes c^{-1})(c^{-1} \otimes id_{V})(id_{V} \otimes c^{-1}) =  \]
	\[ = ( c^{-1} \otimes id_{V} ) (id_{V} \otimes c^{-1}) ( c^{-1} \otimes id_{V} ) (id_{V} \otimes c) (c \otimes id_{V})
(id_{V} \otimes c) (id_{V} \otimes c^{-1})(c^{-1} \otimes id_{V})(id_{V} \otimes c^{-1}) = \]
	\[ = ( c^{-1} \otimes id_{V} ) (id_{V} \otimes c^{-1}) ( c^{-1} \otimes id_{V} ) (id_{V} \otimes c) (c \otimes id_{V})
(id_{V} \otimes c) (id_{V} \otimes c)^{-1} (c \otimes id_{V})^{-1} (id_{V} \otimes c)^{-1} = \]
	\[ = ( c^{-1} \otimes id_{V} ) (id_{V} \otimes c^{-1}) ( c^{-1} \otimes id_{V} ). \]
	
	3. Let $\{e_{i}\}_{i \in I}$ be a basis in $V$. The automorphism $c$ is defined by a set of scalars
$\{c_{ij}^{kl}\}_{i,j,k,l \in I}$ defined from equation
	\[ c( e_i \otimes e_j ) = \sum_{k,l \in I} c_{ij}^{kl} e_k \otimes e_l \]
	for all $i,j \in I$, where $I$ is a set of indexes.
	
	Let $\sigma = (\tau_{V,V} \otimes id_{V}) (id_{V} \otimes \tau_{V,V})   (\tau_{V,V} \otimes id_{V}) $. Then $\sigma^{-1} =
(id_{V} \otimes \tau_{V,V}) (\tau_{V,V} \otimes id_{V}) (id_{V} \otimes \tau_{V,V}) $.
	We prove that
	\[ (\tau_{V,V} \circ c \circ \tau_{V,V}) \otimes id_{V} = \sigma ( id_{V} \otimes c ) \sigma^{-1} .\]
	
	We have for all vectors $v_1,v_2,v_3 \in V$
	\[ ((\tau_{V,V} \circ c \circ \tau_{V,V}) \otimes id_{V}) ( v_1 \otimes v_2 \otimes v_3 ) =  (-1)^{|v_1||v_2| } \tau_{V,V}
(c( v_2 \otimes v_1 )) \otimes v_3 =  \]
	\[ = (-1)^{|v_1||v_2| } \tau_{V,V}  ( \sum_{i,j,k,l \in I} v_{2i} v_{1j} c_{ij}^{kl} e_{k} \otimes e_{l} ) \otimes v_3 =
\sum_{i,j,k,l \in I} (-1)^{|v_1||v_2| + |e_{l}||e_{k}| } v_{1j} v_{2i} c_{ij}^{kl} e_{l} \otimes e_{k} \otimes v_3. \]
	
	\[ \sigma ( id_{V} \otimes c ) \sigma^{-1} ( v_1 \otimes v_2 \otimes v_3 ) = (-1)^{ |v_2||v_3| + |v_1||v_3| + |v_1||v_2| }
\sigma ( id_{V} \otimes c ) (v_3 \otimes v_2 \otimes v_1) = \]
	\[ = (-1)^{ |v_2||v_3| + |v_1||v_3| + |v_1||v_2| + |c||v_3| } \sigma (v_3 \otimes ( \sum_{i,j,k,l \in I} v_{2i} v_{1j}
c_{ij}^{kl} e_{k} \otimes e_{l} )) = \]
	\[ = \sum_{i,j,k,l \in I} (-1)^{ |v_2||v_3| + |v_1||v_3| + |v_1||v_2| + |c||v_3| + |e_k||e_l| + |v_3||e_l| + |v_3||e_k| }
v_{1j} v_{2i} c_{ij}^{kl} e_{l} \otimes e_{k} \otimes v_3 = \]
	\[ = \sum_{i,j,k,l \in I} (-1)^{ |v_1||v_2| + |e_k||e_l| + |v_3|(|v_2| + |v_1|) + |v_3|(|c| + |e_l| + |e_k|) } v_{1j} v_{2i}
c_{ij}^{kl} e_{l} \otimes e_{k} \otimes v_3 = \]
	\[ = \sum_{i,j,k,l \in I} (-1)^{ |v_1||v_2| + |e_k||e_l| + |v_3|(|v_2| + |v_1|) + |v_3|(|v_2| + |v_1|) } v_{1j} v_{2i}
c_{ij}^{kl} e_{l} \otimes e_{k} \otimes v_3 = \]
	\[ = \sum_{i,j,k,l \in I} (-1)^{ |v_1||v_2| + |e_k||e_l| } v_{1j} v_{2i} c_{ij}^{kl} e_{l} \otimes e_{k} \otimes v_3. \]
	
	We show that
	\[ id_{V} \otimes (\tau_{V,V} \circ c \circ \tau_{V,V}) =  \sigma ( c \otimes id_{V} ) \sigma^{-1} .\]
	
	We have for all vectors $v_1,v_2,v_3 \in V$
	\[ (id_{V} \otimes (\tau_{V,V} \circ c \circ \tau_{V,V})) ( v_1 \otimes v_2 \otimes v_3 ) = (-1)^{|c||v_1| + |v_2||v_3|} v_1
\otimes \tau_{V,V}( c( v_3 \otimes v_2 ) ) = \]
	\[ = (-1)^{|c||v_1| + |v_2||v_3|} v_1 \otimes \tau_{V,V} ( \sum_{i,j,k,l \in I} v_{2j} v_{3i} c_{ij}^{kl} e_{k} \otimes
e_{l} ) = \sum_{i,j,k,l \in I} (-1)^{|c||v_1| + |v_2||v_3| + |e_k||e_l|} v_{2j} v_{3i} c_{ij}^{kl} v_1 \otimes e_{l} \otimes
e_{k}. \]
	
	\[ \sigma ( c \otimes id_{V} ) \sigma^{-1} ( v_1 \otimes v_2 \otimes v_3 ) = (-1)^{ |v_2||v_3| + |v_1||v_3| + |v_1||v_2| }
\sigma ( c \otimes id_{V} ) (v_3 \otimes v_2 \otimes v_1) = \]
	\[ = (-1)^{ |v_2||v_3| + |v_1||v_3| + |v_1||v_2| } \sigma ( ( \sum_{i,j,k,l \in I} v_{2j} v_{3i} c_{ij}^{kl} e_{k} \otimes
e_{l} ) \otimes v_1 ) = \]
	\[ = \sum_{i,j,k,l \in I} (-1)^{ |v_2||v_3| + |v_1|(|v_3| + |v_2|) + |v_1|( |e_k| + |e_l| ) + |e_k||e_l| } v_{2j} v_{3i}
c_{ij}^{kl} v_1 \otimes e_{l} \otimes e_{k} = \]
	\[ = \sum_{i,j,k,l \in I} (-1)^{ |v_2||v_3| + |v_1|(|v_3| + |v_2|) + |v_1|( |v_3| + |v_2| + |c| ) + |e_k||e_l| } v_{2j}
v_{3i} c_{ij}^{kl} v_1 \otimes e_{l} \otimes e_{k} = \]	
	\[ = \sum_{i,j,k,l \in I} (-1)^{|c||v_1| + |v_2||v_3| + |e_k||e_l|} v_{2j} v_{3i} c_{ij}^{kl} v_1 \otimes e_{l} \otimes
e_{k}. \]	
	
	Therefore
	\[ ((\tau_{V,V} \circ c \circ \tau_{V,V}) \otimes id_{V}) (id_{V} \otimes (\tau_{V,V} \circ c \circ \tau_{V,V}))
((\tau_{V,V} \circ c \circ \tau_{V,V}) \otimes id_{V}) = \]
	\[ = \sigma ( id_{V} \otimes c ) \sigma^{-1} \sigma ( c \otimes id_{V} ) \sigma^{-1} \sigma ( id_{V} \otimes c ) \sigma^{-1}
= \sigma ( id_{V} \otimes c ) ( c \otimes id_{V} ) ( id_{V} \otimes c ) \sigma^{-1} = \]
	\[ = \sigma ( c \otimes id_{V} ) ( id_{V} \otimes c ) ( c \otimes id_{V} ) \sigma^{-1} =  \sigma ( c \otimes id_{V} )
\sigma^{-1} \sigma ( id_{V} \otimes c ) \sigma^{-1} \sigma ( c \otimes id_{V} ) \sigma^{-1} = \]
	\[ = (id_{V} \otimes (\tau_{V,V} \circ c \circ \tau_{V,V})) ((\tau_{V,V} \circ c \circ \tau_{V,V}) \otimes id_{V}) (id_{V}
\otimes (\tau_{V,V} \circ c \circ \tau_{V,V})). \]
\end{proof}

\newquasicom*
\begin{proof}
	Let $R = \sum_{i \in I} s_i \otimes t_i$, $R^{-1}=\sum_{j \in J} r_j \otimes d_j$, where $I,J$ are sets of indexes.
	We rewrite the equality \ref{eq:Runiversal} for each $x \in H$ in the following form
	\[ \tau_{H,H} \circ \Delta_{H} (x) = \mu_{H,H} \circ ( id_{H \otimes H} \otimes \mu_{H \otimes H} ) \circ ( id_{H \otimes H}
\otimes \Delta_{H} \otimes id_{H \otimes H} ) ( R \otimes x \otimes R^{-1} ). \]
	
	1.
	1.1 Consider $H^{op}$. We have for each $x \in H$
	\[ \mu_{H^{op},H^{op}} \circ ( id_{H^{op} \otimes H^{op}} \otimes \mu_{H^{op} \otimes H^{op}} ) \circ ( id_{H^{op} \otimes
H^{op}} \otimes \Delta_{H^{op}} \otimes id_{H^{op} \otimes H^{op}} ) ( R^{-1} \otimes x \otimes R ) = \ \]
	\[ = \mu_{H^{op},H^{op}} ( R^{-1} \otimes (\sum_{(x), i \in I} (-1)^{|x^{''}||s_{i}| + |s_{i}||x^{'}| + |t_{i}||x^{''}| }
s_{i} x^{'} \otimes t_{i} x^{''}) ) = \]
	\[ = \sum_{(x), i \in I, j \in J} (-1)^{|x^{''}||s_{i}| + |s_{i}||x^{'}| + |t_{i}||x^{''}| + |d_j|( |s_{i}| + |x^{'}| ) +
|r_j|( |s_{i}| + |x^{'}| ) + |d_j|( |t_{i}| + |x^{''}| ) } s_{i} x^{'} r_j \otimes t_{i} x^{''} d_j = \]
	\[ = \sum_{(x), i \in I, j \in J} (-1)^{|x|(|s_{i}| + |t_{i}| + |r_j| + |d_j|) + (|s_{i}| + |t_{i}|)( |r_j| + |d_j| ) +
|x^{''}||r_j| + |t_i|( |x^{'}| + |r_j| ) } s_{i} x^{'} r_j \otimes t_{i} x^{''} d_j = \]
	\[ = \sum_{(x),i \in I,j \in J} (-1)^{ |x^{''}||r_j| + |t_i|( |x^{'}| + |r_j| ) } s_i x^{'} r_j \otimes t_i x^{''} d_j =
\mu_{H,H} ( R \otimes \sum_{(x),j \in J} (-1)^{|x^{''}||r_j|} x^{'} r_j \otimes x^{''} d_j  ) =  \]	
	\[ = \mu_{H,H} \circ ( id_{H \otimes H} \otimes \mu_{H \otimes H} ) \circ ( id_{H \otimes H} \otimes \Delta_{H} \otimes
id_{H \otimes H} ) ( R \otimes x \otimes R^{-1} ) =  \tau_{H,H} \circ \Delta_{H} (x) = \tau_{H^{op},H^{op}} \circ
\Delta_{H^{op}} (x). \]
	
	1.2 Consider $H^{cop}$. It follows from the equality \ref{eq:Runiversal} that we have
$R^{-1}(\tau_{H,H}\circ\Delta_{H})(x)R=\Delta(x)$ for each $x\in H$, that is
	\[\Delta(x)=\sum_{(x),i\in I,j\in J}(-1)^{|x^{'}||x^{''}|+|x^{'}||s_{i}|+|d_{j}|(|x^{''}|+|s_{i}|)}r_{j}x^{''}s_{i}\otimes
d_{j}x^{'}t_{i}.\]
	
	We have for each $x \in H$
	\[\mu_{H^{cop}\otimes H^{cop}}\circ(id_{H^{cop}}\otimes\mu_{H^{cop}\otimes H^{cop}})\circ(id_{H^{cop}\otimes
H^{cop}}\otimes\Delta_{H^{cop}}\otimes id_{H^{cop}\otimes H^{cop}})(R^{-1}\otimes x\otimes R)=\]
	\[=\mu_{H^{cop}\otimes H^{cop}}(R^{-1}\otimes(\sum_{(x),i\in I}(-1)^{|x^{'}||x^{''}|+|x^{'}||s_{i}|}x^{''}s_{i}\otimes
x^{'}t_{i}))=\]
	\[=\sum_{(x),i\in I,j\in J}(-1)^{|x^{'}||x^{''}|+|x^{'}||s_{i}|+|d_{j}|(|x^{''}|+|s_{i}|)}r_{j}x^{''}s_{i}\otimes
d_{j}x^{'}t_{i}=\Delta(x)=\tau_{H^{cop},H^{cop}}\circ\Delta_{H^{cop}}(x).\]
	
	1.3 Consider $H^{cop}$. We have for each $x \in H$
	\[\tau_{H^{cop},H^{cop}}\circ\Delta_{H^{cop}}(x)=\Delta_{H}(x)=\tau_{H,H}\circ\mu_{H\otimes H}\circ(id_{H\otimes
H}\otimes\mu_{H\otimes H})\circ(id_{H\otimes H}\otimes\Delta_{H}\otimes id_{H\otimes H})(R\otimes x\otimes R^{-1})=\]
	\[=\tau_{H,H}(\sum_{(x),i\in I,j\in J}(-1)^{|x^{''}||r_{j}|+|t_{i}|(|x^{'}|+|r_{j}|)}s_{i}x^{'}r_{j}\otimes
t_{i}x^{''}d_{j})=\]
	\[=\sum_{(x),i\in I,j\in
J}(-1)^{|x^{''}||r_{j}|+|t_{i}|(|x^{'}|+|r_{j}|)+(|t_{i}|+|x^{''}|+|d_{j}|)(|s_{i}|+|x^{'}|+|r_{j}|)}t_{i}x^{''}d_{j}\otimes
s_{i}x^{'}r_{j}=\]
	\[=\mu_{H^{cop}\otimes H^{cop}}(\tau_{H,H}(R)\otimes(\sum_{(x),j\in
J}(-1)^{|x^{'}||x^{''}|+|r_{j}||d_{j}|+|x^{'}||d_{j}|}x^{''}d_{j}\otimes x^{'}r_{j}))=\]
	\[=\mu_{H^{cop}\otimes H^{cop}}\circ(id_{H^{cop}}\otimes\mu_{H^{cop}\otimes H^{cop}})\circ(id_{H^{cop}\otimes
H^{cop}}\otimes\Delta_{H^{cop}}\otimes id_{H^{cop}\otimes H^{cop}})(\tau_{H,H}(R)\otimes x\otimes\tau_{H,H}(R^{-1}))=\]
	\[=\mu_{H^{cop}\otimes H^{cop}}\circ(id_{H^{cop}}\otimes\mu_{H^{cop}\otimes H^{cop}})\circ(id_{H^{cop}\otimes
H^{cop}}\otimes\Delta_{H^{cop}}\otimes id_{H^{cop}\otimes H^{cop}})(\tau_{H,H}(R)\otimes x\otimes\tau_{H,H}(R)^{-1}).\]
	
	Remark.
	We notice that $\tau_{H,H}(R)$ is the invertible element. Indeed,
	\[\tau_{H,H}(R)\tau_{H,H}(R^{-1})=(\sum_{i\in I}(-1)^{|s_{i}||t_{i}|}t_{i}\otimes s_{i})(\sum_{j\in
J}(-1)^{|r_{j}||d_{j}|}d_{j}\otimes r_{j})=\]
	\[=\sum_{i\in I,j\in J}(-1)^{|s_{i}||t_{i}|+|r_{j}||d_{j}|+|s_{i}||d_{j}|}t_{i}d_{j}\otimes s_{i}r_{j}=\tau_{H,H}(\sum_{i\in
I,j\in J}(-1)^{|r_{j}||t_{i}|}s_{i}r_{j}\otimes t_{i}d_{j})=\tau_{H,H}(RR^{-1})=1_{H}\otimes1_{H}.\]
	\[\tau_{H,H}(R^{-1})\tau_{H,H}(R)=(\sum_{j\in J}(-1)^{|r_{j}||d_{j}|}d_{j}\otimes r_{j})(\sum_{i\in
I}(-1)^{|s_{i}||t_{i}|}t_{i}\otimes s_{i})=\]
	\[=\sum_{i\in I,j\in J}(-1)^{|s_{i}||t_{i}|+|r_{j}||d_{j}|+|r_{j}||t_{i}|}d_{j}t_{i}\otimes r_{j}s_{i}=\tau_{H,H}(\sum_{i\in
I,j\in J}(-1)^{|s_{i}||d_{j}|}r_{j}s_{i}\otimes d_{j}t_{i})=\tau_{H,H}(R^{-1}R)=1_{H}\otimes1_{H}.\]
	
	Thus
	\[\tau_{H,H}(R^{-1})=\tau_{H,H}(R)^{-1}. \qed \]
	
	2. It follows from 1 that a Hopf superalgebra $(H,\mu,\eta,\Delta^{op},\epsilon,S^{-1},S,\tau_{H,H}(R))$ is
quasi-cocommutative.
	
	2.1 We verify that the realtion \ref{eq:RuniversalBr2} holds.
	\[ (\tau_{H,H} \otimes id_{H}) ( \Delta_{H} \otimes id_{H} )(R) = (\tau_{H,H} \otimes  id_{H}) (R_{13} R_{23}), \]
	\[ \sum_{(s_i),i \in I} (-1)^{|s_i^{'}||s_i^{''}|} s_i^{''} \otimes s_i^{'} \otimes t_i = (\tau_{H,H} \otimes  id_{H}) ((
\sum_{i \in I} s_{i} \otimes 1_{H} \otimes t_{i} ) ( \sum_{j \in J} 1_{H} \otimes s_{j} \otimes t_{j} )) = \]
	\[ = \sum_{i \in I, j \in J} (-1)^{|t_{i}||s_{j}| + |s_j||s_i|} s_j \otimes s_i \otimes t_i t_j, \]
	\[ (\tau_{H,H} \otimes id_{H}) \circ ( id_{H} \otimes \tau_{H,H} ) ( \sum_{(s_i),i \in I} (-1)^{|s_i^{'}||s_i^{''}|}
s_i^{''} \otimes s_i^{'} \otimes t_i ) = \]
	\[ = (\tau_{H,H} \otimes id_{H}) \circ ( id_{H} \otimes \tau_{H,H} ) (\sum_{i \in I, j \in J} (-1)^{|t_{i}||s_{j}| +
|s_j||s_i|} s_j \otimes s_i \otimes t_i t_j), \]
	\[ \sum_{(s_i),i \in I} (-1)^{|s_i^{'}||s_i^{''}| + |t_i||s_i|} t_i \otimes s_i^{''} \otimes s_i^{'} = \sum_{i \in I, j \in
J} (-1)^{|t_{i}||s_{j}| + |s_j||s_i| + (|s_i| + |s_j|)( |t_i| + |t_j| )} t_i t_j \otimes s_j \otimes s_i = \]
	\[ = \sum_{i \in I, j \in J} (-1)^{|s_{i}||t_{i}| + |s_j||t_j| + |s_i|( |s_j| + |t_j| ) } t_i t_j \otimes s_j \otimes s_i.
\]
	
	Thus
	\[ ( id_{H} \otimes \Delta_{H^{cop}} ) ( \tau_{H,H} (R) ) = (\tau_{H,H} (R))_{13} (\tau_{H,H} (R))_{12}. \] 	
	
	2.2 We verify that the realtion \ref{eq:RuniversalBr1} holds.
	
	\[ (\tau_{H,H} \otimes id_{H}) \circ ( id_{H} \otimes \tau_{H,H} ) \circ (\tau_{H,H} \otimes id_{H}) \circ (id_{H} \otimes
\Delta_{H}) (R) = (\tau_{H,H} \otimes id_{H}) \circ ( id_{H} \otimes \tau_{H,H} ) \circ (\tau_{H,H} \otimes id_{H}) (R_{13}
R_{12}), \]
	\[ (\tau_{H,H} \otimes id_{H}) \circ ( id_{H} \otimes \tau_{H,H} ) ( \sum_{(t_i),i \in I} (-1)^{|s_i||t_i^{'}|} t_i^{'}
\otimes s_i \otimes t_i^{''} ) = \]
	\[ = (\tau_{H,H} \otimes id_{H}) \circ ( id_{H} \otimes \tau_{H,H} ) ( \sum_{i \in I, j \in J} (-1)^{ |t_j|( |s_i| + |s_j| )
+ |t_i|( |s_j| + |t_j| ) } t_j \otimes s_i s_j \otimes t_i), \]
	\[ \sum_{(t_i),i \in I} (-1)^{|s_i||t_i| + |t_i^{'}||t_i^{''}|} t_i^{''} \otimes t_i^{'} \otimes s_i = \sum_{i \in I, j \in
J} (-1)^{|s_i||t_i| + |t_j|(|s_j| + |s_i|)} t_{i} \otimes t_j \otimes s_{i} s_j. \]
	
	Thus
	\[ ( \Delta_{H^{cop}} \otimes id_{H} )( \tau_{H,H}(R) ) = (\tau_{H,H}(R))_{13} (\tau_{H,H}(R))_{23}. \]
	
\end{proof}

\propuniversalmatrix*
\begin{proof}
	Relation \ref{eq:RuniversalBr1} and the defintion of $R$ imply
	\[R_{12}R_{13}R_{23}=R_{12}(\Delta_{H}\otimes id_{H})(R)=\sum_{j\in I}(R\Delta_{H}(s_{j}))\otimes t_{j}= \sum_{j\in
I}((\tau_{H,H}\circ\Delta_{H}(s_{j}))R)\otimes t_{j}=\]
	\[=\sum_{i,j\in
I,(s_{j})}(-1)^{|(s_{j})^{'}||(s_{j})^{''}|+|s_{i}||(s_{j})^{'}|}(s_{j})^{''}s_{i}\otimes(s_{j})^{'}t_{i}\otimes
t_{j}=((\tau_{H,H}\circ\Delta_{H})\otimes id_{H})(R)R_{12}=\]
	\[=(\tau_{H,H}\otimes id_{H})(\Delta_{H}\otimes id_{H})(R)R_{12}=(\tau_{H,H}\otimes id_{H})(R_{13}R_{23})R_{12}=\]
	\[=(\sum_{i,j\in I}(-1)^{|s_{j}|(|s_{i}|+|t_{i}|)}s_{j}\otimes s_{i}\otimes t_{i}t_{j})R_{12}=R_{23}R_{13}R_{12}.\]
	
	It follows from the equality \ref{eq:RuniversalBr1} that
	\[R=(id_{H}\otimes id_{H})(R)=((\nu_{k,H}\circ(\epsilon_{H}\otimes id_{H}))\otimes id_{H})(\Delta_{H}\otimes
id_{H})(R)=((\nu_{k,H}\circ(\epsilon_{H}\otimes id_{H}))\otimes id_{H})R_{13}R_{23}=\]
	\[=((\nu_{k,H}\circ(\epsilon_{H}\otimes id_{H}))\otimes id_{H})(\sum_{i,j\in I}(-1)^{|t_{i}||s_{j}|}s_{i}\otimes
s_{j}\otimes t_{i}t_{j})=\sum_{i,j\in I}(-1)^{|t_{i}||s_{j}|}\epsilon_{H}(s_{i})s_{j}\otimes t_{i}t_{j}=\]
	\[=((\eta_{H}\circ\epsilon_{H})\otimes id_{H})(R)R \iff RR^{-1}=((\eta_{H}\circ\epsilon_{H})\otimes id_{H})(R)RR^{-1}\iff
1_{H}\otimes 1_{H}=((\eta_{H}\circ\epsilon_{H})\otimes id_{H})(R).\]
	
	It follows from the equality \ref{eq:RuniversalBr2} that
	\[R=(id_{H}\otimes id_{H})(R)=(id_{H}\otimes(\nu_{H,k}\circ(id_{H}\otimes\epsilon_{H})))(id_{H}\otimes\Delta_{H})(R)=\]
	
\[=(id_{H}\otimes(\nu_{H,k}\circ(id_{H}\otimes\epsilon_{H})))(R_{13}R_{12})=(id_{H}\otimes(\nu_{H,k}\circ(id_{H}\otimes\epsilon_{H})))(\sum_{i,j\in
I}s_{i}s_{j}\otimes t_{j}\otimes t_{i})=\]
	\[=\sum_{i,j\in I}s_{i}s_{j}\otimes\epsilon_{H}(t_{i})t_{j}=(id_{H}\otimes(\eta_{H}\circ\epsilon_{H}))(R)R\iff\]
	\[RR^{-1}=(id_{H}\otimes(\eta_{H}\circ\epsilon_{H}))(R)RR^{-1}\iff 1_{H}\otimes
1_{H}=(id_{H}\otimes(\eta_{H}\circ\epsilon_{H}))(R).\]
	
	We have
	\[((\mu_{H}\circ(S\otimes id)\circ\Delta_{H})\otimes id_{H})(R)=((\eta_{H}\circ\epsilon_{H})\otimes
id_{H})(R)=1_{H}\otimes1_{H}.\]
	
	Thus
	\[1_{H}\otimes1_{H}=(\mu_{H}\otimes id_{H}) \circ (S\otimes id_{H}\otimes id_{H}) \circ (\Delta_{H}\otimes
id_{H})(R)=(\mu_{H}\otimes id_{H}) \circ (S\otimes id_{H}\otimes id_{H})(R_{13}R_{23})=\]
	\[= \sum_{i,j\in I} (-1)^{|s_{j}||t_{i}|} S(s_{i}) s_{j} \otimes t_{i}t_{j} = (S\otimes id_{H})(R)R \iff R^{-1}=(S\otimes
id_{H})(R)RR^{-1}=(S\otimes id_{H})(R).\]
	
	Suppose that $H$ has an invertible antipode $S$. We can consider the braided Hopf superalgebra
	\[(H,\mu,\eta,\Delta^{op},\epsilon,S^{-1},S,\tau_{H,H}(R))\]
	of Lemma \ref{pr:newquasicom}. Then
	\[((\mu_{H}\circ(S^{-1}\otimes id_{H})\circ\ (\tau_{H,H} \circ \Delta_{H}) )\otimes id_{H})(\tau_{H,H}(R))=((\eta_{H}
\circ\epsilon_{H})\otimes id_{H})(\tau_{H,H}(R))=1_{H}\otimes1_{H}.\]	
	
	Consequently
	\[1_{H}\otimes1_{H}= ((\mu_{H}\circ(S^{-1}\otimes id_{H})\circ\ (\tau_{H,H} \circ \Delta_{H}) )\otimes
id_{H})(\tau_{H,H}(R)) = \]
	\[ = ( \mu_{H} \otimes id_{H} ) \circ ( S^{-1} \otimes id_{H} \otimes id_{H} ) \circ ( (\tau_{H,H} \circ \Delta_{H}) \otimes
id_{H} )(\tau_{H,H}(R)) = \]
	\[ = ( \mu_{H} \otimes id_{H} ) \circ ( S^{-1} \otimes id_{H} \otimes id_{H} ) ( (\tau_{H,H}(R))_{13} (\tau_{H,H}(R))_{23} )
= \]
	\[ = \sum_{i,j \in I} (-1)^{|s_{i}||t_{i}| + |s_{j}||t_{j}| + |t_j||s_i|} S^{-1}(t_{i}) t_j \otimes s_{i} s_{j} = ( S^{-1}
\otimes id_{H} ) ( \tau_{H,H}(R) ) \tau_{H,H}(R) \iff \]
	\[ \tau_{H,H}(R)^{-1} = ( S^{-1} \otimes id_{H} ) ( \tau_{H,H}(R) ) \tau_{H,H}(R) \tau_{H,H}(R)^{-1} = ( S^{-1} \otimes
id_{H} ) ( \tau_{H,H}(R) ) \iff \]
	\[ \tau_{H,H} \circ \tau_{H,H}(R^{-1}) =  \tau_{H,H} \circ ( S^{-1} \otimes id_{H} ) ( \tau_{H,H}(R) ) \iff \]
	\[ R^{-1} = \sum_{i \in I} (-1)^{ |s_i||t_i| + |s_i||t_i| } s_i \otimes S^{-1}(t_i) = (id_{H} \otimes S^{-1}) (R). \]
	
	Finally, we have
	\[ ( S \otimes S )(R) = ( id_{H} \otimes S ) ( S \otimes id_{H} ) (R) = ( id_{H} \otimes S ) (R^{-1}) = \]
	\[ = ( id_{H} \otimes S ) ( id_{H} \otimes S^{-1} ) (R) = ( id_{H} \otimes id_{H} ) (R) = R. \]
	
\end{proof}

\isomctouniveralmatrix*
\begin{proof}
	$c_{V,W}^{R}(?) = \tau_{V,W} \circ ((\alpha_{H,V} \otimes \alpha_{H,W}) \circ (id_H \otimes \tau_{H,V} \otimes id_{W})) ( R
\otimes ? )$, where $?$ is a numb variable, is a composition of superspace morphisms. Thus it is a superspace morphism.
	
	We prove the bijectivity of $c_{V,W}^{R}$ by constructing the inverse $(c_{V,W}^{R})^{-1}$. Note that $(c_{V,W}^{R})^{-1}(?)
= ((\alpha_{H,V} \otimes \alpha_{H,W}) \circ (id_H \otimes \tau_{H,V} \otimes id_{W})) ( R^{-1} \otimes \tau_{W,V}(?) ) $, where
$?$ is a numb variable, is a composition of superspace morphisms. Thus it is a superspace morphism.
	
	We have for all $v \in V, w \in W$
	\[ (c_{V,W}^{R})^{-1} \circ c_{V,W}^{R} (v \otimes w) = (c_{V,W}^{R})^{-1} ( \sum_{i \in I} (-1)^{|s_i| ( |t_i| + |w| ) +
|w||v|} \alpha_{H,W} (t_i \otimes w) \otimes \alpha_{H,V} (s_i \otimes v) ) = \]
	\[ = \sum_{i,j \in I} (-1)^{|s_i| ( |t_i| + |w| ) + |w||v| + (|s_i| + |v|)(|t_i| + |w|) + |t_j|(|s_i| + |v|)} \alpha_{H,V}
(S(s_j) s_i \otimes v) \otimes \alpha_{H,W} (t_j t_i \otimes w) = \]
	\[ = \sum_{i,j \in I} (-1)^{|v|(|t_i| + |t_j|) + |t_j||s_i| } \alpha_{H,V}  (S(s_j) s_i \otimes v) \otimes \alpha_{H,W} (t_j
t_i \otimes w) = \]
	\[ = ((\alpha_{H,V} \otimes \alpha_{H,W}) \circ (id_H \otimes \tau_{H,V} \otimes id_{W}))  ((\sum_{i,j \in I} (-1)^{
|t_j||s_i| }  S(s_j) s_i \otimes t_j t_i) \otimes ( v \otimes w )) = \]
	\[ = ((\alpha_{H,V} \otimes \alpha_{H,W}) \circ (id_H \otimes \tau_{H,V} \otimes id_{W})) ((R^{-1} R) \otimes ( v \otimes w
)) = \]
	\[ = ((\alpha_{H,V} \otimes \alpha_{H,W}) \circ (id_H \otimes \tau_{H,V} \otimes id_{W})) ((1_H \otimes 1_{H}) \otimes ( v
\otimes w )) = v \otimes w. \]
	
	We can rewrite this relation in the following form
	\[ (c_{V,W}^{R})^{-1} \circ c_{V,W}^{R} (v \otimes w) = (c_{V,W}^{R})^{-1} ( \sum_{i \in I} (-1)^{|s_i| ( |t_i| + |w| ) +
|w||v|} \alpha_{H,W} (t_i \otimes w) \otimes \alpha_{H,V} (s_i \otimes v) ) = \]
	\[ = \sum_{i,j \in I} (-1)^{|s_i| ( |t_i| + |w| ) + |w||v| + (|s_i| + |v|)(|t_i| + |w|) + |t_j|(|s_i| + |v|)} \alpha_{H,V}
(s_j s_i \otimes v) \otimes \alpha_{H,W} (S^{-1}(t_j) t_i \otimes w) = \]
	\[ = \sum_{i,j \in I} (-1)^{|v|(|t_i| + |t_j|) + |t_j||s_i| } \alpha_{H,V} (s_j s_i \otimes v) \otimes \alpha_{H,W}
(S^{-1}(t_j) t_i \otimes w) = \]
	\[ = ((\alpha_{H,V} \otimes \alpha_{H,W}) \circ (id_H \otimes \tau_{H,V} \otimes id_{W})) ((\sum_{i,j \in I} (-1)^{
|t_j||s_i| }  s_j s_i \otimes S^{-1}(t_j) t_i) \otimes ( v \otimes w )) = \]
	\[ = ((\alpha_{H,V} \otimes \alpha_{H,W}) \circ (id_H \otimes \tau_{H,V} \otimes id_{W})) ((R^{-1} R) \otimes ( v \otimes w
)) = \]
	\[ = ((\alpha_{H,V} \otimes \alpha_{H,W}) \circ (id_H \otimes \tau_{H,V} \otimes id_{W})) ((1_{H} \otimes 1_{H}) \otimes ( v
\otimes w )) = v \otimes w. \]
	
	Furthermore,
	\[ c_{V,W}^{R} \circ (c_{V,W}^{R})^{-1} (w \otimes v) = c_{V,W}^{R} ( \sum_{i \in I} (-1)^{|w||v| + |t_i||v|} \alpha_{H,V}
(S(s_i) \otimes v) \otimes \alpha_{H,W} (t_i \otimes w) ) =  \]
	\[ = \sum_{i,j \in I} (-1)^{|w||v| + |t_i||v| + |t_j|( |s_i| + |v| ) + ( |t_j| + |t_i| + 	|w| )( |s_j| + |s_i| + |v| ) }
\alpha_{H,W} (t_jt_i \otimes w) \otimes \alpha_{H,V} (s_jS(s_i) \otimes v)  = \]
	\[ = \sum_{i,j \in I} (-1)^{|t_j||s_j| + ( |t_i| + |w| )( |s_j| + |s_i| ) } \alpha_{H,W} (t_jt_i \otimes w) \otimes
\alpha_{H,V} (s_jS(s_i) \otimes v)  = \]
	\[ = ((\alpha_{H,W} \otimes \alpha_{H,V}) \circ (id_H \otimes \tau_{H,W} \otimes id_{V})) ((\sum_{i,j \in I}
(-1)^{|t_j||s_j| + |t_i||s_i| + |t_i||s_j| } t_jt_i \otimes s_jS(s_i)) \otimes ( w \otimes v )) = \]
	\[ = ((\alpha_{H,W} \otimes \alpha_{H,V}) \circ (id_H \otimes \tau_{H,W} \otimes id_{V})) ((\tau_{H,H}(R)
\tau_{H,H}(R^{-1})) \otimes ( w \otimes v )) = \]
	\[ = ((\alpha_{H,W} \otimes \alpha_{H,V}) \circ (id_H \otimes \tau_{H,W} \otimes id_{V})) ((\tau_{H,H}(R)
(\tau_{H,H}(R))^{-1}) \otimes ( w \otimes v )) = \]
	\[ = ((\alpha_{H,W} \otimes \alpha_{H,V}) \circ (id_H \otimes \tau_{H,W} \otimes id_{V})) ((1_{H} \otimes 1_{H} ) \otimes (
w \otimes v )) = w \otimes v. \]
	
	We can rewrite this relation in the following form
	\[ c_{V,W}^{R} \circ (c_{V,W}^{R})^{-1} (w \otimes v) = c_{V,W}^{R} ( \sum_{i \in I} (-1)^{|w||v| + |t_i||v|} \alpha_{H,V}
(s_i \otimes v) \otimes \alpha_{H,W} (S^{-1}(t_i) \otimes w) ) =  \]
	\[ = \sum_{i,j \in I} (-1)^{|w||v| + |t_i||v| + |t_j|( |s_i| + |v| ) + ( |t_j| + |t_i| + |w| )( |s_j| + |s_i| + |v| ) }
\alpha_{H,W} (t_jS^{-1}(t_i) \otimes w) \otimes \alpha_{H,V} (s_js_i \otimes v) = \]
	\[ = \sum_{i,j \in I} (-1)^{|t_j||s_j| + ( |t_i| + |w| )( |s_j| + |s_i| ) } \alpha_{H,W} (t_jS^{-1}(t_i) \otimes w) \otimes
\alpha_{H,V} (s_js_i \otimes v)  = \]
	\[ = ((\alpha_{H,W} \otimes \alpha_{H,V}) \circ (id_H \otimes \tau_{H,W} \otimes id_{V})) ((\sum_{i,j \in I}
(-1)^{|t_j||s_j| + |t_i||s_i| + |t_i||s_j| } t_jS^{-1}(t_i) \otimes s_js_i) \otimes ( w \otimes v )) = \]
	\[ = ((\alpha_{H,W} \otimes \alpha_{H,V}) \circ (id_H \otimes \tau_{H,W} \otimes id_{V})) ((\tau_{H,H}(R)
\tau_{H,H}(R^{-1})) \otimes ( w \otimes v )) = \]
	\[ = ((\alpha_{H,W} \otimes \alpha_{H,V}) \circ (id_H \otimes \tau_{H,W} \otimes id_{V})) ((\tau_{H,H}(R)
(\tau_{H,H}(R))^{-1}) \otimes ( w \otimes v )) = \]
	\[ = ((\alpha_{H,W} \otimes \alpha_{H,V}) \circ (id_H \otimes \tau_{H,W} \otimes id_{V})) ((1_{H} \otimes 1_{H}) \otimes ( w
\otimes v )) = w \otimes v. \]
	
	Therefore
	\[ (c_{V,W}^{R})^{-1} \circ c_{V,W}^{R} = c_{V,W}^{R} \circ (c_{V,W}^{R})^{-1} = id_{H} \otimes id_{H}. \]
\end{proof}

\isompropuniverslamatrix*
\begin{proof}
	1. It follows from Lemma \ref{lm:isomctouniveralmatrix} that $c_{V,W}^{R}$ is the superspace isomorphism. We prove that
$c_{V,W}^{R}$ is a mapping of left $H$-modules. We have for all $x \in H$, $v \in V$, $w \in W$
	\[ c_{V,W}^{R} ( \alpha_{H, V \otimes W} (x \otimes ( v \otimes w )) ) = c_{V,W}^{R} ( ((\alpha_{H,V} \otimes \alpha_{H,W})
\circ (id_{H} \otimes \tau_{H,V} \otimes id_{W})) (\Delta_{H}(x) \otimes ( v \otimes w )) ) = \]
	\[ = c_{V,W}^{R} ( \sum_{(x)} (-1)^{|x^{''}||v|} \alpha_{H,V} (x^{'} \otimes v) \otimes \alpha_{H,W} (x^{''} \otimes w) ) =
\]
	\[ = \tau_{V,W}( \sum_{(x), i \in I} (-1)^{|x^{''}||v| + |t_i|( |x^{'}| + |v| ) } \alpha_{H,V} (s_i x^{'} \otimes v) \otimes
\alpha_{H,W} (t_i x^{''} \otimes w) ) = \]	
	\[ = \tau_{V,W}( ((\alpha_{H,V} \otimes \alpha_{H,W}) \circ (id_H \otimes \tau_{H,V} \otimes id_{W})) (( \sum_{(x), i \in I}
(-1)^{|t_i||x^{'}| } s_i x^{'} \otimes t_i x^{''} ) \otimes ( v \otimes w )) ) = \]
	\[ = \tau_{V,W} ( ((\alpha_{H,V} \otimes \alpha_{H,W}) \circ (id_{H} \otimes \tau_{H,V} \otimes id_{W})) ((R \Delta_{H}(x))
\otimes ( v \otimes w )) ) = \]
	\[ = \tau_{V,W} ( ((\alpha_{H,V} \otimes \alpha_{H,W}) \circ (id_{H} \otimes \tau_{H,V} \otimes id_{W})) (( (\tau_{H,H}
\circ \Delta_{H}(x)) R ) \otimes ( v \otimes w )) ) = \]
	\[ = \sum_{(x), i \in I} (-1)^{ |x^{'}||x^{''}| + |x^{'}||s_i| + |v|( |x^{'}| + |t_i| ) + (|x^{''}| + |s_i| + |v|)(|x^{'}| +
|t_i| + |w| ) } \alpha_{H,W} (x^{'} t_i \otimes w) \otimes \alpha_{H,V} (x^{''}s_i \otimes v) = \]
	\[ = ((\alpha_{H,W} \otimes \alpha_{H,V}) \circ (id_{H} \otimes \tau_{H,W} \otimes id_{V})) (\Delta_{H}(x) \otimes ( \sum_{i
\in I} (-1)^{  |v||t_i| + (|s_i| + |v|)(|t_i| + |w| ) } \alpha_{H,W} (t_i \otimes w) \otimes \alpha_{H,V} (s_i \otimes v) )) =
\]
	\[ = ((\alpha_{H,W} \otimes \alpha_{H,V}) \circ (id_{H} \otimes \tau_{H,W} \otimes id_{V})) (\Delta_{H}(x) \otimes
(\tau_{V,W}( \sum_{i \in I} (-1)^{|v||t_i|} \alpha_{H,V} (s_i \otimes v) \otimes \alpha_{H,W} (t_i \otimes w) ))) = \]
	\[ = \alpha_{H , W \otimes V} (x \otimes (\tau_{V,W}( ((\alpha_{H,V} \otimes \alpha_{H,W}) \circ (id_H \otimes \tau_{H,V}
\otimes id_{W})) (R \otimes ( v \otimes w )) ))) =  \alpha_{H , W \otimes V} (x \otimes (c_{V,W}^{R}( v \otimes w ))). \]
	
	2. Using the relation \ref{eq:RuniversalBr1}, we have for all $u \in U$, $v \in V$ and $w \in W$:
	\[ (c_{U,W}^{R} \otimes id_V)(id_{U} \otimes c_{V,W}^{R}) ( u \otimes v \otimes w ) = \]
	\[ = (c_{U,W}^{R} \otimes id_V) ( \sum_{i \in I} (-1)^{|t_i||v| + (|s_i| + |v|) (|t_i| + |w|)} u \otimes \alpha_{H,W} (t_i
\otimes w) \otimes \alpha_{H,V} (s_i \otimes v) ) = \]
	\[ = \sum_{i,j \in I} (-1)^{|t_i||v| + (|s_i| + |v|) (|t_i| + |w|) + |t_j||u| + (|s_j| + |u|) (|t_j| + |t_i| + |w|)}
\alpha_{H,W} (t_jt_i \otimes w) \otimes \alpha_{H,U} (s_j \otimes u) \otimes \alpha_{H,V} (s_i \otimes v) = \]
	\[ = (\alpha_{H,W} \otimes \alpha_{H,U} \otimes \alpha_{H,V}) \circ (id_{H} \otimes \tau_{H,W} \otimes id_{U} \otimes id_{H}
\otimes id_{V}) \circ (id_{H} \otimes id_{H} \otimes \tau_{H,W \otimes U} \otimes id_{V}) \circ \]
	\[ \circ ( \sum_{i,j \in I} (-1)^{ |v||w|+ |u||w| + |s_i||t_j| + (|s_i|+ |s_j|) (|t_j| + |t_i|)} t_jt_i \otimes s_j \otimes
s_i \otimes (w \otimes u \otimes v) ) = \]
	\[ = (\alpha_{H,W} \otimes \alpha_{H,U} \otimes \alpha_{H,V}) \circ (id_{H} \otimes \tau_{H,W} \otimes id_{U} \otimes id_{H}
\otimes id_{V}) \circ (id_{H} \otimes id_{H} \otimes \tau_{H,W \otimes U} \otimes id_{V}) \circ \]
	\[ \circ ( \tau_{H \otimes H,H} (\sum_{i,j \in I} (-1)^{ |v||w|+ |u||w| + |s_i||t_j|} s_j \otimes s_i \otimes t_jt_i )
\otimes (w \otimes u \otimes v) ) = \]
	\[ = (\alpha_{H,W} \otimes \alpha_{H,U} \otimes \alpha_{H,V}) \circ (id_{H} \otimes \tau_{H,W} \otimes id_{U} \otimes id_{H}
\otimes id_{V}) \circ (id_{H} \otimes id_{H} \otimes \tau_{H,W \otimes U} \otimes id_{V}) \circ \]
	\[ \circ ( \tau_{H \otimes H,H} (\sum_{j \in I, (s_j)} (-1)^{ |v||w|+ |u||w|} (s_j)^{'} \otimes (s_j)^{''} \otimes t_j)
\otimes (w \otimes u \otimes v) ) = \]
	\[ = \sum_{j \in I, (s_j)} (-1)^{ |v||w|+ |u||w| + |t_j||s_j| + |w||s_j| + |u||(s_j)^{''}|} \alpha_{H,W} (t_j \otimes w)
\otimes \alpha_{H,U} ((s_j)^{'} \otimes u) \otimes \alpha_{H,V} ((s_j)^{''} \otimes v) = \]
	\[ = \tau_{U \otimes V, W} ( \sum_{j \in I, (s_j)} (-1)^{ |t_j|( |u| + |v| ) + |u||(s_j)^{''}|} \alpha_{H,U} ((s_j)^{'}
\otimes u) \otimes \alpha_{H,V} ((s_j)^{''} \otimes v) \otimes \alpha_{H,W} (t_j \otimes w) ) = \]
	\[ = \tau_{U \otimes V, W} ( \sum_{j \in I} (-1)^{ |t_j|( |u| + |v| )} \alpha_{H,U \otimes V} (s_j \otimes (u \otimes v))
\otimes \alpha_{H,W} (t_j \otimes w) ) = \]
	\[ = \tau_{U \otimes V, W} ((( \alpha_{H,U \otimes V} \otimes \alpha_{H,W} ) \circ (id_{H} \otimes \tau_{H,U \otimes V}
\otimes id_{W})) (R \otimes ( u \otimes v \otimes w ))) =  c_{U \otimes V,W}^{R} ( u \otimes v \otimes w ). \]
	
	Using the relation \ref{eq:RuniversalBr2}, we have for all $u \in U$, $v \in V$ and $w \in W$:
	\[ (id_{V} \otimes c_{U,W}^{R})(c_{U,V}^{R} \otimes id_{W}) ( u \otimes v \otimes w ) = \]
	\[ = (id_{V} \otimes c_{U,W}^{R}) ( \sum_{i \in I} (-1)^{|t_i||u| + (|t_i| + |v|)(|s_i| + |u|)} \alpha_{H,V} (t_i \otimes v)
\otimes \alpha_{H,U} (s_i \otimes u) \otimes w ) = \]
	\[ = \sum_{i,j \in I} (-1)^{|t_i||u| + (|t_i| + |v|)(|s_i| + |u|) + |t_j|( |s_i| + |u| ) + (|t_j| + |w|)(|s_j| + |s_i| +
|u|)} \alpha_{H,V} (t_i \otimes v) \otimes \alpha_{H,W} (t_j \otimes w) \otimes \alpha_{H,U} (s_j s_i \otimes u) = \]
	\[ = (\alpha_{H,V} \otimes \alpha_{H,W} \otimes \alpha_{H,U}) \circ (id_{H} \otimes \tau_{H,V} \otimes id_{W} \otimes id_{H}
\otimes id_{U}) \circ (id_{H} \otimes id_{H} \otimes \tau_{H,V \otimes W} \otimes id_{U}) \circ \]
	\[ \circ ((\sum_{i,j \in I} (-1)^{|t_i|(|s_j| + |s_i|) + |t_i||s_j| + |t_j||s_i| + |t_j|(|s_j| + |s_i|) + |v||u| + |w||u|}
t_i \otimes t_j \otimes s_j s_i) \otimes ( v \otimes w \otimes u )) = \]
	\[ = (\alpha_{H,V} \otimes \alpha_{H,W} \otimes \alpha_{H,U}) \circ (id_{H} \otimes \tau_{H,V} \otimes id_{W} \otimes id_{H}
\otimes id_{U}) \circ (id_{H} \otimes id_{H} \otimes \tau_{H,V \otimes W} \otimes id_{U}) \circ \]
	\[ \circ (\sum_{i,j \in I} (-1)^{|t_i|(|s_j| + |s_i|) + |s_j|(|t_i| + |s_i|) + |s_i|(|t_j| + |s_j|)+ |t_j|(|s_j| + |s_i|) +
|v||u| + |w||u|} t_i \otimes t_j \otimes s_j s_i) ( v \otimes w \otimes u ) = \]
	\[ = (\alpha_{H,V} \otimes \alpha_{H,W} \otimes \alpha_{H,U}) \circ (id_{H} \otimes \tau_{H,V} \otimes id_{W} \otimes id_{H}
\otimes id_{U}) \circ (id_{H} \otimes id_{H} \otimes \tau_{H,V \otimes W} \otimes id_{U}) \circ \]
	\[ \circ (\sum_{i,j \in I} (-1)^{|t_i|(|s_j| + |s_i|) + |t_j|(|s_j| + |s_i|) + |v||u| + |w||u|} t_i \otimes t_j \otimes s_j
s_i) ( v \otimes w \otimes u ) = \]
	\[ = (\alpha_{H,V} \otimes \alpha_{H,W} \otimes \alpha_{H,U}) \circ (id_{H} \otimes \tau_{H,V} \otimes id_{W} \otimes id_{H}
\otimes id_{U}) \circ (id_{H} \otimes id_{H} \otimes \tau_{H,V \otimes W} \otimes id_{U}) \circ \]
	\[ \circ (-1)^{|v||u| + |w||u|} ( (id_{H} \otimes \tau_{H,H}) \circ (\tau_{H,H} \otimes id_{H} ) ( \sum_{i,j \in I}  s_j s_i
\otimes t_i \otimes t_j )) ( v \otimes w \otimes u ) = \]
	\[ = (\alpha_{H,V} \otimes \alpha_{H,W} \otimes \alpha_{H,U}) \circ (id_{H} \otimes \tau_{H,V} \otimes id_{W} \otimes id_{H}
\otimes id_{U}) \circ (id_{H} \otimes id_{H} \otimes \tau_{H,V \otimes W} \otimes id_{U}) \circ \]
	\[ \circ (-1)^{|v||u| + |w||u|} ( (id_{H} \otimes \tau_{H,H}) \circ (\tau_{H,H} \otimes id_{H} ) ( \sum_{j \in I,(t_j)}  s_j
\otimes (t_j)^{'} \otimes (t_j)^{''} )) ( v \otimes w \otimes u ) = \]
	\[ = (\alpha_{H,V} \otimes \alpha_{H,W} \otimes \alpha_{H,U}) \circ (id_{H} \otimes \tau_{H,V} \otimes id_{W} \otimes id_{H}
\otimes id_{U}) \circ (id_{H} \otimes id_{H} \otimes \tau_{H,V \otimes W} \otimes id_{U}) \circ \]
	\[ \circ (\sum_{j \in I,(t_j)} (-1)^{|v||u| + |w||u| + |s_j||t_j|} (t_j)^{'} \otimes (t_j)^{''} \otimes s_j) ( v \otimes w
\otimes u ) = \]
	\[ = \sum_{j \in I,(t_j)} (-1)^{|v||u| + |w||u| + |s_j||t_j| + |s_j|(|v| + |w|) + |v||(t_j)^{''}|} \alpha_{H,V} ((t_j)^{'}
\otimes v) \otimes \alpha_{H,W} ((t_j)^{''} \otimes w) \otimes \alpha_{H,U} (s_j \otimes u) = \]
	\[ = \sum_{j \in I} (-1)^{|v||u| + |w||u| + |s_j||t_j| + |s_j|(|v| + |w|)} (( \alpha_{H,V} \otimes \alpha_{H,W} ) \circ
(id_{H} \otimes \tau_{H,V} \otimes id_{W}) (\Delta_{H}(t_j) \otimes (v \otimes w))) \otimes \alpha_{H,U} (s_j \otimes u) = \]
	\[ = \tau_{U,V \otimes W} (\sum_{j \in I} (-1)^{|u||t_j|} \alpha_{H,U} (s_j \otimes u) \otimes \alpha_{H, V \otimes W} (t_j
\otimes (v \otimes w))) = \]
	\[ = \tau_{U,V \otimes W} ( ((\alpha_{H,U} \otimes \alpha_{H, V \otimes W}) \circ (id_{H} \otimes \tau_{H,U} \otimes id_{V}
\otimes id_{W} )) (R \otimes ( u \otimes (v \otimes w) )) ) = c_{U,V \otimes W}^{R} ( u \otimes v \otimes w ). \]
	
	Finally, we have for all $u \in U$, $v \in V$ and $w \in W$:
	\[ (c_{V,W}^{R} \otimes id_{U})(id_{V} \otimes c_{U,W}^{R})(c_{U,V}^{R} \otimes id_{W} ) ( u \otimes v \otimes w ) = \]
	\[ = (c_{V,W}^{R} \otimes id_{U})(id_{V} \otimes c_{U,W}^{R}) ( \sum_{i \in I} (-1)^{|t_i||u| + (|s_i| + |u|) (|t_i| + |v|)}
\alpha_{H,V} (t_i \otimes v) \otimes \alpha_{H,U} (s_i \otimes u) \otimes w ) =  \]
	\[ = (c_{V,W}^{R} \otimes id_{U}) ( \sum_{i,j \in I} (-1)^{|t_i||u| + (|s_i| + |u|) (|t_i| + |v|) + |t_j|( |s_i| + |u| ) + (
|t_j| + |w| )( |s_j| + |s_i| + |u| ) } * \]
	\[ * \alpha_{H,V} (t_i \otimes v) \otimes \alpha_{H,W} (t_j \otimes w) \otimes \alpha_{H,U} (s_js_i \otimes u) ) =  \]
	\[ = \sum_{i,j,k \in I} (-1)^{|t_i||u| + (|s_i| + |u|) (|t_i| + |v|) + |t_j|( |s_i| + |u| ) + ( |t_j| + |w| )( |s_j| + |s_i|
+ |u| ) + |t_k|( |t_i| + |v| ) + ( |t_k| + |t_j| + |w| )( |s_k| + |t_i| + |v| ) } * \]
	\[ * \alpha_{H,W} (t_kt_j \otimes w) \otimes \alpha_{H,V} (s_kt_i \otimes v) \otimes \alpha_{H,U} (s_js_i \otimes u) =  \]
	\[ = (id_{W} \otimes \tau_{U,V}) \circ (\tau_{U,W} \otimes id_{V}) \circ (id_{V} \otimes \tau_{V,W}) \circ \]
	\[ \circ (\sum_{i,j,k \in I} (-1)^{|v||u| + |s_i| (|t_i| + |v|) + |t_j|( |s_i| + |u| ) + |t_k| ( |s_j| + |s_i| + |u| + |t_i|
+ |v| ) + (|s_j| + |s_i| + |u|)(|s_k| + |t_i| + |v|)} * \]
	\[ * \alpha_{H,U} (s_js_i \otimes u) \otimes \alpha_{H,V} (s_kt_i \otimes v) \otimes \alpha_{H,W} (t_kt_j \otimes w) )=  \]
	\[ = (id_{W} \otimes \tau_{U,V}) \circ (\tau_{U,W} \otimes id_{V}) \circ (id_{V} \otimes \tau_{V,W}) \circ (\alpha_{H,U}
\otimes \alpha_{H,V} \otimes \alpha_{H,W}) \circ \]
	\[ \circ (id_{H} \otimes id_{U} \otimes id_{H} \otimes \tau_{H,V} \otimes id_{W}) \circ (id_{H} \otimes \tau_{H \otimes H,U}
\otimes id_{V} \otimes id_{W}) \circ \]
	\[ \circ \sum_{i,j,k \in I} (-1)^{|s_i||t_i| + |t_j||s_i| + |t_k| ( |s_j| + |s_i| + |t_i| ) + (|s_j| + |s_i|)(|s_k| +
|t_i|)} *   ((s_js_i \otimes s_kt_i \otimes t_kt_j) \otimes ( u \otimes v \otimes w )) = \]
	\[ = (id_{W} \otimes \tau_{U,V}) \circ (\tau_{U,W} \otimes id_{V}) \circ (id_{V} \otimes \tau_{V,W}) \circ (\alpha_{H,U}
\otimes \alpha_{H,V} \otimes \alpha_{H,W}) \circ \]
	\[ \circ (id_{H} \otimes id_{U} \otimes id_{H} \otimes \tau_{H,V} \otimes id_{W}) \circ (id_{H} \otimes \tau_{H \otimes H,U}
\otimes id_{V} \otimes id_{W}) \circ \]
	\[ \circ  (( \sum_{i,j,k \in I} (-1)^{ |s_i||s_k| } s_js_i \otimes s_kt_i \otimes t_kt_j) \otimes ( u \otimes v \otimes w ))
= \]
	\[ = (id_{W} \otimes \tau_{U,V}) \circ (\tau_{U,W} \otimes id_{V}) \circ (id_{V} \otimes \tau_{V,W}) \circ (\alpha_{H,U}
\otimes \alpha_{H,V} \otimes \alpha_{H,W}) \circ \]
	\[ \circ (id_{H} \otimes id_{U} \otimes id_{H} \otimes \tau_{H,V} \otimes id_{W}) \circ (id_{H} \otimes \tau_{H \otimes H,U}
\otimes id_{V} \otimes id_{W}) \circ  (( R_{23} R_{13} R_{12} ) \otimes ( u \otimes v \otimes w )) = \]
	\[ = (id_{W} \otimes \tau_{U,V}) \circ (\tau_{U,W} \otimes id_{V}) \circ (id_{V} \otimes \tau_{V,W}) \circ (\alpha_{H,U}
\otimes \alpha_{H,V} \otimes \alpha_{H,W}) \circ \]
	\[ \circ (id_{H} \otimes id_{U} \otimes id_{H} \otimes \tau_{H,V} \otimes id_{W}) \circ (id_{H} \otimes \tau_{H \otimes H,U}
\otimes id_{V} \otimes id_{W}) \circ  (( R_{12} R_{13} R_{23} ) \otimes ( u \otimes v \otimes w )) = \]
	\[ = (id_{W} \otimes \tau_{U,V}) \circ (\tau_{U,W} \otimes id_{V}) \circ (id_{V} \otimes \tau_{V,W}) \circ (\alpha_{H,U}
\otimes \alpha_{H,V} \otimes \alpha_{H,W}) \circ \]
	\[ \circ (id_{H} \otimes id_{U} \otimes id_{H} \otimes \tau_{H,V} \otimes id_{W}) \circ (id_{H} \otimes \tau_{H \otimes H,U}
\otimes id_{V} \otimes id_{W}) \circ \]
	\[ \circ (( \sum_{i,j,k \in I} (-1)^{ |t_k||s_j| + |t_j||s_i| } s_k s_j \otimes t_k s_i \otimes t_j t_i ) \otimes ( u
\otimes v \otimes w )) = \]
	\[ = (id_{W} \otimes \tau_{U,V}) \circ (\tau_{U,W} \otimes id_{V}) \circ (id_{V} \otimes \tau_{V,W}) \circ \]	
	\[ \circ (\sum_{i,j,k \in I} (-1)^{ |t_k||s_j| + |t_j||s_i| + |u|(|t_k| + |t_j|) + |v|(|t_j| + |t_i|) } \alpha_{H,U} (s_k
s_j \otimes u) \otimes \alpha_{H,V} (t_k s_i \otimes v) \otimes \alpha_{H,W} (t_j t_i \otimes w)) = \]
	\[ = \sum_{i,j,k \in I} (-1)^{ |t_k||s_j| + |t_j||s_i| + |u|(|t_k| + |t_j|) + |v|(|t_j| + |t_i|) + ( |t_k| + |s_i| + |v| )
(|t_j| + |t_i| + |w|) } * \]
	\[ *(-1)^{(|s_k| + |s_j| + |u|)(|t_j| + |t_i| + |w|) + (|t_k| + |s_i| + |v|)(|s_k| + |s_j| + |u|)} \alpha_{H,W} (t_j t_i
\otimes w) \otimes \alpha_{H,V} (t_k s_i \otimes v) \otimes \alpha_{H,U} (s_k s_j \otimes u) = \]
	\[ = \sum_{i,j,k \in I} (-1)^{ |t_i||v| + ( |s_i| + |v| ) (|t_i| + |w|) + |t_j||u| + (|s_j| + |u|)(|t_j| + |t_i| + |w|) +
|t_k|(|s_j| + |u|) + (|s_k| + |s_j| + |u|)(|t_k| + |s_i| + |v|) } * \]
	\[ * \alpha_{H,W} (t_j t_i \otimes w) \otimes \alpha_{H,V} (t_k s_i \otimes v) \otimes \alpha_{H,U} (s_k s_j \otimes u) =
\]
	\[ = (id_{W} \otimes c_{U,V}^{R}) ( \sum_{i \in I} (-1)^{|t_i||v| + (|s_i| + |v|)( |t_i| + |w| ) + |t_j||u| + (|s_j| +
|u|)(|t_j| + |t_i| + |w|)} * \]
	\[ * \alpha_{H,W} (t_jt_i \otimes w) \otimes \alpha_{H,U} (s_j \otimes u) \otimes \alpha_{H,V} (s_i \otimes v) ) = \]
	\[ = (id_{W} \otimes c_{U,V}^{R})(c_{U,W}^{R} \otimes id_{V}) ( \sum_{i \in I} (-1)^{|t_i||v| + (|s_i| + |v|)( |t_i| + |w|
)} u \otimes \alpha_{H,W} (t_i \otimes w) \otimes \alpha_{H,V} (s_i \otimes v) ) = \]
	\[ = (id_{W} \otimes c_{U,V}^{R})(c_{U,W}^{R} \otimes id_{V})(id_{U} \otimes c_{V,W}^{R}) ( u \otimes v \otimes w ). \]	
\end{proof}

\twistedBialgebras*
\begin{proof}		
	1.Verify that the unity axiom holds
	\[ \mu_{X \bowtie A} \circ ( \eta_{X \bowtie A} \otimes id_{X \bowtie A} ) = \mu_{X \bowtie A} \circ ( id_{X \bowtie A}
\otimes \eta_{X \bowtie A} ), \]	
	\[ \mu_{X \bowtie A} \circ ( \eta_{X \bowtie A} \otimes id_{X \bowtie A} ) ( 1_{k} \otimes ( x \otimes a ) ) = \mu_{X
\bowtie A} ( (1_{X} \otimes 1_{A}) \otimes ( x \otimes a ) ) = \]
	\[ = ( \mu_{X} \otimes \mu_{A} ) \circ ( id_{X} \otimes \alpha \otimes \beta \otimes id_{A} ) ( 1_{X} \otimes 1_{A} \otimes
x^{'} \otimes 1_{A} \otimes x^{''} \otimes a ) = \]
	\[ = \sum_{(x)}(-1)^{|x^{'}||1_{A}|}1_{X}(1_{A}\cdot x^{'})\otimes(1_{A})^{x^{''}}a= \]
	\[ =\sum_{(x)} x^{'}\epsilon_{X}(x^{''})\otimes a =( (\nu_{X,k} \circ(id_{X} \otimes \epsilon_{X} )\circ\Delta_{X})\otimes
id_{A})(x\otimes a)=  (id_{X}\otimes id_{A})(x\otimes a)=x\otimes a \]
	for all $x \in X, \; a \in A$. Furthermore,
	
	\[ \mu_{X \bowtie A} \circ ( id_{X \bowtie A} \otimes \eta_{X \bowtie A} ) ( ( x \otimes a ) \otimes 1_{k} ) = \mu_{X
\bowtie A} ( ( x \otimes a ) \otimes (1_{X} \otimes 1_{A}) ) = \]
	\[ = ( \mu_{X} \otimes \mu_{A} ) \circ ( id_{X} \otimes \alpha \otimes \beta \otimes id_{A} ) ( x \otimes a^{'} \otimes
1_{X} \otimes a^{''} \otimes 1_{X} \otimes 1_{A} ) = \]
	\[ = \sum_{(a)} x(a^{'} \cdot 1_{X}) \otimes (a^{''})^{1_{X}} 1_{A} = \sum_{(a)} x\otimes\epsilon_{A}(a^{'})a^{''}= \]
	\[ = (id_{X} \otimes (\nu_{k,A} \circ (\epsilon_{A} \otimes id_{A}) \circ \Delta_{A})) (x\otimes a)=  (id_{X}\otimes
id_{A})(x\otimes a)= x\otimes a \]
	for all $x \in X, \; a \in A$.	
	
	2. Verify that the associativity axiom holds
	\[ (( x \otimes a ) ( y \otimes b )) ( z \otimes c ) = ( \sum_{(a),(y)} (-1)^{ |y^{'}||a^{''}| } x ( a^{'} \cdot y^{'} )
\otimes (a^{''})^{y^{''}} b ) ( z \otimes c ) = \]
	\[ ( \mu_{X} \otimes \mu_{A} ) \circ ( id_{X} \otimes \alpha \otimes \beta \otimes id_{A} ) \circ ( id_{X}  \otimes id_{A}
\otimes \tau_{A,X} \otimes id_{X} \otimes id_{A} ) \circ ( id_{X}  \otimes \Delta_{A} \otimes \Delta_{X} \otimes id_{A} ) \circ
\]
	\[ \circ ( \sum_{(a),(y)} (-1)^{ |y^{'}||a^{''}| } x ( a^{'} \cdot y^{'} ) \otimes (a^{''})^{y^{''}} b \otimes z \otimes c )
= \]
	\[ = ( \mu_{X} \otimes \mu_{A} ) \circ ( id_{X} \otimes \alpha \otimes \beta \otimes id_{A} ) \circ ( id_{X}  \otimes id_{A}
\otimes \tau_{A,X} \otimes id_{X} \otimes id_{A} ) \circ \]
	\[ \circ ( \sum_{(a),(y),(z),(a^{''}),(y^{''}),(b)} (-1)^{ |y^{'}||a^{''}| + |(y^{''})^{'}||(a^{''})^{''}|+
|(a^{''})^{''}||b^{'}| + |(y^{''})^{''}||b^{'}| } * \]
	\[ * x ( a^{'} \cdot y^{'} ) \otimes ((a^{''})^{'})^{(y^{''})^{'}} b^{'} \otimes ((a^{''})^{''})^{(y^{''})^{''}}) b^{''}
\otimes z^{'} \otimes z^{''} \otimes c ) = \]
	\[ = ( \mu_{X} \otimes \mu_{A} ) \circ ( id_{X} \otimes \alpha \otimes \beta \otimes id_{A} ) \circ \]
	\[ \circ ( \sum_{(a),(y),(z),(a^{''}),(y^{''}),(b)} (-1)^{ |y^{'}||a^{''}| + |(y^{''})^{'}||(a^{''})^{''}|+
|(a^{''})^{''}||b^{'}| + |(y^{''})^{''}||b^{'}| + |(a^{''})^{''}||z^{'}| + |(y^{''})^{''}||z^{'}| + |b^{''}||z^{'}| } * \]
	\[ * x ( a^{'} \cdot y^{'} ) \otimes ((a^{''})^{'})^{(y^{''})^{'}} b^{'} \otimes z^{'} \otimes
((a^{''})^{''})^{(y^{''})^{''}}) b^{''} \otimes z^{''} \otimes c ) = \]
	\[ = \sum_{(a),(y),(z),(a^{''}),(y^{''}),(b),(b^{''}),(z^{''})} (-1)^{ |y^{'}||a^{''}| + |(y^{''})^{'}||(a^{''})^{''}|+
|(a^{''})^{''}||b^{'}| + |(y^{''})^{''}||b^{'}| + |(a^{''})^{''}||z^{'}| + |(y^{''})^{''}||z^{'}| + |b^{''}||z^{'}| +
|(b^{''})^{''}||(z^{''})^{'}| } * \]
	\[ * x ( a^{'} \cdot y^{'} ) (((a^{''})^{'})^{(y^{''})^{'}} b^{'}) \cdot z^{'}) \otimes
(((a^{''})^{''})^{(y^{''})^{''}})^{(b^{''})^{'} \cdot (z^{''})^{'})} ( (b^{''})^{''} )^{ (z^{''})^{''} } c = \]
	\[ = ( (\mu_{X} \circ (id_{X} \otimes \alpha)) \otimes \mu_{A} ) \circ ( \mu_{X} \otimes \mu_{A} \otimes  id_{X} \otimes
\beta \otimes \mu_{A} ) \circ \]
	\[ \circ (id_{X} \otimes \alpha \otimes \beta \otimes id_{A} \otimes id_{X} \otimes \beta \otimes \alpha \otimes \beta
\otimes id_{A}) \circ \]
	\[ \circ ( id_{X} \otimes id_{A} \otimes id_{X} \otimes id_{A} \otimes id_{X} \otimes id_{A} \otimes id_{X} \otimes id_{A}
\otimes id_{X} \otimes id_{A} \otimes \tau_{A,X} \otimes id_{X} \otimes id_{A} ) \circ \]
	\[ \circ ( id_{X} \otimes id_{A} \otimes id_{X} \otimes id_{A} \otimes id_{X} \otimes id_{A} \otimes \tau_{A \otimes X
\otimes A \otimes A, X} \otimes id_{X} \otimes id_{X} \otimes id_{A} ) \circ \]
	\[ \circ ( id_{X} \otimes id_{A} \otimes id_{X} \otimes id_{A} \otimes id_{X} \otimes \tau_{A \otimes X, A} \otimes id_{A}
\otimes id_{A} \otimes id_{X} \otimes id_{X} \otimes id_{X} \otimes id_{A} ) \circ \]
	\[ \circ ( id_{X} \otimes id_{A} \otimes id_{X} \otimes id_{A} \otimes \tau_{A,X} \otimes id_{X} \otimes id_{A} \otimes
id_{A} \otimes id_{A} \otimes id_{X} \otimes id_{X} \otimes id_{X} \otimes id_{A} ) \circ \]
	\[ \circ ( id_{X} \otimes id_{A} \otimes \tau_{A \otimes A, X} \otimes id_{X} \otimes id_{X} \otimes id_{A} \otimes id_{A}
\otimes id_{A} \otimes id_{X} \otimes id_{X} \otimes id_{X} \otimes id_{A} ) \circ \]
	\[ \circ ( id_{X} \otimes ( ( id_{A} \otimes \Delta_{A} ) \circ \Delta_{A} ) \otimes ( ( id_{X} \otimes \Delta_{X} ) \circ
\Delta_{X} ) \otimes ( ( id_{A} \otimes \Delta_{A} ) \circ \Delta_{A} ) \otimes ( ( id_{X} \otimes \Delta_{X} ) \circ \Delta_{X}
) \otimes id_{A} ) \circ \]
	\[ \circ (x \otimes a \otimes y \otimes b \otimes z \otimes c) = \]
	\[ = ( (\mu_{X} \circ (id_{X} \otimes \alpha)) \otimes \mu_{A} ) \circ ( \mu_{X} \otimes \mu_{A} \otimes  id_{X} \otimes
\beta \otimes \mu_{A} ) \circ \]
	\[ \circ (id_{X} \otimes \alpha \otimes \beta \otimes id_{A} \otimes id_{X} \otimes \beta \otimes \alpha \otimes \beta
\otimes id_{A}) \circ \]
	\[ \circ ( id_{X} \otimes id_{A} \otimes id_{X} \otimes id_{A} \otimes id_{X} \otimes id_{A} \otimes id_{X} \otimes id_{A}
\otimes id_{X} \otimes id_{A} \otimes \tau_{A,X} \otimes id_{X} \otimes id_{A} ) \circ \]
	\[ \circ ( id_{X} \otimes id_{A} \otimes id_{X} \otimes id_{A} \otimes id_{X} \otimes id_{A} \otimes \tau_{A \otimes X
\otimes A \otimes A, X} \otimes id_{X} \otimes id_{X} \otimes id_{A} ) \circ \]
	\[ \circ ( id_{X} \otimes id_{A} \otimes id_{X} \otimes id_{A} \otimes id_{X} \otimes \tau_{A \otimes X, A} \otimes id_{A}
\otimes id_{A} \otimes id_{X} \otimes id_{X} \otimes id_{X} \otimes id_{A} ) \circ \]
	\[ \circ ( id_{X} \otimes id_{A} \otimes id_{X} \otimes id_{A} \otimes \tau_{A,X} \otimes id_{X} \otimes id_{A} \otimes
id_{A} \otimes id_{A} \otimes id_{X} \otimes id_{X} \otimes id_{X} \otimes id_{A} ) \circ \]
	\[ \circ ( id_{X} \otimes id_{A} \otimes \tau_{A \otimes A, X} \otimes id_{X} \otimes id_{X} \otimes id_{A} \otimes id_{A}
\otimes id_{A} \otimes id_{X} \otimes id_{X} \otimes id_{X} \otimes id_{A} ) \circ \]
	\[ \circ ( id_{X} \otimes ( ( \Delta_{A} \otimes id_{A} ) \circ \Delta_{A} ) \otimes ( ( \Delta_{X} \otimes id_{X} ) \circ
\Delta_{X} ) \otimes ( ( \Delta_{A} \otimes id_{A} ) \circ \Delta_{A} ) \otimes ( ( \Delta_{X} \otimes id_{X} ) \circ \Delta_{X}
) \otimes id_{A} ) \circ \]
	\[ \circ (x \otimes a \otimes y \otimes b \otimes z \otimes c) = \]
	\[ = \sum_{(b),(z),(a),(y),(b^{'}),(z^{'}),(a^{'}),(y^{'})} (-1)^{|(y^{'})^{'}| ( |(a^{'})^{''}| + |a^{''}| ) + |a^{''}|
|(y^{'})^{''}| + |(b^{'})^{'}| ( |a^{''}| + |y^{''}| ) + |(z^{'})^{'}| ( |a^{''}| + |y^{''}| + |(b^{'})^{''}| + |b^{''}| ) +
|(z^{'})^{''}| |b^{''}| } * \]
	\[ * x  ((a^{'})^{'} \cdot (y^{'})^{'}) ((((a^{'})^{''})^{(y^{'})^{''}} (b^{'})^{'}) \cdot (z^{'})^{'}) \otimes
((a^{''})^{y^{''}})^{(b^{'})^{''} \cdot (z^{'})^{''}} (b^{''})^{z^{''}}  c = \]
	\[ \sum_{(b),(z),(a),(y),(b^{'}),(z^{'}),(a^{'}),(y^{'})} (-1)^{|(y^{'})^{'}| |(a^{'})^{''}| + |a^{''}| |y^{'}| +
|(b^{'})^{'}| ( |a^{''}| + |y^{''}| ) + |(z^{'})^{'}| ( |a^{''}| + |y^{''}| + |(b^{'})^{''}| ) + |z^{'}| |b^{''}| } * \]
	\[ * x  ((a^{'})^{'} \cdot (y^{'})^{'}) ((((a^{'})^{''})^{(y^{'})^{''}} (b^{'})^{'}) \cdot (z^{'})^{'}) \otimes (a^{''})^{
y^{''} ((b^{'})^{''} \cdot (z^{'})^{''})} (b^{''})^{z^{''}} c \]
	for all $x,y,z \in X, \; a,b,c \in A$.
	
	Remark.
	
	2.1 We have by Definition \ref{df:twistedB}
	\[ \Delta_{A} ( (a^{''})^{y^{''}} b ) = \Delta_{A} ( (a^{''})^{y^{''}} ) \Delta_{A} ( b ) =  (\sum_{(a^{''}),(y^{''})}
(-1)^{|(y^{''})^{'}||(a^{''})^{''}|} ((a^{''})^{'})^{(y^{''})^{'}} \otimes ((a^{''})^{''})^{(y^{''})^{''}}) * \]
	\[ * ( \sum_{(b)} b^{'} \otimes b^{''} ) = \sum_{(a^{''}),(y^{''}),(b)} (-1)^{|(y^{''})^{'}||(a^{''})^{''}| +
|((a^{''})^{''})^{(y^{''})^{''}}||b^{'}|} ((a^{''})^{'})^{(y^{''})^{'}} b^{'} \otimes ((a^{''})^{''})^{(y^{''})^{''}}) b^{''} =
\]
	\[ = \sum_{(a^{''}),(y^{''}),(b)} (-1)^{|(y^{''})^{'}||(a^{''})^{''}|+ |(a^{''})^{''}||b^{'}| + |(y^{''})^{''}||b^{'}|}
((a^{''})^{'})^{(y^{''})^{'}} b^{'} \otimes ((a^{''})^{''})^{(y^{''})^{''}}) b^{''}. \]
	
	2.2 It also follows from Definition \ref{df:twistedB}
	\[ \beta( (a^{''})^{''})^{(y^{''})^{''}} b^{''} \otimes z^{''} ) = ((a^{''})^{''})^{(y^{''})^{''}} b^{''})^{z^{''}} = \]
	\[ = \sum_{(b^{''}),(z^{''})} (-1)^{ |(b^{''})^{''}||(z^{''})^{'}| }  ((a^{''})^{''})^{(y^{''})^{''}})^{(b^{''})^{'} \cdot
(z^{''})^{'}} ( (b^{''})^{''} )^{ (z^{''})^{''} }. \qed \]	
	
	Next we have
	\[ ( x \otimes a ) (( y \otimes b ) ( z \otimes c )) =  ( x \otimes a ) ( \sum_{(b),(z)} (-1)^{ |z^{'}||b^{''}| } y ( b^{'}
\cdot z^{'} ) \otimes (b^{''})^{z^{''}} c ) = \]
	\[ = ( \mu_{X} \otimes \mu_{A} ) \circ ( id_{X} \otimes \alpha \otimes \beta \otimes id_{A} ) \circ ( id_{X}  \otimes id_{A}
\otimes \tau_{A,X} \otimes id_{X} \otimes id_{A} ) \circ ( id_{X}  \otimes \Delta_{A} \otimes \Delta_{X} \otimes id_{A} ) \circ
\]
	\[ \circ ( \sum_{(b),(z)} (-1)^{ |z^{'}||b^{''}| } x \otimes a \otimes y ( b^{'} \cdot z^{'} ) \otimes (b^{''})^{z^{''}} c )
= \]
	\[ = ( \mu_{X} \otimes \mu_{A} ) \circ ( id_{X} \otimes \alpha \otimes \beta \otimes id_{A} ) \circ ( id_{X}  \otimes id_{A}
\otimes \tau_{A,X} \otimes id_{X} \otimes id_{A} ) \circ \]
	
	\[ \circ ( \sum_{(b),(z),(a),(y),(b^{'}),(z^{'})} (-1)^{ |z^{'}||b^{''}| + |(z^{'})^{'}||(b^{'})^{''}| + |y^{''}|
|(b^{'})^{'} | + |y^{''}| |(z^{'})^{'}| } * \]
	
	\[ * x \otimes a^{'} \otimes a^{''} \otimes y^{'} ((b^{'})^{'} \cdot (z^{'})^{'}) \otimes y^{''} ( (b^{'})^{''} \cdot
(z^{'})^{''} ) \otimes (b^{''})^{z^{''}} c ) = \]
	
	\[ = ( \mu_{X} \otimes \mu_{A} ) \circ ( id_{X} \otimes \alpha \otimes \beta \otimes id_{A} ) \circ \]
	
	\[ \circ ( \sum_{(b),(z),(a),(y),(b^{'}),(z^{'})} (-1)^{ |z^{'}||b^{''}| + |(z^{'})^{'}||(b^{'})^{''}| + |y^{''}|
|(b^{'})^{'} | + |y^{''}| |(z^{'})^{'}| + | a^{''} | | y^{'} | + | a^{''} | |(b^{'})^{'}| + | a^{''} | |(z^{'})^{'} | } * \]
	\[ * x \otimes a^{'} \otimes y^{'} ((b^{'})^{'} \cdot (z^{'})^{'}) \otimes a^{''} \otimes y^{''} ( (b^{'})^{''} \cdot
(z^{'})^{''} ) \otimes (b^{''})^{z^{''}} c ) = \]
	
	\[ = \sum_{(b),(z),(a),(y),(b^{'}),(z^{'}),(a^{'}),(y^{'})} (-1)^{ |z^{'}||b^{''}| + |(z^{'})^{'}||(b^{'})^{''}| + |y^{''}|
|(b^{'})^{'} | + |y^{''}| |(z^{'})^{'}| + | a^{''} | | y^{'} | + | a^{''} | |(b^{'})^{'}| + | a^{''} | |(z^{'})^{'} | +
|(y^{'})^{'}| |(a^{'})^{''}| } * \]
	
	\[ * x ( (a^{'})^{'} \cdot (y^{'})^{'} ) ( (((a^{'})^{''})^{ (y^{'})^{''} } (b^{'})^{'}) \cdot (z^{'})^{'} ) \otimes (
a^{''} ) ^ { y^{''} ( (b^{'}) ^ {''} \cdot (z^{'}) ^ {''} ) } (b^{''})^{z^{''}} c \]
	
	for all $x,y,z \in X, \; a,b,c \in A$.
	
	Remark.
	
	2.3 We have by the definition of a Hopf superalgebra morphism and by Definition \ref{df:twistedB}
	\[ \Delta_{X} ( y ( b^{'} \cdot z^{'} ) ) = \Delta_{X} (y) \Delta_{X} ( b^{'} \cdot z^{'} ) = \]
	\[ = ( \sum_{(y)} y^{'} \otimes y^{''} ) ( \sum_{(b^{'}),(z^{'})} (-1)^{|(z^{'})^{'}||(b^{'})^{''}|} (b^{'})^{'} \cdot
(z^{'})^{'} \otimes (b^{'})^{''} \cdot (z^{'})^{''} = \]
	\[ = \sum_{(y),(b^{'}),(z^{'})} (-1)^{|(z^{'})^{'}||(b^{'})^{''}| + |y^{''}| |(b^{'})^{'} \cdot (z^{'})^{'}| } y^{'}
((b^{'})^{'} \cdot (z^{'})^{'}) \otimes y^{''} ( (b^{'})^{''} \cdot (z^{'})^{''} ) = \]
	\[ = \sum_{(y),(b^{'}),(z^{'})} (-1)^{|(z^{'})^{'}||(b^{'})^{''}| + |y^{''}| |(b^{'})^{'} | + |y^{''}| |(z^{'})^{'}| } y^{'}
((b^{'})^{'} \cdot (z^{'})^{'}) \otimes y^{''} ( (b^{'})^{''} \cdot (z^{'})^{''} ). \]
	
	2.4 Notice that
	\[ ( ( a^{''} ) ^ { y^{''} } ) ^ { (b^{'}) ^ {''} \cdot (z^{'}) ^ {''} } = ( a^{''} ) ^ { y^{''} ( (b^{'}) ^ {''} \cdot
(z^{'}) ^ {''} ) } , \]
	since $\beta$ endows $A$ with a structure of a right $X$-module.
	
	2.5 We also have
	\[ \alpha ( a^{'} \otimes y^{'} ( (b^{'})^{'} \cdot (z^{'})^{'} ) ) = \]
	\[ = \sum_{(a^{'}),(y^{'})} (-1)^{ |(y^{'})^{'}| |(a^{'})^{''}| } ( (a^{'})^{'} \cdot (y^{'})^{'} ) ( ((a^{'})^{''})^{
(y^{'})^{''} } \cdot ( (b^{'})^{'} \cdot (z^{'})^{'} ) ) = \]
	\[ = \sum_{(a^{'}),(y^{'})} (-1)^{ |(y^{'})^{'}| |(a^{'})^{''}| } ( (a^{'})^{'} \cdot (y^{'})^{'} ) ( (( (a^{'})^{''} )^{
(y^{'})^{''} } (b^{'})^{'}) \cdot (z^{'})^{'} ), \]
	\[ ((a^{'})^{''})^{ (y^{'})^{''} } \cdot ( (b^{'})^{'} \cdot (z^{'})^{'} ) = (((a^{'})^{''})^{ (y^{'})^{''} } (b^{'})^{'})
\cdot (z^{'})^{'}, \]
	since $\alpha$ endows $X$ with a structure of a left $A$-module. $\qed$

	3. We deduce from the relations mentioned above that the bicrossed product is a supercoalgebra as it is the tensor product
of supercoalgebras $X$ and $A$.
	
	4. Verify that the counit is a superalgebra morphism. We have for all $x,y \in X, \; a,b \in A$
	\[ \epsilon_{X \bowtie A} ( x \otimes a ) \epsilon_{X \bowtie A} ( y \otimes b ) = \epsilon_{X}(x) \epsilon_{A}(a)
\epsilon_{X}(y) \epsilon_{A}(b), \]
	\[ \epsilon_{X \bowtie A} ( \mu_{X \bowtie A} ( (x \otimes a) \otimes (y \otimes b) ) ) = \]
	\[ = \epsilon_{X \bowtie A} ( \sum_{(a),(y)} (-1)^{ |y^{'}||a^{''}| } x ( a^{'} \cdot y^{'} ) \otimes (a^{''})^{y^{''}} b )
= \sum_{(a),(y)} (-1)^{ |y^{'}||a^{''}| } \epsilon_{X}(x) \epsilon_{X}(a^{'} \cdot y^{'}) \epsilon_{A}((a^{''})^{y^{''}})
\epsilon_{A}(b) = \]
	\[ = \sum_{(a),(y)} (-1)^{ |y^{'}||a^{''}| } \epsilon_{X}(x) \epsilon_{A}(a^{'}) \epsilon_{X}(y^{'}) \epsilon_{A}(a^{''})
\epsilon_{X}(y^{''}) \epsilon_{A}(b) = \]
	\[ = \epsilon_{X}(x) \epsilon_{A}(b) \sum_{(a),(y)} (-1)^{ |y^{'}||a^{''}| } \epsilon_{A}(a^{'}) \epsilon_{X}(y^{'})
\epsilon_{A}(a^{''}) \epsilon_{X}(y^{''}) = \]
	\[ = \epsilon_{X}(x) \epsilon_{A}(b) \sum_{(a),(y)} \epsilon_{A}(a^{'} \epsilon_{A}(a^{''}))
\epsilon_{X}(y^{'}\epsilon_{X}(y^{''})) =  \epsilon_{X}(x) \epsilon_{A}(b) \epsilon_{A}(a) \epsilon_{X}(y). \]
	
	\[ \epsilon_{X \bowtie A} \circ \eta_{X \bowtie A} = id_{k}, \]
	\[ \epsilon_{X \bowtie A} \circ \eta_{X \bowtie A} (1_{k}) = \epsilon_{X \bowtie A} ( 1_{X} \otimes 1_{A} ) =
\epsilon_{X}(1_{X}) \epsilon_{A}(1_{A}) = 1_{k} = id_{k}(1_k). \]
	
	5. Verify that the comultiplication is a superalgebra morphism. We have for all $x,y \in X, \; a,b \in A$:
	\[ \mu_{X \bowtie A \otimes X \bowtie A} ( \Delta_{X \bowtie A}(x \otimes a) \otimes \Delta_{X \bowtie A} (y \otimes b) ) =
\]
	\[ = ( \sum_{(a),(x)} (-1)^{ |a^{'}| |x^{''}| } (x^{'} \otimes a^{'}) \otimes ( x^{''} \otimes a^{''} ) ) ( \sum_{(y),(b)}
(-1)^{ |b^{'}| |y^{''}| } (y^{'} \otimes b^{'}) \otimes ( y^{''} \otimes b^{''} ) ) = \]
	\[ = \sum_{(a),(x),(y),(b),(y^{'}),(y^{''}),(a^{'}),(a^{''})} (-1)^{ |a^{'}| |x^{''}| + |b^{'}| |y^{''}| +
|(y^{'})^{'}||(a^{'})^{''}| + |(y^{''})^{'}||(a^{''})^{''}| + (|x^{''}|+|a^{''}|)(|y^{'}| + |b^{'}| ) } * \]
	\[ * x^{'} ( (a^{'})^{'} \cdot (y^{'})^{'} ) \otimes ((a^{'})^{''})^{(y^{'})^{''}} b^{'} \otimes x^{''} ( (a^{''})^{'} \cdot
(y^{''})^{'} ) \otimes ((a^{''})^{''})^{(y^{''})^{''}} b^{''},\]
	
	\[ \Delta_{X \bowtie A} ( \mu_{X \bowtie A} ((x \otimes a) \otimes (y \otimes b) )) = \Delta_{X \bowtie A} ( \sum_{(a),(y)}
(-1)^{ |y^{'}||a^{''}| } x ( a^{'} \cdot y^{'} ) \otimes (a^{''})^{y^{''}} b ) = \]
	\[ = ( id_{X} \otimes \tau_{X,A} \otimes id_{A} ) \circ  ( \sum_{(a),(y)} (-1)^{ |y^{'}||a^{''}| } \Delta_{X} ( x ( a^{'}
\cdot y^{'} ) ) \otimes \Delta_{A} ( (a^{''})^{y^{''}} b ) ) ) = \]
	\[ = ( id_{X} \otimes \tau_{X,A} \otimes id_{A} ) \circ \]
	\[ \circ ( \sum_{(a),(y),(x),(a^{'}),(y^{'}),(a^{''}),(y^{''}),(b)} (-1)^{ |y^{'}||a^{''}| + |(y^{'})^{'}||(a^{'})^{''}| +
|x^{''}| ( |(a^{'})^{'}| + |(y^{'})^{'}| ) + |(y^{''})^{'}||(a^{''})^{''}| + |b^{'}|(|(a^{''})^{''}| + |(y^{''})^{''}|) } * \]
	\[ * x^{'} ((a^{'})^{'} \cdot (y^{'})^{'}) \otimes x^{''} ( (a^{'})^{''} \cdot (y^{'})^{''} ) ) \otimes
((a^{''})^{'})^{(y^{''})^{'}} b^{'} \otimes ((a^{''})^{''})^{(y^{''})^{''}} b^{''} ) = \]
	\[ = \sum_{(a),(y),(x),(a^{'}),(y^{'}),(a^{''}),(y^{''}),(b)} (-1)^{ |y^{'}||a^{''}| + |(y^{'})^{'}||(a^{'})^{''}| +
|x^{''}| ( |(a^{'})^{'}| + |(y^{'})^{'}| ) + |(y^{''})^{'}||(a^{''})^{''}| + |b^{'}|(|(a^{''})^{''}| + |(y^{''})^{''}|) } * \]
	\[ * (-1)^{ ( |x^{''}| + |(a^{'})^{''}| + |(y^{'})^{''}| )( |(a^{''})^{'}| + |(y^{''})^{'}| + |b^{'}| ) } x^{'} ((a^{'})^{'}
\cdot (y^{'})^{'}) \otimes ((a^{''})^{'})^{(y^{''})^{'}} b^{'} \otimes x^{''} ( (a^{'})^{''} \cdot (y^{'})^{''} ) \otimes
((a^{''})^{''})^{(y^{''})^{''}} b^{''} = \]
	\[ = \sum_{(x),(a),(a^{'}),(a^{''}),(y),(y^{'}),(y^{''}),(b)} (-1)^{ |x^{''}||(a^{'})^{'}| + |(y^{'})^{'}|( |x^{''}| +
|(a^{'})^{''}| + |a^{''}| ) + |(a^{''})^{'}| ( |x^{''}| + |(a^{'})^{''}| ) } * \]
	\[ * (-1)^{ |(y^{''})^{'}| ( |x^{''}| + |(a^{'})^{''}| + |(a^{''})^{''}| + |(y^{'})^{''}| ) + |b^{'}| ( |x^{''}| +
|(a^{'})^{''}| + |(a^{''})^{''}| + |(y^{'})^{''}| + |(y^{''})^{''}| ) + |(y^{'})^{''}||(a^{''})^{''}| } * \]
	\[ * x^{'} ((a^{'})^{'} \cdot (y^{'})^{'}) \otimes ((a^{''})^{'})^{(y^{''})^{'}} b^{'} \otimes x^{''} ( (a^{'})^{''} \cdot
(y^{'})^{''} ) \otimes ((a^{''})^{''})^{(y^{''})^{''}} b^{''} = \]
	\[ = ((\mu_{X} \circ ( id_{X} \otimes \alpha )) \otimes (\mu_{A} \circ ( \beta \otimes id_{A} )) \otimes (\mu_{X} \circ (
id_{X} \otimes \alpha )) \otimes (\mu_{A} \circ ( \beta \otimes id_{A} ))) \circ \]
	\[ \circ (id_{X} \otimes id_{A} \otimes id_{X} \otimes id_{A} \otimes id_{X} \otimes id_{A} \otimes id_{X} \otimes id_{A}
\otimes \tau_{A,X} \otimes id_{X} \otimes id_{A}) \circ \]
	\[ \circ (id_{X} \otimes id_{A} \otimes id_{X} \otimes id_{A} \otimes id_{X} \otimes \tau_{X \otimes A \otimes A \otimes X
\otimes X, A} \otimes id_{A}) \circ \]
	\[ \circ (id_{X} \otimes id_{A} \otimes id_{X} \otimes id_{A} \otimes \tau_{X \otimes A \otimes A \otimes X, X} \otimes
id_{X} \otimes id_{A} \otimes id_{A}) \circ \]
	\[ \circ (id_{X} \otimes id_{A} \otimes id_{X} \otimes \tau_{X \otimes A, A} \otimes id_{A} \otimes id_{X} \otimes id_{X}
\otimes id_{X} \otimes id_{A} \otimes id_{A}) \circ \]
	\[ \circ (id_{X} \otimes id_{A} \otimes \tau_{X \otimes A \otimes A \otimes A, X} \otimes id_{X} \otimes id_{X} \otimes
id_{X} \otimes id_{A} \otimes id_{A} ) \circ \]
	\[ \circ ( id_{X} \otimes \tau_{X,A} \otimes id_{A} \otimes id_{A} \otimes id_{X} \otimes id_{X} \otimes id_{X} \otimes
id_{X} \otimes id_{A} \otimes id_{A} ) \circ \]
	\[ \circ (\Delta_{X} \otimes (( \Delta_{A} \otimes \Delta_{A} ) \circ \Delta_{A}) \otimes (( \Delta_{X} \otimes \Delta_{X} )
\circ \Delta_{X}) \otimes \Delta_{A}) (x \otimes a \otimes y \otimes b) = \]
	\[ = ((\mu_{X} \circ ( id_{X} \otimes \alpha )) \otimes (\mu_{A} \circ ( \beta \otimes id_{A} )) \otimes (\mu_{X} \circ (
id_{X} \otimes \alpha )) \otimes (\mu_{A} \circ ( \beta \otimes id_{A} ))) \circ \]
	\[ \circ (id_{X} \otimes id_{A} \otimes id_{X} \otimes id_{A} \otimes id_{X} \otimes id_{A} \otimes id_{X} \otimes id_{A}
\otimes \tau_{A,X} \otimes id_{X} \otimes id_{A}) \circ \]
	\[ \circ (id_{X} \otimes id_{A} \otimes id_{X} \otimes id_{A} \otimes id_{X} \otimes \tau_{X \otimes A \otimes A \otimes X
\otimes X, A} \otimes id_{A}) \circ \]
	\[ \circ (id_{X} \otimes id_{A} \otimes id_{X} \otimes id_{A} \otimes \tau_{X \otimes A \otimes A \otimes X, X} \otimes
id_{X} \otimes id_{A} \otimes id_{A}) \circ \]
	\[ \circ (id_{X} \otimes id_{A} \otimes id_{X} \otimes \tau_{X \otimes A, A} \otimes id_{A} \otimes id_{X} \otimes id_{X}
\otimes id_{X} \otimes id_{A} \otimes id_{A}) \circ \]
	\[ \circ (id_{X} \otimes id_{A} \otimes \tau_{X \otimes A \otimes A \otimes A, X} \otimes id_{X} \otimes id_{X} \otimes
id_{X} \otimes id_{A} \otimes id_{A} ) \circ \]
	\[ \circ ( id_{X} \otimes \tau_{X,A} \otimes id_{A} \otimes id_{A} \otimes id_{X} \otimes id_{X} \otimes id_{X} \otimes
id_{X} \otimes id_{A} \otimes id_{A} ) \circ \]
	\[ \circ (\Delta_{X} \otimes ( ( id_{A} \otimes \Delta_{A} \otimes id_{A} ) \circ ( id_{A} \otimes \Delta_{A} ) \circ
\Delta_{A}) \otimes ( ( id_{X} \otimes \Delta_{X} \otimes id_{X} ) \circ ( id_{X} \otimes \Delta_{X} ) \circ \Delta_{X}) \otimes
\Delta_{A}) (x \otimes a \otimes y \otimes b) = \]
	\[ = \sum_{(a),(x),(y),(b),(y^{''}),((y^{''})^{'}),(a^{''}),((a^{''})^{'})} (-1)^{ |a^{'}| |x^{''}| + |y^{'}| ( |x^{''}| +
|((a^{''})^{'})^{'}| + |((a^{''})^{'})^{''}| + |(a^{''})^{''}| ) + |((a^{''})^{'})^{''}| ( |x^{''}| + |((a^{''})^{'})^{'}| ) } *
\]
	\[ * (-1)^{ |((y^{''})^{'})^{''}| ( |x^{''}| + |((a^{''})^{'})^{'}| + |(a^{''})^{''}| + |((y^{''})^{'})^{'}| ) + |b^{'}| (
|x^{''}| + |((a^{''})^{'})^{'}| + |(a^{''})^{''}| + |((y^{''})^{'})^{'}| + |(y^{''})^{''}| ) + |((y^{''})^{'})^{'}|
|(a^{''})^{''}| } * \]
	\[ * x^{'} (a^{'} \cdot y^{'}) \otimes (((a^{''})^{'})^{''})^{((y^{''})^{'})^{''}} b^{'} \otimes x^{''} (((a^{''})^{'})^{'}
\cdot ((y^{''})^{'})^{'}) \otimes ((a^{''})^{''})^{(y^{''})^{''}} b^{''} = \]
	\[ = ( id_{X} \otimes \mu_{A} \otimes \mu_{X} \otimes id_{A} ) \circ ( id_{X} \otimes id_{A} \otimes \tau_{X \otimes X, A}
\otimes id_{A} ) \circ ( id_{X} \otimes \tau_{X,A} \otimes id_{X} \otimes id_{A} \otimes id_{A} ) \circ \]
	\[ \circ ( \sum_{(a),(x),(y),(b),(y^{''}),(a^{''})} (-1)^{ |a^{'}| |x^{''}| + |y^{'}| ( |x^{''}| + |a^{''}| ) +
|(y^{''})^{'}| |(a^{''})^{''}| + |b^{'}| ( |(a^{''})^{''}| + |(y^{''})^{''}| ) } * \]
	\[ * \sum_{((y^{''})^{'}),((a^{''})^{'})} (-1)^{ |((a^{''})^{'})^{'}| |((a^{''})^{'})^{''}| + |((y^{''})^{'})^{'}|
|((y^{''})^{'})^{''}| + |((y^{''})^{'})^{''}| |((a^{''})^{'})^{'}| } * \]
	\[ * x^{'} (a^{'} \cdot y^{'}) \otimes x^{''} \otimes (((a^{''})^{'})^{''})^{((y^{''})^{'})^{''}} \otimes ((a^{''})^{'})^{'}
\cdot ((y^{''})^{'})^{'} \otimes b^{'} \otimes ((a^{''})^{''})^{(y^{''})^{''}} b^{''} ) = \]
	\[ = ( id_{X} \otimes \mu_{A} \otimes \mu_{X} \otimes id_{A} ) \circ ( id_{X} \otimes id_{A} \otimes \tau_{X \otimes X, A}
\otimes id_{A} ) \circ ( id_{X} \otimes \tau_{X,A} \otimes id_{X} \otimes id_{A} \otimes id_{A} ) \circ \]
	\[ ( \sum_{(a),(x),(y),(b),(y^{''}),(a^{''})} (-1)^{ |a^{'}| |x^{''}| + |y^{'}| ( |x^{''}| + |a^{''}| ) + |(y^{''})^{'}|
|(a^{''})^{''}| + |b^{'}| ( |(a^{''})^{''}| + |(y^{''})^{''}| ) } * \]
	\[ * \sum_{((y^{''})^{'}),((a^{''})^{'})} (-1)^{ |((a^{''})^{'})^{''}| |((y^{''})^{'})^{'}| } * \]
	\[ * x^{'} (a^{'} \cdot y^{'}) \otimes x^{''} \otimes (((a^{''})^{'})^{'})^{((y^{''})^{'})^{'}} \otimes ((a^{''})^{'})^{''}
\cdot ((y^{''})^{'})^{''} \otimes b^{'} \otimes ((a^{''})^{''})^{(y^{''})^{''}} b^{''} ) = \]
	\[ = \sum_{(a),(x),(y),(b),(y^{''}),(a^{''})} (-1)^{ |a^{'}| |x^{''}| + |y^{'}| ( |x^{''}| + |a^{''}| ) + |(y^{''})^{'}|
|(a^{''})^{''}| + |b^{'}| ( |(a^{''})^{''}| + |(y^{''})^{''}| ) } * \]
	\[ * \sum_{((y^{''})^{'}),((a^{''})^{'})} (-1)^{ |((a^{''})^{'})^{''}| |((y^{''})^{'})^{'}| + |x^{''}| (
|(((a^{''})^{'})^{'})| + |((y^{''})^{'})^{'}| ) + |b^{'}| ( |x^{''}| + |((a^{''})^{'})^{''}| + |((y^{''})^{'})^{''}| ) } * \]
	\[ * x^{'} (a^{'} \cdot y^{'}) \otimes (((a^{''})^{'})^{'})^{((y^{''})^{'})^{'}} b^{'} \otimes x^{''} (((a^{''})^{'})^{''}
\cdot ((y^{''})^{'})^{''}) \otimes ((a^{''})^{''})^{(y^{''})^{''}} b^{''} = \]
	\[ = \sum_{(x),(a),(a^{''}),((a^{''})^{'}),(y),(y^{''}),((y^{''})^{'}),(b)} (-1)^{|a^{'}||x^{''}| + |y^{'}|( |x^{''}| +
|a^{''}| ) + |((a^{''})^{'})^{'}||x^{''}| + |((y^{''})^{'})^{'}|( |x^{''}| + |((a^{''})^{'})^{''}| + |(a^{''})^{''}| ) } * \]
	\[ * (-1)^{|b^{'}|( |x^{''}| + |((a^{''})^{'})^{''}| + |(a^{''})^{''}| + |((y^{''})^{'})^{''}| +  |(y^{''})^{''}| ) +
|((y^{''})^{'})^{''}||(a^{''})^{''}| } * \]
	\[ * x^{'} (a^{'} \cdot y^{'}) \otimes (((a^{''})^{'})^{'})^{((y^{''})^{'})^{'}} b^{'} \otimes x^{''} (((a^{''})^{'})^{''}
\cdot ((y^{''})^{'})^{''}) \otimes ((a^{''})^{''})^{(y^{''})^{''}} b^{''} = \]
	\[ = ((\mu_{X} \circ ( id_{X} \otimes \alpha )) \otimes (\mu_{A} \circ ( \beta \otimes id_{A} )) \otimes (\mu_{X} \circ (
id_{X} \otimes \alpha )) \otimes (\mu_{A} \circ ( \beta \otimes id_{A} ))) \circ \]
	\[ \circ (id_{X} \otimes id_{A} \otimes id_{X} \otimes id_{A} \otimes id_{X} \otimes id_{A} \otimes id_{X} \otimes id_{A}
\otimes \tau_{A,X} \otimes id_{X} \otimes id_{A}) \circ \]
	\[ \circ (id_{X} \otimes id_{A} \otimes id_{X} \otimes id_{A} \otimes id_{X} \otimes \tau_{X \otimes A \otimes A \otimes X
\otimes X,A} \otimes id_{A}) \circ \]
	\[ \circ (id_{X} \otimes id_{A} \otimes id_{X} \otimes id_{A} \otimes \tau_{X \otimes A \otimes A, X} \otimes id_{X} \otimes
id_{X} \otimes id_{A} \otimes id_{A}) \circ \]
	\[ \circ (id_{X} \otimes id_{A} \otimes id_{X} \otimes \tau_{X,A} \otimes id_{A} \otimes id_{A} \otimes id_{X} \otimes
id_{X} \otimes id_{X} \otimes id_{A} \otimes id_{A}) \circ \]
	\[ \circ (id_{X} \otimes id_{A} \otimes \tau_{X \otimes A \otimes A \otimes A, X} \otimes id_{X} \otimes id_{X} \otimes
id_{X} \otimes id_{A} \otimes id_{A} ) \circ \]
	\[ \circ ( id_{X} \otimes \tau_{X,A} \otimes id_{A} \otimes id_{A} \otimes id_{X} \otimes id_{X} \otimes id_{X} \otimes
id_{X} \otimes id_{A} \otimes id_{A} ) \circ \]
	\[ \circ (\Delta_{X} \otimes ( ( id_{A} \otimes \Delta_{A} \otimes id_{A} ) \circ ( id_{A} \otimes \Delta_{A} ) \circ
\Delta_{A}) \otimes ( ( id_{X} \otimes \Delta_{X} \otimes id_{X} ) \circ ( id_{X} \otimes \Delta_{X} ) \circ \Delta_{X}) \otimes
\Delta_{A}) (x \otimes a \otimes y \otimes b) = \]
	\[ = ((\mu_{X} \circ ( id_{X} \otimes \alpha )) \otimes (\mu_{A} \circ ( \beta \otimes id_{A} )) \otimes (\mu_{X} \circ (
id_{X} \otimes \alpha )) \otimes (\mu_{A} \circ ( \beta \otimes id_{A} ))) \circ \]
	\[ \circ (id_{X} \otimes id_{A} \otimes id_{X} \otimes id_{A} \otimes id_{X} \otimes id_{A} \otimes id_{X} \otimes id_{A}
\otimes \tau_{A,X} \otimes id_{X} \otimes id_{A}) \circ \]
	\[ \circ (id_{X} \otimes id_{A} \otimes id_{X} \otimes id_{A} \otimes id_{X} \otimes \tau_{X \otimes A \otimes A \otimes X
\otimes X,A} \otimes id_{A}) \circ \]
	\[ \circ (id_{X} \otimes id_{A} \otimes id_{X} \otimes id_{A} \otimes \tau_{X \otimes A \otimes A, X} \otimes id_{X} \otimes
id_{X} \otimes id_{A} \otimes id_{A}) \circ \]
	\[ \circ (id_{X} \otimes id_{A} \otimes id_{X} \otimes \tau_{X,A} \otimes id_{A} \otimes id_{A} \otimes id_{X} \otimes
id_{X} \otimes id_{X} \otimes id_{A} \otimes id_{A}) \circ \]
	\[ \circ (id_{X} \otimes id_{A} \otimes \tau_{X \otimes A \otimes A \otimes A, X} \otimes id_{X} \otimes id_{X} \otimes
id_{X} \otimes id_{A} \otimes id_{A} ) \circ \]
	\[ \circ ( id_{X} \otimes \tau_{X,A} \otimes id_{A} \otimes id_{A} \otimes id_{X} \otimes id_{X} \otimes id_{X} \otimes
id_{X} \otimes id_{A} \otimes id_{A} ) \circ \]
	\[ \circ (\Delta_{X} \otimes (( \Delta_{A} \otimes \Delta_{A} ) \circ \Delta_{A}) \otimes (( \Delta_{X} \otimes \Delta_{X} )
\circ \Delta_{X}) \otimes \Delta_{A}) (x \otimes a \otimes y \otimes b) = \]
	\[ = \sum_{(a),(x),(y),(b),(y^{'}),(y^{''}),(a^{'}),(a^{''})} (-1)^{ |(a^{'})^{'}| |x^{''}| + |(y^{'})^{'}| ( |x^{''}| +
|(a^{'})^{''}| + |(a^{''})^{'}| + |(a^{''})^{''}| ) + |(a^{'})^{''}| |x^{''}| } * \]
	\[ * (-1)^{ |(y^{'})^{''}| ( |x^{''}| + |(a^{''})^{'}| + |(a^{''})^{''}| ) + |b^{'}| ( |x^{''}| + |(a^{''})^{'}| +
|(a^{''})^{''}| + |(y^{''})^{'}| + |(y^{''})^{''}| ) + |(y^{''})^{'}| |(a^{''})^{''}| } * \]
	\[ * x^{'} ((a^{'})^{'} \cdot (y^{'})^{'}) \otimes ((a^{'})^{''})^{(y^{'})^{''}} b^{'} \otimes x^{''} ((a^{''})^{'} \cdot
(y^{''})^{'}) \otimes ((a^{''})^{''})^{(y^{''})^{''}} b^{''} = \]
	\[ = \sum_{(a),(x),(y),(b),(y^{'}),(y^{''}),(a^{'}),(a^{''})} (-1)^{ |a^{'}| |x^{''}| + |b^{'}| |y^{''}| +
|(y^{'})^{'}||(a^{'})^{''}| + |(y^{''})^{'}||(a^{''})^{''}| + (|x^{''}|+|a^{''}|)(|y^{'}| + |b^{'}| ) } * \]
	\[ * x^{'} ((a^{'})^{'} \cdot (y^{'})^{'}) \otimes ((a^{'})^{''})^{(y^{'})^{''}} b^{'} \otimes x^{''} ((a^{''})^{'} \cdot
(y^{''})^{'}) \otimes ((a^{''})^{''})^{(y^{''})^{''}} b^{''}. \]
	
	Remark.
	5.1 We have by Definition \ref{df:twistedB}
	\[ \Delta_{X} ( x ( a^{'} \cdot y^{'} ) ) = ( \sum_{(x)} x^{'} \otimes x^{''} ) ( \sum_{(a),(y)}
(-1)^{|(y^{'})^{'}||(a^{'})^{''}|} (a^{'})^{'} \cdot (y^{'})^{'}  \otimes (a^{'})^{''} \cdot (y^{'})^{''} ) = \]
	\[ = \sum_{(x),(a^{'}),(y^{'})} (-1)^{|(y^{'})^{'}||(a^{'})^{''}| + |x^{''}| ( |(a^{'})^{'}| + |(y^{'})^{'}| )} x^{'}
((a^{'})^{'} \cdot (y^{'})^{'}) \otimes x^{''} ( (a^{'})^{''} \cdot (y^{'})^{''} ) ). \]
	
	5.2 Notice that by Definition \ref{df:twistedB}
	\[ \Delta_{A} ( (a^{''})^{y^{''}} b ) ) ) = ( \sum_{(a^{''}),(y^{''})} (-1)^{ |(y^{''})^{'}||(a^{''})^{''}| }
((a^{''})^{'})^{(y^{''})^{'}} \otimes ((a^{''})^{''})^{(y^{''})^{''}} ) ( \sum_{(b)} b^{'} \otimes b^{''} ) = \]
	\[ = \sum_{(a^{''}),(y^{''}),(b)} (-1)^{ |(y^{''})^{'}||(a^{''})^{''}| + (|(a^{''})^{''}| + |(y^{''})^{''}|)|b^{'}| }
((a^{''})^{'})^{(y^{''})^{'}} b^{'} \otimes ((a^{''})^{''})^{(y^{''})^{''}} b^{''}. \qed \]
	
	Verify that
	\[ \Delta_{X \bowtie A} \circ \eta_{X \bowtie A} = (\eta_{X \bowtie A} \otimes \eta_{X \bowtie A}) \circ \Delta_{k}. \]
	Indeed,
	\[ \Delta_{X \bowtie A} \circ \eta_{X \bowtie A} (1_{k}) = \Delta_{X \bowtie A} ( 1_{X} \otimes 1_{A} ) = 1_{X} \otimes
1_{A} \otimes 1_{X} \otimes 1_{A} = \]
	\[ = (\eta_{X \bowtie A} \otimes \eta_{X \bowtie A}) (1_{k} \otimes 1_{k}) = (\eta_{X \bowtie A} \otimes \eta_{X \bowtie A})
\circ \Delta_{k} (1_k). \]
	
	6. Verify that $S_{X \bowtie A}$ is an antipode on $X \otimes A$.	
	\[ \mu_{X \bowtie A} \circ ( S_{X \bowtie A} \otimes id_{X \bowtie A}) \circ \Delta_{X \bowtie A} = \mu_{X \bowtie A} \circ
( id_{X \bowtie A} \otimes S_{X \bowtie A} ) \circ \Delta_{X \bowtie A} = \eta_{X \bowtie A} \otimes \epsilon_{X \bowtie A}, \]
	\[ \mu_{X \bowtie A} \circ ( S_{X \bowtie A} \otimes id_{X \bowtie A}) \circ \Delta_{X \bowtie A} ( x \otimes a ) = \mu_{X
\bowtie A} ( \sum_{(x),(a)} (-1)^{|x^{''}||a^{'}|} S_{X \bowtie A} (x^{'} \otimes a^{'}) \otimes x^{''} \otimes a^{''} ) = \]
	\[ = \mu_{X \bowtie A} ( \sum_{(x),(a),(x^{'}),(a^{'})} (-1)^{|x^{''}||a^{'}| + |(x^{'})^{'}||(x^{'})^{''}| +
|(a^{'})^{'}||(a^{'})^{''}| + |(x^{'})^{'}||(a^{'})^{''}| + |(x^{'})^{''}||(a^{'})^{''}| + |(x^{'})^{'}||(a^{'})^{'}|} * \]
	\[ * S_{A}((a^{'})^{''}) \cdot S_{X}((x^{'})^{''}) \otimes ( S_{A}((a^{'})^{'}) )^{S_{X}((x^{'})^{'})} \otimes x^{''}
\otimes a^{''} ) = \]
	\[ = \sum_{(x),(a),(x^{'}),(a^{'}),((a^{'})^{'}),((x^{'})^{'}),(x^{''})} (-1)^{|x^{''}||a^{'}| + |(x^{'})^{'}||(x^{'})^{''}|
+ |(a^{'})^{'}||(a^{'})^{''}| + |(x^{'})^{'}||(a^{'})^{''}| + |(x^{'})^{''}||(a^{'})^{''}| + |(x^{'})^{'}||(a^{'})^{'}| } * \]
	\[ * (-1)^{|((a^{'})^{'})^{'}||((a^{'})^{'})^{''}| + |((x^{'})^{'})^{'}||((x^{'})^{'})^{''}| + |((a^{'})^{'})^{'}|
|((x^{'})^{'})^{''}| + |(x^{''})^{'}| | ((a^{'})^{'})^{'} | + |(x^{''})^{'}| | ((x^{'})^{'})^{'} | } * \]
	\[ * ( S_{A}((a^{'})^{''}) \cdot S_{X}((x^{'})^{''}) ) ( (S_{A}(((a^{'})^{'})^{''}))^{ S_{X}(((x^{'})^{'})^{''}) } \cdot
(x^{''})^{'} ) \otimes (S_{A}( ((a^{'})^{'})^{'} ))^{ S_{X}( ((x^{'})^{'})^{'} ) (x^{''})^{''} } a^{''} = \]	
	\[ = \sum_{(x),(a),(x^{'}),(a^{'}),((a^{'})^{'}),((x^{'})^{'}),(x^{''})} (-1)^{ |(a^{'})^{''}| ( |(a^{'})^{'}| + |x| ) +
|(x^{'})^{''}| |(x^{'})^{'}| + |((a^{'})^{'})^{''}| ( |x^{''}| + |(x^{'})^{'}| + |((a^{'})^{'})^{'}| ) +
|((x^{'})^{'})^{''}||((x^{'})^{'})^{'}|} * \]
	\[ * (-1)^{ |(x^{''})^{'}| | ((x^{'})^{'})^{'} | + | ((a^{'})^{'})^{'} | ( |(x^{''})^{''}| +  |((x^{'})^{'})^{'}| ) } * \]
	\[ * ( S_{A}((a^{'})^{''}) \cdot S_{X}((x^{'})^{''}) ) ( (S_{A}(((a^{'})^{'})^{''}))^{ S_{X}(((x^{'})^{'})^{''}) } \cdot
(x^{''})^{'} ) \otimes (S_{A}( ((a^{'})^{'})^{'} ))^{ S_{X}( ((x^{'})^{'})^{'} ) (x^{''})^{''} } a^{''} = \]
	\[ = (\mu_{X} \circ (\alpha \otimes (\alpha \circ (\beta \otimes id_{X}))) \otimes (\mu_{A} \circ (\beta \otimes id_{A})
\circ (id_{A} \otimes \mu_{X} \otimes id_{A}))) \circ \]
	\[ \circ (S_{A} \otimes S_{X} \otimes S_{A} \otimes S_{X} \otimes id_{X} \otimes S_{A} \otimes S_{X} \otimes id_{X} \otimes
id_{A}) \circ \]
	\[ \circ (id_{A} \otimes id_{X} \otimes id_{A} \otimes id_{X} \otimes id_{X} \otimes \tau_{X \otimes X,A} \otimes id_{A})
\circ (id_{A} \otimes id_{X} \otimes id_{A} \otimes id_{X} \otimes \tau_{X,X} \otimes id_{X} \otimes id_{A} \otimes id_{A})
\circ \]
	\[ \circ (id_{A} \otimes id_{X} \otimes id_{A} \otimes \tau_{X,X} \otimes id_{X} \otimes id_{X} \otimes id_{A} \otimes
id_{A}) \circ (id_{A} \otimes id_{X} \otimes \tau_{X \otimes X \otimes X \otimes X \otimes A, A} \otimes id_{A}) \circ \]
	\[ \circ (id_{A} \otimes \tau_{X \otimes X,X} \otimes id_{X} \otimes id_{X} \otimes id_{A} \otimes id_{A} \otimes id_{A})
\circ (\tau_{X \otimes X \otimes X \otimes X \otimes X \otimes A \otimes A,A} \otimes id_{A}) \circ \]
	\[ \circ (((((\Delta_{X} \otimes id_{X}) \circ \Delta_{X}) \otimes id_{X} \otimes id_{X}) \circ (( id_{X} \otimes \Delta_{X}
) \circ \Delta_{X})) \otimes(((( \Delta_{A} \otimes id_{A}) \circ \Delta_{A}) \otimes id_{A}) \circ \Delta_{A}))(x \otimes a)=
	\]
	\[ = (\mu_{X} \circ (\alpha \otimes (\alpha \circ (\beta \otimes id_{X}))) \otimes (\mu_{A} \circ (\beta \otimes id_{A})
\circ (id_{A} \otimes \mu_{X} \otimes id_{A}))) \circ \]
	\[ \circ (S_{A} \otimes S_{X} \otimes S_{A} \otimes S_{X} \otimes id_{X} \otimes S_{A} \otimes S_{X} \otimes id_{X} \otimes
id_{A}) \circ \]
	\[ \circ (id_{A} \otimes id_{X} \otimes id_{A} \otimes id_{X} \otimes id_{X} \otimes \tau_{X \otimes X,A} \otimes id_{A})
\circ (id_{A} \otimes id_{X} \otimes id_{A} \otimes id_{X} \otimes \tau_{X,X} \otimes id_{X} \otimes id_{A} \otimes id_{A})
\circ \]
	\[ \circ (id_{A} \otimes id_{X} \otimes id_{A} \otimes \tau_{X,X} \otimes id_{X} \otimes id_{X} \otimes id_{A} \otimes
id_{A}) \circ (id_{A} \otimes id_{X} \otimes \tau_{X \otimes X \otimes X \otimes X \otimes A, A} \otimes id_{A}) \circ \]
	\[ \circ (id_{A} \otimes \tau_{X \otimes X,X} \otimes id_{X} \otimes id_{X} \otimes id_{A} \otimes id_{A} \otimes id_{A})
\circ (\tau_{X \otimes X \otimes X \otimes X \otimes X \otimes A \otimes A,A} \otimes id_{A}) \circ \]	
	\[ (((((id_{X} \otimes \Delta_{X}) \circ \Delta_{X}) \otimes id_{X} \otimes id_{X}) \circ (( id_{X} \otimes \Delta_{X} )
\circ \Delta_{X})) \otimes((((id_{A}\otimes\Delta_{A})\circ\Delta_{A})\otimes id_{A})\circ\Delta_{A}))(x\otimes a) = \]
	\[ = \sum_{(x),(a),(x^{'}),((x^{'})^{''}),(x^{''}),(a^{'}),((a^{'})^{''})} (-1)^{ |((a^{'})^{''})^{''}| ( |x| +
|(a^{'})^{'}| + |((a^{'})^{''})^{'}| ) + |((x^{'})^{''})^{''}| ( |(x^{'})^{'}| + |((x^{'})^{''})^{'}| ) } * \]
	\[ * (-1)^{ |((a^{'})^{''})^{'}| ( |(x^{'})^{'}| + |((x^{'})^{''})^{'}| + |x^{''}| + |(a^{'})^{'}| ) +
|((x^{'})^{''})^{'}||(x^{'})^{'}| + |(x^{''})^{'}| |(x^{'})^{'}| + |(a^{'})^{'}| ( |(x^{'})^{'}| + |(x^{''})^{''}| ) } * \]
	\[ * (S_{A} ( ((a^{'})^{''})^{''} ) \cdot S_{X} ( ((x^{'})^{''})^{''} ) ) ((S_{A} ( ((a^{'})^{''})^{'} )) ^{S_{X} (
((x^{'})^{''})^{'} )} \cdot (x^{''})^{'}) \otimes (S_{A} ((a^{'})^{'}))^{S_{X} ((x^{'})^{'}) (x^{''})^{''}} a^{''} = \]
	\[ = \sum_{(x),(a),(x^{'}),(x^{''}),(a^{'})} (-1)^{ |(a^{'})^{''}| ( |x| + |(a^{'})^{'}| ) + |(x^{'})^{''}| |(x^{'})^{'}| +
|(x^{''})^{'}| |(x^{'})^{'}| + |(a^{'})^{'}| ( |(x^{'})^{'}| + |(x^{''})^{''}| ) } * \]
	\[ * S_{A} ( (a^{'})^{''} ) \cdot ( S_{X} ((x^{'})^{''}) (x^{''})^{'}) \otimes (S_{A} ((a^{'})^{'}))^{S_{X} ((x^{'})^{'})
(x^{''})^{''}} a^{''} = \]
	\[ = ( (\alpha \circ (id_{A} \otimes \mu_{X})) \otimes (\mu_{A} \circ (\beta \otimes id_{A}) \circ (id_{A} \otimes \mu_{X}
\otimes id_{A}) ) ) \circ ( S_{A} \otimes S_{X} \otimes id_{X} \otimes S_{A} \otimes S_{X} \otimes id_{X} \otimes id_{A} ) \circ
\]
	\[ \circ ( id_{A} \otimes id_{X} \otimes id_{X} \otimes \tau_{X \otimes X, A} \otimes id_{A} ) \circ (id_{A} \otimes
\tau_{X,X \otimes X} \otimes id_{X} \otimes id_{A} \otimes id_{A} ) \circ \]
	\[ \circ ( \tau_{X \otimes X \otimes X \otimes X \otimes A, A} \otimes id_{A} ) \circ (( (\Delta_{X} \otimes \Delta_{X})
\circ \Delta_{X} ) \otimes ((\Delta_{A} \otimes id_{A}) \circ \Delta_{A})) (x \otimes a) = \]
	\[ = ( (\alpha \circ (id_{A} \otimes \mu_{X})) \otimes (\mu_{A} \circ ((\beta \otimes id_{A}) \circ ( id_{A} \otimes \mu_{X}
\otimes id_{A})) ) \circ ( S_{A} \otimes S_{X} \otimes id_{X} \otimes S_{A} \otimes S_{X} \otimes id_{X} \otimes id_{A} ) \circ
\]
	\[ \circ ( id_{A} \otimes id_{X} \otimes id_{X} \otimes \tau_{X \otimes X, A} \otimes id_{A} ) \circ (id_{A} \otimes
\tau_{X,X \otimes X} \otimes id_{X} \otimes id_{A} \otimes id_{A} ) \circ \]
	\[ \circ ( \tau_{X \otimes X \otimes X \otimes X \otimes A, A} \otimes id_{A} ) \circ ( ( id_{X} \otimes \Delta_{X} \otimes
id_{X} ) \circ (id_{X} \otimes \Delta_{X}) \circ \Delta_{X} ) \otimes ((\Delta_{A} \otimes id_{A}) \circ \Delta_{A})) (x \otimes
a) = \]
	\[ = \sum_{(x),(a),(x^{''}),((x^{''})^{'}),(a^{'})} (-1)^{ |(a^{'})^{''}| ( |x| + |(a^{'})^{'}| ) + |(x^{''})^{'}||x^{'}| +
|(a^{'})^{'}| ( |x^{'}| + |(x^{''})^{''}| ) } * \]
	\[ * (S_{A}((a^{'})^{''}) \cdot (S_{X}(((x^{''})^{'})^{'}) ((x^{''})^{'})^{''})) \otimes (S_{A}((a^{'})^{'})) ^{S_{X}(x^{'})
(x^{''})^{''}} a^{''} = \]
	\[ = \sum_{(x),(a),(x^{''}),(a^{'})} (-1)^{ |(a^{'})^{''}| ( |x| + |(a^{'})^{'}| ) + |(a^{'})^{'}| ( |x^{'}| +
|(x^{''})^{''}| ) } * \]
	\[ * \epsilon_{X} ((x^{''})^{'}) ((S_{A}((a^{'})^{''}) \cdot 1_{X}) \otimes (S_{A}((a^{'})^{'})) ^{S_{X}(x^{'})
(x^{''})^{''}} a^{''} = \]
	\[ = \sum_{(x),(a),(x^{''}),(a^{'})} (-1)^{ |(a^{'})^{''}| ( |x| + |(a^{'})^{'}| ) + |(a^{'})^{'}| ( |x^{'}| +
|(x^{''})^{''}| ) } * \]
	\[ * \epsilon_{X}((x^{''})^{'}) \epsilon_{A}(S_{A}((a^{'})^{''}) 1_{X} \otimes (S_{A}((a^{'})^{'})) ^{S_{X}(x^{'})
(x^{''})^{''}} a^{''} = \]
	\[ = \sum_{(x),(a),(x^{''}),(a^{'})} (-1)^{ |(a^{'})^{'}| ( |x^{'}| + |(x^{''})^{''}| ) }  \epsilon_{X}((x^{''})^{'})
\epsilon_{A}((a^{'})^{''}) 1_{X} \otimes (S_{A}((a^{'})^{'})) ^{S_{X}(x^{'}) (x^{''})^{''}} a^{''} = \]
	\[ = (id_{X} \otimes \mu_{A}) \circ (id_{X} \otimes \beta \otimes id_{A}) \circ (id_{X} \otimes id_{A} \otimes \mu_{X}
\otimes id_{A}) \circ (id_{X} \otimes \tau_{X \otimes X, A} \otimes id_{A}) \circ ( id_{X} \otimes S_{X} \otimes id_{X} \otimes
S_{A} \otimes id_{A} ) \circ \]
	\[ \circ ( id_{X} \otimes id_{X} \otimes ( \nu_{k,X} \circ ( \epsilon_{X} \otimes id_{X} ) \circ \Delta_{X}) \otimes (
\nu_{A,k} \circ ( id_{A} \otimes \epsilon_{A} ) \circ \Delta_{A}) \otimes id_{A} ) (\sum_{(x),(a)} 1_{X} \otimes x^{'} \otimes
x^{''} \otimes a^{'} \otimes a^{''}) = \]
	\[ = (id_{X} \otimes \mu_{A}) \circ (id_{X} \otimes \beta \otimes id_{A}) \circ (id_{X} \otimes id_{A} \otimes \mu_{X}
\otimes id_{A}) \circ (id_{X} \otimes \tau_{X \otimes X, A} \otimes id_{A}) \circ ( id_{X} \otimes S_{X} \otimes id_{X} \otimes
S_{A} \otimes id_{A} ) \circ \]
	\[ \circ ( id_{X} \otimes id_{X} \otimes id_{X} \otimes id_{A} \otimes id_{A} )  (\sum_{(x),(a)} 1_{X} \otimes x^{'} \otimes
x^{''} \otimes a^{'} \otimes a^{''}) = \]
	\[ = \sum_{(x),(a)} (-1)^{|a^{'}||x|} 1_{X} \otimes (S_{A}(a^{'}))^{S_{X}(x^{'}) x^{''}} a^{''} = \]
	\[ = (id_{X} \otimes (\mu_{A} \circ (\beta \otimes id_{A}))) \circ (id_{X} \otimes \tau_{X,A} \otimes id_{A} ) \circ \]
	\[ \circ (id_{X} \otimes ( \mu_{X} \circ (S_{X} \otimes id_{X}) \circ \Delta_{X} ) \otimes id_{A} \otimes id_{A}) (
\sum_{(a)} 1_{X} \otimes x \otimes S_{A}(a^{'}) \otimes a^{''} ) = \]
	\[ = (id_{X} \otimes (\mu_{A} \circ (\beta \otimes id_{A}))) \circ (id_{X} \otimes \tau_{X,A} \otimes id_{A} ) \circ (id_{X}
\otimes (\eta_{X} \circ \epsilon_{X}) \otimes id_{A} \otimes id_{A}) ( \sum_{(a)} 1_{X} \otimes x \otimes S_{A}(a^{'}) \otimes
a^{''} ) = \]
	\[ = \epsilon_{X}(x) \sum_{(a)} 1_{X} \otimes (S_{A}(a^{'}))^{1_{X}} a^{''} = \epsilon_{X}(x) \sum_{(a)} 1_{X} \otimes
S_{A}(a^{'}) a^{''} = \]
	\[ = \epsilon_{X}(x) \epsilon_{A}(a) 1_{X} \otimes 1_{A} = (\eta_{X \bowtie A} \otimes \epsilon_{X \bowtie A}) (x \otimes a)
. \]
	
	Remark. We use the following relations that we deduce from Definition \ref{df:twistedB}
	
	6.1.1
	\[ \Delta_{A} ( S_{A}((a^{'})^{'}) = \sum_{((a^{'})^{'})} (-1)^{|((a^{'})^{'})^{'}||((a^{'})^{'})^{''}|}
S_{A}(((a^{'})^{'})^{''}) \otimes S_{A}( ((a^{'})^{'})^{'} ), \]
	
	6.1.2
	\[ \Delta_{X} ( S_{X}((x^{'})^{'}) = \sum_{((x^{'})^{'})} (-1)^{|((x^{'})^{'})^{'}||((x^{'})^{'})^{''}|}
S_{X}(((x^{'})^{'})^{''}) \otimes S_{X}( ((x^{'})^{'})^{'} ). \]
	
	6.1.3
	\[ \Delta_{A} ( ( S_{A}((a^{'})^{'}) )^{S_{X}((x^{'})^{'})} ) = \sum_{((a^{'})^{'}),((x^{'})^{'})}
(-1)^{|((a^{'})^{'})^{'}||((a^{'})^{'})^{''}| + |((x^{'})^{'})^{'}||((x^{'})^{'})^{''}| + |((a^{'})^{'})^{'}|
|((x^{'})^{'})^{''}|} * \]
	\[ * (S_{A}(((a^{'})^{'})^{''}))^{ S_{X}(((x^{'})^{'})^{''}) } \otimes (S_{A}( ((a^{'})^{'})^{'} ))^{ S_{X}(
((x^{'})^{'})^{'} ) }.\]
	
	6.2
	\[ S_{A} ( (a^{'})^{''} ) \cdot ( S_{X} ((x^{'})^{''}) (x^{''})^{'}) = \]
	\[ = \sum_{((a^{'})^{''}),((x^{'})^{''})} (-1)^{ |((x^{'})^{''})^{'}| |((x^{'})^{''})^{''}| + |((a^{'})^{''})^{'}|
|((a^{'})^{''})^{''}| + |((x^{'})^{''})^{''}| |((a^{'})^{''})^{'}| }  * \]
	\[ * (S_{A} ( ((a^{'})^{''})^{''} ) \cdot S_{X} ( ((x^{'})^{''})^{''} )) ((S_{A} ( ((a^{'})^{''})^{'} )) ^{S_{X} (
((x^{'})^{''})^{'} )} \cdot (x^{''})^{'}). \]
	
	6.3 We have from the definition of a counit
	\[ \epsilon_{X}((x^{''})^{'}) \Rightarrow |(x^{''})^{'}|=0, \; \epsilon_{A}((a^{'})^{''}) \Rightarrow |(a^{'})^{''}|=0, \;
\epsilon_{X}(x) \Rightarrow |x|=0. \qed \]
	
	Next we have
	\[ \mu_{X \bowtie A} \circ ( id_{X \bowtie A} \otimes S_{X \bowtie A} ) \circ \Delta_{X \bowtie A} (x \otimes a) = \mu_{X
\bowtie A} ( \sum_{(x),(a)} (-1)^{|x^{''}||a^{'}|} x^{'} \otimes a^{'} \otimes S_{X \bowtie A} ( x^{''} \otimes a^{''} ) ) = \]
	
	\[ = \mu_{X \bowtie A} ( \sum_{(x),(a),(x^{''}),(a^{''})} (-1)^{|x^{''}||a^{'}| + |(x^{''})^{'}||(x^{''})^{''}| +
|(a^{''})^{'}||(a^{''})^{''}| + |(x^{''})^{'}||(a^{''})^{''}| + |(x^{''})^{''}||(a^{''})^{''}| + |(x^{''})^{'}||(a^{''})^{'}|} *
\]
	\[ * x^{'} \otimes a^{'} \otimes S_{A}((a^{''})^{''}) \cdot S_{X}((x^{''})^{''}) \otimes ( S_{A}((a^{''})^{'})
)^{S_{X}((x^{''})^{'})} )= \]
	\[ = \sum_{(x),(a),(x^{''}),(a^{''}),((a^{''})^{''}),((x^{''})^{''}),(a^{'})} (-1)^{|x^{''}||a^{'}| +
|(x^{''})^{'}||(x^{''})^{''}| + |(a^{''})^{'}||(a^{''})^{''}| + |(x^{''})^{'}||(a^{''})^{''}| + |(x^{''})^{''}||(a^{''})^{''}| +
|(x^{''})^{'}||(a^{''})^{'}|} * \]
	\[ * (-1)^{|((a^{''})^{''})^{'}||((a^{''})^{''})^{''}| + |((x^{''})^{''})^{'}||((x^{''})^{''})^{''}| + |((a^{''})^{''})^{'}|
|((x^{''})^{''})^{''}| + |(a^{'})^{''}| |((a^{''})^{''})^{''}| + |(a^{'})^{''}| |((x^{''})^{''})^{''}| } * \]
	\[ * x^{'} ( (a^{'})^{'} \cdot (S_{A}(((a^{''})^{''})^{''})) \cdot S_{X}(((x^{''})^{''})^{''}) ) \otimes ((a^{'})^{''}) ^ {
(S_{A}( ((a^{''})^{''})^{'} )) \cdot S_{X}( ((x^{''})^{''})^{'} ) } ( S_{A}((a^{''})^{'}) )^{S_{X}((x^{''})^{'})} = \]	
	\[ = \sum_{(x),(a),(x^{''}),(a^{''}),((a^{''})^{''}),((x^{''})^{''}),(a^{'})} (-1)^{|x^{''}||a^{'}| +
|(x^{''})^{'}||(x^{''})^{''}| + |(a^{''})^{'}||(a^{''})^{''}| + |(x^{''})^{'}||(a^{''})^{''}| + |(x^{''})^{''}||(a^{''})^{''}| +
|(x^{''})^{'}||(a^{''})^{'}|} * \]
	\[ * (-1)^{|((a^{''})^{''})^{'}||((a^{''})^{''})^{''}| + |((x^{''})^{''})^{'}||((x^{''})^{''})^{''}| + |((a^{''})^{''})^{'}|
|((x^{''})^{''})^{''}| + |(a^{'})^{''}| |((a^{''})^{''})^{''}| + |(a^{'})^{''}| |((x^{''})^{''})^{''}| } * \]
	\[ * x^{'} ( ( (a^{'})^{'} (S_{A}(((a^{''})^{''})^{''})) ) \cdot S_{X}(((x^{''})^{''})^{''}) ) \otimes ((a^{'})^{''}) ^ {
(S_{A}( ((a^{''})^{''})^{'} )) \cdot S_{X}( ((x^{''})^{''})^{'} ) } ( S_{A}((a^{''})^{'}) )^{S_{X}((x^{''})^{'})} = \]
	\[ = \sum_{(x),(a),(x^{''}),(a^{''}),((a^{''})^{''}),((x^{''})^{''}),(a^{'})} (-1)^{ |(a^{'})^{'}| |x^{''}| +
|((a^{''})^{''})^{''}| ( |((a^{''})^{''})^{'}| + |(a^{''})^{'}| + |(a^{'})^{''}| + |x^{''}| ) + |((x^{''})^{''})^{''}| (
|(x^{''})^{'}| + |((x^{''})^{''})^{'}| ) } * \]
	\[ * (-1)^{ |(a^{'})^{''}| ( |(x^{''})^{'}| + |((x^{''})^{''})^{'}| ) + |((a^{''})^{''})^{'}| ( |(x^{''})^{'}| +
|((x^{''})^{''})^{'}| + |(a^{''})^{'}| ) + |((x^{''})^{''})^{'}| |(x^{''})^{'}| + |(a^{''})^{'}| |(x^{''})^{'}| } * \]
	\[ * x^{'} ( ( (a^{'})^{'} (S_{A}(((a^{''})^{''})^{''})) ) \cdot S_{X}(((x^{''})^{''})^{''}) ) \otimes ((a^{'})^{''}) ^ {
(S_{A}( ((a^{''})^{''})^{'} )) \cdot S_{X}( ((x^{''})^{''})^{'} ) } ( S_{A}((a^{''})^{'}) )^{S_{X}((x^{''})^{'})} = \]	
	\[ = (\mu_{X} \circ (id_{X} \otimes (\alpha \circ (\mu_{A} \otimes id_{X}))) \otimes (\mu_{A} \circ (\beta \otimes id_{A})
\circ (id_{A} \otimes \alpha \otimes \beta))) \circ \]
	\[ \circ (id_{X} \otimes id_{A} \otimes S_{A} \otimes S_{X} \otimes id_{A} \otimes S_{A} \otimes S_{X} \otimes S_{A} \otimes
S_{X}) \circ ( id_{X} \otimes id_{A} \otimes id_{A} \otimes id_{X} \otimes id_{A} \otimes id_{A} \otimes id_{X} \otimes
\tau_{X,A} ) \circ \]
	\[ \circ (id_{X} \otimes id_{A} \otimes id_{A} \otimes id_{X} \otimes id_{A} \otimes id_{A} \otimes \tau_{X ,X} \otimes
id_{A}) \circ (id_{X} \otimes id_{A} \otimes id_{A} \otimes id_{X} \otimes id_{A} \otimes \tau_{X \otimes X \otimes A,A} ) \circ
\]	
	\[ \circ (id_{X} \otimes id_{A} \otimes id_{A} \otimes id_{X} \otimes \tau_{X \otimes X,A} \otimes id_{A} \otimes id_{A})
\circ (id_{X} \otimes id_{A} \otimes id_{A} \otimes \tau_{X \otimes X , X} \otimes id_{A} \otimes id_{A} \otimes id_{A}) \circ
\]
	\[ \circ (id_{X} \otimes id_{A} \otimes \tau_{X \otimes X \otimes X \otimes A \otimes A \otimes A,A}) \circ (id_{X} \otimes
\tau_{X \otimes X \otimes X ,A} \otimes id_{A} \otimes id_{A} \otimes id_{A} \otimes id_{A}) \circ \]
	\[ \circ ( (((id_{X} \otimes ((id_{X} \otimes \Delta_{X}) \circ \Delta_{X}))) \circ \Delta_{X}) \otimes ((\Delta_{A} \otimes
((id_{A} \otimes \Delta_{A}) \circ \Delta_{A})) \circ \Delta_{A}) ) (x\otimes a) =\]
	\[ = (\mu_{X} \circ (id_{X} \otimes (\alpha \circ (\mu_{A} \otimes id_{X}))) \otimes (\mu_{A} \circ (\beta \otimes id_{A})
\circ (id_{A} \otimes \alpha \otimes \beta))) \circ \]
	\[ \circ (id_{X} \otimes id_{A} \otimes S_{A} \otimes S_{X} \otimes id_{A} \otimes S_{A} \otimes S_{X} \otimes S_{A} \otimes
S_{X}) \circ ( id_{X} \otimes id_{A} \otimes id_{A} \otimes id_{X} \otimes id_{A} \otimes id_{A} \otimes id_{X} \otimes
\tau_{X,A} ) \circ \]
	\[ \circ (id_{X} \otimes id_{A} \otimes id_{A} \otimes id_{X} \otimes id_{A} \otimes id_{A} \otimes \tau_{X ,X} \otimes
id_{A}) \circ (id_{X} \otimes id_{A} \otimes id_{A} \otimes id_{X} \otimes id_{A} \otimes \tau_{X \otimes X \otimes A,A} ) \circ
\]	
	\[ \circ (id_{X} \otimes id_{A} \otimes id_{A} \otimes id_{X} \otimes \tau_{X \otimes X,A} \otimes id_{A} \otimes id_{A})
\circ (id_{X} \otimes id_{A} \otimes id_{A} \otimes \tau_{X \otimes X , X} \otimes id_{A} \otimes id_{A} \otimes id_{A}) \circ
\]
	\[ \circ (id_{X} \otimes id_{A} \otimes \tau_{X \otimes X \otimes X \otimes A \otimes A \otimes A,A}) \circ (id_{X} \otimes
\tau_{X \otimes X \otimes X ,A} \otimes id_{A} \otimes id_{A} \otimes id_{A} \otimes id_{A}) \circ \]
	\[ \circ ( (((id_{X} \otimes (( \Delta_{X} \otimes id_{X}) \circ \Delta_{X}))) \circ \Delta_{X}) \otimes ((\Delta_{A}
\otimes (( \Delta_{A} \otimes id_{A}) \circ \Delta_{A})) \circ \Delta_{A}) ) (x\otimes a) =\]	
	\[ = \sum_{(x),(a),(a^{'}),(a^{''}),((a^{''})^{'}),(x^{''}),((x^{''})^{'})} (-1)^{ |(a^{'})^{'}| |x^{''}| + |(a^{''})^{''}|
( |x^{''}| + |(a^{'})^{''}| + |(a^{''})^{'}| ) } * \]
	\[ * (-1)^{|(x^{''})^{''}| |(x^{''})^{'}| + |(a^{'})^{''}| |(x^{''})^{'}| + |((a^{''})^{'})^{''}| ( |((a^{''})^{'})^{'}| +
|(x^{''})^{'}| ) + |((x^{''})^{'})^{''}| |((x^{''})^{'})^{'}| + |((a^{''})^{'})^{'}| |((x^{''})^{'})^{'}|} * \]
	\[ * x^{'} (((a^{'})^{'} S_{A}((a^{''})^{''})) \cdot S_{X}((x^{''})^{''})) \otimes
(((a^{'})^{''})^{S_{A}(((a^{''})^{'})^{''}) \cdot S_{X}(((x^{''})^{'})^{''})} )
(S_{A}(((a^{''})^{'})^{'}))^{S_{X}(((x^{''})^{'})^{'})} = \]
	\[ = \sum_{(x),(a),(a^{'}),(a^{''}),(x^{''})} (-1)^{ |(a^{'})^{'}| |x^{''}| + |(a^{''})^{''}| ( |x^{''}| + |(a^{'})^{''}| +
|(a^{''})^{'}| ) + |(x^{''})^{''}| |(x^{''})^{'}| + |(a^{'})^{''}| |(x^{''})^{'}| + |(a^{''})^{'}| |(x^{''})^{'}| } * \]
	\[ * x^{'} (((a^{'})^{'} S_{A}((a^{''})^{''})) \cdot S_{X}((x^{''})^{''})) \otimes ((a^{'})^{''}
S_{A}((a^{''})^{'}))^{S_{X}((x^{''})^{'})} = \]
	\[ = ((\mu_{X} \circ (id_{X} \otimes \alpha) \circ (id_{X} \otimes \mu_{A} \otimes id_{X})) \otimes (\beta \circ (\mu_{A}
\otimes id_{X}))) \circ \]
	\[ \circ (id_{X} \otimes id_{A} \otimes S_{A} \otimes S_{X} \otimes id_{A} \otimes S_{A} \otimes S_{X}) \circ \]
	\[ \circ (id_{X} \otimes id_{A} \otimes id_{A} \otimes \tau_{X,X \otimes A \otimes A}) \circ (id_{X} \otimes id_{A} \otimes
\tau_{X \otimes X \otimes A \otimes A,A}) \circ \]
	\[ \circ (id_{X} \otimes \tau_{X \otimes X, A} \otimes id_{A} \otimes id_{A} \otimes id_{A}) \circ (((id_{X} \otimes
\Delta_{X}) \circ \Delta_{X}) \otimes ( (\Delta_{A} \otimes \Delta_{A}) \circ \Delta_{A} )) (x \otimes a) = \]	
	\[ = ((\mu_{X} \circ (id_{X} \otimes \alpha) \circ (id_{X} \otimes \mu_{A} \otimes id_{X})) \otimes (\beta \circ (\mu_{A}
\otimes id_{X}))) \circ \]
	\[ \circ (id_{X} \otimes id_{A} \otimes S_{A} \otimes S_{X} \otimes id_{A} \otimes S_{A} \otimes S_{X}) \circ \]
	\[ \circ (id_{X} \otimes id_{A} \otimes id_{A} \otimes \tau_{X,X \otimes A \otimes A}) \circ (id_{X} \otimes id_{A} \otimes
\tau_{X \otimes X \otimes A \otimes A,A}) \circ \]
	\[ \circ (id_{X} \otimes \tau_{X \otimes X, A} \otimes id_{A} \otimes id_{A} \otimes id_{A}) \circ ( ((id_{X} \otimes
\Delta_{X}) \circ \Delta_{X}) \otimes ( ( id_{A} \otimes \Delta_{A} \otimes id_{A} ) \circ ( (id_{A} \otimes \Delta_{A}) \circ
\Delta_{A} )) ) (x \otimes a) = \]	
	\[ = \sum_{(x),(a),(x^{''}),(a^{''}),((a^{''})^{'})} (-1)^{ |a^{'}| ||x^{''}| + |(a^{''})^{''}| ( |x^{''}| + |(a^{''})^{'}|
) + |(x^{''})^{'}| ( |(x^{''})^{''}| + |(a^{''})^{'}| ) } * \]
	\[ * x^{'} ((a^{'} S_{A}((a^{''})^{''})) \cdot S_{X}((x^{''})^{''})) \otimes (((a^{''})^{'})^{'} S_{A}
(((a^{''})^{'})^{''}))^{S_{X} ((x^{''})^{'})} = \]
	\[ = (id_{X} \otimes (\beta \circ ((\mu_{A} \circ (id_{A} \otimes S_{A}) \circ \Delta_{A}) \otimes id_{X})))  \circ\]
	\[ \circ (\sum_{(x),(a),(x^{''}),(a^{''})} (-1)^{ |a^{'}| ||x^{''}| + |(a^{''})^{''}| ( |x^{''}| + |(a^{''})^{'}| ) +
|(x^{''})^{'}| ( |(x^{''})^{''}| + |(a^{''})^{'}| ) } * \]
	\[ * x^{'} ((a^{'} S_{A}((a^{''})^{''})) \cdot S_{X}((x^{''})^{''})) \otimes (a^{''})^{'} \otimes S_{X} ((x^{''})^{'}) ) =
\]
	\[ = \epsilon_{A}((a^{''})^{'}) \sum_{(x),(a),(x^{''}),(a^{''})} (-1)^{ |a^{'}| ||x^{''}| + |(a^{''})^{''}| |x^{''}| +
|(x^{''})^{'}| |(x^{''})^{''}| } x^{'} ((a^{'} S_{A}( (a^{''})^{''})) \cdot S_{X}((x^{''})^{''})) \otimes \epsilon_{X}(S_{X}
((x^{''})^{'})) 1_{A} = \]
	\[ = \epsilon_{A}((a^{''})^{'}) \epsilon_{X}((x^{''})^{'})  \sum_{(x),(a),(x^{''}),(a^{''})} (-1)^{ |a^{'}| ||x^{''}| +
|(a^{''})^{''}| |x^{''}| } x^{'} ((a^{'} S_{A}( (a^{''})^{''})) \cdot S_{X}((x^{''})^{''})) \otimes 1_{A} = \]
	\[ = ((\mu_{X} \circ (id_{X} \otimes \alpha) \circ (id_{X} \otimes \mu_{A} \otimes id_{X})) \otimes id_{A}) \circ ( id_{X}
\otimes id_{A} \otimes S_{A} \otimes S_{X} \otimes id_{A}) \circ ( id_{X} \otimes \tau_{X, A \otimes A} \otimes id_{A} ) \circ
\]
	\[ \circ (id_{X} \otimes ( \nu_{k,X} \circ ( \epsilon_{X} \otimes id_{X} ) \circ \Delta_{X} ) \otimes id_{A} \otimes (
\nu_{k,A} \circ  ( \epsilon_{A} \otimes id_{A} ) \circ \Delta_{A} ) \otimes id_{A}) ( \sum_{(x),(a)} x^{'} \otimes x^{''}
\otimes a^{'} \otimes a^{''} \otimes 1_{A}) = \]
	\[ = ((\mu_{X} \circ (id_{X} \otimes \alpha) \circ (id_{X} \otimes \mu_{A} \otimes id_{X})) \otimes id_{A}) \circ ( id_{X}
\otimes id_{A} \otimes S_{A} \otimes S_{X} \otimes id_{A}) \circ ( id_{X} \otimes \tau_{X, A \otimes A} \otimes id_{A} ) \circ
\]
	\[ \circ (id_{X} \otimes id_{X} \otimes id_{A} \otimes id_{A} \otimes id_{A}) ( \sum_{(x),(a)} x^{'} \otimes x^{''} \otimes
a^{'} \otimes a^{''} \otimes 1_{A}) = \]
	\[ = \sum_{(x),(a)} (-1)^{ |x^{''}||a| } x^{'} ( (a^{'} S_{A} (a^{''})) \cdot S_{X}(x^{''} )) \otimes 1_{A} = \]	
	\[ = ((\mu_{X} \circ (id_{X} \otimes \alpha) ) \otimes id_{A}) \circ (id_{X} \otimes \tau_{X,A} \otimes id_{A}) \circ \]
	\[ \circ ( id_{X} \otimes id_{X} \otimes ( \mu_{A} \otimes ( id_{A} \otimes S_{A} ) \otimes \Delta_{A} ) \otimes id_{A} ) (
\sum_{(x),(a)} x^{'} \otimes S_{X}(x^{''}) \otimes a \otimes 1_{A}) = \]
	\[ = ((\mu_{X} \circ (id_{X} \otimes \alpha) ) \otimes id_{A}) (\sum_{(x)} x^{'} \otimes \epsilon_{A}(a) 1_{A} \otimes
S_{X}(x^{''}) \otimes 1_{A}) = \]
	\[ = \epsilon_{A}(a) \sum_{(x)} x^{'} (1_{A} \cdot S_{X}(x^{''})) \otimes 1_{A} = \epsilon_{A}(a) \sum_{(x)} x^{'}
S_{X}(x^{''}) \otimes 1_{A} = \]
	\[ = \epsilon_{X}(x) \epsilon_{A}(a) 1_{X} \otimes 1_{A} = (\eta_{X \bowtie A} \otimes \epsilon_{X \bowtie A}) (x \otimes a)
. \]
	Remark. We use the following relations that we deduce from Definition \ref{df:twistedB}
	
	6.4.1
	\[ \Delta_{A} ( S_{A}((a^{''})^{''}) = \sum_{((a^{''})^{''})} (-1)^{|((a^{''})^{''})^{'}||((a^{''})^{''})^{''}|}
S_{A}(((a^{''})^{''})^{''}) \otimes S_{A}( ((a^{''})^{''})^{'} ), \]
	
	6.4.2
	\[ \Delta_{X} ( S_{X}((x^{''})^{''}) = \sum_{((x^{''})^{''})} (-1)^{|((x^{''})^{''})^{'}||((x^{''})^{''})^{''}|}
S_{X}(((x^{''})^{''})^{''}) \otimes S_{X}( ((x^{''})^{''})^{'} ). \]
	
	6.4.3
	\[ \Delta_{X}( S_{A}((a^{''})^{''}) \cdot S_{X}((x^{''})^{''}) ) = \sum_{((a^{''})^{''}),((x^{''})^{''})}
(-1)^{|((a^{''})^{''})^{'}||((a^{''})^{''})^{''}| + |((x^{''})^{''})^{'}||((x^{''})^{''})^{''}| + |((a^{''})^{''})^{'}|
|((x^{''})^{''})^{''}|} *  \]
	\[ * (S_{A}(((a^{''})^{''})^{''})) \cdot S_{X}(((x^{''})^{''})^{''}) \otimes (S_{A}( ((a^{''})^{''})^{'} )) \cdot S_{X}(
((x^{''})^{''})^{'} ).\]	
	
	6.5
	\[ ((a^{'})^{''} S_{A}((a^{''})^{'}))^{S_{X}((x^{''})^{'})} = \sum_{((a^{''})^{'}),((x^{''})^{'})} (-1)^{
|((a^{''})^{'})^{'}| |((a^{''})^{'})^{''}| + |((x^{''})^{'})^{'}| |((x^{''})^{'})^{''}| + |((a^{''})^{'})^{'}|
|((x^{''})^{'})^{''}| } * \]
	\[ * (((a^{'})^{''})^{S_{A}(((a^{''})^{'})^{''}) \cdot S_{X}(((x^{''})^{'})^{''})} )
(S_{A}(((a^{''})^{'})^{'}))^{S_{X}(((x^{''})^{'})^{'})}.  \]
	
	6.6 We have from the definition of a counit
	\[ \epsilon_{A} ((a^{''})^{'}) \Rightarrow |(a^{''})^{'}| = 0, \; \epsilon_{X} ((x^{''})^{'}) \Rightarrow |(x^{''})^{'}| =
0, \]
	\[ \epsilon_{A}(a) \Rightarrow |a|=0, \; \epsilon_{X}(x) \Rightarrow |x|=0. \qed \]
	
	7. Verify that $i_{X}$ and $i_{A}$ are supercoalgebras morphisms
	\[ (i_{X} \otimes i_{X}) \circ \Delta_{X} = \Delta_{X \bowtie A} \circ i_{X}, \]
	\[ (i_{X} \otimes i_{X}) \circ \Delta_{X} (x) = \sum_{(x)} ( x^{'} \otimes 1_{A} ) \otimes ( x^{''} \otimes 1_{A} ), \]	
	\[ \Delta_{X \bowtie A} \circ i_{X} (x) = \sum_{(a)} (-1)^{|1_{A}||x^{''}|} ( x^{'} \otimes 1_{A} ) \otimes ( x^{''} \otimes
1_{A} )  = \sum_{(x)} ( x^{'} \otimes 1_{A} ) \otimes ( x^{''} \otimes 1_{A} ) \]
	for all $x \in X$.
	\[ \epsilon_{X \bowtie A} \circ i_{X} = \epsilon_{X}, \]
	\[ \epsilon_{X \bowtie A} \circ i_{X} (x) = \epsilon_{X}(x) \epsilon_{A}(1_{A}) = 1_{k} \epsilon_{X}(x) = \epsilon_{X}(x)\]
	for all $x \in X$.
	
	\[ (i_{A} \otimes i_{A}) \circ \Delta_{A} = \Delta_{X \bowtie A} \circ i_{A}, \]
	\[ (i_{A} \otimes i_{A}) \circ \Delta_{A} (a) = \sum_{(a)} ( 1_{X} \otimes a^{'} ) \otimes ( 1_{X} \otimes a^{''} ), \]
	\[ \Delta_{X \bowtie A} \circ i_{A} (a) = \sum_{(a)} (-1)^{|1_{X}||a^{'}|} ( 1_{X} \otimes a^{'} ) \otimes ( 1_{X} \otimes
a^{''} ) = \sum_{(a)} ( 1_{X} \otimes a^{'} ) \otimes ( 1_{X} \otimes a^{''} ) \]
	for all $a \in A$.
	\[ \epsilon_{X \bowtie A} \circ i_{A} = \epsilon_{A}, \]
	\[ \epsilon_{X \bowtie A} \circ i_{A} (a) = \epsilon_{X}(1_{X}) \epsilon_{A}(a) = 1_{k} \epsilon_{A}(a) = \epsilon_{A}(a)
\]
	for all $a \in A$.
	
	Verify that $i_X$ and $i_A$ are superalgebras morphisms:
	\[ i_{X}(xy) = xy \otimes 1_{A}, \]
	\[ i_{X}(x) i_{X}(y) = ( x \otimes 1_{A} ) ( y \otimes 1_{A} ) = \]
	\[ = \sum_{(y)} (-1)^{|y^{'}||1_{A}|} x ( 1_{A} \cdot y^{'} ) \otimes (1_{A})^{y{''}} 1_{A} = \sum_{(y)} x y^{'} \otimes
\epsilon_{X} (y^{''}) 1_{A} = x ( \sum_{(y)} y^{'} \epsilon_{X} (y^{''}) ) \otimes 1_{A} = xy \otimes 1_{A} \]
	for all $x,y \in X$.
	
	\[ i_{X} \circ \eta_{X} = \eta_{X \bowtie A} \iff  i_{X} \circ \eta_{X} ( 1_{k} ) = i_{X} ( 1_{X} ) = 1_{X} \otimes 1_{A}
\iff 		\eta_{X \bowtie A} ( 1_{k} ) = 1_{X} \otimes 1_{A}. \]
	
	\[ i_{A}(ab) = 1_{X} \otimes ab, \]
	\[ i_{A}(a) i_{A}(b) = ( 1_{X} \otimes a ) ( 1_{X} \otimes b ) = \]
	\[ \sum_{(a)} (-1)^{|1_{X}||a^{''}|} 1_{X} ( a^{'} \cdot 1_{X} ) \otimes (a^{''})^{1_{X}} b = \sum_{(a)} 1_{X}
\epsilon_{A}(a^{'}) \otimes a^{''} b = 1_{X} \otimes ( \sum_{(a)} \epsilon_{A}(a^{'}) a^{''}) b = 1_{X} \otimes ab \]
	for all $a,b \in A$.
	
	\[ i_{A} \circ \eta_{A} = \eta_{X \bowtie A}  \iff i_{A} \circ \eta_{A} ( 1_{k} ) = i_{A} ( 1_{A} ) = 1_{X} \otimes 1_{A}
\iff \eta_{X \bowtie A} ( 1_{k} ) = 1_{X} \otimes 1_{A}. \]
	
	8. We prove that
	\[ x \otimes a = \mu_{X \bowtie A} ( (x \otimes 1_{A}) \otimes (1_{X} \otimes a) ) \]
	for all $x \in X, \; a \in A$. Indeed,
	\[ \mu_{X \bowtie A} ( (x \otimes 1_{A}) \otimes (1_{X} \otimes a) ) = x ( 1_{A} \cdot 1_{X} ) \otimes (1_{A})^{1_{X}} a = x
\otimes a. \]
\end{proof}

\modularalgebra*
\begin{proof}
	It is sufficient to verify that if the relation \ref{eq:tensormodule} (\ref{eq:tensormodule_1} in case of a right
$H$-module) holds for $x$ and $y$ from $X$,then it will be true for a product $xy$. Indeed, we have for all $x,y \in X$, $a,b
\in A$ in a case of a left $H$-module
	\[ \alpha_{H,A} ((xy) \otimes (ab)) = \alpha_{H,A} (x \otimes \alpha_{H,A} (y \otimes (ab))) = \alpha_{H,A} (x \otimes (
\sum_{(y)} (-1)^{|y^{''}||a|} \alpha_{H,A} (y^{'} \otimes a) \alpha_{H,A}(y^{''} \otimes b) )) = \]
	\[ = \sum_{(x),(y)} (-1)^{|y^{''}||a| + |x^{''}||y^{'}| + |x^{''}||a|} \alpha_{H,A} ( x^{'} \otimes \alpha_{H,A} (y^{'}
\otimes a) ) \alpha_{H,A} ( x^{''} \otimes \alpha_{H,A} (y^{''} \otimes b) ) = \]
	\[ = \sum_{(x),(y)} (-1)^{|y^{''}||a| + |x^{''}||y^{'}| + |x^{''}||a|} \alpha_{H,A} ((x^{'}y^{'}) \otimes a)
\alpha_{H,A}((x^{''}y^{''}) \otimes b) ) = \]	
	\[ = \mu_{A}\circ\mu_{H\otimes H,A\otimes A}(\Delta(xy)\otimes(a\otimes b))=\mu_{A}\circ\mu_{H\otimes H,A\otimes
A}(\sum_{(xy)}(xy)^{'}\otimes(xy)^{''}\otimes(a\otimes b))= \]
	\[ = \mu_{A} (\sum_{(xy)}(-1)^{|(xy)^{''}||a|} \alpha_{H,A} ((xy)^{'} \otimes a) \otimes \alpha_{H,A} ((xy)^{''} \otimes b)
) = \sum_{(xy)} (-1)^{|(xy)^{''}||a|} \alpha_{H,A}( (xy)^{'} \otimes a) \alpha_{H,A}((xy)^{''} \otimes b ). \]
	
	We have for all $x,y \in H$, $a,b \in A$ in a case of a right $H$-module:
	\[ \beta_{A,H} ((ab) \otimes (xy))=\beta_{A,H}(\beta_{A,H}((ab) \otimes x) \otimes
y)=\sum_{(x)}(-1)^{|b||x^{'}|}\beta_{A,H}((\beta_{A,H}(a\otimes x^{'}) \beta_{A,H}(b\otimes x^{''}))\otimes y)= \]
	\[ = \sum_{(x),(y)} (-1)^{|b||x^{'}|+|y^{'}|(|b|+|x^{''}|)} \beta_{A,H}(\beta_{A,H}(a \otimes x^{'}) \otimes y^{'})
\beta_{A,H}(\beta_{A,H}(b \otimes x^{''}) \otimes y^{''}) = \]
	\[ = \sum_{(x),(y)} (-1)^{|b||x^{'}|+|y^{'}|(|b|+|x^{''}|)} \beta_{A,H}(a \otimes (x^{'}y^{'})) \beta_{A,H}(b \otimes
(x^{''}y^{''})) = \]
	\[ = \mu_{A} \circ \mu_{A\otimes A,H\otimes H} ((a\otimes b)\otimes\Delta(xy)) = \mu_{A} \circ \mu_{A\otimes A,H\otimes H}
(\sum_{(xy)} (a\otimes b) \otimes (xy)^{'} \otimes (xy)^{''})=  \]
	\[ = \mu_{A} (\sum_{(xy)} (-1)^{|b||(xy)^{'}|} \beta_{A,H} (a \otimes (x y)^{'}) \otimes \beta_{A,H} (b \otimes (xy)^{''}) )
= \sum_{(xy)} (-1)^{|b||(xy)^{'}|} \beta_{A,H}(a \otimes (xy)^{'}) \beta_{A,H}(b \otimes (xy)^{''}). \]
	
\end{proof}

\moduleAlgebra*
\begin{proof}
	First we verify that a mapping $\gamma: H \otimes H \to H$ induces on $H$ a structure of a left $H$-module. Indeed, we have
	\[ \gamma( 1_H \otimes x ) = 1_H x S(1_H) = x, \]
	\[ \gamma ( b \otimes \gamma( a \otimes x )) = \gamma ( b \otimes ( \sum_{(a)} (-1)^{|x||a^{''}|} a^{'} x S(a^{''}) )) =
\sum_{(a),(b)} (-1)^{|x||a^{''}| + |b^{''}|(|a^{'}|+|x|+|a^{''}|)} b^{'} a^{'} x S(a^{''}) S(b^{''}) = \]
	\[ = \sum_{(a),(b)} (-1)^{|x||a^{''}| + |b^{''}|(|a^{'}|+|x|)} b^{'} a^{'} x S(b^{''}a^{''}) = \mu \circ ( \mu \otimes id )
\circ ( id \otimes \tau_{H,H} ) \circ ( id \otimes S \otimes id ) ( \Delta(ba) \otimes x ) = \]
	\[ = \sum_{(ba)} (-1)^{(ba)^{''}||x|} (ba)^{'} x S((ba)^{''}) = \gamma ((ba) \otimes x) \]
	for all $a,b,x \in H$.
	
	We show that $H$ is a left module-superalgebra on over $H$. We have
	\[ \gamma(a \otimes 1_H) = \sum_{(a)} a^{'} S(a^{''}) = \epsilon(a) 1_H, \]			
	\[ \sum_{(a)} (-1)^{|x||a^{''}|} \gamma(a^{'} \otimes x) \gamma(a^{''} \otimes y) = \sum_{(a),(a^{'}),(a^{''})}
(-1)^{|x||a^{''}| + |x||(a^{'})^{''}| + |y||(a^{''})^{''}|} (a^{'})^{'} x S((a^{'})^{''}) (a^{''})^{'} y S((a^{''})^{''}) = \]
	\[ = \mu \circ ( \mu \otimes id_{H} ) \circ (\mu \otimes \mu \otimes \mu) \circ (id_{H} \otimes id_{H} \otimes S \otimes
id_{H} \otimes id_{H} \otimes S) \circ (id_{H} \otimes id_{H} \otimes id_{H} \otimes id_{H} \otimes \tau_{H,H}) \circ \]
	\[ \circ (id_{H} \otimes \tau_{H \otimes H \otimes H,H} \otimes id_{H}) \circ (((\Delta \otimes \Delta) \circ \Delta)
\otimes id_{H} \otimes id_{H}) ( a \otimes x \otimes y ) = \]
	\[ = \mu \circ ( \mu \otimes id_{H} ) \circ (\mu \otimes \mu \otimes \mu) \circ (id_{H} \otimes id_{H} \otimes S \otimes
id_{H} \otimes id_{H} \otimes S) \circ (id_{H} \otimes id_{H} \otimes id_{H} \otimes id_{H} \otimes \tau_{H,H}) \circ \]
	\[ \circ (id_{H} \otimes \tau_{H \otimes H \otimes H,H} \otimes id_{H}) \circ (((id_{H} \otimes ((\Delta \otimes id_{H})
\circ \Delta)) \circ \Delta) \otimes id_{H} \otimes id_{H}) ( a \otimes x \otimes y ) = \]
	\[ = \sum_{(a),(a^{''}),((a^{''})^{'})} (-1)^{||x| |a^{''}| + |y||(a^{''})^{''}|} a^{'} x S(((a^{''})^{'})^{'})
((a^{''})^{'})^{''} y S((a^{''})^{''}) = \]
	\[ = \sum_{(a),(a^{''})} (-1)^{|x||a^{''}| + |y||(a^{''})^{''}|} a^{'} x \epsilon((a^{''})^{'}) y S((a^{''})^{''}) = \]
	\[ = \mu \circ (\mu \otimes \mu) \circ ( id_{H} \otimes id_{H} \otimes id_{H} \otimes S ) \circ (id_{H} \otimes \tau_{H,H
\otimes H}) \circ (((id_{H} \otimes (\nu_{k,H} \circ ( \epsilon \otimes id_{H} ) \circ \Delta)) \circ \Delta) \otimes id_{H}
\otimes id_{H}) (a \otimes x \otimes y) = \]
	\[ = \mu \circ (\mu \otimes \mu) \circ ( id_{H} \otimes id_{H} \otimes id_{H} \otimes S ) \circ (id_{H} \otimes \tau_{H,H
\otimes H}) \circ (((id_{H} \otimes id_{H}) \circ \Delta) \otimes id_{H} \otimes id_{H}) (a \otimes x \otimes y) = \]
	\[ = \sum_{(a)} (-1)^{|x||a^{''}| + |y||a^{''}|} a^{'} x y S(a^{''}) = \sum_{(a)} (-1)^{|a^{''}|( |x| + |y| )} a^{'} xy
S(a^{''}) = \gamma(a \otimes (xy)) \]
	for all $a,x,y \in H$.
	
	We verify that a mapping $\delta: H \otimes H \to H$ induces on $H$ a structure of a $H$-module. Indeed, we have
	\[ \delta(x \otimes 1_H) = S(1_H) x 1_H = x, \]
	\[ \delta ( \delta(x \otimes {a}) \otimes b) = \beta ( ( \sum_{(a)} (-1)^{|x||a^{'}|} S(a^{'}) x a^{''} ) \otimes {b} ) =
\]
	\[ = \sum_{(a),(b)} (-1)^{|x||a^{'}| + |b^{'}|(|a^{'}|+|x|+|a^{''}|)} S(b^{'}) S(a^{'}) x a^{''} b^{''} = \sum_{(a),(b)}
(-1)^{|x||a^{'}| + |b^{'}|(|x|+|a^{''}|)} S(a^{'}b^{'}) x a^{''} b^{''} = \]
	\[ = \mu \circ ( \mu \otimes id ) \circ ( \tau_{H,H} \otimes id ) \circ ( id \otimes S \otimes id ) \circ ( x \otimes
\Delta(ab) ) = \sum_{(ab)} (-1)^{|x||(ab)^{'}|} S((ab)^{'}) x (ab)^{''} = \delta ( x \otimes (ab) ) \]
	for all $a,b,x \in H$.
	
	We show that $H$ is a right module-superalgebra on $H$. We have
	\[ \delta(1_H \otimes a) = \sum_{(a)} S(a^{'}) a^{''} = \epsilon(a) 1_H, \]
	
	\[ \sum_{(a)} (-1)^{|y||a^{'}|} \delta(x \otimes a^{'}) \delta(y \otimes a^{''}) = \sum_{(a),(a^{'}),(a^{''})}
(-1)^{|y||a^{'}| + |x||(a^{'})^{'}| + |y||(a^{''})^{'}|} S((a^{'})^{'}) x (a^{'})^{''} S((a^{''})^{'}) y (a^{''})^{''} = \]	
	\[ = \mu \circ ( \mu \otimes id_{H} ) \circ (\mu \otimes \mu \otimes \mu) \circ (S \otimes id_{H} \otimes id_{H} \otimes S
\otimes id_{H} \otimes id_{H}) \circ (\tau_{H,H} \otimes id_{H} \otimes id_{H} \otimes id_{H} \otimes id_{H}) \circ \]
	\[ \circ (id_{H} \otimes \tau_{H,H \otimes H \otimes H} \otimes id_{H}) \circ (id_{H} \otimes id_{H} \otimes ((\Delta
\otimes \Delta) \circ \Delta)) ( x \otimes y \otimes a ) = \]
	\[ = \mu \circ ( \mu \otimes id_{H} ) \circ (\mu \otimes \mu \otimes \mu) \circ (S \otimes id_{H} \otimes id_{H} \otimes S
\otimes id_{H} \otimes id_{H}) \circ (\tau_{H,H} \otimes id_{H} \otimes id_{H} \otimes id_{H} \otimes id_{H}) \circ \]
	\[ \circ (id_{H} \otimes \tau_{H,H \otimes H \otimes H} \otimes id_{H}) \circ (id_{H} \otimes id_{H} \otimes ((id_{H}
\otimes ((\Delta \otimes id_{H}) \circ \Delta)) \circ \Delta)) ( x \otimes y \otimes a ) = \]
	\[ = \sum_{(a),(a^{''}),((a^{''})^{'})} (-1)^{|y|( |a^{'}| + |(a^{''})^{'}| ) + |x||a^{'}| } S(a^{'}) x ((a^{''})^{'})^{'}
S(((a^{''})^{'})^{''}) y (a^{''})^{''} = \]
	\[ = \sum_{(a),(a^{''}),((a^{''})^{'})} (-1)^{|y|( |a^{'}| + |(a^{''})^{'}| ) + |x||a^{'}| } S(a^{'}) x
\epsilon((a^{''})^{'}) y (a^{''})^{''} =  \sum_{(a),(a^{''}),((a^{''})^{'})} (-1)^{|y||a^{'}| + |x||a^{'}| } S(a^{'}) x
\epsilon((a^{''})^{'}) y (a^{''})^{''} = \]
	\[ = \mu \circ (\mu \otimes \mu) \circ ( S \otimes id_{H} \otimes id_{H} \otimes id_{H} ) \circ (\tau_{H \otimes H,H}
\otimes id_{H} ) \circ (id_{H} \otimes id_{H} \otimes ((id_{H} \otimes (\nu_{k,H} \circ ( \epsilon \otimes id_{H} ) \circ
\Delta)) \circ \Delta)) (x \otimes y \otimes a) = \]
	\[ = \mu \circ (\mu \otimes \mu) \circ ( S \otimes id_{H} \otimes id_{H} \otimes id_{H} ) \circ (\tau_{H \otimes H,H}
\otimes id_{H} ) \circ (id_{H} \otimes id_{H} \otimes \Delta) (x \otimes y \otimes a) = \]
	\[ = \sum_{(a)} (-1)^{(|y| + |x|)|a^{'}|} S(a^{'}) xy a^{''} = \delta((xy) \otimes a) \]
	for all $a,x,y \in H$.
\end{proof}

\RightLeftaction*
\begin{proof}
	We verify that a left action $\alpha$ induces a structure of a left $H$-module on $A^{*}$.
	\[ \alpha ( 1_{H} \otimes f )(a) = (-1)^{|1_{H}||f|}f(\alpha_{H,A}(S^{-1}(1_{H}) \otimes a))=f(a),\]
	\[ \alpha ( y \otimes \alpha(x \otimes f) )(a) = (-1)^{|x||f|} \alpha (y \otimes f (\alpha_{H,A}(S^{-1}(x) \otimes ?))) (a)
= \]
	\[ = (-1)^{|y|(|x| + |f|) + |x||f|} f(\alpha_{H,A}(S^{-1}(x)S^{-1}(y) \otimes a)) = (-1)^{(|x| + |y|)|f|}
f(\alpha_{H,A}(S^{-1}(yx) \otimes a)) = \alpha ( (yx) \otimes f ) (a) \]
	for  all $x,y \in H, \; f \in A^{*}, a \in A$.
	
	We verify that a mapping $\alpha: H \otimes (A^{op})^{*} \to (A^{op})^{*}$, which is a left action of $H$ on $(A^{op})^{*}$,
is also a supercoalgebra morphism.
	
	1.
	\[ \epsilon_{A^{*}} \circ \alpha = \epsilon_{H \otimes A^{*}}, \]
	\[ \epsilon_{A^{*}}(\alpha(x \otimes f)) = \alpha(x \otimes f)(1_{A}) = (-1)^{|x||f|} f( \alpha_{H,A} (S^{-1}(x) \otimes
1_{A})) = (-1)^{|x||f|} \epsilon_{H}(S^{-1}(x)) f(1_{A}) = \]
	\[ = \epsilon_{H}(x) \epsilon_{A^{*}}(f) = \epsilon_{H \otimes A^{*}} (x \otimes f) \]
	for all $x \in H, \; f \in (A^{op})^{*}$.
	
	2.
	\[ \Delta_{(A^{op})^{*}} \circ \alpha = ( \alpha \otimes \alpha ) \circ \Delta_{H \otimes (A^{op})^{*}}, \]
	\[ \lambda_{A,A} (\Delta_{(A^{op})^{*}} \circ \alpha(x \otimes f)) (a \otimes b) = \sum_{(\alpha(x \otimes f))} (-1)^{|a||b|
+ |(\alpha(x \otimes f))^{''}||b|} (\alpha(x \otimes f))^{'} (b) (\alpha(x \otimes f))^{''} (a) = \]
	\[ = (-1)^{|a||b|} \sum_{(\alpha(x \otimes f))} (-1)^{|(\alpha(x \otimes f))^{''}||b|} (\alpha(x \otimes f))^{'} (b)
(\alpha(x \otimes f))^{''} (a) = (-1)^{|a||b|}  \lambda_{A,A}  (\Delta_{A^{*}} (\alpha(x \otimes f)))  ( b \otimes a ) = \]
	\[ = (-1)^{|a||b|} \alpha(x \otimes f)  (ba) = (-1)^{|a||b| + |x| |f|} f( \alpha_{H,A} (S^{-1}(x) \otimes (ba))) = \]
	\[ = \sum_{(x)} (-1)^{|a||b| + |x| |f| + |x^{'}||x^{''}| + |b||x^{'}|} f( \alpha_{H,A} (S^{-1}(x^{''}) \otimes b)
\alpha_{H,A} (S^{-1}(x^{'}) \otimes a) ) = \]
	\[ = \sum_{(x)} (-1)^{|a||b| + |x| |f| + |x^{'}||x^{''}| + |b||x^{'}|} \lambda_{A,A}  (\Delta_{A^{*}} (f)) (
\alpha_{H,A}(S^{-1}(x^{''}) \otimes b) \otimes \alpha_{H,A}(S^{-1}(x^{'}) \otimes a)) = \]
	\[ = \sum_{(x),(f)} (-1)^{|a||b| + |x| |f| + |x^{'}||x^{''}| + |b||x^{'}| + |f^{''}| ( |x^{''}| + |b| ) } f^{'} (
\alpha_{H,A} (S^{-1}(x^{''})  \otimes b)) f^{''} ( \alpha_{H,A} (S^{-1}(x^{'}) \otimes a)) = \]
	\[ = \sum_{(x),(f)} (-1)^{|a||b| + |x^{'}| |f^{'}| + |x^{''}| |f^{''}| + |x^{'}||x^{''}| + |b||x^{'}| + |f^{''}| ( |x^{''}|
+ |b| ) } \alpha (x^{''} \otimes f^{'}) (b) \alpha ( x^{'} \otimes f^{''}) (a) = \]	
	\[ = \sum_{(x),(f)} (-1)^{|a|( |x^{''}| + |f^{'}| ) + |f^{''}||x^{''}| + |f^{''}||f^{'}| } \alpha ( x^{'} \otimes f^{''})
(a) \alpha (x^{''} \otimes f^{'}) (b) = \]
	\[ = \lambda_{A,A} (( \alpha \otimes \alpha ) \circ \Delta_{H \otimes (A^{op})^{*}} ( x \otimes f )) ( a \otimes b ) \]
	for all $x \in H, \; f \in (A^{op})^{*}, \; a,b \in A$. Since $\lambda_{A,A}$ is the superspace isomorphism, the result
follows.
	
	We verify that a right action $\beta$ induces a structure of a right $H$-module on $A^{*}$.
	\[ \beta ( f \otimes 1_{H} )(a) = (-1)^{|1_{H}||a|}f(\beta_{A,H}(a \otimes S^{-1}(1_{H})))=f(a),\]
	\[ \beta ( \beta(f \otimes x) \otimes y )(a) = (-1)^{|x||?|} \beta ( f (\beta_{A,H}(? \otimes S^{-1}(x))) \otimes y ) (a) =
\]
	\[ = (-1)^{|y||a| + |x|(|a|+|y|)} f(\beta_{A,H}(a \otimes S^{-1}(y)S^{-1}(x))) = (-1)^{(|x| + |y|)|a|} f(\beta_{A,H}(a
\otimes S^{-1}(xy))) = \beta ( f \otimes xy ) (a) \]
	for  all $x,y \in H, \; f \in A^{*}, a \in A$.
	
	We verify that a mapping $\beta:(A^{op})^{*} \otimes H \to (A^{op})^{*}$, which is a right action of $H$ on $A^{*}$, is also
a supercoalgebra morphism.
	
	1.
	\[ \epsilon_{A^{*}} \circ \beta= \epsilon_{H \otimes A^{*}}, \]
	\[ \epsilon_{A^{*}}(\beta(f \otimes x)) = \beta(f \otimes x)(1_{A}) = (-1)^{|x||1_{A}|} f( \beta_{A,H} (1_{A} \otimes
S^{-1}(x))) = \epsilon_{H}(S^{-1}(x)) f(1_{A}) = \]
	\[ = \epsilon_{H}(x) \epsilon_{A^{*}}(f) = \epsilon_{H \otimes A^{*}} (x \otimes f) \]
	for all $x \in H, \; f \in (A^{op})^{*}$.
	
	2.
	\[ \Delta_{(A^{op})^{*}} \circ \beta = ( \beta \otimes \beta ) \circ \Delta_{(A^{op})^{*} \otimes H}, \]
	\[\lambda_{A,A}(\Delta_{(A^{op})^{*}}\circ\beta(f\otimes x))(a\otimes b)=\sum_{(\beta(f\otimes
x))}(-1)^{|a||b|+|(\beta(f\otimes x))^{''}||b|}(\beta(f\otimes x))^{'}(b)(\beta(f\otimes x))^{''}(a)=\]
	\[=(-1)^{|a||b|}\sum_{(\beta(f\otimes x))}(-1)^{|(\beta(f\otimes x))^{''}||b|}(\beta(f\otimes x))^{'}(b)(\beta(f\otimes
x))^{''}(a)=(-1)^{|a||b|}\lambda_{A,A}(\Delta_{A^{*}}(\beta(f\otimes x)))(b\otimes a)=\]
	\[=(-1)^{|a||b|}\beta(f\otimes x)(ba)=(-1)^{|a||b|+|x|(|b|+|a|)}f(\beta_{A,H} (ba \otimes S^{-1}(x)))=\]
	\[=\sum_{(x)}(-1)^{|a||b|+|x|(|b|+|a|)+|x^{'}||x^{''}|+|x^{''}||a|}f(\beta_{A,H}(b \otimes S^{-1}(x^{''})) \beta_{A,H}(a
\otimes S^{-1}(x^{'})))=\]
	\[=\sum_{(x)}(-1)^{|a||b|+|x|(|b|+|a|)+|x^{'}||x^{''}|+|x^{''}||a|}\lambda_{A,A}(\Delta_{A^{*}}(f))( \beta_{A,H} (b \otimes
S^{-1}(x^{''}))\otimes \beta_{A,H} (a \otimes S^{-1}(x^{'})))=\]
	\[=\sum_{(x)}(-1)^{|a||b|+|x|(|b|+|a|)+|x^{'}||x^{''}|+|x^{''}||a|+|f^{''}|(|b|+|x^{''}|)}f^{'}(\beta_{A,H}(b \otimes
S^{-1}(x^{''})))f^{''}( \beta_{A,H} (a \otimes S^{-1}(x^{'})))=\]
	\[=\sum_{(x)}(-1)^{|a||b|+|x^{'}||b|+|x^{''}||a|+|x^{'}||x^{''}|+|x^{''}||a|+|f^{''}|(|b|+|x^{''}|)}\beta(f^{'}\otimes
x^{''})(b)\beta(f^{''}\otimes x^{'})(a)=\]
	
\[=\sum_{(x)}(-1)^{|a|(|f^{'}|+|x^{''}|)+|f^{''}||x^{''}|+|x^{'}||x^{''}|+(|x^{'}|+|f^{''}|)(|f^{'}|+|x^{''}|)}\beta(f^{'}\otimes
x^{''})(b)\beta(f^{''}\otimes x^{'})(a)=\]
	\[=\sum_{(x)}(-1)^{|f^{'}||f^{''}|+|f^{'}||x^{'}|+|a|(|f^{'}|+|x^{''}|)}\beta(f^{''}\otimes x^{'})(a)\beta(f^{'}\otimes
x^{''})(b)=\]
	\[=\lambda_{A,A}((\beta\otimes\beta)\circ\Delta_{(A^{op})^{*}\otimes H}(f\otimes x))(a\otimes b)\]
	for all $x \in H, \; f \in (A^{op})^{*}, \; a,b \in A$. Since $\lambda_{A,A}$ is the superspace isomorphism, the result
follows.
\end{proof}

\hondualop*
\begin{proof}
	It follows from Proposition \ref{pr:moduleAlgebra} that a Hopf superalgebra $H$ induces a structure of a left (respectively
of a right) module-superalgebra over itself by the mapping \ref{eq:leftmult} (\ref{eq:rightmult}). It follows from Lemma
\ref{cl:action} that we can endow $(H^{op})^{*}$ with a structure of a left (respectively of a right) module-supercoalgebra over
$H$ by the formula \ref{eq:modulesupercoalgebraleft} (\ref{eq:modulesupercoalgebraright}).
	\[\alpha(x\otimes f)(a)=(-1)^{|x||f|}f(\gamma(S^{-1}(x)\otimes
a))=\sum_{(x)}(-1)^{|x||f|+|x^{'}||x^{''}|+|x^{'}||a|}f(S^{-1}(x^{''})aS(S^{-1}(x^{'})))=\]
	\[=\sum_{(x)}(-1)^{|x||f|+|x^{'}||x^{''}|+|x^{'}||a|}f(S^{-1}(x^{''})ax^{'}),\]
	\[\beta(f\otimes x)(a)=(-1)^{|x||a|}f(\delta(a\otimes
S^{-1}(x)))=\sum_{(x)}(-1)^{|x||a|+|x^{'}||x^{''}|+|x^{''}||a|}f(S(S^{-1}(x^{''}))aS^{-1}(x^{'}))=\]
	\[=\sum_{(x)}(-1)^{|x||a|+|x^{'}||x^{''}|+|x^{''}||a|}f(x^{''}aS^{-1}(x^{'}))\]
	for all $x,a\in H, \; f\in(H^{op})^{*}$.
\end{proof}

\rightAction*
\begin{proof}
	Consider a Hopf superalgebra $(H^{cop})^{*}$. It follows from Corollary \ref{cl:caction} that we can endow a Hopf
superalgebra $(((H^{cop})^{*})^{op})^{*}$ with a structure of a right module-supercoalgebra over $(H^{cop})^{*}$ by the formula
	\[\beta^{'}(a\otimes f)(g)=\sum_{(f)}(-1)^{|f||g|+|f^{'}||f^{''}|+|f^{''}||g|}a(f^{''}gS^{*}(f^{'}))\]
	for all $a\in(((H^{cop})^{*})^{op})^{*}, \; f,g \in (H^{cop})^{*}$.
	It follows from Proposition \ref{pr:Dualcopop} that there exists a Hopf superalgebra morphism $ev_{H}:H \to
(((H^{cop})^{*})^{op})^{*}$, such that $ev_{H}(x)=E_{x}$, where $E_{x}(f)=(-1)^{|x||f|}f(x)$ for all $x\in H,f\in H^{*}$. Then
	\[\beta^{'}(ev_{H}(a)\otimes f)(g)=\sum_{(f)}(-1)^{|f||g|+|f^{'}||f^{''}|+|f^{''}||g|}ev_{H}(a)(f^{''}gS^{*}(f^{'}))=\]
	\[=\sum_{(f)}(-1)^{|f||g|+|f^{'}||f^{''}|+|f^{''}||g|+|a|(|f|+|g|)}(f^{''}gS^{*}(f^{'}))(a)=\]
	\[ = \sum_{(f)}(-1)^{|f||g|+|f^{'}||f^{''}|+|f^{''}||g|+|a|(|f|+|g|)} \mu_{(H^{cop})^{*}} ( f^{''}g \otimes S^{*}(f^{'}) )
(a) = \]
	\[ = \sum_{(f)}(-1)^{|f||g|+|f^{'}||f^{''}|+|f^{''}||g|+|a|(|f|+|g|)} (\tau_{H,H} \circ \Delta_{H})^{*} (\lambda_{H,H} (
f^{''}g \otimes S^{*}(f^{'}) )) (a) = \]
	\[ = \sum_{(f),(a)}(-1)^{|f||g|+|f^{'}||f^{''}|+|f^{''}||g|+|a|(|f|+|g|) + |a^{'}||a^{''}| + |f^{'}||a^{''}|}
\mu_{(H^{cop})^{*}} ( f^{''} \otimes g ) (a^{''}) S^{*}(f^{'})(a^{'}) = \]
	\[ = \sum_{(f),(a),(a^{''})}(-1)^{|f||g|+|f^{'}||f^{''}|+|f^{''}||g|+|a|(|f|+|g|) + |a^{'}||a^{''}| + |f^{'}||a^{''}| +
|g||(a^{''})^{''}| + |(a^{''})^{'}||(a^{''})^{''}|} f^{''} ((a^{''})^{''}) g ((a^{''})^{'}) f^{'} (S(a^{'})) = \]
	\[ = \sum_{(a),(a^{''})}(-1)^{|f||g|+|a|(|f|+|g|) + |(a^{''})^{'}||(a^{''})^{''}|} \lambda_{H,H}  (\Delta_{(H^{cop})^{*}}
(f)) (S(a^{'}) \otimes (a^{''})^{''}) g((a^{''})^{'}) = \]
	\[ = \sum_{(a),(a^{''})}(-1)^{|f||g|+|a|(|f|+|g|) + |(a^{''})^{'}||(a^{''})^{''}|} g( f(S(a^{'}) (a^{''})^{''})
(a^{''})^{'}) = \]
	\[ = \sum_{(a),(a^{''})}(-1)^{|a||f| + |(a^{''})^{'}||(a^{''})^{''}|} ev_{H}(f(S(a^{'}) (a^{''})^{''}) (a^{''})^{'}) (g) \]
	for all $a \in H, \; f,g\in(H^{cop})^{*}$.
	
	Consequently,
	\[ \beta^{'}(ev_{H}(a)\otimes f) = \sum_{(a),(a^{''})} (-1)^{|a||f| + |(a^{''})^{'}||(a^{''})^{''}|}  ev_{H}
(f(S(a^{'})(a^{''})^{''}) (a^{''})^{'}). \]
	
	It follows from Remark \ref{rm:antipodeisom} that a superspace morphism $(S^{-1})^{*}:(H^{op})^{*} \to (H^{cop})^{*}$ is a
Hopf superalgebra isomorphism. Thus we have a right action of the Hopf superalgebra $(H^{op})^{*}$ on $H$, defined by the
formula
	\[ \beta(a \otimes f) = ev_{H}^{-1} \circ \beta^{'}(ev_{H}(a)\otimes (S^{-1})^{*}(f)) = ev_{H}^{-1} \circ
\beta^{'}(ev_{H}(a)\otimes (f \circ S^{-1})) = \]
	\[ = \sum_{(a),(a^{''})} (-1)^{|a||f| + |(a^{''})^{'}||(a^{''})^{''}|} ev_{H}^{-1} \circ ev_{H} (f( S^{-1}
(S(a^{'})(a^{''})^{''}) ) (a^{''})^{'}) = \]
	\[ = \sum_{(a),(a^{''})} (-1)^{|a||f| + |(a^{''})^{''}|(|(a^{''})^{'}| + |a^{'}|)} f ( S^{-1} ((a^{''})^{''}) a^{'} )
(a^{''})^{'} \]	
	for all $a \in H, \; f \in (H^{op})^{*}$.
\end{proof}

\QBT*
\begin{proof}
	It follows from Corollary \ref{cl:caction} that there exists a superspace morphism $\alpha$ which induces on $X$ a structure
of a left module-supercoalgebra over $H$. It follows from Corollary \ref{cl:rightAction} that there exists a superspace morphism
$\beta$ which induces on $H$ a structure of a right module-supercoalgebra over $X$.
	
	We verify that relations \ref{eq:twistBi1} - \ref{eq:twistBi5} from Definition \ref{df:twistedB} hold.
	
	Relation \ref{eq:twistBi1}. We show that for all $a \in H$, $f,g \in X$
	\[ a \cdot (fg) = \sum_{(a),(f)} (-1)^{ |f^{'}||f^{''}| + |f^{''}||a^{''}| } (a^{'} \cdot f^{''}) ((a^{''})^{f^{'}} \cdot
g). \]
	
	Indeed, we have for all $x \in H$
	\[ \sum_{(a),(f)} (-1)^{|f^{'}||f^{''}| + |f^{''}||a^{''}|} (a^{'} \cdot f^{''}) ((a^{''})^{f^{'}} \cdot g) (x) = \]
	\[ = \sum_{(a),(f),(x)} (-1)^{|f^{'}||f^{''}| + |f^{''}||a^{''}| + |x^{'}||((a^{''})^{f^{'}} \cdot g| } (a^{'} \cdot f^{''})
(x^{'}) ((a^{''})^{f^{'}} \cdot g) (x^{''}) = \]		
	\[ = \sum_{(a),(f),(x),(a^{'}),(a^{''}),((a^{''})^{''})} (-1)^{|f^{'}||f^{''}| + |f^{''}||a^{''}| +
|x^{'}||((a^{''})^{f^{'}} \cdot g| + |a^{'}||f^{''}| + |(a^{'})^{'}| |(a^{'})^{''}| + |x^{'}||(a^{'})^{'}| + |a^{''}||f^{'}|} *
\]
	\[ * (-1)^{|((a^{''})^{''})^{''}| ( |((a^{''})^{''})^{'}| + |(a^{''})^{'}| )} * \]
	\[ * f^{''} ( S^{-1}((a^{'})^{''}) x^{'} (a^{'})^{'} )  (f^{'}( S^{-1}(((a^{''})^{''})^{''}) (a^{''})^{'} )
((a^{''})^{''})^{'} \cdot g) (x^{''}) = \]				
	\[ = \sum_{(a),(f),(x),(a^{'}),(a^{''}),((a^{''})^{''}),(((a^{''})^{''})^{'})} (-1)^{|f^{'}||f^{''}| + |f^{''}||a^{''}| +
|x^{'}||((a^{''})^{f^{'}} \cdot g| + |a^{'}||f^{''}| + |(a^{'})^{'}| |(a^{'})^{''}| + |x^{'}||(a^{'})^{'}| + |a^{''}||f^{'}|} *
\]
	\[ * (-1)^{|((a^{''})^{''})^{''}| ( |((a^{''})^{''})^{'}| + |(a^{''})^{'}| ) + |(a^{''})^{''})^{'}||g| +
|(((a^{''})^{''})^{'})^{'}||(((a^{''})^{''})^{'})^{''}| + |x^{''}||(((a^{''})^{''})^{'})^{'}| } * \]
	\[ * f^{''} ( S^{-1}((a^{'})^{''}) x^{'} (a^{'})^{'} )  f^{'}( S^{-1}(((a^{''})^{''})^{''}) (a^{''})^{'} ) g( S^{-1}(
(((a^{''})^{''})^{'})^{''} ) x^{''} (((a^{''})^{''})^{'})^{'} ) = \]
	\[ = \sum_{(a),(x),(a^{'}),(a^{''}),((a^{''})^{''}),(((a^{''})^{''})^{'})} (-1)^{|f||a| + |x^{'}|(|a^{''}| + |g|) +
|(a^{'})^{'}| |(a^{'})^{''}| + |x^{'}||(a^{'})^{'}| + |((a^{''})^{''})^{''}| ( |((a^{''})^{''})^{'}| + |(a^{''})^{'}| )} * \]
	\[ * (-1)^{ |(a^{''})^{''})^{'}||g| + |(((a^{''})^{''})^{'})^{'}||(((a^{''})^{''})^{'})^{''}| +
|x^{''}||(((a^{''})^{''})^{'})^{'}| } * \]
	\[ * \lambda_{H,H} (\Delta_{(H^{op})^{*}} (f)) ( S^{-1}((a^{'})^{''}) x^{'} (a^{'})^{'} \otimes S^{-1}(((a^{''})^{''})^{''})
(a^{''})^{'} ) g( S^{-1}( (((a^{''})^{''})^{'})^{''} ) x^{''} (((a^{''})^{''})^{'})^{'} ) = \]
	\[ = \sum_{(a),(x),(a^{'}),(a^{''}),((a^{''})^{''}),(((a^{''})^{''})^{'})} (-1)^{ (|((a^{''})^{''})^{''}| +
(a^{''})^{'}|)(|a^{'}| + |x^{'}|) + |f||a| + |x^{'}|(|a^{''}| + |g|) + |(a^{'})^{'}| |(a^{'})^{''}| + |x^{'}||(a^{'})^{'}|} *
\]
	\[ * (-1)^{ |((a^{''})^{''})^{''}| ( |((a^{''})^{''})^{'}| + |(a^{''})^{'}| ) + |(a^{''})^{''})^{'}||g| +
|(((a^{''})^{''})^{'})^{'}||(((a^{''})^{''})^{'})^{''}| + |x^{''}||(((a^{''})^{''})^{'})^{'}| } * \]
	\[ * f ( S^{-1}(((a^{''})^{''})^{''}) (a^{''})^{'} S^{-1}((a^{'})^{''}) x^{'} (a^{'})^{'}) g( S^{-1}(
(((a^{''})^{''})^{'})^{''} ) x^{''} (((a^{''})^{''})^{'})^{'} ) = \]
	\[ = \sum_{(a),(f),(x),(a^{'}),(a^{''}),((a^{''})^{''}),(((a^{''})^{''})^{'})} (-1)^{ |f||a| + |((a^{''})^{''})^{''}| (
|a^{'}| + |((a^{''})^{''})^{'}| + |(a^{''})^{'}| ) + (a^{''})^{'}| |a^{'}| + |(a^{'})^{'}| |(a^{'})^{''}| } * \]
	\[ * (-1)^{ |x^{'}| ( |g| + |((a^{''})^{''})^{'}| + |(a^{'})^{'}| ) + |g||(a^{''})^{''})^{'}| +
|(((a^{''})^{''})^{'})^{''}||(((a^{''})^{''})^{'})^{'}| + |x^{''}||(((a^{''})^{''})^{'})^{'}| } * \]
	\[ * f ( S^{-1}(((a^{''})^{''})^{''}) (a^{''})^{'} S^{-1}((a^{'})^{''}) x^{'} (a^{'})^{'}) g( S^{-1}(
(((a^{''})^{''})^{'})^{''} ) x^{''} (((a^{''})^{''})^{'})^{'} ) = \]		
	\[ = \nu_{k,k} \circ (\rho_{H^{*},H} \otimes \rho_{H^{*},H}) \circ (id_{X} \otimes (\mu_{H} \circ (\mu_{H} \otimes \mu_{H}))
\otimes id_{X} \otimes (\mu_{H} \circ (id_{H} \otimes \mu_{H}))) \circ \]
	\[ \circ (id_{X} \otimes S^{-1} \otimes id_{H} \otimes S^{-1} \otimes id_{H} \otimes id_{H} \otimes id_{X} \otimes S^{-1}
\otimes id_{H} \otimes id_{H}) \circ \]
	\[ \circ (id_{X} \otimes id_{H} \otimes id_{H} \otimes id_{H} \otimes id_{H} \otimes id_{H} \otimes id_{X} \otimes id_{H}
\otimes \tau_{H,H}) \circ (id_{X} \otimes id_{H} \otimes id_{H} \otimes id_{H} \otimes id_{H} \otimes id_{H} \otimes id_{X}
\otimes \tau_{H,H} \otimes id_{H}) \circ \]
	\[ \circ (id_{X} \otimes id_{H} \otimes id_{H} \otimes id_{H} \otimes id_{H} \otimes id_{H} \otimes \tau_{H \otimes H,X}
\otimes id_{H}) \circ (id_{X} \otimes id_{H} \otimes id_{H} \otimes id_{H} \otimes \tau_{H \otimes H \otimes H \otimes X, H}
\otimes id_{H}) \circ \]
	\[ \circ (id_{X} \otimes id_{H} \otimes id_{H} \otimes \tau_{H,H} \otimes id_{H} \otimes id_{H} \otimes id_{X} \otimes
id_{H} \otimes id_{H}) \circ (id_{X} \otimes id_{H} \otimes \tau_{H \otimes H,H} \otimes id_{H} \otimes id_{H} \otimes id_{X}
\otimes id_{H} \otimes id_{H}) \circ \]
	\[ \circ (id_{X} \otimes \tau_{H \otimes H \otimes H \otimes H \otimes H,H} \otimes id_{X} \otimes id_{H} \otimes id_{H})
\circ (\tau_{H \otimes H \otimes H \otimes H \otimes H \otimes H,X} \otimes id_{X} \otimes id_{H} \otimes id_{H}) \circ \]
	\[ \circ (((id_{H} \otimes id_{H} \otimes id_{H} \otimes ((\Delta_{H} \otimes id_{H}) \circ \Delta_{H})) \circ (\Delta_{H}
\otimes \Delta_{H}) \circ \Delta_{H}) \otimes id_{X} \otimes id_{X} \otimes \Delta_{H}) (a \otimes f \otimes g \otimes x) = \]
	\[ = \nu_{k,k} \circ (\rho_{H^{*},H} \otimes \rho_{H^{*},H}) \circ (id_{X} \otimes (\mu_{H} \circ (\mu_{H} \otimes \mu_{H}))
\otimes id_{X} \otimes (\mu_{H} \circ (id_{H} \otimes \mu_{H}))) \circ \]
	\[ \circ (id_{X} \otimes S^{-1} \otimes id_{H} \otimes S^{-1} \otimes id_{H} \otimes id_{H} \otimes id_{X} \otimes S^{-1}
\otimes id_{H} \otimes id_{H}) \circ \]
	\[ \circ (id_{X} \otimes id_{H} \otimes id_{H} \otimes id_{H} \otimes id_{H} \otimes id_{H} \otimes id_{X} \otimes id_{H}
\otimes \tau_{H,H}) \circ (id_{X} \otimes id_{H} \otimes id_{H} \otimes id_{H} \otimes id_{H} \otimes id_{H} \otimes id_{X}
\otimes \tau_{H,H} \otimes id_{H}) \circ \]
	\[ \circ (id_{X} \otimes id_{H} \otimes id_{H} \otimes id_{H} \otimes id_{H} \otimes id_{H} \otimes \tau_{H \otimes H,X}
\otimes id_{H}) \circ (id_{X} \otimes id_{H} \otimes id_{H} \otimes id_{H} \otimes \tau_{H \otimes H \otimes H \otimes X, H}
\otimes id_{H}) \circ \]
	\[ \circ (id_{X} \otimes id_{H} \otimes id_{H} \otimes \tau_{H,H} \otimes id_{H} \otimes id_{H} \otimes id_{X} \otimes
id_{H} \otimes id_{H}) \circ (id_{X} \otimes id_{H} \otimes \tau_{H \otimes H,H} \otimes id_{H} \otimes id_{H} \otimes id_{X}
\otimes id_{H} \otimes id_{H}) \circ \]
	\[ \circ (id_{X} \otimes \tau_{H \otimes H \otimes H \otimes H \otimes H,H} \otimes id_{X} \otimes id_{H} \otimes id_{H})
\circ (\tau_{H \otimes H \otimes H \otimes H \otimes H \otimes H,X} \otimes id_{X} \otimes id_{H} \otimes id_{H}) \circ \]
	\[ \circ (((id_{H} \otimes id_{H} \otimes id_{H} \otimes ((\Delta_{H} \otimes id_{H}) \circ \Delta_{H})) \circ ( id_{H}
\otimes \Delta_{H} \otimes id_{H} ) \circ (\Delta_{H} \otimes id_{H}) \circ \Delta_{H}) \otimes id_{X} \otimes id_{X} \otimes
\Delta_{H}) (a \otimes f \otimes g \otimes x) = \]
	\[ = \sum_{(a),(a^{'}),((a^{'})^{''}),(a^{''}),((a^{''})^{'}),(x)} (-1)^{ |f||a| + |(a^{''})^{''}| ( |a^{'}| +
|(a^{''})^{'}| ) + |((a^{'})^{''})^{''}| ( |(a^{'})^{'}| + |((a^{'})^{''})^{'}| ) + |((a^{'})^{''})^{'}||(a^{'})^{'}| + |x^{'}|(
|(a^{'})^{'}| + |(a^{''})^{'}| + |g| ) } * \]
	\[ * (-1)^{ |g||(a^{''})^{'}| + |((a^{''})^{'})^{''}||((a^{''})^{'})^{'}| + |x^{''}||((a^{''})^{'})^{'}| } * \]
	\[ * f(S^{-1}((a^{''})^{''}) ((a^{'})^{''})^{''} S^{-1}(((a^{'})^{''})^{'}) x^{'} (a^{'})^{'})  g
(S^{-1}(((a^{''})^{'})^{''}) x^{''} ((a^{''})^{'})^{'}) = \]
	\[ = \sum_{(a),(a^{'}),((a^{'})^{''}),(a^{''}),((a^{''})^{'}),(x)} (-1)^{ |f||a| + |(a^{''})^{''}| ( |a^{'}| +
|(a^{''})^{'}| ) + |(a^{'})^{''}||(a^{'})^{'}| + |x^{'}|( |(a^{'})^{'}| + |(a^{''})^{'}| + |g| ) } * \]
	\[ * (-1)^{ |g||(a^{''})^{'}| + |((a^{''})^{'})^{''}||((a^{''})^{'})^{'}| + |x^{''}||((a^{''})^{'})^{'}| } * \]
	\[ * f(S^{-1}((a^{''})^{''}) \epsilon((a^{'})^{''}) x^{'} (a^{'})^{'})  g (S^{-1}(((a^{''})^{'})^{''}) x^{''}
((a^{''})^{'})^{'}) = \]
	\[ = \sum_{(a),(a^{'}),((a^{'})^{''}),(a^{''}),((a^{''})^{'}),(x)} (-1)^{ |f||a| + |(a^{''})^{''}| ( |a^{'}| +
|(a^{''})^{'}| ) + |x^{'}|( |a^{'}| + |(a^{''})^{'}| + |g| ) } * \]
	\[ * (-1)^{ |g||(a^{''})^{'}| + |((a^{''})^{'})^{''}||((a^{''})^{'})^{'}| + |x^{''}||((a^{''})^{'})^{'}| } * \]
	\[ * f(S^{-1}((a^{''})^{''}) x^{'} a^{'})  g (S^{-1}(((a^{''})^{'})^{''}) x^{''} ((a^{''})^{'})^{'}) = \]
	\[ = \nu_{k,k} \circ (\rho_{H^{*},H} \otimes \rho_{H^{*},H}) \circ (id_{X} \otimes (\mu_{H} \circ (id_{H} \otimes \mu_{H}))
\otimes id_{X} \otimes (\mu_{H} \circ (id_{H} \otimes \mu_{H}))) \circ \]
	\[ \circ ( id_{X} \otimes S^{-1} \otimes id_{H} \otimes id_{H} \otimes id_{X} \otimes S^{-1} \otimes id_{H} \otimes id_{H} )
\circ \]
	\[ \circ (id_{X} \otimes id_{H} \otimes id_{H} \otimes id_{H} \otimes id_{X} \otimes id_{H} \otimes \tau_{H,H}) \circ
(id_{X} \otimes id_{H} \otimes id_{H} \otimes id_{H} \otimes id_{X} \otimes \tau_{H,H} \otimes id_{H}) \circ \]
	\[ \circ (id_{X} \otimes id_{H} \otimes id_{H} \otimes id_{H} \otimes \tau_{H \otimes H,X} \otimes id_{H}) \circ (id_{X}
\otimes id_{H} \otimes \tau_{H \otimes H \otimes H \otimes X,H} \otimes id_{H}) \circ \]
	\[ \circ (id_{X} \otimes \tau_{H \otimes H \otimes H, H} \otimes id_{X} \otimes id_{H} \otimes id_{H}) \circ (\tau_{H
\otimes H \otimes H \otimes H,X} \otimes id_{X} \otimes id_{H} \otimes id_{H}) \circ \]
	\[ \circ ((( id_{H} \otimes \Delta_{H} \otimes id_{H} ) \circ (id_{H} \otimes \Delta_{H}) \circ \Delta_{H}) \otimes id_{X}
\otimes id_{X} \otimes \Delta_{H}) (a \otimes f \otimes g \otimes x) = \]
	\[ = \nu_{k,k} \circ (\rho_{H^{*},H} \otimes \rho_{H^{*},H}) \circ (id_{X} \otimes (\mu_{H} \circ (id_{H} \otimes \mu_{H}))
\otimes id_{X} \otimes (\mu_{H} \circ (id_{H} \otimes \mu_{H}))) \circ \]
	\[ \circ ( id_{X} \otimes S^{-1} \otimes id_{H} \otimes id_{H} \otimes id_{X} \otimes S^{-1} \otimes id_{H} \otimes id_{H} )
\circ \]
	\[ \circ (id_{X} \otimes id_{H} \otimes id_{H} \otimes id_{H} \otimes id_{X} \otimes id_{H} \otimes \tau_{H,H}) \circ
(id_{X} \otimes id_{H} \otimes id_{H} \otimes id_{H} \otimes id_{X} \otimes \tau_{H,H} \otimes id_{H}) \circ \]
	\[ \circ (id_{X} \otimes id_{H} \otimes id_{H} \otimes id_{H} \otimes \tau_{H \otimes H,X} \otimes id_{H}) \circ (id_{X}
\otimes id_{H} \otimes \tau_{H \otimes H \otimes H \otimes X,H} \otimes id_{H}) \circ \]
	\[ \circ (id_{X} \otimes \tau_{H \otimes H \otimes H, H} \otimes id_{X} \otimes id_{H} \otimes id_{H}) \circ (\tau_{H
\otimes H \otimes H \otimes H,X} \otimes id_{X} \otimes id_{H} \otimes id_{H}) \circ \]
	\[ \circ (((\Delta_{H} \otimes \Delta_{H}) \circ \Delta_{H}) \otimes id_{X} \otimes id_{X} \otimes \Delta_{H}) (a \otimes f
\otimes g \otimes x) = \]
	\[ = \sum_{(a),(a^{'}),(a^{''}),(x)} (-1)^{ |f||a| + |(a^{''})^{''}|(|a^{'}| + |(a^{''})^{'}|) + |x^{'}|(|a^{'}| +
|(a^{''})^{'}| + |g|) + |g|(|(a^{'})^{''}| + |(a^{''})^{'}|) + |(a^{''})^{'}||(a^{'})^{''}| + |x^{''}||(a^{'})^{''}| } * \]
	\[ * f(S^{-1}((a^{''})^{''}) x^{'} (a^{'})^{'})  g(S^{-1}((a^{''})^{'}) x^{''} (a^{'})^{''}) = \]
	\[ = \sum_{(a)} (-1)^{ |a| (|f| + |g|) + |a^{'}|(|a^{''}| + |x|) } \mu_{X} ( f \otimes g ) ( S^{-1}(a^{''}) x a^{'} ) = a
\cdot (fg). \]

	Relation \ref{eq:twistBi2}. We show that for all $a \in H$
	\[ a \cdot 1_{X} = \epsilon_{H}(a) 1_{X}. \]
	We have
	\[ a \cdot 1_{X} = a \cdot \epsilon_{H} = \sum_{(a)} (-1)^{|a^{'}||a^{''}| + |?||a^{'}|} \epsilon_{H} (S^{-1}(a^{''})?a^{'})
= \sum_{(a)} \epsilon_{H}(a^{''}) \epsilon_{H}( a^{'} ) \epsilon_{H}(?) = \epsilon_{H}( \sum_{(a)} \epsilon_{H}(a^{'}) a^{''} )
\epsilon_H(?) = \]
	\[ = \epsilon_{H}(a) \epsilon_{H} = \epsilon_{H}(a) 1_{X}. \]

	Relation \ref{eq:twistBi3}. We show that for all $a,b \in H$, $f \in X$
	\[ (ab) ^ {f} = \sum_{(b),(f)} (-1)^{|f^{'}||f^{''}| + |b^{''}||f^{''}|} a^{b^{'} \cdot f^{''}} (b^{''})^{ f^{'}}. \]
	Indeed,
	\[ \sum_{(b),(f)} (-1)^{|f^{'}||f^{''}| + |b^{''}||f^{''}|} a^{b^{'} \cdot f^{''}} (b^{''})^{ f^{'}} = \]
	\[ = \sum_{(b),(f),(b^{''}),((b^{''})^{''})} (-1)^{|f^{'}||f^{''}| + |b^{''}||f^{''}| + |b^{''}||f^{'}| +
|((b^{''})^{''})^{''}|(|((b^{''})^{''})^{'}| + |(b^{''})^{'}|)} * \]
	\[ * a^{\sum_{(b^{'})} (-1)^{ |b^{'}||f^{''}| + |(b^{'})^{'}||(b^{'})^{''}| + |?||(b^{'})^{'}| }
f^{''}(S^{-1}((b^{'})^{''})?(b^{'})^{'}) } f^{'}(S^{-1}(((b^{''})^{''})^{''}) (b^{''})^{'}) ((b^{''})^{''})^{'} = \]
	\[ = \sum_{(b),(f),(b^{''}),((b^{''})^{''}),(b^{'})} (-1)^{|f^{'}||f^{''}| + |b^{''}||f^{''}| + |b^{''}||f^{'}| +
|((b^{''})^{''})^{''}|(|((b^{''})^{''})^{'}| + |(b^{''})^{'}|)} * \]
	\[ * (-1)^{|b^{'}||f^{''}| + |(b^{'})^{'}||(b^{'})^{''}| + |(b^{'})^{'}|(|a^{'}| + |(a^{''})^{''}|) + |a|(|f^{''}| +
|b^{'}|) + |(a^{''})^{''}|(|(a^{''})^{'}|+|a^{'}|)} * \]
	\[ * f^{''}(S^{-1}((b^{'})^{''}) S^{-1}((a^{''})^{''})a^{'} (b^{'})^{'}) (a^{''})^{'} f^{'}(S^{-1}(((b^{''})^{''})^{''})
(b^{''})^{'}) ((b^{''})^{''})^{'} = \]		
	\[ = \sum_{(b),(b^{''}),((b^{''})^{''}),(b^{'})} (-1)^{|b||f| + |a||f| +
|(a^{''})^{'}|(|(b^{''})^{'}|+|((b^{''})^{''})^{''}|) + |((b^{''})^{''})^{''}|(|((b^{''})^{''})^{'}| + |(b^{''})^{'}|)} * \]
	\[ * (-1)^{|(b^{'})^{'}||(b^{'})^{''}| + |(b^{'})^{'}|(|a^{'}| + |(a^{''})^{''}|) + |a||b^{'}| +
|(a^{''})^{''}|(|(a^{''})^{'}|+|a^{'}|)} * \]
	\[ * \lambda_{H,H} (\Delta_{(H^{op})^{*}} (f)) (S^{-1}((b^{'})^{''}) S^{-1}((a^{''})^{''})a^{'} (b^{'})^{'} \otimes
S^{-1}(((b^{''})^{''})^{''}) (b^{''})^{'}) (a^{''})^{'} ((b^{''})^{''})^{'} = \]
	\[ = \sum_{(b),(b^{''}),((b^{''})^{''}),(b^{'})} (-1)^{ (|(b^{'})^{''}| + |(a^{''})^{''}| + |a^{'}| + |(b^{'})^{'}|) (
|((b^{''})^{''})^{''}| + |(b^{''})^{'}| ) + |b||f| + |a||f| + |(a^{''})^{'}|(|(b^{''})^{'}|+|((b^{''})^{''})^{''}|) } * \]
	\[ * (-1)^{|((b^{''})^{''})^{''}|(|((b^{''})^{''})^{'}| + |(b^{''})^{'}|) + |(b^{'})^{'}||(b^{'})^{''}| +
|(b^{'})^{'}|(|a^{'}| + |(a^{''})^{''}|) + |a||b^{'}| + |(a^{''})^{''}|(|(a^{''})^{'}|+|a^{'}|)} * \]
	\[ * f(S^{-1}(((b^{''})^{''})^{''}) (b^{''})^{'} S^{-1}((b^{'})^{''}) S^{-1}((a^{''})^{''})a^{'} (b^{'})^{'}) (a^{''})^{'}
((b^{''})^{''})^{'} = \]
	\[ \sum_{(b),(b^{''}),((b^{''})^{''}),(b^{'})} (-1)^{ |f|(|a| + |b|) + |((b^{''})^{''})^{''}| (|((b^{''})^{''})^{'}| +
|(b^{''})^{'}| + |b^{'}| + |a| ) + |(b^{''})^{'}|( |b^{'}| + |a| ) } * \]
	\[ * (-1)^{|(b^{'})^{''}|(|(b^{'})^{'}| + |a|) + |(a^{''})^{''}|(|(a^{''})^{'}|+|a^{'}|) + |(b^{'})^{'}||(a^{''})^{'}|} *
\]
	\[ * f(S^{-1}(((b^{''})^{''})^{''}) (b^{''})^{'} S^{-1}((b^{'})^{''}) S^{-1}((a^{''})^{''})a^{'} (b^{'})^{'}) (a^{''})^{'}
((b^{''})^{''})^{'} = \]
	\[ = \nu_{k,H} \circ ((\rho_{H^{*},H} \circ (id_{X} \otimes  \mu_{H})) \otimes id_{H}) \circ (id_{X} \otimes \mu_{H} \otimes
id_{H} \otimes id_{H}) \circ (id_{X} \otimes \mu_{H} \otimes \mu_{H} \otimes \mu_{H} \otimes \mu_{H}) \circ \]
	\[ \circ (id_{X} \otimes S^{-1} \otimes id_{H} \otimes S^{-1} \otimes S^{-1} \otimes id_{H} \otimes id_{H} \otimes id_{H}
\otimes id_{H}) \circ \]
	\[ \circ (id_{X} \otimes id_{H} \otimes id_{H} \otimes id_{H} \otimes id_{H} \otimes id_{H} \otimes \tau_{H,H} \otimes
id_{H}) \circ (id_{X} \otimes id_{H} \otimes id_{H} \otimes id_{H} \otimes \tau_{H \otimes H,H} \otimes id_{H} \otimes id_{H})
\circ \]
	\[ \circ (id_{X} \otimes id_{H} \otimes id_{H} \otimes \tau_{H \otimes H \otimes H \otimes H,H} \otimes id_{H}) \circ
(id_{X} \otimes id_{H} \otimes \tau_{H \otimes H \otimes H \otimes H \otimes H,H} \otimes id_{H}) \circ \]
	\[ \circ (id_{X} \otimes \tau_{H \otimes H \otimes H \otimes H \otimes H \otimes H \otimes H,H}) \circ \tau_{H \otimes H
\otimes H \otimes H \otimes H \otimes H \otimes H \otimes H,X} \circ \]
	\[ \circ (((id_{H} \otimes \Delta_{H}) \circ \Delta_{H}) \otimes ((id_{H} \otimes id_{H} \otimes id_{H} \otimes \Delta_{H})
\circ (\Delta_{H} \otimes \Delta_{H}) \circ \Delta_{H}) \otimes id_{X}) (a \otimes b \otimes f) = \]
	\[ = \nu_{k,H} \circ ((\rho_{H^{*},H} \circ (id_{X} \otimes  \mu_{H})) \otimes id_{H}) \circ (id_{X} \otimes \mu_{H} \otimes
id_{H} \otimes id_{H}) \circ (\mu_{H} \otimes \mu_{H} \otimes \mu_{H} \otimes \mu_{H}) \circ \]
	\[ \circ (id_{X} \otimes S^{-1} \otimes id_{H} \otimes S^{-1} \otimes S^{-1} \otimes id_{H} \otimes id_{H} \otimes id_{H}
\otimes id_{H}) \circ \]
	\[ \circ (id_{X} \otimes id_{H} \otimes id_{H} \otimes id_{H} \otimes id_{H} \otimes id_{H} \otimes \tau_{H,H} \otimes
id_{H}) \circ (id_{X} \otimes id_{H} \otimes id_{H} \otimes id_{H} \otimes \tau_{H \otimes H,H} \otimes id_{H} \otimes id_{H})
\circ \]
	\[ \circ (id_{X} \otimes id_{H} \otimes id_{H} \otimes \tau_{H \otimes H \otimes H \otimes H,H} \otimes id_{H}) \circ
(id_{X} \otimes id_{H} \otimes \tau_{H \otimes H \otimes H \otimes H \otimes H,H} \otimes id_{H}) \circ \]
	\[ \circ (id_{X} \otimes \tau_{H \otimes H \otimes H \otimes H \otimes H \otimes H \otimes H,H}) \circ \tau_{H \otimes H
\otimes H \otimes H \otimes H \otimes H \otimes H \otimes H,X} \circ \]
	\[ \circ (((id_{H} \otimes \Delta_{H}) \circ \Delta_{H}) \otimes ((id_{H} \otimes id_{H} \otimes id_{H} \otimes \Delta_{H})
\circ (id_{H} \otimes \Delta_{H} \otimes id_{H}) \circ (id_{H} \otimes \Delta_{H}) \circ \Delta_{H}) \otimes id_{X}) (a \otimes
b \otimes f) = \]
	\[ = \sum_{(a),(a^{''}),(b),(b^{''}),((b^{''})^{'}),((b^{''})^{''})} (-1)^{|f|(|a| + |b|) + |((b^{''})^{''})^{''}|( |a| +
|b^{'}| + |(b^{''})^{'}| + |((b^{''})^{''})^{'}| ) + |((b^{''})^{'})^{''}|( |a| + |b^{'}| + |((b^{''})^{'})^{'}| ) } * \]
	\[ * (-1)^{|((b^{''})^{'})^{'}|( |a| + |b^{'}| ) + |(a^{''})^{''}|(|a^{'}| + |(a^{''})^{'}|) + |b^{'}||(a^{''})^{'}|} * \]
	\[ * f(S^{-1}(((b^{''})^{''})^{''})  ((b^{''})^{'})^{''}  S^{-1}(((b^{''})^{'})^{'})  S^{-1}((a^{''})^{''}) a^{'} b^{'})
(a^{''})^{'} ((b^{''})^{''})^{'} = \]
	\[ = \sum_{(a),(a^{''}),(b),(b^{''}),((b^{''})^{'}),((b^{''})^{''})} (-1)^{|f|(|a| + |b|) + |((b^{''})^{''})^{''}|( |a| +
|b^{'}| + |(b^{''})^{'}| + |((b^{''})^{''})^{'}| ) + |(b^{''})^{'}|( |a| + |b^{'}| ) } * \]
	\[ * (-1)^{|(a^{''})^{''}|(|a^{'}| + |(a^{''})^{'}|) + |b^{'}||(a^{''})^{'}|} * \]
	\[ * f(S^{-1}(((b^{''})^{''})^{''})  \epsilon_{H}((b^{''})^{'})  S^{-1}((a^{''})^{''}) a^{'} b^{'}) (a^{''})^{'}
((b^{''})^{''})^{'} = \]	
	\[ = \nu_{k,H} \circ ((\rho_{H^{*},H} \circ (id_{X} \otimes (\mu_{H} \circ (\mu_{H} \otimes \mu_{H})))) \otimes \mu_{H})
\circ (id_{X} \otimes S^{-1} \otimes S^{-1} \otimes id_{H} \otimes id_{H} \otimes id_{H} \otimes id_{H}) \circ \]
	\[ \circ (id_{X} \otimes id_{H} \otimes id_{H} \otimes id_{H} \otimes \tau_{H,H} \otimes id_{H}) \circ (id_{X} \otimes
id_{H} \otimes \tau_{H \otimes H, H} \otimes id_{H} \otimes id_{H}) \circ \]
	\[ \circ (id_{X} \otimes \tau_{H \otimes H \otimes H \otimes H \otimes H, H}) \circ \tau_{H \otimes H \otimes H \otimes H
\otimes H \otimes H,X} \circ \]
	\[ \circ (((id_{H} \otimes \Delta_{H}) \circ \Delta_{H}) \otimes ((id_{H} \otimes (\Delta_{H} \circ \nu_{k,H} \circ
(\epsilon_{H} \otimes id_{H}) \circ \Delta_{H})) \circ \Delta_{H}) \otimes id_{X}) (a \otimes b \otimes f) = \]		
	\[ = \nu_{k,H} \circ ((\rho_{H^{*},H} \circ (id_{X} \otimes (\mu_{H} \circ (\mu_{H} \otimes \mu_{H})))) \otimes \mu_{H})
\circ (id_{X} \otimes S^{-1} \otimes S^{-1} \otimes id_{H} \otimes id_{H} \otimes id_{H} \otimes id_{H}) \circ \]
	\[ \circ (id_{X} \otimes id_{H} \otimes id_{H} \otimes id_{H} \otimes \tau_{H,H} \otimes id_{H}) \circ (id_{X} \otimes
id_{H} \otimes \tau_{H \otimes H, H} \otimes id_{H} \otimes id_{H}) \circ \]
	\[ \circ (id_{X} \otimes \tau_{H \otimes H \otimes H \otimes H \otimes H, H}) \circ \tau_{H \otimes H \otimes H \otimes H
\otimes H \otimes H,X} \circ \]
	\[ \circ (((id_{H} \otimes \Delta_{H}) \circ \Delta_{H}) \otimes ((id_{H} \otimes (\Delta_{H} \circ id_{H})) \circ
\Delta_{H}) \otimes id_{X}) (a \otimes b \otimes f) = \]
	\[ = \sum_{(a),(b),(a^{''}),(b^{''})} (-1)^{ |f|(|a| + |b|) + |(b^{''})^{''}|( |a| + |b^{'}| + |(b^{''})^{'}| ) +
|(a^{''})^{''}|(|a^{'}| + |(a^{''})^{'}|) + |b^{'}||(a^{''})^{'}| } * \]
	\[ * f(S^{-1}((b^{''})^{''}) S^{-1}((a^{''})^{''}) a^{'} b^{'}) (a^{''})^{'} (b^{''})^{'} = \]		
	\[ = \sum_{(a),(b),(a^{''}),(b^{''})} (-1)^{ |f|(|a| + |b|) + |(b^{''})^{''}|( |a^{'}| + |(a^{''})^{'}| + |b^{'}| +
|(b^{''})^{'}| ) + |(a^{''})^{''}|(|a^{'}| + |(a^{''})^{'}| + |b^{'}| + |(b^{''})^{'}|) + |b^{'}||a^{''}| +
|(b^{''})^{'}||(a^{''})^{''}|} * \]
	\[ * f((S^{-1}((a^{''})^{''}(b^{''})^{''})) a^{'} b^{'}) (a^{''})^{'} (b^{''})^{'} = \]	
	\[ = \nu_{k,H} \circ (\rho_{H^{*},H} \otimes id_{H}) \circ (id_{X} \otimes \mu_{H} \otimes id_{H}) \circ (id_{X} \otimes
\tau_{H \otimes H,H}) \circ \tau_{H \otimes H \otimes H,X} \circ ( id_{H} \otimes id_{H} \otimes S^{-1} \otimes id_{X} ) \circ
\]
	\[ \circ (\sum_{(a),(b),(a^{''}),(b^{''})} (-1)^{|b^{'}||a^{''}| + |(b^{''})^{'}||(a^{''})^{''}|} a^{'} b^{'} \otimes
(a^{''})^{'} (b^{''})^{'} \otimes (a^{''})^{''}(b^{''})^{''} \otimes f) = \]	
	\[ = \nu_{k,H} \circ (\rho_{H^{*},H} \otimes id_{H}) \circ (id_{X} \otimes \mu_{H} \otimes id_{H}) \circ (id_{X} \otimes
\tau_{H \otimes H,H}) \circ \tau_{H \otimes H \otimes H,X} \circ ( id_{H} \otimes id_{H} \otimes S^{-1} \otimes id_{X} ) \circ
\]
	\[ \circ (\sum_{(ab),((ab)^{''})} (ab)^{'} \otimes ((ab)^{''})^{'} \otimes ((ab)^{''})^{''} \otimes f) = \]	
	\[ = \sum_{(ab),((ab)^{''})} (-1)^{|ab||f| + |((ab)^{''})^{''}|( |((ab)^{''})^{'}| + |(ab)^{'}| )}
f(S^{-1}(((ab)^{''})^{''}) (ab)^{'} ) ((ab)^{''})^{'} = (ab)^{f}. \]
	
	Relation \ref{eq:twistBi4}. We show that for all $f \in X$
	\[ (1_{H})^{f} = \epsilon_{X}(f) 1_{H}. \]
	We have
	\[ (1_{H})^{f} = f(S^{-1}(1_{H})1_{H})1_{H} = f(1_H) 1_{H} = \epsilon_{X}(f) 1_{H}. \]
	
	Relation \ref{eq:twistBi5}. We verify that for all $a \in H$, $f \in X$
	\[ \sum_{(a),(f)} (-1)^{|f^{'}||f^{''}| + |a^{''}||f^{''}|} (a^{'})^{f^{''}} \otimes a^{''} \cdot f^{'} = \sum_{(a),(f)}
(-1)^{|a^{'}||a^{''}| + |f^{'}||a^{'}|} (a^{''})^{f^{'}} \otimes a^{'} \cdot f^{''}. \]
	
	We have
	\[ \sum_{(a),(f)} (-1)^{|f^{'}||f^{''}| + |a^{''}||f^{''}|} (a^{'})^{f^{''}} \otimes a^{''} \cdot f^{'} = \]
	\[ = \sum_{(a),(f),(a^{'}),((a^{'})^{''}),(a^{''})} (-1)^{|f^{'}||f^{''}| + |a^{''}||f^{''}| + |a^{'}||f^{''}| +
|((a^{'})^{''})^{''}|(|((a^{'})^{''})^{'}| + |(a^{'})^{'}|) + |a^{''}||f^{'}| + |(a^{''})^{'}||(a^{''})^{''}| +
|?||(a^{''})^{'}|} * \]
	\[ * f^{''}(S^{-1}(((a^{'})^{''})^{''}) (a^{'})^{'} ) ((a^{'})^{''})^{'} \otimes f^{'}(S^{-1}((a^{''})^{''}) ? (a^{''})^{'}
) = \]
	\[ = \sum_{(a),(a^{'}),((a^{'})^{''}),(a^{''})} (-1)^{|a||f| + |((a^{'})^{''})^{''}|(|((a^{'})^{''})^{'}| + |(a^{'})^{'}|) +
|((a^{'})^{''})^{'}|( |a^{''}| + |?| ) + |(a^{''})^{'}||(a^{''})^{''}| + |?||(a^{''})^{'}|} * \]
	\[ * ((a^{'})^{''})^{'} \otimes \lambda_{H,H} ( \Delta_{(H^{op})^{*}} (f)) ( S^{-1}(((a^{'})^{''})^{''}) (a^{'})^{'} \otimes
S^{-1}((a^{''})^{''}) ? (a^{''})^{'} ) = \]
	\[ = \sum_{(a),(a^{'}),((a^{'})^{''}),(a^{''})} (-1)^{|a||f| + |((a^{'})^{''})^{''}|(|((a^{'})^{''})^{'}| + |(a^{'})^{'}|) +
|((a^{'})^{''})^{'}|( |a^{''}| + |?| ) + |(a^{''})^{'}||(a^{''})^{''}| + |?||(a^{''})^{'}|} * \]
	\[ * (-1)^{(|((a^{'})^{''})^{''}| + |(a^{'})^{'}|)(|(a^{''})^{''}| + |?| + |(a^{''})^{'}|)} * \]		
	\[ * ((a^{'})^{''})^{'} \otimes f(S^{-1}((a^{''})^{''}) ? (a^{''})^{'} S^{-1}(((a^{'})^{''})^{''}) (a^{'})^{'}) = \]			

	\[ = \sum_{(a),(a^{'}),((a^{'})^{''}),(a^{''})} (-1)^{ |((a^{'})^{''})^{'}||(a^{'})^{'}| + |f|(|a| + |((a^{'})^{''})^{'}|) +
|(a^{''})^{''}|(|(a^{'})^{'}| + |((a^{'})^{''})^{''}| + |(a^{''})^{'}|) + |?|(|(a^{'})^{'}| + |((a^{'})^{''})^{''}| +
|(a^{''})^{'}|)} * \]
	\[ * (-1)^{|(a^{''})^{'}|(|((a^{'})^{''})^{''}| + |(a^{'})^{'}|) + |((a^{'})^{''})^{''}||(a^{'})^{'}|} * \]
	\[ * ((a^{'})^{''})^{'} \otimes f(S^{-1}((a^{''})^{''}) ? (a^{''})^{'} S^{-1}(((a^{'})^{''})^{''}) (a^{'})^{'}) = \]
	\[ = (id_{H} \otimes (\rho_{H^{*},H} \circ (id_{X} \otimes (\mu_{H} \circ (id_{H} \otimes \mu_{H}) \circ (id_{H} \otimes
\mu_{H} \otimes \mu_{H}))))) \circ (id_{H} \otimes id_{X} \otimes S^{-1} \otimes id_{H} \otimes id_{H} \otimes S^{-1} \otimes
id_{H}) \circ \]
	\[ \circ (id_{H} \otimes id_{X} \otimes id_{H} \otimes id_{H} \otimes id_{H} \otimes \tau_{H,H}) \circ (id_{H} \otimes
id_{X} \otimes id_{H} \otimes id_{H} \otimes \tau_{H \otimes H,H}) \circ \]
	\[ \circ (id_{H} \otimes id_{X} \otimes id_{H} \otimes \tau_{H \otimes H \otimes H,H}) \circ (id_{H} \otimes id_{X} \otimes
\tau_{H \otimes H \otimes H,H} \otimes id_{H}) \circ \]
	\[ \circ (id_{H} \otimes \tau_{H \otimes H \otimes H \otimes H,X} \otimes id_{H}) \circ (\tau_{H,H} \otimes id_{H} \otimes
id_{H} \otimes id_{H} \otimes id_{X} \otimes id_{H}) \circ \]
	\[ \circ (((id_{H} \otimes \Delta_{H} \otimes id_{H} \otimes id_{H}) \circ (\Delta_{H} \otimes \Delta_{H}) \circ \Delta_{H})
\otimes id_{X} \otimes id_{H}) (a \otimes f \otimes ?) = \]
	\[ = (id_{H} \otimes (\rho_{H^{*},H} \circ (id_{X} \otimes (\mu_{H} \circ (id_{H} \otimes \mu_{H}) \circ (id_{H} \otimes
\mu_{H} \otimes \mu_{H}))))) \circ (id_{H} \otimes id_{X} \otimes S^{-1} \otimes id_{H} \otimes id_{H} \otimes S^{-1} \otimes
id_{H}) \circ \]
	\[ \circ (id_{H} \otimes id_{X} \otimes id_{H} \otimes id_{H} \otimes id_{H} \otimes \tau_{H,H}) \circ (id_{H} \otimes
id_{X} \otimes id_{H} \otimes id_{H} \otimes \tau_{H \otimes H,H}) \circ \]
	\[ \circ (id_{H} \otimes id_{X} \otimes id_{H} \otimes \tau_{H \otimes H \otimes H,H}) \circ (id_{H} \otimes id_{X} \otimes
\tau_{H \otimes H \otimes H,H} \otimes id_{H}) \circ \]
	\[ \circ (id_{H} \otimes \tau_{H \otimes H \otimes H \otimes H,X} \otimes id_{H}) \circ (\tau_{H,H} \otimes id_{H} \otimes
id_{H} \otimes id_{H} \otimes id_{X} \otimes id_{H}) \circ \]
	\[ \circ (((id_{H} \otimes id_{H} \otimes \Delta_{H} \otimes id_{H}) \circ (\Delta_{H} \otimes \Delta_{H}) \circ \Delta_{H})
\otimes id_{X} \otimes id_{H}) (a \otimes f \otimes ?) = \]
	\[ = \sum_{(a),(a^{'}),(a^{''}),((a^{''})^{'})} (-1)^{ |(a^{'})^{'}||(a^{'})^{''}| + |f|(|a| + |(a^{'})^{''}|) +
|(a^{''})^{''}|(|(a^{'})^{'}| + |(a^{''})^{'}|) + |?|(|(a^{'})^{'}| + |(a^{''})^{'}|) } * \]
	\[ * (-1)^{|((a^{''})^{'})^{''}|(|(a^{'})^{'}| + |((a^{''})^{'})^{'}|) + |((a^{''})^{'})^{'}||(a^{'})^{'}|} * \]
	\[ * (a^{'})^{''} \otimes f(S^{-1}((a^{''})^{''}) ? ((a^{''})^{'})^{''} S^{-1}(((a^{''})^{'})^{'}) (a^{'})^{'}) = \]
	\[ = \sum_{(a),(a^{'}),(a^{''})} (-1)^{ |(a^{'})^{'}||(a^{'})^{''}| + |f|(|a| + |(a^{'})^{''}|) +
|(a^{''})^{''}|(|(a^{'})^{'}| + |(a^{''})^{'}|) + |?|(|(a^{'})^{'}| + |(a^{''})^{'}|) + |(a^{'})^{'}||(a^{''})^{'}| } * \]
	\[ * (a^{'})^{''} \otimes f(S^{-1}((a^{''})^{''}) ? \epsilon_{H}((a^{''})^{'}) (a^{'})^{'}) = \]
	\[ = \sum_{(a),(a^{'}),(a^{''})} (-1)^{ |(a^{'})^{'}||(a^{'})^{''}| + |f|(|a| + |(a^{'})^{''}|) + |a^{''}||(a^{'})^{'}| +
|?||(a^{'})^{'}| } * \]
	\[ * (a^{'})^{''} \otimes f(S^{-1}(\epsilon_{H}((a^{''})^{'})(a^{''})^{''}) ? (a^{'})^{'}) = \]	
	\[ = \sum_{(a),(a^{'})} (-1)^{ |(a^{'})^{'}||(a^{'})^{''}| + |f|(|a| + |(a^{'})^{''}|) + |a^{''}||(a^{'})^{'}| +
|?||(a^{'})^{'}| } (a^{'})^{''} \otimes f(S^{-1}(a^{''}) ? (a^{'})^{'}). \]	
	
	Remark 1. We have by the coassociativity
	\[ (id_{H} \otimes \Delta_{H} \otimes id_{H} \otimes id_{H}) \circ (\Delta_{H} \otimes \Delta_{H}) \circ \Delta_{H} =
(id_{H} \otimes ((\Delta_{H} \otimes id_{H}) \circ \Delta_{H}) \otimes id_{H} ) \circ (id_{H} \otimes \Delta_{H}) \circ
\Delta_{H} = \]
	\[ = (id_{H} \otimes ((id_{H} \otimes \Delta_{H}) \circ \Delta_{H}) \otimes id_{H} ) \circ ( id_{H} \otimes \Delta_{H})
\circ \Delta_{H} =  (id_{H} \otimes id_{H} \otimes \Delta_{H} \otimes id_{H}) \circ (\Delta_{H} \otimes \Delta_{H}) \circ
\Delta_{H}. \qed \]
	
	We have for all $a \in H$, $f \in X$
	\[ \sum_{(a),(f)} (-1)^{|a^{'}||a^{''}| + |f^{'}||a^{'}|} (a^{''})^{f^{'}} \otimes a^{'} \cdot f^{''} = \]
	\[ = \sum_{(a),(f),(a^{'}),(a^{''}),((a^{''})^{''})} (-1)^{|a^{'}||a^{''}| + |f^{'}||a^{'}| + |a^{''}||f^{'}| +
|((a^{''})^{''})^{''}|(|((a^{''})^{''})^{'}| + |(a^{''})^{'}|) + |a^{'}||f^{''}| + |(a^{'})^{'}||(a^{'})^{''}| +
|?||(a^{'})^{'}| } * \]
	\[ * f^{'}(S^{-1}(((a^{''})^{''})^{''}) (a^{''})^{'} ) ((a^{''})^{''})^{'} \otimes f^{''}(S^{-1}((a^{'})^{''}) ? (a^{'})^{'}
) = \]
	\[ = \sum_{(a),(a^{'}),(a^{''}),((a^{''})^{''})} (-1)^{|a^{'}||a^{''}| + |f||a| +
|((a^{''})^{''})^{''}|(|((a^{''})^{''})^{'}| + |(a^{''})^{'}|) + |((a^{''})^{''})^{'}|(|a^{'}|+|?|) +
|(a^{'})^{'}||(a^{'})^{''}| + |?||(a^{'})^{'}| } * \]
	\[ * ((a^{''})^{''})^{'} \otimes \lambda_{H,H} ( \Delta_{H^{*}} (f)) (S^{-1}(((a^{''})^{''})^{''}) (a^{''})^{'} \otimes
S^{-1}((a^{'})^{''}) ? (a^{'})^{'}) = \]
	\[ = \sum_{(a),(a^{'}),(a^{''}),((a^{''})^{''})} (-1)^{|a^{'}||a^{''}| + |f||a| +
|((a^{''})^{''})^{''}|(|((a^{''})^{''})^{'}| + |(a^{''})^{'}|) + |((a^{''})^{''})^{'}|(|a^{'}|+|?|) +
|(a^{'})^{'}||(a^{'})^{''}| + |?||(a^{'})^{'}| } * \]
	\[ * ((a^{''})^{''})^{'} \otimes f(S^{-1}(((a^{''})^{''})^{''}) (a^{''})^{'} S^{-1}((a^{'})^{''}) ? (a^{'})^{'}) = \]
	\[ = \sum_{(a),(a^{'}),(a^{''}),((a^{''})^{''})} (-1)^{|((a^{''})^{''})^{'}|( |a^{'}| + |(a^{''})^{'}|) + |f|(|a| +
|((a^{''})^{''})^{'}|) + |((a^{''})^{''})^{''}|( |a^{'}| + |(a^{''})^{'}| ) + |(a^{''})^{'}||a^{'}| +
|(a^{'})^{''}||(a^{'})^{'}| +  |(a^{'})^{'}||?| } * \]
	\[ * ((a^{''})^{''})^{'} \otimes f(S^{-1}(((a^{''})^{''})^{''}) (a^{''})^{'} S^{-1}((a^{'})^{''}) ? (a^{'})^{'}) = \]		

	\[ = (id_{H} \otimes (\rho_{H^{*},H} \circ (id_{X} \otimes (\mu_{H} \circ (id_{H} \otimes \mu_{H}) \circ (id_{H} \otimes
\mu_{H} \otimes \mu_{H}))))) \circ (id_{H} \otimes id_{X} \otimes S^{-1} \otimes id_{H} \otimes S^{-1} \otimes id_{H}  \otimes
id_{H}) \circ \]
	\[ \circ (id_{H} \otimes id_{X} \otimes id_{H} \otimes id_{H} \otimes id_{H} \otimes \tau_{H,H}) \circ (id_{H} \otimes
id_{X} \otimes id_{H} \otimes id_{H} \otimes \tau_{H,H} \otimes id_{H}) \circ \]
	\[ \circ (id_{H} \otimes id_{X} \otimes id_{H} \otimes \tau_{H \otimes H,H} \otimes id_{H}) \circ (id_{H} \otimes id_{X}
\otimes \tau_{H \otimes H \otimes H,H} \otimes id_{H}) \circ \]
	\[ \circ (id_{H} \otimes \tau_{H \otimes H \otimes H \otimes H,X} \otimes id_{H}) \circ (\tau_{H \otimes H \otimes H,H}
\otimes id_{H} \otimes id_{X} \otimes id_{H}) \circ \]
	\[ \circ (((id_{H} \otimes id_{H} \otimes id_{H} \otimes \Delta_{H}) \circ (\Delta_{H} \otimes \Delta_{H}) \circ \Delta_{H})
\otimes id_{X} \otimes id_{H}) (a \otimes f \otimes ?) = \]
	\[ = (id_{H} \otimes (\rho_{H^{*},H} \circ (id_{X} \otimes (\mu_{H} \circ (id_{H} \otimes \mu_{H}) \circ (id_{H} \otimes
\mu_{H} \otimes \mu_{H}))))) \circ (id_{H} \otimes id_{X} \otimes S^{-1} \otimes id_{H} \otimes S^{-1} \otimes id_{H}  \otimes
id_{H}) \circ \]
	\[ \circ (id_{H} \otimes id_{X} \otimes id_{H} \otimes id_{H} \otimes id_{H} \otimes \tau_{H,H}) \circ (id_{H} \otimes
id_{X} \otimes id_{H} \otimes id_{H} \otimes \tau_{H,H} \otimes id_{H}) \circ \]
	\[ \circ (id_{H} \otimes id_{X} \otimes id_{H} \otimes \tau_{H \otimes H,H} \otimes id_{H}) \circ (id_{H} \otimes id_{X}
\otimes \tau_{H \otimes H \otimes H,H} \otimes id_{H}) \circ \]
	\[ \circ (id_{H} \otimes \tau_{H \otimes H \otimes H \otimes H,X} \otimes id_{H}) \circ (\tau_{H \otimes H \otimes H,H}
\otimes id_{H} \otimes id_{X} \otimes id_{H}) \circ \]
	\[ \circ (((id_{H} \otimes \Delta_{H} \otimes id_{H} \otimes id_{H}) \circ ( (( \Delta_{H} \otimes id_{H} ) \circ
\Delta_{H}) \otimes id_{H}) \circ \Delta_{H}) \otimes id_{X} \otimes id_{H}) (a \otimes f \otimes ?) = \]
	\[ = \sum_{(a),(a^{'}),((a^{'})^{'}),(((a^{'})^{'})^{''})} (-1)^{ |(a^{'})^{''}||(a^{'})^{'}| + |f|( |(a^{'})^{'}| +
|a^{''}| ) + |a^{''}||(a^{'})^{'}| + |(((a^{'})^{'})^{''})^{''}|(|((a^{'})^{'})^{'}| + |(((a^{'})^{'})^{''})^{'}|) } * \]
	\[ * (-1)^{|(((a^{'})^{'})^{''})^{'}||((a^{'})^{'})^{'}| + |?||((a^{'})^{'})^{'}|} *  \]
	\[ * (a^{'})^{''} \otimes f(S^{-1}(a^{''}) (((a^{'})^{'})^{''})^{''} S^{-1}((((a^{'})^{'})^{''})^{'}) ? ((a^{'})^{'})^{'}) =
\]
	\[ = \sum_{(a),(a^{'}),((a^{'})^{'})} (-1)^{ |(a^{'})^{''}||(a^{'})^{'}| + |f|( |(a^{'})^{'}| + |a^{''}| ) +
|a^{''}||(a^{'})^{'}| + |((a^{'})^{'})^{''}||((a^{'})^{'})^{'}| + |?||(a^{'})^{'}| } * \]
	\[ * (a^{'})^{''} \otimes f(S^{-1}(a^{''}) \epsilon_{H}(((a^{'})^{'})^{''}) ? ((a^{'})^{'})^{'}) = \]
	\[ = \sum_{(a),(a^{'}),((a^{'})^{'})} (-1)^{ |(a^{'})^{''}||(a^{'})^{'}| + |f|( |(a^{'})^{'}| + |a^{''}| ) +
|a^{''}||(a^{'})^{'}| + |?||(a^{'})^{'}| } * \]
	\[ * (a^{'})^{''} \otimes f(S^{-1}(a^{''}) ? ((a^{'})^{'})^{'} \epsilon_{H}(((a^{'})^{'})^{''})) = \]
	\[ = \sum_{(a),(a^{'})} (-1)^{ |(a^{'})^{''}||(a^{'})^{'}| + |f|( |(a^{'})^{'}| + |a^{''}| ) + |a^{''}||(a^{'})^{'}| +
|?||(a^{'})^{'}| } (a^{'})^{''} \otimes f(S^{-1}(a^{''}) ? (a^{'})^{'}) = \]
	\[ = \sum_{(a),(a^{'})} (-1)^{ |(a^{'})^{'}||(a^{'})^{''}| + |f|(|a| + |(a^{'})^{''}|) + |a^{''}||(a^{'})^{'}| +
|?||(a^{'})^{'}| } (a^{'})^{''} \otimes f(S^{-1}(a^{''}) ? (a^{'})^{'}). \]
	
	Remark 2. We have by the coassociativity
	\[ (id_{H} \otimes id_{H} \otimes id_{H} \otimes \Delta_{H}) \circ (\Delta_{H} \otimes \Delta_{H}) \circ \Delta_{H} = \]
	\[ = (id_{H} \otimes id_{H} \otimes ((id_{H} \otimes \Delta_{H}) \circ \Delta_{H})) \circ (\Delta_{H} \otimes id_{H}) \circ
\Delta_{H} = \]
	\[ = (id_{H} \otimes id_{H} \otimes ((\Delta_{H} \otimes id_{H}) \circ \Delta_{H})) \circ (\Delta_{H} \otimes id_{H}) \circ
\Delta_{H} = \]
	\[ = (id_{H} \otimes id_{H} \otimes \Delta_{H} \otimes id_{H}) \circ (\Delta_{H} \otimes \Delta_{H}) \circ \Delta_{H}= \]
	\[ = (id_{H} \otimes id_{H} \otimes \Delta_{H} \otimes id_{H}) \circ (id_{H} \otimes \Delta_{H} \otimes id_{H}) \circ
(id_{H} \otimes \Delta_{H}) \circ \Delta_{H} = \]
	\[ = (id_{H} \otimes ((id_{H} \otimes \Delta_{H}) \circ \Delta_{H}) \otimes id_{H}) \circ (id_{H} \otimes \Delta_{H}) \circ
\Delta_{H} = \]
	\[ = (id_{H} \otimes ((\Delta_{H} \otimes id_{H}) \circ \Delta_{H}) \otimes id_{H}) \circ (\Delta_{H} \otimes id_{H}) \circ
\Delta_{H} = \]
	\[ = (id_{H} \otimes \Delta_{H} \otimes id_{H} \otimes id_{H}) \circ ( (( id_{H} \otimes \Delta_{H} ) \circ \Delta_{H})
\otimes id_{H}) \circ \Delta_{H} = \]
	\[ = (id_{H} \otimes \Delta_{H} \otimes id_{H} \otimes id_{H}) \circ ( (( \Delta_{H} \otimes id_{H} ) \circ \Delta_{H})
\otimes id_{H}) \circ \Delta_{H}. \qed \]
	
	We see that the right and left sides of the considered relation are equal. The result follows.
\end{proof}

\MultQD*
\begin{proof}
	It follows from Theorem \ref{theorem:twistedBialgebras} that we have for all $a,b,c \in H, \; f,g,h \in H^{*}$
	
	\[ ((id_{X} \otimes ev_{H}) ((f \otimes a)( g \otimes b ))) (c \otimes h) = ((id_{X} \otimes ev_{H}) (\sum_{(a),(g)}
(-1)^{|g^{'}||g^{''}| + |g^{''}||a^{''}|}  f(a^{'} \cdot g^{''}) \otimes (a^{''})^{g^{'}} b)) (c \otimes h) = \]
	\[ = \sum_{(a),(g),(a^{'}),(a^{''}),((a^{''})^{''}),(c)} (-1)^{|g^{'}||g^{''}| + |g^{''}||a^{''}| + |a^{'}||g^{''}| +
|(a^{'})^{'}||(a^{'})^{''}| + |(a^{'})^{'}||?| + |a^{''}||g^{'}| + |((a^{''})^{''})^{''}|(|((a^{''})^{''})^{'}| +
|(a^{''})^{'}|) } * \]
	\[ * (-1)^{ |c|( |((a^{''})^{''} )^{'}| + |b| ) } * \]
	\[ * \lambda_{H,H} ((f \otimes g^{''}( S^{-1}( (a^{'})^{''} ) ? (a^{'})^{'} ))) (c^{'} \otimes c^{''}) \otimes g^{'}( S^{-1}
( ((a^{''})^{''})^{''} ) (a^{''})^{'} ) ev_{H} (((a^{''})^{''} )^{'} b) (h) = \]
	\[ = \sum_{(a),(g),(a^{'}),(a^{''}),((a^{''})^{''}),(c)} (-1)^{|g^{'}||g^{''}| + |g^{''}||a^{''}| + |a^{'}||g^{''}| +
|(a^{'})^{'}||(a^{'})^{''}| + |(a^{'})^{'}||c^{''}| + |a^{''}||g^{'}| + |((a^{''})^{''})^{''}|(|((a^{''})^{''})^{'}| +
|(a^{''})^{'}|) } * \]
	\[ * (-1)^{ |c|( |((a^{''})^{''} )^{'}| + |b| ) + |c^{'}|( |g^{''}| + |a^{'}| ) } * \]
	\[ * f(c^{'}) g^{''}( S^{-1}( (a^{'})^{''} ) c^{''} (a^{'})^{'} ) \otimes g^{'}( S^{-1} ( ((a^{''})^{''})^{''} )
(a^{''})^{'} ) ev_{H} (((a^{''})^{''} )^{'} b) (h) = \]
	\[ = \sum_{(a),(a^{'}),(a^{''}),((a^{''})^{''}),(c)} (-1)^{|g||a| + |(a^{'})^{'}||(a^{'})^{''}| + |(a^{'})^{'}||c^{''}| +
|a^{'}|( |((a^{''})^{''})^{''}| + |(a^{''})^{'}| ) + |((a^{''})^{''})^{''}|(|((a^{''})^{''})^{'}| + |(a^{''})^{'}|) } * \]
	\[ * (-1)^{ |c|( |((a^{''})^{''} )^{'}| + |b| ) + |c^{'}|( |g| + |a^{'}| + |((a^{''})^{''})^{''}| + |(a^{''})^{'}| ) } * \]
	\[ * f(c^{'}) \lambda_{H,H} (\Delta_{H^{*}} (g)) ( S^{-1} ( ((a^{''})^{''})^{''} ) (a^{''})^{'} \otimes S^{-1}( (a^{'})^{''}
) c^{''}  (a^{'})^{'} ) \otimes ev_{H} (((a^{''})^{''} )^{'} b) (h) = \]
	\[ = \sum_{(a),(a^{'}),(a^{''}),((a^{''})^{''}),(c)} (-1)^{|c^{'}|( |g| + |a| + |b| ) + |g||a| + |((a^{''})^{''} )^{''}|(
|a^{'}| + |(a^{''})^{'}| + |((a^{''})^{''} )^{'}| ) + |(a^{''})^{'}||a^{'}| + |(a^{'})^{'}||(a^{'})^{''}|} * \]
	\[ * (-1)^{ |c^{''}|( |(a^{'})^{'}| + |((a^{''})^{''} )^{'}| + |b| ) } * \]
	\[ * f(c^{'}) g( S^{-1} ( ((a^{''})^{''})^{''} ) (a^{''})^{'} S^{-1}( (a^{'})^{''} ) c^{''}  (a^{'})^{'} ) \otimes ev_{H}
(((a^{''})^{''} )^{'} b) (h) = \]
	\[ = ((\nu_{k,k} \circ (\rho_{H^{*},H} \otimes \rho_{H^{*},H})) \otimes id_{k}) \circ \]
	\[ \circ (id_{X} \otimes id_{H} \otimes id_{X} \otimes (\mu_{H} \circ (id_{H} \otimes \mu_{H}) \circ (id_{H} \otimes \mu_{H}
\otimes \mu_{H}))) \otimes (\rho_{H^{**},H^{*}} \circ (ev_{H} \otimes id_{H^{*}})) \circ \]
	\[ \circ (id_{X} \otimes id_{H} \otimes id_{X} \otimes S^{-1} \otimes id_{H} \otimes S^{-1} \otimes id_{H} \otimes id_{H}
\otimes id_{H} \otimes id_{H} \otimes id_X) \circ \]
	\[ \circ ( id_{X} \otimes id_{H} \otimes id_{X} \otimes id_{H} \otimes id_{H} \otimes id_{H} \otimes \tau_{H \otimes H
\otimes H, H} \otimes id_{X}) \circ \]
	\[ \circ (id_{X} \otimes id_{H} \otimes id_{X} \otimes id_{H} \otimes id_{H} \otimes \tau_{H,H} \otimes id_{H} \otimes
id_{H} \otimes id_{H} \otimes id_{X}) \circ \]
	\[ (id_{X} \otimes id_{H} \otimes id_{X} \otimes id_{H} \otimes \tau_{H \otimes H,H} \otimes id_{H} \otimes id_{H} \otimes
id_{H} \otimes id_{X}) \circ (id_{X} \otimes id_{H} \otimes id_{X} \otimes \tau_{H \otimes H \otimes H \otimes H,H} \otimes
id_{H} \otimes id_{H} \otimes id_{X}) \circ \]
	\[ \circ (id_{X} \otimes id_{H} \otimes \tau_{H \otimes H \otimes H \otimes H \otimes H,X} \otimes id_{H} \otimes id_{H}
\otimes id_{X}) \circ (id_{X} \otimes \tau_{H \otimes H \otimes H \otimes H \otimes H \otimes X \otimes H, H} \otimes id_{H}
\otimes id_{X}) \circ \]
	\[ \circ (id_{X} \otimes ((id_{H} \otimes id_{H} \otimes id_{H} \otimes \Delta_{H}) \circ (\Delta_{H} \otimes \Delta_{H})
\circ \Delta_{H}) \otimes id_{X} \otimes id_{H} \otimes \Delta_{H} \otimes id_{X}) (f \otimes a \otimes g \otimes b \otimes c
\otimes h) = \]
	\[ = ((\nu_{k,k} \circ (\rho_{H^{*},H} \otimes \rho_{H^{*},H})) \otimes id_{k}) \circ \]
	\[ \circ (id_{X} \otimes id_{H} \otimes id_{X} \otimes (\mu_{H} \circ (id_{H} \otimes \mu_{H}) \circ (id_{H} \otimes \mu_{H}
\otimes \mu_{H}))) \otimes (\rho_{H^{**},H^{*}} \circ (ev_{H} \otimes id_{H^{*}})) \circ \]
	\[ \circ (id_{X} \otimes id_{H} \otimes id_{X} \otimes S^{-1} \otimes id_{H} \otimes S^{-1} \otimes id_{H} \otimes id_{H}
\otimes id_{H} \otimes id_{H} \otimes id_X) \circ \]
	\[ \circ ( id_{X} \otimes id_{H} \otimes id_{X} \otimes id_{H} \otimes id_{H} \otimes id_{H} \otimes \tau_{H \otimes H
\otimes H, H} \otimes id_{X}) \circ \]
	\[ \circ (id_{X} \otimes id_{H} \otimes id_{X} \otimes id_{H} \otimes id_{H} \otimes \tau_{H,H} \otimes id_{H} \otimes
id_{H} \otimes id_{H} \otimes id_{X}) \circ \]
	\[ (id_{X} \otimes id_{H} \otimes id_{X} \otimes id_{H} \otimes \tau_{H \otimes H,H} \otimes id_{H} \otimes id_{H} \otimes
id_{H} \otimes id_{X}) \circ (id_{X} \otimes id_{H} \otimes id_{X} \otimes \tau_{H \otimes H \otimes H \otimes H,H} \otimes
id_{H} \otimes id_{H} \otimes id_{X}) \circ \]
	\[ \circ (id_{X} \otimes id_{H} \otimes \tau_{H \otimes H \otimes H \otimes H \otimes H,X} \otimes id_{H} \otimes id_{H}
\otimes id_{X}) \circ (id_{X} \otimes \tau_{H \otimes H \otimes H \otimes H \otimes H \otimes X \otimes H, H} \otimes id_{H}
\otimes id_{X}) \circ \]
	\[ \circ (id_{X} \otimes ((id_{H} \otimes \Delta_{H} \otimes id_{H} \otimes id_{H}) \circ ( (( \Delta_{H} \otimes id_{H} )
\circ \Delta_{H}) \otimes id_{H}) \circ \Delta_{H}) \otimes id_{X} \otimes id_{H} \otimes \Delta_{H} \otimes id_{X}) (f \otimes
a \otimes g \otimes b \otimes c \otimes h) = \]
	\[ = \sum_{(a),(a^{'}),((a^{'})^{'}),(((a^{'})^{'})^{''}),(c)} (-1)^{ |c^{'}|( |a| + |g| + |b| ) + |g||a| + |a^{''}||a^{'}|
+ |(((a^{'})^{'})^{''})^{''}|( |((a^{'})^{'})^{'}| + |(((a^{'})^{'})^{''})^{'}| ) +
|(((a^{'})^{'})^{''})^{'}||((a^{'})^{'})^{'}| } * \]
	\[ * (-1)^{|c^{''}|( (|(a^{'})^{'})^{'}| + |(a^{'})^{''}| + |b| )} * \]
	\[ * f(c^{'}) g(S^{-1}(a^{''}) (((a^{'})^{'})^{''})^{''} S^{-1}((((a^{'})^{'})^{''})^{'}) c^{''} ((a^{'})^{'})^{'}) \otimes
ev_{H}((a^{'})^{''} b)(h) = \]
	\[ = \sum_{(a),(a^{'}),((a^{'})^{'}),(c)} (-1)^{ |c^{'}|( |a| + |g| + |b| ) + |g||a| + |a^{''}||a^{'}| +
|((a^{'})^{'})^{''}||((a^{'})^{'})^{'}| + |c^{''}|( (|(a^{'})^{'})^{'}| + |(a^{'})^{''}| + |b| ) } * \]
	\[ * f(c^{'}) g(S^{-1}(a^{''}) c^{''} ((a^{'})^{'})^{'} \epsilon_{H}(((a^{'})^{'})^{''}) ) \otimes ev_{H}((a^{'})^{''} b)(h)
= \]
	\[ = \sum_{(a),(a^{'})} (-1)^{ |c^{'}|( |a| + |g| + |b| ) + |g||a| + |a^{''}||a^{'}| + |c^{''}| (|a^{'}| + |b| ) } f(c^{'})
g(S^{-1}(a^{''}) c^{''} (a^{'})^{'} ) \otimes ev_{H}((a^{'})^{''} b)(h) = \]
	\[ = ((id_{X} \otimes ev_{H}) ( \sum_{(a),(a^{'})} (-1)^{ |g||a| + |a^{''}||a^{'}| + |?||(a^{'})^{'}| } f g(S^{-1}(a^{''}) ?
(a^{'})^{'} ) \otimes (a^{'})^{''} b )) (c \otimes h). \]
	
	Thus
	\[ (f \otimes a)( g \otimes b ) =  \sum_{(a),(a^{'})} (-1)^{ |g||a| + |a^{''}||a^{'}| + |?||(a^{'})^{'}| } f
g(S^{-1}(a^{''}) ? (a^{'})^{'} ) \otimes (a^{'})^{''} b. \]
	
	Remark. We have by the coassociativity
	\[ (id_{H} \otimes id_{H} \otimes id_{H} \otimes \Delta_{H}) \circ (\Delta_{H} \otimes \Delta_{H}) \circ \Delta_{H} = \]
	\[ = (id_{H} \otimes id_{H} \otimes ((id_{H} \otimes \Delta_{H}) \circ \Delta_{H})) \circ (\Delta_{H} \otimes id_{H}) \circ
\Delta_{H} = \]
	\[ = (id_{H} \otimes id_{H} \otimes ((\Delta_{H} \otimes id_{H}) \circ \Delta_{H})) \circ (\Delta_{H} \otimes id_{H}) \circ
\Delta_{H} = \]
	\[ = (id_{H} \otimes id_{H} \otimes \Delta_{H} \otimes id_{H}) \circ (\Delta_{H} \otimes \Delta_{H}) \circ \Delta_{H}= \]
	\[ = (id_{H} \otimes id_{H} \otimes \Delta_{H} \otimes id_{H}) \circ (id_{H} \otimes \Delta_{H} \otimes id_{H}) \circ
(id_{H} \otimes \Delta_{H}) \circ \Delta_{H} = \]
	\[ = (id_{H} \otimes ((id_{H} \otimes \Delta_{H}) \circ \Delta_{H}) \otimes id_{H}) \circ (id_{H} \otimes \Delta_{H}) \circ
\Delta_{H} = \]
	\[ = (id_{H} \otimes ((\Delta_{H} \otimes id_{H}) \circ \Delta_{H}) \otimes id_{H}) \circ (\Delta_{H} \otimes id_{H}) \circ
\Delta_{H} = \]
	\[ = (id_{H} \otimes \Delta_{H} \otimes id_{H} \otimes id_{H}) \circ ( (( id_{H} \otimes \Delta_{H} ) \circ \Delta_{H})
\otimes id_{H}) \circ \Delta_{H} = \]
	\[ = (id_{H} \otimes \Delta_{H} \otimes id_{H} \otimes id_{H}) \circ ( (( \Delta_{H} \otimes id_{H} ) \circ \Delta_{H})
\otimes id_{H}) \circ \Delta_{H}. \]	
\end{proof}

\DHprop*
\begin{proof}
	
	It follows from Lemma \ref{lm:MultQDr} that we have for all $a \in H, \; f \in X$
	
	\[ ( 1_{X} \otimes 1_{H} )( f \otimes a ) = \epsilon_{H} f( S^{-1}(1_{H}) ? 1_{H} ) \otimes 1_{H} a = \epsilon_{H} f \otimes
a = f \otimes a, \]
	\[ ( f \otimes a )( 1_{X} \otimes 1_{H} ) = \sum_{(a),(a^{'})} (-1)^{|a^{''}||a^{'}| + |?||(a^{'})^{'}|} f \epsilon_{H}(
S^{-1}( a^{''} ) ? (a^{'})^{'} ) \otimes (a^{'})^{''} 1_{H} = \]
	\[ = \sum_{(a),(a^{'})} f \epsilon_{H} \otimes \epsilon_{H}((a^{'})^{'}) (a^{'})^{''} \epsilon_{H}(a^{''}) =  \sum_{(a)} f
\otimes a^{'} \epsilon_{H}(a^{''}) = f \otimes a. \]
	
	Statements about a counit and a comultiplication in $D(H)$ follow from Theorem \ref{theorem:twistedBialgebras}.
	
	We also have from Theorem \ref{theorem:twistedBialgebras} that an antipode $S_{X \bowtie A}$ is given by formula
	\[ ((id_{X} \otimes ev_{H}) (S_{X \bowtie A}(f \otimes a)))  ( b \otimes g ) = \]
	\[ = ((id_{X} \otimes ev_{H}) (\sum_{(f),(a)} (-1)^{|a^{'}||a^{''}| + |f^{''}||a^{''}| + |f^{'}||a^{''}| + |f^{''}||a^{'}|}
S(a^{''}) \cdot (S^{-1})^{*}(f^{'}) \otimes S(a^{'})^{(S^{-1})^{*}(f^{''})})) ( b \otimes g ) = \]
	\[ = \sum_{(f),(a),(a^{'}),((a^{'})^{'}),(a^{''})} (-1)^{|a^{'}||a^{''}| + |f^{''}||a^{''}| + |f^{'}||a^{''}| +
|f^{''}||a^{'}| + |a^{''}||f^{'}| + |b||(a^{''})^{''}| + |a^{'}||f^{''}| + |(a^{'})^{'}||(a^{'})^{''}| +
|((a^{'})^{'})^{'}||(a^{'})^{''}| + |b||((a^{'})^{'})^{''}| } * \]
	\[ * (S^{-1})^{*}(f^{'})( S^{-1}(S((a^{''})^{'})) b S( (a^{''})^{''} ) ) \otimes (S^{-1})^{*}(f^{''})(S^{-1}(
S(((a^{'})^{'})^{'}) ) S((a^{'})^{''}) ) ev_{H}(S(((a^{'})^{'})^{''}))(g) = \]
	\[ = \sum_{(f),(a),(a^{'}),((a^{'})^{'}),(a^{''})} (-1)^{|a^{'}||a^{''}| + |f^{''}||a^{''}| + |b||(a^{''})^{'}| +
|(a^{''})^{'}||(a^{''})^{''}| + |(a^{'})^{'}||(a^{'})^{''}| + |b||((a^{'})^{'})^{''}| } * \]	
	\[ * f^{'}( (a^{''})^{''} S^{-1}(b) S^{-1}((a^{''})^{'}) ) \otimes f^{''}( (a^{'})^{''}  S^{-1}(((a^{'})^{'})^{'}) )
ev_{H}(S(((a^{'})^{'})^{''}))(g) = \]
	\[ = \sum_{(a),(a^{'}),((a^{'})^{'}),(a^{''})} (-1)^{|a^{'}||a^{''}| + |b||(a^{''})^{'}| + |(a^{''})^{'}||(a^{''})^{''}| +
|(a^{'})^{'}||(a^{'})^{''}| + |b||a^{'}| } * \]	
	\[ * \lambda_{H,H} (\Delta_{H^{*}} (f)) ( (a^{''})^{''} S^{-1}(b) S^{-1}((a^{''})^{'}) \otimes (a^{'})^{''}
S^{-1}(((a^{'})^{'})^{'}) ) \otimes ev_{H}(S(((a^{'})^{'})^{''}))(g) = \]
	\[ = \sum_{(a),(a^{'}),((a^{'})^{'}),(a^{''})} (-1)^{|a^{'}||a^{''}| + |b||(a^{''})^{'}| + |(a^{''})^{'}||(a^{''})^{''}| +
|(a^{'})^{'}||(a^{'})^{''}| + |b||a^{'}| } * \]	
	\[ * f( (a^{''})^{''} S^{-1}(b) S^{-1}((a^{''})^{'}) (a^{'})^{''} S^{-1}(((a^{'})^{'})^{'}) ) \otimes
ev_{H}(S(((a^{'})^{'})^{''}))(g) = \]
	\[ = \sum_{(a),(a^{'}),((a^{'})^{'}),(a^{''})} (-1)^{|(a^{''})^{''}|(|(a^{''})^{'}| + |a^{'}|) + |b|(|a^{'}| +
|(a^{''})^{'}|) + |(a^{''})^{'}||a^{'}| + |(a^{'})^{''}||(a^{'})^{'}| } * \]
	\[ * f( (a^{''})^{''} S^{-1}(b) S^{-1}((a^{''})^{'}) (a^{'})^{''} S^{-1}(((a^{'})^{'})^{'}) ) \otimes
ev_{H}(S(((a^{'})^{'})^{''}))(g) = \]	
	\[ = (\rho_{H^{*},H} \otimes id_{k}) \circ (id_{X} \otimes (\mu_{H} \circ (id_{H} \otimes \mu_{H}) \circ (id_{H} \otimes
\mu_{H} \otimes \mu_{H})) \otimes (\rho_{H^{**},H^{*}} \circ (ev_{H} \otimes id_{X}) )) \circ \]
	\[\circ (id_{X} \otimes id_{H} \otimes S^{-1} \otimes S^{-1} \otimes id_{H} \otimes S^{-1} \otimes S \otimes id_{X}) \circ
\]
	\[ \circ (id_{X} \otimes id_{H} \otimes id_{H} \otimes id_{H} \otimes \tau_{H \otimes H,H} \otimes id_{X}) \circ (id_{X}
\otimes id_{H} \otimes id_{H} \otimes \tau_{H \otimes H \otimes H,H} \otimes id_{X}) \circ \]
	\[ \circ (id_{X} \otimes id_{H} \otimes \tau_{H \otimes H \otimes H \otimes H,H} \otimes id_{X}) \circ (id_{X} \otimes
\tau_{H \otimes H \otimes H \otimes H,H} \otimes id_{H} \otimes id_{X}) \circ \]
	\[ \circ (id_{X} \otimes ((\Delta_{H} \otimes id_{H} \otimes id_{H} \otimes id_{H}) \circ (\Delta_{H} \otimes \Delta_{H})
\circ \Delta_{H}) \otimes id_{H} \otimes id_{X}) (f \otimes a \otimes b \otimes g) = \]
	\[ = (\rho_{H^{*},H} \otimes id_{k}) \circ (id_{X} \otimes (\mu_{H} \circ (id_{H} \otimes \mu_{H}) \circ (id_{H} \otimes
\mu_{H} \otimes \mu_{H})) \otimes (\rho_{H^{**},H^{*}} \circ (ev_{H} \otimes id_{X}) )) \circ \]
	\[\circ (id_{X} \otimes id_{H} \otimes S^{-1} \otimes S^{-1} \otimes id_{H} \otimes S^{-1} \otimes S \otimes id_{X}) \circ
\]
	\[ \circ (id_{X} \otimes id_{H} \otimes id_{H} \otimes id_{H} \otimes \tau_{H \otimes H,H} \otimes id_{X}) \circ (id_{X}
\otimes id_{H} \otimes id_{H} \otimes \tau_{H \otimes H \otimes H,H} \otimes id_{X}) \circ \]
	\[ \circ (id_{X} \otimes id_{H} \otimes \tau_{H \otimes H \otimes H \otimes H,H} \otimes id_{X}) \circ (id_{X} \otimes
\tau_{H \otimes H \otimes H \otimes H,H} \otimes id_{H} \otimes id_{X}) \circ \]
	\[ \circ (id_{X} \otimes ((id_{H} \otimes id_{H} \circ \Delta_{H} \otimes id_{H}) \circ (id_{H} \otimes \Delta_{H} \otimes
id_{H}) \circ (\Delta_{H} \otimes id_{H}) \circ \Delta_{H}) \otimes id_{H} \otimes id_{X}) (f \otimes a \otimes b \otimes g) =
\]
	\[ = \sum_{(a),(a^{'}),((a^{'})^{''}),(((a^{'})^{''})^{''})} (-1)^{ |a^{''}||a^{'}| + |b||a^{'}| +
|(((a^{'})^{''})^{''})^{''}|( |(a^{'})^{'}| + |((a^{'})^{''})^{'}| + |(((a^{'})^{''})^{''})^{'}| ) + |(((a^{'})^{''})^{''})^{'}
|( |(a^{'})^{'}| + |((a^{'})^{''})^{'}| ) } * \]
	\[ * f(a^{''} S^{-1}(b) S^{-1}((((a^{'})^{''})^{''})^{''}) (((a^{'})^{''})^{''})^{'} S^{-1}((a^{'})^{'})) \otimes
ev_{H}(S(((a^{'})^{''})^{'}))(g) = \]
	\[ = \sum_{(a),(a^{'}),((a^{'})^{''})} (-1)^{ |a^{''}||a^{'}| + |b||a^{'}| + |((a^{'})^{''})^{''}|( |(a^{'})^{'}| +
|((a^{'})^{''})^{'}| ) } * \]
	\[ * f(a^{''} S^{-1}(b) \epsilon_{H}(((a^{'})^{''})^{''}) S^{-1}((a^{'})^{'})) \otimes ev_{H}(S(((a^{'})^{''})^{'}))(g) =
\]
	\[ = \sum_{(a),(a^{'}),((a^{'})^{''})} (-1)^{ |a^{''}||a^{'}| + |b||a^{'}| + |((a^{'})^{''})^{''}|( |(a^{'})^{'}| +
|((a^{'})^{''})^{'}| ) } * \]
	\[ * f(a^{''} S^{-1}(b) S^{-1}((a^{'})^{'})) \otimes ev_{H}(S( ((a^{'})^{''})^{'})\epsilon_{H}(((a^{'})^{''})^{''}) )(g) =
\]
	\[ = \sum_{(a),(a^{'})} (-1)^{ |a^{''}||a^{'}| + |b||a^{'}| } f(a^{''} S^{-1}(b) S^{-1}((a^{'})^{'})) \otimes ev_{H}(S(
(a^{'})^{''} )(g) = \]
	\[ = ((id_{X} \otimes ev_{H}) (\sum_{(a),(a^{'})} (-1)^{ |a^{''}||a^{'}| + |(a^{'})^{'}||?| } f(a^{''} S^{-1}(?)
S^{-1}((a^{'})^{'})) \otimes S( (a^{'})^{''} ))) ( b \otimes g )  \]
	for all $f,g \in X, \; a,b \in H$.
	
	Thus
	\[ S_{X \bowtie A}(f \otimes a) = \sum_{(a),(a^{'})} (-1)^{ |a^{''}||a^{'}| + |(a^{'})^{'}||?| } f(a^{''} S^{-1}(?)
S^{-1}((a^{'})^{'})) \otimes S( (a^{'})^{''} ) \]
	for all $f \in X, \; a \in H$.
	
	Remark. We have by the coassociativity
	\[(\Delta_{H} \otimes id_{H} \otimes id_{H} \otimes id_{H}) \circ (\Delta_{H} \otimes \Delta_{H}) \circ \Delta_{H} = \]
	\[ = (((\Delta_{H} \otimes id_{H}) \circ \Delta_{H}) \otimes id_{H} \otimes id_{H}) \circ (id_{H} \otimes \Delta_{H}) \circ
\Delta_{H} = \]
	\[ = (((id_{H} \otimes \Delta_{H}) \circ \Delta_{H}) \otimes id_{H} \otimes id_{H}) \circ (id_{H} \otimes \Delta_{H}) \circ
\Delta_{H} = \]
	\[ = (id_{H} \otimes \Delta_{H} \otimes id_{H} \otimes id_{H}) \circ (\Delta_{H} \otimes \Delta_{H}) \circ \Delta_{H} = \]
	\[ = (id_{H} \otimes \Delta_{H} \otimes id_{H} \otimes id_{H}) \circ (id_{H} \otimes \Delta_{H} \otimes id_{H}) \circ
(id_{H} \otimes \Delta_{H}) \circ \Delta_{H} = \]
	\[ = (id_{H} \otimes \Delta_{H} \otimes id_{H} \otimes id_{H}) \circ (id_{H} \otimes \Delta_{H} \otimes id_{H}) \circ
(\Delta_{H} \otimes id_{H}) \circ \Delta_{H}  = \]
	\[ = (id_{H} \otimes ((\Delta_{H} \otimes id_{H}) \circ \Delta_{H}) \otimes id_{H}) \circ (\Delta_{H} \otimes id_{H}) \circ
\Delta_{H} = \]
	\[ = (id_{H} \otimes ((id_{H} \otimes \Delta_{H}) \circ \Delta_{H}) \otimes id_{H}) \circ (\Delta_{H} \otimes id_{H}) \circ
\Delta_{H} = \]
	\[ = (id_{H} \otimes id_{H} \circ \Delta_{H} \otimes id_{H}) \circ (id_{H} \otimes \Delta_{H} \otimes id_{H}) \circ
(\Delta_{H} \otimes id_{H}) \circ \Delta_{H}. \qed \]

	We show that $i_{X}(X)$ and $i_{H}(H)$ are Hopf subsuperalgebras. Mappings $i_{X}$ and $i_{H}$ are bijective. Furthermore,
$i_{X}(X)$ is a vector superspace: $(i_{X}(X))_{0} = X_{0} \otimes 1_{H}$,  $(i_{X}(X))_{1} = X_{1} \otimes 1_{H}$. We have for
all $\lambda_{1}, \lambda_{2} \in k$, $f,g \in X$
	\[ \lambda_{1}i_{X}(f) + \lambda_{2}i_{X}(g) = \lambda_{1} f \otimes 1_{H} + \lambda_{2} g \otimes 1_{H} = (\lambda_{1} +
\lambda_{2}) ( f + g ) \otimes 1_{H} = (\lambda_{1} + \lambda_{2}) i_{X}(f + g). \]
	
	In the same way  $i_{H}(H)$ is a vector superspace: $(i_{H}(H))_{0} = 1_{X} \otimes H_{0}$,  $(i_{H}(H))_{1} = 1_{X} \otimes
H_{1}$. We have for all $\lambda_{1}, \lambda_{2} \in k$, $a,b \in H$
	\[ \lambda_{1}i_{H}(a) + \lambda_{2}i_{H}(b) = \lambda_{1} 1_{X} \otimes a + \lambda_{2} 1_{X} \otimes b = (\lambda_{1} +
\lambda_{2}) 1_{X} \otimes (a + b) = (\lambda_{1} + \lambda_{2}) i_{H}(a + b). \]
	
	We prove that superspace morphisms $i_{X}$ and $i_{H}$ are Hopf superalgebras morphisms.
	First we prove that they are supercoalgebras morphisms.
	
	We show for $i_{X}$ that
	\[ (i_{X} \otimes i_{X}) \otimes \Delta_{X} = \Delta_{X \bowtie H} \circ i_{X}. \]
	Indeed, we have for all $f \in X$
	\[ (i_{X} \otimes i_{X}) \otimes \Delta_{X}(f) = \sum_{(f)} (-1)^{|f^{'}||f^{''}|} ( f^{''} \otimes 1_{H} ) \otimes ( f^{'}
\otimes 1_{H} ) = \]
	\[ = \sum_{(f)} (-1)^{|f^{'}||f^{''}| + |1_{H}||f^{'}|} ( f^{''} \otimes 1_{H} ) \otimes ( f^{'} \otimes 1_{H} ) = \Delta_{X
\bowtie H} \circ i_{X}(f). \]
	
	Next we show that
	\[ \epsilon_{X \bowtie H} \circ i_{X} = \epsilon_{X}. \]
	It follows from the fact that we have for all $f \in X$
	\[ \epsilon_{X \bowtie H} \circ i_{X}(f) = \epsilon_{X \bowtie H}( f \otimes 1_{H}) = \epsilon_{X}(f) \epsilon_{H}(1_{H}) =
1_{k} \epsilon_{X}(f) = \epsilon_{X}(f).\]
	
	We show for $i_{H}$ that
	\[ (i_{H} \otimes i_{H}) \otimes \Delta_{H} = \Delta_{X \bowtie H} \circ i_{H}. \]
	Indeed, we have for all $a \in H$
	\[ (i_{H} \otimes i_{H}) \otimes \Delta_{H}(a) = \sum_{(a)} ( 1_{X} \otimes a^{'} ) \otimes ( 1_{X} \otimes a^{''} ) =
\sum_{(a)} (-1)^{|\epsilon_{H}||a^{'}|} ( 1_{X} \otimes a^{'} ) \otimes ( 1_{X} \otimes a^{''} ) = \Delta_{X \bowtie H} \circ
i_{H}(a). \]
	
	Next we show that
	\[ \epsilon_{X \bowtie H} \circ i_{H} = \epsilon_{H}. \]
	It follows from the fact that we have for all $a \in H$
	\[ \epsilon_{X \bowtie H} \circ i_{H}(a) = \epsilon_{X}(1_{X}) \epsilon_{H}(a) = 1_{X}(1_{H}) \epsilon_{H}(a) =
\epsilon_{H}(1_{H}) \epsilon_{H}(a) = 1_{k} \epsilon_{H}(a) = \epsilon_{H}(a).\]
	
	We show that $i_X$ and $i_A$ are superalgebras morphisms.
	
	We show for $i_{X}$ that
	\[ i_{X}(f) i_{X}(g) = i_{X}(fg). \]
	Indeed, we have for all $f ,g\in X$
	\[ i_{X}(f) i_{X}(g) = ( f \otimes 1_{H} ) ( g \otimes 1_{H} ) = fg( S^{-1}(1_{H}) ? 1_{H} ) \otimes 1_{H} 1_{H} = fg
\otimes 1_{H} = i_{X}(fg). \]
	
	Next we show that
	\[ i_{X} \circ \eta_{X} = \eta_{X \bowtie H}. \]
	It follows from the fact that
	\[ i_{X} \circ \eta_{X} ( 1_{k} ) = i_{X} ( 1_{X} ) = 1_{X} \otimes 1_{H} = \eta_{X \bowtie H} ( 1_{k} ). \]
	
	We show for $i_{H}$ that
	\[ i_{H}(a) i_{H}(b) = i_{H}(ab). \]
	Indeed, we have for all $a,b \in H$
	\[ i_{H}(a) i_{H}(b) = ( 1_{X} \otimes a ) ( 1_{X} \otimes b ) = \]
	\[ = \sum_{(a)} (-1)^{|a^{'}||a^{''}|+|?||(a^{'})^{'}|} 1_{X} \epsilon_{H} ( S^{-1}( a^{''} ) ? (a^{'})^{'} ) \otimes
(a^{'})^{''} b = \]
	\[ = \sum_{(a)} 1_{X} \epsilon_{H} \otimes \epsilon_{H}((a^{'})^{'}) (a^{'})^{''} \epsilon_{H}(a^{''}) b = 1_{X} \otimes ab
= i_{H}(ab). \]
	
	Next we show that
	\[ i_{H} \circ \eta_{H} = \eta_{X \bowtie H}.  \]
	It follows from the fact that
	\[ i_{H} \circ \eta_{H} ( 1_{k} ) = i_{H} ( 1_{H} ) = 1_{X} \otimes 1_{H} = \eta_{X \bowtie H} ( 1_{k} ). \]
	
	We show that $i_{X}$ commutes with antipodes
	\[ S_{X \bowtie H} \circ i_{X} = i_{X} \circ S_{X}. \]
	Indeed, we have for all $f \in X$
	\[ S_{X \bowtie H} \circ i_{X}(f) = S_{X \bowtie H}(f \otimes 1_{H}) = \]
	\[ = f( 1_{H} S^{-1}(?) S^{-1}(1_{H} )) \otimes S(1_{H}) = (f \circ S^{-1}) \otimes 1_{H} = (S^{-1})^{*} (f) \otimes 1_{H} =
i_{X} \circ S_{X}(f). \]
	
	We show that $i_{H}$ commutes with antipodes
	\[ S_{X \bowtie H} \circ i_{H} = i_{H} \circ S_{H}. \]
	It follows from the fact that we have for all $a \in H$
	\[ S_{X \bowtie H} \circ i_{H}(a) = S_{X \bowtie H}(1_{X} \otimes a) = \sum_{(a),(a^{'})} (-1)^{ |a^{''}||a^{'}| +
|(a^{'})^{'}||?| } \epsilon_{H}(a^{''} S^{-1}(?) S^{-1}((a^{'})^{'})) \otimes S( (a^{'})^{''} ) = \]
	\[ = \sum_{(a),(a^{'})} (-1)^{ |a^{''}||a^{'}| } \epsilon_{H} \otimes S( \epsilon_{H}((a^{'})^{'}) (a^{'})^{''}
\epsilon_{H}(a^{''}) ) = \sum_{(a)} \epsilon_{H} \otimes S( a^{'} \epsilon_{H}(a^{''}) ) = \]
	\[ = \epsilon_{H} \otimes S(a) = 1_{X} \otimes S(a) = i_{H} \circ S_{H}(a). \]

	Also note that we have for all $f \in X, \; a \in H$
	\[ i_{X}(f) i_{H}(a) = ( f \otimes 1_{H} ) ( 1_{X} \otimes a ) = f \epsilon_{H} ( S^{-1}( 1_{H} ) ? 1_{H} ) \otimes 1_{H} a
= f \epsilon_{H} \otimes a = f \otimes a,\]
	
	\[ i_{H}(a)i_{X}(f) = ( 1_{X} \otimes a ) ( f \otimes 1_{H} ) = \sum_{(a),(a^{'})} (-1)^{|f||a| + |a^{''}||a^{'}| +
|?||(a^{'})^{'}|} \epsilon_{H} f(S^{-1}( a^{''} ) ? (a^{'})^{'}) \otimes (a^{'})^{''} 1_{H} = \]
	\[ = \sum_{(a),(a^{'})} (-1)^{|f||a| + |a^{''}||a^{'}| + |?||(a^{'})^{'}|} f(S^{-1}( a^{''} ) ? (a^{'})^{'}) \otimes
(a^{'})^{''}. \]
\end{proof}

\RQD*
\begin{proof}
	We show that $R$ respects conditions of Defenitions \ref{quasicoco} and \ref{twistedquasicoco}.

	1.
	We argue that $R$ is an invertible even element and its inverse is an even element
	\[ \overline{R} = \sum_{i \in I} ( 1_{X} \otimes e_{i} ) \otimes ( ( e^{i} \circ S ) \otimes 1_{H} ). \]
	
	Consider the element $R\overline{R}$.
	\[ R\overline{R} = ( \sum_{i \in I} ( 1_{X} \otimes e_{i} ) \otimes ( e^{i} \otimes 1_{H} )  ) ( \sum_{j \in I} ( 1_{X}
\otimes e_{j} ) \otimes ( (e^{j} \circ S) \otimes 1_{H} ) ) = \]
	\[ = \sum_{i,j \in I} (-1)^{|e^{i}||e_{j}|} ( 1_{X} \otimes e_{i} )( 1_{X} \otimes e_{j} ) \otimes ( e^{i} \otimes 1_{H} )(
(e^{j} \circ S) \otimes 1_{H} ) = \]
	\[ = \sum_{i,j \in I} (-1)^{|e^{i}||e_{j}|} ( 1_{X} \otimes e_{i}e_{j} ) \otimes ( e^{i}(e^{j} \circ S) \otimes 1_{H} ). \]
	
	We evaluate $\xi=b \otimes u \otimes c \otimes v \in H \otimes X \otimes H \otimes X$ on $R\overline{R}$.
	\[ \lambda_{H,H^{*},H,H^{*}} (((id_{X} \otimes ev_{H} \otimes id_{X} \otimes ev_{H}) (R\overline{R}))) (\xi) = \]
	\[ = \lambda_{H,H^{*},H,H^{*}} ( (\sum_{i,j \in I} (-1)^{|e^{i}||e_{j}|} ( 1_{X} \otimes ev_{H}(e_{i}e_{j}) ) \otimes (
(e^{i}(e^{j} \circ S)) \otimes ev_{H}(1_{H}) ))) (b \otimes u \otimes c \otimes v) = \]
	\[ = v(1_{H}) \epsilon_{H} (b) \sum_{i,j \in I} (-1)^{|e^{i}||e_{j}| + ( |b| + |u| ) |e^{i}(e^{j} \circ S)| +
|e_{i}e_{j}||b| + |u||e_{i} e_{j}| } u( e_{i} e_{j} ) (e^{i}(e^{j} \circ S)) (c) = \]
	\[ = v(1_{H}) \epsilon_{H} (b) \sum_{i,j \in I} (-1)^{|e_{i}||e_{j}| + (|e_{i}| + |e_{j}|)(|b| + |u|) + (|e_{i}| +
|e_{j}|)|b| + |u|(|e_{i}| + |e_{j}|) } u( e_{i} e_{j} ) (e^{i}(e^{j} \circ S)) (c) = \]
	\[ = v(1_{H}) \epsilon_{H} (b) \sum_{i,j \in I} (-1)^{|e_{i}||e_{j}|} u( e_{i} e_{j} )  (\Delta_{H})^{*}  (\lambda_{H,H}
(e^{i} \otimes (e^{j} \circ S))) (c) = \]	
	\[ = v(1_{H}) \epsilon_{H} (b) \sum_{(c),i,j \in I} (-1)^{|e_{i}||e_{j}| + |c^{'}||e_{j}|} u( e_{i} e_{j} ) e^{i}(c^{'})
e^{j}(S(c^{''})) = \]	
	\[ = v(1_{H}) \epsilon_{H} (b) \sum_{(c),i,j \in I} (-1)^{|c^{'}||c^{''}| + |c^{'}||c^{''}|} u( e_{i} e_{j} ) e^{i}(c^{'})
e^{j}(S(c^{''})) = \]	
	\[ = v(1_{H}) \epsilon_{H} (b) \sum_{(c)} u( (\sum_{i \in I} e^{i}(c^{'}) e_{i}) (\sum_{j \in I} e^{j}( S(c^{''}) )e_{j}) )
= \]
	\[ = v(1_{H}) \epsilon_{H} (b) u( \sum_{(c)} c^{'} S(c^{''}) ) = \epsilon_{H}(b) u(1_{H}) \epsilon_{H}(c) v(1_{H}) =  \]
	\[ = \lambda_{H,H^{*},H,H^{*}} ( (id_{X} \otimes ev_{H} \otimes id_{X} \otimes ev_{H}) (1_{X} \otimes 1_{H} \otimes 1_{X}
\otimes 1_{H})) (\xi). \]
	
	Consequently, $R\overline{R}=1_{X} \otimes 1_{H} \otimes 1_{X} \otimes 1_{H}$.
	
	Consider the element $\overline{R}R$.		
	\[ \overline{R}R = ( \sum_{i \in I} ( 1_{X} \otimes e_{i} ) \otimes ( (e^{i} \circ S) \otimes 1_{H} ) ) ( \sum_{j \in I} (
1_{X} \otimes e_{j} ) \otimes ( e^{j} \otimes 1_{H} )  ) = \]
	\[ = \sum_{i,j \in I} (-1)^{|e^{i}||e_{j}|} ( 1_{X} \otimes e_{i} )( 1_{X} \otimes e_{j} ) \otimes ( (e^{i} \circ S) \otimes
1_{H} )( e^{j} \otimes 1_{H} ) = \]
	\[ = \sum_{i,j \in I} (-1)^{|e^{i}||e_{j}|} ( 1_{X} \otimes e_{i}e_{j} ) \otimes ( (e^{i} \circ S) e^{j} \otimes 1_{H} ).
\]
	
	We evaluate $\xi=b \otimes u \otimes c \otimes v \in H \otimes X \otimes H \otimes X$ on $\overline{R}R$.
	\[ \lambda_{H,H^{*},H,H^{*}} ( ((id_{X} \otimes ev_{H} \otimes id_{X} \otimes ev_{H}) (\overline{R}R))) (\xi) = \]
	\[ = \lambda_{H,H^{*},H,H^{*}} ( (\sum_{i,j \in I} (-1)^{|e^{i}||e_{j}|} ( 1_{X} \otimes e_{i}e_{j} ) \otimes ( (e^{i} \circ
S) e^{j} \otimes 1_{H} )))  (b \otimes u \otimes c \otimes v) = \]
	\[ = v(1_{H}) \epsilon_{H}(b) \sum_{i,j \in I} (-1)^{|e^{i}||e_{j}| + ( |b| + |u| ) |(e^{i} \circ S)e^{j}| + |e_{i}e_{j}||b|
+ |u||e_{i} e_{j}| } u( e_{i} e_{j} ) ((e^{i} \circ S) e^{j})(c) = \]
	\[ = v(1_{H}) \epsilon_{H}(b) \sum_{i,j \in I} (-1)^{|e_{i}||e_{j}| + (|b| + |u|)(|e_{i}| + |e_{j}|) + (|e_{i}| +
|e_{j}|)|b| + |u|(|e_{i}| + |e_{j}|) } u(e_{i} e_{j}) ((e^{i} \circ S) e^{j})(c) = \]
	\[ = v(1_{H}) \epsilon_{H}(b) \sum_{i,j \in I} (-1)^{|e_{i}||e_{j}|} u(e_{i} e_{j}) (\Delta_{H})^{*} ( \lambda_{H,H} ((e^{i}
\circ S) \otimes e^{j})) (c) = \]
	\[ = v(1_{H}) \epsilon_{H}(b) \sum_{(c), i,j \in I} (-1)^{|e_{i}||e_{j}| + |e_{j}||c^{'}|} u(e_{i} e_{j}) e^{i}(S(c^{'}))
e^{j}(c^{''}) = \]
	\[ = v(1_{H}) \epsilon_{H}(b) \sum_{(c), i,j \in I} (-1)^{|c^{'}||c^{''}| + |c^{'}||c^{''}|} u(e_{i} e_{j}) e^{i}(S(c^{'}))
e^{j}(c^{''}) = \]
	\[ = v(1_{H}) \epsilon_{H}(b) \sum_{(c)}  u( (\sum_{i \in I} e^{i}( S(c^{'}) )e_{i}) (\sum_{j \in I} e^{j}(c^{''}) e_{j}) )
= \]
	\[ = v(1_{H}) \epsilon_{H} (b) u(\sum_{(c)} S(c^{'}) c^{''}) = \epsilon_{H}(b) u(1_{H}) \epsilon_{H}(c) v(1_{H}) = \]
	\[ = \lambda_{H,H^{*},H,H^{*}} ( (id_{X} \otimes ev_{H} \otimes id_{X} \otimes ev_{H}) (1_{X} \otimes 1_{H} \otimes 1_{X}
\otimes 1_{H})) (\xi). \]
	
	Consequently, $\overline{R}R=1_{X} \otimes 1_{H} \otimes 1_{X} \otimes 1_{H}$.
	
	Thus $ \overline{R}R = R\overline{R} = 1_{X} \otimes 1_{H} \otimes 1_{X} \otimes 1_{H}$.
	
	2. We fix an element $f \in X, \; a \in H$. Then
	\[ ((\tau_{X \bowtie A, X \bowtie A} \circ \Delta_{X \bowtie A}) (f \otimes a)) R = \]
	\[ = \sum_{(f),(a),i \in I} (-1)^{|f^{'}||f^{''}| + |f^{'}||a^{'}| + (|f^{''}| + |a^{'}|)(|f^{'}| + |a^{''}|) +
|e_{i}|(|f^{''}| + |a^{'}|)} (f^{'} \otimes a^{''})( 1_{X} \otimes e_{i} ) \otimes (f^{''} \otimes a^{'})( e^{i} \otimes 1_{H} )
= \]
	\[ = \sum_{(f),(a),(a^{'}),((a^{'})^{'}),i \in I} (-1)^{|f^{'}||f^{''}| + |f^{'}||a^{'}| + (|f^{''}| + |a^{'}|)(|f^{'}| +
|a^{''}|) + |e_{i}|(|f^{''}| + |a^{'}|) + |e^{i}||a^{'}| + |(a^{'})^{'}||(a^{'})^{''}| + |?||((a^{'})^{'})^{'}| } * \]
	\[ * ( f^{'} \otimes a^{''}e_{i} ) \otimes (f^{''} e^{i}( S^{-1}((a^{'})^{''}) ? ((a^{'})^{'})^{'} ) \otimes
((a^{'})^{'})^{''}) = \]
	\[ = \sum_{(f),(a),(a^{'}),((a^{'})^{'}),i \in I} (-1)^{|f^{''}|(|a^{''}| + |e_{i}|) + |a^{'}||a^{''}|  +
|(a^{'})^{'}||(a^{'})^{''}| + |?||((a^{'})^{'})^{'}|} * \]
	\[ * ( f^{'} \otimes a^{''}e_{i} ) \otimes (f^{''} e^{i}( S^{-1}((a^{'})^{''}) ? ((a^{'})^{'})^{'} ) \otimes
((a^{'})^{'})^{''}).\]
	
	We fix an element $\xi= b \otimes u \otimes c \otimes v \in H \otimes X \otimes H \otimes X$. Then
	
	\[ \lambda_{H,H^{*},H,H^{*}} ( ((id_{X} \otimes ev_{H} \otimes id_{X} \otimes ev_{H}) (\Delta_{X \bowtie A}^{op} (f \otimes
a) R))) (\xi) = \]
	\[ = \sum_{(f),(a),(a^{'}),((a^{'})^{'}),(c),i \in I} (-1)^{|f^{''}|(|a^{''}| + |e_{i}|) + |a^{'}||a^{''}|  +
|(a^{'})^{'}||(a^{'})^{''}| + |c^{''}||((a^{'})^{'})^{'}| + |u|( |a^{''}| + |e_{i}| ) + |((a^{'})^{'})^{''}||v|} * \]
	\[ * (-1)^{ |b| ( |a| + |f^{''}| ) + |u| ( |a^{'}| + |f^{''}| + |e^{i}| ) + |c||((a^{'})^{'})^{''}| + |c^{'}|( |e^{i}| +
|(a^{'})^{''}| + |((a^{'})^{'})^{'}| )} * \]
	\[ * f^{'}(b)  u(a^{''}e_{i}) f^{''}(c^{'}) e^{i}( S^{-1}((a^{'})^{''}) c^{''} ((a^{'})^{'})^{'} )  v(((a^{'})^{'})^{''}) =
\]
	\[ = \sum_{(a),(a^{'}),((a^{'})^{'}),(c),i \in I} (-1)^{|c^{'}|(|a^{''}| + |u|) + |a^{'}||a^{''}|  +
|(a^{'})^{'}||(a^{'})^{''}| + |c^{''}||((a^{'})^{'})^{'}| + |u||a| + |((a^{'})^{'})^{''}||v|} * \]
	\[ * (-1)^{ |b| |a| + |c||((a^{'})^{'})^{''}| + |c^{'}|( |(a^{'})^{''}| + |((a^{'})^{'})^{'}| )}    \lambda_{H,H} (
\Delta_{H^{*}} (f)) (b \otimes c^{'})  u(a^{''}e_{i}) e^{i}( S^{-1}((a^{'})^{''}) c^{''} ((a^{'})^{'})^{'} )
v(((a^{'})^{'})^{''}) = \]	
	\[ = \sum_{(a),(a^{'}),((a^{'})^{'}),(c)} (-1)^{|c^{'}|(|a^{''}| + |u|) + |a^{'}||a^{''}|  + |(a^{'})^{'}||(a^{'})^{''}| +
|u||a| + |((a^{'})^{'})^{''}||v|} * \]
	\[ * (-1)^{ |b| |a| + |c||(a^{'})^{'}| + |c^{'}||(a^{'})^{''}|}  f(b c^{'})  u(a^{''} \sum_{i \in I}  e^{i}(
S^{-1}((a^{'})^{''}) c^{''} ((a^{'})^{'})^{'} ) e_{i}) v(((a^{'})^{'})^{''}) = \]	
	\[ = \sum_{(a),(a^{'}),((a^{'})^{'}),(c)} (-1)^{|c^{'}|(|a^{''}| + |u|) + |a^{'}||a^{''}|  + |(a^{'})^{'}||(a^{'})^{''}| +
|u||a| + |((a^{'})^{'})^{''}||v| + |b| |a| + |c||(a^{'})^{'}| + |c^{'}||(a^{'})^{''}|} * \]
	\[ * f(b c^{'})  u(a^{''} S^{-1}((a^{'})^{''}) c^{''} ((a^{'})^{'})^{'}) v(((a^{'})^{'})^{''}) = \]	
	\[ = \sum_{(a),(a^{'}),((a^{'})^{'}),(c)} (-1)^{ |b||a| + |c^{'}|(|a| + |u|) + |u||a| + |a^{''}||a^{'}|  +
|(a^{'})^{''}||(a^{'})^{'}| + |c^{''}||(a^{'})^{'}| + |v||((a^{'})^{'})^{''}|} * \]
	\[ * f(b c^{'})  u(a^{''} S^{-1}((a^{'})^{''}) c^{''} ((a^{'})^{'})^{'}) v(((a^{'})^{'})^{''}) = \]	
	\[ = \nu_{k,k} \circ (id_{k} \otimes \nu_{k,k}) \circ (\rho_{H^{*},H} \otimes \rho_{H^{*},H} \otimes id_{k}) \circ \]
	\[ \circ (id_{X} \otimes \mu_{H} \otimes id_{X} \otimes (\mu_{H} \circ ((\mu_{H} \circ ( id_{H} \otimes S^{-1} )) \otimes
\mu_{H})) \otimes \rho_{H^{*},H}) \circ \]
	\[ \circ (id_{X} \otimes id_{H} \otimes id_{H} \otimes  id_{X} \otimes id_{H} \otimes id_{H} \otimes id_{H} \otimes id_{H}
\otimes \tau_{H,X}) \circ \]
	\[ \circ (id_{X} \otimes id_{H} \otimes id_{H} \otimes  id_{X} \otimes id_{H} \otimes id_{H} \otimes \tau_{H \otimes H,H}
\otimes id_{X}) \circ (id_{X} \otimes id_{H} \otimes id_{H} \otimes  id_{X} \otimes id_{H} \otimes \tau_{H \otimes H,H} \otimes
id_{H} \otimes id_{X}) \circ \]
	\[ \circ (id_{X} \otimes id_{H} \otimes id_{H} \otimes  id_{X} \otimes \tau_{H \otimes H \otimes H,H} \otimes id_{H} \otimes
id_{X}) \circ (id_{X} \otimes id_{H} \otimes id_{H} \otimes \tau_{H \otimes H \otimes H \otimes H,X} \otimes id_{H} \otimes
id_{X}) \circ \]
	\[ \circ (id_{X} \otimes id_{H} \otimes \tau_{H \otimes H \otimes H \otimes H \otimes X,H} \otimes id_{H} \otimes id_{X})
\circ (id_{X} \otimes \tau_{H \otimes H \otimes H \otimes H,H} \otimes id_{X} \otimes id_{H} \otimes id_{H} \otimes id_{X})
\circ \]
	\[ \circ (id_{X} \otimes ((\Delta_{H} \otimes id_{H} \otimes id_{H}) \circ (\Delta_{H} \otimes id_{H}) \circ \Delta_{H})
\otimes id_{H} \otimes id_{X} \otimes \Delta_{H} \otimes id_{X}) (f \otimes a \otimes b \otimes u \otimes c \otimes v) = \]
	\[ = \nu_{k,k} \circ (id_{k} \otimes \nu_{k,k}) \circ (\rho_{H^{*},H} \otimes \rho_{H^{*},H} \otimes id_{k}) \circ \]
	\[ \circ (id_{X} \otimes \mu_{H} \otimes id_{X} \otimes (\mu_{H} \circ ((\mu_{H} \circ ( id_{H} \otimes S^{-1} )) \otimes
\mu_{H})) \otimes \rho_{H^{*},H}) \circ \]
	\[ \circ (id_{X} \otimes id_{H} \otimes id_{H} \otimes  id_{X} \otimes id_{H} \otimes id_{H} \otimes id_{H} \otimes id_{H}
\otimes \tau_{H,X}) \circ \]
	\[ \circ (id_{X} \otimes id_{H} \otimes id_{H} \otimes  id_{X} \otimes id_{H} \otimes id_{H} \otimes \tau_{H \otimes H,H}
\otimes id_{X}) \circ (id_{X} \otimes id_{H} \otimes id_{H} \otimes  id_{X} \otimes id_{H} \otimes \tau_{H \otimes H,H} \otimes
id_{H} \otimes id_{X}) \circ \]
	\[ \circ (id_{X} \otimes id_{H} \otimes id_{H} \otimes  id_{X} \otimes \tau_{H \otimes H \otimes H,H} \otimes id_{H} \otimes
id_{X}) \circ (id_{X} \otimes id_{H} \otimes id_{H} \otimes \tau_{H \otimes H \otimes H \otimes H,X} \otimes id_{H} \otimes
id_{X}) \circ \]
	\[ \circ (id_{X} \otimes id_{H} \otimes \tau_{H \otimes H \otimes H \otimes H \otimes X,H} \otimes id_{H} \otimes id_{X})
\circ (id_{X} \otimes \tau_{H \otimes H \otimes H \otimes H,H} \otimes id_{X} \otimes id_{H} \otimes id_{H} \otimes id_{X})
\circ \]	
	\[ \circ (id_{X} \otimes ((id_{H} \otimes id_{H} \otimes \Delta_{H}) \circ (id_{H} \otimes \Delta_{H}) \circ \Delta_{H})
\otimes id_{H} \otimes id_{X} \otimes \Delta_{H} \otimes id_{X}) (f \otimes a \otimes b \otimes u \otimes c \otimes v) = \]
	\[ = \sum_{(a),(a^{''}),((a^{''})^{''}),(c)} (-1)^{ |b||a| + |c^{'}|(|a| + |u|) + |u||a| + |((a^{''})^{''})^{''}|( |a^{'}| +
|(a^{''})^{'}| + |((a^{''})^{''})^{'}| ) + |((a^{''})^{''})^{'}|( |a^{'}| + |(a^{''})^{'}| ) } * \]
	\[ * (-1)^{|c^{''}|( |a^{'}| + |(a^{''})^{'}| ) + |v||(a^{''})^{'}| }  f(b c^{'}) u(((a^{''})^{''})^{''}
S^{-1}(((a^{''})^{''})^{'}) c^{''} a^{'}) v((a^{''})^{'}) = \]
	\[ = \sum_{(a),(a^{''}),(c)} (-1)^{ |b||a| + |c^{'}|(|a| + |u|) + |u||a| + |(a^{''})^{''}|( |a^{'}| + |(a^{''})^{'}| ) +
|c^{''}|( |a^{'}| + |(a^{''})^{'}| ) + |v||(a^{''})^{'}| } * \]
	\[ * f(b c^{'}) u( \epsilon_{H}((a^{''})^{''}) c^{''} a^{'}) v((a^{''})^{'}) = \]
	\[ = \sum_{(a),(a^{''}),(c)} (-1)^{ |b||a| + |c^{'}|(|a| + |u|) + |u||a| + |c^{''}||a| + |v||a^{''}| } f(b c^{'}) u(c^{''}
a^{'}) v((a^{''})^{'} \epsilon_{H}((a^{''})^{''}) ) = \]
	\[ = \sum_{(a),(c)} (-1)^{ |b||a| + |c^{'}|(|a| + |u|) + |u||a| + |c^{''}||a| + |v||a^{''}| } f(b c^{'}) u(c^{''} a^{'})
v(a^{''}). \]	
	
	Remark. 2.1 We have by the coassociativity
	\[ (\Delta_{H} \otimes id_{H} \otimes id_{H}) \circ (\Delta_{H} \otimes id_{H}) \circ \Delta_{H} = (((\Delta_{H} \otimes
id_{H}) \circ \Delta_{H}) \otimes id_{H}) \circ \Delta_{H} = \]
	\[ = (((id_{H} \otimes \Delta_{H}) \circ \Delta_{H}) \otimes id_{H}) \circ \Delta_{H} = (id_{H} \otimes \Delta_{H} \otimes
id_{H}) \circ (\Delta_{H} \otimes id_{H}) \circ \Delta_{H} = \]
	\[ = (id_{H} \otimes \Delta_{H} \otimes id_{H}) \circ (id_{H} \otimes \Delta_{H}) \circ \Delta_{H} = (id_{H} \otimes
((\Delta_{H} \otimes id_{H}) \circ \Delta_{H})) \circ \Delta_{H} = \]
	\[ = (id_{H} \otimes ((id_{H} \otimes \Delta_{H}) \circ \Delta_{H})) \circ \Delta_{H} = (id_{H} \otimes id_{H} \otimes
\Delta_{H}) \circ (id_{H} \otimes \Delta_{H}) \circ \Delta_{H}. \qed \]
	
	For early fixed $f \in X, \; a \in H$ it follows that
	\[ R \Delta_{X \bowtie A} (f \otimes a) = ( \sum_{i \in I} ( 1_{X} \otimes e_{i} ) \otimes ( e^{i} \otimes 1_{H} ) ) (
\sum_{(f),(a)} (-1)^{|f^{'}||f^{''}| + |f^{'}||a^{'}|} (f^{''} \otimes a^{'}) \otimes (f^{'} \otimes a^{''}) ) = \]
	\[ = \sum_{(f),(a),i \in I} (-1)^{|f^{'}||f^{''}| + |f^{'}||a^{'}| + |e^{i}|( |f^{''}| + |a^{'}| ) } ( 1_{X} \otimes e_{i}
)(f^{''} \otimes a^{'}) \otimes ( e^{i} \otimes 1_{H} )(f^{'} \otimes a^{''}) = \]
	\[ = \sum_{(f),(a),i \in I, (e_{i}),((e_{i})^{'})} (-1)^{|f^{'}||f^{''}| + |f^{'}||a^{'}| + |e^{i}|( |f^{''}| + |a^{'}| ) +
|f^{''}||e_{i}| + |(e_{i})^{'}||(e_{i})^{''}| + |?||((e_{i})^{'})^{'}|} * \]
	\[ * f^{''}( S^{-1}( (e_{i})^{''} ) ? ((e_{i})^{'})^{'} ) \otimes ((e_{i})^{'})^{''} a^{'} \otimes e^{i} f^{'} \otimes
a^{''}. \]
	
	Consider the early fixed element $\xi= b \otimes u \otimes c \otimes v \in H \otimes X \otimes H \otimes X$. Then
	
	\[ \lambda_{H,H^{*},H,H^{*}} ( ((id_{X} \otimes ev_{H} \otimes id_{X} \otimes ev_{H}) (R \Delta_{X \bowtie A} (f \otimes
a)))) (\xi) = \]	
	\[ = \sum_{(f),(a),i \in I, (e_{i}),((e_{i})^{'})} (-1)^{|f^{'}||f^{''}| + |f^{'}||a^{'}| + |e^{i}||a^{'}| +
|(e_{i})^{'}||(e_{i})^{''}| + |b|( |a| + |e^{i}| + |f^{'}| + |(e_{i})^{'}| )} * \]
	\[ * (-1)^{ |u|( |a| + |f^{'}| + |e^{i}| + |((e_{i})^{'})^{''}| ) + |c||a^{''}| + |c^{'}||f^{'}| + |v||a^{''}| }  f^{''}(
S^{-1}( (e_{i})^{''} ) b ((e_{i})^{'})^{'} ) u(((e_{i})^{'})^{''} a^{'}) e^{i}(c^{'}) f^{'}(c^{''}) v(a^{''}) = \]
	\[ = \sum_{(f),(a),i \in I, (e_{i}),((e_{i})^{'})} (-1)^{|c^{''}||a^{'}| + |e^{i}||a^{'}| + |(e_{i})^{'}||(e_{i})^{''}| +
|b|( |a| + |e^{i}| + |(e_{i})^{'}| ) + |c^{''}|( |(e_{i})^{''}| + |((e_{i})^{'})^{'}| )} * \]
	\[ * (-1)^{ |u|( |a| + |c^{''}| + |e^{i}| + |((e_{i})^{'})^{''}| ) + |c||a^{''}| + |e^{i}||c^{''}| + |v||a^{''}| } * \]
	\[ * \lambda_{H,H} ( \Delta_{(H^{op})^{*}} (f)) ( S^{-1}( (e_{i})^{''} ) b ((e_{i})^{'})^{'} \otimes c^{''}) v(a^{''} )
u(((e_{i})^{'})^{''} a^{'}) e^{i}(c^{'}) = \]
	\[ = \sum_{(f),(a),i \in I, (e_{i}),((e_{i})^{'})} (-1)^{|c^{''}||a^{'}| + |e^{i}||a^{'}| + |(e_{i})^{'}||(e_{i})^{''}| +
|b|( |a| + |c^{''}| + |e^{i}| + |(e_{i})^{'}| )} * \]
	\[ * (-1)^{ |u|( |a| + |c^{''}| + |e^{i}| + |((e_{i})^{'})^{''}| ) + |c||a^{''}| + |e_{i}||c^{''}| + |v||a^{''}| } f(c^{''}
S^{-1}( (e_{i})^{''} ) b ((e_{i})^{'})^{'}) e^{i}(c^{'}) u(((e_{i})^{'})^{''} a^{'}) v(a^{''} ) = \]		
	\[ = \sum_{(f),(a),i \in I, (e_{i}),((e_{i})^{'})} (-1)^{ |c^{''}|( |e_{i}| + |u| + |b| + |a| ) + |(e_{i})^{''}|(
|(e_{i})^{'}| + |b| + |a| + |u| ) + |b||a| + |((e_{i})^{'})^{'}|( |a| + |u| ) + |u||a| + |((e_{i})^{'})^{''}||a| + |v||a^{''}| }
* \]
	\[ * f(c^{''} S^{-1}( (e_{i})^{''} ) b ((e_{i})^{'})^{'}) e^{i}(c^{'}) u(((e_{i})^{'})^{''} a^{'}) v(a^{''} ) = \]
	\[ = \nu_{k,k} \circ (\rho_{H^{*},H} \otimes \nu_{k,k}) \circ (id_{X} \otimes (\mu_{H} \circ ((\mu_{H} \circ (id_{H} \otimes
S^{-1})) \otimes \mu_{H})) \otimes (\rho_{H^{*},H} \circ ( id_{X} \otimes \mu_{H} )) \otimes \rho_{H^{*},H}) \circ \]
	\[ \circ (id_{X} \otimes id_{H} \otimes id_{H} \otimes id_{H} \otimes id_{H} \otimes id_{X} \otimes id_{H} \otimes id_{H}
\otimes \tau_{H,X}) \circ (id_{X} \otimes id_{H} \otimes id_{H} \otimes id_{H} \otimes id_{H} \otimes id_{X} \otimes id_{H}
\otimes \Delta_{H} \otimes id_{X}) \circ \]
	\[ \circ (id_{X} \otimes id_{H} \otimes id_{H} \otimes id_{H} \otimes id_{H} \otimes id_{X} \otimes \tau_{H,H} \otimes
id_{X}) \circ (id_{X} \otimes id_{H} \otimes id_{H} \otimes id_{H} \otimes id_{H} \otimes \tau_{H,X} \otimes id_{H} \otimes
id_{X}) \circ \]
	\[ \circ (id_{X} \otimes id_{H} \otimes id_{H} \otimes id_{H} \otimes \tau_{H \otimes X,H} \otimes id_{H} \otimes id_{X})
\circ (id_{X} \otimes id_{H} \otimes id_{H} \otimes \tau_{H,H} \otimes id_{X} \otimes id_{H} \otimes id_{H} \otimes id_{X})
\circ \]
	\[ \circ (id_{X} \otimes id_{H} \otimes \tau_{H \otimes H \otimes X \otimes H \otimes H,H} \otimes id_{X}) \circ (id_{X}
\otimes \tau_{H \otimes H \otimes X \otimes H \otimes H \otimes H,H} \otimes id_{X}) \circ \]
	\[ \circ (id_{X} \otimes id_{H} \otimes id_{H} \otimes id_{X} \otimes (\nu_{k,H \otimes H \otimes H} \circ ( \rho_{H^{*},H}
\otimes ((\Delta_{H} \otimes id_{H}) \circ \Delta_{H}))) \otimes id_{H} \otimes id_{X}) \circ \]
	\[ \circ (id_{X} \otimes id_{H} \otimes id_{H} \otimes id_{X} \otimes \tau_{H, X \otimes H} \otimes id_{H} \otimes id_{X})
\circ \]
	\[ \circ (\tau_{H \otimes X,X \otimes H \otimes H \otimes X} \otimes \Delta_{H} \otimes id_{X}) (e_{i} \otimes e^{i} \otimes
f \otimes a \otimes b \otimes u \otimes c \otimes v) = \]
	\[ = \nu_{k,k} \circ (\rho_{H^{*},H} \otimes \nu_{k,k}) \circ (id_{X} \otimes (\mu_{H} \circ ((\mu_{H} \circ (id_{H} \otimes
S^{-1})) \otimes \mu_{H})) \otimes (\rho_{H^{*},H} \circ ( id_{X} \otimes \mu_{H} )) \otimes \rho_{H^{*},H}) \circ \]
	\[ \circ (id_{X} \otimes id_{H} \otimes id_{H} \otimes id_{H} \otimes id_{H} \otimes id_{X} \otimes id_{H} \otimes id_{H}
\otimes \tau_{H,X}) \circ (id_{X} \otimes id_{H} \otimes id_{H} \otimes id_{H} \otimes id_{H} \otimes id_{X} \otimes id_{H}
\otimes \Delta_{H} \otimes id_{X}) \circ \]
	\[ \circ (id_{X} \otimes id_{H} \otimes id_{H} \otimes id_{H} \otimes id_{H} \otimes id_{X} \otimes \tau_{H,H} \otimes
id_{X}) \circ (id_{X} \otimes id_{H} \otimes id_{H} \otimes id_{H} \otimes id_{H} \otimes \tau_{H,X} \otimes id_{H} \otimes
id_{X}) \circ \]
	\[ \circ (id_{X} \otimes id_{H} \otimes id_{H} \otimes id_{H} \otimes \tau_{H \otimes X,H} \otimes id_{H} \otimes id_{X})
\circ (id_{X} \otimes id_{H} \otimes id_{H} \otimes \tau_{H,H} \otimes id_{X} \otimes id_{H} \otimes id_{H} \otimes id_{X})
\circ \]
	\[ \circ (id_{X} \otimes id_{H} \otimes \tau_{H \otimes H \otimes X \otimes H \otimes H,H} \otimes id_{X}) \circ (id_{X}
\otimes \tau_{H \otimes H \otimes X \otimes H \otimes H \otimes H,H} \otimes id_{X}) \circ \]
	\[ \circ (\sum_{(c),(c^{''}),((c^{''})^{''})} f \otimes a \otimes b \otimes u \otimes c^{'} \otimes (c^{''})^{'} \otimes
((c^{''})^{''})^{'} \otimes ((c^{''})^{''})^{''} \otimes v) = \]
	\[ = \sum_{(c),(c^{''}),((c^{''})^{''})} (-1)^{ |((c^{''})^{''})^{''}|( |a| + |b| + |u| + |c^{'}| + |(c^{''})^{'}| +
|((c^{''})^{''})^{'}| ) + |((c^{''})^{''})^{'}|( |a| + |b| + |u| + |c^{'}| + |(c^{''})^{'}| ) + |b||a| + |c^{'}|( |a| + |u| ) }
* \]
	\[ * (-1)^{ |u||a| + |(c^{''})^{'}||a| + |v||a^{''}| } f(((c^{''})^{''})^{''} S^{-1}(((c^{''})^{''})^{'}) b c^{'})
u((c^{''})^{'} a^{'}) v(a^{''}) = \]	
	\[ = \sum_{(c),(c^{''})} (-1)^{ |b||a| + |c^{'}|( |a| + |u| ) + |u||a| + |c^{''}||a| + |v||a^{''}| } f( b c^{'}) u(
(c^{''})^{'} \epsilon_{H}((c^{''})^{''}) a^{'}) v(a^{''}) = \]	
	\[ = \sum_{(c)} (-1)^{ |b||a| + |c^{'}|( |a| + |u| ) + |u||a| + |c^{''}||a| + |v||a^{''}| } f( b c^{'}) u( c^{''} a^{'})
v(a^{''}). \]		
	
	Remark.	
	2.2 Set
	\[ c = \sum_{i \in I} e^{i} (c) e_{i}. \]
	Then it follows
	\[ (\Delta_{H} \otimes id_{H}) \circ \Delta_{H} (c) = \sum_{(c),(c^{'})} (c^{'})^{'} \otimes (c^{'})^{''} \otimes c^{''} =
\sum_{i \in I, (e_{i}),(e_{i}^{'})} e^{i} (c) (e_{i}^{'})^{'} \otimes (e_{i}^{'})^{''} \otimes e_{i}^{''}. \]
	
	2.3 It is easy to see that
	\[ (id_{X} \otimes id_{H} \otimes id_{H} \otimes id_{X} \otimes (\nu_{k,H \otimes H \otimes H} \circ ( \rho_{H^{*},H}
\otimes ((\Delta_{H} \otimes id_{H}) \circ \Delta_{H}))) \otimes id_{H} \otimes id_{X}) \circ \]
	\[ \circ (id_{X} \otimes id_{H} \otimes id_{H} \otimes id_{X} \otimes \tau_{H, X \otimes H} \otimes id_{H} \otimes id_{X})
\circ \]
	\[ \circ (\tau_{H \otimes X,X \otimes H \otimes H \otimes X} \otimes \Delta_{H} \otimes id_{X}) (e_{i} \otimes e^{i} \otimes
f \otimes a \otimes b \otimes u \otimes c \otimes v) = \]
	\[ = \sum_{i \in I, (e_{i}),(e_{i}^{'}), (c)} f \otimes a \otimes b \otimes u \otimes e^{i}(c^{'}) (e_{i}^{'})^{'} \otimes
(e_{i}^{'})^{''} \otimes e_{i}^{''} \otimes  c^{''} \otimes v = \]
	\[ = \sum_{(c),(c^{'}),((c^{'})^{'})} f \otimes a \otimes b \otimes u \otimes ((c^{'})^{'})^{'} \otimes ((c^{'})^{'})^{''}
\otimes (c^{'})^{''} \otimes  c^{''} \otimes v = \]
	\[ = \sum_{(c),(c^{''}),((c^{''})^{''})} f \otimes a \otimes b \otimes u \otimes c^{'} \otimes (c^{''})^{'} \otimes
((c^{''})^{''})^{'} \otimes ((c^{''})^{''})^{''} \otimes v. \qed \]
	
	Thus we have for all $f \in X, \; a \in H$
	\[ ((\tau_{X \bowtie A, X \bowtie A} \circ \Delta_{X \bowtie A}) (f \otimes a)) R = R \Delta_{X \bowtie A} (f \otimes a).
\]
	
	3. We prove that
	\[ ( \Delta_{D(H)} \otimes id_{D(H)} ) (R) = R_{13} R_{23}. \]
	
	Note that
	\[ ( \Delta_{D(H)} \otimes id_{D(H)} ) (R) = ( \Delta_{D(H)} \otimes id_{D(H)} ) ( \sum_{i \in I} ( 1_{X} \otimes e_{i} )
\otimes ( e^{i} \otimes 1_{H} ) ) = \]
	\[ = \sum_{i \in I} 1_{X} \otimes (e_{i})^{'} \otimes 1_{X} \otimes (e_{i})^{''} \otimes e^{i} \otimes 1_{H}. \]
	
	We fix an element $a \otimes t \otimes b \otimes u \otimes c \otimes v \in H \otimes X \otimes H \otimes X \otimes H \otimes
X$.
	\[\lambda_{H,H^{*},H,H^{*},H,H^{*}}((id_{X}\otimes ev_{H}\otimes id_{X}\otimes ev_{H}\otimes id_{X}\otimes
ev_{H})((\Delta_{D(H)}\otimes id_{D(H)})(R)))(a\otimes t\otimes b\otimes u\otimes c\otimes v)=\]
	\[=\epsilon_{H}(a)\epsilon_{H}(b)v(1_{H})\sum_{i\in
I,(e_{i})}(-1)^{|e^{i}|(|a|+|t|+|b|+|u|)+|(e_{i})^{''}|(|a|+|t|+|b|+|u|)+|(e_{i})^{'}|(|a|+|t|)}t((e_{i})^{'})u((e_{i})^{''})e^{i}(c)=\]
	\[=\epsilon_{H}(a)\epsilon_{H}(b)v(1_{H})\sum_{i\in I,(e_{i})}(-1)^{|(e_{i})^{'}||u|}t((e_{i})^{'})u((e_{i})^{''})e^{i}(c)=
\epsilon_{H}(a)\epsilon_{H}(b)v(1_{H})\sum_{(c)}(-1)^{|u||c^{'}|}t(c^{'})u(c^{''}).\]
	
	Remark. 3.1 Set
	\[ c = \sum_{i \in I} e^{i}(c) e_{i}. \]
	Then
	\[ \Delta(c) = \sum_{i \in I,(e_{i})} e^{i}(c) (e_{i})^{'} \otimes (e_{i})^{''}. \]
	It is easy to see that
	\[\lambda_{H,H}(t\otimes u)(\sum_{(c)}c^{'}\otimes c^{''})=\lambda_{H,H}(t\otimes u)(\sum_{i\in
I,(e_{i})}e^{i}(c)(e_{i})^{'}\otimes(e_{i})^{''}) \iff \]
	\[ \iff \sum_{(c)}(-1)^{|u||c^{'}|}t(c^{'})u(c^{''})=\sum_{i\in
I,(e_{i})}(-1)^{|u||(e_{i})^{'}|}e^{i}(c)t((e_{i})^{'})u((e_{i})^{''}). \qed \]
	
	Evaluate
	\[R_{13}R_{23}=(\sum_{i\in I}(1_{X}\otimes e_{i})\otimes(1_{X}\otimes1_{H})\otimes(e^{i}\otimes1_{H}))(\sum_{j\in
I}(1_{X}\otimes1_{H})\otimes(1_{X}\otimes e_{j})\otimes(e^{j}\otimes1_{H}))=\]
	\[=\sum_{i,j\in I}(-1)^{|e^{i}||e_{j}|}(1_{X}\otimes e_{i})\otimes(1_{X}\otimes e_{j})\otimes(e^{i}e^{j}\otimes1_{H}).\]
	
	Consider the early fixed element $a \otimes t \otimes b \otimes u \otimes c \otimes v \in H \otimes X \otimes H \otimes X
\otimes H \otimes X$. Then
	\[\lambda_{H,H^{*},H,H^{*},H,H^{*}}((id_{X}\otimes ev_{H}\otimes id_{X}\otimes ev_{H}\otimes id_{X}\otimes
ev_{H})(R_{13}R_{23}))(a\otimes t\otimes b\otimes u\otimes c\otimes v)=\]
	\[=\epsilon_{H}(a)\epsilon_{H}(b)v(1_{H})\sum_{i,j\in
I,(c)}(-1)^{|e^{i}||e_{j}|+(|e_{i}|+|e_{j}|)(|a|+|t|+|b|+|u|)+|e^{j}||c^{'}|+|e_{j}|(|a|+|t|+|b|+|u|)+|e_{i}|(|a|+|t|)} *\]
	\[ * t(e_{i})u(e_{j})e^{i}(c^{'})e^{j}(c^{''})=\]
	\[=\epsilon_{H}(a)\epsilon_{H}(b)v(1_{H})\sum_{i,j\in
I,(c)}(-1)^{|e^{i}||e_{j}|+|e_{i}|(|b|+|u|)+|e^{j}||c^{'}|}t(e_{i})u(e_{j})e^{i}(c^{'})e^{j}(c^{''})=\]
	\[=\epsilon_{H}(a)\epsilon_{H}(b)v(1_{H})\sum_{i,j\in
I,(c)}(-1)^{|e^{i}||e_{j}|+|e_{i}||u|+|e^{j}||e^{i}|}t(e_{i})u(e_{j})e^{i}(c^{'})e^{j}(c^{''})=\]
	\[=\epsilon_{H}(a)\epsilon_{H}(b)v(1_{H})\sum_{(c)}(-1)^{|c^{'}||u|}t(\sum_{i\in I}e^{i}(c^{'})e_{i})u(\sum_{j\in
I}e^{j}(c^{''})e_{j})=\epsilon_{H}(a)\epsilon_{H}(b)v(1_{H})\sum_{(c)}(-1)^{|u||c^{'}|}t(c^{'})u(c^{''}).\]
	
	Therefore
	\[ ( \Delta_{D(H)} \otimes id_{D(H)} ) (R) = R_{13} R_{23}. \]
	
	4. We prove that
	\[ ( id_{D(H)} \otimes \Delta_{D(H)} ) (R) = R_{13} R_{12}. \]
	
	Note that
	\[(id_{D(H)}\otimes\Delta_{D(H)})(R)=(id_{D(H)}\otimes\Delta_{D(H)})(\sum_{i\in I}(1_{X}\otimes
e_{i})\otimes(e^{i}\otimes1_{H}))=\]
	\[=\sum_{i\in I,(e^{i})}(-1)^{|(e^{i})^{'}||(e^{i})^{''}|}1_{X}\otimes
e_{i}\otimes(e^{i})^{''}\otimes1_{H}\otimes(e^{i})^{'}\otimes1_{H}.\]
	
	We fix an element $a \otimes t \otimes b \otimes u \otimes c \otimes v \in H \otimes X \otimes H \otimes X \otimes H \otimes
X$.
	\[\lambda_{H,H^{*},H,H^{*},H,H^{*}}((id_{X}\otimes ev_{H}\otimes id_{X}\otimes ev_{H}\otimes id_{X}\otimes
ev_{H})((id_{D(H)}\otimes\Delta_{D(H)})(R)))(a\otimes t\otimes b\otimes u\otimes c\otimes v)=\]
	\[=\epsilon_{H}(a)u(1_{H})v(1_{H})\sum_{i\in
I,(e^{i})}(-1)^{|(e^{i})^{'}||(e^{i})^{''}|+|(e^{i})^{'}|(|a|+|t|+|b|+|u|)+|(e^{i})^{''}|(|a|+|t|)+|e_{i}|(|a|+|t|)}t(e_{i})(e^{i})^{''}(b)(e^{i})^{'}(c)=\]
	\[=\epsilon_{H}(a)u(1_{H})v(1_{H})\sum_{i\in I}t(e_{i})\lambda_{H,H}(\Delta_{(H^{op})^{*}}(e^{i}))(b\otimes c)=\]
	\[=\epsilon_{H}(a)u(1_{H})v(1_{H})\sum_{i\in
I}(-1)^{|b||c|}t(e_{i})e^{i}(cb)=(-1)^{|b||c|}\epsilon_{H}(a)u(1_{H})v(1_{H})t(\sum_{i\in I}e^{i}(cb)e_{i})=\]
	\[=(-1)^{|b||c|}\epsilon_{H}(a)u(1_{H})v(1_{H})t(cb).\]
	
	Next we have
	\[ R_{13} R_{12} = ( \sum_{i \in I} ( 1_{X} \otimes e_{i} ) \otimes ( 1_{X} \otimes 1_{H} ) \otimes ( e^{i} \otimes 1_{H} )
) ( \sum_{j \in I} ( 1_{X} \otimes e_{j} ) \otimes ( e^{j} \otimes 1_{H} ) \otimes ( 1_{X} \otimes 1_{H} ) ) = \]
	\[ = \sum_{i,j \in I} (-1)^{|e^{i}| ( |e_{j}| + |e^{j}| ) } ( 1_{X} \otimes e_{i}e_{j} ) \otimes ( e^{j} \otimes 1_{H} )
\otimes ( e^{i} \otimes 1_{H} ) = \sum_{i,j \in I} ( 1_{X} \otimes e_{i}e_{j} ) \otimes ( e^{j} \otimes 1_{H} ) \otimes ( e^{i}
\otimes 1_{H} ). \]
	
	Consider the early fixed element $a \otimes t \otimes b \otimes u \otimes c \otimes v \in H \otimes X \otimes H \otimes X
\otimes H \otimes X$. Then		
	\[\lambda_{H,H^{*},H,H^{*},H,H^{*}}((id_{X}\otimes ev_{H}\otimes id_{X}\otimes ev_{H}\otimes id_{X}\otimes
ev_{H})(R_{13}R_{12}))(a\otimes t\otimes b\otimes u\otimes c\otimes v)=\]
	\[=\epsilon_{H}(a)u(1_{H})v(1_{H})\sum_{i,j\in
I}(-1)^{|e^{i}|(|a|+|t|+|b|+|u|)+|e^{j}|(|a|+|t|)+(|e_{i}|+|e_{j}|)(|a|+|t|)}t(e_{i}e_{j})e^{j}(b)e^{i}(c)=\]
	\[=\epsilon_{H}(a)u(1_{H})v(1_{H})(-1)^{|b||c|}t((\sum_{i\in I}e^{i}(c)e_{i})(\sum_{j\in I}e^{j}(b)e_{j}))=
(-1)^{|b||c|}\epsilon_{H}(a)u(1_{H})v(1_{H})t(cb).\]
	
	Therefore
	\[ ( id_{D(H)} \otimes \Delta_{D(H)} ) (R) = R_{13} R_{12}. \]
\end{proof}

\eq*
\begin{proof}
	We use mathematical induction on two variables to proof all identities. All relations are easy to proof. Therefore we
consider only the identity
	\[ e_1^r f_1^w = f_1^w e_1^r + [w,r > 0] \sum_{u=1}^{min(r,w)} \frac{[r]! [w]!}{[u]! [r-u]! [w-u]!}  f_1^{w-u} \times \]
	\[ \times [k_1; \; 2u-r-w] [k_1; \; 2u-r-w-1] ... [k_1; \; u-r-w+1] e_1^{r-u}, \]
	where $w,r \in \mathbb{N} $.
	
	If $w=0, \; r=0$ the statement is trivial. If $w=1, \; r=1$ we get			
	\[ e_1 f_1 = f_1 e_1 + [k_1; \; 0] = f_1 e_1 + \frac{k_1 - k_1^{-1}}{q - q^{-1}}. \]
	
	Suppose that the claim is true for all $1 \le p < r$ for $e_1$ and for all $1 \le q \le v$ for $f_1$. Using the induction
hypothesis,
	\[ e_1 f_1^v = f_1^v e_1 + \frac{[1]! [v]!}{[1]! [0]! [v-1]!} f_1^{v-1} [k_1; \; 1-v] = f_1^v e_1 + [v] f_1^{v-1} [k_1; \;
1-v]. \]
	We now prove that the equality holds for $r$:
	\[ e_1^r f_1^v =  e_1 ( f_1^v e_1^{r-1} + \sum_{u=1}^{min(r-1,v)} \frac{[r-1]! [v]!}{[u]! [r-u-1]! [v-u]!}  f_1^{v-u} \times	
\]
	\[ \times [k_1; \; 2u-r-v+1] [k_1; \; 2u-r-v] ... [k_1; \; u-r-v+2] e_1^{r-u-1} ) = \]
	\[ = f_1^v e_1^r + [r][v] f_1^{v-1} [k_1; 2-r-v] e_1^{r-1} +  \sum_{u=2}^{min(r-2,v-1)} \frac{[r]! [v]!}{[u]! [r-u]! [v-u]!}
f_1^{v-u} \times \]
	\[ \times [k_1; \; 2u-r-v] [k_1; \; 2u-r-v-1] [k_1; \; 2u-r-v-2] ... [k_1; \; u-r-v+1] e_1^{r-u} + \]
	\[ + [ r > v ] \frac{[r]!}{[r-v]!} [k_1; v-r] [k_1; v-r-1] ... [k_1; -r+1] e_1^{r-v} + \]
	\[ + [ v \ge r ] ( \frac{[r]! [v]!}{[r-1]! [v-r+1]!} f_1^{v-r+1} [k_1; \; r-v-2] [k_1; \; r-v-3] ... [k_1; \; -v] e_1 + \]
	\[ + \frac{ [v]!}{ [v-r]!} f_1^{v-r} [k_1; \; r-v] [k_1; \; r-v-1] [k_1; \; r-v-2] ... [k_1; \; -v+1]) = \]
	\[ = f_1^v e_1^r + \sum_{u=1}^{min(r,v)} \frac{[r]! [v]!}{[u]! [r-u]! [v-u]!} f_1^{v-u} \times \]
	\[ \times [k_1; \; 2u-r-v] [k_1; \; 2u-r-v-1] [k_1; \; 2u-r-v-2] ... [k_1; \; u-r-v+1] e_1^{r-u}. \]
	
	Suppose that the claim is true for all $1 \le p \le z$ for $e_1$ and for all $1 \le q < w$ for $f_1$. Using the induction
hypothesis,
	\[ e_1^z f_1 = f_1 e_1^z + \frac{[z]! [1]!}{[1]! [z-1]! [0]!} [k_1; 1-z] e_1^{z-1} = f_1 e_1^z + [z] [k_1; 1-z] e_1^{z-1}.
\]			
	We now prove that the equality holds for $w$:
	\[ e_1^z f_1^w = f_1^{w-1} e_1^z f_1 + \sum_{u=1}^{min(z,w-1)} \frac{[z]! [w-1]!}{[u]! [z-u]! [w-u-1]!}  f_1^{w-u-1} \times
\]
	\[ \times [k_1; \; 2u-z-w+1] [k_1; \; 2u-z-w] ... [k_1; \; u-z-w+2] e_1^{z-u} f_1 = \]
	\[ = f_1^w e_1^z + [z] [w] f_1^{w-1} [k_1; 2-z-w] e_1^{z-1} + \]
	\[ + \sum_{u=2}^{min(z-1,w-2)} \frac{[z]! [w]!}{[u]! [z-u]! [w-u]!}  f_1^{w-u} \times \]
	\[ \times [k_1; \; 2u-z-w] [k_1; \; 2u-z-w-1] [k_1; \; 2u-z-w-2] ... [k_1; \; u-z-w+1] e_1^{z-u} + \]
	\[ + [z \ge w] ( \frac{[z]![w]}{[z-w+1]!} f_1 [k_1; \; w-z-2] [k_1; \; w-z-3] [k_1; \; w-z-4] ... [k_1; \; -z] e_1^{z-w+1}
+ \]
	\[ + \frac{[z]!}{[z-w]!} [k_1; w-z] [k_1; \; w-z-1] [k_1; \; w-z-2] ... [k_1; \; -z+1] e_1^{z-w} ) + \]						

	\[ + [ w > z ] \frac{[w]!}{[w-z]!} f_1^{w-z} [k_1;z-w] [k_1; \; z-w-1] [k_1; \; z-w-2] ... [k_1; \; -w+1] = \]
	\[ = f_1^w e_1^z + \sum_{u=1}^{min(z,w)} \frac{[z]! [w]!}{[u]! [z-u]! [w-u]!}  f_1^{w-u} \times \]
	\[ \times [k_1; \; 2u-z-w] [k_1; \; 2u-z-w-1] ... [k_1; \; u-z-w+1] e_1^{z-u}. \]	
	
\end{proof}

\cent*
\begin{proof}
	We verify that the element $k_1^{d}$ belongs to the center of $U_q$. It follows from Lemma \ref{lm:relonbasis} that  		

	\[k_1^{d}f_1=q^{-2d}f_1k_1^d=f_1k_1^d, \; k_1^df_2=q^{d}f_2k_1^d=f_2k_1^d,\]
	\[k_1^df_3=q^{-d}f_3k_1^d=f_3k_1^d, \; k_2k_1^d=k_1^dk_2,\]
	\[e_1k_1^d=q^{-2d}k_1^{d}e_1=k_1^{d}e_1, \; e_2k_1^d=q^{d}k_1^de_2=k_1^de_2,\]
	\[e_3k_1^d=q^{-d}k_1^de_3=k_1^de_3.\]
	
	We verify that the element $k_2^{d}$ belongs to the center of $U_q$.
	\[k_2^df_1=q^{d}f_1k_2^d=f_1k_2^d, \; k_2^df_2=f_2k_2^d,\]
	\[k_2^df_3=q^{d}f_3k_2^d=f_3k_2^d, \; k_2^dk_1=k_1k_2^d,\]
	\[e_1k_2^d=q^{d}k_2^de_1=k_2^de_1, \; e_2k_2^d=k_2^de_2,\]
	\[e_3k_2^d=q^{d}k_2^de_3=k_2^de_3.\]
	
	We verify that the element $f_1^d$ belongs to the center of $U_q$. It follows from Lemma \ref{lm:relonbasis} that
	\[ f_2f_1^d = q^d f_1^d f_2 + [d] f_1^{d-1} f_3 = f_1^d f_2, \; f_3f_1^d=q^{-d}f_1^df_3=f_1^df_3, \]
	\[ k_1f_1^{d}=q^{-2d}f_1^dk_1=f_1^dk_1, \; k_2f_1^d=q^{d}f_1^dk_2=f_1^dk_2,\]
	\[ e_1f_1^d = f_1^d e_1 + \frac{[1]![d]!}{[1]![0]![d-1]!} f_1^{d-1} [k_1;1-d] = f_1^d e_1, \]
	\[e_2f_1^d=f_1^de_2, \; e_3 f_1^d = f_1^d e_3 - q^{d-2} [d] f_1^{d-1} k_1^{-1} e_2 = f_1^d e_3. \]		
	
	We verify that the element $e_1^d$ belongs to the center of $U_q$. It follows from Lemma \ref{lm:relonbasis} that
	\[ e_1^d f_1 = f_1 e_1^d + \frac{[d]![1]!}{[1]![d-1]![0]!} [k_1;1-d] e_1^{d-1} = f_1 e_1^d, \]
	\[e_1^df_2=f_2e_1^d, \; e_1^d f_3 = f_3 e_1^d + q^{2-d} [d] f_2 k_1 e_1^{d-1} = f_3 e_1^d, \]
	\[e_1^dk_1=q^{-2d}k_1^{d}e_1=k_1^{d}e_1, e_1^dk_2=q^{d}k_2e_1^d=k_2e_1^d,\]
	\[ e_2e_1^d = q^d e_1^d e_2 - q[d] e_1^{d-1} e_3 = e_1^d e_2, \; e_3e_1^d=q^{-d}e_1^de_3=e_1^de_3.\]    	
\end{proof}

\strHUU*
\begin{proof}
	We know from Proposition \ref{prHopfQuotient} that $\bar{U}_q$ is the Hopf superalgebra if a two-sided $\mathbb{Z}_2$-graded
ideal
	\[I=(f_1^d, k_1^d - 1, k_2^d - 1, e_1^d)\]
	is a $\mathbb{Z}_{2}$-graded Hopf ideal in $U_q$.	
	
	Note that
	\[ (k_1 \otimes e_1)(e_1 \otimes 1) = k_1 e_1 \otimes e_1 = q^2 e_1 k_1 \otimes e_1 = q^2 (e_1 \otimes 1) (k_1 \otimes e_1),
\]
	\[ ( f_1 \otimes k_1^{-1} ) ( 1 \otimes f_1 ) = f_1 \otimes k_1^{-1} f_1 = q^2 ( f_1 \otimes f_1 k_1^{-1} )= q^2 (1 \otimes
f_1)( f_1 \otimes k_1^{-1} ). \]
	We have for all $v \in \mathbb{N}$
	\[ (f_1k_1)^v = q^{-v(v-1)} f_1^v k_1^v, \]
	\[ ( k_1^{-1}e_1 )^v = q^{v(v-1)} k_1^{-v}e_1^{v}. \]
	
	We now prove that $\Delta(I) \subset I \otimes H + H \otimes I$. It is sufficient to consider only generation relations of
$I$. For $i \in \{1,2\}$
	\[ \Delta(k_i^d - 1) = \Delta^d(k_i) - \Delta(1) = k_i^d \otimes k_i^d - 1 \otimes 1 = \]
	\[ = k_i^d \otimes k_i^d - 1 \otimes k_1^d + 1 \otimes k_1^d - 1 \otimes 1 = (k_i^d - 1) \otimes k_i^d + 1 \otimes (k_i^d -
1) \subset I \otimes H + H \otimes I. \]
	
	\[\Delta(e_1^d) = \Delta^d(e_1)= (e_1 \otimes 1 + k_1 \otimes e_1)^d =\sum_{k=0}^{d} q^{k(d-k)} {d \brack k} (e_1 \otimes
1)^k (k_1 \otimes e_1)^{d-k} = \]
	\[ = \sum_{k=0}^{d} q^{k(k-d)} {d \brack k} k_1^{d-k} e_1^k \otimes e_1^{d-k} = k_1^d \otimes e_1^d + \sum_{k=1}^{d-1}
q^{k(k-d)} {d \brack k} k_1^{d-k} e_1^k \otimes e_1^{d-k} + e_1^d \otimes 1= \]
	\[ =  e_1^d \otimes 1 + k_1^d \otimes e_1^d \subset I \otimes H + H \otimes I. \]
	
	\[ \Delta^d(f_1)= (1 \otimes f_1 + f_1 \otimes k_1^{-1} )^d = \sum_{k=0}^{d} q^{k(d-k)} {d \brack k} ( 1 \otimes f_1 )^k (
f_1 \otimes k_1^{-1} )^{d-k} = \]
	\[ = \sum_{k=0}^{d} q^{k(d-k)} {d \brack k} f_1^{d-k} \otimes f_1^k k_1^{k-d} = f_1^d \otimes k_1^{-d} + \sum_{k=1}^{d-1}
q^{k(d-k)} {d \brack k} f_1^{d-k} \otimes f_1^k k_1^{k-d} + 1 \otimes f_1^d = \]
	\[ = f_1^d \otimes k_1^{-d} + 1 \otimes f_1^d \subset I \otimes H + H \otimes I. \]
	
	We prove that $\epsilon(I)=0$. It is sufficient to consider only generation relations of $I$. For $i \in \{1,2\}$
	\[ \epsilon(k_i^d-1)=\epsilon^d(k_i)-1=1-1=0, \; \epsilon(f_1^d)=\epsilon^d(f_1)=0, \; \epsilon(e_1^d)=\epsilon^d(e_1)=0.
\]
	
	We prove that $S(I) \subset I$. It is sufficient to consider only generation relations of $I$. For $i \in \{1,2\}$
	\[ S(k_i^d-1) = S^d(k_i) - S(1) = k_i^{-d} -1 \in I, \]
	\[ S(f_1^d) = S^d(f_1) = (-1)^d (f_1k_1)^d = (-1)^d q^{-d(d-1)} f_1^dk_1^d \in I, \]
	\[ S(e_1^d) = S^d(e_1) = (-1)^d ( k_1^{-1}e_1 )^d = (-1)^d q^{d(d-1)} k_1^{-d}e_1^{d} \in I.\]
\end{proof}

\subH*
\begin{proof}
	Since $B_q^+$ and $B_q^{-}$ are multiplicatively generated by homogeneous elements, they are $\mathbb{Z}_2$-graded subspaces
of vector superspace $\bar{U}_q$.
	
	We prove that $B_q^-$ is a Hopf subsuperalgebra. We have to prove that $\mu_{\bar{U}_q}(B_q^- \otimes B_q^-) \subset B_q^-$,
$\Delta_{\bar{U}_q}(B_q^-) \subset B_q^-$ and $S_{\bar{U}_q}(B_q^-) \subset B_q^-$.
	We verify that $B_q^-$ is closed under multiplication. It is sufficient to notice that
	\[ ( f_1^{w_1} f_3^{s_1} f_2^{l_1} k_1^{i_1} k_2^{j_1}) ( f_1^{w_2} f_3^{s_2} f_2^{l_2} k_1^{i_2} k_2^{j_2}) =  \]
	\[ = (-1)^{l_1s_2} q^{(j_1-2i_1-s_1)w_2+(j_1-i_1)s_2+i_1l_2+(w_2+s_2)l_1} f_1^{w_1+w_2} f_3^{s_1+s_2} f_2^{l_1+l_2}
k_1^{i_1+i_2} k_2^{j_1+j_2} + \]
	\[ + [s_1=s_2=0,l_1=1] q^{(j_1-2i_1)w_2+i_1l_2} [w_2] f_1^{w_1+w_2-1} f_3 f_2^{l_2} k_1^{i_1+i_2} k_2^{j_1+j_2} \in B_q^-.
\]
	
	We verify that $B_q^-$ is closed under comultiplication. It is sufficient to notice that
	\[ \Delta_{\bar{U}_q} (f_1^{w} f_3^{s} f_2^{l} k_1^{i} k_2^{j}) = \]
	\[ = \sum_{0 \le v \le w} q^{v(w-v)} {w \brack v} ( q^{(w-v)(s-l)} f_1^{w-v} k_1^i k_2^j \otimes f_1^v f_3^s f_2^l
k_1^{v-w+i} k_2^j + \]
	\[ + (-1)^s [l=1] q^{(w-v)s} f_1^{w-v} f_2 k_1^i k_2^j \otimes  f_1^v f_3^s k_1^{v-w+i} k_2^{j-1} + \]
	\[ + [s=1] q^{(2 - l)(w-v)} ( q^{-1} - q ) f_1^{w-v} f_2 k_1^i k_2^j \otimes f_1^{v+1} f_2^l k_1^{v-w+i} k_2^{j-1} + \]
	\[ + [s=1] q^{(v-w-1)l} f_1^{w-v} f_3 k_1^i k_2^j \otimes f_1^v f_2^l k_1^{v-w-1+i} k_2^{j-1} + \]
	\[ + [s=l=1] f_1^{w-v} f_3 f_2 k_1^i k_2^j \otimes f_1^v k_1^{v-w-1+i} k_2^{j-2} ) \in B_q^- \otimes B_q^-. \]
	
	Note that
	\[ \epsilon_{\bar{U}_q}(\sum_{ \substack{0 \leq w, i, j \leq d-1, \\  0 \leq s, l \leq 1}} \alpha_{w s l i j} f_1^{w}
f_3^{s} f_2^{l} k_1^{i} k_2^{j}) = \sum_{ 0 \leq i, j \leq d-1} \alpha_{0, 0, 0, i, j}. \]
	
	We verify that $S_{\bar{U}_q}(B_q^-) \subset B_q^-$. It is sufficient to notice that
	\[ S_{\bar{U}_q}( f_1^{w} f_3^{s} f_2^{l} k_1^{i} k_2^{j} ) = (-1)^{sl} S_{\bar{U}_q}(k_2)^j S_{\bar{U}_q}(k_1)^i
S_{\bar{U}_q}(f_2)^l S_{\bar{U}_q}(f_3)^s S_{\bar{U}_q}(f_1)^w = \]
	\[ = [s=0] (-1)^{l+w} q^{w(1-w+2l+2i-j)-il}  f_1^w f_2^l k_1^{w-i} k_2^{l-j} + \]
	\[ + [s=0,l=1] (-1)^{w+1} q^{w(2-w+2i-j)-i} [w] f_1^{w-1} f_3 k_1^{w-i} k_2^{1-j} + \]
	\[ + [s=1,l=0] (-1)^{w} q^{w(1-w+2i-j)+i-j} (q - q^3) f_1^{w+1} f_2 k_1^{w-i+1} k_2^{1-j} + \]
	\[ + [s=l=1] (-1)^{w} q^{w(1-w+2i-j) - j+2} f_1^w f_3 f_2 k_1^{1+w-i} k_2^{2-j} + \]
	\[ + [s=1,l=0] (-1)^{w+1} q^{w(-w+2i-j+1)+i-j+2} f_1^w f_3 k_1^{w-i+1} k_2^{1-j} \in B_q^-. \]
	
	Note that
	\[ S^{-1}_{\bar{U}_q}( f_1^{w} f_3^{s} f_2^{l} k_1^{i} k_2^{j} ) = (-1)^{sl} S^{-1}_{\bar{U}_q}(k_2)^j
S^{-1}_{\bar{U}_q}(k_1)^i S^{-1}_{\bar{U}_q}(f_2)^l S^{-1}_{\bar{U}_q}(f_3)^s S^{-1}_{\bar{U}_q}(f_1)^w = \]
	\[ = (1 - [s=l=1]) (-1)^{l+w+s} q^{w(-w-2s+2l+2i-j-1)+l(2s-i)+s(i-j)} f_1^w f_3^s f_2^l k_1^{w+s-i} k_2^{s+l-j} + \]
	\[ + [s=0,l=1] (-1)^{w+1} q^{w(-w+2i-j)-i} [w] f_1^{w-1} f_3 k_1^{w-i} k_2^{1-j} + \]		
	\[ + [s=1,l=0] (-1)^{w} q^{w(-w-1-j+2i)+i-j} (q^{-1} - q) f_1^{w+1} f_2 k_1^{w-i+1} k_2^{1-j} + \]
	\[ + [s=l=1] (-1)^{w} q^{w(-w-j+2i-1)-j} f_1^{w} f_3 f_2 k_1^{w-i+1} k_2^{2-j} \in B_q^-. \]
	
	We prove that $B_q^+$ is a Hopf subsuperalgebra. We have to prove that $\mu_{\bar{U}_q}(B_q^+ \otimes B_q^+) \subset B_q^+$,
$\Delta_{\bar{U}_q}(B_q^+) \subset B_q^+$ and $S_{\bar{U}_q}(B_q^+) \subset B_q^+$.
	We verify that $B_q^+$ is closed under multiplication
	\[ (k_1^{i_1}k_2^{j_1}e_1^{r_1}e_3^{h_1}e_2^{t_1}) (k_1^{i_2}k_2^{j_2}e_1^{r_2}e_3^{h_2}e_2^{t_2}) = \]
	\[ = (-1)^{t_1h_2} q^{(t_1-h_1-2r_1)i_2+(h_1+r_1)j_2-h_1r_2+(r_2+h_2)t_1}  k_1^{i_1+i_2}k_2^{j_1+j_2}e_1^{r_1+r_2}
e_3^{h_1+h_2} e_2^{t_1+t_2} - \]
	\[ - [h_1=h_2=0, t_1=1] q^{(1-2r_1)i_2+r_1j_2+1} [r_2] k_1^{i_1+i_2}k_2^{j_1+j_2}e_1^{r_1 +r_2-1} e_3 e_2^{t_2} \in B_q^+.
\]
	
	We verify that $B_q^+$ is closed under comultiplication
	\[\Delta_{\bar{U}_q}(k_1^i k_2^j e_1^r e_3^h e_2^t) = \Delta_{\bar{U}_q}(k_1)^i \Delta_{\bar{U}_q}(k_2)^j
\Delta_{\bar{U}_q}(e_1)^r \Delta_{\bar{U}_q}(e_3)^h \Delta_{\bar{U}_q}(e_2)^t = \]
	\[ = \sum_{v=0}^{r} q^{v(v-r)} {r \brack v} (k_1^{r-v+i} k_2^j e_1^v e_3^h e_2^t \otimes k_1^i k_2^j e_1^{r-v} + \]
	\[ + [t=1] q^{h+v} k_1^{r-v+i} k_2^{j+1} e_1^v e_3^h \otimes k_1^i k_2^j e_1^{r-v} e_2 + \]
	\[ + [h=1] (-1)^{t} q^{-v} k_1^{r-v+i+1} k_2^{j+1} e_1^v e_2^t \otimes k_1^i k_2^j e_1^{r-v} e_3 + \]
	\[ + [h=1] (-1)^{t} q^{v} (q-q^{-1}) k_1^{r-v+i} k_2^{j+1} e_1^{v+1} e_2^t \otimes k_1^i k_2^j e_1^{r-v} e_2 + \]
	\[ + [h=t=1] k_1^{r-v+i+1} k_2^{j+2} e_1^v \otimes k_1^i k_2^j e_1^{r-v} e_3 e_2 ) \in B_q^+ \otimes B_q^+. \]
	
	Note that
	\[ \epsilon_{\bar{U}_q}(\sum_{ \substack{0 \leq i, j, r \leq d-1, \\  0 \leq h, t \leq 1}} \alpha_{i j r h t}
k_1^{i}k_2^{j}e_1^{r}e_3^{h}e_2^{t}) = \sum_{ 0 \leq i, j \leq d-1} \alpha_{i, j, 0, 0, 0}. \]
	
	We verify that $S_{\bar{U}_q}(B_q^+) \subset B_q^+$. It is sufficient to notice that
	\[S_{\bar{U}_q}(k_1^i k_2^j e_1^r e_3^h e_2^t) = (-1)^{ht} S_{\bar{U}_q}(e_2)^t S_{\bar{U}_q}(e_3)^h S_{\bar{U}_q}(e_1)^r
S_{\bar{U}_q}(k_2)^j S_{\bar{U}_q}(k_1)^i =\]
	\[ = [h=0] (-1)^{t+r} q^{r(r-1-j+2i)-it} k_1^{-r-i} k_2^{-t-j} e_1^r e_2^t + \]
	\[+ [h=0,t=1] (-1)^{r} q^{r(r-2-j+2i)-i+1} [r] k_1^{-r-i} k_2^{-j-1} e_1^{r-1} e_3 +\]
	\[+ [h=1,t=0] (-1)^{r} q^{r(r-j+2i+1)-j+i} (1-q^{-2}) k_1^{-r-i-1} k_2^{-j-1} e_1^{r+1} e_2 +\]
	\[ + [h=t=1] (-1)^{r} q^{r(r-j+2i-1)-j-2} k_1^{-1-r-i} k_2^{-2-j} e_1^r e_3 e_2 + \]
	\[ + [h=1,t=0] (-1)^{r+1} q^{r(r-j+2i+1)-j+i} k_1^{-r-i-1} k_2^{-j-1} e_1^{r} e_3 \in B_q^+. \]
	
	Notice that
	\[S^{-1}_{\bar{U}_q}(k_1^i k_2^j e_1^r e_3^h e_2^t) = (-1)^{ht} S^{-1}_{\bar{U}_q}(e_2)^t S^{-1}_{\bar{U}_q}(e_3)^h
S^{-1}_{\bar{U}_q}(e_1)^r S^{-1}_{\bar{U}_q}(k_2)^j S^{-1}_{\bar{U}_q}(k_1)^i =\]
	\[ = [h=0] (-1)^{t+r} q^{r(r+1+2i-j)-ti} k_1^{-i-r} k_2^{-j-t} e_1^{r} e_2^t + \]
	\[ + [h=0,t=1] (-1)^{r} q^{r(r+2i-j)-i+1} [r] k_1^{-i-r} k_2^{-j-1} e_1^{r-1} e_3 + \]
	\[ + [h=1,t=0] (-1)^{r} q^{r(r+2i+3-j)+i-j} (q^2-1) k_1^{-1-r-i} k_2^{-1-j} e_1^{r+1} e_2 + \]
	\[ + [h=1,t=1] (-1)^{r} q^{r(r+2i+1-j)-j} k_1^{-1-r-i} k_2^{-2-j} e_1^{r} e_3 e_2 + \]
	\[ + [h=1,t=0] (-1)^{r+1} q^{r(r+3+2i-j)+i-j+2} k_1^{-i-r-1} k_2^{-1-j} e_1^{r} e_3 \in B_q^+. \]
	
\end{proof}

\Xmultip*
\begin{proof}
	Let $\beta$ and $\gamma$ be two linear $\mathbb{Z}_2$-graded maps on $B_q^+$. Then the product $\beta \gamma$ in $X$ is
given by
	\[ \beta \gamma (k_1^i k_2^j e_1^r e_3^h e_2^t) =  \]		
	\[ = \sum_{v=0}^{r} q^{v(v-r)} {r \brack v} ( (-1)^{|\gamma|(h+t)} \beta(k_1^{r-v+i} k_2^j e_1^v e_3^h e_2^t) \gamma(k_1^i
k_2^j e_1^{r-v}) + \]
	\[ + [t=1] (-1)^{|\gamma|h} q^{h+v} \beta(k_1^{r-v+i} k_2^{j+1} e_1^v e_3^h) \gamma(k_1^i k_2^j e_1^{r-v} e_2) + \]
	\[ + [h=1] (-1)^{t(1+|\gamma|)} q^{-v} \beta(k_1^{r-v+i+1} k_2^{j+1} e_1^v e_2^t) \gamma(k_1^i k_2^j e_1^{r-v} e_3) + \]
	\[ + [h=1] (-1)^{t(1+|\gamma|)} q^{v} (q-q^{-1}) \beta(k_1^{r-v+i} k_2^{j+1} e_1^{v+1} e_2^t) \gamma(k_1^i k_2^j e_1^{r-v}
e_2) + \]
	\[ + [h=t=1] \beta(k_1^{r-v+i+1} k_2^{j+2} e_1^v) \gamma (k_1^i k_2^j e_1^{r-v} e_3 e_2) ). \]
	
	It is easy to prove using mathematical induction on $n \in \mathbb{N}$ that
	\[ \alpha_{k_1}^n (k_1^i k_2^j e_1^r e_3^h e_2^t) = [r=h=t=0] q^{n(-2i+j)}, \]
	\[ \alpha_{k_2}^n (k_1^i k_2^j e_1^r e_3^h e_2^t) = [r=h=t=0] q^{ni}, \]
	\[ \alpha_{e_1}^n (k_1^i k_2^j e_1^r e_3^h e_2^t) = [r=n,h=t=0] (-1)^{n} (q-q^{-1})^{-n} q^{n(2i-j)+\frac{n(n-1)}{2}} [n]!.
\]

	We evaluate $\alpha_{k_1}^{i_1} \alpha_{k_2}^{j_1}$. We have for all $i_1,j_1,r_1 \in \mathbb{N}$
	\[ \alpha_{k_1}^{i_1} \alpha_{k_2}^{j_1} (k_1^i k_2^j e_1^r e_3^h e_2^t) = [r=h=t=0] q^{i_1(-2i+j) + j_1i}. \]

	We evaluate $\alpha_{e_1}^{r_1} \alpha_{k_1}^{i_1} \alpha_{k_2}^{j_1}$.
	\[ \alpha_{e_1}^{r_1} \alpha_{k_1}^{i_1} \alpha_{k_2}^{j_1}(k_1^{i} k_2^{j} e_1^{r} e_3^{h} e_2^{t}) = [r=r_1,h=t=0]
\alpha_{e_1} ^{r_1}( k_1^i k_2^j e_1^{r_1} ) \alpha_{k_1}^{i_1} \alpha_{k_2}^{j_1}(k_1^i k_2^j) = \]
	\[ = [r=r_1,h=t=0] (-1)^{r_1} (q-q^{-1})^{-r_1} q^{r_1(2i-j) + \frac{r_1(r_1-1)}{2} + i_1(-2i+j)+ j_1i} [r_1]! = \]
	\[ = [r=r_1,h=t=0] (-1)^{r_1} (q-q^{-1})^{-r_1} q^{(r_1-i_1)(2i-j) + \frac{r_1(r_1-1)}{2} + j_1i} [r_1]! \Rightarrow \]
	\[ \alpha_{e_1}^{r_1} \alpha_{k_1}^{i_1} \alpha_{k_2}^{j_1} = \sum_{0 \le v,p \le d-1} (-1)^{r_1} (q-q^{-1})^{-r_1}
q^{(r_1-i_1)(2v-p) + \frac{r_1(r_1-1)}{2} + j_1v} [r_1]! (k_1^v k_2^p e_1^{r_1})^{*}, \]

	We evaluate $\alpha_{e_1}^{r_1} \alpha_{e_3} \alpha_{k_1}^{i_1} \alpha_{k_2}^{j_1}$.
	\[ \alpha_{e_1}^{r_1} \alpha_{e_3} (k_1^{i} k_2^{j} e_1^{r} e_3^{h} e_2^{t}) = \]
	\[ = [r=r_1+1,h=0,t=1] (-1)^{r_1} (q-q^{-1})^{-r_1-1} q^{r_1(2i-j+1) + \frac{r_1(r_1-1)}{2} + i - j} [r_1+1]! + \]
	\[ + [r=r_1,h=1,t=0] (-1)^{r_1} (q-q^{-1})^{-r_1-1} q^{-r_1 + r_1(2i-j+1) + \frac{r_1(r_1-1)}{2} + i - j} [r_1]! + \]
	\[ + [r=r_1,h=1,t=0] [r_1] (-1)^{r_1} (q-q^{-1})^{-r_1} q^{r_1(2i-j+1) + \frac{r_1(r_1-1)}{2} + i - j} [r_1]! = \]
	\[ = [r=r_1+1,h=0,t=1] (-1)^{r_1} (q-q^{-1})^{-r_1-1} q^{r_1(2i-j+1) + \frac{r_1(r_1-1)}{2} + i - j} [r_1+1]! + \]
	\[ + [r=r_1,h=1,t=0] (-1)^{r_1} (q-q^{-1})^{-r_1-1} q^{r_1 + r_1(2i-j+1) + \frac{r_1(r_1-1)}{2} + i - j} [r_1]!, \]
	\[ \alpha_{e_1}^{r_1} \alpha_{e_3} \alpha_{k_1}^{i_1} \alpha_{k_2}^{j_1} (k_1^{i} k_2^{j} e_1^{r} e_3^{h} e_2^{t}) = \]
	\[ = [r=r_1+1,h=0,t=1] (-1)^{r_1} (q-q^{-1})^{-r_1-1} q^{r_1(2i-j+1) + \frac{r_1(r_1-1)}{2} + i - j + i_1(-2i+j) + j_1i}
[r_1+1]! + \]
	\[ + [r=r_1,h=1,t=0] (-1)^{r_1} (q-q^{-1})^{-r_1-1} q^{r_1 + r_1(2i-j+1) + \frac{r_1(r_1-1)}{2} + i - j + i_1(-2i+j) + j_1i}
[r_1]! \Rightarrow \]
	\[ \alpha_{e_1}^{r_1} \alpha_{e_3} \alpha_{k_1}^{i_1} \alpha_{k_2}^{j_1} = \]
	\[ = \sum_{0 \le v,p \le d-1} ( (-1)^{r_1} (q-q^{-1})^{-r_1-1} q^{r_1(2v-p+1) + \frac{r_1(r_1-1)}{2} + v - p + i_1(-2v+p) +
j_1v} [r_1+1]! (k_1^v k_2^p e_1^{r_1+1} e_2 )^{*} + \]
	\[ + (-1)^{r_1} (q-q^{-1})^{-r_1-1} q^{r_1 + r_1(2v-p+1) + \frac{r_1(r_1-1)}{2} + v - p + i_1(-2v+p) + j_1v} [r_1]! (k_1^v
k_2^p e_1^{r_1} e_3 )^{*} ) = \]
	\[ = (-1)^{r_1} (q-q^{-1})^{-r_1-1} q^{r_1(2v-p+1) + \frac{r_1(r_1-1)}{2} + v - p + i_1(-2v+p) + j_1v} [r_1]! \times \]
	\[ \times \sum_{0 \le v,p \le d-1} ( [r_1+1] (k_1^v k_2^p e_1^{r_1+1} e_2 )^{*} + q^{r_1}  (k_1^v k_2^p e_1^{r_1} e_3 )^{*}
). \]

	We evaluate $\alpha_{e_1}^{r_1} \alpha_{e_2} \alpha_{k_1}^{i_1} \alpha_{k_2}^{j_1}$.
	\[ \alpha_{e_1}^{r_1} \alpha_{e_2} (k_1^{i} k_2^{j} e_1^{r} e_3^{h} e_2^{t}) = [r=r_1,h=0,t=1] (-1)^{r_1}
(q-q^{-1})^{-r_1-1} q^{r_1(2i-j)+\frac{r_1(r_1-1)}{2}-i} [r_1]! + \]
	\[ + [r=r_1-1,h=1,t=0] (q-q^{-1}) (-1)^{r_1} (q-q^{-1})^{-r_1-1} q^{-1+r_1(2i-j)+\frac{r_1(r_1-1)}{2}-i} [r_1]!, \]
	\[ \alpha_{e_1}^{r_1} \alpha_{e_2} \alpha_{k_1}^{i_1} \alpha_{k_2}^{j_1} (k_1^{i} k_2^{j} e_1^{r} e_3^{h} e_2^{t}) = \]
	\[ = [r=r_1,h=0,t=1] (-1)^{r_1} (q-q^{-1})^{-r_1-1} q^{r_1(2i-j)+\frac{r_1(r_1-1)}{2}-i + i_1(-2i+j) + j_1i} [r_1]! + \]
	\[ + [r=r_1-1,h=1,t=0] (q-q^{-1}) (-1)^{r_1} (q-q^{-1})^{-r_1-1} q^{-1+r_1(2i-j)+\frac{r_1(r_1-1)}{2}-i+ i_1(-2i+j) + j_1i}
[r_1]! \Rightarrow \]
	\[ \alpha_{e_1}^{r_1} \alpha_{e_2} \alpha_{k_1}^{i_1} \alpha_{k_2}^{j_1} = \]
	\[ = \sum_{0 \le v,p \le d-1} ( (-1)^{r_1} (q-q^{-1})^{-r_1-1} q^{r_1(2v-p)+\frac{r_1(r_1-1)}{2} + i_1(-2v+p) + j_1v -v}
[r_1]! (k_1^{v} k_2^{p} e_1^{r_1} e_2 )^{*} + \]
	\[ + (-1)^{r_1} (q-q^{-1})^{-r_1} q^{r_1(2v-p)+\frac{r_1(r_1-1)}{2}+ i_1(-2v+p) + j_1v -v -1} [r_1]! (k_1^{v} k_2^{p}
e_1^{r_1-1} e_3 )^{*} ) = \]
	\[ = (-1)^{r_1} (q-q^{-1})^{-r_1-1} q^{r_1(2v-p)+\frac{r_1(r_1-1)}{2} + i_1(-2v+p) + j_1v -v} [r_1]! \sum_{0 \le v,p \le
d-1} (  (k_1^{v} k_2^{p} e_1^{r_1} e_2 )^{*} + (1-q^{-2}) (k_1^{v} k_2^{p} e_1^{r_1-1} e_3 )^{*}). \]

	We evaluate $\alpha_{e_1}^{r_1} \alpha_{e_3} \alpha_{e_2} \alpha_{k_1}^{i_1} \alpha_{k_2}^{j_1}$.
	\[ \alpha_{e_3} \alpha_{e_2} = [r=0,h=t=1] - q (q-q^{-1})^{-2} q^{i-j-1-i} + [r=0,h=t=1] (q-q^{-1}) (q-q^{-1})^{-2}
q^{i-j-1-i} = \]
	\[ = [r=0,h=t=1] ( - q^{-j} (q-q^{-1})^{-2} + (q-q^{-1})^{-1} q^{-j-1} ) = [r=0,h=t=1] - q^{-j-2} (q-q^{-1})^{-2}, \]
	\[ \alpha_{e_1}^{r_1} \alpha_{e_3} \alpha_{e_2} (k_1^{i} k_2^{j} e_1^{r} e_3^{h} e_2^{t}) = [r=r_1,h=t=1] (-1)^{r_1+1}
(q-q^{-1})^{-r_1-2} q^{r_1(2i-j) + \frac{r_1(r_1-1)}{2} - j - 2} [r_1]!, \]
	\[ \alpha_{e_1}^{r_1} \alpha_{e_3} \alpha_{e_2} \alpha_{k_1}^{i_1} \alpha_{k_2}^{j_1} (k_1^{i} k_2^{j} e_1^{r} e_3^{h}
e_2^{t}) = \]
	\[ = [r=r_1,h=t=1] (-1)^{r_1+1} (q-q^{-1})^{-r_1-2} q^{r_1(2i-j) + \frac{r_1(r_1-1)}{2} - j - 2 + i_1(-2i+j)+j_1i} [r_1]!
\Rightarrow \]
	\[ \alpha_{e_1}^{r_1} \alpha_{e_3} \alpha_{e_2} \alpha_{k_1}^{i_1} \alpha_{k_2}^{j_1} = \]
	\[ = \sum_{0 \le v,p \le d-1} (-1)^{r_1+1} (q-q^{-1})^{-r_1-2} q^{r_1(2v-p) + \frac{r_1(r_1-1)}{2} + i_1(-2v+p)+j_1v - p -
2} [r_1]! (k_1^{v} k_2^{p} e_1^{r_1} e_3 e_2 )^{*}. \]
	
	Thus,
	\[ \alpha_{e_1}^{r_1} \alpha_{e_3}^{h_1} \alpha_{e_2}^{t_1} \alpha_{k_1}^{i_1} \alpha_{k_2}^{j_1}  = \]
	\[ = \sum_{0 \le v,p \le d-1} ( [h_1=t_1=0] (-1)^{r_1} (q-q^{-1})^{-r_1} q^{(r_1-i_1)(2v-p) + \frac{r_1(r_1-1)}{2} + j_1v}
[r_1]! (k_1^v k_2^p e_1^{r_1})^{*} + \]
	\[ + [h_1=1,t_1=0] (-1)^{r_1} (q-q^{-1})^{-r_1-1} q^{r_1(2v-p+1) + \frac{r_1(r_1-1)}{2} + v - p + i_1(-2v+p) + j_1v} [r_1]!
\times \]
	\[ \times ( [r_1+1] (k_1^v k_2^p e_1^{r_1+1} e_2 )^{*} + q^{r_1}  (k_1^v k_2^p e_1^{r_1} e_3 )^{*} ) + \]
	\[ + [h_1=0,t_1=1] (-1)^{r_1} (q-q^{-1})^{-r_1-1} q^{r_1(2v-p)+\frac{r_1(r_1-1)}{2} + i_1(-2v+p) + j_1v -v} [r_1]! \times
\]
	\[ \times (  (k_1^{v} k_2^{p} e_1^{r_1} e_2 )^{*} + (1-q^{-2}) (k_1^{v} k_2^{p} e_1^{r_1-1} e_3 )^{*}) + \]
	\[ + [h_1=t_1=1] (-1)^{r_1+1} (q-q^{-1})^{-r_1-2} q^{r_1(2v-p) + \frac{r_1(r_1-1)}{2} + i_1(-2v+p)+j_1v - p - 2} [r_1]!
(k_1^{v} k_2^{p} e_1^{r_1} e_3 e_2 )^{*} ). \]
	
	Note that
	\[ (k_1^{i_2} k_2^{j_2} e_1^{r})^{*} = \sum_{i_1,j_1} \mu^{i_2,j_2,r,0,0}_{i_1,j_1,r,0,0} \alpha_{e_1}^{r}
\alpha_{k_1}^{i_1} \alpha_{k_2}^{j_1}, \]
	\[ \mu^{i_2,j_2,r,0,0}_{i_1,j_1,r,0,0} = (-1)^{r} \frac{(q-q^{-1})^{r}}{d^2[r]!} q^{-r(2i_2-j_2) - \frac{r(r-1)}{2} +
i_1(2i_2-j_2) - j_1i_2}. \]
	\[ (k_1^{i_2} k_2^{j_2} e_1^{r} e_3 e_2)^{*} = \sum_{i_1,j_1} \mu^{i_2,j_2,r,1,1}_{i_1,j_1,r,1,1} \alpha_{e_1}^{r}
\alpha_{e_3} \alpha_{e_2} \alpha_{k_1}^{i_1} \alpha_{k_2}^{j_1}, \]
	\[ \mu^{i_2,j_2,r,1,1}_{i_1,j_1,r,1,1} = (-1)^{r+1} \frac{(q-q^{-1})^{r+2}}{d^2[r]!} q^{-r(2i_2-j_2) - \frac{r(r-1)}{2} +
i_1(2i_2-j_2)-j_1i_2 + j_2 + 2}. \]		
	\[ (k_1^{i_2} k_2^{j_2} e_1^{r} e_3)^{*} = \sum_{i_1,j_1} \mu^{i_2,j_2,r,1,0}_{i_1,j_1,r,1,0} \alpha_{e_1}^{r} \alpha_{e_3}
\alpha_{k_1}^{i_1} \alpha_{k_2}^{j_1} + \mu^{i_2,j_2,r,1,0}_{i_1,j_1,r+1,0,1} \alpha_{e_1}^{r+1} \alpha_{e_2} \alpha_{k_1}^{i_1}
\alpha_{k_2}^{j_1} =  \]
	\[ = \sum_{i_1,j_1} \mu^{i_2,j_2,r,1,0}_{i_1,j_1,r,1,0} (-1)^{r} (q-q^{-1})^{-r-1} q^{r(2i_2-j_2+1) + \frac{r(r-1)}{2} + i_2
- j_2 + i_1(-2i_2+j_2) + j_1i_2} [r+1]! (k_1^{i_2} k_2^{j_2} e_1^{r+1} e_2 )^{*} + \]
	\[ + \mu^{i_2,j_2,r,1,0}_{i_1,j_1,r,1,0} (-1)^{r} (q-q^{-1})^{-r-1} q^{r + r(2i_2-j_2+1) + \frac{r(r-1)}{2} + i_2 - j_2 +
i_1(-2i_2+j_2) + j_1i_2} [r]! (k_1^{i_2} k_2^{j_2} e_1^{r} e_3 )^{*} + \]
	\[ + (q-q^{-1})^{-1} \mu^{i_2,j_2,r,1,0}_{i_1,j_1,r+1,0,1} (-1)^{r+1} (q-q^{-1})^{-r-1} q^{r(2i_2-j_2+1)+\frac{r(r-1)}{2} +
i_1(-2i_2+j_2) + j_1i_2 +i_2-j_2} [r+1]! (k_1^{i_2} k_2^{j_2} e_1^{r+1} e_2 )^{*} + \]
	\[ + \mu^{i_2,j_2,r,1,0}_{i_1,j_1,r+1,0,1} (-1)^{r+1} (q-q^{-1})^{-r-1} q^{r(2i_2-j_2+1)+\frac{r(r-1)}{2}+ i_1(-2i_2+j_2) +
j_1i_2 +i_2-j_2 -1} [r+1]! (k_1^{i_2} k_2^{j_2} e_1^{r} e_3 )^{*}  = \]
	\[ = \sum_{i_1,j_1} \mu^{i_2,j_2,r,1,0}_{i_1,j_1,r,1,0} (-1)^{r} (q-q^{-1})^{-r-1} q^{r(2i_2-j_2+1) + \frac{r(r-1)}{2} +
i_1(-2i_2+j_2) + j_1i_2 + i_2 - j_2 + r} [r]! (k_1^{i_2} k_2^{j_2} e_1^{r} e_3 )^{*} + \]
	\[ + \mu^{i_2,j_2,r,1,0}_{i_1,j_1,r+1,0,1} (-1)^{r+1} (q-q^{-1})^{-r-1} q^{r(2i_2-j_2+1)+\frac{r(r-1)}{2}+ i_1(-2i_2+j_2) +
j_1i_2 +i_2-j_2 -1} [r+1]! (k_1^{i_2} k_2^{j_2} e_1^{r} e_3 )^{*} =  \]
	\[ = \sum_{i_1,j_1} \mu^{i_2,j_2,r,1,0}_{i_1,j_1,r,1,0} (-1)^{r} (q-q^{-1})^{-r-1} q^{r(2i_2-j_2+1) + \frac{r(r-1)}{2} +
i_1(-2i_2+j_2) + j_1i_2 + i_2 - j_2} [r]! ( q^{r} - q^{r} + q^{-r-2} ) (k_1^{i_2} k_2^{j_2} e_1^{r} e_3 )^{*} = \]
	\[ = \sum_{i_1,j_1} \mu^{i_2,j_2,r,1,0}_{i_1,j_1,r,1,0} (-1)^{r} (q-q^{-1})^{-r-1} q^{r(2i_2-j_2) + \frac{r(r-1)}{2} +
i_1(-2i_2+j_2) + j_1i_2 + i_2 - j_2-2} [r]! (k_1^{i_2} k_2^{j_2} e_1^{r} e_3 )^{*}. \]
	
	\begin{equation} \label{eq:linkeq1}
	\mu^{i_2,j_2,r,1,0}_{i_1,j_1,r+1,0,1} = (q-q^{-1}) \mu^{i_2,j_2,r,1,0}_{i_1,j_1,r,1,0}.
	\end{equation}
	
	\[ \mu^{i_2,j_2,r,1,0}_{i_1,j_1,r,1,0} = (-1)^{r} \frac{(q-q^{-1})^{r+1}}{d^2[r]!} q^{-r(2i_2-j_2) - \frac{r(r-1)}{2} +
i_1(2i_2-j_2) - j_1i_2 - i_2 + j_2+2}, \]
	\[ \mu^{i_2,j_2,r,1,0}_{i_1,j_1,r+1,0,1} = (-1)^{r} \frac{(q-q^{-1})^{r+2}}{d^2[r]!} q^{-r(2i_2-j_2) - \frac{r(r-1)}{2} +
i_1(2i_2-j_2) - j_1i_2 - i_2 + j_2+2}. \]
	
	\[ (k_1^{i_2} k_2^{j_2} e_1^{r} e_2)^{*} = \sum_{i_1,j_1} \mu^{i_2,j_2,r,0,1}_{i_1,j_1,r-1,1,0} \alpha_{e_1}^{r-1}
\alpha_{e_3} \alpha_{k_1}^{i_1} \alpha_{k_2}^{j_1} + \mu^{i_2,j_2,r,0,1}_{i_1,j_1,r,0,1} \alpha_{e_1}^{r} \alpha_{e_2}
\alpha_{k_1}^{i_1} \alpha_{k_2}^{j_1} = \]
	\[ = \sum_{i_1,j_1} \mu^{i_2,j_2,r,0,1}_{i_1,j_1,r-1,1,0} (-1)^{r-1} (q-q^{-1})^{-r} q^{r(2i_2-j_2+1) + \frac{r(r-1)}{2} +
i_1(-2i_2+j_2) + j_1i_2 - i_2 -r} [r]! (k_1^{i_2} k_2^{j_2} e_1^{r} e_2 )^{*} + \]
	\[  + \mu^{i_2,j_2,r,0,1}_{i_1,j_1,r-1,1,0} (-1)^{r-1} (q-q^{-1})^{-r} q^{ r(2i_2-j_2+1) + \frac{r(r-1)}{2} + i_1(-2i_2+j_2)
+ j_1i_2 - i_2 - 1} [r-1]! (k_1^{i_2} k_2^{j_2} e_1^{r-1} e_3 )^{*} + \]
	\[ + \mu^{i_2,j_2,r,0,1}_{i_1,j_1,r,0,1} (-1)^{r} (q-q^{-1})^{-r-1} q^{r_1(2i_2-j_2)+\frac{r(r-1)}{2} + i_1(-2i_2+j_2) +
j_1i_2 -i_2} [r]! (k_1^{i_2} k_2^{j_2} e_1^{r} e_2 )^{*} + \]
	\[ + \mu^{i_2,j_2,r,0,1}_{i_1,j_1,r,0,1} (-1)^{r} (q-q^{-1})^{-r} q^{r(2i_2-j_2)+\frac{r(r-1)}{2}+ i_1(-2i_2+j_2) + j_1i_2
-i_2 -1} [r]! (k_1^{i_2} k_2^{j_2} e_1^{r-1} e_3 )^{*} = \]
	\[ = \sum_{i_1,j_1} \mu^{i_2,j_2,r,0,1}_{i_1,j_1,r-1,1,0} (-1)^{r-1} (q-q^{-1})^{-r} q^{r(2i_2-j_2) + \frac{r(r-1)}{2} +
i_1(-2i_2+j_2) + j_1i_2 - i_2} [r]! (k_1^{i_2} k_2^{j_2} e_1^{r} e_2 )^{*} + \]
	\[ + \mu^{i_2,j_2,r,0,1}_{i_1,j_1,r,0,1} (-1)^{r} (q-q^{-1})^{-r-1} q^{r_1(2i_2-j_2)+\frac{r(r-1)}{2} + i_1(-2i_2+j_2) +
j_1i_2 -i_2} [r]! (k_1^{i_2} k_2^{j_2} e_1^{r} e_2 )^{*} = \]
	\[ = \sum_{i_1,j_1} \mu^{i_2,j_2,r,0,1}_{i_1,j_1,r,0,1} (-1)^{r} (q-q^{-1})^{-r-1} q^{r(2i_2-j_2) + \frac{r(r-1)}{2} +
i_1(-2i_2+j_2) + j_1i_2 - i_2} [r]! ( - [r] q^{-r} (q-q^{-1}) + 1 ) (k_1^{i_2} k_2^{j_2} e_1^{r} e_2 )^{*} = \]
	\[ = \sum_{i_1,j_1} \mu^{i_2,j_2,r,0,1}_{i_1,j_1,r,0,1} (-1)^{r} [r]! (q-q^{-1})^{-r-1} q^{r(2i_2-j_2-2) + \frac{r(r-1)}{2}
+ i_1(-2i_2+j_2) + j_1i_2 - i_2} (k_1^{i_2} k_2^{j_2} e_1^{r} e_2 )^{*}. \]
	
	\begin{equation} \label{eq:linkeq2}
	\mu^{i_2,j_2,r,0,1}_{i_1,j_1,r-1,1,0} = [r] q^{-r} \mu^{i_2,j_2,r,0,1}_{i_1,j_1,r,0,1}.
	\end{equation}
	
	\[ \mu^{i_2,j_2,r,0,1}_{i_1,j_1,r,0,1} = (-1)^{r} \frac{(q-q^{-1})^{r+1}}{d^2[r]!} q^{-r(2i_2-j_2-2) - \frac{r(r-1)}{2} +
i_1(2i_2-j_2) - j_1i_2 + i_2}, \]
	\[ \mu^{i_2,j_2,r,0,1}_{i_1,j_1,r-1,1,0} = (-1)^{r} \frac{(q-q^{-1})^{r+1}}{d^2[r-1]!} q^{-r(2i_2-j_2-1) - \frac{r(r-1)}{2}
+ i_1(2i_2-j_2) - j_1i_2 + i_2}. \]
	
	Therefore,
	\begin{equation} \label{eq:linkeq3}
	\mu^{i_2,j_2,r,1,0}_{i_1,j_1,r+1,0,1} = (q-q^{-1}) q^{-2r-2i_2+j_2+2} \mu^{i_2,j_2,r,0,1}_{i_1,j_1,r,0,1},
	\end{equation}
	\begin{equation} \label{eq:linkeq4}
	\mu^{i_2,j_2,r,1,0}_{i_1,j_1,r,1,0} = \frac{1}{[r]} q^{-r-2i_2+j_2+2} \mu^{i_2,j_2,r,0,1}_{i_1,j_1,r-1,1,0},
	\end{equation}
	\begin{equation} \label{eq:linkeq5}
	\mu^{0,0,0,0,1}_{0,0,0,0,1} = \frac{(q-q^{-1})}{d^2}.
	\end{equation}
\end{proof}

\subBF*
\begin{proof}
	From Lemma \ref{lm:mulX} we have
	\[ \alpha_{e_1}^d (k_1^i k_2^j e_1^r e_3^h e_2^t) = [r=d,h=t=0] (-1)^{d} (q-q^{-q})^{-d} q^{d(2i-j)+\frac{d(d-1)}{2}} [d]! =
0 \Rightarrow \alpha_{e_1}^d = 0, \]
	\[ \alpha_{e_3} \alpha_{e_1} (k_1^i k_2^j e_1^r e_3^h e_2^t) = - [r=2,h=0,t=1] (q-q^{-1})^{-2} [2] q^{-1+i+1-j+2i-j} - \]
	\[ - [r=1,h=1,t=0] (q-q^{-1})^{-2} q^{i+1-j+2i-j} \Rightarrow \alpha_{e_3} \alpha_{e_1} = q^{-1} \alpha_{e_1} \alpha_{e_3},
\]
	\[ \alpha_{e_2} \alpha_{e_1} (k_1^i k_2^j e_1^r e_3^h e_2^t) = [r=1,h=0,t=1] (-1) (q-q^{-1})^{-2} q^{-i-1+2i-j} \Rightarrow
\alpha_{e_2} \alpha_{e_1} = q \alpha_{e_1} \alpha_{e_2} + \alpha_{e_3}, \]
	\[ \alpha_{k_1} \alpha_{e_1} (k_1^i k_2^j e_1^r e_3^h e_2^t) = [r=1,h=t=0] (-1) (q-q^{-1})^{-1} q^{-2i-2+j+2i-j} \Rightarrow
\alpha_{k_1} \alpha_{e_1} = q^{-2} \alpha_{e_1} \alpha_{k_1}, \]
	\[ \alpha_{k_2} \alpha_{e_1} (k_1^i k_2^j e_1^r e_3^h e_2^t) = [r=1,h=t=0] (-1) (q-q^{-1})^{-1} q^{i+1+2i-j} \Rightarrow
\alpha_{k_2} \alpha_{e_1} = q \alpha_{e_1} \alpha_{k_2}, \]
	\[ \alpha_{e_3}^2 (k_1^i k_2^j e_1^r e_3^h e_2^t) = [r=h=t=1] (q-q^{-1})^{-2} q^{2(i-j)} ( q^{-1} + q - q^{-1} - q ) = 0
\Rightarrow \alpha_{e_3}^2 = 0, \]
	\[ \alpha_{e_2} \alpha_{e_3} (k_1^i k_2^j e_1^r e_3^h e_2^t) = [r=0,h=t=1] (q-q^{-1})^{-2} q^{-i-1+i-j} \Rightarrow
\alpha_{e_2} \alpha_{e_3} = - q \alpha_{e_3} \alpha_{e_2}, \]
	\[ \alpha_{k_1} \alpha_{e_3} (k_1^i k_2^j e_1^r e_3^h e_2^t) = [r=1,h=0,t=1] (q-q^{-1})^{-1} q^{-2i-2+j+1+i-j} + \]
	\[ + [r=0,h=1,t=0] (q-q^{-1})^{-1} q^{-2i-2+j+1+i-j} \Rightarrow \alpha_{k_1} \alpha_{e_3} = q^{-1} \alpha_{e_3}
\alpha_{k_1}, \]
	\[ \alpha_{k_2} \alpha_{e_3} (k_1^i k_2^j e_1^r e_3^h e_2^t) = [r=1,h=0,t=1] (q-q^{-1})^{-1} q^{i+1+i-j} + \]
	\[ + [r=0,h=1,t=0] (q-q^{-1})^{-1} q^{i+1+i-j} \Rightarrow \alpha_{k_2} \alpha_{e_3} = q \alpha_{e_3} \alpha_{k_2}, \]
	\[ \alpha_{e_2}^2 = 0, \]
	\[ \alpha_{k_1} \alpha_{e_2} (k_1^i k_2^j e_1^r e_3^h e_2^t) = [r=h=0,t=1] (q-q^{-1})^{-1} q^{-2i+j+1-i} \Rightarrow
\alpha_{k_1} \alpha_{e_2} = q \alpha_{e_2} \alpha_{k_1}, \]
	\[ \alpha_{k_2} \alpha_{e_2} (k_1^i k_2^j e_1^r e_3^h e_2^t) = [r=h=0,t=1] (q-q^{-1})^{-1} q^{-i-i} \Rightarrow \alpha_{k_2}
\alpha_{e_2} = \alpha_{e_2} \alpha_{k_2}, \]
	\[ \alpha_{k_1}^d (k_1^i k_2^j e_1^r e_3^h e_2^t) = [r=h=t=0] q^{-2di+dj} =  [r=h=t=0] 1_{B_q^+} = \epsilon_{B_q^+} (k_1^i
k_2^j e_1^r e_3^h e_2^t) \Rightarrow \alpha_{k_1}^d=\epsilon_{B_q^+}, \]
	\[ \alpha_{k_2}^d (k_1^i k_2^j e_1^r e_3^h e_2^t) = [r=h=t=0] q^{di} =  [r=h=t=0] 1_{B_q^+} = \epsilon_{B_q^+} (k_1^i k_2^j
e_1^r e_3^h e_2^t) \Rightarrow \alpha_{k_2}^d=\epsilon_{B_q^+}, \]
	\[ \alpha_{k_2} \alpha_{k_1} (k_1^i k_2^j e_1^r e_3^h e_2^t) = [r=h=t=0] q^{i+2i-j} \Rightarrow \alpha_{k_2} \alpha_{k_1} =
\alpha_{k_1} \alpha_{k_2}. \]
	
	Let $\beta$ be a linear $\mathbb{Z}_2$-graded map on $B_q^+$. Then the comultiplication of $\beta$ in $X$ is given by	
	\[ \lambda_{B_q^{+},B_q^{+}}(\Delta_{(H^{op})^{*}}(\beta)) (k_1^{i_1}k_2^{j_1}e_1^{r_1}e_3^{h_1}e_2^{t_1} \otimes
k_1^{i_2}k_2^{j_2}e_1^{r_2}e_3^{h_2}e_2^{t_2}) =   \]
	\[ = (-1)^{|e_3^{h_1}e_2^{t_1}||e_3^{h_2}e_2^{t_2}|} \beta(k_1^{i_2}k_2^{j_2}e_1^{r_2}e_3^{h_2}e_2^{t_2}
k_1^{i_1}k_2^{j_1}e_1^{r_1}e_3^{h_1}e_2^{t_1}) = \]
	\[ = (-1)^{h_2(h_1+t_1)+t_1t_2} q^{(t_2-h_2-2r_2)i_1+(h_2+r_2)j_1-h_2r_1+(r_1+h_1)t_2} \beta(
k_1^{i_1+i_2}k_2^{j_1+j_2}e_1^{r_1+r_2} e_3^{h_1+h_2} e_2^{t_1+t_2} ) - \]
	\[ - [h_1=h_2=0, t_2=1] (-1)^{t_1} q^{(1-2r_2)i_1+r_2j_1+1} [r_1] \beta (k_1^{i_1+i_2}k_2^{j_1+j_2}e_1^{r_1 +r_2-1} e_3
e_2^{t_1}). \]
	
	Similarly, we have
	\[ \lambda_{B_q^{+},B_q^{+}} \circ \Delta_{X} ( \alpha_{k_v} ) (k_1^{i_1}k_2^{j_1}e_1^{r_1}e_3^{h_1}e_2^{t_1} \otimes
k_1^{i_2}k_2^{j_2}e_1^{r_2}e_3^{h_2}e_2^{t_2}) =  \]
	\[ = [r_1=r_2=h_1=h_2=t_1=t_2=0] q^{-2(i_1+i_2)+(j_1+j_2)} \Rightarrow \Delta_{X} ( \alpha_{k_v} ) = \alpha_{k_v} \otimes
\alpha_{k_v}, \]
	\[ \lambda_{B_q^{+},B_q^{+}} \circ \Delta_{X} ( \alpha_{e_1} ) (k_1^{i_1}k_2^{j_1}e_1^{r_1}e_3^{h_1}e_2^{t_1} \otimes
k_1^{i_2}k_2^{j_2}e_1^{r_2}e_3^{h_2}e_2^{t_2}) = \]
	\[ = [r_1 \vee r_2 = 1, h_1=h_2=t_1=t_2=0] - (q-q^{-1})^{-1} q^{2(i_1+i_2)-j_1-j_2 -2r_2 i_1 + r_2 j_1} = \]
	\[ = - (q-q^{-1})^{-1} ( [r_1=1,r_2=h_1=h_2=t_1=t_2=0] q^{2(i_1+i_2)-j_1-j_2} + \]
	\[ + [r_1=0,r_2=1,h_1=h_2=t_1=t_2=0] q^{2i_2-j_2 } ) \Rightarrow \]
	\[ \Rightarrow \Delta_{X} ( \alpha_{e_1} ) = \alpha_{e_1} \otimes \alpha_{k_1}^{-1} + 1_X \otimes \alpha_{e_1}, \]
	\[\lambda_{B_q^{+},B_q^{+}} \circ \Delta_X(\alpha_{e_2})( k_1^{i_1}k_2^{j_1}e_1^{r_1}e_3^{h_1}e_2^{t_1} \otimes
k_1^{i_2}k_2^{j_2}e_1^{r_2}e_3^{h_2}e_2^{t_2} ) = \]
	\[ = (q-q^{-1})^{-1} ([r_1=r_2=0,t_1=1,t_2=h_1=h_2=0] q^{-i_1-i_2} + [r_1=r_2=t_1=0,t_2=1,h_1=h_2=0] q^{-i_2} \Rightarrow
\]
	\[ \Rightarrow \Delta_{X} ( \alpha_{e_2} ) = \alpha_{e_2} \otimes \alpha_{k_2}^{-1} + 1_X \otimes \alpha_{e_2} , \]
	\[ \lambda_{B_q^{+},B_q^{+}} \circ \Delta_X(\alpha_{e_3})( k_1^{i_1}k_2^{j_1}e_1^{r_1}e_3^{h_1}e_2^{t_1} \otimes
k_1^{i_2}k_2^{j_2}e_1^{r_2}e_3^{h_2}e_2^{t_2} ) =  \]
	\[ = (q-q^{-1})^{-1} ( [r_1 \vee r_2 = 1, h_1=h_2=0, t_1 \vee t_2 = 1] q^{(t_2-2r_2)i_1 + r_2 j_1 + r_1 t_2 + i_1 + i_2 -
j_1 - j_2} + \]
	\[ + [r_1=r_2=0,h_1 \vee h_2 = 1,t_1=t_2=0] q^{-h_2i_1 + h_2j_1 + i_1 + i_2 - j_1 - j_2} - \]
	\[ - [r_1=1,r_2=h_1=h_2=t_1=0,t_2=1] q^{i_1+1 + i_1 + i_2 - j_1 - j_2} ) = \]
	\[ = (q-q^{-1})^{-1} ( [r_1=0,r_2=1,h_1=h_2=t_1=0,t_2=1] q^{ i_2  - j_2} + \]
	\[+ [r_1=1,r_2=h_1=h_2=t_1=0,t_2=1] q^{2i_1 + i_2 - j_1 - j_2 + 1} + \]
	\[+ [r_1=0,r_2=1,h_1=h_2=0,t_1=1,t_2=0] q^{-i_1 + i_2 - j_2} + \]
	\[+ [r_1=1,r_2=h_1=h_2=0,t_1=1,t_2=0] q^{ i_1 + i_2 - j_1 - j_2} + \]
	\[ + [r_1=r_2=0,h_1=1,h_2=t_1=t_2=0] q^{ i_1 + i_2 - j_1 - j_2} + [r_1=r_2=h_1=0,h_2=1,t_1=t_2=0] q^{i_2 - j_2} - \]
	\[ - [r_1=1,r_2=h_1=h_2=t_1=0,t_2=1] q^{2i_1 + i_2 - j_1 - j_2 +1} ) \Rightarrow \]
	\[ \Rightarrow \Delta_{X} ( \alpha_{e_3} ) = 1_X \otimes \alpha_{e_3} + \alpha_{e_3} \otimes \alpha_{k_1}^{-1}
\alpha_{k_2}^{-1} + (q^{-1}-q) \alpha_{e_2} \otimes \alpha_{e_1} \alpha_{k_2}^{-1}. \]
	
	We have for the counit
	\[ \epsilon_{X}(\alpha_{k_1}) = \alpha_{k_1}(1_{B^+_q}) = 1_F, \; \epsilon_{X}(\alpha_{k_2}) = \alpha_{k_1}(1_{B^+_q}) =
1_F, \]
	\[ \epsilon_{X}(\alpha_{e_1}) = \alpha_{e_1}(1_{B_q^{+}}) = 0, \; \epsilon_X(\alpha_{e_2}) = \alpha_{e_2}(1_{B_q^{+}}) = 0,
\]
	\[ \epsilon_{X}(\alpha_{e_3}) = \alpha_{e_3}(1_{B_q^{+}}) = 0. \]
	
	We evaluate values of the antipode on particular elements:
	\[ S_{X}(\alpha_{k_1})(k_1^i k_2^j e_1^r e_3^h e_2^t) = [r=h=t=0] \alpha_{k_1}(k_1^{-i} k_2^{-j}) = [r=h=t=0] q^{2i-j} =
\alpha_{k_1}^{d-1} (k_1^i k_2^j e_1^r e_3^h e_2^t), \]
	\[ S_{X}(\alpha_{k_2})(k_1^i k_2^j e_1^r e_3^h e_2^t) = [r=h=t=0] \alpha_{k_2}(k_1^{-i} k_2^{-j}) = [r=h=t=0] q^{-i} =
\alpha_{k_2}^{d-1} (k_1^i k_2^j e_1^r e_3^h e_2^t), \]
	\[ S_{X}(\alpha_{e_1})(k_1^i k_2^j e_1^r e_3^h e_2^t) = - [r=1,h=t=0] q^{2+2i-j} \alpha_{e_1}(k_1^{-i-1} k_2^{-j} e_1) = \]
	\[ = [r=1,h=t=0] (q-q^{-1})^{-1} q^{2+2i-j-2i-2+j} = - \alpha_{e_1} \alpha_{k_1} (k_1^i k_2^j e_1^r e_3^h e_2^t), \]
	\[S_X(\alpha_{e_2})(k_1^i k_2^j e_1^r e_3^h e_2^t) = \alpha_{e_2} \circ S^{-1}_{B_q^+}(k_1^i k_2^j e_1^r e_3^h e_2^t) = \]
	\[ =  - [r=h=0,t=1] q^{-i} \alpha_{e_2}(k_1^{-i} k_2^{-j-1} e_2) = - [r=h=0,t=1] (q-q^{-1})^{-1} q^{-i+i} = - \alpha_{e_2}
\alpha_{k_2} (k_1^i k_2^j e_1^r e_3^h e_2^t), \]
	\[S_X(\alpha_{e_3})(k_1^i k_2^j e_1^r e_3^h e_2^t) = \alpha_{e_3} \circ S^{-1}_{B_q^+}(k_1^i k_2^j e_1^r e_3^h e_2^t) =  \]
	\[ = (q-q^{-1})^{-1} ( [r=1,h=0,t=1] q^{2} + [r=0,h=1,t=0] q (q-q^{-1}) - [r=1,h=0,t=1] q^{2} - \]
	\[ - [r=0,h=1,t=0] q^{2} ) = (- (q-q^{-1}) q^2 \alpha_{e_1} \alpha_{e_2} \alpha_{k_1} \alpha_{k_2} - q^{2} \alpha_{e_3}
\alpha_{k_1} \alpha_{k_2})(k_1^i k_2^j e_1^r e_3^h e_2^t) = \]
	\[ = ((q-q^3) \alpha_{e_1} \alpha_{e_2} \alpha_{k_1} \alpha_{k_2} - q^{2} \alpha_{e_3} \alpha_{k_1} \alpha_{k_2})(k_1^i
k_2^j e_1^r e_3^h e_2^t). \]
	
	We now construct a basis. Since the dimension of Hopf superalgebra $X$ is equal to $4d^{3}$, it is sufficient to prove that
elements of the set
	\[ \{ \alpha_{e_1}^{w} \alpha_{e_3}^{s} \alpha_{e_2}^{l} \alpha_{k_1}^{i} \alpha_{k_2}^{j} | \; 0 \le w,i,j \le d-1, \; s,l
\in \{0,1\} \} \]
	are linear independent. Thus, we consider a superaglebra morphism $\rho : X \to B_q^{-}$:
	\[ \rho( \alpha_{e_1} ) = f_1, \; \rho( \alpha_{e_3} ) = f_3, \; \rho( \alpha_{e_2} ) = f_2, \; \rho( \alpha_{k_1} ) = k_1,
\; \rho( \alpha_{k_2} ) = k_2.  \]
	In fact, we have already proved that $\rho$ is the superaglebra morphism. Therefore,
	\[ \rho( \sum_{0 \le i_1,j_1,r_1 \le d-1, 0 \le h_1, t_1 \le 1} \eta_{r_1 h_1 t_1 i_1 j_1} \alpha_{e_1}^{r_1}
\alpha_{e_3}^{h_1} \alpha_{e_2}^{t_1} \alpha_{k_1}^{i_1} \alpha_{k_2}^{j_1}  ) =  \]
	\[ = \sum_{r_1,h_1,t_1,i_1,j_1 \in I} \eta_{r_1 h_1 t_1 i_1 j_1} \rho( \alpha_{e_1}^{r_1} \alpha_{e_3}^{h_1}
\alpha_{e_2}^{t_1} \alpha_{k_1}^{i_1} \alpha_{k_2}^{j_1} ) =  \]
	\[ = \sum_{r_1,h_1,t_1,i_1,j_1 \in I} \eta_{r_1 h_1 t_1 i_1 j_1} \rho( \alpha_{e_1})^{r_1} \rho(\alpha_{e_3})^{h_1}
\rho(\alpha_{e_2})^{t_1} \rho(\alpha_{k_1})^{i_1} \rho(\alpha_{k_2})^{j_1} = \]
	\[ = \sum_{r_1,h_1,t_1,i_1,j_1 \in I} \eta_{r_1 h_1 t_1 i_1 j_1} f_1^{r_1} f_3^{h_1} f_2^{t_1} k_1^{i_1} k_2^{j_1} = 0. \]
	From linear independence of basis elements it follows that $\eta_{r_1 h_1 t_1 i_1 j_1}=0$ for all $0 \le i_1,j_1,r_1 \le
d-1$, $0 \le h_1, t_1 \le 1$.		
\end{proof}

\Xeq*
\begin{proof}
	We have for all $v \in \{1,2\}$
	
	1.
	\[ k_1^{-1} k_1^i k_2^j e_1^r e_3^h e_2^t k_1 = q^{t-h-2r} k_1^i k_2^j e_1^r e_3^h e_2^t, \]
	\[ \alpha_{k_v}(k_1^{-1} ? k_1) = \alpha_{k_v}, \; \alpha_{e_1}(k_1^{-1} ? k_1) = q^{-2} \alpha_{e_1}, \]
	\[ \alpha_{e_2}(k_1^{-1} ? k_1) = q \alpha_{e_2}, \; \alpha_{e_3}(k_1^{-1} ? k_1) = q^{-1} \alpha_{e_3}. \]
	
	2.
	\[ k_2^{-1} k_1^i k_2^j e_1^r e_3^h e_2^t k_2 = q^{h+r} k_1^i k_2^j e_1^r e_3^h e_2^t, \]
	\[ \alpha_{k_v}(k_2^{-1} ? k_2) = \alpha_{k_v},\; \alpha_{e_1}(k_2^{-1} ? k_2) = q \alpha_{e_1}, \]
	\[ \alpha_{e_2}(k_2^{-1} ? k_2) = \alpha_{e_2}, \; \alpha_{e_3}(k_2^{-1} ? k_2) = q \alpha_{e_3}. \]
	
	3.
	\[ k_1^i k_2^j e_1^r e_3^h e_2^t e_1 = q^{t-h} k_1^i k_2^j e_1^{r+1} e_3^h e_2^t - [t=1] q k_1^i k_2^j e_1^r e_3^{h+1} = 0,
\]
	\[ \alpha_{k_v}(?e_1) = 0, \; \alpha_{e_1}(?e_1) = -(q-q^{-1})^{-1} \alpha_{k_1}^{-1}, \]
	\[ \alpha_{e_2}(?e_1) = 0, \; \alpha_{e_3}(?e_1) = 0. \]
	
	4.
	\[ k_1^i k_2^j e_1^r e_3^h e_2^t k_1 = q^{t-h-2r} k_1^{i+1} k_2^j e_1^r e_3^h e_2^t, \]
	\[ \alpha_{k_v}(?k_1) = ([v=1]q^{-2} + [v=2]q) \alpha_{k_v}, \; \alpha_{e_1}(?k_1) = \alpha_{e_1}, \]
	\[ \alpha_{e_2}(?k_1) = \alpha_{e_2}, \; \alpha_{e_3}(?k_1) = \alpha_{e_3}. \]
	
	5.
	\[ k_1^{-1} e_1 k_1^i k_2^j e_1^r e_3^h e_2^t k_1 = q^{t-h-2r-2i-2+j} k_1^i k_2^j e_1^{r+1} e_3^h e_2^t, \]
	\[ \alpha_{k_v}(k_1^{-1} e_1 ? k_1) = 0, \alpha_{e_1}(k_1^{-1} e_1 ? k_1) = - (q-q^{-1})^{-1} q^{-2} 1_X, \]
	\[ \alpha_{e_2}(k_1^{-1} e_1 ? k_1) = 0, \alpha_{e_3}(k_1^{-1} e_1 ? k_1) = q^{-1} \alpha_{e_2}. \]
	
	6.
	\[ k_1^i k_2^j e_1^r e_3^h e_2^te_2= k_1^i k_2^j e_1^r e_3^h e_2^{t+1} = 0, \]
	\[ \alpha_{k_v}(?e_2) = 0, \; \alpha_{e_1}(?e_2) = 0, \]
	\[ \alpha_{e_2}(?e_2) = (q-q^{-1})^{-1} \alpha_{k_2}^{-1}, \; \alpha_{e_3}(?e_2) = - \alpha_{e_1} \alpha_{k_2}^{-1}. \]
	
	7.
	\[ k_1^i k_2^j e_1^r e_3^h e_2^t k_2 = q^{h+r} k_1^i k_2^{j+1} e_1^r e_3^h e_2^t, \]
	\[ \alpha_{k_v}(?k_2) = q^{[v=1]} \alpha_{k_v}, \; \alpha_{e_1}(?k_2) = \alpha_{e_1}, \]
	\[ \alpha_{e_2}(?k_2) = \alpha_{e_2}, \; \alpha_{e_3}(?k_2) = \alpha_{e_3}. \]
	
	8.
	\[ k_2^{-1} e_2 k_1^i k_2^j e_1^r e_3^h e_2^t k_2 = (-1)^h q^{2h+2r+i} k_1^i k_2^j e_1^r e_3^h e_2^{t+1} - q^{h+r+i+1} [r]
k_1^i k_2^j e_1^{r-1} e_3^{h+1} e_2^t, \]
	\[ \alpha_{k_v}(k_2^{-1} e_2 ? k_2) = 0, \; \alpha_{e_1}(k_2^{-1} e_2 ? k_2) = 0, \]
	\[ \alpha_{e_2}(k_2^{-1} e_2 ? k_2) = (q-q^{-1})^{-1} 1_X, \; \alpha_{e_3}(k_2^{-1} e_2 ? k_2) = 0. \]
	
	9.
	\[ k_2^{-1}e_2 k_1^i k_2^j e_1^r e_3^h e_2^t k_2e_1 = (-1)^h q^{2h+2r+i} k_1^i k_2^j e_1^r e_3^h e_2^{t+1} e_1 - q^{h+r+i+1}
[r] k_1^i k_2^j e_1^{r-1} e_3^{h+1} e_2^t e_1 =  \]
	\[ = (-1)^h q^{h+2r+i+t+1} k_1^i k_2^j e_1^{r+1} e_3^h e_2^{t+1} - [t=0] (-1)^h q^{2h+2r+i+1} k_1^i k_2^j e_1^r e_3^{h+1} -
\]
	\[ - q^{r+i+t} [r] k_1^i k_2^j e_1^{r} e_3^{h+1} e_2^t, \]
	\[ \alpha_{k_v}(k_2^{-1}e_2 ?k_2e_1) = 0, \; \alpha_{e_1}(k_2^{-1}e_2 ?k_2e_1) = 0, \]
	\[ \alpha_{e_2}(k_2^{-1}e_2 ?k_2e_1) = 0, \; \alpha_{e_3}(k_2^{-1}e_2 ?k_2e_1) = 0. \]
	
	10.
	\[ k_2^{-1}e_2 k_1^i k_2^j e_1^r e_3^h e_2^t k_1k_2 = q^{h+r+t-h-2r+i+1} k_1^{i+1} k_2^{j} e_2 e_1^r e_3^h e_2^t= \]
	\[ = (-1)^h q^{h+t+i+1} k_1^{i+1} k_2^{j} e_1^r e_3^h e_2^{t+1} - q^{t-r+i+2} [r] k_1^{i+1} k_2^{j} e_1^{r-1} e_3^{h+1}
e_2^t,  \]
	\[ \alpha_{k_v}(k_2^{-1}e_2 ?k_1k_2) = 0, \; \alpha_{e_1}(k_2^{-1}e_2 ?k_1k_2) = 0, \]
	\[ \alpha_{e_2}(k_2^{-1}e_2 ?k_1k_2) = (q-q^{-1})^{-1} 1_X, \; \alpha_{e_3}(k_2^{-1}e_2 ?k_1k_2) = 0. \]
	
	11.
	\[ k_1^i k_2^j e_1^r e_3^h e_2^tk_2 e_1 = q^{h+r+t-h} k_1^i k_2^{j+1} e_1^{r+1} e_3^h e_2^t - [t=1] q^{h+r+1} k_1^i
k_2^{j+1} e_1^r e_3^{h+1}, \]
	\[ \alpha_{k_v}(?k_2e_1) = 0, \; \alpha_{e_1}(?k_2e_1) = - (q-q^{-1})^{-1} q^{-1} \alpha_{k_1}^{-1}, \]
	\[ \alpha_{e_2}(?k_2e_1) = 0, \; \alpha_{e_3}(?k_2e_1) = 0. \]
	
	12.
	\[ k_1^i k_2^j e_1^r e_3^h e_2^te_3 = (-1)^{t} q^t k_1^i k_2^j e_1^r e_3^{h+1} e_2^t, \]
	\[ \alpha_{k_v}(?e_3) = 0, \; \alpha_{e_1}(?e_3) = 0, \]
	\[ \alpha_{e_2}(?e_3) = 0, \; \alpha_{e_3}(?e_3) = (q-q^{-1})^{-1} \alpha_{k_1}^{-1} \alpha_{k_2}^{-1}. \]
	
	13.
	\[ k_1^i k_2^j e_1^r e_3^h e_2^t k_1k_2 = q^{t-h-2r+h+r} k_1^{i+1} k_2^{j+1} e_1^r e_3^h e_2^t, \]
	\[ \alpha_{k_v}(?k_1k_2) = ([v=1] q^{-1} + [v=2] q) \alpha_{k_v}, \; \alpha_{e_1}(?k_1k_2) = \alpha_{e_1}, \]
	\[ \alpha_{e_2}(?k_1k_2) = \alpha_{e_2}, \; \alpha_{e_3}(?k_1k_2) = \alpha_{e_3}. \]
	
	14.
	\[ k_1^{-1} k_2^{-1} e_1 e_2 k_1^i k_2^j e_1^r e_3^h e_2^t k_1k_2 = q^{t-h-2r-1+h+r+1+i-2i+j} k_1^i k_2^j e_1 e_2 e_1^r
e_3^h e_2^t =  \]
	\[ = (-1)^h q^{t+h-i+j} k_1^i k_2^j e_1^{r+1} e_3^h e_2^{t+1} - q^{t-r-i+j+1} [r] k_1^i k_2^j e_1^{r} e_3^{h+1} e_2^t, \]
	\[ \alpha_{k_v}(k_1^{-1} k_2^{-1} e_1 e_2?k_1k_2) = 0, \; \alpha_{e_1}(k_1^{-1} k_2^{-1} e_1 e_2?k_1k_2) = 0, \]
	\[ \alpha_{e_2}(k_1^{-1} k_2^{-1} e_1 e_2?k_1k_2) = 0, \; \alpha_{e_3}(k_1^{-1} k_2^{-1} e_1 e_2?k_1k_2) = (q-q^{-1})^{-1}
1_X. \]
	
	15.
	\[ k_1^{-1} k_2^{-1} e_3 k_1^i k_2^j e_1^r e_3^h e_2^t k_1k_2 = q^{t-h-2r+h+r-i-1+j+1-r} k_1^{i} k_2^{j} e_1^r e_3^{h+1}
e_2^t, \]
	\[ \alpha_{k_v}(k_1^{-1} k_2^{-1} e_3?k_1k_2) = 0, \; \alpha_{e_1}(k_1^{-1} k_2^{-1} e_3?k_1k_2) = 0, \]
	\[ \alpha_{e_2}(k_1^{-1} k_2^{-1} e_3?k_1k_2) = 0, \; \alpha_{e_3}(k_1^{-1} k_2^{-1} e_3?k_1k_2) = (q-q^{-1})^{-1} 1_X. \]
\end{proof}

\DF*
\begin{proof}
	By \ref{lm:MultQDr} the product $(1_X \otimes a) (\beta \otimes 1_{B_q^+})$ in $D$ for $a \in B_q^+$ and $\beta \in X$ is
given by
	\[ (1_X \otimes a) (\beta \otimes 1_{B_q^+}) = \sum_{(a),(a^{'})} (-1)^{|\beta||a| + |a^{''}||a^{'}| + |?||(a^{'})^{'}|}
\beta(S^{-1}( a^{''} ) ? (a^{'})^{'}) \otimes (a^{'})^{''}. \]
	
	Let us apply this formula to the generators. For any element $\beta \in X$ we have
	\[ (1_X \otimes k_i) (\beta \otimes 1_{B_q^+}) = \beta(k_i^{-1} ? k_i) \otimes k_i, \]
	\[ (1_X \otimes e_1) (\beta \otimes 1_{B_q^+}) = \beta(?e_1) \otimes 1_{B_q^+} + \beta(?k_1) \otimes e_1 - q^2
\beta(k_1^{-1} e_1 ? k_1) \otimes k_1, \]
	\[ (1_X \otimes e_2) (\beta \otimes 1_{B_q^+}) = (-1)^{|\beta|+|?|} \beta(?e_2) \otimes 1_{B_q^+} + (-1)^{|\beta|}
\beta(?k_2) \otimes e_2 + (-1)^{|\beta|+1} \beta(k_2^{-1} e_2 ? k_2) \otimes k_2, \]
	\[ (1_X \otimes e_3) (\beta \otimes 1_{B_q^+}) = (-1)^{1+|\beta|} (q-q^{-1}) \beta(k_2^{-1}e_2 ?k_2e_1) \otimes k_2 +\]
	\[ + (-1)^{1+|\beta|} (q-q^{-1}) \beta(k_2^{-1}e_2 ?k_1k_2) \otimes k_2 e_1 + (-1)^{|\beta|} (q-q^{-1}) \beta(?k_2e_1)
\otimes e_2 + (-1)^{|\beta|+|?|} \beta(?e_3) \otimes 1_{B_q^+} + \]
	\[ + (-1)^{|\beta|} \beta(?k_1k_2) \otimes e_3 + (-1)^{|\beta|} (q^2-1) \beta(k_1^{-1} k_2^{-1} e_1 e_2?k_1k_2) \otimes k_1
k_2 + (-1)^{1+|\beta|} q^2 \beta(k_1^{-1} k_2^{-1} e_3?k_1k_2) \otimes k_1 k_2. \]
	
	Note that
	\[ ( \Delta_{B_q^+} \otimes id_{B_q^+} ) \circ \Delta_{B_q^+} (e_i) =   \]
	\[ = ( \Delta_{B_q^+} \otimes id_{B_q^+} ) ( e_i \otimes 1 + k_i \otimes e_i ) = e_i \otimes 1 \otimes 1 + k_i \otimes e_i
\otimes 1 + k_i \otimes k_i \otimes e_i, \]
	\[ ( \Delta_{B_q^+} \otimes id_{B_q^+} ) \circ \Delta_{B_q^+} (e_3) =  \]
	\[ = ( \Delta_{B_q^+} \otimes id_{B_q^+} )( (q-q^{-1}) k_2 e_1 \otimes e_2 + e_3 \otimes 1 + k_1 k_2 \otimes e_3 ) = \]
	\[ = (q-q^{-1}) k_2 e_1 \otimes k_2 \otimes e_2 + (q-q^{-1}) k_1 k_2 \otimes k_2 e_1 \otimes e_2 + \]
	\[ + (q-q^{-1}) k_2 e_1 \otimes e_2 \otimes 1 + e_3 \otimes 1 \otimes 1 + k_1 k_2 \otimes e_3 \otimes 1 + \]
	\[ + k_1 k_2 \otimes k_1 k_2 \otimes e_3. \]
	
	Hence for $\alpha_{k_1}$
	\[ (1_X \otimes k_v) (\alpha_{k_1} \otimes 1_{B_q^+}) = \alpha_{k_1}(k_v^{-1} ? k_v) \otimes k_v = \alpha_{k_1} \otimes k_v
= (\alpha_{k_1} \otimes 1_{B_q^+}) (1_X \otimes k_v), \]
	\[ (1_X \otimes e_1) (\alpha_{k_1} \otimes 1_{B_q^+}) = \alpha_{k_1}(?k_1) \otimes e_1 = q^{-2} \alpha_{k_1} \otimes e_1 =
q^{-2} (\alpha_{k_1} \otimes 1_{B_q^+}) (1_X \otimes e_1), \]
	\[ (1_X \otimes e_2) (\alpha_{k_1} \otimes 1_{B_q^+}) = \alpha_{k_1}(?k_2) \otimes e_2 = q \alpha_{k_1} \otimes e_2 = q
(\alpha_{k_1} \otimes 1_{B_q^+}) (1_X \otimes e_2), \]
	\[ (1_X \otimes e_3) (\alpha_{k_1} \otimes 1_{B_q^+}) =  \alpha_{k_1}(?k_1k_2) \otimes e_3 = q^{-1} \alpha_{k_1} \otimes e_3
= q^{-1} (\alpha_{k_1} \otimes 1_{B_q^+}) (1_X \otimes e_3). \]
	
	For $\alpha_{k_2}$
	\[ (1_X \otimes k_v) (\alpha_{k_2} \otimes 1_{B_q^+}) = \alpha_{k_2}(k_v^{-1} ? k_v) \otimes k_v = \alpha_{k_2} \otimes k_v
= (\alpha_{k_2} \otimes 1_{B_q^+})(1_X \otimes k_v), \]
	\[ (1_X \otimes e_1) (\alpha_{k_2} \otimes 1_{B_q^+}) = \alpha_{k_2}(?k_1) \otimes e_1 = q \alpha_{k_2} \otimes e_1 = q
(\alpha_{k_2} \otimes 1_{B_q^+}) (1_X \otimes e_1), \]
	\[ (1_X \otimes e_2) (\alpha_{k_2} \otimes 1_{B_q^+}) = \alpha_{k_2}(?k_2) \otimes e_2 = \alpha_{k_2} \otimes e_2 =
(\alpha_{k_2} \otimes 1_{B_q^+})(1_X \otimes e_2), \]
	\[ (1_X \otimes e_3) (\alpha_{k_2} \otimes 1_{B_q^+}) =  \alpha_{k_2}(?k_1k_2) \otimes e_3 = q \alpha_{k_2} \otimes e_3 = q
(\alpha_{k_2} \otimes 1_{B_q^+}) (1_X \otimes e_3). \]
	
	For $\alpha_{e_1}$
	\[ (1_X \otimes k_1) (\alpha_{e_1} \otimes 1_{B_q^+}) = \alpha_{e_1}(k_1^{-1} ? k_1) \otimes k_1 = q^{-2} \alpha_{e_1}
\otimes k_1, \]
	\[ (1_X \otimes k_2) (\alpha_{e_1} \otimes 1_{B_q^+}) = \alpha_{e_1}(k_2^{-1} ? k_2) \otimes k_2 = q \alpha_{e_1} \otimes
k_2, \]
	\[ (1_X \otimes e_1) (\alpha_{e_1} \otimes 1_{B_q^+}) = \alpha_{e_1}(?e_1) \otimes 1_{B_q^+} + \alpha_{e_1}(?k_1) \otimes
e_1 - q^2 \alpha_{e_1}(k_1^{-1} e_1 ? k_1) \otimes k_1 = \]
	\[ = - (q-q^{-1})^{-1} \alpha_{k_1}^{-1} \otimes 1_{B_q^+} + \alpha_{e_1} \otimes e_1 + (q-q^{-1})^{-1} 1_{X} \otimes k_1,
\]
	\[ (1_X \otimes e_2) (\alpha_{e_1} \otimes 1_{B_q^+}) = \alpha_{e_1}(?k_2) \otimes e_2  = \alpha_{e_1} \otimes e_2, \]
	\[ (1_X \otimes e_3) (\alpha_{e_1} \otimes 1_{B_q^+}) = \]
	\[ = (q-q^{-1}) \alpha_{e_1}(?k_2e_1) \otimes e_2 + \alpha_{e_1}(?k_1k_2) \otimes e_3 = - q^{-1} \alpha_{k_1}^{-1} \otimes
e_2 + \alpha_{e_1} \otimes e_3. \]
	
	For $\alpha_{e_2}$
	\[ (1_X \otimes k_1) (\alpha_{e_2} \otimes 1_{B^{+}_q}) = \alpha_{e_2}(k_1^{-1} ? k_1) \otimes k_1 = q \alpha_{e_2} \otimes
k_1, \]
	\[(1_X \otimes k_2)(\alpha_{e_2} \otimes 1_{B_q^{+}}) = \alpha_{e_2}(k_2^{-1}?k_2) \otimes k_2 = \alpha_{e_2} \otimes
k_2,\]
	\[(1_X \otimes e_1)(\alpha_{e_2} \otimes 1_{B_q^{+}}) = \alpha_{e_2}(?k_1) \otimes e_1 = \alpha_{e_2} \otimes e_1,\]
	\[(1_X \otimes e_2)(\alpha_{e_2} \otimes 1_{B_q^{+}}) = (-1)^{1+|?|} \alpha_{e_2}(?e_2) \otimes 1_{B_q^{+}} -
\alpha_{e_2}(?k_2) \otimes e_2 + \alpha_{e_2}(k_2^{-1} e_2 ? k_2) \otimes k_2 =\]
	\[= - (q-q^{-1})^{-1} \alpha_{k_2}^{-1} \otimes 1_{B_q^{+}} - \alpha_{e_2} \otimes e_2 + (q-q^{-1})^{-1} 1_{X} \otimes
k_2,\]
	\[(1_X \otimes e_3)(\alpha_{e_2} \otimes 1_{B_q^{+}}) = (q-q^{-1}) \alpha_{e_2}(k_2^{-1} e_2 ? k_1 k_2) \otimes k_2 e_1 -
\alpha_{? k_1 k_2} \otimes e_3 = \]
	\[= 1_{X} \otimes k_2 e_1 - \alpha_{e_2} \otimes e_3. \]
	
	For $\alpha_{e_3}$
	\[(1_X \otimes k_1) (\alpha_{e_3} \otimes 1_{B_q^{+}}) = q^{-1} \alpha_{e_3} \otimes k_1,\]
	\[(1_X \otimes k_2) (\alpha_{e_3} \otimes 1_{B_q^{+}}) = q \alpha_{e_3} \otimes k_2,\]
	\[(1_X \otimes e_1) (\alpha_{e_3} \otimes 1_{B_q^{+}}) = \alpha_{e_3} \otimes e_1 - q \alpha_{e_2} \otimes k_1,\]
	\[(1_X \otimes e_2) (\alpha_{e_3} \otimes 1_{B_q^{+}}) = \alpha_{e_1} \alpha_{k_2}^{-1} \otimes 1_{B_q^{+}} - \alpha_{e_3}
\otimes e_2,\]		
	\[ (1_X \otimes e_3) (\alpha_{e_3} \otimes 1_{B_q^+}) = \]
	\[ = - (q-q^{-1})^{-1} \alpha_{k_1}^{-1} \alpha_{k_2}^{-1} \otimes 1_{B_q^{+}} - \alpha_{e_3} \otimes e_3 - (q^2-1)
(q-q^{-1})^{-1} 1_X  \otimes k_1 k_2 + q^2 (q-q^{-1})^{-1} 1_X \otimes k_1 k_2 = \]
	\[ = - (q-q^{-1})^{-1} \alpha_{k_1}^{-1} \alpha_{k_2}^{-1} \otimes 1_{B_q^{+}} - \alpha_{e_3} \otimes e_3 + (q-q^{-1})^{-1}
1_X  \otimes k_1 k_2. \]
\end{proof}

\braided*
\begin{proof}
	The Hopf superalgebra $D=D(B^{+}_q)$ is braided by Theorem \ref{RQDth}. Let $R_{D} \in D \otimes D$ be its universal
$R$-matrix. Define the invertible even element $\bar{R}$  of $\bar{U} \otimes \bar{U}$ by
	\[ \bar{R} = (\chi \otimes \chi)(R_{D}). \]
	Since $\chi$ is surjective Hopf superalgebra morphism we have $x=\chi(y)$ for every $x$ in $\bar{U}_q$ for some $y \in D$.
Let us show that $\bar{R}$ is universal $R$-matrix.
	\[ \bar{R} \Delta_{\bar{U}_q}(x) = (\chi \otimes \chi)(R_{D}) \Delta_{\bar{U}_q}(x) = (\chi \otimes \chi)(R_{D})
\Delta_{\bar{U}_q}(\chi(y)) =  \]
	\[ = (\chi \otimes \chi)(R_{D}) ((\chi \otimes \chi) \circ \Delta_{D})(y) = (\chi \otimes \chi)( R_{D} \Delta_{D}(y) ) = \]
	\[ = (\chi \otimes \chi)( \Delta_{D}^{op}(y) R_{D} ) = (\chi \otimes \chi)(\Delta_{D}^{op}(y)) (\chi \otimes \chi)(R_{D}) =
\]
	\[ = \Delta_{\bar{U}_q}^{op} ( \chi(y) ) ( \chi \otimes \chi)(R_{D}) = \Delta_{\bar{U}_q}^{op}(x) \bar{R}. \]
	Let us check that $\bar{U}_q$ is braided.
	\[ \bar{R}_{13} \bar{R}_{23} = (\chi \otimes \chi)(R_{D})_{13} (\chi \otimes \chi)(R_{D})_{23} = (\chi \otimes \chi \otimes
\chi) ( (R_{D})_{13} ) (\chi \otimes \chi \otimes \chi) ( (R_{D})_{23} ) = \]
	\[ = (\chi \otimes \chi \otimes \chi) ((R_{D})_{13} (R_{D})_{23}) = (\chi \otimes \chi \otimes \chi) ((\Delta_{D} \otimes
id_{D} ) (R_{D})) =  \]
	\[ = ((\Delta_{\bar{U}_{q}} \circ \chi) \otimes \chi )(R_{D}) =  (\Delta_{\bar{U}_{q}} \otimes id_{\bar{U}_q}) ( \bar{R} ),
\]		
	\[ \bar{R}_{13} \bar{R}_{12} = (\chi \otimes \chi)(R_{D})_{13} (\chi \otimes \chi)(R_{D})_{12} = (\chi \otimes \chi \otimes
\chi) ( (R_{D})_{13} ) (\chi \otimes \chi \otimes \chi) ( (R_{D})_{12} ) = \]
	\[ = (\chi \otimes \chi \otimes \chi) ((R_{D})_{13} (R_{D})_{12}) = (\chi \otimes \chi \otimes \chi) ((id_{D} \otimes
\Delta_{D} ) (R_{D})) =  \]
	\[ = ( \chi \otimes (\Delta_{\bar{U}_{q}} \circ \chi) )(R_{D}) =  (id_{\bar{U}_q} \otimes \Delta_{\bar{U}_{q}}) ( \bar{R} ).
\]
\end{proof}

\lmbimo*
\begin{proof}
	We give a proof by induction on $n$. When $n=3$ the claim of lemma holds by \ref{eq:l3decomp}. Similary for $n=4$ by
\ref{eq:l4decomp}. Let $W = \sum_{i=0}^{2} L_{n-1,\mu} g_{n-1}^{i} L_{n-1,\mu} + L_{n-3,\mu} g_{n-1} g_{n-2}^{2} g_{n-1}$. Now
we need to proof that $W$ is a $L_{n-1,\mu}$-bimodule. Since $L_{n-3,\mu}$ commutes with $g_{n-1} g_{n-2}^{2} g_{n-1}$, it is
sufficient to proof that elements $g_{i_1}^{j_1} g_{n-1} g_{n-2}^{2} g_{n-1} g_{i_2}^{j_2} \in W$, where $i_1,i_2 \in
\{n-3,n-2\}, \; j_1,j_2 \in \{ 1,2 \}$. This elements are conjugates of elements whose image lie in $L_{3,\mu}$ and $L_{4,\mu}$,
so the result follows from our basis construction for $L_{3,\mu}$ and $L_{4,\mu}$.
\end{proof}

\bibliographystyle{elsarticle-num}

\end{document}